\newtheorem{theorem}{Theorem}[section]
\newtheorem*{claim*}{Claim}
\newtheorem{lemma}[theorem]{Lemma}
\newtheorem{lem}[theorem]{Lemma}
\newtheorem{corollary}[theorem]{Corollary}
\newtheorem{cor}[theorem]{Corollary}
\newtheorem{proposition}[theorem]{Proposition}
\newtheorem{Prop}[theorem]{Proposition}
\newtheorem{prop}[theorem]{Proposition}
\newtheorem{Thm}[theorem]{Theorem}
\newtheorem{thm}[theorem]{Theorem}
\theoremstyle{definition}
\newtheorem{definition}[theorem]{Definition}\newtheorem{Def}[theorem]{Definition}
\newtheorem{example}[theorem]{Example}
\theoremstyle{remark}
\newtheorem{remark}[theorem]{Remark}\newtheorem{rmk}[theorem]{Remark}
\newtheorem{Rmk}[theorem]{Remark}
\numberwithin{equation}{section}
\newcommand{\norm}[1]{\lVert#1\rVert}
\newcommand{\M}{\mathcal{M}}
\newcommand{\op}{\operatorname}
\newcommand{\bb}{\mathbb}
\newcommand{\hu}{\op{hull}}
\newcommand{\Ga}{\Gamma}
\newcommand{\s}{\mathsf{s}}
\newcommand{\ga}{\gamma}
\newcommand{\la}{\lambda}
\newcommand{\La}{\Lambda}
\newcommand{\ba}{\backslash}
\newcommand{\cl}{\overline}
\newcommand{\til}{\tilde}
\newcommand{\cal}{\mathcal}
\newcommand{\br}{\mathbb R}
\newcommand{\SO}{\op{SO}}
\newcommand{\core}{\op{core}}
\newcommand{\coree}{\op{core}_\epsilon(\M)}
\newcommand{\bH}{\mathbb H}
\newcommand{\BFM}{\op{BF} \M}
\newcommand{\RFM}{\op{RF} \M}
\newcommand{\RFPM}{\op{RF}_+ \M}
\newcommand{\RF}{\op{RF}}
\newcommand{\FM}{\op{F} \M}
\newcommand{\hull}{\op{hull}}
\newcommand{\q}{\mathbb Q}
\newcommand{\lan}{\langle}
\newcommand{\be}{\begin{equation}}
\newcommand{\ee}{\end{equation}}
\newcommand{\ran}{\rangle}
\newcommand{\G}{\Gamma}
\newcommand{\m}{\mathsf{m}}
\renewcommand{\O}{\mathcal O}
\newcommand{\mS}{\mathscr{S}}
\newcommand{\mG}{\mathscr{G}}\newcommand{\mH}{\mathscr{H}}
\newcommand{\T}{\op{T}}\newcommand{\F}{\op{F}}
\renewcommand{\frak}{\mathfrak}
\newcommand{\mT}{\mathsf T}
\newcommand{\e}{\varepsilon}\renewcommand{\epsilon}{\varepsilon}
\newcommand{\BR}{\op{BR}}
\newcommand{\NC}{\op{N}_G(H_{nc})}
\begin{document}

\title[Orbit closures]{Orbit closures of Unipotent flows for hyperbolic manifolds with Fuchsian ends}

\author{Minju Lee}

\address{Department of Mathematics, Yale University, New Haven, CT 06520}
\address{current: Department of Mathematics, University of Chicago, Chicago, IL 60637}
\email{minju1@uchicago.edu}

\author{Hee Oh}
\address{Mathematics department, Yale university, New Haven, CT 06511 and Korea Institute for Advanced Study, Seoul, Korea}
\email{hee.oh@yale.edu}

\thanks{Oh was supported in part by NSF Grants \#1900101.}



\begin{abstract} We establish an analogue of Ratner's orbit closure theorem for any connected closed subgroup
generated by unipotent elements in $\SO(d,1)$ acting on the space $\Gamma\ba \SO(d,1)$, 
assuming that the associated hyperbolic manifold $\M=\Gamma\ba \bH^d$ is a convex cocompact manifold with Fuchsian ends.
For $d=3$, this was proved earlier by McMullen, Mohammadi and Oh.
In a higher dimensional case, the possibility of accumulation on closed orbits of intermediate subgroups causes serious  issues, but
in the end, all orbit closures of unipotent flows are relatively homogeneous.
Our results imply the following: for any $k\ge 1$,\begin{enumerate}
\item the closure of any $k$-horosphere in $\M$  is
  a properly immersed submanifold;
  \item the closure of any geodesic $(k+1)$-plane in $\M$
  is  a properly immersed submanifold;
\item an infinite sequence of maximal properly immersed geodesic $(k+1)$-planes 
intersecting  $\text{core } \M$ becomes dense in $\M$.
\end{enumerate}

\end{abstract}

\maketitle

\tableofcontents

\section{Introduction}
Let $G$ be a connected simple linear Lie group and $\Gamma <G$ be a discrete subgroup.
An element $g\in G$ is called {\it unipotent} if all of its eigenvalues are one, and
a closed subgroup of $G$ is called unipotent if all of its elements are unipotent. Let $U$ be a connected unipotent subgroup of $G$, or more generally, any connected
closed  subgroup of $G$ generated by unipotent elements in it. We are interested in the action of $U$ on the homogeneous space $\Gamma\ba G$ by right translations.

If the volume
of the homogeneous space $\Gamma\ba G$ is finite, i.e., if $\Gamma$ is a lattice in $G$,
then Moore's ergodicity theorem  says that for almost all $x\in \Gamma\ba G$,
$xU$ is dense in $\Gamma\ba G$ \cite{Moo}.
While this theorem does not provide any information for a given point $x$, the celebrated Ratner's orbit closure theorem,
which was a  conjecture of Raghunathan,  states that 
\begin{equation} \label{ho} \text{the closure of every $U$-orbit is homogeneous},\end{equation}  that is,
for any $x\in \Gamma\ba G$, $\overline{xU}=xL$
 for some connected closed subgroup $L<G$ containing $U$  \cite{Ra2}.  Ratner's proof 
 is based on her classification  of all $U$-invariant ergodic probability measures \cite{Ra1} and the  work of Dani and Margulis \cite{DM2} on the non-divergence of unipotent flow. Prior to her work,
 some important special cases of \eqref{ho} were established  by Margulis \cite{Mar}, Dani-Margulis (\cite{DM1}, \cite{DM3})
 and Shah (\cite{Sh}, \cite{Sh2}) by topological methods. 
 This theorem is a fundamental result with numerous applications.

It is natural to ask if there exists a family of homogeneous spaces of infinite volume where 
an analogous orbit closure theorem holds. 
When the volume of $\Gamma\ba G$ is infinite, the geometry of the associated locally symmetric space turns out to play an important role in this question. The first orbit closure theorem in the infinite volume case was established by
McMullen, Mohammadi, and Oh (\cite{MMO1}, \cite{MMO2}) 
for a class of homogeneous spaces  $\Gamma\ba \SO(3,1)$ which arise as the frame bundles of convex cocompact hyperbolic $3$-manifolds with Fuchsian ends.

Our goal in this paper is to show that a similar type of orbit closure theorem holds in the higher dimensional analogues of these manifolds.
We present a complete hyperbolic $d$-manifold $\M=\Gamma\ba \bH^d$ as the quotient
of the hyperbolic space by the action of a discrete subgroup
$$\Gamma < G=\SO^\circ (d, 1)\simeq \op{Isom}^+(\bH^d) $$
where $\SO^\circ(d,1)$ denotes the identity component of $\SO(d,1)$.
The geometric boundary of $\bH^d$ can be identified with the sphere ${\mathbb S}^{d-1}$.
The limit set $\Lambda\subset \mathbb S^{d-1}$ of $\Gamma$ is the set of all accumulation points of an orbit $\Gamma x$ in the compactification
$\bH^d\cup \mathbb S^{d-1}$
for $x\in \bH^d$. 

The convex core of $\M$ is a submanifold of $\M$ given by the quotient
$$\core {\M}=\Gamma\ba \op{hull}(\Lambda)$$
where $\hull(\Lambda) \subset \bH^d$ is the smallest convex subset containing all geodesics in $\bH^d$ connecting points in $\Lambda$. 
When $\core {\M}$ is compact, $\M$ is called {\it convex cocompact}.

\begin{figure}[h]
\centering
      \includegraphics[totalheight=5cm]{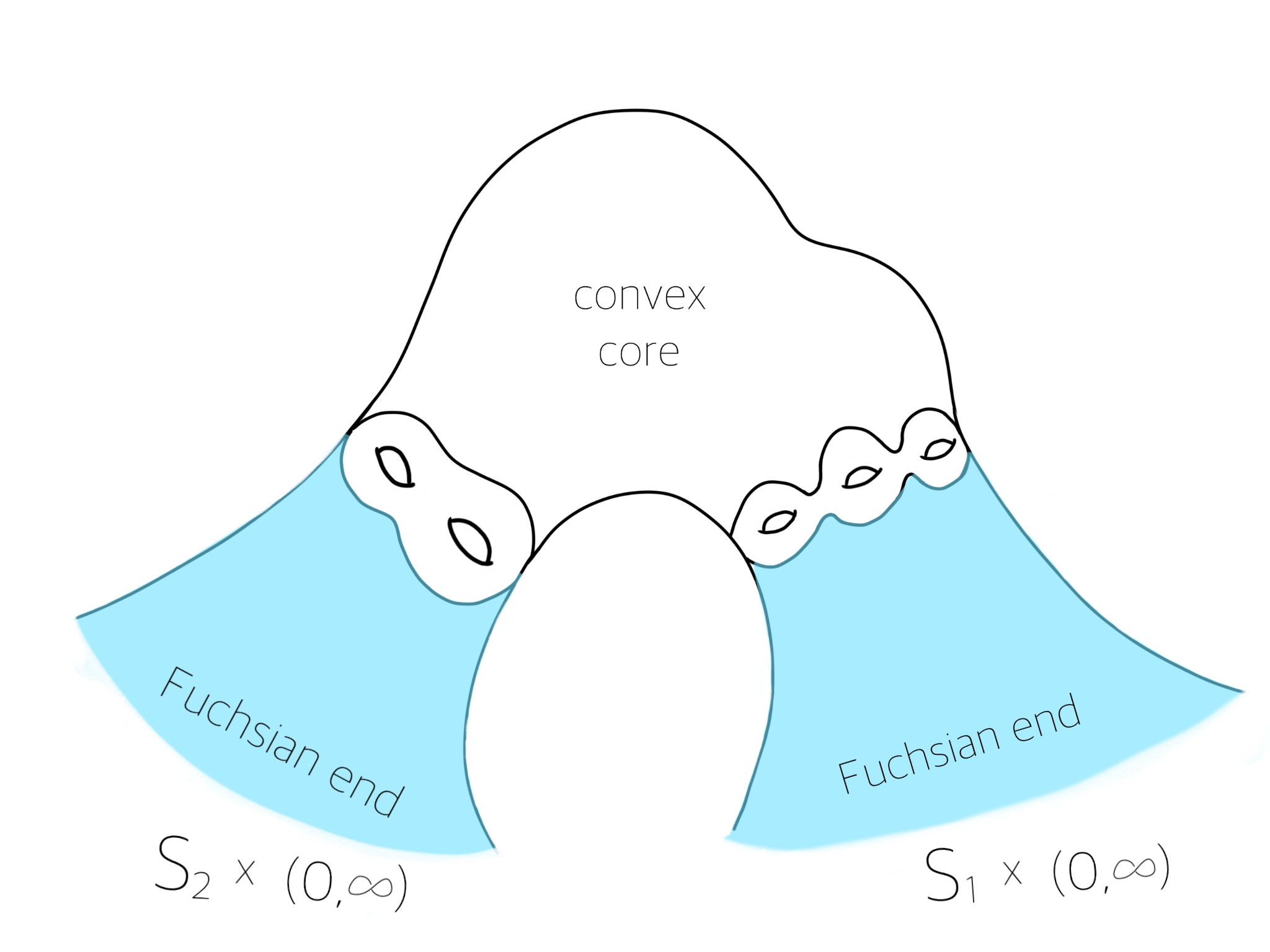}
\caption{A convex cocompact hyperbolic manifold with non-empty Fuchsian ends}
\label{Fend}
\end{figure} 
\subsection*{Convex cocompact manifolds with Fuchsian ends}
Following the terminology introduced in \cite{KS}, we define:
 \begin{Def} A convex cocompact hyperbolic $d$-manifold $\M$ is said to have Fuchsian ends
 if  $\core {\M}$ has non-empty interior and has  totally geodesic boundary.
 \end{Def}
The term {\it Fuchsian ends} reflects the fact that each component of the boundary of $\core {\M}$
is a $(d-1)$-dimensional closed hyperbolic manifold, and each component of the complement $\M-\core \M $  is diffeomorphic 
to the product $S\times (0,\infty)$ for some closed hyperbolic $(d-1)$-manifold $S$ (see Figure \ref{Fend}).

Convex cocompact hyperbolic $d$-manifolds with non-empty Fuchsian ends can also be characterized
as convex cocompact hyperbolic manifolds whose limit sets satisfy: $${\mathbb S}^{d-1}-\Lambda =\bigcup_{i=1}^\infty B_i$$
where $B_i$'s are round balls with mutually disjoint closures (see Figure \ref{limit}). Hence for $d=2$, any non-elementary convex cocompact hyperbolic surface has Fuchsian ends.
The double of the  core of a convex cocompact hyperbolic $d$-manifold with non-empty Fuchsian ends
 is a closed hyperbolic $d$-manifold.

\begin{figure}[h]
\centering
        \includegraphics[totalheight=4cm]{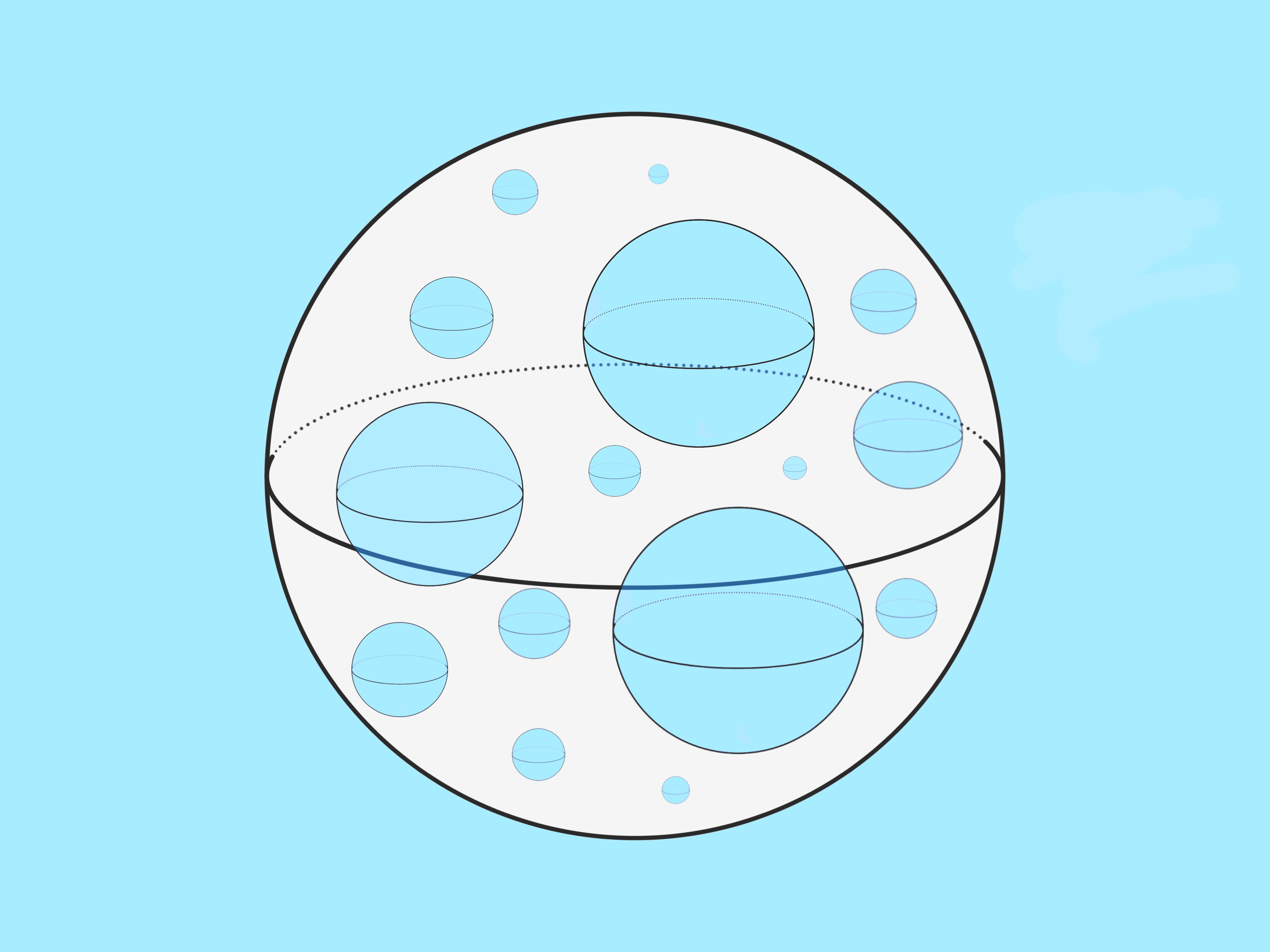} \caption{Limit set of a convex cocompact hyperbolic $4$-manifold with non-empty Fuchsian ends}
\label{limit}
\end{figure}

Any convex cocompact hyperbolic manifold with non-empty Fuchsian ends is constructed in the following way.
 Begin with a closed hyperbolic $d$-manifold $\cal N_0$ with a fixed collection of finitely many, mutually disjoint,
 properly embedded totally geodesic hypersurfaces. Cut $\cal N_0$ along those hypersurfaces to obtain a compact hyperbolic manifold $W$ with totally geodesic boundary hypersurfaces. 
There is a canonical procedure of
extending each boundary hypersurface to a Fuchsian end, which results in  a convex cocompact hyperbolic manifold $\M$ (with Fuchsian ends) which is diffeomorphic to the interior of $W$.

By  Mostow rigidity theorem, there are only countably infinitely many convex cocompact
hyperbolic manifolds with Fuchsian ends of dimension at least $3$. On the other hand, for a fixed closed hyperbolic $d$-manifold $\cal N_0$ with infinitely many properly immersed geodesic hypersurfaces,\footnote{Any closed arithmetic hyperbolic manifold
has  infinitely many properly immersed geodesic hypersurfaces provided  it has at least one. This is due to the
presence of Hecke operators \cite{Re}.}
 one can produce  infinitely many non-isometric convex compact hyperbolic $d$-manifolds with non-empty Fuchsian ends;
for each properly immersed geodesic hypersurface $f_i(\bH^{d-1})$ for a totally geodesic immersion $f_i:\bH^{d-1}\to \cal N_0$, there is a finite covering $\cal N_i$ of $\cal N_0$ such that $f_i$ lifts to $\bH^{d-1}\to \cal N_i$ with image $S_i$ being properly imbedded in $\cal N_i$ \cite{MR}. Cutting and pasting $\cal N_i$ along $S_i$ as described above produces
a  hyperbolic manifold $\M_i$ with Fuchsian ends. When the volumes of $S_i$ are distinct, $\M_i$'s are not isometric to each other.

\subsection*{Orbit closures}  
In the rest of the introduction, we assume that for $d\ge 2$,
$$\text{ $\M$ is a convex cocompact hyperbolic $d$-manifold with Fuchsian ends.}$$

The homogeneous space $\Gamma\ba G$ can be regarded as the bundle $\FM$ of oriented frames over $\M$.
Let $A=\{a_t: t\in \br\} <G$ denote the one parameter subgroup of diagonalizable elements whose right translation actions on
$\Gamma\ba G$
correspond to the frame flow. Let $N\simeq \br^{d-1}$ denote the contracting horospherical subgroup: 
$$N=\{g\in G:
a_{-t} ga_{t}\to e \text{ as $t\to +\infty$}\} .$$

 We denote by $\RFM$ the renormalized frame bundle of $\M$:
$$\RFM:=\{x\in \Gamma\ba G: \text{ $xA$ is bounded}\},$$
and also set $$ \RFPM:=\{x\in \Gamma\ba G: \text{$xA^+$ is bounded}\} $$
where $A^+=\{a_t: t\ge 0\}$.
When $\op{Vol}(M)<\infty$, we have 
 $$\RFM=\RFPM=\Gamma\ba G.$$ In general, $\RFM$ projects into $\core {\M}$ (but not surjective in general)
 and $\RFPM$ projects onto $\M$ under the basepoint projection $\Gamma\ba G\to \M$.
The sets $\RFM$ and $\RFPM$ are precisely non-wandering sets for the actions of $A$ and $N$ respectively \cite{Win}.

For a connected closed subgroup $U<N$,
we denote by $H(U)$
the smallest  closed simple Lie subgroup of $G$
which contains both $U$ and $A$. If $U\simeq \br^{k}$, then $H(U)\simeq \SO^\circ(k+1,1)$.
A connected closed subgroup of $G$ generated by  one-parameter unipotent subgroups is, up to conjugation, of the form
 $U<N$ or $H(U)$ for some $U<N$ (Corollary \ref{dic}).

We set $F_{H(U)}:=\RFPM \cdot H(U)$, which is a closed subset.  
It is easy to see that if $x\notin \RFPM$ (resp. $x\notin F_{H(U)}$),  then $xU$ (resp. $xH(U)$)
 is closed in $\Gamma\ba G$. 
 On the other hand, for almost all $x\in \RF_+\M$,
$xU$ is dense in $\RFPM$, with respect to  a unique $N$-invariant  locally finite measureon $\RFPM$, called the Burger-Roblin measure;
this was shown by Mohammadi-Oh \cite{MO} for $d=3$ and by Maucourant-Schapira for general $d\ge 3$ \cite{MS} (see section \ref{s:err}).
\subsection*{Orbit closures are relatively homogeneous}
We define the following collection of closed connected subgroups of $G$:
\begin{equation*}
\mathcal L_U:=\left\{L=H(\widehat U) C: 
\begin{array}{c}
\text{for some $z\in \RFPM$, $zL $ is closed in $\Gamma\ba G$  }\\  \text{ and $\op{Stab}_L(z)$ is Zariski
dense in $L$}
\end{array}
\right\}.
\end{equation*}
 where $U <\widehat U<N$ and $C$ is a closed subgroup of the centralizer of $H(\widehat U)$.
We also define: $$\cal{Q}_U:=\{vLv^{-1} :  L\in\cal{L}_U\text{ and } v\in N\}.$$

In view of the previous discussion, the following theorem gives a classification of orbit closures for all connected closed subgroups of $G$ generated by unipotent one-parameter subgroups:

\begin{theorem}\label{mtp} Let $\M=\Gamma\ba \bH^d$ be a convex cocompact hyperbolic manifold with Fuchsian ends, and let
 $U<N$ be a non-trivial connected closed subgroup.
\begin{enumerate}
\item {\rm ($H(U)$-orbit  closures)} For any $x\in \RFM \cdot H(U)$,

\begin{equation*}
\cl{xH(U)}=xL\cap F_{H(U)}
\end{equation*}
where $xL$ is a closed orbit of  some $L\in \mathcal L_{U}$.

\item{\rm ($U$-orbit closures)}
  For any $x\in  \RFPM$, 
 \begin{equation*}
\cl{xU}=x  L\cap\op{RF}_+\M
\end{equation*}
where  $x L$ is a closed orbit of  some $L\in \cal{Q}_U$.

\item {\rm (Equidistributions)} Let $x_iL_i $ be a sequence of  closed orbits  intersecting $\RFM$, where $x_i\in\RFPM$ and
 $L_i\in\cal{Q}_U$. 
Assume that  no infinite subsequence of $x_iL_i$ is contained in a subset
of the form $y_0L_0D$ where  $y_0L_0$ is a closed orbit of $L_0\in \cal L_U$ with $\op{dim}L_0<\op{dim} G$  and $D$ is a compact subset of the normalizer $\op{N}(U)$ of $U$.
Then\footnote{For a sequence of subsets $Y_n$ in a topological space $X$ such that $Y=\limsup_n Y_n=\liminf Y_n$,  we write $Y=\lim_{n\to \infty} Y_n$, where
$\limsup_n Y_n= \bigcup_n \cl{\bigcap_{m\geq n} Y_m }$ and   
$\liminf_n Y_n= \bigcap_n \cl{\bigcup_{m\geq n} Y_m }.
$ }
$$\lim_{i\to\infty}
\text{ } x_i L_i\cap\op{RF}_+\M= \op{RF}_+\M.
$$
\end{enumerate}
\end{theorem}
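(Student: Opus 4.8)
The three statements are proved together, by an induction over the subgroups $H(\widehat U)$ and their centralizer extensions that is intertwined across (1), (2) and (3); setting this induction up so that it is well-founded is itself part of the difficulty. Two geometric inputs special to the Fuchsian-ends setting are used throughout: $\RFM$ is \emph{compact} and the $A$-action on it is Anosov with stable/unstable horospherical foliations $N,N^-$; and $\mathbb S^{d-1}\setminus\Lambda=\bigsqcup_i B_i$ is a disjoint union of round balls with pairwise disjoint closures, from which one extracts a Dani--Margulis type \emph{non-divergence} property: for $x\in\RFPM$ and a unipotent $U<N$, the orbit $xU$ spends a definite proportion of its time in a fixed compact subset of $\RFPM$ and never escapes entirely into an end.

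\medskip\noindent\textbf{Statement (1).} Fix $x\in\RFM\cdot H(U)$, say $x\in\RFM$, and put $X=\cl{xH(U)}\subseteq F_{H(U)}$. If $X$ does not already have the asserted form $xL\cap F_{H(U)}$ with $L\in\mathcal L_U$, then $X$ is not a single closed homogeneous piece; using $A$-recurrence inside the compact set $\RFM$ to pass to a point where $X$ is ``as large as possible'', one produces distinct $y,y'\in X\cap\RFM$ with $y'=yg$, $g\notin H(U)$ and $g$ arbitrarily close to $e$. Feeding the $H(U)$-orbits through $y$ and $y'$ into the polynomial-divergence (Ratner $H$-property) machinery, together with the non-divergence above, yields additional invariance of $X$, and here two cases occur. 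Either $X$ becomes locally --- hence, by a Shah-type globalization built on $A$-recurrence, globally --- invariant under a connected group strictly larger than $H(U)$, which one checks must be of the form $H(\widehat U)C$ with $U<\widehat U<N$ and $C$ centralizing $H(\widehat U)$ and with Zariski-dense stabilizer, so that it lies in $\mathcal L_U$ and the induction applies. Or --- the genuinely new phenomenon in dimension $>3$ --- the transverse directions are seen only in a limit, producing a sequence of \emph{closed} orbits $z_iH(\widehat U)\subseteq X$ (for a fixed $\widehat U\supsetneq U$, possibly with a centralizer factor) with $z_iH(\widehat U)\to X$ in the Hausdorff topology. To this sequence we apply statement (3) for $\widehat U$: if it is trapped in some $y_0L_0D$ as in the hypothesis of (3), then $X$ lies in $\cl{y_0L_0D\cdot H(U)}$ and we recurse inside the lower-dimensional closed orbit $\cl{y_0L_0}$, the recursion terminating because discreteness of $\Gamma$ (Mostow rigidity) bounds the relevant closed orbits; otherwise (3) gives $z_iH(\widehat U)\cap\RFPM\to\RFPM$, so $X\supseteq\RFPM$, and since $X$ is $H(U)$-invariant and $X\subseteq F_{H(U)}=\RFPM\cdot H(U)$ we conclude $X=F_{H(U)}=xG\cap F_{H(U)}$.

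\medskip\noindent\textbf{Statements (2) and (3).} Part (2) follows from (1) by the standard argument relating the closure of a horospherical orbit $xU$ to the $H(U')$-orbit closures of nearby points: since $A$ normalizes every $U<N$ (its adjoint action on $N$ is by a scalar) and since $\RFPM$ is governed by $\RFM$ under the $A$- and $N$-actions, $\cl{xU}$ is cut out from such an orbit closure by $\RFPM$ together with a horospherical leaf, the relevant subgroup picking up an $N$-conjugation and so landing in $\mathcal Q_U$. For part (3), given closed orbits $x_iL_i$ ($L_i\in\mathcal Q_U$) meeting the compact set $\RFM$ and not trapped as excluded, pass to a Hausdorff limit $Y$ of $x_iL_i\cap\RFPM$; using that each $L_i\supseteq v_iUv_i^{-1}$ and that the orbits meet $\RFM$, one obtains (after a subsequence and a bounded conjugation) a point $y\in Y$ with $\cl{y\widetilde U}\subseteq Y$ for some $\widetilde U$ conjugate to $U$. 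By part (2), $\cl{y\widetilde U}=yL'\cap\RFPM$ with $L'\in\mathcal Q_{\widetilde U}$: if $L'=G$ then $Y=\RFPM$ and we are done, since every subsequential limit then equals $\RFPM$; if $\dim L'<\dim G$ then $Y$ sits inside finitely many proper closed homogeneous pieces, and a linearization argument in a Chevalley representation --- the Dani--Margulis device, here used with $\RFM$ compact --- forces $x_iL_i$ into a tube $y_0L_0D$ of the excluded form, a contradiction.

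\medskip\noindent\textbf{Main obstacle.} The heart of the matter is the second alternative in the proof of (1): the additional invariance of the orbit closure may be visible only along a sequence of closed orbits of an intermediate subgroup $H(\widehat U)$, a possibility that simply does not arise for $d=3$. Dealing with it requires running the equidistribution statement (3) for these intermediate orbits \emph{before} the full orbit-closure statement for $\widehat U$ is available, forcing (1), (2) and (3) to be proved by one simultaneous induction across all intermediate subgroups and their centralizer extensions; proving this induction well-founded (using discreteness of $\Gamma$ to rule out infinite descent through proper closed orbits) and verifying that the enlarged invariance group always has the permitted shape $H(\widehat U)C$ with Zariski-dense stabilizer are the two places where work beyond the classical unipotent-dynamics toolkit is needed.
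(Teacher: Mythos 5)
Your overall scheme --- a simultaneous induction intertwining (1), (2) and (3), with the equidistribution statement invoked for intermediate closed orbits before the full orbit-closure statements are available --- does match the paper's road map, but the proposal replaces the two load-bearing ingredients by an input that is not available, and that step would fail. You assume a Dani--Margulis type non-divergence: that for $x\in\RFPM$ the orbit $xU$ spends a definite proportion of its time in a fixed compact subset. In infinite volume this is false (for any compact $\Omega$ and almost every point the time spent in $\Omega$ up to $T$ is $o(T)$, as the paper recalls); the only recurrence the Fuchsian-ends geometry provides is that for $x\in\RFM$ and any one-parameter subgroup $\{u_t\}<N$ the return-time set $\{t: xu_t\in\RFM\}$ is globally $k$-thick (Proposition \ref{defk}), which carries no density information. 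Because of this, your ``additional invariance'' step does not run as stated: the unipotent blowup along thick return times produces a point $x_1v\in X$ with $x_1\in x_0L\cap\RFM$, but nothing in your argument guarantees that $x_1$ is \emph{generic} for $U$ in the intermediate closed orbit $x_0L$; if it is singular, $\cl{x_1U}$ can be trapped in a smaller closed orbit and the enlargement makes no progress. Closing exactly this gap is the paper's main new contribution: the avoidance theorems (Theorems \ref{mtt2}, \ref{avoid1}, \ref{lin2}), proved by linearization together with the combinatorial inductive search lemma, give $2k$-thick sets of return times avoiding neighborhoods of the compact pieces $E_j$ exhausting $\mS(U_0)\cap\RFM$, \emph{uniformly} in $j$; only then, combined with the induction hypotheses $(2)_m$ and $(3)_m$, does one get accumulation at generic points (Propositions \ref{cor.lin} and \ref{lem.lin}). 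Your appeal to ``(3) for $\widehat U$'' along a sequence of intermediate closed orbits is roughly where $(3)_m$ enters in the paper, but without the avoidance mechanism you cannot verify its non-trapping hypothesis, since the base points of those orbits might all drift into the singular set.

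Two further points are not minor. First, (2) is not a ``standard consequence'' of (1): $U$ is horospherical only inside $H(U)$, and the paper needs a separate step $(1)_{m+1},(2)_m,(3)_m\Rightarrow(2)_{m+1}$ whose key geometric input is Proposition \ref{SAVE} (every noncompact closed orbit $x_0\widehat L$ with $\dim(\widehat L\cap N)\ge 2$ contains a \emph{compact} orbit of some member of $\mathcal L_U$); this is what forces $\cl{xAU}$, and then $\cl{xU}$, to meet the singular set, and to meet it inside $F^*_{H(U)}$ --- the location in $F^*$ being essential to promote the semigroup $V^+$ produced by the blowup to a full one-parameter group. Second, you do not address the dichotomy between $F^*_{H(U)}$ and $\partial F_{H(U)}$, where closures are genuinely non-homogeneous (Theorem \ref{b-ratner}) and which is why the hypotheses $x\in\RFM\cdot H(U)$, resp.\ $x\in\RFPM$, appear in the statement; and the induction terminates by a codimension count in $N$ (resp.\ $\widehat L\cap N$), not by a discreteness or Mostow-rigidity argument as you suggest.
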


\begin{rmk}\label{EEnd}
\begin{enumerate}
\item If $x\in F_{H(U)}- \RFM \cdot H(U)$, then
$xH(U)$ is contained in an end component of $\M$ under the projection $\Gamma\ba G\to \M$,
 and its closure is not relatively homogeneous in $F_{H(U)}$.  More precisely,
 $$\overline{xH(U)}= xL V^+ H(U) $$
 for some $L\in \mathcal L_U$, and some one-parameter semigroup $V^+<N$  (see Theorem \ref{b-ratner}).

 \item If $\M$ has empty ends, i.e., if $\M$ is compact,
 Theorem \ref{mtp}(1) and (2) are special cases of Ratner's theorem \cite{Ra2}, also proved by Shah \cite{Sh0} independently, and
  Theorem \ref{mtp}(3) follows from  Mozes-Shah equidistribution theorem \cite{MSh}.
\end{enumerate}
\end{rmk}

Theorem \ref{mtp}(1) and (2)  can  also be presented as follows in a unified manner:
\begin{cor} \label{mt2} Let
$H<G$ be a connected closed subgroup generated by unipotent elements in it.
 Assume that $H$ is normalized by $A$.
For any $x\in \RFM$, the closure of $xH$ is homogeneous in $\RFM$, that is,
\begin{equation}\label{uc} \overline{xH}\cap \RFM=xL \cap \RFM\end{equation}
where $xL$ is a closed orbit of some $L\in \mathcal Q_{H\cap N}$.
\end{cor}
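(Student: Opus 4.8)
The plan is to read Corollary \ref{mt2} off from Theorem \ref{mtp}(1)--(2): those two parts already handle the only two shapes a connected closed subgroup generated by unipotents can take, so the work is (i) to reduce $H$ to one of these shapes and (ii) to keep track of which of the nested sets $\RFM\subset\RFPM\subset F_{H(U)}$ each orbit closure occupies, and then intersect with $\RFM$. The corollary is essentially a repackaging of the main theorem; (i) is the only step that takes any thought.

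For (i) I would combine Corollary \ref{dic} --- a connected closed subgroup generated by one-parameter unipotent subgroups is, up to conjugacy, of the form $U<N$ or $H(U)$ --- with the hypothesis that $A$ normalizes $H$, in order to arrange that $H$ \emph{is} $U$ or $H(U)$ for some connected closed $U<N$. The key observation is that a one-parameter unipotent subgroup normalized by $A$ lies in $N$ or in the expanding horospherical subgroup $N^-=\{g\in G:a_{t}ga_{-t}\to e \text{ as }t\to+\infty\}$: its Lie algebra is a single $\op{ad}(\mathfrak a)$-eigenline, and the $0$-eigenspace $\mathfrak a\oplus\mathfrak m$ (with $\mathfrak m\cong\mathfrak{so}(d-1)$ compact and commuting with the $\br$-split $\mathfrak a$) contains no nonzero nilpotent. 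Hence $H$ is generated by one-parameter subgroups of $N\cup N^-$; a short bracket computation in $\mathfrak{so}(d,1)$ then shows that either all of them lie in $N$ (so $H=U<N$, since $N$ is abelian), or all lie in $N^-$, or else $H$ meets both $N$ and $N^-$ nontrivially and $H=H(U)$ for $U=(H\cap N)^\circ$. The residual possibility $H<N^-$ is disposed of by conjugating by a representative $w$ of the nontrivial Weyl element: $w$ normalizes $A$, and since conjugation by $w$ is a homeomorphism of $\Gamma\ba G$ sending $\overline{xA}$ to $\overline{xA}\,w^{-1}$ it preserves $\RFM$, so this replacement is harmless for the assertion to be proved. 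This elementary Lie-algebra bookkeeping --- essentially Corollary \ref{dic} specialized to $A$-normalized subgroups --- is the main (and only) obstacle, and it is routine; I anticipate nothing deeper.

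For (ii), first record $\RFM\subset\RFPM\subset\RFPM\cdot H(U)=F_{H(U)}$, the first inclusion because $xA^+\subset xA$ and the second trivially. Fix $x\in\RFM$. If $H=H(U)$, then $x\in\RFM\subset\RFM\cdot H(U)$, so Theorem \ref{mtp}(1) gives a closed orbit $xL$ with $L\in\mathcal L_U\subset\mathcal Q_U$ and $\overline{xH(U)}=xL\cap F_{H(U)}$; intersecting with $\RFM$ and using $\RFM\subset F_{H(U)}$ yields $\overline{xH}\cap\RFM=xL\cap\RFM$. If $H=U$, then $x\in\RFM\subset\RFPM$, so Theorem \ref{mtp}(2) gives a closed orbit $xL$ with $L\in\mathcal Q_U$ and $\overline{xU}=xL\cap\RFPM$; intersecting with $\RFM\subset\RFPM$ yields $\overline{xH}\cap\RFM=xL\cap\RFM$. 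In either case this is precisely \eqref{uc}, and the $H<N^-$ case follows by applying the same argument to $wHw^{-1}<N$ and transporting the conclusion back by $w$.
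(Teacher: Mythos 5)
Your proposal is correct and follows essentially the route the paper intends: Corollary \ref{mt2} is stated there as a direct repackaging of Theorem \ref{mtp}(1)--(2), with the reduction of an $A$-normalized connected closed subgroup generated by unipotents to the cases $U<N$, a subgroup of the opposite horospherical group, or $H(U)$ supplied by Proposition \ref{str} and Corollary \ref{dic} (your weight-space/bracket sketch is the same argument), and the passage to $\RFM$ handled exactly as you do via $\RFM\subset\RFPM\subset F_{H(U)}$. Two small bookkeeping points: the paper's $N=N^-$ is the \emph{contracting} horospherical subgroup, so your labelling of the expanding one as $N^-$ clashes with its conventions, and in the case $H<N^+$ the group you transport back by the Weyl element lies in the $w$-conjugated (transposed) family rather than literally in $\mathcal Q_U$ for a subgroup $U<N$, which is how the corollary's loosely quantified ``$\mathcal Q_U$'' has to be read in that case.
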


\begin{Rmk} If $\Gamma$ is  contained in $G(\q)$ for some $\q$-structure of $G$,
and $[g]L$ is a closed orbit  appearing in Corollary \ref{mt2},
then $L$ is defined by the condition that
$gLg^{-1}$ is the smallest connected $\q$-subgroup of $G$ containing $gHg^{-1}$.
\end{Rmk}

\subsection*{Generic points} Denote by $\mG(U)$ the set of all points $x\in \RFPM$ such that $x$ is not contained in any closed orbit
 of a proper  reductive algebraic subgroup of $G$ containing $U$.
Theorem \ref{mtp}(2) implies that for any $x\in \mG(U)$, 
$$\cl{xU}=\RFPM.$$

\subsection*{Geodesic planes, horospheres and spheres}We state implications of our main theorems on the closures of 
geodesic planes and horospheres of the manifold $\M$, as well as on the $\Gamma$-orbit closures of  spheres in $\mathbb S^{d-1}$.

A geodesic $k$-plane $P$ in $\M$ is the image of a totally geodesic immersion $f:\bH^k\to \M$, or equivalently, the image of a geodesic
$k$-subspace of $\bH^d$ under the covering map $\bH^d\to \M$. If $f$ factors through the covering map 
$\bH^k \to \Gamma_0\ba \bH^k$
for a convex cocompact hyperbolic $k$-manifold with Fuchsian ends, we call $P=f(\bH^k)$ a convex cocompact geodesic $k$-plane with Fuchsian ends.

\begin{thm} \label{geo-intro1} Let $\M=\Gamma\ba \bH^d$ be a convex cocompact hyperbolic manifold with Fuchsian ends, and
let $P$ be a geodesic $k$-plane of $\M$ for some $k\ge 2$.
\begin{enumerate}
\item If $P$ intersects $\core {\M}$, then
$\cl P$
is a properly immersed  convex cocompact geodesic $m$-plane  with Fuchsian ends for some $m\ge k$.
\item Otherwise, $P$ is contained in some Fuchsian end $E=S_0\times (0,\infty)$ of $\M$, and either $P$ is properly immersed or
$\cl{P}$ is diffeomorphic to the product $S\times [0,\infty)$
for a closed geodesic $m$-plane $S$ of $ S_0$ for some $k\le m\le d-1$.
\end{enumerate}
In particular, the closure of a geodesic plane of dimension at least $2$ is a properly immersed submanifold of $\M$ (possibly with boundary).
\end{thm}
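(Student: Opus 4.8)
The plan is to reduce Theorem~\ref{geo-intro1} to the homogeneous-dynamics statement of Theorem~\ref{mtp} (equivalently Corollary~\ref{mt2}) by lifting a geodesic $k$-plane to an $H(U)$-orbit in $\Gamma\ba G$. First I would fix a totally geodesic immersion $f\colon\bH^k\to M$ with image $P$ and choose a frame over a point of $P$ whose first $k$ vectors are tangent to $P$; this gives a point $x\in\Gamma\ba G$ with $xH\subset\Gamma\ba G$ projecting onto $P$, where $H:=H(U)\cong\SO^\circ(k,1)$ for a suitable $U\cong\br^{k-1}<N$. Here $H$ is connected, generated by unipotents, and normalized by $A$, so Corollary~\ref{mt2} applies. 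The dichotomy in the theorem then mirrors the dichotomy of Remark~\ref{EEnd}(1): either the $H$-orbit meets $\RFM\cdot H(U)$ (i.e. $P$ meets $\core M$), in which case $\overline{xH}\cap\RFM=xL\cap\RFM$ is relatively homogeneous with $xL$ closed for some $L\in\mathcal L_U$; or $x\in F_{H(U)}-\RFM\cdot H(U)$, in which case $\overline{xH}=xLV^+H(U)$ with $V^+<N$ a one-parameter semigroup.

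Next I would translate ``$xL$ closed, $L=H(\widehat U)C\in\mathcal L_U$, $\operatorname{Stab}_L(x)$ Zariski dense in $L$'' into geometry. The factor $H(\widehat U)$ is $\SO^\circ(m,1)$ for some $U<\widehat U<N$, so $\widehat U\cong\br^{m-1}$ with $k\le m\le d$; the centralizer factor $C$ acts by rotating frames and translating along directions orthogonal to the relevant $\bH^m$, so it does not enlarge the basepoint image. Hence the image of $xL$ in $M$ is a geodesic $m$-plane $Q\supset P$, and $Q=\overline P$ because the basepoint projection $\Gamma\ba G\to M$ is continuous and open on $\RFPM$ and sends $\overline{xH}\cap\RFPM$ onto $\overline P$. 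That $xL$ is a \emph{closed} orbit with Zariski-dense stabilizer means, via the standard translation (cf.\ the rational remark after Corollary~\ref{mt2} and the Borel density theorem), that $Q=f_0(\bH^m)$ where $f_0$ factors through $\bH^m\to\Gamma_0\ba\bH^m$ for $\Gamma_0:=\operatorname{Stab}_{H(\widehat U)}(x)$ a discrete Zariski-dense subgroup of $\SO^\circ(m,1)$; the immersion is proper because the orbit is closed. To see $\Gamma_0\ba\bH^m$ is convex cocompact with Fuchsian ends, I would use the characterization recalled in the introduction: the limit set of $\Gamma$ has complement a union of round balls with disjoint closures, and intersecting this limit set with the boundary sphere $\mathbb S^{m-1}$ of $\bH^m\subset\bH^d$ (which contains the limit set of $\Gamma_0$, in fact equals it since $P$ meets $\core M$ forces $\Lambda(\Gamma_0)$ to carry the orbit closure) yields again a complement that is a union of round balls with disjoint closures in $\mathbb S^{m-1}$; by the same characterization $\Gamma_0\ba\bH^m$ has Fuchsian ends, and convex cocompactness follows since $\Gamma_0$ is a geometrically finite subgroup of the convex cocompact $\Gamma$ with compact core (no cusps).

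For part~(2), when $P$ does not meet $\core M$: since the Fuchsian ends of $M$ are the components of $M-\core M$, each diffeomorphic to $S_0\times(0,\infty)$ with $S_0$ a closed hyperbolic $(d-1)$-manifold, $P$ lies in one such end $E$. Lifting to $\widetilde E\subset\bH^d$, which is a ``half-space'' region bounded by the totally geodesic hyperplane covering $S_0$, a geodesic $k$-subspace either is disjoint from that hyperplane and limits onto it (giving, upon taking closures, $\overline P\cong S\times[0,\infty)$ for $S=$ the closed geodesic plane of $S_0$ obtained from the endpoint configuration — closedness of $S$ coming from Theorem~\ref{mtp} applied inside the closed hyperbolic $(d-1)$-manifold that is the double of $\core M$, or directly from Ratner in $\Gamma_{S_0}\ba\SO^\circ(d-1,1)$), or is parallel to it and already properly immersed. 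The case $x\in F_{H(U)}-\RFM\cdot H(U)$ with $\overline{xH}=xLV^+H(U)$ of Remark~\ref{EEnd}(1) is exactly the source of the $S\times[0,\infty)$ boundary behavior: the semigroup $V^+$ records the single direction pushing into the end, and projecting $xLV^+H(U)$ to $M$ gives $S\times[0,\infty)$.

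The main obstacle I expect is not the dynamical input — that is Theorem~\ref{mtp}/Corollary~\ref{mt2} — but the geometric bookkeeping in the last step of identifying $\overline P$: namely, verifying carefully that the centralizer factor $C$ and the choice of lifted frame do not introduce extra dimensions in the basepoint projection, that ``closed orbit of $L\in\mathcal L_U$'' descends to ``properly immersed geodesic plane'' (properness of the immersion must be extracted from closedness of the orbit together with the fact that the fibers of $\Gamma\ba G\to M$ over $\RFPM$ are compact), and that Zariski density of the stabilizer plus the round-ball characterization of $\Lambda$ really does force the ``convex cocompact with Fuchsian ends'' conclusion on $\Gamma_0\ba\bH^m$ rather than merely geometric finiteness. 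A secondary subtlety is the boundary case $k=2$ versus higher $k$ and making sure ``properly immersed submanifold possibly with boundary'' is the correct uniform statement covering both parts.
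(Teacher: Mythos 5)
Your route is essentially the paper's own: lift $P$ to an $H(U)$-orbit with $U\cong\br^{k-1}$, apply Theorem \ref{mtp}(1)/Corollary \ref{mt2} when $P$ meets $\core M$ and Remark \ref{EEnd}(1)/Theorem \ref{b-ratner} when it does not, and translate back through the frame-bundle correspondence; this is exactly how the paper deduces Theorem \ref{geo-intro1}, with Proposition \ref{PPP} supplying the ``convex cocompact with Fuchsian ends'' structure of the closed orbit.

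Two of your fallback justifications would not survive as written, although both are covered by results already in the paper. First, convex cocompactness of $\cl P$ does not follow from ``$\Gamma_0$ is a geometrically finite subgroup of the convex cocompact $\Gamma$'': subgroups of convex cocompact groups need not be geometrically finite, and this is precisely the point you flagged. The paper gets it from Proposition \ref{proper}(2) (a properly immersed plane in a convex cocompact manifold is convex cocompact, with $\partial\tilde S\cap\Lambda=\Lambda(p(\Gamma_{\tilde S}))$, citing [OS]), after which Proposition \ref{PPP} runs the round-ball dichotomy on $\partial\tilde S\cap\Lambda$, including the case where this set has nonempty interior and $\cl P$ is compact. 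Second, in part (2) the relevant dichotomy is not ``parallel versus disjoint'': writing $C$ for the boundary $(k-1)$-sphere of a lift of $P$ and $B$ for the ball of the end, either $C$ lies in the open ball $B$ (then $x\notin F_{H(U)}$, the orbit is closed, and $P$ is properly immersed), or $C$ is internally tangent to $\partial B$, i.e.\ $\#\,C\cap\Lambda=1$ (the case $\#\,C\cap\Lambda\ge 2$ is excluded by \eqref{convex} since $P\cap\core M=\emptyset$), and then Theorem \ref{b-ratner} applies. Moreover the $S\times[0,\infty)$ conclusion is not obtained from Ratner in the compact boundary manifold alone: one also needs the closedness of $zLV^+H(U)$ (Lemma \ref{lem-ra}) and the identification of its projection, both of which are built into Theorem \ref{b-ratner}, which you do cite. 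With these two repairs your outline coincides with the paper's argument.
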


We also obtain:
\begin{thm} \label{geo-intro2}
\begin{enumerate}

\item Any infinite sequence of maximal properly immersed geodesic planes $P_i$ of $\op{dim}P_i\ge 2$ intersecting $\core {\M}$ becomes dense in $\M$, i.e.,
$$\lim_{i\to\infty} P_i= \M $$
where the limit is taken in the Hausdorff topology on the space of all closed subsets in $\M$.
\item There are only countably many properly immersed geodesic
planes of dimension at least $2$ intersecting $\core {\M}$.

\item If $\op{Vol}(\M)=\infty$, there are only finitely many maximal properly immersed bounded
geodesic planes of dimension at least $2$.
\end{enumerate}

\end{thm}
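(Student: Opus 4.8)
The plan is to transfer everything to the frame bundle $\Gamma\ba G$ and feed it into Theorem \ref{mtp}, using Theorem \ref{geo-intro1} to control closures. \textbf{Dictionary.} Write $\pi\colon\Gamma\ba G\to M$ for the base-point projection; for $2\le k\le d$ fix a reference copy $H_k\cong\SO^\circ(k,1)$ of $G$ with $A<H_k$ and $U_k:=H_k\cap N\cong\br^{k-1}$, so $H_k=H(U_k)\in\cal L_{U_k}$. A geodesic $k$-plane $P\subset M$ is $\pi(\mathcal O)$ for an orbit $\mathcal O$ of the $(\cong\SO^\circ(k,1))$-subgroup of frames adapted to $P$, and $P$ is properly immersed iff $\mathcal O$ is closed (a properly immersed submanifold being closed). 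If $P$ is properly immersed and meets $\core M$, then by Theorem \ref{geo-intro1}(1) $P=\cl P$ is convex cocompact with Fuchsian ends, hence contains a bounded complete geodesic; basing $\mathcal O$ at a frame $x$ on such a geodesic gives $\mathcal O=xH'$ with $x\in\RFM$, $A<H'$, and $H'$ conjugate to $H_k$. Since a copy of $\SO^\circ(k,1)$ containing $A$ is determined by its intersection with $N$, and there is a compact subgroup $W_0<G$ centralizing $A$, preserving $\RFM$, and acting transitively on the $(k-1)$-dimensional connected subgroups of $N$, one finds $w\in W_0$ with $wH'w^{-1}=H_k$; then $z:=xw^{-1}$ yields a \emph{closed $H_k$-orbit} $zH_k=\mathcal O w^{-1}$ meeting $\RFM$, with $\op{Stab}_{H_k}(z)\cong\pi_1(P)$ a finitely generated Zariski dense subgroup of $H_k$, $\pi(zH_k)=P$, and $zH_k\cap\RFPM$ projecting onto $P$.

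\textbf{Part (1).} By the subsequence principle it suffices to treat a subsequence with $\dim P_i=k$ constant. The dictionary produces closed orbits $z_iH_k$ meeting $\RFM$, with $\op{Stab}_{H_k}(z_i)$ Zariski dense and $\pi(z_iH_k)=P_i$. Taking $U:=U_k$ we have $H_k\in\cal L_U\subset\cal Q_U$, so these are closed orbits of a member of $\cal Q_U$ meeting $\RFM$, and I would invoke Theorem \ref{mtp}(3). Its hypothesis requires ruling out that an infinite subsequence lies in $y_0L_0D$ with $y_0L_0$ a proper closed orbit, $L_0=H(\widehat U)C\in\cal L_U$, and $D\subset\op N(U)$ compact. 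If it did, then, since $z_iH_k$ is a closed orbit with Zariski dense stabilizer and so cannot be spread over a positive-dimensional family of $L_0$-cosets, one gets $z_iH_k\subset y_0L_0d_i$ for single $d_i\in D$; projecting, each $P_i$ lies in the fixed geodesic plane $Q_0:=\pi(y_0H(\widehat U))$ thickened by the bounded amount coming from $C$ and from $d_i\in\op N(U)$. Since $\dim Q_0\ge\dim U+1=k$: if $\dim Q_0=k$ then all $P_i=Q_0$, contradicting distinctness; if $\dim Q_0>k$ then $Q_0$ is a strictly larger properly immersed geodesic plane containing $P_i$, contradicting maximality. So Theorem \ref{mtp}(3) applies, $z_iH_k\cap\RFPM\to\RFPM$, and applying the proper continuous map $\pi$ — with $\pi(z_iH_k\cap\RFPM)=P_i$ and $\pi(\RFPM)=M$ — gives $\lim_iP_i=M$.

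\textbf{Parts (2) and (3).} For (2): a properly immersed geodesic $k$-plane $P$ ($k\ge 2$) meeting $\core M$ gives, as above, a conjugate $H'=gH_kg^{-1}$ with $\Delta:=\Gamma\cap H'\cong\pi_1(P)$ a finitely generated Zariski dense subgroup of $H'$ (finitely generated because the convex core of $P$ is compact: $P$ is properly immersed into the compact set $\core M$ and $\hull(\Lambda(\Delta))\subset\hull(\Lambda)$). Then $H'$ is the identity component of the Zariski closure of $\Delta$, and $P$ is recovered from $\Delta$ up to the finitely many choices coming from the component group of $\op N_G(H_k)$ (the centralizer $Z_G(H_k)$ being compact and acting only on the normal frame). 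As $\Gamma$ is countable it has only countably many finitely generated subgroups, and $k$ is bounded by $d$; hence countably many such $P$. For (3): assume $\op{Vol}(M)=\infty$, i.e.\ $\core M\subsetneq M$. A bounded geodesic plane meets $\core M$, so by Theorem \ref{geo-intro1}(1) its closure is convex cocompact with Fuchsian ends and, being compact, has empty ends, hence is a closed geodesic plane sitting inside $\core M$ (its limit set lying in $\Lambda$). If there were infinitely many distinct maximal bounded such planes, passing to a constant-dimension subsequence (with $k\le d-1$, as a $d$-plane is the unbounded $M$) gives distinct compact orbits $z_iH_k$ in the compact frame bundle over $\core M$; the non-concentration hypothesis of Theorem \ref{mtp}(3) holds exactly as in (1), except that when $\dim Q_0>k$ the plane $Q_0$ is now bounded (because $z_iH_k$ is) and hence a strictly larger \emph{bounded} geodesic plane containing $P_i$, again contradicting maximality among bounded planes. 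Then $\lim_iP_i=M$, impossible since every $P_i\subset\core M\subsetneq M$; so only finitely many exist.

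\textbf{Main obstacle.} The heart of both (1) and (3) is the non-concentration step — showing that maximality of the $P_i$ is genuinely incompatible with an infinite subsequence of the closed orbits $z_iH_k$ being trapped in a thickened proper closed orbit $y_0L_0D$ with $D\subset\op N(U)$ compact. Making this precise demands a careful analysis of $\cal L_U$ and of the normalizer $\op N(U)$ to confirm that such trapping really localizes all the $P_i$ inside one proper convex cocompact geodesic plane with Fuchsian ends, which then cannot absorb infinitely many distinct maximal planes; the dichotomy $\dim Q_0=k$ versus $\dim Q_0>k$ and the precise handling of the compact factors $C$ and $D$ is where the real work lies.
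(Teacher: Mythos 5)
Your overall route coincides with the paper's: pass to the frame bundle, represent the planes by closed orbits of groups in $\cal Q_U$, invoke Theorem \ref{mtp}(3), and verify its non-concentration hypothesis by playing containment in a larger closed orbit against maximality. However, as written there are genuine gaps, and they sit exactly where you defer the work. First, the dictionary is off: proper immersion of $P_i$ gives closedness of the $H'(U)=H(U)\op{C}(H(U))$-orbit (Proposition \ref{proper}), not of the $H(U)$-orbit, and $\op{Stab}_{H(U)}(z_i)$ need not be Zariski dense in $H(U)$ — it can even be trivial — because elements of $\pi_1(P_i)$ may carry nontrivial rotational holonomy in the normal directions. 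So your claims ``$P$ is properly immersed iff the $\SO^\circ(k,1)$-orbit is closed'' and ``$\op{Stab}_{H_k}(z)\cong\pi_1(P)$ is Zariski dense'' fail in general, and the hypotheses of Theorem \ref{mtp}(3) are not verified for the orbits you feed in. The paper fixes this with Proposition \ref{count}: one obtains a closed orbit $x_iL_i$ with $L_i=H(U)C_i\in\cal L_U$ and $P_i=\pi(x_iL_i)$, and these are the orbits to which Theorem \ref{mtp}(3) is applied.

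Second, the heart of the non-concentration check is missing. Your reduction to a single translate, $x_iL_i\subset y_0L_0d_i$ with $d_i\in D$, is asserted heuristically (this is precisely Lemma \ref{lem.inc}, which requires a real argument), and from it you only conclude that $P_i$ lies in a \emph{bounded thickening} of $Q_0=\pi(y_0L_0)$; yet your dichotomy needs the exact containment $P_i\subset Q_0$, and you yourself flag this as the ``main obstacle.'' The paper closes it concretely: since $\op{N}(U)\subset L_0(L_0\cap N)^\perp M$ one may take $d_i\in (L_0\cap N)^\perp M$, and $A\subset L_i\subset d_i^{-1}L_0d_i$ then forces $d_i\in M$, so $\pi(y_0L_0d_i)=\pi(y_0L_0)$ on the nose; maximality then makes $P_i$ constant, a contradiction. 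Third, in part (3) your parenthetical that $Q_0$ is bounded ``because $z_iH_k$ is'' is a non sequitur: by Proposition \ref{SAVE}, a noncompact closed orbit of a group in $\cal L_U$ with $\dim(\widehat L\cap N)\ge 2$ always contains compact orbits, so compactness of the small orbit says nothing about the big one. The paper sidesteps this entirely: (3) follows from (1) because a bounded properly immersed plane lies in $\core M$ (its orbit lies over $\core M$), and $\core M$ is a proper closed subset of $M$ when $\op{Vol}(M)=\infty$, contradicting $\lim_i P_i=M$. Part (2) is fine; it is the countability statement already recorded as Corollary \ref{rcount}, proved essentially as you sketch.
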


In fact, Theorem \ref{geo-intro2}(3) holds for any convex cocompact hyperbolic $d$-manifold (see Remark 
 \ref{nochange}).

A  $k$-horosphere in $\bH^d$ is a Euclidean sphere of dimension $k$ which is tangent to a point in $\mathbb S^{d-1}$. A $k$-horosphere in $\M$ is simply the image of a $k$-horosphere in $\bH^d$ under the covering map $\bH^d\to \M=\Gamma\ba \bH^d$.

\begin{thm} \label{geo-intro3} Let $\chi$ be a $k$-horosphere of $\M$ for $k\ge 1$.
Then either
\begin{enumerate}
\item $\chi$ is properly immersed; or
\item
$\cl{\chi}$ is
 a properly immersed $m$-dimensional submanifold, parallel to a convex cocompact geodesic $m$-plane of $\M$ with Fuchsian ends for some $m\ge k+1$.
\end{enumerate}
\end{thm}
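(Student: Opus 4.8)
The plan is to transfer the problem to the frame bundle $\F M=\Gamma\ba G$ and invoke Theorem~\ref{mtp}(2). A $k$-horosphere $\chi$ of $M$ is the base-point image, under the projection $\pi\colon\Gamma\ba G\to M$, of an orbit $xU$, where $U<N$ is a $k$-dimensional subgroup and $x=\Gamma g$ has $g^{+}$ equal to the tangent point $\tilde\xi$ of a lift of $\chi$ to $\bH^{d}$; this uses that $G$ acts transitively on the relevant (point, frame, tangent-horosphere) data. Since the fibers of $\pi$ are the compact group $K=\SO(d)$, the map $\pi$ is proper, so $\cl{\chi}=\pi(\cl{xU})$, and it suffices to describe $\cl{xU}$. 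Note that $x\in\RFPM$ exactly when $g^{+}\in\Lambda$ (the forward ray along $g^{+}$ is bounded in $M$ iff $g^{+}\in\Lambda$); since $\Lambda$ is $\Gamma$-invariant, this is a property of $\chi$ alone.

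If the tangent point of $\chi$ does not lie in $\Lambda$, then every lift has $x\notin\RFPM$, so $xU$ is closed in $\Gamma\ba G$ by the elementary observation recorded in the introduction. The orbit map $U\to xU$ is then a proper embedding with discrete stabilizer, and $\pi$ restricted to $xU$ is an immersion (no $U$-direction is tangent to a $K$-fiber), so $\chi=\pi(xU)$ is properly immersed — this is alternative~(1).

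Otherwise every lift $x$ lies in $\RFPM$, and Theorem~\ref{mtp}(2) gives $\cl{xU}=xL\cap\RFPM$ for a closed orbit $xL$ of $L=v\hat Lv^{-1}\in\mathcal{Q}_{U}$, with $v\in N$ and $\hat L=H(\widehat U)C\in\mathcal{L}_{U}$, $U<\widehat U<N$. Here $H(\widehat U)\cong\SO^{\circ}(\widehat k+1,1)$ with $\widehat k=\dim\widehat U\ge k$, so its centralizer in $G$ — which contains $C$ — is compact, hence $C$ is compact. To compute $\cl{\chi}=\pi(xL\cap\RFPM)$, I would use that $\RFPM$ is invariant under right translation by $N$ to write $xL\cap\RFPM=(x'\hat L\cap\RFPM)v^{-1}$ with $x'=xv\in\RFPM$. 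Since $A\subset H(\widehat U)$ commutes with the compact group $C$, the set $x'\hat L\cap\RFPM$ is a full $C$-bundle over the $\op{RF}_{+}$-part of the sub-frame-bundle $\Delta\ba H(\widehat U)$, where $\Delta$ is the image of $\op{Stab}_{\hat L}(x')$ in $H(\widehat U)$ — discrete, since $C$ is compact, and Zariski dense, hence non-elementary. As this $\op{RF}_{+}$-part surjects onto the geodesic plane $\Delta\ba\bH^{\widehat k+1}$, projecting to $M$ identifies $\cl{\chi}$ as a submanifold parallel to the geodesic $(\widehat k+1)$-plane $P$ carried by $H(\widehat U)$. Finally, non-elementarity of $\Delta$ makes $\Lambda_{\Delta}\subset\Lambda$ infinite, so $P$ meets $\core M$; by Theorem~\ref{geo-intro1}(1), $P$ is a properly immersed convex cocompact geodesic $(\widehat k+1)$-plane with Fuchsian ends, and $\cl{\chi}$ — parallel to $P$ and closed, since $\pi$ is proper — realizes alternative~(2) with $m=\widehat k+1\ge k+1$.

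The main obstacle is the geometric identification in the last step: one must rule out that the compact factor $C$ together with the unipotent conjugation $v$ sweeps $P$ into a higher-dimensional partial tube, i.e. one must show that $\cl{\chi}$ has dimension exactly $\widehat k+1$ — equivalently, that the identity component of $C$ fixes the normal direction determined by $v$. This alignment should be extracted from the inductive description of $U$-orbit closures underlying Theorem~\ref{mtp}(2). A lesser point requiring care is the passage from the closed $H(\widehat U)$-orbit to a convex cocompact geodesic plane with Fuchsian ends, which goes through the $\RFM$-level statements behind Theorem~\ref{geo-intro1}(1).
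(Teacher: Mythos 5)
Your overall route is the one the paper intends: realize $\chi=\pi(xU)$ for $U\cong\br^k$ in $N$, use properness of $\pi$ to get $\cl{\chi}=\pi(\cl{xU})$, dispose of the case $g^+\notin\Lambda$ (where $xU$ is closed and $\chi$ is properly immersed), and for $x\in\RFPM$ invoke Theorem \ref{mtp}(2); the identification of the underlying plane as a properly immersed convex cocompact plane with Fuchsian ends is then Proposition \ref{PPP} (cleaner than routing through Theorem \ref{geo-intro1}). All of that part of your write-up is sound.

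However, the step you flag at the end is a genuine gap, and it is exactly where Theorem \ref{geo-intro3} has content beyond a restatement of Theorem \ref{mtp}(2). Writing $\cl{xU}=(x'\widehat L\cap\RFPM)\,v^{-1}$ with $\widehat L=H(\widehat U)C$, $x'=xv$, note that $C\subset\op{C}(H(\widehat U))\subset M$ acts on $\widehat U^{\perp}$ by rotations and need not commute with $v$; since $c\in K$,
\[
\pi\big(xL\cap\RFPM\big)=\bigcup_{c\in C}\pi\big((x'H(\widehat U)\cap\RFPM)\,cv^{-1}c^{-1}\big),
\]
and each set in this union is the \emph{entire} parallel copy of $P=\pi(x'H(\widehat U))$ in the normal direction $\op{Ad}(c)(v^{-1})$: the stabilizer in $H(\widehat U)$ of a point off the plane is the full point stabilizer $K\cap H(\widehat U)$ (it acts trivially on the normal space) and it moves the forward endpoint over all of $\partial \tilde S$, which meets $\Lambda$. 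So unless one shows that $v$ may be chosen to be centralized by $C$ (equivalently, that $C^{\circ}$ fixes the normal direction of $v$), the projection could a priori be a positive-dimensional family of parallel copies --- a partial tube around $P$ of dimension strictly larger than $\dim\widehat U+1$, which is not parallel to any geodesic plane, so conclusion (2) would fail. This compatibility is not a formal consequence of the statement of Theorem \ref{mtp}(2): the theorem only records $L=v\widehat Lv^{-1}\in\mathcal Q_U$ for some $v\in N$, with no relation imposed between $v$ and the compact factor $C$, and $C$ can in principle be infinite (this is why $\mathcal L_U$ carries it); note also that the issue is invisible when $d=3$, where $\op{C}(H(U))$ is trivial. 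Filling it requires extracting finer information from the proof of Theorem \ref{mainth}(2) (or from Theorem \ref{tm} together with an analysis of the rotation group $C$), which your proposal only gestures at; as written, case (2) is not established.
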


 By abuse of notation, let $\pi$ denote both base point projection maps
$G\to \bH^d$  and $\Gamma\ba G\to \M$ where we consider an element $g\in G$ as an oriented frame
over $\bH^d$. Let $H'=\SO^\circ (k+1,1)\SO(d-k-1)$, $1\le k\le d-2$. The quotient space $G/H'$ parametrizes all oriented $k$-spheres in $\mathbb S^{d-1}$, which we denote by $\mathcal C^{k}$.
For each $H'$-orbit $gH'\subset G$,  the image $\pi(gH')\subset \bH^d$ is an oriented geodesic $(k+1)$-plane
 and the boundary $\partial(\pi(gH'))\subset \mathbb S^{d-1}$  is an oriented  $k$-sphere.  Passing to the quotient space $\Gamma\ba G$,
 this gives bijections among:
\begin{enumerate}
\item
the space of all closed $H'$-orbits  $xH'\subset \Gamma\backslash G$ for $x\in \RFM$;
\item
the space of all oriented properly immersed geodesic $(k+1)$-planes $P$ in $\M$ intersecting $\core {\M}$;
\item
the space of all closed $\Gamma$-orbits of oriented $k$-spheres $C\in \mathcal C^{k}$ with $\# C\cap \Lambda \ge 2$
\end{enumerate}
If $U:=H'\cap N$, then any $k$-horosphere in $\M$
 is given by $\pi(xU)$ for some $x\in \Gamma\ba G$. 
 
 In view of these correspondences, Theorems \ref{geo-intro1}, \ref{geo-intro2} and \ref{geo-intro3} follow from Theorem \ref{mtp}, Theorem \ref{b-ratner}, and Corollary \ref{rcount}.

 We also obtain the following  description on $\Gamma$-orbits of spheres of any positive dimension. 
 
  \begin{cor}\label{cor.sphere interpretation}\label{sphere} Let $1\le k\le d-2$.
\begin{enumerate}
\item 
Let $C\in \mathcal C^{k}$ with
$\# C\cap \Lambda \ge 2$.
Then there exists a sphere $S\in \cal C^m$
such that $\Gamma S$ is closed in $\cal C^m$ and
$$\cl{\Ga C}=\{D\in \cal C^k :D\cap \Lambda \ne \emptyset,\; D \subset  \Gamma S\}.$$

\item Let $C_i\in \cal C^k$ be an infinite sequence of spheres  with $\#C_i\cap \Lambda \ge 2$ such that
$\Gamma C_i $ is closed in  $\mathcal C^k$. 
Assume that $\Ga C_i$ is maximal in the sense that there is no 
  proper sphere $S\subset \bb S^{d-1}$ which properly contains $C_i$ and that $\Ga S$ is closed.
Then as $i\to\infty$, 
$$\lim_{i\to \infty} \Ga C_i= \{D\in \cal C^k :D\cap \Lambda \ne \emptyset\}$$
where the limit is taken in the Hausdorff topology on the space of all closed subsets in $\mathcal C^k$.
\item If $\Lambda \ne \bb S^{d-1}$, there are only finitely many maximal closed $\Gamma$-orbits of spheres of positive dimension
contained in $\Lambda$. 
\end{enumerate}
\end{cor}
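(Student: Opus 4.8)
The plan is to transport all three assertions to statements about orbit closures in $\Ga\ba G$ via the dictionary recorded just above the corollary, and then invoke Theorem \ref{mtp}. Throughout, fix $1\le k\le d-2$, set $H'=\SO^\circ(k+1,1)\SO(d-k-1)$ and $U=H'\cap N\simeq\br^{k}$, so that $H(U)=\SO^\circ(k+1,1)$ and $H'=H(U)C_0$, where $C_0=\SO(d-k-1)$ is the compact centralizer of $H(U)$ in $G$; in particular $H'$ is normalized by $A$ and is of the type handled by Corollary \ref{mt2}. Using the identification $\mathcal C^{k}\simeq G/H'$ and the open $H'$-bundle map $q\colon G\to\mathcal C^{k}$, $q(g)=\partial\pi(gH')$, I record three facts to be used repeatedly: (i) $\ov{\Ga C}=q\big(\ov{\Ga g H'}\big)$ for any $g$ with $q(g)=C$, since $q$ is open with fibres the $H'$-orbits; (ii) $q(g)\cap\La\ne\emptyset$ iff $\Ga gH'$ meets $F_{H(U)}$, and $\#q(g)\cap\La\ge 2$ iff $\Ga g$ has a representative in $\RFM$ --- both obtained by unwinding that a frame in $\RFPM$, resp.\ $\RFM$, over the $(k+1)$-plane $\pi(gH')$ exists precisely when $\partial\pi(gH')$ meets $\La$ in at least one, resp.\ two, points; (iii) $C_0$ commutes with $A$, hence preserves $\RFM$ and $\RFPM$, so $\ov{xH'}=\ov{xH(U)}\,C_0$ for every $x\in\Ga\ba G$.

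For part (1), pick $g$ with $q(g)=C$ and, using $\#C\cap\La\ge 2$ and (ii), arrange that $x:=\Ga g\in\RFM$; write $g_0$ for a fixed lift of $x$. By (iii) and Theorem \ref{mtp}(1), $\ov{xH'}=(xL\cap F_{H(U)})\,C_0$ for some $L=H(\widehat U)C'\in\mathcal L_U$ with $U\le\widehat U\le N$, $H(\widehat U)\simeq\SO^\circ(m+1,1)$ for some $m\ge k$, and $C'$ a compact subgroup of the centralizer $\SO(d-m-1)$ of $H(\widehat U)$. Since $C'\subset\SO(d-m-1)\subset C_0$, and inside the $(m+1)$-plane $\pi(g_0H(\widehat U))$ the rotations carrying the $(k+1)$-plane $\pi(g_0H(U))$ around already lie in $H(\widehat U)$, one checks $g_0 L C_0 H'=g_0H(\widehat U)H'$; applying $q$ and reading off (ii), the spheres represented by $\ov{\Ga gH'}$ are precisely the $\Ga$-translates of the $k$-subspheres $D$ of the $m$-sphere $S:=\partial\pi(g_0H(\widehat U))$ with $D\cap\La\ne\emptyset$, and $\Ga S$ is closed in $\mathcal C^{m}$ because $xL$ is closed and $C'$ is compact. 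That is the asserted formula for $\ov{\Ga C}$; the reverse inclusion is automatic, as Theorem \ref{mtp}(1) is an equality.

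For part (2), each $C_i$ with $\Ga C_i$ closed and $\#C_i\cap\La\ge 2$ corresponds to a closed orbit $x_iH'$ with $x_i\in\RFM$ and $H'\in\mathcal Q_U$, so I would apply Theorem \ref{mtp}(3) to the sequence $x_iH'$. The maximality hypothesis on the $\Ga C_i$ should be shown to be exactly the hypothesis of Theorem \ref{mtp}(3): if infinitely many $x_iH'$ lay in a set $y_0L_0D$ with $y_0L_0$ a closed orbit of $L_0\in\mathcal L_U$ of dimension $<\dim G$ and $D$ compact in $\op N(U)$, then writing $L_0=H(\widehat U_0)C_0'$, the sphere $S_0:=\partial\pi(y_0H(\widehat U_0))$ is proper --- since $\dim L_0<\dim G$ forces $\dim\widehat U_0\le d-2$ --- and $\Ga S_0$ is closed; unwinding $q$ then packs infinitely many distinct closed orbits $\Ga C_i$ into $\Ga$-translates (up to the compact ambiguity of $D$) of subspheres of $S_0$, which, given that each $\Ga C_i$ is maximal, is impossible. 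Theorem \ref{mtp}(3) then gives $\lim_i x_iH'\cap\RFPM=\RFPM$, and pushing forward by $q$, using that $q$ carries the $H'$-saturation of $\RFPM$ onto $\{D\in\mathcal C^{k}:D\cap\La\ne\emptyset\}$, yields the stated limit. For part (3): if $\La\ne\mathbb S^{d-1}$ then $\op{Vol}(M)=\infty$, and a positive-dimensional sphere $S\subset\La$ bounds a geodesic plane of dimension $\ge 2$ contained in $\hull(\La)$, hence bounded in $M$; a maximal closed $\Ga$-orbit of such a sphere corresponds, via the dictionary and Theorem \ref{mtp}(1), to a maximal properly immersed bounded geodesic plane of dimension $\ge 2$, and there are only finitely many of these by Theorem \ref{geo-intro2}(3) (equivalently Corollary \ref{rcount}).

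The main obstacle I expect is the bookkeeping in descending from $\Ga\ba G$ to $\mathcal C^{k}=\Ga\ba G/H'$: showing that the relatively homogeneous closures of Theorem \ref{mtp} descend correctly through this a priori non-Hausdorff double quotient; that the compact factor $C_0=\SO(d-k-1)$ is absorbed cleanly, so that one gets exactly the $k$-subspheres of a single $m$-sphere with no spurious extra $\Ga$-orbits; and --- most delicately --- that the trichotomy ``$\#C\cap\La\ge 2$'' / ``$C\cap\La\ne\emptyset$'' / ``$C\subset\La$'' is faithfully matched with membership in $\RFM$ / $F_{H(U)}$ / (the image of) $\core M$, including the reformulation of the maximality hypothesis needed in part (2). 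No dynamical input beyond Theorem \ref{mtp}, Theorem \ref{b-ratner}, and Corollary \ref{rcount} is required; the corollary is entirely a matter of transporting those results through the parametrization of spheres.
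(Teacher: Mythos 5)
Your route is the paper's route: Corollary \ref{sphere} is obtained by transporting Theorem \ref{mtp} (together with Corollary \ref{rcount} and Remark \ref{nochange}) through the identification $\mathcal C^{k}\simeq G/H'$, and your parts (1) and (3) carry this out correctly. Two steps in part (2) need tightening. First, you invoke Theorem \ref{mtp}(3) with $L_i=H'$, but $H'$ is not known to belong to $\mathcal Q_U$: membership in $\mathcal L_U$ requires some closed orbit whose stabilizer is Zariski dense in $H'$, and the orbits $x_iH'$ need not supply one. The correct move is to pass to the closed orbits $x_iL_i$ with $L_i=H(U)C_i\in\mathcal L_U$ furnished by Proposition \ref{count}; since $L_i\subset H'$, their images under $q$ still lie in $\Gamma C_i$, so your liminf/limsup computation goes through unchanged. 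Second, the assertion that containment of infinitely many $x_iL_i$ in $y_0L_0D$ ``packs the $\Gamma C_i$ into translates of subspheres of $S_0$'' is precisely where Lemma \ref{lem.inc} is needed, exactly as in the paper's proof of Theorem \ref{geo-intro2}(1): it yields a single $d_i\in D$ with $x_iL_i\subset y_0L_0d_i$ and $L_i\subset d_i^{-1}L_0d_i$, whence $d_iH(U)d_i^{-1}\subset H(\widehat U_0)$ and therefore $C_i\subset \gamma S_0$ for some $\gamma\in\Gamma$; maximality of $\Gamma C_i$ (together with the distinctness of these orbits, to exclude $C_i=\gamma S_0$ for infinitely many $i$) then gives the contradiction. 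With these two patches your argument is complete and coincides with the paper's intended deduction.
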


\begin{Rmk}

\begin{enumerate}
\item  The main results of this paper for $d=3$ were proved  by McMullen, Mohammadi, and Oh  (\cite{MMO1}, \cite{MMO2}). We refer to \cite{MMO1} for counterexamples to Theorem \ref{mtp} for a certain family of quasi-Fuchsian $3$-manifolds.
\item A convex cocompact hyperbolic $3$-manifold with Fuchsian ends (which was referred to as {\it a rigid acylindrical hyperbolic $3$-manifold} in \cite{MMO1})
 has a huge deformation space parametrized by the product of the Teichmuller spaces of the boundary components of $\core {\M}$ (cf. \cite{Ma}).  Any
convex cocompact acylindrical hyperbolic $3$-manifold is a quasi-conformal conjugation of a rigid acylindrical hyperbolic $3$-manifold \cite{Mc}.
An analogue of Theorem \ref{mtp}(1) was obtained for all convex cocompact acylindrical hyperbolic $3$-manifolds in \cite{MMO3} 
and  for all geometrically finite acylindrical hyperbolic $3$-manifolds in \cite{BO}.

\item For $d\ge 4$, Kerckhoff and Storm showed that a convex cocompact
hyperbolic manifold $\M=\Gamma\ba \bH^d$ with non-empty Fuchsian ends does not allow any non-trivial deformation, in the sense that
  the representation of $\Gamma$
into $G$ is infinitesimally rigid
\cite{KS}.
\end{enumerate}
\end{Rmk}

\begin{Rmk}
  We discuss an implication of Theorem \ref{mtp}(2)
on the classification question on $U$-invariant ergodic locally finite measures on $\RFPM$.
There exists a canonical geometric $U$-invariant measure on each closed orbit $xL$ in Theorem \ref{mtp}(2):
we write $L=v^{-1} H(\widehat U)Cv $. As $v$ centralizes $U$, let's assume $v=e$ without loss of generality.
 Denoting by $p: L\to H(\widehat U)$
 the canonical projection, the subgroup $p(\op{Stab}_L(x))$
 is a convex cocompact Zariski dense subgroup of $H(\widehat U)$, and hence
there exists a unique  $\widehat U$-invariant locally finite
measure on $ p(\op{Stab}_L(x)) \ba H(\widehat U) $,  called the Burger-Roblin measure (\cite{Bu}, \cite{Ro},
\cite{OS}, \cite{Win}).
 Now its $C$-invariant lift  to $ (L\cap \op{Stab}_L(x))\ba L$ defines a
unique $\widehat U C$-invariant locally finite measure, say $m^{\BR}_{xL}$, 
 whose support is equal to $xL\cap \RFPM$.
Moreover  $m^{\BR}_{xL}$ is $U$-ergodic (cf. section \ref{s:err}).
A natural question is the following:
\begin{multline*}\text{ \it is every ergodic $U$-invariant locally finite Borel measure in $\RFPM$} \\ \text{\it
proportional to some $m^{\BR}_{xL}$?}\end{multline*}
 An affirmative answer would provide an analogue of Ratner's measure classification \cite{Ra1} in this setup.
 Theorem \ref{mtp}(2) implies that the answer is yes, at least in terms of the support of the measure.
\end{Rmk}

\noindent{\bf Acknowledgement} We would like to thank Nimish Shah for making his unpublished notes, containing
 most of his proof of Theorems \ref{mtp}(1) and (2)  for the finite volume case, available to us. We would also like to thank Gregory Margulis, Curt McMullen, and Amir Mohammadi
for useful conversations. Finally, Oh would like to thank  Joy Kim  for her encouragement.

\section{Outline of the proof}
We will explain the strategy of our proof of Theorem \ref{mtp} with an emphasis on the difference between finite and infinite volume case
and the difference between dimension $3$ and higher case.

\subsection*{Thick recurrence of unipotent flows} Let $U_0=\{u_t:t\in \br\}$ be a one-parameter subgroup of $N$.
The main obstacle of carrying out unipotent dynamics in a homogeneous space of {\it  infinite }volume
is the scarcity of recurrence of unipotent flow.
In a compact homogeneous space, every $U_0$-orbit stays in a compact set for the obvious reason.  Already in a {\it noncompact} homogeneous space of finite volume, understanding the recurrence of $U_0$-orbit is a non-trivial issue. Margulis showed that
any $U_0$-orbit is recurrent to a compact subset \cite{Ma1}, and Dani-Margulis  showed that
for any $x\in \Gamma\ba G$, and for any $\epsilon>0$, there exists a compact subset $\Omega\subset \Gamma\ba G$ such that
$$\ell \{t\in [0, T]: xu_t\in \Omega\} \ge (1-\epsilon) T$$
for all large $T\gg 1$, where $\ell$ denotes the Lebesgue measure on $\br$ \cite{DM2}.
This non-divergence of unipotent flows is an important ingredient of Ratner's orbit closure theorem \cite{Ra2}.

In contrast, when $\Gamma\ba G$ has infinite volume, for any compact subset $\Omega\subset \Gamma\ba G$, and for almost all $x$
(with respect to any Borel measure $\mu$ on $\br$),
$$\mu\{t\in [0, T]: xu_t\in \Omega\} =o(T)$$
for all $T\gg 1$ \cite{Aa}.

 Nonetheless, the pivotal reason that we can work
with convex cocompact hyperbolic manifolds of non-empty Fuchsian ends
 is the  following {\it thick} recurrence property that they possess: there exists $k>1$, depending only on the systole of the double of $\core {\M}$, such that
for any $x\in \RFM$,
the return time $$\mathsf T (x):=\{t\in \br: xu_t\in \RFM\}$$ is $k$-thick, in the sense that
for any $\la > 0$, 
\begin{equation}\label{window} \mathsf T (x)\cap \left( [-k\la,-\la]\cup [\la, k\la]\right)\ne \emptyset .\end{equation}

This recurrence property was first observed by McMullen, Mohammadi and Oh \cite{MMO1}  in the case of dimension $3$
 in order to get an additional invariance of a relative $U_0$-minimal subset with respect to $\RFM$
by studying the polynomial divergence property of  $U_0$-orbits of two nearby $\RFM$-points.

\subsection*{Beyond $d=3$} In a higher dimensional case, 
the possible presence of closed orbits of intermediate subgroups introduces a variety of serious  hurdles. 
Roughly speaking, calling the collection of all such closed orbits  as the singular set and its complement as the generic set,
one of the main new ingredients of this paper is the {\it avoidance of the singular set} along the $k$-thick recurrence of $U_0$-orbits to $\RFM$ for a sequence of $\RFM$-points limiting at a generic point.  Its analogue in the finite volume case was proved by Dani-Margulis \cite{DM} and also independently by Shah \cite{Sh1} based on the
 {\it linearization methods}.

\subsection*{Road map for induction} 
Roughly speaking,\footnote{To be precise, we need to carry out  induction on the co-dimension of $U$ in $\widehat L\cap N$
whenever $xU$ is contained in a closed orbit $x_0\widehat L$ for some $\widehat L\in \mathcal L_U$  as formulated  in Theorem \ref{mainth}.} 
Theorem \ref{mtp} is proved by induction on the co-dimension of $U$ in $N$. For each $i=1,2,3$, let us say that $(i)_m$ holds, if Theorem \ref{mtp}$(i)$ is true for all $U$ satisfying $\op{co-dim}_N(U)\leq m$.
We  show that
the validity of  $(2)_m$ and $(3)_m$ implies that of $(1)_{m+1}$,  
the validity of $(1)_{m+1}$, $(2)_m$, and $(3)_m$ implies that of $(2)_{m+1}$ and
the validity of $(1)_{m+1}$, $(2)_{m+1} $, and $(3)_m$ implies that of $(3)_{m+1}$.
In order to give an outline of the proof of $(1)_{m+1}$, we 
suppose  that $\op{co-dim}_N(U)\leq m+1$. Let 
$$F:= \RFPM \cdot H(U),\;\;  F^*:=\text{Interior}( F),\text{ and }\, \partial F:=F-F^* .$$ 
 
Let $x\in F^*\cap \RFM$, and consider
 $$X:=\overline{xH(U)}\subset F.$$

The strategy in proving $(1)_{m+1}$ for $X$ consists of two steps:
\begin{enumerate}
\item  (Find) Find a closed $L$-orbit $x_0L$ {\it with $x_0\in F^*\cap \RFM$} such that $x_0L\cap F$ contained in $X$ for some $L\in \mathcal L_U$;

\item (Enlarge) If $X\not\subset x_0L \op{C}(H(U))$,\footnote{The notation $\op{C}(S)$ denotes the identity component of the
centralizer of $S$} then  enlarge $x_0L$ to a bigger closed orbit $x_1 \widehat L$
so that $x_1\widehat L\cap F\subset X$ where  $x_1\in F^*\cap \RFM$ and $\widehat L\in\cal L_{\widehat U}$ for some $\widehat U<N$ 
containing $L\cap N$ properly. \end{enumerate}

The enlargement process must end after finitely many steps because of dimension reason.  
Finding a closed orbit as in (1) is based on the study of the relative $U$-minimal sets and  the unipotent blow up argument using the polynomial divergence of $U$-orbits of nearby $\RFM$-points. 
To explain  the enlargement step, suppose that we are
given an intermediate closed $L$-orbit with $x_0L\cap F\subset X$ by the step (1), and a one-parameter subgroup $U_0=\{u_t\}$ of $U$ such that $x_0U_0$ is dense in $x_0L\cap \RFPM$.  
As $L$ is reductive, the Lie algebra of $G$ can be decomposed into the $\op{Ad}(L)$-invariant subspaces 
$\mathfrak l\oplus \mathfrak l^\perp$ where $\mathfrak l$ denotes the Lie algebra of $L$.
Suppose that we could arrange a sequence $x_0g_i \to x_0$ in $ X$ for some $g_i \to e$
such that writing $g_i= \ell_i r_i$ with $\ell_i\in L$ and $r_i \in  \exp (\mathfrak l^\perp)$, 
the following conditions are satisfied:
\begin{itemize}
\item  $x_0\ell_i \in \RFM$;
\item $r_i \notin   \op{N}(U_0) $.
   \end{itemize}
 
Then the $k$-thick return property of $x_0\ell_i\in \RFM$ along $U_0$ yields 
a sequence $u_{t_i}\in U_0 $ such that  $$x_0\ell_i u_{t_i}\to x_1\in \RFM\cap x_0L\;\;\text{ and }\;\; u_{t_i}^{-1} r_iu_{t_i}\to v$$ 
 for some element $v \in N-L$.  This gives us a point 
 $$x_1 v \in X .$$
  \begin{equation}\label{prog} \text{{\it If we could guarantee that 
$x_1$ is a {generic} point  for $U$ in $x_0L$,}}\end{equation} 
  then $\overline{x_1U}$ must be equal to $x_0L\cap \RFPM$ by induction hypothesis $(2)_m$, since 
the co-dimension of $U$ inside $L\cap N$ is at most $m$.
Then  $$\overline{x_1vU}=\overline{x_1U} v= x_0L v \cap \RF_+\M \subset X .$$ 

Using the $A$-invariance of $X$ and the fact that
the double coset $AvA$ contains a one-parameter unipotent
subsemigroup $V^+$, we can put $x_0LV^+ \cap F$ inside $ X$. 
\be \label{star}\text{\it Assuming that $x_0\in F^*\cap \RFPM$,}\ee
 we can promote $V^+$ to a one-parameter subgroup $V$, and
find an orbit of a bigger unipotent subgroup  $\widehat U:=(L\cap N)V$  inside $ X$. This enables us to use the induction hypothesis $(2)_m$ 
to complete the enlargement step. Note that if $x_1$ is not generic for $U$ in $x_0L$, the closure of $x_1U$ may be stuck in a smaller closed orbit inside
$x_0L$, in which case $\overline{x_1U}v$ may not be bigger than $x_0L$ in terms of the dimension, resulting in no progress.

We now explain how we establish \eqref{prog}.\footnote{For the dimension $d=3$,  $L$ is either the entire $\SO^{\circ}(3,1)$ in which case we are done, or
 $L= H(U)=\SO^\circ(2,1)$. In the latter case, \eqref{prog} is automatic as $U$ is a horocyclic subgroup of $L$.}

\subsection*{Avoidance of the singular set along the thick return time} Let $U_0=\{u_t\}$ be a one parameter subgroup of $U$. We denote by $\mathscr{S}(U_0)$
the union of all closed orbits $xL$ where $x\in \RFPM$ and $L\in \mathcal Q_{U_0}$ is a {\it proper} 
subgroup of $G$.  This set is called the {\it singular set} for $U_0$.
Its complement in $\RFPM$
 is denoted by $\mathscr{G}(U_0)$, and called the set of {\it generic} elements of
$U_0$. 
We have
$$\mS(U_0)=\bigcup_{H\in \mH}\Gamma\ba \Gamma X(H, U_0)$$
where $\mH$ is the countable collection of all proper connected closed subgroups $H$ of $G$ containing a unipotent element such that
$\Gamma\ba \Gamma H$ is closed and $H\cap \Gamma$ is Zariski dense in $H$,
and $X(H, U_0):=\{g\in G: gU_0g^{-1}\subset H\}$ (Proposition \ref{sudef}).
We define $\cal E=\cal E_{U_0}$ to be the collection of all subsets of $\mS(U_0)$ which are
of the form
$$\bigcup \Gamma\ba \Gamma H_i D_i \cap \RFM$$
where $H_i\in \mH$ is a finite collection, and $D_i$ is a compact subset of $X(H_i, U_0)$.
The following avoidance theorem is one of the main ingredients of our proof:  let $k$ be given by
\eqref{window} for $\M=\Gamma\ba \bH^d$:

\begin{Thm}[Avoidance theorem] \label{mtt2} 
There exists a sequence of compact subsets $E_1\subset E_2\subset \cdots $ in 
$\cal E$ with $$\mathscr{S}(U_0)\cap \RFM=\bigcup\limits_{j=1}^\infty E_j $$ satisfying the following:
for each $j\in \mathbb N$ and  for any compact subset $F\subset \op{RF}\M- E_{j+1}$, 
there exists an open neighborhood $\cal O_j=\cal O_j(F)$ of $ E_j$ such that for any $x\in F$, 
the following set \begin{equation}\label{omj} 
\{t\in \bb{R} : xu_t\in\op{RF}\M-\cal O_j\}
\end{equation}
is $2k$-thick.
\end{Thm}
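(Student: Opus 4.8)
The plan is to build the exhaustion $E_1 \subset E_2 \subset \cdots$ first, and then prove the thickness statement by a linearization argument combined with the $k$-thick recurrence property \eqref{window} of $\RFM$-points. For the exhaustion: since $\mH$ is countable, enumerate it as $H_1, H_2, \ldots$, and for each $n$ choose an increasing sequence of compact subsets $D_{n,1} \subset D_{n,2} \subset \cdots$ of $X(H_n, U_0)$ exhausting it. Setting $E_j := \bigcup_{n \le j} \Gamma\ba\Gamma H_n D_{n,j} \cap \RFM$ gives compact members of $\cal E$ with $\bigcup_j E_j = \mS(U_0)\cap\RFM$ (compactness of each piece uses that $\Gamma\ba\Gamma H_n$ is closed, $D_{n,j}$ is compact, and $\RFM$ is closed). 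The main content is the second part.

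For the thickness statement, fix $j$ and a compact $F \subset \RFM - E_{j+1}$. The approach is the standard linearization of the singular set associated with the finitely many subgroups $H_1, \ldots, H_{j+1}$ that appear in $E_{j+1}$. For each such $H = H_i$, pass to a linear representation $G \to \GL(V_H)$ with a distinguished point $p_H \in V_H$ whose stabilizer is (the relevant finite-index subgroup of) the normalizer of $H$, so that the set $\Gamma\ba\Gamma H D_i$ is detected by the $\Gamma$-orbit of $p_H$ in $V_H$; the key polynomial fact (as in Dani--Margulis \cite{DM} and Shah \cite{Sh1}) is that $t \mapsto u_t \cdot (\gamma p_H)$ is a polynomial of bounded degree in $t$, so the time spent by $xu_t$ near the tube around $\Gamma\ba\Gamma H D_i$ is controlled: either the orbit of the relevant vector stays small on a long interval — forcing $x$ itself into a slightly larger compact piece of the singular set, hence into $E_{j+1}$, contradicting $x \in F$ — or it escapes any prescribed small neighborhood on a set of times that is syndetic/thick. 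Quantitatively, one chooses the neighborhood $\cal O_j$ of $E_j$ small enough (depending on $F$, via the gap between $E_j$ and $E_{j+1}$ and compactness of $F$) that on any window $[-k\la, -\la] \cup [\la, k\la]$ the polynomial trajectory cannot remain inside the linearized tube over the whole doubled window; combined with \eqref{window} applied to $x \in F \subset \RFM$, which already provides return times to $\RFM$ in $[-k\la,-\la]\cup[\la,k\la]$, one extracts a return time that simultaneously lies in $\RFM - \cal O_j$. Iterating over all dyadic-type scales $\la$ yields $2k$-thickness of the set \eqref{omj}.

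The main obstacle — and the reason the statement is phrased with the \emph{pair} $(E_j, E_{j+1})$ rather than a single set — is the uniformity of the choice of $\cal O_j$ over all $x \in F$ and all scales $\la$ simultaneously. The polynomial-divergence estimates give, for each individual $H_i$ and each $x$, a neighborhood on which the orbit escapes; one must compactify this over the finitely many $H_i$'s in $E_{j+1}$ and over $x \in F$, which is where the hypothesis $F \cap E_{j+1} = \emptyset$ is essential: it guarantees a definite distance from $F$ to the linearized tubes over the $H_i$, so the escape neighborhoods can be taken uniform. A secondary technical point is handling the interaction of the linear-algebraic picture with the constraint that the return point must land in $\RFM$ and not merely in $\Gamma\ba G$; here the thick-recurrence mechanism \eqref{window} (valid precisely because $M$ has Fuchsian ends) does the work, and one needs to interleave the two thick sets — the $\RFM$-return times from \eqref{window} and the singular-set-avoidance times from linearization — which costs the factor $2$, turning $k$-thickness into $2k$-thickness. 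I expect the bookkeeping of these two interleaved thick sets, together with the uniform choice of $\cal O_j$, to be the crux; the polynomial estimates themselves are by now routine and can be quoted from \cite{DM}, \cite{Sh1}.
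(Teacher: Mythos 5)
Your proposal reproduces the finite-volume Dani--Margulis scheme (linearization, polynomial escape, compactness for uniformity) and treats the passage to $2k$-thickness as routine bookkeeping, but that passage is precisely where the paper's new work lies, and your mechanism for it does not hold up. In the finite-volume case the escape estimate is a Lebesgue-measure bound on the time spent in $\cal O_j$ inside $[0,T]$, and the complement is automatically most of $[0,T]$; here the return-time set $\mathsf T(x)=\{t: xu_t\in\op{RF}M\}$ can have measure zero, so a measure bound on the bad times says nothing about its intersection with $\mathsf T(x)$. Your claim that one "interleaves the two thick sets" at the cost of a factor $2$ is not a valid argument: the set of singular-set-avoidance times is not a thick set in any usable sense, and in general the intersection of two $k$-thick sets need not be thick at all. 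In the paper the factor $2$ has a completely different origin: one encodes the bad times as families of triples $(I(q),J^*(q),J'(q))$ of intervals which are $\beta_0$-regular (via the $(C,\alpha)$-good property, Lemma \ref{ca} and Proposition \ref{lem.relsize}) and $\mathsf T(x)$-multiplicity free, and then a purely combinatorial \emph{inductive search lemma} (Lemma \ref{lem.search}, packaged as Proposition \ref{prop.2kthick}) shows that $\mathsf T(x)-J'(\cal X)$ is $2k$-thick: starting from a return time trapped in some $J'$, one repeatedly uses the \emph{global} $k$-thickness of $\mathsf T(x)$ at that point together with $\beta$-regularity to jump to a nearby return time escaping that interval, the process terminating in at most $2^n$ steps with total multiplicative drift $(1+\epsilon)^{2^n-1}\le 2$. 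Nothing in your outline produces, at every scale $\la$, a point of $\mathsf T(x)\cap(\pm[\la,2k\la])$ outside \emph{all} the bad intervals.

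The second missing idea is what makes that search terminate: the multiplicity-free property. Different $\gamma\in\Gamma$ and different $H_i$ give wildly overlapping intervals $I(q)$ for $q\in p_i\Gamma g$, and controlling this forces the graded self-intersection analysis of the singular set — the operator $\s$ on $\cal E$, the inductively defined compact sets $K_n$ that are self-intersection free (Proposition \ref{dmc}), and the modified intervals $I'(q),J^*(q),J'(q)$ defined relative to $K_{n-1}$ — none of which appears in your proposal, and without which the "escape" step can simply land you in another tube indefinitely. Relatedly, your exhaustion $E_j$ is chosen in advance and independently of the argument, but the theorem needs $E_{j+1}$ to contain the specific enlargement $E_j'$ of $E_j$ (compact pieces of $\beta_0$-regular size in the linearization, associated to the same family of subgroups); the paper first proves the single-pair statement (Theorem \ref{avoid1}: for each $E\in\cal E$ there is $E'\in\cal E$ so that disjointness of $F$ from $E'$ yields the $2k$-thick avoidance of a neighborhood of $E$) and only then builds the exhaustion adaptively with $E_{j+1}\supset E_j'$. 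So the overall architecture (pair statement first, exhaustion second) also needs to be reversed relative to what you wrote.
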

It is crucial that the thickness size of the set \eqref{omj}, which is given by $2k$ here, can be
controlled independently of the compact subsets $E_j$ for applications in the orbit closure theorem.
If $E_j$  does not intersect any closed orbit of a proper subgroup of $G$, then obtaining $E_{j+1}$ and $\O_j$ is much simpler. In general, $E_j$ may 
intersect  infinitely many intermediate
closed orbits, and our proof
is based on a careful analysis on the graded intersections of those closed orbits 
 and a combinatorial argument, which we call an {\it inductive search argument}.
 This process is quite delicate, compared to the finite volume case treated in
(\cite{DM}, \cite{Sh1}) in which case the set $\{t: xu_t\in \RFM\}$, being equal to $\br$, possesses the Lebesgue measure which can be used to measure the time outside of a neighborhood of $E_j$'s.

We deduce  the following from Theorem \ref{mtt2}:
\begin{Thm}[Accumulation on a generic point]  \label{mtl} 
Suppose that $(2)_m$ and $(3)_m$ hold in Theorem \ref{mtp}. Then the following holds for
any connected closed subgroup $U<N$ with $\op{co-dim}_{N}(U)= m+1$:
Let  $U_0=\{u_t:t\in \br\}$ be a one-parameter subgroup of $U$, and let $x_i\in \op{RF}\M$  be a sequence converging to $x_0\in\mathscr G(U_0) $ as $i\to\infty$.
Then for any given sequence $T_i\to\infty$, 
\begin{equation}\label{dmggg}
\limsup_{i\to\infty}\{ x_i u_{t_i}\in \RFM: T_i\le |t_i| \le 2kT_i \}
\end{equation}
contains a sequence $\{y_j:j=1,2, \cdots \}$ such that $\limsup_{j\to \infty} y_jU $ contains a point in $\mathscr{G}(U_0)$.\footnote{Here we allow a constant sequence $y_j=y$ in which case
 $\limsup_{j\to \infty} y_jU$ is understood as $\overline{yU}$ and hence $y\in \mG(U_0)$.}
\end{Thm}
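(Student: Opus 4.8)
\textbf{Proof proposal for Theorem \ref{mtl}.}

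The plan is to derive Theorem \ref{mtl} directly from the Avoidance Theorem \ref{mtt2} combined with the $k$-thick recurrence property \eqref{window} of $\mathsf{T}(x)$ and the induction hypotheses $(2)_m$, $(3)_m$. First I would fix a compact exhaustion $E_1\subset E_2\subset\cdots$ of $\mathscr S(U_0)\cap\RFM$ as furnished by Theorem \ref{mtt2}. Since $x_0\in\mathscr G(U_0)$, the point $x_0$ lies outside the closed set $\mathscr S(U_0)\cap\RFM$, so it lies outside some $E_{j_0+1}$; choosing a small compact neighborhood $F$ of $x_0$ disjoint from $E_{j_0+1}$, we may assume $x_i\in F$ for all large $i$. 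Applying Theorem \ref{mtt2} to this $F$ (with $j=j_0$), we obtain an open neighborhood $\mathcal O=\mathcal O_{j_0}(F)$ of $E_{j_0}$ such that for every $i$ the set $\{t\in\mathbb R: x_iu_t\in\RFM-\mathcal O\}$ is $2k$-thick.

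Next, for the given sequence $T_i\to\infty$, $2k$-thickness applied with $\lambda=T_i$ produces $t_i$ with $T_i\le|t_i|\le 2kT_i$ and $x_iu_{t_i}\in\RFM-\mathcal O$; passing to a subsequence, $x_iu_{t_i}\to y$ for some $y\in\RFM-\mathcal O$, in particular $y\notin E_{j_0}$. This already places $y$ in the $\limsup$ set \eqref{dmggg}. The key point is then to show that by iterating this construction — shifting the exhaustion level — we can produce a sequence $\{y_j\}$ in \eqref{dmggg} whose orbits $y_jU$ accumulate on a point of $\mathscr G(U_0)$. Concretely, if $y\in\mathscr G(U_0)$ we are done (taking the constant sequence); otherwise $y$ lies in some closed orbit $zL$ with $L\in\mathcal Q_{U_0}$ a proper subgroup, $y\in\mathscr S(U_0)$. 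In that case $y$ lies in some $E_{j_1}$ with $j_1>j_0$, and one runs the argument again: because \eqref{dmggg} is stable under enlarging the window constant only up to $2k$, and because the thickness constant $2k$ in Theorem \ref{mtt2} is \emph{independent of the level $j$}, one can re-apply Theorem \ref{mtt2} at level $j_1$ to a small neighborhood $F'$ of $y$ (here one uses that $y$ itself is a limit of $\RFM$-points $x_iu_{t_i}$ staying outside $\mathcal O_{j_0}$, so a diagonal/recurrence argument along $U_0$ near $y$ gives new points of $\RFM$ outside a neighborhood of $E_{j_1}$, still inside the $\limsup$ in \eqref{dmggg} after adjusting $t_i$ within the same $[T_i,2kT_i]$-window using $k$-thickness of $\mathsf T$). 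Continuing, either the process terminates at a generic limit point, or it produces an infinite sequence $y_j$ with $y_j$ eventually escaping every $E_j$, hence any limit of $\{y_j\}$ — and a fortiori any point of $\limsup_j y_jU$ — avoids $\mathscr S(U_0)$ and lies in $\mathscr G(U_0)$.

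The main obstacle, and the place where the induction hypotheses $(2)_m$ and $(3)_m$ genuinely enter, is controlling what happens when a limit point $y$ is singular: one must rule out that the entire sequence $\{y_j\}$, together with all translates $y_jU$, stays trapped inside a single intermediate closed orbit $zL$ (or a compact $\mathrm N(U_0)$-thickening of one). This is exactly the hypothesis excluded in Theorem \ref{mtt2} via the passage from $E_j$ to $E_{j+1}$, and in the equidistribution input $(3)_m$: if infinitely many of the $y_j$ were confined to $y_0L_0D$ with $\dim L_0<\dim G$ and $D$ compact in $\mathrm N(U_0)$, then the orbits $y_jU$ could not accumulate outside $\mathscr S(U_0)$, contradicting the way the neighborhoods $\mathcal O_j$ shrink around $E_j$. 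Making the ``inductive search'' bookkeeping precise — i.e. verifying that at each stage the new window point can be chosen with $T_i\le|t_i|\le 2kT_i$ and that the accumulated orbit genuinely gains in dimension or escapes to a generic point rather than cycling among intermediate orbits — is the technical heart; everything else is a routine compactness-and-recurrence argument built on \eqref{window} and Theorem \ref{mtt2}.
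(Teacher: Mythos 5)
Your first step (one application of Theorem \ref{mtt2} to the sequence $x_i\to x_0$, $2k$-thickness with $\la=T_i$ to place $t_i$ in $[T_i,2kT_i]$, and extraction of a limit $y\in\RFM-\cal O_{j_0}$ lying in the set \eqref{dmggg}) agrees with the paper. But the iteration you build on top of it has a genuine gap. When the limit $y$ is singular you propose to re-apply the avoidance theorem to a small compact neighborhood $F'$ of $y$ at a higher level $j_1$. Theorem \ref{mtt2} only applies to compact sets disjoint from $E_{j_1+1}$, and a singular $y$ lies in $E_j$ for all large $j$, so the points $x_iu_{t_i}\to y$ eventually enter every neighborhood of $E_{j_1+1}$ and no such application is available; moreover ``adjusting $t_i$ within the same $[T_i,2kT_i]$-window'' is not justified, since the multiplicative room $2k$ was already spent landing in the window. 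The fix is simpler and is what the paper does: because $x_0\in\mG(U_0)$ avoids \emph{every} $E_{j+1}$, you apply the avoidance theorem for each level $j$ to the \emph{same} sequence $x_i\to x_0$, obtaining for every $j$ a point $y_j\in\RFM-\cal O_j$ as a limit of $x_iu_{t_i}$ with $t_i$ in the prescribed window; no recentering at singular points is needed.

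The second gap is the endgame. You claim that if the $y_j$ escape every $E_j$ then any limit of $\{y_j\}$, and a fortiori any point of $\limsup_j y_jU$, lies in $\mG(U_0)$. This is false: $\mS(U_0)\cap\RFM=\bigcup_j E_j$ is only a countable union of compact sets, not closed, so avoiding each $E_j$ imposes no constraint on where limits land, and in fact in the correct argument $\limsup_j y_jU$ turns out to be all of $\RFPM$, which certainly contains singular points. This is exactly where $(2)_m$ and $(3)_m$ must do real work, and your proposal uses them only rhetorically (and with the roles reversed). The paper's route: if no $y_j$ is generic, then by $(2)_m$ together with Lemma \ref{lem.gp} each $\cl{y_jU}=y_jL_j\cap\RFPM$ for a proper $L_j\in\cal Q_U$; the condition $y_j\notin\cal O_j\supset E_j$, combined with the fact that any set of the form $y_0L_0D\cap\RFM$ ($L_0$ proper, $D\subset\op{N}(U_0)$ compact) belongs to $\cal E_{U_0}$ and hence is swallowed by some $E_{j'}$, shows the sequence $y_jL_j$ satisfies the non-confinement hypothesis of $(3)_m$; and then $(3)_m$ — the equidistribution statement, not a confinement-exclusion device — yields $\limsup_j\,(y_jL_j\cap\RFPM)=\RFPM$, so $\limsup_j y_jU=\RFPM$ contains points of $\mG(U_0)$. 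Without this use of the induction hypotheses your argument does not produce a generic accumulation point.
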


 Again, it is important that $2k$ is independent of $x_i$ here. 
We prove two independent but related versions of Theorem \ref{mtl} in section \ref{s:li} depending
on the relative location of $x_i$ for the set $\RFM$; we use Proposition \ref{cor.lin} for the proof of $(1)_{m+1}$ and Proposition \ref{lem.lin} for the proofs of $(2)_{m+1}$
and $(3)_{m+1}$.


\subsection*{Comparison with the finite volume case}
 If $\Gamma\ba G$ is compact, the approach of Dani-Margulis   \cite{DM} shows that
if $x_i$ converges to $x \in \mG(U_0)$,  
then for any $\e>0$, we can find a sequence of compact subsets $E_1\subset E_2\subset \cdots $ in $\cal E$, and neighborhoods $\cal O_j$
of $E_j$  such that 
$\mS(U_0)=\bigcup E_j$, $x_i\notin \bigcup_{j\le i+1} \cal O_{j}$ and for all $i\ge j$ and $T>0$,
$$ \ell \{t\in [0,T] : x_iu_t\in \cal O_j\} \le \tfrac{\e }{2^i}T.$$
This implies that for all $i>1$,
\begin{equation}\label{dmg}
 \ell \{t\in [0,T] : x_iu_t\in \bigcup_{j\le i} \cal O_j\} \le \e T. \end{equation}

In particular, the limsup set in \eqref{dmggg} always contains an element of $\mG(U_0)$, without using induction hypothesis.
This is the reason why $(3)_m$ is not needed in obtaining $(1)_{m+1}$ and $(2)_{m+1}$
in Theorem \ref{mainthlattice} for the finite volume case.\footnote{We give a summary of our proof for the case
when $\Gamma\ba G$ is compact and has at least one $\SO^\circ(d-1,1)$ closed orbit in the appendix
to help readers understand the whole scheme of the proof.}

In comparison, we are able to get a generic point in Theorem \ref{mtl}
only with the help of the induction hypothesis $(2)_m$ and $(3)_m$ and after taking the limsup of the $U$-orbits of all accumulating points from the $2k$-thick sets obtained in Theorem \ref{mtt2}.

\subsection*{Generic points in $F^*$ as limits of $\RFM$-points}
In the inductive argument, it is important to find a closed orbit $x_0L$ based at a point $x_0\in F^*$ in order to
 promote a semi-group $V^+$ to a group $V$ as described following \eqref{star}. Another reason why this is critical is the following:
implementing  Theorem \ref{mtl} (more precisely, its versions Theorems \ref{cor.lin} and \ref{lem.lin}) requires having a sequence of
{\it $\RFM$-points of $X$} accumulating on a generic point of $x_0L$ with respect to $U_0$.

The advantage of having a closed orbit $x_0L$ with $x_0\in F^*\cap \RFPM\cap \mG(U_0)$ is that $x_0$ can be  approximated by a sequence of $\RFM$-points in $F^*\cap X$ (Lemmas \ref{lem.frameshift} and \ref{xfss}). 

We also point out that we use the ergodicity theorem
obtained in \cite{MO} and \cite{MS} to guarantee that there are many $U_0$-generic points in any closed orbit $x_0L$ as above.

\subsection*{Existence of a compact orbit in any noncompact closed orbit}
In our setting, $\Gamma\ba G$ always contains a closed orbit $xL$ for some $x\in \RFM$ and
a proper subgroup $L\in \mathcal L_U$; namely those compact orbits of
$\SO^\circ (d-1,1)$ over the boundary of $\core {\M}$. 
Moreover, if $x_0\widehat L$ is a noncompact closed orbit for some $x_0\in \RFM$ and $\op{dim}(\widehat L\cap N)\ge 2$,
then $x_0\widehat L$ contains a compact
orbit $xL$ of some $L\in\mathcal L_U$ (Proposition \ref{SAVE}).
This fact was crucially used in deducing $(2)_{m+1}$ from $(1)_{m+1}, (2)_m$ and $ (3)_m$ in Theorem \ref{mtp}
(more precisely, in Theorem \ref{mainth}).

\subsection*{Organization of the paper}
\begin{itemize}
\item In section \ref{s:ba}, we set up notations for certain Lie subgroups of $G$, review some basic facts and gather preliminaries
about them and geodesic planes of $\M$. 
\item In section \ref{s:ri},  for each unipotent subgroup $U$ of $G$, we
define the minimal $H(U)$-invariant closed subset $F_{H(U)}\subset \Gamma\ba G$ containing $\RFPM$ and
study  its properties  for a
  convex cocompact hyperbolic manifold of non-empty Fuchsian ends. 
\item In section \ref{s:st}, we define the singular set  $\mS(U, x_0L)$ for a closed orbit $x_0L\subset \Gamma\ba G$,
 and prove a structure theorem and a countability
theorem  for a general convex cocompact manifold.

\item  In section \ref{s:ig}, we prove Proposition \ref{prop.2kthick}, based on a combinatorial lemma \ref{lem.search}, called an
{\it inductive search lemma}. This proposition is used in the proof of Theorem \ref{avoid1} (Avoidance theorem).
\item In section \ref{s:lt}, we construct families of triples of intervals which satisfy the hypothesis of Proposition \ref{prop.2kthick},
by making a careful analysis of the  graded intersections of the singular set and the linearization, and
prove Theorem \ref{avoid1} from which Theorem \ref{mtt2} is deduced.
\item In section \ref{s:ge}, we prove several geometric lemmas which are needed to modify a sequence
limiting on a generic point to a sequence of $\RFM$-points which still converges to a generic point. 

\item In section \ref{s:un}, we study the unipotent blowup lemmas using quasi-regular maps and properties
of thick subsets.
\item In section \ref{s:tr}, we study the translates of relative $U$-minimal sets $Y$ into the orbit closure
of an $\RFM$ point; the results in this section are used in the step of finding a closed orbit in a given $H(U)$-orbit closure.
\item In section \ref{s:cl}, we describe closures of orbits contained in the boundary of $F_{H(U)}$.
\item In section \ref{s:err}, we review the ergodicity theorem of \cite{MO} and \cite{MS} and deduce the density of almost all orbits
of a connected unipotent subgroup in $\RFPM$.
\item In section \ref{s:um}, the minimality of a horospherical subgroup action is obtained in the presence of compact factors.

\item  In section \ref{s:or},
we  begin to prove Theorem \ref{mtp}; the base case $m=0$ is addressed and the orbit closure
of a singular $U$-orbit is classified under the induction hypothesis.
\item In section \ref{s:ge2} we prove two propositions on how to get an additional invariance from Theorem \ref{avoid1}; the results in this section are used in the step of
enlarging a  closed orbit to a larger one inside a given $U$-invariant orbit closure in the proof of Theorem \ref{mtp}.

\item We prove $(1)_{m+1}$, $(2)_{m+1}$
and $(3)_{m+1}$ respectively in sections \ref{s:1}, \ref{s:2} and \ref{s:3}.

\item In the appendix, we give an outline of our proof
in the case when $\Gamma\ba G$ is compact with at least one $\SO^\circ (d-1,1)$-closed orbit.

\end{itemize}

\section{Lie subgroups and geodesic planes}\label{sec.notation}\label{s:ba}
Let $G$ denote the connected simple Lie group $\op{SO}^\circ (d,1)$ for $d\ge 2$.
In this section, we fix  notation and recall some background about  Lie subgroups of $G$ and geodesic planes of a hyperbolic $d$-manifold.

As a Lie group, we have $G\simeq \op{Isom}^+(\bH^d)$.
In order to present a family of subgroups of $G$ explicitly,
we fix a quadratic form $Q(x_1, \cdots, x_{d+1})=
2x_1x_{d+1}+x_2^2+ x_3^2 +\cdots + x_{d}^2$, and identify $G=\op{SO}^\circ(Q)$.
The Lie algebra of $G$ is then given as:
\begin{equation*}
\frak{so}(d,1)=\{X\in\frak{sl}_{d+1}(\bb{R}) : X^tQ+QX=0\}
\end{equation*}
where
\begin{equation*}
Q=\left(
\begin{array}{ccc}
0&0&1\\
0&\op{Id}_{d-1}&0\\
1&0&0
\end{array}
\right).
\end{equation*}

A subset $S\subset G$ is said to be Zariski closed  if $S$ is defined as the zero set
$\{(x_{ij})\in G: p_1(x_{ij})= \cdots = p_\ell(x_{ij})=0\}$ for a finite collection of polynomials with real coefficients
in variables $(x_{ij})\in\op{ M}_{d+1}(\br)$. 
The Zariski closure of a subset $S\subset G$ means the smallest Zariski closed subset of $G$ containing $S$.
A connected sugroup $L<G$ is algebraic if $L$ is equal to the identity component of its Zariski closure.
\subsection*{Subgroups of $G$}

Inside $G$, we have the following subgroups:
\begin{align*}
K&=\{g\in G: g^t g=\op{Id}_{d+1}\} \simeq \SO(d), \\
A&= \left\{a_s=\begin{pmatrix}
e^s&0&0\\
0&\op{Id}_{d-1}&0\\
0&0&e^{-s}
\end{pmatrix}  :s\in \br \right\},\\
M&=\text{ the centralizer of $A$ in $K$} \simeq \SO(d-1), \\
N^- &=\{\exp  u^{-}(x): x\in \br^{d-1}\},\\
N^{+} &=\{\exp  u^{+}(x): x\in \br^{d-1}\},
\end{align*}
where
 $$u^-(x)= \begin{pmatrix} 0&x^t&0\\
0&0&-x\\
0&0&0
\end{pmatrix}\quad\text{and}\quad u^+(x)=\begin{pmatrix}
0&0&0\\
x&0&0\\
0&-x^t&0
\end{pmatrix} .$$
The Lie algebra of $M$ consists of
matrices of the form $$m(C)=\begin{pmatrix}
0&0&0\\
0&C&0\\
0&0&0
\end{pmatrix} $$
where $C\in \op{M}_{d-1}(\br)$ is a skew-symmetric matrix, i.e., $C^t=-C$.

The subgroups $N^{-}$ and $N^+$ are respectively the contracting  and expanding horospherical subgroups of $G$ for the action of $A$.
We have the Iwasawa decomposition $G=KAN^{\pm}$.
As we will be using the subgroup $N^-$ frequently, we simply write $N=N^-$.
We often identify the subgroup $N^{\pm}$ with $\br^{d-1}$ via the map $\exp u^{\pm}(x) \mapsto x$.
For a connected closed subgroup $U<N$, we use the notation $U^\perp$ for the orthogonal complement of $U$ in $N$ as a vector subgroup of $N$,
and $U^t=U^+$ for the transpose of $U$.
 We use the notation $B_U(r)$ to denote the ball of radius $r$ centered at $0$ in $U$
for the Euclidean metric on $N=\br^{d-1}$.

We consider the upper-half space model of $\bH^d=\br^+\times \br^{d-1}$, so that
its boundary is given by $\bb S^{d-1}=\{\infty\} \cup ( \{0\}\times \br^{d-1})$. Set $o=(1, 0,\cdots, 0)$, and
fix a standard basis $e_0, e_1, \cdots, e_{d-1}$ at $\T_o(\bH^d)$.  The map
\be \label{map} g \mapsto (ge_0, \cdots, g e_{d-1})_{g(o)}\ee
gives an identification of $G$ with the oriented frame bundle $\op F \bH^d$.
The stabilizer of $o$ and $e_0$ in $G$ are equal to $K$ and $M$ respectively, and hence the map \eqref{map} induces the identifications of
the hyperbolic space $\bH^d$ and the unit tangent bundle $\op{T}^1\bH^d$  with $G/K$ and $G/M$ respectively.
The action of $G$  on the hyperbolic space $\bH^d=G/K$ extends continuously to the compactification
$\mathbb {\mathbb S}^{d-1}\cup \bH^{d}$.

If $g\in G$ corresponds to a frame $(v_0, \cdots, v_{d-1})\in \F \bH^d$, we define $g^+, g^-\in {\mathbb S}^{d-1}$ to be the forward and backward end points of the directed geodesic tangent to $v_0$ respectively.
The right translation action of $A$ on  $G=\op{F}\bH^d $ defines the frame flow
and we have
$$g^{\pm}=\lim_{t\to\pm\infty} \pi(ga_t)$$
where $\pi:G=\F \bH^d\to \bH^d$ is the basepoint projection.

For the identity element $e=\op{Id}_{d+1}\in G$, note that
 $e^{+}=\infty$, and $e^-=0$, and hence $g^+=g(\infty)$ and $g^-=g(0)$. The subgroup $MA$ fixes both points $0$ and $\infty$, and
the horospherical subgroup $N$ fixes $\infty$, and the restriction of the map $g\mapsto g(0)$
to $N$ defines an isomorphism $N\to \br^{d-1}$ given by $u^-(x)\mapsto x$.

For each non-trivial connected subgroup $U<N$, we denote by 
$$H(U)$$
 the smallest  simple
closed Lie subgroup of $G$ containing $A$ and $U$.
It is generated by $U$ and the transpose of $U$. 

For a subset $S\subset G$, we denote by $\op{N}_G(S)$ and $\op{C}_G(S)$ the normalizer
of $S$ and the centralizer of $S$ respectively.
We denote by $\op{N}(S) $ and $\op{C}(S)$ the identity components of $\op{N}_G(S)$ and $\op{C}_G(S)$ respectively.

\begin{example} \label{exam} Fix the standard basis $e_1,\cdots,e_{d-1}$ of $\br^{d-1}$.
For $1\le k\le d-1$, define $U_{k}$ to be the connected subgroup of $N$ spanned
by $ e_1,\cdots, e_{k}$.  

The following can be checked directly:
\begin{align*} H(U_{k}) &=\lan U_{k}, U_{k}^t\ran \simeq\SO^\circ (k+1,1);\\
\op{C}(H(U_{k}))&\simeq\SO(d-k-1);\\
 \op{N}_G(H(U_{k}))&\simeq\mathrm{O}(k+1,1) \mathrm{O}(d-k-1) \cap G; \\
 \op{N}( H(U_{k}))&\simeq\mathrm{SO}^\circ (k+1,1) \mathrm{SO}(d-k-1) .
\end{align*}
\end{example}
We set $$H'(U):= \op{N}(H(U))=H(U) \op{C}(H(U)),$$
which is a connected reductive algebraic subgroup of $G$ with compact center.

The adjoint action of $M$ on $N$ corresponds to the standard
action of $\SO(d-1)$ on $\br^{d-1}$. It follows that  any connected closed subgroup $U< N$ is conjugate to $U_k$
and $H(U)$ is conjugate to $H(U_k)$ by an element of $M$, where $k=\op{dim}(U)$.

 We set
 \begin{equation}\label{conetwo}\op{C}_1(U):= \op{C}(H(U))= M\cap \op{C}(U),\text{ and } \op{C}_2(U):=M\cap \op{C}(U^\perp)\subset H(U).\end{equation}

\begin{lemma} \label{nu}We have  $$\op{N}(U)=NA \op{C}_1(U) \op{C}_2(U) \text{ and } \op{C}(U)=N \op{C}_1(U). $$ \end{lemma}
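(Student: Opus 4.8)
The plan is to compute the normalizer and centralizer of $U$ directly using the conjugation to a standard subgroup, and then analyze how elements of $G$ act on the Lie algebra $\mathfrak n$ of $N$. By the paragraph preceding the statement, we may conjugate by an element of $M$ and assume $U = U_k$ for $k = \dim U$, spanned by $e_1,\dots,e_k$; this is harmless since $M$ normalizes $N$, $A$, and preserves the decompositions in question, so it suffices to verify the identities for $U = U_k$.

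\emph{Step 1: the easy inclusions.} First I would check that each of $N$, $A$, $\op{C}_1(U) = M \cap \op{C}(U)$, and $\op{C}_2(U) = M \cap \op{C}(U^\perp)$ normalizes $U$. For $N$ this is because $N$ is abelian, so it in fact centralizes $U$; for $A$, conjugation by $a_s$ scales every root vector in $\mathfrak n$ by $e^{-s}$, hence preserves the subspace $\mathfrak u$; for $\op{C}_1(U)$ this is immediate from the definition (it centralizes $U$); and for $\op{C}_2(U) = M \cap \op{C}(U^\perp) \subset H(U)$, an element centralizing $U^\perp$ must, via the $\SO(d-1)$-action on $\br^{d-1}$, preserve the orthogonal complement $U$ of $U^\perp$, hence normalize $U$. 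This gives $NA\op{C}_1(U)\op{C}_2(U) \subseteq \op{N}(U)$ and $N\op{C}_1(U) \subseteq \op{C}(U)$. Also one should note $\op{C}_1(U)$ and $\op{C}_2(U)$ commute with each other (they act on complementary coordinate blocks) so the product is a genuine subgroup.

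\emph{Step 2: the reverse inclusion.} The main work is showing $\op{N}(U) \subseteq NA\op{C}_1(U)\op{C}_2(U)$. Take $g \in \op{N}_G(U)$ lying in the identity component. Since $g$ normalizes $U \subset N$ and hence normalizes the horospherical subgroup it generates together with $A$... more carefully: $g$ normalizes $U$, so $g$ fixes the fixed point $\infty = e^+$ of $U$ on $\mathbb S^{d-1}$ (as $U$ is unipotent, $\infty$ is its unique fixed point, and $g U g^{-1} = U$ forces $g(\infty) = \infty$). Thus $g \in \op{Stab}_G(\infty) = NAM$ (the minimal parabolic). Write $g = n a m$ with $n \in N$, $a \in A$, $m \in M$. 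Since $n \in N$ centralizes $U$ and $a$ normalizes $U$, the condition $gUg^{-1} = U$ reduces to $mUm^{-1} = U$, i.e. $m \in M \cap \op{N}_G(U)$. Now $M \simeq \SO(d-1)$ acts standardly on $\br^{d-1} \simeq \mathfrak n$, and the stabilizer in $\SO(d-1)$ of the subspace $\mathfrak u = \langle e_1,\dots,e_k\rangle$ is $\mathrm{S}(\mathrm{O}(k)\times \mathrm{O}(d-1-k))$; its identity component is $\SO(k)\times\SO(d-1-k)$, which is exactly $\op{C}_2(U)\cdot\op{C}_1(U)$ (the $\SO(k)$-factor acting on the $U$-coordinates is $M \cap \op{C}(U^\perp) = \op{C}_2(U)$, and the $\SO(d-1-k)$-factor acting on the $U^\perp$-coordinates is $M \cap \op{C}(U) = \op{C}_1(U)$). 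Since we are in the identity component of $\op{N}_G(U)$, after adjusting we get $m \in \op{C}_1(U)\op{C}_2(U)$, giving $g \in NA\op{C}_1(U)\op{C}_2(U)$ as desired. For $\op{C}(U)$: the same analysis with $mUm^{-1} = u$ \emph{pointwise} forces $m$ to fix every vector in $\mathfrak u$, i.e. $m \in \op{C}_2(U)$ acting trivially, so $m \in \op{C}_1(U)$, and also $a$ must centralize $U$ which (since $a_s$ scales $\mathfrak u$ by $e^{-s}$) forces $a = e$; hence $g \in N\op{C}_1(U)$.

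\emph{Anticipated obstacle.} The one point requiring care is the bookkeeping around connected components: $\op{N}_G(U)$ and $\op{C}_G(U)$ may be disconnected, and the claim is only about their identity components $\op{N}(U)$, $\op{C}(U)$, so I must be careful that the reduction to $M$ and the identification of $\op{Stab}_{\SO(d-1)}(\mathfrak u)^\circ = \SO(k)\times\SO(d-1-k)$ is done at the level of identity components — in particular that $NA(\op{C}_1(U)\op{C}_2(U))$ is already connected (it is, being a product of connected groups) so it equals the full identity component and not just a subgroup of it. A secondary, purely bookkeeping, point is confirming that $\op{C}_2(U) \subseteq H(U)$ as asserted in \eqref{conetwo}, which follows since $\SO(k) \subset \SO^\circ(k+1,1) = H(U_k)$ sits inside $H(U)$ as the rotation block; this is needed only for the statement's consistency, not for the proof of the two displayed identities.
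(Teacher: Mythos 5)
Your proposal is correct and follows essentially the same route as the paper's proof: the easy inclusion, then using that $\infty$ is the unique fixed point of $U$ to place $\op{N}_G(U)$ inside $NAM$, identifying the $M$-part with the stabilizer of $\br^k$ in $\SO(d-1)$, a connectedness argument to pass to identity components, and the dilation-versus-orthogonal observation for the centralizer (the paper merely routes the first step through the Iwasawa decomposition $G=KAN$ and the boundary action on $U(0)=\br^k$ instead of the adjoint action on $\mathfrak n$). The only loose phrasing is in your centralizer step, where you treat $a$ and the $M$-part as centralizing $U$ separately although a priori only their product does; this is repaired by exactly the observation you and the paper both invoke, namely that a nontrivial dilation composed with an orthogonal map cannot restrict to the identity on $\mathfrak u$.
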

\begin{proof}
For the first claim, it suffices to show that for $U=U_k$, $\op{N}(U)=NA \op{SO}(k) \op{SO}(d-1-k)$.
It is easy to check that $Q:=NA \op{C}_1(U) \op{C}_2(U)$ normalizes $U$.
Let $g\in\op{N}(U)$. We claim that $g\in Q$.
Using the decomposition $G=KAN$, we may assume that $g\in K$.
Then $Ug(\infty)=gU(\infty)=g(\infty)$ since $U(\infty)=\infty$.
Since $\infty\in\bb{S}^{d-1}$ is the unique fixed point of $U$, it follows $g(\infty)=\infty$. As $M=\op{Stab}_K(\infty)$, we get  $g\in M$.
Now $gU(0)=Ug(0)=U(0)$.
As  $U(0)=\bb{R}^k$, $g\bb{R}^k=\bb{R}^k$. Therefore, as $g\in M$,
we also have $g\bb{R}^{d-1-k}=\bb{R}^{d-1-k}$, and consequently $g\in \op{O}(k) \op{O}(d-1-k)$.
This shows that $NA \op{SO}(k)\op{SO}(d-1-k)  \subset \op{N}(U)\subset NA \op{O}(k) \op{O}(d-1-k) $.
As $\op{N}(U)$ is connected, this implies the claim.

For the second claim, note first that $N\op{C}_1(U)<\op{C}(U)$.
Now let $g\in\op{C}(U)$.
Since $\op{C}(U)<\op{N}(U)=AN\op{C}_1(U)\op{C}_2(U)$, we can write $g=ac_2nc_1\in A\op{C}_2(U)N\op{C}_1(U)$.
Since $nc_1$ commutes with $U$, it follows $ac_2\in\op{C}(U)$.
Now observe that the adjoint action of $a$ on $U$ is a dilation and the adjoint action of $c_2$ on $U$ is a multiplication by an orthogonal matrix.
Therefore we get  $a=c_2=e$, finishing the proof. 
\end{proof}

Denote by $\frak g= \op{Lie}(G)$ the Lie algebra of $G$. By a one-parameter subsemigroup of $G$, we mean a set of the form $\{\exp (t\xi)\in G:t\ge 0\}$ for some 
non-zero $\xi\in \frak g$.  Note that the product $AU^\perp \op{C}_2(U)$ is a subgroup of $G$.
\begin{lemma}\label{rmk.1psg}
An unbounded one-parameter subsemigroup $S$ of $AU^\perp \op{C}_2(U)$ is one of the following form:
\begin{align*}
&\{\exp(t\xi_A)\exp(t\xi_C)\ : t\geq 0\};\\
&\{\left(v\exp(t\xi_A)v^{-1}\right)\exp(t\xi_C) : t\geq 0\}; \\
&\{\exp(t\xi_V)\exp(t\xi_C) : t\geq 0 \}
\end{align*}
for some $\xi_A\in\op{Lie}(A)-\{0\}, \xi_C\in\op{Lie}(\op{C}_2(U)), v\in U^\perp-\{e\}$, and $\xi_V\in \op{Lie}(U^\perp)-\{0\}$.
\end{lemma}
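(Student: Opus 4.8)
The plan is to analyze the structure of one-parameter subsemigroups of the group $P := AU^\perp \op{C}_2(U)$ by exploiting the fact that this is a solvable-by-compact group with a particularly simple Lie-algebra structure. First I would record the relevant bracket relations: $\op{Lie}(A)$ is one-dimensional and acts on $\op{Lie}(U^\perp)\subset \op{Lie}(N)$ by the scalar $-1$ (the contracting weight), $\op{Lie}(\op{C}_2(U))$ is a compact semisimple (or trivial) subalgebra commuting with $A$ and normalizing $U^\perp$ via the standard orthogonal action of $\SO(d-1-k)$ on $\br^{d-1-k}$, and $[\op{Lie}(U^\perp),\op{Lie}(U^\perp)]=0$. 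Thus $\op{Lie}(P) = \op{Lie}(A)\oplus \op{Lie}(U^\perp)\oplus\op{Lie}(\op{C}_2(U))$ with $\op{Lie}(A)\oplus\op{Lie}(U^\perp)$ a solvable ideal and $\op{Lie}(\op{C}_2(U))$ a reductive complement commuting with $\op{Lie}(A)$.

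Given a nonzero $\xi\in\op{Lie}(P)$, write $\xi = \xi_A + \xi_V + \xi_C$ according to this decomposition. Since $\op{C}_2(U)$ commutes with $A$, the $\op{C}_2(U)$-component exponentiates separately, so $\exp(t\xi)$ is automatically of the form $g(t)\exp(t\xi_C)$ with $g(t)\in AU^\perp$; hence the whole problem reduces to classifying unbounded one-parameter subsemigroups of the $(k+1)$-or-so-dimensional solvable group $AU^\perp$, which is just (a product of copies of) the $ax+b$-group $\br\ltimes\br^{d-1-k}$. Here I would split into two cases according to whether $\xi_A = 0$ or not. If $\xi_A\neq 0$, then $\exp(t\xi_A+t\xi_V)$ lies in a conjugate $v A v^{-1}$ of $A$: concretely, solving the ODE in the $ax+b$-group shows $\exp(t(\xi_A+\xi_V)) = v\exp(t\xi_A)v^{-1}$ for a unique $v\in U^\perp$ determined by $\xi_V$ and the eigenvalue of $\op{ad}(\xi_A)$ on $\op{Lie}(U^\perp)$ (namely $v = \exp(\xi_V/\lambda)$ up to sign, where $\lambda$ is that eigenvalue), with $v=e$ precisely when $\xi_V=0$; this yields the first two listed forms. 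If $\xi_A=0$ but $\xi_V\neq 0$, then $\exp(t\xi_V)$ is already a one-parameter subgroup of $U^\perp$, giving the third form. The case $\xi_A=0,\ \xi_V=0$ is excluded because then $\exp(t\xi)=\exp(t\xi_C)$ lies in the compact group $\op{C}_2(U)$, so $S$ is bounded, contradicting the hypothesis; likewise one should check that when $\xi_A\neq 0$ the semigroup is genuinely unbounded (it is, since $\exp(t\xi_A)$ escapes $A$), and similarly in the third case $\exp(t\xi_V)$ escapes in $U^\perp$.

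I expect the only mildly delicate point to be the conjugation identity $\exp\big(t(\xi_A+\xi_V)\big) = v\exp(t\xi_A)v^{-1}$ in the $ax+b$-group and the verification that $v$ can be taken in $U^\perp$ (rather than in some larger group) — this is a short explicit computation using $[\xi_A,\xi_V]=-\xi_V$ (after normalizing $\xi_A$) together with $\op{Ad}(\exp w)\xi_A = \xi_A - [w,\xi_A] = \xi_A + w$ for $w\in\op{Lie}(U^\perp)$, from which $\op{Ad}(v)\xi_A = \xi_A+\xi_V$ when $v=\exp(\xi_V)$, and hence $\exp(t\,\op{Ad}(v)\xi_A) = v\exp(t\xi_A)v^{-1}$. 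The remaining steps are bookkeeping: confirming $AU^\perp\op{C}_2(U)$ is a group (already asserted in the excerpt, and clear from the bracket relations), and checking unboundedness in each case. No induction or deep input is needed — this is purely a Lie-theoretic normal-form argument, and the hypothesis that $M\cap\op{C}(U^\perp)=\op{C}_2(U)$ is compact is exactly what forces the $\op{C}_2(U)$-direction to contribute nothing to unboundedness.
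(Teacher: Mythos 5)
Your proposal follows essentially the same route as the paper: split off the $\op{C}_2(U)$-factor, reduce to classifying one-parameter subsemigroups of $AU^\perp$, and there use the identity $\op{Ad}(v)\xi_A=\xi_A+\xi_V$ for a suitable $v\in U^\perp$ to get $\exp\big(t(\xi_A+\xi_V)\big)=v\exp(t\xi_A)v^{-1}$ when the $A$-component is nonzero; this is exactly the paper's computation, carried out there with explicit matrices ($\xi_0=v\xi_A v^{-1}$ with $\log v=u^-(-x/a)$). One point needs correcting, because as written it undercuts the first step: you describe $\op{C}_2(U)$ as a copy of $\SO(d-1-k)$ normalizing $U^\perp$ through the standard orthogonal action, and you justify the factorization $\exp(t\xi)=\exp\big(t(\xi_A+\xi_V)\big)\exp(t\xi_C)$ only by the fact that $\op{C}_2(U)$ commutes with $A$. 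That is not sufficient: if $\xi_C$ acted nontrivially on $\xi_V$, the two exponentials would not commute and the splitting would fail. What makes the step valid is the definition $\op{C}_2(U)=M\cap\op{C}(U^\perp)$, so $\op{C}_2(U)$ \emph{centralizes} $U^\perp$ (it is isomorphic to $\SO(k)$ with $k=\dim U$ and lies in $H(U)$; the group acting orthogonally on $U^\perp$ is $\op{C}_1(U)=\op{C}(H(U))\cong\SO(d-k-1)$, which you seem to have conflated with $\op{C}_2(U)$). With this fact in hand — which is precisely the commutativity of $AU^\perp$ with $\op{C}_2(U)$ that the paper invokes — your reduction and the rest of the argument go through; the remaining items (the conjugation identity, the exclusion of the bounded case $\xi_A=\xi_V=0$ by unboundedness of $S$) are fine, apart from a harmless sign slip in the intermediate step, which should read $\op{Ad}(\exp w)\xi_A=\xi_A+[w,\xi_A]$.
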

\begin{proof}
Let $\xi\in\op{Lie}(AU^\perp \op{C}_2(U))$ be such that $S=\{\exp(t\xi) :t\geq 0\}$.
Write $\xi=\xi_0+\xi_C$ where $\xi_0\in\op{Lie}(AU^\perp)$ and $\xi_C\in\op{Lie}(\op{C}_2(U))$.
Since $AU^\perp$ commutes with $\op{C}_2(U)$, $\exp(t\xi)=\exp(t\xi_0)\exp(t\xi_C)$ for any $t\in \br$.
Hence we only need to show that either $\xi_0\in\op{Lie}(U^\perp)$ or
\begin{equation}\label{eq.u2}
\{\exp(t\xi_0) : t\geq 0\}=\{v\exp(t\xi_A)v^{-1} : t\geq0\}
\end{equation}
for some $v\in U^\perp$ and $\xi_A\in\op{Lie}(A)$.
Now if $\xi_0\not\in\op{Lie}(U^\perp)$, then writing 
\begin{equation*}
\xi_0=
\left(
\begin{array}{ccc}
a&x^t &0 \\
0&0_{d-1} &-x\\
0&0 &-a
\end{array}
\right)
\in\op{Lie}(AU^\perp)
\end{equation*}
with $a\neq0$, a direct computation shows that $\xi_0=v\xi_A v^{-1}$ where
\begin{equation*}
\log v=
\left(
\begin{array}{ccc}
0&-x^t/a &0 \\
0&0_{d-1} &x/a\\
0&0 &0
\end{array}
\right)\text{ and }
\xi_A=
\left(
\begin{array}{ccc}
a&0 &0 \\
0&0_{d-1} &0\\
0&0 &-a
\end{array}
\right),
\end{equation*}
proving \eqref{eq.u2}.
\end{proof}

\begin{lemma}\label{vAv}
If $v_i\to\infty$ in $U^\perp$, then $\limsup_{i\to \infty} v_iAv_i^{-1}$ contains one-parameter subgroup of $U^\perp$.
\end{lemma}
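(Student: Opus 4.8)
The plan is to make the computation in Lemma \ref{rmk.1psg} quantitative. Writing a generic element $v \in U^\perp$ as $\exp u^-(x)$ with $x \in U^\perp \subset \br^{d-1}$, we saw there that the element
\[
\xi_A = \begin{pmatrix} 1 & 0 & 0 \\ 0 & 0_{d-1} & 0 \\ 0 & 0 & -1 \end{pmatrix}
\]
satisfies $v_i \xi_A v_i^{-1} = \xi_0^{(i)}$, where $\xi_0^{(i)} \in \op{Lie}(AU^\perp)$ has diagonal part $1$ and $U^\perp$-part equal to (a fixed multiple of) $x_i$; more precisely, by the explicit formula in Lemma \ref{rmk.1psg}, $\log v_i$ corresponds to $x_i$ and $v_i \xi_A v_i^{-1}$ has its $u^-$-component equal to $x_i$. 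Thus the one-parameter subgroup $v_i A v_i^{-1} = \{\exp(t\, v_i\xi_A v_i^{-1}) : t \in \br\}$ is generated by a vector whose norm grows like $\|x_i\| \to \infty$.

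First I would normalize: set $s_i := \|x_i\| \to \infty$ and consider the reparametrized generators $\tfrac{1}{s_i}(v_i \xi_A v_i^{-1})$. These have diagonal (i.e. $\op{Lie}(A)$) component $\tfrac{1}{s_i} \to 0$ and $U^\perp$-component $\tfrac{x_i}{s_i}$, a unit vector in $U^\perp$. Passing to a subsequence, $\tfrac{x_i}{s_i} \to w$ for some unit vector $w \in U^\perp$, and hence $\tfrac{1}{s_i}(v_i \xi_A v_i^{-1}) \to u^-(w) =: \eta \in \op{Lie}(U^\perp) - \{0\}$. Now fix any $t \in \br$ and set $t_i := t s_i \to \pm\infty$. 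Then
\[
\exp\!\big(t_i \cdot (v_i \xi_A v_i^{-1})\big) = \exp\!\Big(t \cdot \tfrac{1}{s_i}\big(v_i \xi_A v_i^{-1}\big) \cdot s_i\Big)\cdot\tfrac{1}{s_i}\cdot s_i
\]
— more cleanly, $\exp(t_i v_i\xi_A v_i^{-1}) = \exp\big(t \cdot [\,\tfrac{1}{s_i}(v_i\xi_A v_i^{-1})\cdot s_i/s_i\,]\big)$; the point is simply that $t_i (v_i\xi_A v_i^{-1}) = t \cdot \big(s_i \cdot \tfrac{1}{s_i}(v_i\xi_A v_i^{-1})\big)$ has the wrong scaling, so instead one writes $t_i(v_i\xi_A v_i^{-1}) = (t s_i)(v_i \xi_A v_i^{-1})$ and observes $(ts_i)(v_i\xi_A v_i^{-1}) = t\cdot\big(s_i (v_i\xi_A v_i^{-1})\big)$ is not what we want. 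The correct bookkeeping: choose $t_i = t \cdot \tfrac{s_i}{1} $ is still wrong. Let me restate cleanly below.

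Cleanly: since $\exp(r \xi) = \exp\big((r c)\,(c^{-1}\xi)\big)$ for any $c \neq 0$, take $\xi = v_i\xi_A v_i^{-1}$, $c = s_i$, and $r = r_i$ with $r_i \to \infty$ chosen so that $r_i / s_i \to t$; then $\exp(r_i\, v_i\xi_A v_i^{-1}) = \exp\big((r_i/s_i)\cdot s_i^{-1} \cdot s_i \cdot (v_i\xi_A v_i^{-1})\big)$. The honest statement is: $r_i (v_i \xi_A v_i^{-1}) = (r_i/s_i)\cdot \big(s_i(v_i\xi_A v_i^{-1})\big)$, and $s_i(v_i\xi_A v_i^{-1})$ does not converge. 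So instead I use the opposite scaling: the generators $v_i \xi_A v_i^{-1}$ themselves already converge after \emph{dividing} by $s_i$, so I pick $r_i = t s_i$ and get $r_i(v_i\xi_A v_i^{-1}) = t\big(s_i (v_i\xi_A v_i^{-1})\big)$, still divergent. The resolution is that I should divide, not multiply: $\exp\big(t\cdot \tfrac{1}{s_i}(v_i\xi_A v_i^{-1})\big) \to \exp(t\eta)$ by continuity of $\exp$, and $\tfrac{1}{s_i}(v_i\xi_A v_i^{-1}) = (s_i)^{-1}\log$ of an element of $v_i A v_i^{-1}$, i.e. $\exp\big(t\cdot\tfrac{1}{s_i}(v_i\xi_A v_i^{-1})\big) = v_i a_{t/s_i} v_i^{-1} \in v_i A v_i^{-1}$. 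Since $\exp$ is continuous, $v_i a_{t/s_i} v_i^{-1} \to \exp(t\eta)$. Therefore $\exp(t\eta) \in \limsup_i v_i A v_i^{-1}$ for every $t \in \br$, so $\limsup_i v_i A v_i^{-1}$ contains the one-parameter subgroup $\{\exp(t\eta):t\in\br\} < U^\perp$, as claimed. (Here $\limsup$ is with respect to the Hausdorff-type notion defined above; one also replaces $v_i$ by the original sequence by a standard diagonal/subsequence argument, which suffices for a $\limsup$ statement.)

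\textbf{Main obstacle.} There is no deep obstacle here; the only subtlety is the bookkeeping of the two competing scalings — the generator of $v_i A v_i^{-1}$ has its $A$-part of fixed size $1$ but its $U^\perp$-part of size $\sim s_i \to \infty$, so one must rescale the \emph{time} parameter like $1/s_i$ (equivalently, rescale the generator by $1/s_i$) so that the $U^\perp$-direction survives in the limit while the $A$-direction is crushed. I would present this via the explicit matrix formula of Lemma \ref{rmk.1psg}, pass to a subsequence so that $x_i/\|x_i\|$ converges to a unit vector $w$, and conclude by continuity of $\exp$ that $v_i a_{t/\|x_i\|} v_i^{-1} \to \exp(t\, u^-(w))$ for each fixed $t$.
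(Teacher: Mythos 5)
Your proposal is correct and is essentially the paper's own proof: the paper computes $v_i a_s v_i^{-1}$ explicitly as a matrix, passes to a subsequence so that $x_i/\|x_i\|\to x_0$, and takes $s_i=\log(1-r\|x_i\|^{-1})$, which is exactly your time-rescaling $s\sim -r/\|x_i\|$ carried out at the group level rather than through the generator $v_i\xi_A v_i^{-1}$. Apart from the digressive bookkeeping in the middle (which you do resolve correctly, and which could simply be deleted), your final computation $v_i a_{t/\|x_i\|}v_i^{-1}=\exp\bigl(t\cdot\tfrac{1}{\|x_i\|}\,v_i\xi_A v_i^{-1}\bigr)\to\exp(t\,u^-(\pm x_0))$, together with the remark that a limit along a subsequence lies in the $\limsup$ of the full sequence, matches the paper's argument.
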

\begin{proof}
Writing $v_i=\exp u^-(x_i)$ for $x_i\in \br^{d-1}$, we have
$$v_ia_sv_i^{-1}=
\begin{pmatrix}
e^s&(1-e^s)x_i^t & -\norm{(e^{s/2}-e^{-s/2})x_i}^2/2\\
0&\op{Id}_{d-1} & (1-e^{-s})x_i\\
0&0 &e^{-s} 
\end{pmatrix}.
$$
Passing to a subsequence, $x_i/\norm{x_i}$ converges to some unit vector $x_0$ as $i\to\infty$.
For any $r\in\bb{R}$,  if we set $s_i:=\log(1-{r}{\norm{x_i}^{-1}})$, then
the sequence $v_ia_{s_i}v_i^{-1}$ converges to $ \exp u^-(rx_0)$. Therefore
the set $V:=\{\exp u^-(rx_0):r\in\bb{R}\}<U^\perp$ gives the desired subgroup.
\end{proof}

\noindent{\bf The complementary subspaces $\frak h_U^\perp$ and $\frak h^\perp$.} 
 If $L$ is a reductive Lie subgroup of $G$ with $\frak l=\op{Lie}(L)$,
the restriction of the adjoint representation of $G$ to $L$ is completely reducible, and hence
  there exists an $\op{Ad} (L)$-invariant complementary subspace $\frak l^\perp$ so that
  $$\frak g=\frak l\oplus \frak l^\perp.$$ 
  It follows from the inverse function theorem that
the map $L\times \frak l^\perp \to G$  given by $(g, X)\mapsto g \exp X$ is a local diffeomorphism onto
an open neighborhood of $e$ in $G$.

  Let $U=U_k$. Denote by  $\frak h_U\subset \frak g$ the Lie algebra of $H(U)$,
  by $\frak u^\perp$ the subspace $\op{Lie}(U^\perp)$, and by $(\frak u^\perp)^t$ its transpose.
Then $\frak h_U^\perp$ can be given explicitly as follows:

\be\label{perp0}
\frak h_U^\perp = \frak u^\perp \oplus (\frak u^\perp)^t \oplus \frak m_0
\ee
where $\frak m_0$ is given by $$\left\{m(C) :  C=
\begin{pmatrix}
0&Y\\
-Y^t&Z
\end{pmatrix}, Z\in \op{M}_{d-1-k}(\br), Z^t=-Z, Y\in \op{M}_{k\times (d-1-k)}(\br)
\right\};$$
to see this, it is enough to check that
$\op{dim}(\frak g)=\op{dim}(\frak h_U)+\op{dim}(\frak h_U^\perp)$ and 
that $\frak h_U^\perp$ is $\op{Ad}(H(U))$-invariant, which can be done by direct computation.

Similarly, setting $\frak h:=\op{Lie}(H'(U))$, $\frak h^\perp$
is given by 

\be\label{perp}
\frak h^\perp = \frak u^\perp \oplus (\frak u^\perp)^t \oplus \frak m_0'
\ee
where $$\frak m_0':=\left\{m(C) : C=
\begin{pmatrix}
0&Y\\
-Y^t&0 
\end{pmatrix} 
\right\}.$$

\begin{lem}\label{hen} If $r_i\to e$ in $\exp \frak h^\perp - \op{C}(H(U))$,
then either $r_i\notin \op{N}(U)$ for all $i$, or $r_i\notin \op{N}(U^+)$ for all $i$, by passing to a subsequence.
\end{lem}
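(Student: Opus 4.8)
The plan is to argue by contradiction. Note first that the asserted dichotomy can be realized along a subsequence unless both of the sets $\{i:r_i\notin\op{N}(U)\}$ and $\{i:r_i\notin\op{N}(U^+)\}$ are finite: if, say, the first is infinite, one simply passes to it. So it suffices to assume $r_i\in\op{N}(U)\cap\op{N}(U^+)$ for all sufficiently large $i$ and derive a contradiction with the hypothesis $r_i\notin\op{C}(H(U))$.

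The key point is that $\op{N}(U)\cap\op{N}(U^+)\subset\op{N}_G(H(U))$: any element normalizing both $U$ and its transpose $U^+$ normalizes the closed subgroup they generate, which is $H(U)$. Since $H'(U)=\op{N}(H(U))$ is by definition the identity component of the closed subgroup $\op{N}_G(H(U))$ (Example \ref{exam}), and $r_i\to e$, we get $r_i\in H'(U)$ for all large $i$. On the other hand $r_i\in\exp\frak h^\perp$, where $\frak h=\op{Lie}(H'(U))$ and $\frak g=\frak h\oplus\frak h^\perp$ is the reductive decomposition recalled just before \eqref{perp}. By the local diffeomorphism $H'(U)\times\frak h^\perp\to G$, $(h,X)\mapsto h\exp X$, noted there as a consequence of the inverse function theorem, an element $\exp X$ with $X\in\frak h^\perp$ close to $0$ lies in $H'(U)$ only when $X=0$. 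Hence $r_i=e$ for all large $i$, contradicting $r_i\in\exp\frak h^\perp-\op{C}(H(U))$, since $e\in\op{C}(H(U))$.

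The only delicate point — the main, though modest, obstacle — is keeping ``near $e$'' uniform: one fixes once and for all the product chart in which $H'(U)\times\frak h^\perp\to G$ is a diffeomorphism, and uses that $r_i\to e$ forces both $r_i$ and its transversal $\frak h^\perp$-coordinate eventually into that chart, together with the fact that $H'(U)$ is open in $\op{N}_G(H(U))$ (so that $r_i\in\op{N}_G(H(U))$ with $r_i\to e$ already gives $r_i\in H'(U)$). A computational alternative to the reduction to $\op{N}_G(H(U))$ is to verify directly, via the root-space decomposition $\frak g=\frak n^+\oplus(\frak m\oplus\frak a)\oplus\frak n^-$ and the explicit description \eqref{perp}, that $\op{Lie}(\op{N}(U))\cap\op{Lie}(\op{N}(U^+))\cap\frak h^\perp=0$, which yields the same local triviality; I would prefer the normalizer argument as it avoids matrix bookkeeping.
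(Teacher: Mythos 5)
Your proof is correct, but it takes a genuinely different route from the paper's. The paper's own argument is a one-line verification: combining the explicit product description of $\op{N}(U)$ from Lemma \ref{nu} (and its transpose for $U^+$) with the explicit form \eqref{perp} of $\frak h^\perp$, it observes that there is a neighborhood $\cal O$ of $0$ in $\frak g$ with $\op{N}(U)\cap\op{N}(U^+)\cap\exp(\frak h^\perp\cap\cal O)\subset\op{C}(H(U))$, which forces the dichotomy since $r_i\to e$ and $r_i\notin\op{C}(H(U))$. You avoid the explicit description of $\op{N}(U)$ altogether: from $r_i\in\op{N}(U)\cap\op{N}(U^+)$ you pass to $r_i\in\op{N}_G(H(U))$ (conjugation by such an $r_i$ preserves the closed connected subgroup generated by $U$ and $U^+$), hence $r_i\in H'(U)$ for large $i$ because the identity component is open and $r_i\to e$, and then the local product decomposition $H'(U)\times\frak h^\perp\to G$ yields $r_i=e$, contradicting $r_i\notin\op{C}(H(U))$. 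This is coordinate-free and in fact proves the slightly stronger local statement that $\op{N}(U)\cap\op{N}(U^+)\cap\exp(\frak h^\perp\cap\cal O)=\{e\}$ for $\cal O$ small; what the paper's route buys is that the needed containment is read off directly from structural facts it has already recorded (Lemma \ref{nu} and \eqref{perp}), with no discussion of components of $\op{N}_G(H(U))$. The ``computational alternative'' you sketch at the end is essentially the paper's actual proof. Two minor points of bookkeeping: the fact that $\op{N}(H(U))$ means the identity component of $\op{N}_G(H(U))$ is the notational convention stated before Example \ref{exam} (the example only computes it for $U_k$), and the injectivity you invoke should be phrased as injectivity of $(h,X)\mapsto h\exp X$ on a fixed neighborhood of $(e,0)$, applied to the two factorizations $r_i=e\cdot\exp X_i=r_i\cdot\exp 0$ --- which is exactly the uniformity issue you correctly flag.
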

 \begin{proof}
 By Lemma \ref{nu} and \eqref{perp}, there exists a neighborhood $\cal O$ of $0$ in $\frak g$ such that 
  $$  \op{N}(U)\cap \op{N}(U^+)   \cap \exp (\frak h^\perp \cap\cal O)\subset \op{C}(H(U)).$$
Hence the claim follows.  \end{proof}
 
\noindent{\bf Reductive subgroups of $G$.}
\begin{Def}
For a connected reductive algebraic subgroup $L<G$, we denote by $L_{nc}$ the maximal connected normal semisimple subgroup of $L$ with no compact factors.
\end{Def}

A connected reductive algebraic subgroup $L$ of $G$ is an almost direct product
\be\label{lnc} L=L_{nc} C T\ee where 
$C$ is a connected semisimple compact normal subgroup of $L$ and  $T$ is the central torus of $L$.
 If $L$ contains a unipotent element, then $L_{nc}$ is non-trivial, and simple,
  containing a conjugate of $A$, and  the center of $L$ is compact.

\begin{prop}\label{str} If $L<G$ is a connected reductive algebraic subgroup normalized by $A$ and containing a unipotent element, then 
$$L=H(U)C$$ where $U<N$ is a non-trivial connected subgroup and $C$ is a closed subgroup of $\op{C}(H(U))$. 
In particular, $L_{nc}$ and $\op{N}(L_{nc})$ are equal to $H(U)$ and $H'(U)$ respectively.
\end{prop}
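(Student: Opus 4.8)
The plan is to work entirely with the $A$-action on $\mathfrak{g}$. Since $A\simeq\br$ acts on $\mathfrak{g}=\mathfrak{so}(d,1)$ semisimply, $\mathfrak{g}$ decomposes into weight spaces $\mathfrak{g}=\mathfrak{n}^-\oplus(\mathfrak{m}\oplus\operatorname{Lie}(A))\oplus\mathfrak{n}^+$, where $\mathfrak{n}^\pm$ are the $(\mp1)$- and $(+1)$-weight spaces (the Lie algebras of $N=N^-$ and $N^+$) and $\mathfrak{m}\oplus\operatorname{Lie}(A)$ is the $0$-weight space. Because $L$ is normalized by $A$, its Lie algebra $\mathfrak{l}$ is $\operatorname{Ad}(A)$-invariant, so $\mathfrak{l}=(\mathfrak{l}\cap\mathfrak{n}^-)\oplus(\mathfrak{l}\cap(\mathfrak{m}\oplus\operatorname{Lie}(A)))\oplus(\mathfrak{l}\cap\mathfrak{n}^+)$. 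Since $L$ contains a unipotent element and is reductive, $L_{nc}$ is non-trivial and contains a conjugate of $A$ (by \eqref{lnc} and the remarks following it); I would first argue that after the reduction this conjugate can be taken to be $A$ itself — indeed $L_{nc}$ is normalized by $A$, its maximal $\br$-split torus is conjugate to $A$ in $G$, and a torus of $L_{nc}$ normalized by $A$ that is $G$-conjugate to $A$ must equal $A$ (here one uses that $A$ is a maximal split torus of $G$ and a rank-one computation). So $A<L_{nc}<L$ and $\operatorname{Lie}(A)\subset\mathfrak{l}$.

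Next, set $U:=\exp(\mathfrak{l}\cap\mathfrak{n}^-)$, which is a connected closed subgroup of $N$; it is non-trivial because $L$ contains a non-central unipotent (any nontrivial unipotent of $L$ commuting with $A$ would have to lie in the $0$-weight space, forcing triviality, so the unipotent has a nonzero $\mathfrak{n}^-$ or $\mathfrak{n}^+$ component, and by $\operatorname{Ad}(A)$-invariance and the $A$-action both the $\mathfrak{n}^-$ and $\mathfrak{n}^+$ components lie in $\mathfrak{l}$; one can arrange $\mathfrak{l}\cap\mathfrak{n}^-\ne0$, possibly after noting $\mathfrak{l}\cap\mathfrak{n}^+\ne0$ and using the Cartan involution $\theta$, which preserves $\mathfrak{l}$ since $L$ is reductive and $\theta$-stable up to conjugacy). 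Then $H(U)$ is the smallest simple closed subgroup containing $A$ and $U$, so $H(U)\subset L_{nc}$; as $\mathfrak{l}$ contains $\mathfrak{l}\cap\mathfrak{n}^-$ it contains its transpose $(\mathfrak{l}\cap\mathfrak{n}^-)^t$ too (again via $\theta$), hence $\mathfrak{h}_U\subset\mathfrak{l}$. I would then show $L_{nc}=H(U)$: since $G=\SO^\circ(d,1)$ has real rank one, any simple subgroup containing $A$ with $\mathfrak{n}^-$-part exactly $\mathfrak{l}\cap\mathfrak{n}^-$ is forced to be $H(U)$ — the non-compact part of $\mathfrak{l}$ is generated by its $A$-weight vectors, all of which lie in $\mathfrak{h}_U$. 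Finally, writing $L=L_{nc}CT$ as in \eqref{lnc} with $C$ compact semisimple normal and $T$ the central torus: the central torus $T$ commutes with $A\subset L_{nc}$, so $T\subset M A$; but $T$ is compact, hence $T\subset M$, and $T$ centralizes $H(U)$, so $T\subset\operatorname{C}(H(U))$; likewise $C$ commutes with $L_{nc}\supset A$ and is compact, so $C\subset\operatorname{C}(H(U))$. Thus $L=H(U)\cdot(CT)$ with $CT$ a closed subgroup of $\operatorname{C}(H(U))$, which is the asserted form; and then $L_{nc}=H(U)$, $\operatorname{N}(L_{nc})=\operatorname{N}(H(U))=H'(U)$ by Example \ref{exam}.

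The main obstacle I anticipate is the step identifying $L_{nc}$ with $H(U)$ rather than merely containing it — i.e., ruling out that $L_{nc}$ is strictly larger while still having $\mathfrak{l}\cap\mathfrak{n}^-=\operatorname{Lie}(U)$. This is where the rank-one structure of $G$ is essential: one must argue that a connected semisimple $A$-normalized subgroup of $\SO^\circ(d,1)$ is determined by its intersection with $\mathfrak{n}^-$ together with $A$, using that every element of $\mathfrak{l}_{nc}$ decomposes into $A$-weight components lying in $\mathfrak{l}$, that the $\pm1$-weight components generate $\mathfrak{l}_{nc}$ modulo its $0$-weight part, and that the $0$-weight part of $\mathfrak{h}_U$ already accounts for $\operatorname{Lie}(A)$ plus the $\mathfrak{m}$-directions forced by brackets $[\mathfrak{l}\cap\mathfrak{n}^-,(\mathfrak{l}\cap\mathfrak{n}^-)^t]$. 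A careful weight-space bookkeeping, together with the explicit description of $\mathfrak{h}_U$ and $\mathfrak{h}_U^\perp$ in \eqref{perp0}, closes this gap. The handling of the Cartan involution (to ensure $\mathfrak{l}$ is $\theta$-stable, so that $\mathfrak{l}\cap\mathfrak{n}^-\ne0\iff\mathfrak{l}\cap\mathfrak{n}^+\ne0$) is routine for reductive $\mathfrak{l}$ but should be stated, since it is used twice.
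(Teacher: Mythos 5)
Your overall plan (decompose $\mathfrak l$ into $\op{Ad}(A)$-weight spaces, set $U=\exp(\mathfrak l\cap\mathfrak n^-)$, and identify $L_{nc}$ with $H(U)$) is the same basic idea as the paper's treatment of the case $A<L_{nc}$, but the two steps you use to reduce to that case contain genuine gaps. First, your deduction that $A<L_{nc}$ is not established: you correctly observe that a split torus of $L_{nc}$ which is normalized by $A$ and $G$-conjugate to $A$ must equal $A$, but you never produce a split torus of $L_{nc}$ that is normalized by $A$ — and that existence is precisely the crux. The fact that the maximal split tori of $L_{nc}$ are $G$-conjugate to $A$ does not give you an $A$-invariant one, since $A$ only permutes them. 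The paper sidesteps this: it proves the statement first when $A<H$ for $H=L_{nc}$ (root-space decomposition of the simple algebra $\mathfrak h$ with respect to $\mathfrak a$), and in the general case it picks $g$ with $g^{-1}Hg\supset A$, applies the first case to get $g^{-1}Hg=H(U)$, and then uses the normalization hypothesis together with $\op{N}_G(A)=AM$ to conjugate $A$ and $g^{-1}Ag$ inside $H(U)$ and replace $g$ by an element $m\in M$, so that $H=H(mUm^{-1})$. Some substitute for this conjugation step is needed in your argument (for instance, that a maximal split torus of the reductive group $AL_{nc}$ containing $A$ meets the normal subgroup $L_{nc}$ in a maximal split torus, which is then centralized by $A$ and hence equal to $A$); as written, your first step assumes what it must prove.

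Second, both of your appeals to the Cartan involution rest on the assertion that ``$\theta$ preserves $\mathfrak l$ since $L$ is reductive and $\theta$-stable up to conjugacy,'' which is a non sequitur: $L$ is stable under \emph{some} Cartan involution, but what you use (to pass from $\mathfrak l\cap\mathfrak n^+\neq 0$ to $\mathfrak l\cap\mathfrak n^-\neq 0$, and to get $(\mathfrak l\cap\mathfrak n^-)^t\subset\mathfrak l$) is stability under the particular involution adapted to $A$ (matrix transpose), which is not given a priori and is in effect part of the conclusion. This gap is repairable once $A<L_{nc}$ is known: then $\mathfrak a\subset\mathfrak l_{nc}$, and simplicity of $\mathfrak l_{nc}$ (for instance, nondegeneracy of its Killing form, which pairs the $+1$ and $-1$ weight spaces) forces both $\mathfrak l_{nc}\cap\mathfrak n^{\pm}$ to be nonzero, after which the bracket bookkeeping you sketch with \eqref{perp0} does show $\mathfrak l_{nc}\cap\mathfrak n^+=(\mathfrak l_{nc}\cap\mathfrak n^-)^t$ and $\mathfrak l_{nc}=\mathfrak h_U$; this is exactly how the paper argues, with no Cartan involution at all. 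Your handling of the compact factors $C$ and $T$ at the end is fine. So the one missing idea is the reduction to the case $A<L_{nc}$, i.e.\ the paper's conjugation argument via $\op{N}_G(A)=AM$.
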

\begin{proof} 
If $L$ is normalized by $A$, then so is $L_{nc}$. 
Therefore it suffices to prove that
a connected  non-compact simple Lie subgroup $H<G$ normalized by $A$ is of the form $H=H(U)$ where
 $U<N$ is a non-trivial connected subgroup. 

 First, consider  the case when $A<H$. Let $\mathfrak{h}$ be the Lie algebra of $H$, and $\mathfrak{a}$ be the Lie algebra of $A$. Since $\mathfrak h$ is simple, its root space decomposition for the adjoint action of $\mathfrak a$
 is of the form $\mathfrak{h}=\mathfrak{z}(\mathfrak{a})\oplus\mathfrak{u^+}\oplus\mathfrak{u^-}$ where $\mathfrak u^{\pm}$ are the sum  of
 all positive and negative root subspaces respectively and $\mathfrak{z}(\mathfrak{a})$ is the centralizer of $\mathfrak a$.
Since the sum of all negative root subspaces for the adjoint action of $\mathfrak{a}$ on $\frak g$ is $\op{Lie}(N^-)$, it follows that $U:=\exp(\mathfrak{u}^-)<N^-$ and $H=H(U)$.

Now for the general case, $H$ contains a conjugate $gAg^{-1}$ for some $g\in G$. Hence
$g^{-1}Hg=H(U)$. Since $H(U)$ contains both $A$ and $g^{-1}Ag$, they must be conjugate within $H(U)$,
so $A=h^{-1}g^{-1}A gh$ for some $h\in H(U)$. Hence $gh\in \op{N}_G(A)=AM$. Therefore $H=gH(U)g^{-1}$ is equal to
$mH(U)m^{-1}$ for some $m\in M$. Since $m$ normalizes $N$ and $mH(U)m^{-1}=H(mUm^{-1})$, the claim follows. \end{proof}

\begin{cor}\label{dic}
Any connected closed subgroup $L$ of $G$ generated by unipotent elements is conjugate to either $U$ or $H(U)$
for some non-trivial connected subgroup $U<N$.
\end{cor}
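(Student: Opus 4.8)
The plan is to deduce this from the structure theory already developed, together with the key fact that any connected subgroup generated by unipotents is the identity component of its Zariski closure. First I would let $L$ be a connected closed subgroup of $G=\SO^\circ(d,1)$ generated by unipotent elements. The first step is to pass to the Zariski closure: let $\mathbf{L}$ be the Zariski closure of $L$ and $L'$ its identity component. Since $L$ is generated by one-parameter unipotent subgroups (a connected group generated by unipotents is generated by the one-parameter subgroups through its unipotent elements), and each such subgroup is Zariski connected and contained in $L'$, the group $L'$ is generated by unipotent one-parameter subgroups as well; moreover a standard argument (the group generated by unipotent one-parameter subgroups of an algebraic group is algebraic — Borel--Tits) gives $L=L'$, so $L$ is a connected algebraic subgroup. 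Being generated by unipotents, $L$ has no nontrivial characters, hence is contained in no proper parabolic unless it is unipotent; more to the point, $L$ equals its own derived-type radical decomposition has trivial torus part.

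The second step is the dichotomy on whether $L$ is unipotent. Case 1: $L$ consists entirely of unipotent elements. Then $L$ is a connected unipotent algebraic subgroup of $G$, hence conjugate into a maximal unipotent subgroup; since all maximal unipotent subgroups of $G$ are conjugate to $N$, some conjugate of $L$ lies in $N$, giving the first alternative with $U$ that conjugate of $L$. Case 2: $L$ contains a non-unipotent element. Then the Zariski closure argument shows $L$ is a connected reductive algebraic subgroup (a connected algebraic group generated by unipotents with a semisimple element present must be reductive — its unipotent radical would be normalized, but a group generated by unipotents that is not unipotent and has the structure forced here in rank one is reductive; concretely one checks the unipotent radical is trivial because $L\cap N$ together with a semisimple element forces an $\SO(2,1)$). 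Having reductive $L$, I would invoke Proposition \ref{str}: up to conjugacy $L$ is normalized by $A$ (since $L$ contains a one-parameter diagonalizable subgroup, which after conjugation we may take to be $A$, and $L$ being reductive, $A$-stable), so Proposition \ref{str} applies after conjugation and yields $L=H(U)C$ with $C\le \op{C}(H(U))$. Finally, since $L$ is generated by unipotents and $\op{C}(H(U))\simeq \SO(d-k-1)$ is compact and contains no unipotents, the image of $L$ in $\op{C}(H(U))$ under the projection $H'(U)\to \op{C}(H(U))$ is a compact connected group generated by unipotents, hence trivial; thus $C=\{e\}$ and $L=H(U)$, giving the second alternative.

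The main obstacle I expect is the bookkeeping in Case 2 to reduce to the hypothesis of Proposition \ref{str} — specifically, justifying that a conjugate of $L$ is normalized by $A$. The cleanest route: pick a unipotent $u\in L$ and a non-unipotent $g\in L$; because $G$ has real rank one, the Zariski closure of $L$ restricted to the span of these generators already contains a full $\SO^\circ(2,1)$ (this is where one uses that in rank one, a non-elementary subgroup is "large"), hence $L$ contains a conjugate of $A$; conjugating, assume $A\le L$; reductivity of $L$ (which I would get from: $L$ algebraic, generated by unipotents, and not unipotent, so its unipotent radical is a proper normal subgroup normalized by the $\SO^\circ(2,1)$, forcing it trivial) then gives that $L$ is $A$-stable, so Proposition \ref{str} applies. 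An alternative that sidesteps some of this is to cite Corollary \ref{dic}'s analogue already implicit in the text, but I think the honest argument is the two-case split above, with the reductivity verification being the only nonformal point — and that is short in rank one.
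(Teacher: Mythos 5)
Your overall architecture is essentially the paper's: dispose of the purely unipotent case by conjugating into $N$, identify the remaining group with $H(U)C$ via Proposition \ref{str}, and kill $C$ because images of unipotent elements under the projection $H'(U)\to\op{C}(H(U))$ are unipotent and a compact group has none (the paper instead quotes Shah's Levi decomposition $L=SV$ with $S$ semisimple without compact factors, identifies $S$ with $H(U)$ via Proposition \ref{str}, and then shows $V=\{e\}$). However, the step your Case 2 hinges on is missing, and the auxiliary claim you use to reach it is false as stated. First, a unipotent $u\in L$ and a non-unipotent $g\in L$ need not Zariski-generate anything containing $\SO^\circ(2,1)$: both may lie in $AN$ (e.g.\ $u\in U$ and $g\in A$ inside $L=H(U)$), whose Zariski closure is solvable. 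Second, and more seriously, your reductivity claim --- ``the unipotent radical is normalized by the $\SO^\circ(2,1)$, forcing it trivial'' --- is asserted with no reason, and it is precisely the content of the corollary. It is not a formal consequence of ``generated by unipotents and not unipotent'': in $\op{SL}_3(\br)$ the subgroup $\op{SL}_2(\br)\ltimes\br^2$ is generated by unipotent elements, is not unipotent, and has nontrivial unipotent radical normalized by a copy of $\op{SL}_2(\br)$. What saves the day in $G=\SO^\circ(d,1)$ is a rank-one fact, which is exactly the paper's argument: a nontrivial connected unipotent subgroup $V$ has a unique fixed point on $\mathbb S^{d-1}$, so $\op{N}_G(V)$ fixes that point and is contained in a conjugate of $NAM$, which is a compact extension of a solvable group and hence cannot contain a noncompact simple subgroup such as $H(U)$ or $\SO^\circ(2,1)$. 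Without this (or a citation replacing it, as the paper does with Shah's Lemma 2.9), your Case 2 does not close, and since your route needs the $\SO^\circ(2,1)$ \emph{before} reductivity, the two gaps compound.

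The rest is fine: once reductivity is established, the Levi quotient is semisimple with no compact factors (unipotents project trivially to compact or toral factors), so $L$ contains a conjugate of $A$ --- this is the correct replacement for your two-element argument --- and then $A\le L$ makes Proposition \ref{str} applicable, after which your elimination of the $\op{C}(H(U))$-factor is correct. You may also want a sentence justifying that the one-parameter group through a unipotent element of the closed subgroup $L$ stays in $L$ (so that Borel--Tits applies), which is where your appeal to algebraicity quietly starts.
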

\begin{proof}
The subgroup $S$ admits a Levi decomposition $L=S V$ where $S$ is a connected semisimple subgroup with no compact factors
  and $V$ is the unipotent radical of $S$ \cite[Lemma 2.9]{Sh1}.
If $S$ is trivial, the claim follows since any connected unipotent subgroup can be conjugate into $N$.
Suppose that $S$ is not-trivial. 
Then $S=H(U)$ for some non-trivial $U<N$ by Proposition \ref{str}. Unless $V$ is trivial,
the normalizer of $V$ is contained in a conjugate of $NAM$, in particular, it cannot contain $H(U)$. Hence $V=\{e\}$.
\end{proof}

\subsection*{Totally geodesic immersed planes}
Let $\Ga$ be a discrete, torsion free, non-elementary, subgroup of $G$, and consider the associated
hyperbolic manifold $$\M=\Ga\ba\bb{H}^d=\Gamma\ba G/K .$$
We refer to \cite{Rt} for basic properties of hyperbolic manifolds.
As in the introduction, we denote by $\La$ the limit set of $\Ga$ and by $\core\M$ the convex core of $\M$.
Note that $\core {\M}$ contains all bounded geodesics in $\M$.

We denote by $\op{F}\M\simeq \Gamma\ba G$ the bundle of all oriented orthonormal frames over $\M$.
We denote by \be\label{pi1} \pi:\Gamma\ba G\to \M=\Gamma \ba G/ K \ee the base-point projection.
By abuse of notation, we also denote by 
\be\label{pi2} \pi:G\to \bH^d=G/K\ee  the base-point projection. For $g\in G$,
$[g]$ denotes its image under the covering map $G\to\Gamma\ba G$.

Fix $1\le k\le d-2$ and let \begin{equation}\label{standardh} H=H(U_k)\simeq\SO^\circ (k+1, 1)\quad \text{and} \quad H'=H(U_k)\simeq \SO^\circ (k+1,1)\cdot \SO(d-k-1).\end{equation}

Let $C_0:=\br^k\cup\{\infty\}$ denote the unique oriented $k$-sphere in ${\mathbb S}^{d-1}$ stabilized by $H'$.
Then
$\tilde S_0:=\hull(C_0)$ is the unique oriented totally geodesic subspace of $\bH^d$  stabilized by $H'$, and
$\partial \tilde S_0=C_0$.
We note that $H'$ (resp. $H$) consists of all oriented frames $(v_0,\cdots, v_{d-1})\in G$
(resp. $(v_0, \cdots, v_k, e_{k+1}, \cdots, e_{d-1}) \in G$)
such that the $k$-tuple $(v_0, \cdots, v_k)$ is tangent to $ \tilde S_0$, compatible with the orientation of $\tilde S_0$.
The group $G$ acts transitively on the space of all oriented $k$ spheres in ${\mathbb S}^{d-1}$
giving rise to the isomorphisms of $G/H'$ with
 $$\cal C^{k}=\text{ the space of all oriented  $k$-spheres in ${\mathbb S}^{d-1}$}$$
and with
 $$  \text{ the space of all oriented totally geodesic  $(k+1)$-planes of $\bH^d$. }$$

 We discuss the fundamental group of an immersed geodesic $k$-plane $S\subset \M$. Choose a totally geodesic subspace $\tilde S$
of $\bH^d$ which covers $S$. Then $\tilde S=g \tilde S_0$ for some $g\in G$,
and the stabilizer of $\tilde S$ in $G$ is equal to $gH'g^{-1}$.
We have $$\Gamma_{\tilde S}=\{\gamma\in \Gamma: \gamma \tilde S =\tilde S\}=\Gamma\cap gH'g^{-1}$$
and get
an immersion $\tilde f: \Gamma_{\tilde S} \ba \tilde S \to \M$ with image $S$.
Consider the projection map 
\be\label{defp} p: gH'g^{-1} \to gHg^{-1}.\ee
Then $p$ is injective on $\Gamma_{\tilde S}$ and $$ \Gamma_{\tilde S} \ba \tilde S\simeq  p(\Gamma_{\tilde S}) \ba \tilde S$$
is an isomorphism, since $g\op{C}(H)g^{-1}$ acts trivially on $\tilde S$.
Hence $\tilde f$ gives an immersion \be\label{f} f: p(\Gamma_{\tilde S}) \ba \tilde S\to \M\ee with image
 $S$. We say $S$ properly immersed if $f$ is a proper map.

\begin{Prop} \label{proper} 
Let $x\in \Gamma\ba G$, and set $S:=\pi(x H')\subset \M$.
Then \begin{enumerate}
\item  $xH'$ is closed in $\Gamma\ba G$ if and only if $S$ is properly immersed in $\M$.
\item
If $M$ is convex cocompact and $S$ is properly immersed,
then $S$ is convex cocompact and 
$$\partial \tilde S\cap \Lambda=\Lambda(p(\Gamma_{\tilde S}))$$ for any geodesic subspace $\tilde S\subset \bH^d$ which covers  $S$.
\end{enumerate} 
\end{Prop}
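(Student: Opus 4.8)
The plan is to prove the two statements about the orbit $xH'$ and the immersed plane $S = \pi(xH')$ by transferring between the dynamical picture on $\Gamma\ba G$ and the geometric picture on $M$, using the structure already set up before the statement (the immersion $f\colon p(\Gamma_{\tilde S})\ba\tilde S\to M$, the projection $p\colon gH'g^{-1}\to gHg^{-1}$, and the identification of $xH'$ with a closed $\Gamma$-orbit of a geodesic plane in $\bH^d$).

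\smallskip
\emph{Part (1).} Write $x = [g]$, so $\tilde S = g\tilde S_0$ covers $S$ and $\Gamma_{\tilde S} = \Gamma\cap gH'g^{-1}$. The key point is that $xH'$ closed in $\Gamma\ba G$ is equivalent to $\Gamma g H'$ closed in $G/H' \cong \mathcal C^k$, which in turn is equivalent to $\Gamma\cdot\tilde S$ being closed in the space of totally geodesic $(k+1)$-planes of $\bH^d$. First I would show the forward direction: if $xH'$ is closed, then the quotient map $p(\Gamma_{\tilde S})\ba\tilde S\to \Gamma\ba G/K$ is proper. This is a standard fact about closed orbits of reductive subgroups — since $gH'g^{-1}$ is its own normalizer's identity component and acts properly on $\tilde S$ with kernel $g\op{C}(H)g^{-1}$ (which is compact), a closed orbit $xH'$ means $\Gamma_{\tilde S}\ba H'$ embeds properly into $\Gamma\ba G$, and quotienting by the compact $K\cap$-directions and by $\op{C}(H)$ preserves properness; one then checks $f$ is proper by following a divergent sequence in $p(\Gamma_{\tilde S})\ba\tilde S$ and using that its image cannot accumulate in $M$. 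For the converse, if $f$ is proper then no sequence $\gamma_n g H'$ with $\gamma_n\notin\Gamma_{\tilde S}$ can converge in $G/H'$: such convergence would produce $\tilde S$-points whose $M$-images stay bounded but whose $p(\Gamma_{\tilde S})\ba\tilde S$-representatives diverge, contradicting properness of $f$. The discreteness of $\Gamma$ then upgrades "no accumulation of distinct cosets" to closedness of $\Gamma g H'$, hence of $xH'$.

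\smallskip
\emph{Part (2).} Now assume $M$ is convex cocompact and $S$ properly immersed, so by (1) the orbit $xH'$ is closed and $\Delta := p(\Gamma_{\tilde S})$ is a discrete subgroup of $gHg^{-1}\cong\SO^\circ(k+1,1)$ acting on $\tilde S\cong\bH^{k+1}$. The geometric input is: $\Gamma$ convex cocompact means $\hull(\Lambda)/\Gamma$ is compact, and $\Lambda = \Lambda(\Gamma)$. I would first prove the inclusion $\Lambda(\Delta)\subseteq\partial\tilde S\cap\Lambda$: any limit point of $\Delta$ is a limit point of $\Gamma_{\tilde S}\le\Gamma$ acting on $\bH^d$ (the compact factor $g\op{C}(H)g^{-1}$ does not affect limit points), hence lies in $\Lambda$, and it lies in $\partial\tilde S$ since $\Delta$ preserves $\tilde S$. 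For the reverse inclusion $\partial\tilde S\cap\Lambda\subseteq\Lambda(\Delta)$, take $\xi\in\partial\tilde S\cap\Lambda$. Since $\xi\in\Lambda$, the geodesic ray in $\hull(\Lambda)$ from a basepoint toward $\xi$ projects into the compact set $\core M$; since $\xi\in\partial\tilde S$, I can choose this ray inside $\tilde S$. The ray's image in $M$ stays in a compact set, so — using properness of $f$ from (1) — the corresponding ray in $\Delta\ba\tilde S$ also stays bounded, which forces $\xi$ to be a limit point of $\Delta$. Convex cocompactness of $S$ then follows: $\Lambda(\Delta) = \partial\tilde S\cap\Lambda$ is the intersection of a round subsphere with $\Lambda$, the convex hull of this in $\tilde S$ is the intersection $\tilde S\cap\hull(\Lambda)$ (here one uses that $\hull$ of an intersection with a totally geodesic subspace is the intersection of the hulls, for round-sphere limit sets), and this has compact quotient by $\Delta$ because it sits inside $\hull(\Lambda)$ whose $\Gamma$-quotient is compact and the $\Gamma_{\tilde S}$-action is cocompact on $\tilde S\cap\hull(\Lambda)$.

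\smallskip
The main obstacle I anticipate is the reverse inclusion $\partial\tilde S\cap\Lambda\subseteq\Lambda(\Delta)$ in part (2) — it is precisely the step where properness of the immersion (not merely closedness of the orbit abstractly) is used in an essential, quantitative way, and one must be careful that a geodesic ray which is bounded \emph{downstairs} in $M$ lifts to a ray bounded in $\Delta\ba\tilde S$ rather than merely in some other intermediate quotient. The cleanest route is probably to argue contrapositively: if $\xi\notin\Lambda(\Delta)$ then the ray toward $\xi$ in $\Delta\ba\tilde S$ eventually leaves every compact set, and by properness of $f$ its image leaves every compact set in $M$, so $\xi\notin\Lambda(\Gamma) = \Lambda$. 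The supporting fact that $\core M$ contains all bounded geodesics of $M$ (noted just before the statement) is what lets one pass between "$\xi\in\Lambda$" and "the ray toward $\xi$ is bounded in $M$."
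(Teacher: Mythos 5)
Your part (1) is essentially the paper's own argument: the paper likewise reduces closedness of $xH'$ to properness of the orbit map $(H'\cap g^{-1}\Gamma g)\ba H'\to\Gamma\ba G$, observes that passing to quotients by the compact groups $H'\cap K$ and $K$ preserves properness, and identifies $(H'\cap g^{-1}\Gamma g)\ba H'/(H'\cap K)$ with $p(\Gamma_{\tilde S})\ba\tilde S$ via $p$, so properness of that map is properness of $f$. Where you genuinely diverge is part (2): the paper disposes of it by citing \cite[Theorem 4.7]{OS}, whereas you give a direct geometric proof. Your main line is correct: $\Lambda(p(\Gamma_{\tilde S}))\subset\partial\tilde S\cap\Lambda$ because the compact factor $g\op{C}(H)g^{-1}$ displaces basepoints boundedly, and for the reverse inclusion a ray in $\tilde S$ toward $\xi\in\partial\tilde S\cap\Lambda$ stays within bounded distance of $\hull(\Lambda)$, hence projects into a compact subset of $M$; properness of $f$ then keeps the ray in a compact part of $p(\Gamma_{\tilde S})\ba\tilde S$, which forces $\xi\in\Lambda(p(\Gamma_{\tilde S}))$. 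This buys a self-contained proof at the cost of the generality (geometrically finite case) and brevity that the citation provides.

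One intermediate assertion in your final step is false as stated, although it is not needed: the identity $\hull(\partial\tilde S\cap\Lambda)=\tilde S\cap\hull(\Lambda)$ can fail for a general convex cocompact $\Gamma$ (if $\partial\tilde S$ meets $\Lambda$ in exactly two points while $\tilde S$ cuts through the interior of $\hull(\Lambda)$, the left-hand side is a single geodesic and the right-hand side has nonempty interior in $\tilde S$); moreover your justification of cocompactness, quoting cocompactness of the $\Gamma_{\tilde S}$-action on $\tilde S\cap\hull(\Lambda)$, is circular as phrased. The repair only uses what you already have: since $\Lambda(p(\Gamma_{\tilde S}))\subset\Lambda$, the set $\hull\bigl(\Lambda(p(\Gamma_{\tilde S}))\bigr)$ lies in $\hull(\Lambda)$, so its image in $p(\Gamma_{\tilde S})\ba\tilde S$ is a closed subset of $f^{-1}(\core M)$, which is compact by properness of $f$ and compactness of $\core M$; hence the quotient of the hull is compact and $S$ is convex cocompact. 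With that substitution your argument for (2) is complete.
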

\begin{proof} Choose a representative $g\in G$ of $x$ and consider the totally geodesic subspace $\tilde S:=g \tilde S_0$.
Then $S=\op{Im}(f)$ as $f$ given by \eqref{f}. 
Now   the closedness of $xH'$ in $\Gamma\ba G$ is equivalent to
the properness of the map $ ( H' \cap  g^{-1}\Gamma g) \ba H' \to \Gamma\ba G$ induced from  map $h\mapsto xh$.
This in turn is equivalent to the properness of the induced map
$ (H' \cap  g^{-1}\Gamma g ) \ba H'/ (H'\cap K) \to \Gamma\ba G/K$. If $\Delta$ is the image of $H'\cap g^{-1}\Gamma g$ under the projection map $H'\to H$,
then the natural injective map $\Delta\ba H/H\cap K \to (H'\cap g^{-1} \Gamma g) \ba H'/H'\cap K$ is an isomorphism.
Since
$$p(\Gamma_{\tilde S}) \ba \tilde S= p(\Gamma_{\tilde S}) \ba gH/(H\cap K)\simeq \Delta\ba H/(H\cap K),$$
the first claim follows.
The second claim follows from \cite[Theorem 4.7]{OS}.

\end{proof}

\section{Hyperbolic manifolds with Fuchsian ends and  thick return time}\label{s:ri}

In this section, we study the closed $H(U)$-invariant subset $F_{H(U)}:=\RFPM \cdot H(U)$ when $\M=\Gamma\ba \bH^d$ is a convex cocompact manifold with Fuchsian ends. At the end of the section, we address
 the global thickness of the return time of any one-parameter
subgroup of $N$ to $\RFM$.

\begin{Def}\label{d:rigid} A convex cocompact hyperbolic manifold $\M=\Gamma\ba \bH^d$ is said to have non-empty
{\it Fuchsian ends}
if one of the following equivalent conditions holds:
\begin{enumerate}
\item its convex core  has non-empty interior and non-empty totally geodesic boundary.
\item the domain of discontinuity of  $\Gamma$ 
$$\Omega:={\mathbb S}^{d-1}-\Lambda=\bigcup_{i=1}^{\infty} B_i$$
is a dense union of infinitely many round balls with mutually disjoint closures.
\end{enumerate}
\end{Def}

In the whole section, let $\M$ be a  convex cocompact hyperbolic manifold of non-empty Fuchsian ends.
 \subsection*{Renormalized frame bundle} 
The renormalized frame bundle
 $\RFM\subset \FM$ is defined as the following $AM$-invariant subset
 $$\RFM=\{[g]\in \Gamma\ba G: g^{\pm}\in \Lambda\}=\{x\in \Gamma\ba G: xA\text{ is bounded}\}$$
i.e., the closed set
consisting of all oriented frames $(v_0, \cdots, v_{d-1})$ such that the complete geodesic through $v_0$ is contained in
$\core {\M}$.

Unless mentioned otherwise\footnote{At certain places, we use notation $A^+$ for
any subsemigroup of $A$},
we set 
$ A^+=\{a_t: t\ge 0\}.$ We define
$$\RFPM=\{[g]\in \Gamma\ba G: g^{+}\in \Lambda\}=\{x\in \Gamma\ba G: xA^+\text{ is bounded}\}$$
which is a closed $NAM$-invariant subset.
As $\pi(xNA)=\pi(xG)=\M$ for any $x\in \G\ba G$, we have $\pi(\RFPM)=\M$.

\begin{lem}\label{boundedA} For $x\in \RFPM$, $\overline{xA^+}$ meets $\RFM$.
\end{lem}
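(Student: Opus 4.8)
\textbf{Proof plan for Lemma \ref{boundedA}.}

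The plan is to use the fact that $x\in\RFPM$ means $g^+\in\Lambda$ for a representative $g$, and that the forward geodesic ray $\pi(xa_t)$, $t\ge 0$, stays in a bounded region of $M$ — indeed in $\core M$ — because it converges to a point of $\Lambda$. First I would recall that $\core M$ is compact since $M$ is convex cocompact, so the forward ray $\{xa_t : t\ge 0\}$, which projects into $\core M$, is precontained in the compact set $\pi^{-1}(\core M)\cap\RFPM$ — more precisely, $xA^+$ is bounded by the very definition of $\RFPM$. Hence $\overline{xA^+}$ is compact and non-empty, and the issue is purely to show this compact set meets the smaller closed set $\RFM=\{y : y^\pm\in\Lambda\}$.

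The key step is to take a sequence $t_n\to+\infty$ and consider $y_n:=xa_{t_n}$. Since $\overline{xA^+}$ is compact, after passing to a subsequence $y_n\to y\in\overline{xA^+}$. I claim $y\in\RFM$. For the forward endpoint: writing $y_n=[ga_{t_n}]$, we have $(ga_{t_n})^+=g^+\in\Lambda$ for every $n$, and since the endpoint map $h\mapsto h^+$ (or rather its $\Gamma$-equivariant version to $\Lambda$) is continuous on $G$ and $\Lambda$ is closed in $\mathbb S^{d-1}$, the limit satisfies $y^+\in\Lambda$. For the backward endpoint: the point is that flowing forward for a long time pushes the backward endpoint toward $g^+$ as well; concretely, $(ga_{t_n})^- = a_{-t_n}$ acting does not literally fix things, but one checks that for any fixed frame $g$, the backward endpoint $g^-$ is unchanged under $a_t$ (since $A$ fixes both $g^+$ and $g^-$!). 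Wait — that is the cleaner observation: $A$ fixes the ordered pair $(g^+,g^-)$, so in fact $y_n^- = g^-$ for all $n$ as well. Thus the only real content is establishing $g^-\in\Lambda$ for the limiting frame, which does not follow from $x\in\RFPM$ directly.

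So the honest approach is: the forward ray $\pi(xa_t)$ is a geodesic ray in $M$ converging to the image of $g^+\in\Lambda$, hence it eventually enters and stays in $\core M$ (here I use that a geodesic ray converging to a limit point is eventually inside any neighborhood of $\core M$, and by convex cocompactness $\core M$ is compact with the rest of $M$ consisting of the ends which geodesics converging to $\Lambda$ must eventually leave — or simply that $\overline{xA^+}$, being compact and $A^+$-invariant in the closure, contains an $A$-recurrent point). Take a recurrence time sequence: there exist $s_n\to\infty$ and $y\in\overline{xA^+}$ with $ya_{s_n}\to y$; such $y$ exists because $\overline{xA^+}$ is a non-empty compact set on which $A^+$ acts, so it contains a point whose forward orbit returns near itself (e.g. a minimal set for the $A^+$-action, or just pick any limit point $y$ of $xa_{t_n}$ and note $\overline{yA}\subset\overline{xA^+}$ so $yA$ is bounded). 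For such $y$ with $\overline{yA}$ bounded, both $y^+$ and $y^-$ lie in $\Lambda$: boundedness of the full orbit $\overline{yA}$ in $\Gamma\backslash G$ means the bi-infinite geodesic through $y$ projects into a bounded, hence relatively compact, subset of $M$, which forces both endpoints into $\Lambda$ by the definition of the limit set. Therefore $y\in\RFM\cap\overline{xA^+}$.

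The main obstacle — really the only non-formal point — is producing the point $y\in\overline{xA^+}$ whose \emph{entire} $A$-orbit (not just forward orbit) is bounded; the forward-orbit boundedness is free from the definition of $\RFPM$, but backward-boundedness requires a recurrence/minimality argument for the $A^+$-action on the compact set $\overline{xA^+}$. I expect the cleanest execution is: let $Y\subset\overline{xA^+}$ be a non-empty compact $A^+$-invariant subset that is minimal; pick $y\in Y$; then $\overline{yA^+}=Y$ and, by minimality together with $yA^+$ accumulating on $y$ from recurrence, one gets $y\in\overline{\{yA^+\}}$ backward as well, so $\overline{yA}=Y$ is bounded, whence $y^\pm\in\Lambda$ and $y\in\RFM$. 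I would write this up in a few lines once the minimal-set existence is invoked.
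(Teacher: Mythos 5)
Your proposal is correct and, at its core, is the same argument as the paper's: take a limit point $y$ of $xa_{t_n}$ with $t_n\to\infty$, observe that the full orbit $yA$ lies in $\overline{xA^+}$ (since $ya_{-s}=\lim xa_{t_n-s}$ with $t_n-s\ge 0$ eventually), so $yA$ is bounded and hence $y\in\RFM$. The detours through endpoint continuity and minimal sets for the $A^+$-action are unnecessary — the parenthetical direct argument already suffices, exactly as in the paper.
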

\begin{proof}
Take any sequence $a_i\to \infty$ in $A^+$. 
Since $xA^+$ is bounded, 
$xa_i$ converges to some $x_0\in \overline{xA^+}$ by passing to a subsequence.
On the other hand, as $A=\liminf a_i^{-1}A^+$, we have
 $x_0A\subset \limsup  (xa_i) (a_i^{-1}A^+)\subset \overline{xA^+}$.
Since $x\in \RFPM$, $\cl{xA^+}$ is bounded, so is $x_0A$. Hence $x_0\in \RFM$ as desired.
\end{proof}

\subsection*{$H(U)$-invariant subsets: $F_{H(U)}, F_{H(U)}^*$, $\partial F_{H(U)}$}
Fix a non-trivial connected subgroup $U<N$, and consider the associated
subgroups $H(U)$ and $H'(U)$ as defined in section \ref{s:ba}.

We define
\begin{equation}\label{fhu} F_{H(U)}:=\op{RF}_+\M\cdot H(U). \end{equation}

We denote by $F_{H(U)}^*$ the interior of $F_{H(U)}$ and by $\partial {F_{H(U)}}$ the boundary of $F_{H(U)}$.
When there is no room for confusion, we will omit the subscript $H(U)$ and simply write $F, F^*$ and $\partial F$.

If $C\subset \bb S^{d-1}$ denotes the oriented $k$-sphere stabilized by $H(U)$, then $g\in F_{H(U)}$ if and only if $gC\cap \La\neq\emptyset$. Therefore the closedness of $F_{H(U)}$  follows from the compactness of $\La$. The set $F_{H(U)}$ is also $H'(U)$-invariant, since $\RFPM$ is $M$-invariant and $\op{C}(H(U))$ is contained in $ M$.
For $g\in G$, we denote by $C_g=C_{gH(U)}\subset \mathbb S^{d-1}$  the sphere given by the boundary of the geodesic plane $\pi(gH(U))$. Then
$\hull C_g=\pi(g(H(U))$, and  
$C_g= gH(U)^+= gH(U)^- $ where $H(U)^{\pm}=\{h^{\pm}: h\in H(U)\}$.
It follows that
\be\label{fhu2} F_{H(U)}=\{[g]\in \Gamma\ba G: C_g\cap \La \ne \emptyset\}.\ee

\begin{lem} Fix $x=[g]\in \Gamma\ba G$. Let $L$ be a closed subgroup of $G$ such that
the closure of  $\pi(gL)$ in $\bH^d\cup \mathbb S^{d-1}$ does not meet $\La$.
Then the map $L\to xL\subset \Gamma\ba G$ given by $\ell \mapsto  x\ell $ is a proper map, and hence $xL$ is closed.
\end{lem}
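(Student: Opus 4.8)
The plan is to prove the stronger (and cleaner) statement that $\ell\mapsto x\ell$ is a proper map; since $\Gamma\ba G$ is locally compact Hausdorff, a proper continuous map into it has closed image, which gives the closedness of $xL$ for free. Properness will be verified by contradiction. Suppose it fails: then there is a compact $Q\subset\Gamma\ba G$ and a sequence $\ell_i\in L$ escaping every compact subset of $L$ with $x\ell_i\in Q$ for all $i$. The first reduction is to pass from $\Gamma\ba G$ to $\bH^d$: since the orbit map $h\mapsto h(o)$, $G\to\bH^d$, is proper ($K$ is compact) and $L$ is \emph{closed} in $G$, the points $p_i:=\pi(g\ell_i)=g\ell_i(o)\in\pi(gL)$ cannot stay in a compact subset of $\bH^d$ — otherwise $g\ell_i$, hence $\ell_i$, would lie in a fixed compact subset of $L$. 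Passing to a subsequence we may thus assume $p_i\to\xi$ for some $\xi\in\mathbb S^{d-1}$, and since $p_i\in\pi(gL)$ we get $\xi\in\overline{\pi(gL)}$, so $\xi\notin\La$ by hypothesis.

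\textbf{The contradiction.} Because $x\ell_i=\Gamma g\ell_i\in Q$, after a further subsequence $\Gamma g\ell_i\to\Gamma g_0$, and by lifting the convergence to $G$ there are $\gamma_i\in\Gamma$ with $\gamma_i g\ell_i\to g_0$; in particular $\gamma_i p_i\to q:=g_0(o)\in\bH^d$. The $\gamma_i$ cannot be eventually constant, for a constant value $\gamma$ would give $\gamma p_i\to\gamma\xi\in\mathbb S^{d-1}$, contradicting $\gamma p_i\to q\in\bH^d$. Hence, after a subsequence, the $\gamma_i$ are distinct; since $\op{Stab}_\Gamma(o)=\Gamma\cap K$ is finite, we may also assume the orbit points $\gamma_i^{-1}o$ are distinct, and proper discontinuity of the $\Gamma$-action forces $\gamma_i^{-1}o\to\infty$ in $\bH^d$, so (subsequence) $\gamma_i^{-1}o\to\eta$ for some $\eta\in\La$ by the very definition of the limit set. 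Finally, $d(o,\gamma_i p_i)$ is bounded (it converges to $d(o,q)$), hence $d(\gamma_i^{-1}o,p_i)=d(o,\gamma_i p_i)$ is bounded; two sequences of $\bH^d$ at bounded distance have the same limit in $\mathbb S^{d-1}$ whenever one of them converges there, so $p_i\to\eta$ and therefore $\xi=\eta\in\La$, contradicting $\xi\notin\La$.

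\textbf{Main obstacle.} There is no deep difficulty here; the argument is a soft escape-to-the-boundary argument. The only points that require genuine care are (i) justifying that $p_i$ actually leaves every compact set — this is exactly where one uses both the properness of $G\to\bH^d$ and the closedness of $L$, so that the divergence of $\ell_i$ in $L$ is not absorbed by a compact stabilizer of $o$ inside $L$; and (ii) the hyperbolic-geometry fact that points at bounded hyperbolic distance share the same boundary endpoint (equivalently, the Gromov product $(p_i\mid\gamma_i^{-1}o)_o\to\infty$), which must be invoked explicitly. A minor bookkeeping point is the passage to subsequences to make the $\gamma_i^{-1}o$ distinct when $\Gamma$ has torsion. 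Once properness is established, the closedness of $xL$ follows formally.
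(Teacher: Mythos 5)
Your proof is correct and follows essentially the same route as the paper: assume a sequence $\ell_i\to\infty$ in $L$ with $x\ell_i$ convergent, lift via elements $\gamma_i\in\Gamma$, use closedness of $L$ and properness of $G\to\bH^d$ to force $\pi(g\ell_i)\to\infty$ in $\bH^d$, and identify its boundary accumulation point with a limit of a $\Gamma$-orbit, hence a point of $\overline{\pi(gL)}\cap\La$, contradicting the hypothesis. The only cosmetic difference is that the paper tracks the orbit points $\gamma_i^{\pm1}\pi(g_0)$, which lie at distance tending to $0$ from $\pi(g\ell_i)$, so the "bounded distance implies same boundary limit" fact and the proper-discontinuity/distinctness bookkeeping you include are not needed.
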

\begin{proof} Suppose that   $x\ell_i$ converges to some $[g_0]\in \Gamma\ba G$ for some sequence  $\ell_i\to \infty$ in $L$. Then there exists $\gamma_i\in\Gamma$ such that $d(\gamma_i \pi(g\ell_i), \pi(g_0))=d(\pi(g\ell_i), \gamma_i \pi(g_0))\to 0$ as $i\to \infty$.
As $g\ell_i \to \infty$, $\gamma_i \pi(g_0)$ converges to a limit point $\xi\in \La$, after passing to a subsequence.
Hence $\overline{\pi(gL)}\cap \La\ne \emptyset$.
\end{proof}

This lemma implies that if $x\notin \RFPM$ (resp. $x\notin F_{H(U)}$) , then $xU$ (resp. $xH(U)$) is closed for any closed subgroup $U<N$.

\begin{lem}\label{fhu3} If $\M$ is a  convex cocompact hyperbolic manifold of non-empty Fuchsian ends, then
$$F_{H(U)}=\{x \in \Gamma\ba G: \pi(\overline{xH(U)})\cap \core \M\ne \emptyset\} .$$
\end{lem}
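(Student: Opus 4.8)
The plan is to prove the two inclusions separately, using the already-established identity \eqref{fhu2}, namely $F_{H(U)}=\{[g]\in\Gamma\ba G : C_g\cap\Lambda\neq\emptyset\}$, together with the characterization of $\RFM$ as the frames whose full geodesic stays in $\core M$ and the fact that $\core M$ has totally geodesic boundary.

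\textbf{The inclusion $\subseteq$.} Let $x=[g]\in F_{H(U)}$, so that $C_g\cap\Lambda\neq\emptyset$. Pick $\xi\in C_g\cap\Lambda$. Since $\pi(gH(U))=\hull(C_g)$ is a totally geodesic plane in $\bH^d$ with boundary $C_g$, and $\xi$ is a limit point lying on $\partial\pi(gH(U))$, I would choose a geodesic ray inside $\pi(gH(U))$ converging to $\xi$; concretely, pick $h_t\in H(U)$ with $\pi(g h_t)\to\xi$ as $t\to\infty$ (e.g. using a one-parameter subgroup of $H(U)$ conjugate to $A$ whose forward endpoint is $\xi$). Projecting to $M$, the rays $\pi([g]h_t)$ have a point in $\overline{\pi(xH(U))}$ whose distance to $\Lambda$-directions forces accumulation in $\core M$: more precisely, since $\xi\in\Lambda$, the geodesic ray towards $\xi$ eventually enters and stays within any neighborhood of $\core M$, and by convex cocompactness the limit of a subsequence of $\pi([g]h_{t_i})$ lies in the compact set $\core M$. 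Hence $\pi(\overline{xH(U)})\cap\core M\neq\emptyset$.

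\textbf{The inclusion $\supseteq$.} Conversely, suppose $\pi(\overline{xH(U)})\cap\core M\neq\emptyset$, i.e. there exist $h_i\in H(U)$ with $\pi(xh_i)\to p\in\core M$. Lift: choose $\gamma_i\in\Gamma$ with $\gamma_i\,\pi(gh_i)\to \tilde p\in\hull(\Lambda)$. Now $\gamma_i g h_i$ lies in $\gamma_i g H(U)$, a $G$-translate of $gH(U)$, so $\gamma_i\pi(gH(U))=\pi(\gamma_i g H(U))$ is a totally geodesic plane passing within bounded distance of the fixed point $\tilde p\in\hull(\Lambda)$. The key geometric input is that a totally geodesic plane which meets (or comes uniformly close to) $\hull(\Lambda)$ must have a boundary sphere meeting $\Lambda$: indeed, if $\partial(\gamma_i\pi(gH(U)))\cap\Lambda=\emptyset$, then since $\mathbb S^{d-1}-\Lambda=\bigcup B_j$ is a disjoint union of round balls, the boundary sphere of that plane would be contained in a single $B_j$ (a sphere disjoint from $\Lambda$ and from all the spheres $\partial B_j$ is contained in one complementary ball, by a connectedness/separation argument), whence the whole plane $\hull$ lies in the corresponding half-space disjoint from $\hull(\Lambda)$ — in fact at positive distance from it when one works with the compact $\core M$ — contradicting that it comes within bounded distance of $\tilde p$. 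So $\partial(\gamma_i\pi(gH(U)))\cap\Lambda\neq\emptyset$, i.e. $C_{\gamma_i g}\cap\Lambda\neq\emptyset$; since $\Gamma\Lambda=\Lambda$ this gives $C_g\cap\Lambda\neq\emptyset$, hence $x\in F_{H(U)}$ by \eqref{fhu2}.

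\textbf{Main obstacle.} The delicate point is the geometric separation claim in the $\supseteq$ direction: a geodesic plane whose boundary sphere misses $\Lambda$ really does stay at positive (indeed uniformly positive, relative to the compact $\core M$) distance from $\hull(\Lambda)$. This uses in an essential way the Fuchsian-ends hypothesis — that $\mathbb S^{d-1}\setminus\Lambda$ is a disjoint union of round balls with \emph{mutually disjoint closures} — so that any sphere $C$ with $C\cap\Lambda=\emptyset$ is trapped inside one ball $B_j$ and thus $\hull(C)$ is separated from $\hull(\Lambda)$ by the geodesic hyperplane $\partial(\hull B_j)$. I would prove this as a separate elementary lemma in $\bH^d\cup\mathbb S^{d-1}$ and then quote it; everything else is routine compactness and lifting. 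One should also handle the degenerate possibility that $h_i$ stays bounded, in which case $\pi(xh_i)\to p\in\core M$ directly gives a point of $\pi(xH(U))$ in $\core M$, so some frame in $xH(U)$ already has its geodesic endpoints near $\Lambda$, and continuity of $g\mapsto C_g$ finishes it.
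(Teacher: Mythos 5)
Your proof is correct, and on the crucial inclusion $\supseteq$ it is essentially the paper's argument: both trap a sphere $C$ with $C\cap\Lambda=\emptyset$ inside a single component $B_j$ of $\Omega$ (connectedness of $C$), use convexity of $B_j$ to place $\hull(C_g)$ in $\hull(B_j)$, disjoint from $\hull(\Lambda)$, and conclude that $\overline{xH(U)}$ cannot project into $\core M$. The difference is in how that conclusion is drawn: the paper asserts that the orbit $\Gamma\,\pi(gH(U))$ is a closed subset of $\bH^d$ disjoint from $\hull(\Lambda)$, while you replace this by a separation estimate; your route is arguably cleaner, and the deferred lemma is indeed elementary — disjointness of the hulls together with disjointness of the ideal boundaries $C_g$ and $\Lambda$ gives $d(\hull(C_g),\hull(\Lambda))>0$ by compactness of $\bH^d\cup\mathbb S^{d-1}$, and the uniformity over the translates is immediate from $\Gamma$-invariance of $\hull(\Lambda)$, since $d(\gamma\,\hull(C_g),\hull(\Lambda))=d(\hull(C_g),\hull(\Lambda))$ for all $\gamma\in\Gamma$ (this equivariance, rather than compactness of $\core M$, is the cleanest justification, and it directly contradicts $\gamma_i\pi(gh_i)\to\tilde p\in\hull(\Lambda)$). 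On the inclusion $\subseteq$ you take a genuinely different, more hands-on route: the paper uses $F_{H(U)}=\RFPM\cdot H(U)$ to move $x$ into $\RFPM$ and then invokes Lemma \ref{boundedA} to find $x_0\in\overline{xA^+}\cap\RFM$, whose bounded geodesic $\pi(x_0A)$ lies in $\core M$; you instead pick $\xi\in C_g\cap\Lambda$ and use that a geodesic ray in the plane with forward endpoint $\xi$ is forward-asymptotic to a geodesic in $\hull(\Lambda)$, then extract a limit frame over the compact set $\core M$. Both are fine (neither uses the Fuchsian-ends hypothesis for this direction); the paper's version has the small advantage of exhibiting an entire bounded geodesic of $\core M$ inside $\pi(\overline{xH(U)})$, which is in the spirit of how $\RFM$ is used elsewhere, whereas yours needs the (true, but worth stating) asymptoticity fact and a frame-compactness step.
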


\begin{proof}
Denote by $Q$  the subset on the right-hand side of the above equality. To show $F_{H(U)}\subset Q$,
let $x\in F_{H(U)}$. By modifying it using an element of $H(U)$, we may assume that $x\in \RFPM$.
By Lemma \ref{boundedA}, $\overline{xA^+}$ contains $x_0\in \RFM$. Since $x_0A$ is bounded, $\pi(x_0A)$
is a bounded geodesic, and hence
$$\pi(x_0A) \subset  \pi(\overline{xH(U)})\cap \core {\M}$$ because $\core {\M}$ contains all bounded geodesics.
 Therefore $x\in Q$.
To show the other inclusion $Q\subset F_{H(U)}$, we use the hypothesis on $\M$.
 Suppose $x=[g]\notin F_{H(U)}$. Then $C_g\cap \La=\emptyset$, and hence
$C_g$ must be contained in
a connected component, say $B_i$, of $\Omega$. Hence $\pi (gH(U))=\op{hull}(C_g) $ is contained in the interior
of $\hull (B_i)$, which is disjoint from $\hull(\Lambda)$,
by the convexity of $B_i$. Therefore the orbit
 $\Gamma \pi (gH(U)) $ is a closed subset of $\bH^d$, disjoint from
$\hull(\Lambda)$. Hence $x\notin Q$, proving the claim. 
\end{proof}

Note also that
\begin{align}\label{convex} \RFM\cdot H(U)&=\{[g]\in \Gamma\ba G: \# C_g\cap \La\ge 2\}
\\&=\{x \in \Gamma\ba G: \pi({xH(U)})\cap \core {\M}\ne \emptyset\} \notag.\end{align}
This can be seen using the fact that for any two distinct points $\xi^+, \xi^-\in C_g$,
there exists $h\in H(U)$ such that $gh(\infty)=\xi^+$ and $gh(0)=\xi^-$; this fact is clear if $H(U)=H(U_k)$
for some $k$, and a general case follows since $H(U)=mH(U_k)m^{-1}$ for some $m\in M$, and $M$ fixes both $0$ and $\infty$.

Denote by $\M^*$ the interior of the core of $\M$ and by $F_{H(U)}^*$ the interior of $F_{H(U)}$.
 Then  $$F_{H(U)}^*=\{x\in \Gamma\ba G: \pi(x H(U))\cap \M^* \ne \emptyset\}.$$
 To see this, note that the right-hand side is equal to
\begin{align}\label{fffs} &\{ [g]\in F_{H(U)} : \hull C_g \cap \text{Interior}( \hull (\La))\ne \emptyset\} \notag
\\&=\{ [g]\in F_{H(U)} : C_g \not\subset \cl B_i\text{ for any $ i$}\}
\end{align}
which can then be seen to be equal to $F_{H(U)}^*$ in view of \eqref{fhu2}.
Note that \eqref{fffs} implies that for $[g]\in F_{H(U)}^*$, $\#C_g\cap \La \ge 2$ and hence 
\be \label{FFF}F_{H(U)}^*\subset \RFM \cdot H(U).\ee
In particular, $\RFM \cdot H(U)$ is dense in $F_{H(U)}$.

\begin{lemma} \label{lem.R1}\label{ru}
We have $$\RFPM\cap F_{H(U)}^*\subset \RFM\cdot U .$$
\end{lemma}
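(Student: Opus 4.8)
The plan is to reduce the inclusion to a single computation with endpoints on the sphere $\bb S^{d-1}$. Fix $x=[g]\in\RFPM\cap F_{H(U)}^*$. It suffices to produce $u\in U$ with $[gu]\in\RFM$, for then $x=[gu]\,u^{-1}\in\RFM\cdot U$. First I would record what the two hypotheses give. From $x\in\RFPM$ one gets $g^+=g(\infty)\in\La$, and since $U<N$ fixes $\infty$ this point is unchanged under right translation by $U$, so $(gu)^+=g^+\in\La$ for every $u\in U$. From $x\in F_{H(U)}^*$ one invokes the inclusion $F_{H(U)}^*\subset\RFM\cdot H(U)$ of \eqref{FFF} together with the description $\RFM\cdot H(U)=\{[h]:\#(C_h\cap\La)\ge 2\}$ of \eqref{convex}, to conclude that the sphere $C_g$ meets $\La$ in at least two points.

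The one step that is not purely formal is the identification of $C_g$ with the orbit of $g^-$ under $U$. Writing $U=mU_km^{-1}$ with $m\in M$ and $k=\dim U$, so that $H(U)=mH(U_k)m^{-1}$, I would use that $H(U_k)=\SO^\circ(k+1,1)$ acts transitively on the boundary of its invariant geodesic $(k+1)$-plane $\hull(\br^k\cup\{\infty\})$; since that boundary is the $k$-sphere $\br^k\cup\{\infty\}=\overline{U_k(0)}$, the closure in $\bb S^{d-1}$ of the $k$-dimensional linear subspace $U_k(0)=\br^k\subset\br^{d-1}$, and $H(U_k)^+$ is exactly the $H(U_k)$-orbit of $\infty$, we get $H(U_k)^+=\overline{U_k(0)}$. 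Transporting by $m$, which fixes both $0$ and $\infty$, gives $H(U)^+=\overline{U(0)}$, where $U(0)=\{u(0):u\in U\}\subset\br^{d-1}$ is a $k$-dimensional linear subspace and $\overline{U(0)}=U(0)\cup\{\infty\}$. Since $C_g=gH(U)^+$, this yields $C_g=g\,\overline{U(0)}=g(U(0))\cup\{g^+\}$; and because $U(0)\subset\br^{d-1}$ avoids $\infty$, the point $g^+=g(\infty)$ lies in $C_g$ but not in $g(U(0))$, whence $C_g\setminus\{g^+\}=g(U(0))=\{\,(gu)^-:u\in U\,\}$.

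The final step combines these facts: $g^+\in C_g\cap\La$ and $\#(C_g\cap\La)\ge 2$, so there is a point $\xi\in(C_g\cap\La)\setminus\{g^+\}$, and by the previous step $\xi=(gu)^-$ for some $u\in U$; since also $(gu)^+=g^+\in\La$, we conclude $[gu]\in\RFM$ and hence $x\in\RFM\cdot U$. I do not anticipate a genuine obstacle here: the argument rests on the two already-established descriptions \eqref{FFF} and \eqref{convex} of the relevant subsets, together with the elementary identification $C_g=g\,\overline{U(0)}$, for which the structure of $H(U)$ needed is recorded in Example \ref{exam} and the surrounding discussion in Section \ref{s:ba}. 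The only point warranting care is ensuring that the $U$-orbit of $g^-$ sweeps out exactly $C_g$ minus the single point $g^+$, rather than possibly missing more of $C_g$; this is precisely where one uses that $H(U_k)=\SO^\circ(k+1,1)$ acts transitively on the boundary of its invariant geodesic plane.
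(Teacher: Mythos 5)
Your proposal is correct and follows essentially the same route as the paper's proof: both reduce to showing $\#\,(C_g\cap\La)\ge 2$ (you via the already-established \eqref{convex} and \eqref{FFF}, the paper by the equivalent observation that $\#\,C_g\cap\La=1$ would force $C_g\subset\cl{B_i}$ and hence $[g]\notin F^*_{H(U)}$), and both then use the identity $C_g=gU(0)\cup\{g(\infty)\}$ to pick $u\in U$ with $(gu)^-\in\La$ while $(gu)^+=g^+\in\La$. Your transitivity argument for $H(U_k)$ on the boundary sphere just fills in the detail the paper dismisses as clear for $U=U_k$, $g=e$.
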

\begin{proof}  
Let $y\in\op{RF}_+\M\cap F_{H(U)}^*$. We need to show that $yU\cap\op{RF}\M\neq\emptyset.$
Choose $g\in G$ so that $[g]=y$.   As $y\in \RFPM$, $g^+=g(\infty)\in \La$,
and hence $C_g\cap \Lambda\ne \emptyset$.
If $\# C_g\cap \Lambda =1$, then $C_g$ must be contained in $\overline{B_i}$ for some $i$, which implies
$[g]\notin  F^*_{H(U)}$. Therefore $\# C_g\cap \Lambda \ge 2$. We note that
 $gU(0) \cup \{g(\infty)\}= C_g$; this is clear when $U=U_k$ for some $k\ge 1$ and $g=e$, to which a general case is reduced.
Hence
there exists $u\in U$ such that $gu(0)\in \Lambda$. Since $gu(\infty) =g(\infty)\in \La$, we have $yu=[g]u\in \RFM$.
\end{proof}

We denote by  $\partial {F_{H(U)}}$ the boundary of $F_{H(U)}$, that is,
$$\partial F_{H(U)}=F_{H(U)} - F^*_{H(U)}=\{[g]\in F_{H(U)}: C_g\subset \cl{B_i}\text{ for some $i$}\} .$$
When there is no room for confusion, we will omit the subscript $H(U)$ and simply write $F, F^*$ and $\partial F$.

We call an oriented frame $g=(v_0, \cdots, v_{d-1})\in \FM=G$ a boundary frame
if the first $(d-1)$ vectors $v_0, \cdots, v_{d-2}$ are tangent to the boundary of $\core {\M}$.
Set $$\check H:=H(U_{d-2})= \SO^\circ (d-1,1) ,$$
and denote by $\check V$ the one-dimensional subgroup $\br e_{d-1} $ of $N=\br^{d-1}$; note that $ \check V=(\check H\cap N)^\perp$.

We denote by $\op{BF}\M$ the set of all boundary frames of $\M$;
it is a union of  compact $\check H$-orbits:
\begin{equation}\label{eq.BFMDEF}
\op{BF}\M=\bigcup\limits_{i=1}^k z_i\check H
\end{equation}
such that $\pi(z_i \check H)=\Gamma \ba \Gamma \hull(B_i)$ for some component $B_i$ of $\Omega$.

\noindent{\bf The boundary $\partial F_{H(U)}$ for $U<\check H\cap N$.}
 Suppose that $U$ is contained in $\check H\cap N=\br^{d-2}$. Then  there exists a one-parameter semigroup $\check V^+$ of $\check V$
 such that
\begin{equation*}
\partial F= \BFM \cdot  \check V^+ \cdot H'(U). \end{equation*}
We use the notation $\check V^-=\{v^{-1}:v\in \check V^+\}$.
Note that
\begin{equation}\label{bfmz}
\partial F \cap \RFM =\BFM \cdot \op{C}(H(U))\;\; \text{and}\;\; 
\partial F \cap \RFPM =\BFM \cdot \check V^+ \cdot \op{C}(H(U)).
\end{equation}

For a general proper connected closed subgroup $U<N$, $mUm^{-1}\subset \check H\cap N$ for some $m\in M$, and
$$\partial F\cap \RFM = \BFM \cdot m \cdot \op{C}(H(U))$$
where $\BFM \cdot m$ is now a union of finitely many $m^{-1}\check H m$-compact orbits.

\begin{lemma}\label{lem.R0} 
Let  $U <\check H\cap N$, $z\in\op{BF}\M$ and $v\in \check{V}-\{e\}$.
If $zv\in\op{RF}\M$, then $zv\in F^*$.
\end{lemma}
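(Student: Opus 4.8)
The statement asserts: for $U < \check H \cap N$, if $z \in \op{BF}M$ and $v \in \check V - \{e\}$ with $zv \in \op{RF}M$, then in fact $zv \in F^* = F^*_{H(U)}$. Since $z \in \op{BF}M$, we may write $\pi(z\check H) = \Gamma\ba\Gamma\,\op{hull}(B_i)$ for some component $B_i$ of $\Omega$; choosing a representative $g \in G$ with $[g] = z$, the sphere $C_{g} = g\check H^+$ equals $\partial B_i$ (the round sphere bounding $B_i$), since $z$ is a boundary frame. The plan is to understand how the sphere $C_{gv}$ (the boundary of the geodesic plane $\pi(gvH(U))$) sits relative to $\overline{B_i}$, and show that the hypothesis $zv \in \op{RF}M$ forces $C_{gv} \not\subset \overline{B_j}$ for every $j$, which is exactly the characterization of $F^*$ from \eqref{fffs} together with \eqref{fhu2}.

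First I would set up coordinates: conjugating by an element of $M$, I may assume $U = U_k$ for $k = \dim U \le d-2$, so $U \subset \check H \cap N = \br^{d-2}$ and $\check V = \br e_{d-1}$. Working in the upper half-space model with $g = e$ as a normalization of the boundary frame (so $C_e = \partial\op{hull}(B_i)$ is the standard $(d-1)$-sphere $\br^{d-2}\cup\{\infty\}$ sitting as the boundary of the totally geodesic copy of $\bH^{d-1}$ stabilized by $\check H$), the frame $zv = [v]$ corresponds to translating by $v = \exp u^-(r e_{d-1})$ for some $r \ne 0$. The key computation is to identify $C_{v}$, the boundary sphere of $\pi(v H(U_k)) = \op{hull}(vH(U_k)^+)$. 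Since $H(U_k)^+ = \br^k \cup \{\infty\}$ (the standard $k$-sphere) and $v$ moves $\infty$ to a point with nonzero last coordinate $r e_{d-1} \ne 0$ in $\br^{d-1}$ while fixing... no: $v \in N$ fixes $\infty$, so $v(\infty) = \infty$ and $v$ acts on $\br^{d-1}$ by the translation $x \mapsto x + r e_{d-1}$. Hence $C_v = v(\br^k \cup \{\infty\}) = (\br^k + r e_{d-1}) \cup \{\infty\}$, an affine $k$-plane (compactified) parallel to but disjoint from the original $\br^k$, sitting inside the round sphere $\partial B_i = \br^{d-2}\cup\{\infty\}$ only if... in fact it passes through $\infty \in \partial B_i$ but otherwise leaves $B_i$. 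The crucial point: $C_v$ contains $\infty$, which lies on $\partial B_i$, hence $\infty \notin B_j^\circ$ for the component $B_i$, and since the $\overline{B_j}$ have mutually disjoint closures, $\infty \notin \overline{B_j}$ for $j \ne i$; and $C_v \not\subset \overline{B_i}$ because $C_v$ is an unbounded affine plane while $B_i$ (containing $\infty$ on its boundary, being a round ball) — here I must be careful about which component contains $\infty$ and whether $B_i$ is the "outer" ball. I would instead argue directly: the hypothesis $zv \in \op{RF}M$ means by definition $(zv)^\pm \in \Lambda$, i.e. $gv(\infty), gv(0) \in \Lambda = \mathbb S^{d-1} - \bigcup B_j$. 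So $C_{gv}$ contains the two limit points $gv(\infty)$ and $gv(0)$. If $C_{gv} \subset \overline{B_j}$ for some $j$, these two points would lie in $\overline{B_j}$, forcing them onto $\partial B_j$ (since they are in $\Lambda$, disjoint from the open ball $B_j$).

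So the real content is: show that $C_{gv}$ cannot have both $gv(\infty)$ and $gv(0)$ on a single $\partial B_j$ while $zv \in \op{RF}M$ and $z \in \op{BF}M$, $v \ne e$. Here is where I expect the main obstacle. I would use that $z \in \op{BF}M$ means $C_g = \partial B_i$ exactly, and that $C_{gv}$ and $C_g$ are two $k$-spheres (resp. $(d-1)$-sphere) with $C_{gv} \subset$ the round sphere $\partial B_i$? No — $v \in \check V$ with $\check V = (\check H \cap N)^\perp$ means $v \notin \check H$, so $vH(U_k)$ is a plane \emph{not} contained in the standard $\bH^{d-1}$; its boundary $C_{gv}$ is a $k$-sphere that is \emph{not} contained in $\partial B_i$. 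The picture is: $C_{gv}$ is a small $k$-sphere, it meets $\Lambda$ (by hypothesis, in at least the two points $(zv)^\pm$), and I must rule out $C_{gv} \subseteq \overline{B_j}$. Suppose $C_{gv} \subseteq \overline{B_j}$. Since $C_{gv} \cap \Lambda \ne \emptyset$ and $\Lambda \cap B_j = \emptyset$, $C_{gv}$ must be tangent to $\partial B_j$ from inside, meeting it in exactly... a round $k$-sphere inside a round ball and meeting the boundary sphere: either tangent at one point, or $C_{gv} \subset \partial B_j$. The hypothesis gives two distinct points $(zv)^+ \ne (zv)^-$ on $C_{gv}\cap\Lambda \subseteq C_{gv}\cap\partial B_j$, so $C_{gv}$ is not internally tangent at a single point; hence $C_{gv} \subset \partial B_j$. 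But then $C_{gv} \subset \partial B_j \cap \partial B_i$ if also related to $B_i$ — more to the point, $C_{gv} \subset \partial B_j$ would mean the whole $k$-plane $\pi(gvH(U_k))$ lies on the totally geodesic hypersurface $\op{hull}(\partial B_j)$. I would then derive a contradiction from $v \notin \check H$: translating the boundary frame $z$ (whose plane's boundary is $\partial B_i$, lying on a component of $\Omega$ at "distance zero" from $\Lambda$) by a nonzero $v$ in the \emph{orthogonal} direction $\check V$ tilts the $H(U_k)$-plane out of every totally geodesic hypersurface bounding a component $B_j$ — concretely, because $\op{hull}(\partial B_j)$ for $j \ne i$ is at positive distance from $\op{hull}(B_i) \ni \pi(z)$, so for the plane through $zv$ to lie in $\op{hull}(\partial B_j)$ we'd need it to reach $\partial B_j$, contradicting that $zv$ is still "near the $i$-th end"; and for $j = i$, the plane $\pi(gvH(U_k))$ for $v \in \check V - \{e\}$ is genuinely transverse to $\op{hull}(\partial B_i)$ (that is the whole point of $\check V = (\check H\cap N)^\perp$), so it is not contained in $\partial B_i$. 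Wrapping up: $C_{gv} \not\subset \overline{B_j}$ for all $j$, so by \eqref{fffs} $[gv] = zv \in F^*$. I'd present this cleanly by reducing to the normal form $g = e$ (boundary frame), $v = \exp u^-(re_{d-1})$, and a direct coordinate computation of $C_v = v(\br^k\cup\{\infty\})$ showing it is an affine (non-round, or transversely-positioned) $k$-sphere not contained in any $\overline{B_j}$; the essential input is the explicit shape of $\partial B_i$ as a round sphere containing $C_e$ and disjointness of the $\overline{B_j}$.
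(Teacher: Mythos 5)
Your proposal is correct and follows essentially the same route as the paper's proof: assume $zv\in\partial F$, so $C_{gv}\subset\overline{B_j}$; note $C_{gv}\ni gv(\infty)=g(\infty)\in\partial B_i$ so disjointness of the closures forces $j=i$; use the two distinct limit points $(zv)^{\pm}\in\Lambda\cap C_{gv}\subset\partial B_i$ to conclude $C_{gv}\subset\partial B_i$; and contradict $v\in\check V-\{e\}$ (the paper phrases this as $gvH(U)\subset g\check H$ forcing $vH(U)\cap\check H\neq\emptyset$, which is your coordinate computation that $C_v=(\br^k+re_{d-1})\cup\{\infty\}\not\subset\br^{d-2}\cup\{\infty\}$). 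The only cosmetic issue is the vague "positive distance / near the $i$-th end" sentence for $j\neq i$, which is unnecessary since your earlier observation that $\infty$ (i.e.\ $g(\infty)$) lies on $C_{gv}\cap\partial B_i$ already settles that case exactly as in the paper.
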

\begin{proof} Let $z=[g]\in \BFM$. Then $\partial (\pi(g\check H))=\partial B_j$ for some $j$.
Let $v\in \check{V}-\{e\}$ be such that $zv\in\op{RF}\M$.
Suppose $zv\in \partial F_{H(U)}$. Then $C_{gv}\subset \overline{B_i}$ for some $i$.
Since the sphere $C_{gv}=\{ gv h(\infty): h\in H(U)\}$ contains $g(\infty)$ which belongs to $\partial B_j$,
we have $i=j$, as $\overline B_i$'s are mutually disjoint.
 As $zv\in \RFM$, $C_{gv}\subset \partial B_j$. Hence $gvH(U)^+\subset g\check H^+$.
It follows that $gv H(U)\subset g \check H$, and hence $vH(U)\cap \check H\ne \emptyset$, which is a contradiction since
$v\notin \check H$, and $H(U)\subset \check H$.
\end{proof}

\subsection*{Properly immersed geodesic planes}
 
Let $H=H(U_k)$ and $H'=H'(U_k)$ be as in \eqref{standardh}, and $p$ be the map in \eqref{defp}.
In \eqref{f}, if $ p(\Gamma_{\tilde S})\ba \tilde S$ is a convex cocompact hyperbolic $k$-manifold with Fuchsian ends  and $f$ is proper, then 
the image $S=\op{Im}(f)$ is referred to as a properly immersed  convex cocompact geodesic $k$-plane of  Fuchsian ends.

\begin{proposition} \label{PPP}
If $xH'$ is closed for $x\in \RFM$, then
  $S=\pi(x H')$ is a properly immersed convex cocompact  geodesic plane with  (possibly empty) Fuchsian ends.
 \end{proposition}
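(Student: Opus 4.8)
The plan is to combine Proposition \ref{proper}, which already handles the case $d=3$-style bookkeeping of fundamental groups and the equivalence ``$xH'$ closed $\iff$ $S$ properly immersed,'' with the structural information about convex cocompact manifolds with Fuchsian ends. Fix a representative $g\in G$ with $[g]=x$, set $\tilde S:=g\tilde S_0$, and let $\Gamma_{\tilde S}=\Gamma\cap gH'g^{-1}$ with $p:gH'g^{-1}\to gHg^{-1}$ the projection of \eqref{defp}. By Proposition \ref{proper}(1), since $xH'$ is closed, $f$ in \eqref{f} is proper, so $S=\pi(xH')$ is a properly immersed geodesic $(k+1)$-plane, and $p(\Gamma_{\tilde S})\ba\tilde S\cong\Gamma_{\tilde S}\ba\tilde S$ is the associated hyperbolic $(k+1)$-manifold. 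By Proposition \ref{proper}(2), since $M$ is convex cocompact, $S$ is convex cocompact and $\partial\tilde S\cap\Lambda=\Lambda(p(\Gamma_{\tilde S}))$. So the only thing left to prove is that the convex cocompact $(k+1)$-manifold $p(\Gamma_{\tilde S})\ba\tilde S$ either is closed or has Fuchsian ends; equivalently, identifying $\tilde S$ with $\bH^{k+1}$ and $\partial\tilde S$ with $\mathbb S^{k}\subset\mathbb S^{d-1}$, that
\[
\mathbb S^{k}-\Lambda(p(\Gamma_{\tilde S}))=\mathbb S^{k}\cap\Big(\mathbb S^{d-1}-\Lambda\Big)
\]
is either empty or a dense union of round balls (in $\mathbb S^k$) with mutually disjoint closures.

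\textbf{Key steps.} First I would use $\partial\tilde S\cap\Lambda=\Lambda(p(\Gamma_{\tilde S}))$ from Proposition \ref{proper}(2) to rewrite $\mathbb S^k-\Lambda(p(\Gamma_{\tilde S}))=\mathbb S^k\cap\Omega$ where $\Omega=\mathbb S^{d-1}-\Lambda=\bigcup_i B_i$ is the given disjoint union of round balls (Definition \ref{d:rigid}(2)). Second, the key geometric fact: the intersection of the round $k$-sphere $\partial\tilde S$ with a round ball $B_i\subset\mathbb S^{d-1}$ is, if nonempty and not all of $\partial\tilde S$, a round ball in $\partial\tilde S$ (a ``round $k$-sphere meets a round ball'' statement — a standard fact about the conformal geometry of $\mathbb S^{d-1}$, provable by moving $\partial\tilde S$ to a standard subsphere by a Möbius transformation and intersecting with a Euclidean ball). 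The case $\partial\tilde S\subset B_i$ is impossible here because then $C_g=\partial\tilde S$ would lie in a component of $\Omega$, forcing $xH'$, hence $xH$, into an end — but $x\in\RFM$ means $g^\pm\in\Lambda$, so $C_g\cap\Lambda\ne\emptyset$; thus every nonempty $B_i\cap\partial\tilde S$ is a proper round ball in $\partial\tilde S$. Third, since the $\overline{B_i}$ are mutually disjoint in $\mathbb S^{d-1}$, their traces $\overline{B_i\cap\partial\tilde S}\subset\partial\tilde S$ are mutually disjoint. Fourth, density: $\Omega$ is dense in $\mathbb S^{d-1}$, but I actually need $\mathbb S^k\cap\Omega$ dense in $\mathbb S^k$, which is the statement that $\Lambda(p(\Gamma_{\tilde S}))=\partial\tilde S\cap\Lambda$ has empty interior in $\partial\tilde S$; this follows from the well-known fact that the limit set of a convex cocompact (indeed, any non-elementary geometrically finite) group acting on $\bH^{k+1}$ is either all of $\mathbb S^k$ or nowhere dense — and it cannot be all of $\mathbb S^k$ unless $\partial\tilde S\cap\Lambda=\partial\tilde S$, i.e. $p(\Gamma_{\tilde S})\ba\tilde S$ is closed, which is the ``possibly empty Fuchsian ends'' alternative. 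Assembling these verifies Definition \ref{d:rigid}(2) for $p(\Gamma_{\tilde S})\ba\tilde S$ (when $\Lambda(p(\Gamma_{\tilde S}))\ne\partial\tilde S$), so $S$ has Fuchsian ends; otherwise $S$ is closed.

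\textbf{Main obstacle.} The routine parts (Proposition \ref{proper}, disjointness of the $\overline{B_i\cap\partial\tilde S}$, the dichotomy for limit sets) are genuinely immediate given what's already in the excerpt. The one point that needs care is the elementary but not-quite-trivial claim that a round $k$-subsphere of $\mathbb S^{d-1}$ intersects a round ball $B_i$ in a round ball of $\partial\tilde S$ (and in particular that this intersection is again ``spherically convex'' so that the complement structure is preserved) — I would prove this by normalizing via a Möbius transformation of $\mathbb S^{d-1}$ carrying $\partial\tilde S$ to the standard $\mathbb S^k=\mathbb S^{d-1}\cap(\br^{k+1}\times\{0\})$, under which $B_i$ becomes a round ball whose boundary sphere meets $\mathbb S^k$ transversally (generic position, else slide slightly), and the intersection of two round spheres in general position is a round sphere of one lower dimension, bounding the desired ball. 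This is where I'd expect to spend the only real effort; everything downstream is bookkeeping against Definition \ref{d:rigid}.
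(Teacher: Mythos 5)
Your proposal is correct and follows essentially the same route as the paper's proof: apply Proposition \ref{proper}, then split according to whether $C\cap\Lambda=\Lambda(p(\Gamma_{\tilde S}))$ has nonempty interior in $C=\partial\tilde S$ (compact plane) or not, in which case $C-(C\cap\Lambda)=\bigcup_i (C\cap B_i)$ verifies Definition \ref{d:rigid}(2). The only cosmetic difference is that the paper handles the first case via Sullivan's rigidity (a limit set of full Hausdorff dimension forces $p(\Gamma_{\tilde S})$ to be a uniform lattice), while you use the interior dichotomy for limit sets together with convex cocompactness, and you spell out the routine sphere-meets-ball geometry that the paper leaves implicit.
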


\begin{proof} Choose $g\in G$ so that $x=[g]$. Let $\tilde S$ and $\G_{\tilde S}$ be as in
  Proposition \ref{proper}. 
 Set $C=\partial \tilde S$.
  By loc. cit., $S$ is properly immersed, and $C\cap \La =\La(p(\Gamma_{\tilde S}))$.
  Write  
\be\label{cex}C-(C\cap\La)=\bigcup\limits_{i\in I} (C\cap B_i)\ee where $I$
 is the collection of all $i$ such that $C\cap B_i\neq\emptyset$. 
If $C\cap \Lambda$ contains a non-empty open subset of $C$, then the limit set of $p(\Gamma_{\tilde S})$  is equal to $C$.
Since $p(\Gamma_{\tilde S})$ is convex cocompact by Proposition \ref{proper}, it is a uniform lattice in $gHg^{-1}$, and hence $S$ is compact.
In the other case, $I$ is an infinite set and $\bigcup_{i\in I} (C\cap B_i)$ is dense in $C$; so $S$ is a convex cocompact hyperbolic submanifold with Fuchsian ends by Definition \ref{d:rigid}(2).
\end{proof}

\begin{lemma}\label{Zd}
For any sphere $C$ in ${\mathbb S}^{d-1}$ with $\#  C\cap \Lambda \ge 2$,
the intersection $C\cap \Lambda$ is Zariski dense in $C$.
\end{lemma}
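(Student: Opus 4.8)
The plan is to argue by contradiction, using the fact that a sphere $C\subset\mathbb S^{d-1}$ is (the boundary of a geodesic plane, hence) a homogeneous space for a conjugate of a group $H'=H(U_k)\op{C}(H(U_k))$, and that proper algebraic subvarieties of a sphere are thin. Suppose $Z$ is the Zariski closure of $C\cap\Lambda$ in $C$ and that $Z\neq C$. Since $\#(C\cap\Lambda)\ge2$, the set $C\cap\Lambda$ is infinite: indeed, $C\cap\Lambda$ contains two distinct points $\xi^\pm$, and the geodesic in $\hull(C)$ joining them projects into $\core M$ (by the discussion around \eqref{convex}), so the $\Gamma$-action on that geodesic produces infinitely many limit points on $C$ — more cleanly, $C\cap\Lambda$ is a closed $\Delta$-invariant subset where $\Delta=p(\Gamma_{\tilde S})$ is a non-elementary discrete subgroup of $\op{Isom}^+(\hull C)\cong\SO^\circ(k+1,1)$, and the limit set of a non-elementary group is infinite and perfect. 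Thus $Z$ is a proper algebraic subvariety of $C$ containing an infinite set.

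Next I would use homogeneity to promote "$Z$ proper" to a contradiction. The limit set $\Lambda(\Delta)=C\cap\Lambda$ (Proposition \ref{proper}(2), in the convex cocompact case; in general one reduces to the subgroup $\Delta$ and its limit set on $C$) is $\Delta$-invariant, hence so is its Zariski closure $Z$. So $Z$ is a $\Delta$-invariant proper subvariety of the sphere $C$. But $\Delta$ acts on $C\cong\mathbb S^k$ as a non-elementary group of conformal transformations, and its limit set — being the smallest non-empty closed invariant subset — is contained in $Z$; the key point is then that a non-elementary subgroup of $\SO^\circ(k+1,1)$ cannot preserve any proper algebraic subvariety of $\mathbb S^k$ other than in trivial (elementary) situations. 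Concretely: the stabilizer in $\SO^\circ(k+1,1)$ of a proper algebraic subvariety $Z\subsetneq\mathbb S^k$ is itself a proper algebraic subgroup, hence of strictly smaller dimension, and its Zariski closure cannot contain a Zariski-dense (in particular non-elementary) subgroup whose limit set is all of $\mathbb S^k$ when... — here one must be a little careful, because $\Delta$ need not be a lattice in $\SO^\circ(k+1,1)$. The clean formulation: the Zariski closure $\overline\Delta^{\,Z}$ of $\Delta$ in $\SO^\circ(k+1,1)$ preserves $Z$; since $\Delta$ is non-elementary (its limit set is infinite), $\overline\Delta^{\,Z}$ is a reductive group containing a unipotent element, hence (by Proposition \ref{str} applied inside $\SO^\circ(k+1,1)$, or directly) it contains a conjugate of the horospherical subgroup and of $A$, and therefore acts minimally enough on $\mathbb S^k$ that its only invariant Zariski-closed subsets are $\emptyset$ and $\mathbb S^k$. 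This forces $Z=C$, a contradiction.

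An alternative, perhaps cleaner route avoids discussing $\Delta$ at all: work directly with $\Lambda$. Since $C\cap\Lambda$ has at least two points, choose $h\in H(U)$ (after conjugating so that $C=C_e$) with $h^+,h^-\in\Lambda$; then $[h]\in\RFM\cdot H(U)$, and conjugating the whole picture by elements of $\Gamma\cap gH'g^{-1}$ together with the density of the limit set of this non-elementary group in $C$ shows $C\cap\Lambda$ is infinite and not contained in any proper subsphere. The Zariski closure of an infinite subset of a sphere $\mathbb S^k$ that is not contained in any proper subsphere (equivalently, not contained in any intersection of $\mathbb S^k$ with a proper affine subspace) is all of $\mathbb S^k$: this is a standard fact about the sphere as a projective variety — proper algebraic subvarieties of $\mathbb S^k$ of dimension $k-1$ are contained in finite unions of subspheres and lower-dimensional pieces, and an infinite perfect invariant set cannot be swallowed by such.

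The step I expect to be the main obstacle is the passage from "$C\cap\Lambda$ infinite" to "$C\cap\Lambda$ Zariski dense", i.e. ruling out that $C\cap\Lambda$ sits inside a proper algebraic subvariety of the sphere. The honest way to do this is to exploit invariance under the non-elementary discrete group $\Delta=\Gamma\cap gH'g^{-1}$ (projected to $\SO^\circ(k+1,1)$): one needs that $\Delta$, being non-elementary, has Zariski closure equal to all of $\SO^\circ(k+1,1)$ (or at least a subgroup acting irreducibly on the ambient $\mathbb R^{k+2}$), because a non-elementary subgroup of a rank-one group is Zariski dense in a group acting irreducibly. Given that, any $\Delta$-invariant subvariety $Z\subseteq C$ is invariant under a Zariski-dense subgroup, hence under the full group acting transitively on $C$, forcing $Z=\emptyset$ or $Z=C$; since $Z\supseteq C\cap\Lambda\neq\emptyset$, we conclude $Z=C$. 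I would phrase the final write-up around this invariance-plus-Zariski-density argument, citing the earlier structural results (Proposition \ref{str}, Proposition \ref{proper}) and the elementary fact that a non-elementary discrete subgroup of $\SO^\circ(k+1,1)$ has Zariski closure acting irreducibly on $\mathbb R^{k+2}$.
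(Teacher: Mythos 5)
Your proposal rests on group invariance that the lemma does not provide. The sphere $C$ here is \emph{arbitrary} subject only to $\#\,C\cap\Lambda\ge 2$: there is no assumption that $\Gamma C$ is closed, and the stabilizer $\Gamma_{\tilde S}=\Gamma\cap gH'g^{-1}$ may well be trivial, so there is in general no non-elementary $\Delta$ acting on $C$ with $\Lambda(\Delta)=C\cap\Lambda$ (Proposition \ref{proper}(2) requires the orbit to be closed). Even in the closed-orbit case your argument is circular relative to the paper's architecture: Lemma \ref{Zd} is exactly the input used in Proposition \ref{count} to \emph{prove} that $\pi_1(\Gamma')$ is Zariski dense in $H$, whereas you assume a form of Zariski density of $\Delta$ to get the conclusion. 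Worse, the key dynamical claim is false as stated: a non-elementary discrete subgroup of $\SO^\circ(k+1,1)$ need not have Zariski closure acting irreducibly, and its limit set need not be Zariski dense in $\mathbb S^k$ --- e.g.\ a non-elementary Fuchsian subgroup of $\SO^\circ(2,1)\subset\SO^\circ(3,1)$ has limit set inside a round circle of $\mathbb S^2$. Your ``cleaner route'' also invokes a false fact: an infinite subset of $\mathbb S^k$ not contained in any proper subsphere can still fail to be Zariski dense (take an infinite subset of a non-spherical algebraic curve on $\mathbb S^2$, or of a union of two circles); proper subvarieties of a sphere are not just unions of subspheres. Finally, your claim that $C\cap\Lambda$ is infinite because ``the $\Gamma$-action on the geodesic produces infinitely many limit points \emph{on} $C$'' does not work either, since $\Gamma$ does not preserve $C$.

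The paper's proof uses none of this; it uses only the Fuchsian-ends structure of $\Lambda$, namely $\mathbb S^{d-1}-\Lambda=\bigcup_i B_i$ with the $B_i$ round balls with disjoint closures. If $C\cap\Lambda$ has nonempty interior in $C$ one is done. Otherwise $\bigcup_i (C\cap B_i)$ is a dense union of pairwise disjoint open caps; since $\#\,C\cap\Lambda\ge2$ forces $C\not\subset\cl{B_i}$ for every $i$ (a sphere contained in a closed round ball meets its boundary in at most one point unless it lies in the boundary), each nonempty cap is proper, its relative boundary $C\cap\partial B_i\subset\Lambda$ is a subsphere of $C$ of codimension one, and connectedness of $C$ forces infinitely many distinct such $B_i$. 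Infinitely many distinct irreducible codimension-one subvarieties inside $C\cap\Lambda$ force its Zariski closure to have dimension $\dim C$, hence to equal $C$. If you want to salvage your approach, this geometric input about $\Lambda$ is precisely the missing ingredient you would have to supply; invariance under a stabilizer subgroup cannot replace it.
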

\begin{proof}
The claim is clear if $C\cap \Lambda$ contains a non-empty open subset of $C$.
If not, $C\cap\La$ contains infinitely many $C\cap\partial B_i$'s, each of which is an irreducible co-dimension one real subvariety of $C$. It follows that the Zariski closure of $C\cap\La$ has dimension strictly greater than $\op{dim}C -1$, hence is equal to $C$.
\end{proof}

We
let \be\label{p1} \pi_1:H'\to H,\quad\text{and }\quad \pi_2:H'\to \op{C}(H)\ee denote the canonical
projections.

\begin{proposition} \label{prop.countability} \label{count} \label{rs}\label{cor.HUminimal}
  Suppose that $xH'$ is closed for $x=[g]\in \RFM$, and set $\Gamma':=g^{-1}\Gamma g \cap H'$.
  Then
  \be\label{czero} \cl{xH}=xHC\ee where $C=\overline{\pi_2(\Gamma')}$ and $HC$
is equal to the identity component of the
 Zariski closure of $\Gamma'$. 
 \end{proposition}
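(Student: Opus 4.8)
The statement is essentially a concrete instance of the Borel density theorem combined with the structure of convex cocompact groups. The plan is to first identify the Zariski closure of $\Gamma'$ inside $H'$, and then translate the result back to the orbit closure $\overline{xH}$ in $\Gamma\backslash G$. Write $\mathbf{L}$ for the Zariski closure of $\Gamma'$ in $H'$, and let $L$ be its identity component. Since $xH'=[g]H'$ is closed, $\Gamma'=g^{-1}\Gamma g\cap H'$ is a lattice in $H'$ if $H'\cap g^{-1}\Gamma g$ is cocompact, but in our convex cocompact-with-Fuchsian-ends situation $\Gamma'$ is a convex cocompact (Zariski dense in its own hull) subgroup — this is exactly the content extracted from Proposition \ref{proper}(2) and Lemma \ref{Zd}: the boundary sphere $C=\partial\tilde S$ satisfies $C\cap\Lambda=\Lambda(p(\Gamma_{\tilde S}))$, and $C\cap\Lambda$ is Zariski dense in $C$, hence $p(\Gamma')=\pi_1(\Gamma')$ is Zariski dense in $H=gH g^{-1}$-conjugate... more precisely, $\pi_1(\Gamma')$ is Zariski dense in the copy of $\SO^\circ(k+1,1)$.

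\textbf{Step 1: The projection to $H$ is onto (Zariski-densely).} Using Proposition \ref{proper}(2), $\partial\tilde S\cap\Lambda=\Lambda(p(\Gamma_{\tilde S}))$; by Lemma \ref{Zd} this limit set is Zariski dense in the sphere $\partial\tilde S$, hence $\pi_1(\Gamma')$ (which is $p(\Gamma_{\tilde S})$ up to the identification) is a Zariski dense subgroup of $H$. Therefore the projection $\pi_1(\mathbf L)$ of the Zariski closure to $H$ is all of $H$ (as an algebraic group, since $H$ is connected and has no proper finite-index subgroup of the relevant kind — more carefully, $\pi_1(L)=H$).

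\textbf{Step 2: Identify $L$ via Goursat.} Since $H'=H\times\op{C}(H)$ is an almost direct product with $\op{C}(H)$ compact, $L<H'$ is a closed subgroup whose projection to $H$ is all of $H$ and whose projection to $\op{C}(H)$ is some closed subgroup $C_0=\overline{\pi_2(\Gamma')}$ (note $\pi_2(\Gamma')$ need not be Zariski closed, but its Zariski closure, being a closed subgroup of a compact group, equals its topological closure $\overline{\pi_2(\Gamma')}$; this is where the identity "Zariski closure $=$ topological closure for subgroups of compact groups" enters). By the Goursat-type argument, since $H$ is simple and has trivial center intersection with the graph structure, and since any normal subgroup of $H$ pushed into $\op{C}(H)$ must be central hence trivial, the subgroup $L$ must be exactly $H\cdot C_0=H\times C_0$ (no twisting is possible because $H$ has no nontrivial homomorphism to the abelian-by-finite structure of a compact group modulo center — more simply, $\op{Hom}(H,\op{C}(H))$ is trivial as $H$ is a connected noncompact simple group with no nontrivial compact quotient). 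Hence the identity component of the Zariski closure of $\Gamma'$ is $HC$ with $C=\overline{\pi_2(\Gamma')}$.

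\textbf{Step 3: From $\Gamma'$ to the orbit closure.} Now I transfer this to $\Gamma\backslash G$. We have $\overline{xH}=\overline{[g]H}$. Since $C=\overline{\pi_2(\Gamma')}\subset\op{C}(H)$ and $\op{C}(H)$ acts trivially on $\tilde S_0$ (so $HC$-orbits and $H$-orbits project to the same plane), and since $\Gamma'\subset HC$ with $\Gamma'$ Zariski dense in $HC$ and $HC/\Gamma'$ of finite... no — here one uses that $xHC=[g]HC$ is closed (because $HC$ is the identity component of the Zariski closure of $g^{-1}\Gamma g\cap H'$, and Zariski closures of $\Gamma$-stabilizers give closed orbits by the standard Borel density / Mostow-type argument for reductive groups, e.g. via Prop. \ref{proper}(1) applied after noting $xHC$ is a union of finitely many $H$-orbits translated by $\op{C}(H)$), together with the fact that $\Gamma'$ is dense in $HC$ modulo... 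Actually the cleanest route: the map $H\to xH$ has image with closure $xHC$ precisely because $\overline{\Gamma' H}=HC\cdot(H\cap\text{something})$ — concretely, $\overline{xH}=x\overline{(g^{-1}\Gamma g\cap H')H}$ computed inside $H'$, and $\overline{\Gamma' H}=L H=HC$ since $\Gamma'$ is dense in $HC$ in the Hausdorff topology after combining Zariski density with the compactness of the $C$-direction and cocompactness considerations along $H$. Therefore $\overline{xH}=xHC$.

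\textbf{Main obstacle.} The delicate point is \emph{not} the Goursat/Borel-density bookkeeping (Steps 1--2), which is routine, but Step 3: passing from "the Zariski closure of $\Gamma'$ has identity component $HC$" to "$\overline{xH}=xHC$ as a topological orbit closure." One must argue that $\Gamma'$ is actually dense (in the Hausdorff sense) in $HC$ modulo $H$ — i.e., that $\overline{\pi_2(\Gamma')}=C$ topologically and that this forces the extra $C$-invariance of $\overline{xH}$ — and simultaneously that no \emph{more} than $C$ appears, which needs that $xHC$ is already closed in $\Gamma\backslash G$. Closedness of $xHC$ follows because $HC=\op{N}(L_{nc})$-type reductive group equal to the identity component of the Zariski closure of a $\Gamma$-stabilizer, hence carries a finite invariant measure on $(g^{-1}\Gamma g\cap HC)\backslash HC$ when $\Gamma'$ is a lattice there — and $\Gamma'$ \emph{is} a lattice in $HC$ exactly when it is cocompact, which in the convex-cocompact-with-Fuchsian-ends setting need not hold; instead one uses that $\Gamma'$ is convex cocompact and Zariski dense in $HC$ and invokes the properness criterion of Proposition \ref{proper}(1): since $xH'$ is closed, so is $xHC$ (it sits between $xH$ and $xH'$ and $HC$ is algebraic). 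Thus the real work is assembling: (i) Zariski density of $\pi_1(\Gamma')$ in $H$ from Lemma \ref{Zd}, (ii) $\op{Hom}(H,\op{C}(H))=1$ to kill twisting, (iii) $\overline{\pi_2(\Gamma')}=C$ is automatically a closed (compact) subgroup, and (iv) closedness of $xHC$ from Proposition \ref{proper}, to conclude $\overline{xH}=xHC$ with $C=\overline{\pi_2(\Gamma')}$ and $HC$ the identity component of the Zariski closure of $\Gamma'$.
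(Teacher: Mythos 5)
Your Steps 1 and 2 are essentially sound and give a legitimate variant of the paper's treatment of the algebraic half of the statement: the paper also obtains Zariski density of $\pi_1(\Gamma')$ in $H$ from Proposition \ref{proper}(2) and Lemma \ref{Zd}, but then identifies the identity component $W$ of the Zariski closure by noting $W\ba H'$ is compact, so $W$ contains a maximal real-split connected solvable subgroup of $H'$, whence $H\cap W$ is a nontrivial normal subgroup of the simple group $H$ and $H\subset W$. Your Goursat-style argument (no noncompact quotient of $H$ can embed in the compact group $\op{C}(H)$, so the kernel of $\pi_2|_W$ is all of $H$) reaches the same conclusion from the same two inputs: Zariski density in the $H$-factor, and the fact that closed subgroups of the compact group $\op{C}(H)$ are algebraic, so Zariski and topological closures of $\pi_2(\Gamma')$ agree.

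The genuine gap is in Step 3, which you yourself flag as the real work. The assertion that ``$\Gamma'$ is dense in $HC$ in the Hausdorff topology'' is false ($\Gamma'$ is discrete), and the deduction that $xHC$ is closed ``because it sits between $xH$ and $xH'$ and $HC$ is algebraic'' is not valid (an intermediate closed subgroup need not have a closed orbit; closedness of $xHC$ is a consequence of the proposition, not an input, and is in fact not needed). The detour through lattices and finite invariant measures is likewise beside the point. What is actually needed, and what the paper does, is elementary: since $xH'$ is closed, $\cl{xH}\subset xH'\simeq \Gamma'\ba H'$, and inside $H'=H\times\op{C}(H)$ one has $\Gamma'H=H\,\pi_2(\Gamma')$, hence $\cl{\Gamma'H}=H\,\cl{\pi_2(\Gamma')}=HC$. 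Equivalently, in the paper's phrasing: $xH=xH\pi_2(\gamma)$ for every $\gamma\in\Gamma'$ gives $xHC\subset\cl{xH}$, while if $\gamma_i(h_i,e)\to(h_0,c_0)$ in $H'$ then $\pi_2(\gamma_i)\to c_0$, so $c_0\in C$, giving the reverse inclusion. No Zariski density, cocompactness, or closedness of $xHC$ enters at this step — only the definition $C=\cl{\pi_2(\Gamma')}$ and the closedness of $xH'$; as written, your justification of the crucial identity $\cl{xH}=xHC$ does not go through.
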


\begin{proof} Without loss of generality, we may assume $g=e$.
As $H'$ is a direct product $H\times \op{C}(H)$, we write an element of $H'$ as
$(h,c)$ with $h\in H$ and $c\in \op{C}(H)$.
For all $\ga\in\Ga'$, 
\begin{equation*}
xH=[(e,e)]H=[(e,\pi_2(\ga))] H =[(e,e)] H \pi_2(\ga)
\end{equation*}
and hence $xH=xH \pi_2(\Ga')$.
It follows that $xH C\subset \cl{xH}$.

To show the other inclusion, let $(h_0,c_0)\in H\op{C}(H)$ be arbitrary.
If $[(h_0,c_0)]\in \cl{xH}=\cl{[(e,e)] H}$, then there exist
sequences  $\ga_i\in\Ga'$ and $h_i\in H$ such that $\ga_i(h_i,e)\to(h_0,c_0)$ in $H'$ as $i\to\infty$.
In particular, $\pi_2(\ga_i)\to c_0$ in $\op{C}(H)$ as $i\to\infty$ and hence $c_0\in C=\cl{\pi_2(\Ga')}$. 
This finishes the proof of \eqref{czero}. Let $W$ denote the identity component of the Zariski closure of $\Ga'$ in $H'$. 
Since any proper algebraic subgroup of $G$ stabilizes either a point, or a proper sphere
in ${\mathbb S}^{d-1}$, it follows from
 Proposition \ref{proper} and Lemma \ref{Zd} that $\pi_1(\G') $ is Zariski dense in $H$; so  $\pi_1(W)=H$.
So  the quotient $W\ba H'$ is compact.
This implies that
 $ W$  contains a maximal real-split connected solvable subgroup, say, $P$ of $H'$. 
Now $H\cap W$  is a normal subgroup of $H$, as $\pi_1(W)=H$.
Since $P< H\cap W$ and $H$ is simple, we conclude  that $H\cap W=H$, i.e., $H<W$. Hence $W=H \pi_2(W)$.
As any compact linear group is algebraic, $C$ is algebraic and hence
$C=\pi_2(W)=\cl{\pi_2(\Ga')}$. Therefore $W=HC$, finishing the proof. \end{proof}

\subsection*{Global thickness of the return time to $\RFM$}
We recall the various notions of thick subsets of $\br$,
following \cite{MMO1} and \cite{MMO3}. 
\begin{Def} \label{thickdef}  Fix $k>1$. 
\begin{itemize}
\item
 A closed subset $\mathsf T \subset \br$ is locally $k$-thick at $t$
if for any $\la>0$,
$$\mathsf T \cap \left(t \pm   [\la, k\la]  \right) \ne \emptyset .$$

\item  A closed subset $\mathsf T \subset \br$ is $k$-thick 
if $\mathsf T$ is locally $k$-thick at $0$.

\item  A closed subset $\mathsf T \subset \br$ is $k$-thick at $\infty$ if
$$\mathsf T\cap \left(\pm   [\la, k\la]  \right) \ne \emptyset$$
for all sufficiently large $\la \gg 1$.

\item A closed subset $\mathsf T \subset \br$ is globally $k$-thick 
if $\mathsf T \ne \emptyset$ and $\mathsf T$ is locally $k$-thick at every $t\in \mathsf T$.
\end{itemize}
\end{Def}

We will frequently use the fact that if $\mathsf T_i$ is a sequence of $k$-thick subsets,
then $\limsup \mathsf T_i$ is also $k$-thick, and that if $\mathsf T$ is $k$-thick, so is $-\mathsf T$.

The following proposition shows that $\op{RF}\M$ has a thick return property under the action of
any one-dimensional subgroup $U$ of $N$.

\begin{proposition}\label{lem.thickreturntime}\label{defk}
 There exists a constant $k>1$  depending only on the systole of the double of $\core {\M}$ such that 
 for any one-parameter subgroup $U=\{u_t: t\in \br \}$ of $N^{\pm}$, and any $y\in \RFM$,
 $$\mathsf T(y)\coloneqq\{t\in \bb{R} : yu_t\in\op{RF}\M\}$$ is globally $k$-thick. 
\end{proposition}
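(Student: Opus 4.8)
The plan is to reduce the statement to a single geometric fact about the double of the convex core and then exploit the product structure coming from the Fuchsian ends. Recall that the double $DM$ of $\core M$ is a closed hyperbolic $d$-manifold, obtained by gluing two copies of $\core M$ along the totally geodesic boundary $\partial \core M=\bigsqcup_j \partial \core M_j$. Let $\Gamma<G$ be a realization of $\pi_1(M)$ and let $\widehat\Gamma<G$ be a torsion-free realization of $\pi_1(DM)$, so that $\widehat\Gamma$ is a uniform lattice containing (a conjugate of) $\Gamma$ and $\core M$ embeds isometrically in $DM$. Let $s>0$ be the systole of $DM$. First I would fix $y\in\RFM$ and a one-parameter unipotent subgroup $U=\{u_t\}<N^\pm$, and consider the geodesic $\gamma=\pi(yA)$; since $y\in\RFM$, $\gamma$ is a complete geodesic contained in $\core M$. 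The horocyclic orbit $\pi(yU)$ is a horocycle in $M$ tangent to $\gamma$, and the key observation is that $yu_t\in\RFM$ precisely when the complete geodesic through $yu_t$ (asymptotic to $\gamma$ in forward time, and with backward endpoint $yu_t^-=yu_t(0)$) stays in $\core M$, equivalently when both endpoints lie in $\Lambda$.

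The main step is to control, along the horocycle, the times at which this geodesic leaves $\core M$ and must re-enter. When $yu_t\notin\RFM$, the backward geodesic ray from $\pi(yu_t)$ exits $\core M$ through one of the Fuchsian ends $E\cong S\times(0,\infty)$. Lift to $\bH^d$: the horocycle lifts to a Euclidean horosphere-curve based at a point $\xi^+\in\Lambda$, and the excursion corresponds to a sub-arc of this curve whose backward geodesic endpoints sweep through one of the round balls $B_i\subset\Omega={\mathbb S}^{d-1}-\Lambda$. The crucial quantitative input is that the geometry near each boundary component of $\core M$ is modeled on a fixed compact piece of $DM$, so the \emph{visual size} of each ball $B_i$, as seen from any point on a geodesic in $\core M$ tangent to the horocycle, is bounded above in terms of $s$ alone. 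Concretely: working in $DM$, a geodesic segment that starts in $\core M$, crosses $\partial\core M$, travels into the end and comes back must, in the double, wrap around and this forces a return within a distance comparable to $s$; translating the distance bound along the geodesic back to a parameter bound along the horocycle (using that $u_t(0)$ moves at unit Euclidean speed while hyperbolic distance grows logarithmically) yields a ratio bound. This is the step I expect to be the main obstacle: making the comparison between ``hyperbolic return distance inside the double'' and ``horocyclic parameter length of the excursion'' uniform over all balls $B_i$ and all starting points $y\in\RFM$, and extracting from it a single constant $k$.

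Granting that input, I would finish as follows. Fix $\lambda>0$ and suppose for contradiction that $\mathsf T(y)\cap([\lambda,k\lambda]\cup[-k\lambda,-\lambda])=\emptyset$ for a suitable $k$ (and similarly one handles local $k$-thickness at an arbitrary $t\in\mathsf T(y)$ by replacing $y$ with $yu_t\in\RFM$, which is legitimate since the argument only used $y\in\RFM$). Then the whole parameter window $[\lambda,k\lambda]$ (say) lies in a single excursion of $yu_t$ outside $\RFM$ through one ball $B_i$; but the excursion corresponding to $B_i$ has horocyclic parameter length bounded by $C(s)\cdot\lambda$ for a constant $C(s)$ depending only on $s$ — here the factor $\lambda$ appears by the scaling $u_t\mapsto a_{-r}u_ta_r=u_{e^{-r}t}$ which lets one normalize the excursion to unit scale. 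Choosing $k>1$ with $k-1>C(s)$ then contradicts the assumption that the entire window $[\lambda,k\lambda]$ is contained in the excursion. Hence $\mathsf T(y)$ meets both $[\lambda,k\lambda]$ and (by the symmetric argument on the other side, using that $-\mathsf T$ is $k$-thick whenever $\mathsf T$ is) $[-k\lambda,-\lambda]$ for every $\lambda>0$, so $\mathsf T(y)$ is globally $k$-thick with $k$ depending only on the systole of $DM$. Finally, note the statement for $U<N^+$ reduces to the case $U<N^-=N$ by applying the Cartan involution (equivalently, replacing the frame $y$ by a flip), which preserves $\RFM$ and conjugates $N^+$ to $N^-$, so no separate argument is needed.
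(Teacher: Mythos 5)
There is a genuine gap, and it sits exactly where you predicted the main obstacle would be. Your plan tries to prove the \emph{one-sided} statement that $\mathsf T(y)$ meets each window $[\lambda,k\lambda]$ and $[-k\lambda,-\lambda]$ separately, by bounding the parameter length of a single excursion by $C(s)\lambda$. Both the strengthened statement and the key estimate are false. Normalize $y=[g]$ with $g(\infty)=\infty\in\La$ and $g(0)=0\in\La$; then $t\mapsto gu_t(0)$ traces a straight line $t\mapsto tw$ in $\br^{d-1}$, and an excursion through a component $B_i$ of $\Omega$ is a chord $(\alpha,\beta)$ of the round ball $B_i$ with $0\notin(\alpha,\beta)$. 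Nothing prevents the entry time $\alpha$ from being arbitrarily small compared to the chord length: since $\partial B_i\subset\La$, you may take $g(0)\in\partial B_i$ with the line pointing into $B_i$, so the excursion is $(0,\beta)$ and the window $[\lambda,k\lambda]$ lies entirely inside it for every small $\lambda$, no matter how large $k$ is. So the ratio of exit to entry time is not controlled by the systole, the rescaling by $a_{-r}$ cannot repair this (it preserves that ratio), and "visual size of $B_i$ bounded in terms of $s$" is not available either (from a point of $\partial\core M$ the visual size of the adjacent ball is essentially a half-space). Relatedly, the heuristic that a geodesic excursion into an end "must return within distance comparable to $s$ in the double" is not the fact the systole provides; geodesics leaving $\core M$ through a wall never return, and what the double actually controls is the separation between \emph{distinct} components of $\Omega$.

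The paper's proof uses the two windows jointly, which is why thickness is defined via their union. Suppose both $[t,kt]w$ and $[-kt,-t]w$ miss $\La$. Since each component of $\Omega$ is convex and the two segments are separated by $0\in\La$ along the line, they must lie in \emph{distinct} components $B_i\neq B_j$. The systole $\eta$ of the double enters through the inequality $\inf_{i\neq j}d(\hull B_i,\hull B_j)\geq \eta/2$ (a geodesic realizing this distance is a closed geodesic, or half of one, in the double). On the other hand, by invariance under the dilation centered at $0$ and rotations about the vertical axis, the hyperbolic distance between $\hull([-kt,-t]w)$ and $\hull([t,kt]w)$ is independent of $w$ and $t$ and equals $d(\hull([-k,-1]),\hull([1,k]))$, which is made equal to $\eta/4$ by the choice of $k$; since these hulls sit inside $\hull B_i$ and $\hull B_j$, this forces $\eta/4\geq\eta/2$, a contradiction. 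Your reductions (global thickness to thickness at $0$ by replacing $y$ with $yu_s$, and $N^+$ to $N^-$ by a flip) are fine and agree with the paper, but the core estimate must be replaced by this two-sided separation argument.
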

\begin{proof} Let $ \eta>0$ be the systole
of  the hyperbolic double of $\core \M$, which is a closed hyperbolic manifold.
Let  $k>1$ be given by the equation 
\begin{equation}\label{koh}
 d \big(\mathrm{hull}([-k,-1]),\text{ }\mathrm{hull}([1,k])\big)=\eta /4
\end{equation}
where $d$ is the hyperbolic distance in the upper half plane $\bH^2$.

Note that
\begin{equation}\label{min}
\inf\limits_{i\neq j} d (\mathrm{hull}B_i,\text{ }\mathrm{hull}B_j)\ge \eta /2
\end{equation}
as the geodesic realizing this distance is either a closed geodesic or half of a closed geodesic in the double of $\core {\M}$.

We first prove the case when $U<N$. 
Let $s\in \mathsf T(y)$ be arbitrary.
To show that $\mathsf T(y)$ is locally $k$-thick at $s$, we may assume that $s=0$, by replacing $y$ with $yu_s\in\RFM$.
We may  also assume that $y=[g]$ where $g(\infty) =\infty$ and $g(0)=0$. As $y\in \RFM$, 
this implies that $0, \infty\in \La$.
Since $gu_t(\infty)=g(\infty)\in \La$, we have
\begin{equation*}
\mathsf T(y)=\{t\in\bb{R} : gu_t(0)\in\La\}.
\end{equation*}

Suppose that $\mathsf T(y)$ is not locally $k$-thick at $0$. Then there exist $w\in U$ and $t>0$ such that 
\begin{equation*}
([-kt,-t]\cdot w \cup [t,kt]\cdot w)\cap\Lambda=\emptyset.
\end{equation*}
Since each component of $\Omega$ is convex and $0\not\in\Omega$, it follows that $[-kt,-t]\cdot w$ and $[t,kt]\cdot w$  lie in distinct components of $\Omega$, say $B_i$ and $B_j$, $(i\neq j)$.
But this yields
\be\label{dw}  d_w(\mathrm{hull}([-kt,-t]\cdot w),\text{ }\mathrm{hull}([t,kt]\cdot w))
\geq d(\mathrm{hull} B_i , \text{ }\mathrm{hull}B_j)
\geq\eta/2 .\ee
where $d_w$ denotes the hyperbolic distance of the plane above the line
$\br w$. Observe that the distance in \eqref{dw} 
is independent of $w\in\bb{R}^{d-1}$ and $t>0$, because both the
dilation centered at $0$ and the $(d-2)$-dimensional rotation with respect to the vertical axis above $0$ are hyperbolic isometries.
Therefore, we get a contradiction to \eqref{koh}. The case of $U<N^+$ is proved similarly, just replacing the role of $g^+$ and $g^-$ in the above arguments.
\end{proof}

\begin{Rmk}
It follows from the proof that $k$ is explicitly given by \eqref{koh}, equivalently,
$k+k^{-1}=e^{\eta/4}+2e^{\eta/8}-1$ where $\eta>0$ is the systole of the double of $\core {\M}$.
\end{Rmk}

\section{Structure of singular sets}\label{sec.correspondence}\label{s:st}

Let $\Gamma <G=\SO^\circ(d,1)$ be a convex cocompact torsion-free Zariski-dense subgroup.
Let $U<G$ be a connected closed subgroup of $G$ generated by unipotent elements in it.
In this section, we define the singular set $\mS(U)$ associated to $U$ and
study its structural property. The singular set $\mS(U)$ is defined
so that it contains all closed orbits of intermediate 
subgroups between $U$ and $G$.

\begin{definition}[Singular set]\label{sing}
We set \begin{equation*}
\mS(U)=\left\{x\in \Gamma\ba G : 
\begin{array}{c}
\text{there exists a proper connected }\\ \text {closed subgroup }W \supset U
     \text{ such that }xW \\ \text{ is closed and $\op{Stab}_W(x)$ is Zariski dense in
$W$.}
\end{array}
\right\}.
\end{equation*}
\end{definition}

\begin{Def} [Definition of $\mH$]
We denote by $\mH$  the collection of all  {\it proper} connected closed subgroups $H <G$ containing a unipotent element
such that
\begin{itemize}
\item  $ \Ga\ba\Ga H$  is closed, and 
\item $H\cap \Gamma$ is Zariski dense in $H$.
\end{itemize}

\end{Def}

\begin{prop}\label{sin1}
If  $H\in \mH$, then
 $H$ is a reductive subgroup of $G$, and hence
is of the form $ gH(U)Cg^{-1}$
 for some connected subgroup $U<N$, a closed subgroup $C<\op{C}(H(U))$ and $g\in G$
 such that $[g]\in \RFM$.
 
\end{prop}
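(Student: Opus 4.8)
## Proof proposal for Proposition \ref{sin1}

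\textbf{The plan.} The statement asserts two things: first, that any $H \in \mH$ is reductive; second, that such $H$ must be $A$-conjugate to one of the standard forms $gH(U)Cg^{-1}$ with a representative $g$ satisfying $[g] \in \RFM$. The second half will follow almost immediately from Proposition \ref{str} (the structure theorem for reductive $A$-normalized subgroups) together with Corollary \ref{dic}, \emph{once} we know $H$ is reductive and has a conjugate normalized by $A$; the only genuinely new content is establishing reductivity and locating a good representative in $\RFM$.

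\textbf{Reductivity.} The key input is that $\Gamma$ is convex cocompact — hence $\Gamma\ba\Gamma H$ closed forces $H\cap\Gamma$ to be a discrete subgroup acting cocompactly on an orbit, so $H\cap\Gamma$ is itself convex cocompact inside $H$ — combined with the hypothesis that $H\cap\Gamma$ is Zariski dense in $H$. The plan is as follows. Write $H = S V$ for a Levi decomposition, with $S$ semisimple with no compact factors (up to compact central factors) and $V$ the unipotent radical of $H$ (as in the proof of Corollary \ref{dic}, using \cite[Lemma 2.9]{Sh1}). I would argue $V$ must be trivial: if not, then since $H\cap\Gamma$ is Zariski dense in $H$, the image of $H\cap\Gamma$ in $H/[H,V]$ (or a suitable nilpotent quotient) would be a Zariski dense, finitely generated subgroup of a group that is, up to compacts, a nontrivial unipotent group times $S$; but a Zariski dense discrete subgroup of $H$ forces $H\cap\Gamma$ to contain a unipotent element $\gamma$ coming from $V$ (or more precisely to have nontrivial unipotent part projecting onto $V$). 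Such a $\gamma \in \Gamma$ is a nontrivial unipotent element, hence a parabolic isometry of $\bH^d$ fixing a unique point of $\partial\bH^d$ — but a convex cocompact group is torsion-free and has no parabolic elements. This contradiction shows $V=\{e\}$, so $H = S$ is semisimple, in particular reductive. (Alternatively: a convex cocompact $\Gamma$ contains no nontrivial unipotents at all, so neither does $H\cap\Gamma$; Zariski density then forces $H$ to have no unipotent radical. One has to be slightly careful that $H$ \emph{does} contain a unipotent element by hypothesis — this unipotent lies in $S = H_{nc}$, consistent with $S$ being nontrivial and noncompact.)

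\textbf{Normalization by $A$ and the representative in $\RFM$.} Having shown $H$ is reductive and noncompact (it contains a unipotent element), $H_{nc}$ is a nontrivial connected simple subgroup of $G = \SO^\circ(d,1)$, hence contains a one-parameter $\br$-diagonalizable subgroup; that subgroup is conjugate in $G$ to $A$, so some conjugate $g^{-1}Hg$ is normalized by (in fact contains) $A$. Proposition \ref{str} then gives $g^{-1}Hg = H(U) C$ for a nontrivial connected $U < N$ and $C < \op{C}(H(U))$, i.e. $H = gH(U)Cg^{-1}$. For the final claim $[g] \in \RFM$: since $\Gamma\ba\Gamma H$ is closed, $\op{Stab}_H([g])$ is a lattice in $H$, and as noted $H\cap\Gamma$ acts cocompactly on the symmetric subspace $\pi(gH_{nc})\subset\bH^d$; being Zariski dense, its limit set is a genuine subsphere $C_g \subset \mathbb S^{d-1}$ with $C_g\subset\Lambda$, so $\#(C_g\cap\Lambda)\geq 2$. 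By the characterization \eqref{convex}, this means $[g]\in\RFM\cdot H(gUg^{-1}\!)$ after conjugating $U$ appropriately; adjusting $g$ within the coset $gH$ (which does not change $H$) by an element of $H(U)$ moving a pair of limit points to $0,\infty$ — exactly the argument used in the proof of Lemma \ref{ru} — produces a representative $g$ with $g^\pm\in\Lambda$, i.e. $[g]\in\RFM$.

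\textbf{Main obstacle.} The one step requiring care is ruling out the unipotent radical \emph{cleanly}: I must be sure that Zariski density of $H\cap\Gamma$ in $H$ genuinely forces $H\cap\Gamma$ to "see" the unipotent radical $V$ — i.e. that a Zariski dense subgroup cannot be entirely contained in a reductive complement. This is true because a Levi complement is a proper algebraic subgroup when $V\neq\{e\}$; but the quickest route is simply to invoke that convex cocompact (equivalently, torsion-free with $\core M$ compact) implies $\Gamma$, and hence $H\cap\Gamma$, contains no nontrivial unipotent element, while Zariski density of $H\cap\Gamma$ combined with $V\neq\{e\}$ would force $H\cap\Gamma$ to contain one (a Zariski dense subgroup of $H$ projects onto a Zariski dense subgroup of $H/S$, a nontrivial unipotent group, so is not contained in the reductive part). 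Everything else is a routine assembly of Proposition \ref{str}, Corollary \ref{dic}, and the limit-set bookkeeping already carried out in Lemmas \ref{ru} and \ref{Zd}.
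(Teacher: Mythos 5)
Your argument hinges on the claim that if the unipotent radical $V$ of $H$ is nontrivial, then Zariski density of $H\cap\Gamma$ forces $H\cap\Gamma$ to contain a nontrivial unipotent element (``a Zariski dense subgroup projects onto \dots a nontrivial unipotent group, so is not contained in the reductive part''). Not being contained in a Levi complement does not make any individual element unipotent, and the principle you invoke is false as a general statement about algebraic groups: take a Schottky subgroup $F<\op{SL}_2(\br)$ all of whose nontrivial elements are hyperbolic, and an affine deformation $\Delta=\{(g,v(g)):g\in F\}<\op{SL}_2(\br)\ltimes\br^2$ by a cocycle $v$ that is not a coboundary. Every nontrivial $(g,v(g))$ fixes a point of $\br^2$ (since $g-I$ is invertible) and hence is semisimple, yet $\Delta$ is Zariski dense in the non-reductive group $\op{SL}_2(\br)\ltimes\br^2$. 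So ``$\Gamma$ has no unipotents (indeed, all elements semisimple) $+$ Zariski density'' does not by itself yield reductivity of $H$; one must use the specific structure of $G=\SO^\circ(d,1)$. A second, related imprecision: the exclusion available for convex cocompact $\Gamma$ that one actually needs is ``no parabolic elements,'' which is strictly stronger than ``no unipotent elements,'' because the offending elements one can really produce are of the form $m_0u$ with $m_0$ a compact part and $u$ nontrivial unipotent --- such elements are parabolic but in general not unipotent.

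What the paper does, and what your sketch is missing, is the following. Since $V$ can be conjugated into $N$, $H\subset\op{N}(V)=NA\op{C}_1(V)\op{C}_2(V)$, which contains no noncompact simple subgroup, so $H_{nc}=\{e\}$ and $H=CTV$ with $C$ compact semisimple and $T$ a torus. If $T$ is compact, every nontrivial element of $H\cap\Gamma$ is elliptic or parabolic, hence parabolic ($\Gamma$ being discrete and torsion free), contradicting convex cocompactness. If $T$ is noncompact, its split part is $v^{-1}Av$ for some $v\in N$, and after conjugation $H=M_0AV$ with $M_0<M\cap\op{N}(V)$; then $[H,H]=[M_0,M_0]V$ (using $[A,V]=V$), and Zariski density of $[H\cap\Gamma,H\cap\Gamma]$ in $[H,H]$ produces an element $m_0u\in\Gamma$ with $u\neq e$, again parabolic --- contradiction. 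Your proposal contains neither the reduction $H_{nc}=\{e\}$, nor the compact/noncompact dichotomy for $T$, nor the commutator mechanism; these are exactly what make ``Zariski density sees the unipotent radical'' correct in this setting. The second half of your argument (conjugating so that $A<H$, applying Proposition \ref{str}, then adjusting $g$ inside $gH(U)$ using two limit points on the boundary sphere) does follow the paper, but two assertions there should be corrected: closedness of $\Gamma\ba\Gamma H$ does not make $\op{Stab}_H([g])$ a lattice, and the limit set of the stabilizer is neither the whole boundary sphere $C_g$ nor is $C_g\subset\Lambda$; all that holds, and all that is needed via Proposition \ref{proper}, is that $C_g\cap\Lambda$ is infinite, hence contains two points that an element of $H(U)$ can move to $0$ and $\infty$.
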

\begin{proof} In order to prove that $H$ is reductive, suppose not. Then
its unipotent radical is non-trivial, which we can assume to be a subgroup $U$ of $N$, up to a conjugation.
Now we write $H=H_{nc} CT U$ where $C$ is a connected semisimple compact subgroup and $T$
is a  torus centralizing $H_{nc} C$. As $H$ is contained in $\op{N}(U)=NA \op{C_1}(U) \op{C_2}(U)$, which does not contain any
non-compact simple Lie subgroup,
it follows that $H_{nc}$ is trivial.
Now if $T$ were compact, then $H\cap \Gamma$ would consist of parabolic elements, which is a contradiction as $\Gamma$ is convex cocompact.
Hence $T$ is non-compact. Write $T=T_0S$ where $S$ is a split torus and $T_0$ is compact. Then $T_0$ is equal to a conjugate
of $A$, say, $g^{-1}A g$ for some $g\in G$.  As $T_0$ normalizes $U$, and $\op{N}(U)$ fixes $\infty$, we deduce that $g(\infty)$ is either $\infty$ or $0$.
Since $\op{Stab}_G(\infty)=NAM$,
$g(\infty)=\infty$ implies $g\in NAM$, and  $g(\infty)=0$ implies $jg\in NAM$ where $j\in G$ is an element of order $2$ such that $j(0)=\infty$.
In either case, $T_0=v^{-1}Av$ for some $v\in N$.
By replacing $H$ with $vHv^{-1}$, we may assume that $T_0=A$. Since $CS$ is a compact subgroup commuting with $A$,
$CS\subset M$. Therefore $H$ is of the form $M_0AU$ where $M_0$ is a closed subgroup of $M\cap \op{N}(U)$; note that we used the fact that
 $v$ commutes with $U$.
Now the commutator subgroup $[H,H]$ is equal to $[M_0,M_0]U$. Since $[H\cap \Gamma, H\cap \Gamma]$ must be Zariski dense
in $[H, H]$, we deduce that $\Gamma$ contains an element $m_0 u\in M_0 U$ with $u$ non-trivial.
Since $m_0u$ is a parabolic element of $\Gamma$, this is a contradiction to the assumption that $\Gamma$ is convex cocompact.
This proves that
 $H$ is reductive. 
 
 By Proposition \ref{str}, $H$ is of the form $ gH(U)Cg^{-1}$ for some $g\in G$ and $C<\op{C}(H(U))$.
 For some $m\in M$ and $1\le k\le d-2$, $H(U)=m H(U_k) m^{-1}$.
Hence $\Gamma\ba \Gamma gm H(U_k) C_0$ is closed where $C_0= m^{-1} C m$.
By Proposition \ref{proper},  the boundary of the geodesic plane
$\pi(gm H(U_k))$ contains uncountably many points of $\Lambda$, since $(gm) H(U_k) C_0(gm)^{-1}\cap \Gamma $ is Zariski dense in $(gm)H(U_k)C_0(gm)^{-1}$.
Using two such limit points,
we can find an element $h\in H(U_k)$ such that $(gmh)^{\pm}\in \La$.
Since $(gmhm^{-1})^{\pm}=(gmh)^{\pm}$ and $mhm^{-1}\in H(U)$,
it follows that $[g] H(U)\cap \RFM\ne \emptyset$, and hence we can take $[g]\in \RFM$
by modifying it with an element of $H(U)$ if necessary. This finishes the proof. \end{proof}

Therefore, for each $H\in \mH$,
the non-compact semisimple part $H_{nc}$ of $H$ is well defined.

\begin{prop}\label{sinc}
If $H\in\mathscr{H}$, then 
\begin{itemize}
\item  $H\cap\Ga$ is finitely generated;
\item $[\op{N}_G(H_{nc})\cap \Gamma; H\cap \Gamma]<\infty$.
\end{itemize}
\end{prop}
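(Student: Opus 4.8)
The plan is to deduce both assertions from the structure established in Proposition \ref{sin1}, namely that $H = gH(U)Cg^{-1}$ is reductive with $[g]\in\RFM$, together with Proposition \ref{count} applied to the closed orbit $xH'$ where $x=[g]$. For the first bullet, I would argue as follows. Conjugating, assume $g=e$, so $H=H(U)C$ with $C$ a closed (hence algebraic, since compact linear groups are algebraic) subgroup of $\op{C}(H(U))$. Since $H\in\mH$ we have $\Gamma_{H}:=H\cap\Gamma$ Zariski dense in $H$, and by Proposition \ref{str} we know $H_{nc}=H(U)$ is conjugate to some $\SO^\circ(k+1,1)$, $1\le k\le d-2$, with $[g]\in\RFM$; thus the projection $p=\pi_1$ of $H'(U)$ onto $H(U)$ maps $\Gamma_{H}$ (in fact $\Gamma\cap H'(U)$) injectively, and by Proposition \ref{proper}(2) the image $p(\Gamma_{H})$ is a convex cocompact (in particular geometrically finite) discrete subgroup of $H(U)\cong\SO^\circ(k+1,1)$, with limit set $\partial\tilde S\cap\Lambda$. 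Geometrically finite groups are finitely generated, so $p(\Gamma_{H})$ is finitely generated; since $p$ restricted to $\Gamma\cap H'(U)\supseteq\Gamma_{H}$ is injective (as $\op{C}(H(U))$ acts trivially on the geodesic plane and the stabilizer in $\Gamma$ of a point is trivial by torsion-freeness — more precisely $\ker p$ on $\Gamma$ consists of torsion elements), $\Gamma_{H}$ is finitely generated.

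For the second bullet I would use the identification $\op{N}_G(H_{nc})=H'(U)=H(U)\op{C}(H(U))$ from Example \ref{exam}/Proposition \ref{str}. Write $\Gamma':=\Gamma\cap H'(U)$ and $\Gamma_{H}=\Gamma\cap H$. Since $C\subset\op{C}(H(U))$ is compact and $\Gamma'\supseteq\Gamma_{H}=\Gamma\cap H(U)C$, the key point is to bound the index $[\Gamma':\Gamma_{H}]$. Apply the projection $\pi_2:H'(U)\to\op{C}(H(U))$. By Proposition \ref{count}, $\cl{xH}=xHC'$ with $C'=\cl{\pi_2(\Gamma')}$, and $HC'$ is the identity component of the Zariski closure of $\Gamma'$; since $H$ is already reductive and closed with $\Gamma_H$ Zariski dense in $H$, comparing Zariski closures forces $HC'=H$, i.e. $\cl{\pi_2(\Gamma')}=\pi_2(H\cap\op{C}(H(U)))=C$ (using $H=H(U)C$). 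Now $\pi_2(\Gamma_H)\subset C$ is a subgroup whose closure is contained in $C$; but $\pi_1(\Gamma')$ is discrete (being a subgroup of the discrete-after-projection image — here one uses that $xH'$ closed and $p$ proper on $\Gamma'$), while $\pi_2(\Gamma')$ lies in the compact group $\op{C}(H(U))$, so $\Gamma'$ is discrete in $H'(U)$ and $p$ is injective on it with image a discrete subgroup $\Delta'$ of $H(U)$ containing $\Delta:=p(\Gamma_H)$ as a subgroup; I then claim $[\Delta':\Delta]<\infty$, whence $[\Gamma':\Gamma_H]<\infty$. This finiteness holds because both $\Delta$ and $\Delta'$ are discrete subgroups of $\SO^\circ(k+1,1)$ with the \emph{same} Zariski closure (namely all of $H(U)$, by Lemma \ref{Zd} and Proposition \ref{proper} applied to $\Delta=p(\Gamma_H)$, while $\Delta'\supseteq\Delta$ forces $\Delta'$ Zariski dense too) and with the same limit set — indeed $\Lambda(\Delta')=\partial\tilde S\cap\Lambda=\Lambda(\Delta)$ since $\tilde S$ is the common invariant geodesic plane and $\Delta'$ preserves it. Two convex cocompact groups with the same limit set, one containing the other, have finite index one in the other (the convex core of the smaller covers that of the larger with degree $[\Delta':\Delta]$, and this must be finite since both convex cores are compact of the same dimension — their volumes are commensurable).

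The main obstacle, I expect, is the second bullet: one must carefully control $\Gamma\cap\op{N}_G(H_{nc})$ versus $\Gamma\cap H$ when $H$ is strictly smaller than $\op{N}_G(H_{nc})=H'(U)$, i.e. when $C\subsetneq\op{C}(H(U))$. The subtlety is that a priori $\Gamma'=\Gamma\cap H'(U)$ could involve elements whose $\op{C}(H(U))$-component is not in $C$; ruling this out (equivalently, showing $\pi_2(\Gamma')\subset C$ up to finite index) is exactly where Proposition \ref{count} — which pins down $\cl{\pi_2(\Gamma')}$ — and the rigidity of convex cocompact subgroups of rank-one groups with equal limit sets are essential. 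I would also need to double-check the torsion-freeness input: since $\Gamma$ is torsion-free, $\ker(p)\cap\Gamma=\{e\}$ (as $\ker p=\op{C}(H(U))$ is compact), so all the projections above are genuinely injective on the relevant subgroups of $\Gamma$, which is what lets finiteness of index downstairs pass back upstairs. Everything else is a routine assembly of Propositions \ref{str}, \ref{proper}, \ref{count} and standard facts about geometrically finite groups.
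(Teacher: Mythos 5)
Your first bullet is handled essentially as in the paper: project by $p=\pi_1$ (injective on $\Gamma\cap\op{N}_G(H_{nc})$ by torsion-freeness and compactness of the kernel), quote Proposition \ref{proper} to get convex cocompactness of the image, and use that convex cocompact groups are finitely generated. The problem is in the second bullet, where your argument has a genuine gap at its crux. You reduce to showing $[\Delta':\Delta]<\infty$ for $\Delta=p(\Gamma\cap H)\le\Delta'=p(\Gamma\cap H')$, and you get this from the assertion $\Lambda(\Delta)=\partial\tilde S\cap\Lambda=\Lambda(\Delta')$. But Proposition \ref{proper}(2) identifies $\partial\tilde S\cap\Lambda$ only with the limit set of the projection of the \emph{full} stabilizer $\Gamma_{\tilde S}=\Gamma\cap gH'g^{-1}$; it says nothing about the smaller group $\Delta$. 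Your stated justification (``$\tilde S$ is the common invariant plane and $\Delta'$ preserves it'') only gives the inclusion $\Lambda(\Delta)\subseteq\partial\tilde S\cap\Lambda$. A Zariski dense discrete subgroup of $\Delta'$ can perfectly well have a strictly smaller limit set -- indeed, by Lemma \ref{dop}, if $[\Delta':\Delta]$ were infinite the critical exponents and hence the limit sets \emph{would} differ -- so asserting equality of the limit sets is essentially equivalent to the finite-index statement you are trying to prove. The paper closes exactly this gap by a different input: it observes that $p(H\cap\Gamma)$ is a \emph{normal} subgroup of $p(\op{N}_G(H_{nc})\cap\Gamma)$, so the limit set of the smaller group is invariant under the larger nonelementary group, and minimality of the limit set forces the two limit sets to coincide (Lemma \ref{finite}); only then does the convex-core covering argument (which you do have, and which matches the paper's) give finite index. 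Your proposal contains no substitute for this normality/minimality step.

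A secondary problem: your intermediate claim $\cl{\pi_2(\Gamma')}=C$, justified by ``comparing Zariski closures since $\Gamma_H$ is Zariski dense in $H$,'' does not follow. Zariski density of the \emph{smaller} group $\Gamma_H$ in $H$ only yields $H\subseteq\cl{\Gamma'}^{\,\mathrm{Zar}}$, not the reverse inclusion, and as an exact equality the claim is false in general: $\pi_2(\Gamma')$ can contain elements outside $C$ (for instance of finite order), with $xH$ still closed, because $\cl{xH(U)}=xH(U)\cl{\pi_2(\Gamma')}$ is an identity about orbits of a point with nontrivial stabilizer, not about subgroups. What is true is that $\pi_2(\Gamma')$ exceeds $C$ at most by a finite amount -- but that is again the content of the proposition, so using it here would be circular. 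Fortunately this claim is not really used in the rest of your argument; the irreparable step is the unproved limit-set equality above, and you correctly sensed in your final paragraph that ``ruling this out'' is the main obstacle -- the argument you give just does not rule it out.
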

\begin{proof}
Let $p$ denote  the projection map $\op{N}_G(H_{nc})\to H_{nc}$.
Note that $p$ is an injective map on $\op{N}_G(H_{nc})\cap \Gamma$, as $\Ga$ is torsion free and the kernel of $p$ is a compact subgroup.
It follows from Proposition \ref{sin1} that $H_{nc}$ is co-compact in $\op{N}_G(H_{nc})$.
Since $H\in\mH$, the orbit $[e]H$ is closed and hence $[e]\op{N}_G(H_{nc})$ is closed.
 It follows that both $p(H\cap \Ga)$ and
 $p(\op{N}_G(H_{nc})\cap\Ga)$ are convex cocompact Zariski dense subgroups of $H_{nc}$ by Proposition \ref{proper}.
 As any convex cocompact subgroup is finitely generated \cite{Bow},
  $p(H\cap\Ga)$ is finitely generated. Hence $H\cap\Ga$ is finitely generated by the injectivity of $p|_{H\cap \Gamma}$.
  
 Since $p(H\cap \Ga)$ is a normal subgroup of $p(\op{N}_G(H_{nc})\cap\Ga)$, it follows
 that $p(H\cap \Ga)$ has finite index in $p(\op{N}_G(H_{nc})\cap\Ga)$ by Lemma \ref{finite} below.
 Since $p|_{\op{N}_G(H_{nc})\cap\Ga}$ is injective, it follows that $H\cap \Gamma$ has finite index in $ \op{N}_G(H_{nc})\cap \Gamma$.  \end{proof}

\begin{lem}\label{finite}
Let $\Gamma_1$ and $ \Gamma_2$ be non-elementary convex cocompact subgroups of $G$.
If $\Gamma_2$ is a normal subgroup of $\Gamma_1$, then $[\Gamma_1: \Gamma_2]<\infty$.
\end{lem}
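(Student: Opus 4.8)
The statement to prove is Lemma~\ref{finite}: if $\Gamma_1,\Gamma_2$ are non-elementary convex cocompact subgroups of $G=\SO^\circ(d,1)$ and $\Gamma_2\trianglelefteq\Gamma_1$, then $[\Gamma_1:\Gamma_2]<\infty$. The strategy I would use is to compare limit sets and then invoke a standard rigidity fact about convex cocompact groups. The first step is to observe that since $\Gamma_2$ is normal in $\Gamma_1$, the limit set $\Lambda(\Gamma_2)$ is $\Gamma_1$-invariant: for $\gamma\in\Gamma_1$ we have $\gamma\Gamma_2\gamma^{-1}=\Gamma_2$, hence $\gamma\Lambda(\Gamma_2)=\Lambda(\gamma\Gamma_2\gamma^{-1})=\Lambda(\Gamma_2)$. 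Since $\Gamma_2$ is non-elementary, $\Lambda(\Gamma_2)$ is a closed $\Gamma_1$-invariant subset with more than two points, so by minimality of the action of $\Gamma_1$ on its own limit set we get $\Lambda(\Gamma_2)=\Lambda(\Gamma_1)$.

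\textbf{Second step: deduce equality of critical exponents / equality up to finite index.} Having $\Lambda(\Gamma_2)=\Lambda(\Gamma_1)=:\Lambda$, both groups act convex cocompactly on $\hull(\Lambda)$, so $\Gamma_2\backslash\hull(\Lambda)$ is a compact manifold covering the compact orbifold $\Gamma_1\backslash\hull(\Lambda)$. The covering $\Gamma_2\backslash\hull(\Lambda)\to\Gamma_1\backslash\hull(\Lambda)$ has degree $[\Gamma_1:\Gamma_2]$ (after passing to a torsion-free finite-index subgroup of $\Gamma_1$ if necessary, or working orbifold-theoretically); since both spaces are compact, this degree is finite, which is exactly the claim. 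Alternatively, and perhaps cleaner: the critical exponents satisfy $\delta_{\Gamma_2}=\delta_{\Gamma_1}$ because both equal the Hausdorff dimension of the common limit set $\Lambda$ (for convex cocompact groups the critical exponent equals $\dim_H\Lambda$ by Sullivan/Patterson), and a normal subgroup with the same critical exponent as the ambient convex cocompact group has finite index --- this is a theorem that goes back to the amenability criterion (Brooks, or Roblin): $\Gamma_2\trianglelefteq\Gamma_1$ with $\delta_{\Gamma_2}=\delta_{\Gamma_1}$ forces $\Gamma_1/\Gamma_2$ to be amenable and, combined with convex cocompactness of $\Gamma_1$ (so $\delta_{\Gamma_1}$ is realized as the bottom of the spectrum on the compact quotient), forces finiteness.

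\textbf{The main obstacle.} The delicate point is the passage from "equal limit sets" (or "equal critical exponents") to "finite index." Merely having the same limit set does not in general force finite index for Kleinian groups --- one needs the convex cocompactness (or at least geometric finiteness together with a matching of the full Patterson--Sullivan data) to rule out infinite normal covers with the same bottom-of-spectrum behavior. I would handle this by using the compactness of $\core(\Gamma_1\backslash\bH^d)$ directly: the convex core of $\Gamma_2\backslash\bH^d$ maps onto that of $\Gamma_1\backslash\bH^d$ since $\Lambda(\Gamma_2)=\Lambda(\Gamma_1)$, and this map is a covering of compact manifolds (orbifolds), hence finite-to-one; the number of sheets is $[\Gamma_1:\Gamma_2]$. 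This reduces everything to the first step, which is the genuinely used input, and the rest is the elementary topology of coverings of compact spaces. I would present the proof in this compact-core form to keep it self-contained and avoid appealing to spectral/amenability machinery.
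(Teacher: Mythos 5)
Your proof is correct and takes essentially the same route as the paper: normality plus minimality of the $\Gamma_1$-action on its limit set gives $\Lambda(\Gamma_2)=\Lambda(\Gamma_1)$, and then the covering $\Gamma_2\backslash\hull(\Lambda)\to\Gamma_1\backslash\hull(\Lambda)$ with compact total space (convex cocompactness of $\Gamma_2$) forces the number of sheets, i.e. $[\Gamma_1:\Gamma_2]$, to be finite. The critical exponent/amenability alternative you sketch is unnecessary; the paper uses exactly your compact-core covering argument.
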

\begin{proof} Let $\Lambda_i$ be the limit set of $\Gamma_i$ for $i=1,2$.
Since $\Ga_2<\Ga_1$, $\Lambda_2\subset \Lambda_1$.  As $\Gamma_2$ is normalized by $\Gamma_1$,
$\Lambda_2$ is $\Gamma_1$-invariant. Since $\Gamma_1$ is non-elementary,
$\Lambda_1$ is a minimal $\Gamma_1$-invariant closed subset. Hence $\Lambda_1=\La_2$.
Let $\M_i:=\Gamma_i\ba \bH^d$. Then the convex core  of $\M_1$ is equal to $\Gamma_1\ba \hull (\La_2)$ and covered by
$\core {\M}_2=\Gamma_2\ba \hull (\La_2)$. Since $\core {\M}_2$ is compact, it follows
that $[\Gamma_1: \Gamma_2]<\infty$.
\end{proof}

\begin{Def} [Definition of $\mH^\star$]
\be\label{hstar}\mH^\star:=\{ \op{N}_G(H_{nc}): H\in \mH\}.\ee
\end{Def}

\begin{corollary}[Countability] \label{counth}
The collection $\mathscr{H}$ is countable,
and the map $H\to \op{N}_G(H_{nc})$ defines a bijection between $\mH$ and $\mH^{\star}$.
\end{corollary}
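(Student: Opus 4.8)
The plan is to deduce countability of $\mH$ from the countability of the family of closed orbits of the non-compact semisimple parts, and then to establish the bijection by exhibiting an explicit inverse. For the first part, I would argue as follows. By Proposition \ref{sin1}, every $H\in\mH$ is reductive and $H\cap\Gamma$ is Zariski dense in $H$; moreover, by Proposition \ref{sinc}, $H\cap\Gamma$ is finitely generated and has finite index in $\op{N}_G(H_{nc})\cap\Gamma$. Since $\Gamma$ is itself finitely generated (being convex cocompact, by \cite{Bow}), it has only countably many finitely generated subgroups; hence there are only countably many possibilities for the subgroup $\Delta:=H\cap\Gamma$ of $\Gamma$. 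The key point is then that $H$ is determined by $\Delta$: indeed, $H$ is reductive and $\Delta$ is Zariski dense in $H$, so $H$ is the identity component of the Zariski closure $\overline{\Delta}^{\,Z}$ of $\Delta$ in $G$. Thus the assignment $H\mapsto H\cap\Gamma$ is an injection from $\mH$ into the countable set of subgroups of $\Gamma$, proving $\mH$ is countable. (Alternatively, and perhaps more in the spirit of the geometric setup: using Proposition \ref{proper}, each $H\in\mH$ with $H_{nc}$ conjugate to $H(U_k)$ corresponds to a properly immersed geodesic $(k+1)$-plane of $M$ meeting $\core M$, together with a compact subgroup datum $C$; the planes are countable by a separate counting argument, and for each plane the possible $C$ form a countable — in fact finite — set since $C=\overline{\pi_2(\Gamma')}$ is determined by $\Gamma'$ up to finitely many choices. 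I would use whichever of these the paper has set up; the Zariski-closure argument is cleanest and self-contained.)

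For the bijection statement, I need to show the map $\Phi:H\mapsto\op{N}_G(H_{nc})$ is injective and surjective onto $\mH^\star$. Surjectivity is immediate from the definition \eqref{hstar} of $\mH^\star$. For injectivity, suppose $H_1,H_2\in\mH$ with $\op{N}_G((H_1)_{nc})=\op{N}_G((H_2)_{nc})$; call this common group $L$. I claim $(H_1)_{nc}=(H_2)_{nc}$: since $L$ is reductive with non-compact part $(H_i)_{nc}$ (by Proposition \ref{str}, $L_{nc}=H(U)=(H_i)_{nc}$ for the appropriate $U$), and $L_{nc}$ is intrinsically determined by $L$ as the maximal connected normal semisimple subgroup with no compact factors, we get $(H_1)_{nc}=L_{nc}=(H_2)_{nc}$. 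Write $H_{nc}$ for this group. Now by Proposition \ref{sin1}, $H_i=H_{nc}C_i$ for a closed subgroup $C_i<\op{C}(H_{nc})$, and $C_i=\overline{\pi_2(\Gamma\cap H_i\text{-conjugate})}$; more directly, by the finite-index statement in Proposition \ref{sinc}, $H_i\cap\Gamma$ has finite index in $L\cap\Gamma=\op{N}_G(H_{nc})\cap\Gamma$. Since both $H_1\cap\Gamma$ and $H_2\cap\Gamma$ are Zariski dense in $H_1$ and $H_2$ respectively, and each is a finite-index subgroup of the fixed group $L\cap\Gamma$, their Zariski closures coincide with that of $L\cap\Gamma$; hence $H_1=(H_1\cap\Gamma)^{\,Z,\circ}=(L\cap\Gamma)^{\,Z,\circ}=(H_2\cap\Gamma)^{\,Z,\circ}=H_2$, where the superscript denotes identity component of Zariski closure. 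This gives injectivity.

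The main obstacle, I expect, is making precise the claim that $H$ is recovered as the identity component of the Zariski closure of $H\cap\Gamma$ — this uses Zariski density of $H\cap\Gamma$ in $H$ (Proposition \ref{sin1}) together with the fact that a reductive group equals the identity component of its own Zariski closure, i.e. is algebraic; one must check this applies to the $H$ arising here, which follows since $H=H(U)C$ with $H(U)$ algebraic and $C$ compact hence algebraic. A secondary subtlety is the interplay between "finite index in $L\cap\Gamma$" and "same Zariski closure": a finite-index subgroup of a Zariski-dense subgroup is still Zariski dense (its Zariski closure has finite index in the ambient group and contains the identity component), so the identity components agree. Everything else is bookkeeping on the structure results already established in Propositions \ref{str}, \ref{sin1}, and \ref{sinc}.
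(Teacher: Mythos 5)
Your proposal is correct and follows essentially the same route as the paper: countability via the finite generation of $H\cap\Gamma$ inside the finitely generated group $\Gamma$ together with recovering $H$ as the identity component of the Zariski closure of $H\cap\Gamma$, and injectivity of $H\mapsto\op{N}_G(H_{nc})$ via the finite-index statement of Proposition \ref{sinc}, which shows $H$ is the identity component of the Zariski closure of $\op{N}_G(H_{nc})\cap\Gamma$. Your intermediate step identifying $(H_1)_{nc}=(H_2)_{nc}$ is harmless but not needed, since the Zariski-closure recovery already gives injectivity directly.
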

\begin{proof} As $\Gamma$ is convex cocompact, it is finitely generated.
Therefore there are only countably many finitely generated subgroups of $\Ga$. By Proposition \ref{sinc},
there are only countably many possible $H\cap \Gamma$ for $H\in \mathscr{H}$. Since $H$ is determined by $H\cap \Gamma$,
being its Zariski closure, the first claim follows.

Since $H\cap \Gamma$ has finite index in $\op{N}_G(H_{nc})\cap \Gamma$ by Proposition \ref{sinc}, $H$ is determined as the identity component of the Zariski closure of
$\op{N}_G(H_{nc})\cap \Gamma$. This proves the second claim.
\end{proof}

 In the case of a  convex cocompact hyperbolic manifold of Fuchsian ends, there is a one to one correspondence between
$\mathscr{H}$ and the collection of all closed $H'(U)$-orbits of points in $\RFM$ for $ U<N$:
 if $H\in \mathscr H$,
then $H=gH(U) C g^{-1}$ for some $U<N$ and  $g\in G$ with
$[g]\in \RFM$ and $[g]H'(U)$ is closed.
Conversely, if $[g]H'(U)$ is closed for some $[g]\in \RFM$,
then the identity component of
the  Zariski closure of $\Gamma\cap gH'(U)g^{-1}$ is given by $ gH(U) Cg^{-1}$ for some
closed subgroup $C< \op{C}(H( U))$ by Proposition \ref{count},
and hence $gH(U) Cg^{-1}\in \mathscr H$. Moreover, since the normalizer of $H(U)C$ is contained in $H'(U)$,
 if $g_1H(U) Cg_1^{-1}
=g_2H( U) Cg_2^{-1}$, then $g_2^{-1}g_1 \in H'(U)$, so
$[g_1]H'(U)= [g_2] H'(U)$.
Therefore Corollary \ref{counth} implies
the following corollary  by Propositions \ref{proper} and \ref{count}.

\begin{cor}\label{rcount} Let $\M$ be a  convex cocompact hyperbolic manifold with Fuchsian ends.
Then \begin{enumerate}
\item  there are only countably many properly immersed geodesic
planes of dimension at least $2$ intersecting $\core {\M}$.
\item For each $1\le m\le d-2$, there are only countably many spheres $S\subset \mathbb S^{d-1}$ of dimension $m$,
such that $\# S\cap \La \ge 2$ and $\Gamma S$ is closed in the space $\mathcal C^m$.
\end{enumerate}
\end{cor}

\begin{Rmk}\label{Qcount}
In (2), we may replace the condition $\# S\cap \La \ge 2$ with $\# S\cap \La \ge 1$, because
if  $\#S\cap \La=1$, then $\Gamma S$ is not closed (see Remark \ref{NCL}).
\end{Rmk}

For a subgroup $H<G$, define
\begin{equation}\label{xhuttt}
X(H,U):=\{g\in G : gUg^{-1}\subset H\}.
\end{equation}

Note that $X(H,U)$ is left-$\op{N}_G(H)$ and right-$\op{N}_G(U)$-invariant, and for any $g\in G$,
\begin{equation}\label{eq.v5}
X(gHg^{-1},U)=gX(H,U) .
\end{equation}

For $H\in \mH$ and any connected unipotent subgroup $U<G$, observe that
\be\label{xhnc} X(H, U)=X(H_{nc}, U)=X(\NC, U);\ee
this follows since any unipotent element of $\NC$ is contained in $H_{nc}$.

\begin{proposition}\label{sudef}
We have
\begin{equation*}
\mathscr{S}(U )=\bigcup\limits_{H\in\mathscr{H}^\star}\Ga\ba\Ga X(H,U).
\end{equation*}
\end{proposition}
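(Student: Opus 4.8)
The plan is to prove the two inclusions separately, using the definitions of $\mS(U)$ and of $\mH^\star$ together with the structural results already established. First I would unwind the definition: a point $x=\Gamma g\in\mS(U)$ means there is a proper connected closed $W\supset U$ with $xW$ closed and $\op{Stab}_W(x)$ Zariski dense in $W$. Writing $\op{Stab}_W(x)=W\cap g^{-1}\Gamma g$, the Zariski density condition says exactly that $gWg^{-1}\cap\Gamma$ is Zariski dense in $gWg^{-1}$, and $\Gamma\ba\Gamma gWg^{-1}$ is closed. Thus $H:=gWg^{-1}$ is a proper connected closed subgroup of $G$ with $\Gamma\ba\Gamma H$ closed and $H\cap\Gamma$ Zariski dense in $H$, i.e. $H\in\mH$ — \emph{provided} $H$ contains a unipotent element. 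Since $U$ is nontrivial and $gUg^{-1}\subset H$, this is automatic. So $gUg^{-1}\subset H$ with $H\in\mH$; by \eqref{xhnc}, $gUg^{-1}\subset\NC$, which by definition of $X(\cdot,\cdot)$ in \eqref{xhuttt} says $g\in X(\op{N}_G(H_{nc}),U)$. Since $\op{N}_G(H_{nc})\in\mH^\star$ by \eqref{hstar}, this shows $x=\Gamma g\in\Gamma\ba\Gamma X(\op{N}_G(H_{nc}),U)$, giving the inclusion $\mS(U)\subset\bigcup_{H\in\mH^\star}\Gamma\ba\Gamma X(H,U)$.

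For the reverse inclusion, take $H_0\in\mH^\star$, so $H_0=\op{N}_G(H_{nc})$ for some $H\in\mH$, and take $g\in X(H_0,U)$, i.e. $gUg^{-1}\subset H_0$. I must produce a proper connected closed $W\supset U$ witnessing $\Gamma g\in\mS(U)$. The natural candidate is $W:=g^{-1}H_0 g$ (or rather the connected component; note $H_0$ is already connected as the identity component of a normalizer of a connected group, being $H'(U')$-type by Proposition \ref{str} and the discussion after Corollary \ref{counth}). Then $W\supset U$ since $gUg^{-1}\subset H_0$, and $W$ is proper since $H_0$ is proper (it lies inside a proper reductive subgroup as $H$ is reductive by Proposition \ref{sin1}, and $H_0=H'(U')$ is proper in $G$). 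The orbit $\Gamma g W=\Gamma H_0 g$ is closed because $\Gamma\ba\Gamma H_0$ is closed: indeed $\Gamma\ba\Gamma H$ is closed by hypothesis $H\in\mH$, and $H$ has finite index in $H_0\cap\Gamma\cdot H$... more carefully, $\Gamma\ba\Gamma\op{N}_G(H_{nc})$ is closed since $H_{nc}$ is cocompact in $\op{N}_G(H_{nc})$ by Proposition \ref{sin1} and $\Gamma\ba\Gamma H_{nc}$ is closed (as $H_{nc}$ is the identity component of the Zariski closure of the convex cocompact group $H\cap\Gamma$). Finally, $\op{Stab}_W(\Gamma g)=W\cap g^{-1}\Gamma g$ is conjugate to $H_0\cap\Gamma$, which is Zariski dense in $H_0$ because $H\cap\Gamma$ is Zariski dense in $H$ and $[H_0\cap\Gamma:H\cap\Gamma]<\infty$ by Proposition \ref{sinc}, so $\overline{H_0\cap\Gamma}^{\,Z}\supset\overline{H\cap\Gamma}^{\,Z}=H$ has the same identity component $H$... hmm, one needs $H_0=\op{N}_G(H_{nc})$ to itself be the Zariski closure of $H_0\cap\Gamma$, which follows since $H_0\cap\Gamma$ contains the Zariski-dense-in-$H$ subgroup $H\cap\Gamma$ with finite index and $\op{N}_G(H_{nc})$ is algebraic and normalizes $H_{nc}=(H)_{nc}$; as $H_0$ is connected this forces the Zariski closure of $H_0\cap\Gamma$ to have identity component all of $H_0$. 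That gives $\Gamma g\in\mS(U)$.

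The remaining routine points are: (i) checking $X(H,U)$ being right-$\op{N}_G(U)$-invariant and left-$\op{N}_G(H)$-invariant (stated already after \eqref{xhuttt}) so that $\Gamma\ba\Gamma X(H,U)$ is well-defined independently of representatives, and that the union over $\mH^\star$ is genuinely a subset of $\Gamma\ba G$; (ii) confirming via Corollary \ref{counth} that the indexing by $\mH^\star$ rather than $\mH$ loses nothing, since $H\mapsto\op{N}_G(H_{nc})$ is a bijection $\mH\to\mH^\star$ and $X(H,U)=X(\op{N}_G(H_{nc}),U)$ by \eqref{xhnc}.

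\textbf{Main obstacle.} The delicate point is the reverse inclusion, specifically verifying that the witnessing subgroup $W=g^{-1}H_0g$ really satisfies the Zariski-density condition in the definition of $\mS(U)$ — that is, that $\op{Stab}_W(\Gamma g)$ is Zariski dense in $W$, not merely in the smaller subgroup $g^{-1}Hg$. This is where Proposition \ref{sinc} (finite index of $H\cap\Gamma$ in $\op{N}_G(H_{nc})\cap\Gamma$) is essential, combined with the fact that a subgroup of finite index in a group has the same Zariski closure, and that $\op{N}_G(H_{nc})$ is connected so equals that common Zariski closure. One must also be slightly careful that $\op{N}_G(H_{nc})$ as used in $\mH^\star$ refers to the full normalizer (which need not be connected) versus its identity component $H'(U')$; checking compatibility of these conventions with \eqref{xhnc} and with the closedness of the orbit is the only genuinely fiddly part.
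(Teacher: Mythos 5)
Your forward inclusion is exactly the paper's argument and is fine. The reverse inclusion, however, has a genuine gap: you chose the wrong witnessing subgroup. Given $g\in X(\op{N}_G(H_{nc}),U)$ you set $W:=g^{-1}\op{N}_G(H_{nc})g$ and then try to verify that $\op{N}_G(H_{nc})\cap\Gamma$ is Zariski dense in $\op{N}_G(H_{nc})$. This fails in general. Proposition \ref{sinc} gives $[\op{N}_G(H_{nc})\cap\Gamma:H\cap\Gamma]<\infty$, and passing to a finite-index overgroup does not enlarge the identity component of the Zariski closure: since $H\cap\Gamma$ is Zariski dense in the connected algebraic group $H$, the Zariski closure of $\op{N}_G(H_{nc})\cap\Gamma$ has identity component equal to $H$, not to $\op{N}(H_{nc})$. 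Whenever $H=gH(U')Cg^{-1}$ with $C$ a \emph{proper} closed subgroup of $\op{C}(H(U'))$ (e.g.\ $C$ trivial and $d-\dim U'-1\ge 2$), one has $H\subsetneq\op{N}(H_{nc})$, and then $\op{Stab}_W([g])$ is not Zariski dense in $W$, so your $W$ does not witness membership in $\mS(U)$. In addition, your parenthetical claim that $\op{N}_G(H_{nc})$ is connected is false: by Example \ref{exam}, $\op{N}_G(H(U_k))=\mathrm{O}(k+1,1)\,\mathrm{O}(d-k-1)\cap G$ has several components, while the definition of $\mS(U)$ requires a \emph{connected} subgroup $W$; passing to the identity component does not rescue the density argument, since the identity component of the relevant Zariski closure is still only $H$.

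The repair is the identity \eqref{xhnc}, which you cite but do not exploit at the key moment: $X(\op{N}_G(H_{nc}),U)=X(H,U)$ because every unipotent element of $\op{N}_G(H_{nc})$ lies in $H_{nc}\subset H$. Hence $g\in X(\op{N}_G(H_{nc}),U)$ already gives $gUg^{-1}\subset H$, and the correct witness is $W:=g^{-1}Hg$, exactly as in the paper's proof: then $U\subset W$, the orbit $[g]W=\Gamma Hg$ is closed since $\Gamma\ba\Gamma H$ is closed, $W$ is proper, connected and closed since $H\in\mH$ is, and $\op{Stab}_W([g])=g^{-1}(H\cap\Gamma)g$ is Zariski dense in $W$ directly from the definition of $\mH$ — no finite-index or connectedness considerations are needed at all.
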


\begin{proof}
 If $x=[ g]\in\mathscr{S}(U)$, then there exists a proper 
 connected closed subgroup $W$ of $G$ containing $U$ such that $[g] W$ is closed and $\op{Stab}_W(x)$ is Zariski
 dense in $W$.
 This means $H:=gWg^{-1}\in \mathscr{H}$ and $g\in X(H, U)$. Since $X(H, U)= X(\op{N}_G(H_{nc}), U)$, and $\NC\in \mH^{\star}$,
   this proves the inclusion $\subset$. Conversely, let $g\in X(\NC,U)$ for some $H\in\mathscr{H}$.
Set $W:=g^{-1}H g$. Then
$U\subset W$, $[g] W=\Gamma H g$ is closed and $\op{Stab}_W([g])= g^{-1}(\Gamma\cap H) g$ is Zariski dense in $W$.
Hence  $[g]\in\mathscr{S}(U)$.
\end{proof}

\subsection*{Singular subset of a closed orbit}
Let $L<G$ be a connected reductive subgroup of $G$ containing unipotent elements.
For a closed orbit $x_0L$  of $x_0\in \RFM$, and a connected subgroup $U_0<L\cap N$,
  we define  the singular set $ \mathscr{S}(U_0, x_0L)$ by
\begin{equation}\label{defslu}
\mS(U_0, x_0L)=\left\{x \in x_0L : 
\begin{array}{c}
\text{there exists a connected closed } \text {subgroup $W <L$},\\ \text{containing $U_0$}
     \text{ such that $\op{dim} W_{nc}<\op{dim} L_{nc}$, }\\x W \text{ is closed and $\op{Stab}_W(x )$ is Zariski dense in
$W$}
\end{array}
\right\}.
\end{equation}

It follows  from Proposition \ref{sudef} and Proposition \ref{sin1}
 that the subgroup $W$ in  the definition \ref{sing} is conjugate to $H(\widehat U)C$ for some $\widehat U<N$. Hence $W$ being a proper subgroup of $G$
is same as requiring $\op{dim}W_{nc}<\op{dim} G$. Therefore
 $\mS(U_0)=\mS(U_0, \Gamma\ba G)$ and
$$\mS (U_0, x_0 L)=x_0L\cap \bigcup
\Gamma\ba \Gamma   X(H, U_0) $$
where 
the union is taken over all subgroups $H\in\mathscr{H}^{\star}$ such that 
$H$ is a subgroup of $g_0Lg_0^{-1}$ with $\op{dim} H_{nc}<\op{dim} L_{nc}$ and $x_0=[g_0]$.
Equivalently,
\be\label{hxo}\mS (U_0, x_0 L)= \bigcup
_{W\in \mH^{\star}_{x_0L}}
x_0  (L\cap X(W, U_0)) \ee
where 
$ \mH^{\star}_{x_0L}$ consists of all subgroups of the form $ W=g_0^{-1} Hg_0\cap L$ for some $H\in \mH^\star$ and $\op{dim} W_{nc}<\op{dim} L_{nc}$.
Then the generic set $\mG(U_0, x_0L)$ is defined by
\begin{equation}\label{gxol} \mG(U_0, x_0L):= (x_0L\cap \RFPM) -\mS(U_0, x_0L).\end{equation}

\subsection*{Definition of $\mathcal L_U$ and $\mathcal Q_U$}
Fix  a non-trivial connected closed subgroup $U<N$.
 We define the collection
 $\mathcal L_U$ of all subgroups of the form $H(\widehat U) C$
 where $U <\widehat U<N$ and $C$ is a closed subgroup of $\op{C}(H(\widehat U))$ satisfying the following:
\begin{equation}\label{deflu}
\mathcal L_U:=\left\{L=H(\widehat U) C : 
\begin{array}{c}
\text{for some $[g]\in \RFPM$, $[g]L $ is closed in $\Gamma\ba G$  }\\  \text{ and $L\cap g^{-1}\Gamma g$ is Zariski
dense in $L$}
\end{array}
\right\}.
\end{equation}

Observe that for $L=H(\widehat U) C\ne G$,
the condition $L\in\cal{L}_U$ with $[g]L$ closed is equivalent to the condition that 
$$ gLg^{-1}\in\mathscr{H}.$$ 

\begin{lem}\label{sin3} Let $L_1$ and $ L_2$ be members of $ \mathcal L_U$ such that
 $xL_1$ and $xL_2$ are closed for some $x\in \RFM$.
If $(L_1)_{nc}=(L_2)_{nc}$, then $L_1=L_2$.
\end{lem}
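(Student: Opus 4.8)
The statement asserts that a member $L\in\mathcal L_U$ with a closed orbit $xL$ based at $x\in\RFM$ is determined by its non-compact part $L_{nc}=H(\widehat U)$, once we fix the base point $x$. So suppose $xL_1$ and $xL_2$ are both closed, $x\in\RFM$, and write $L_i=H(\widehat U)C_i$ with common $H:=H(\widehat U)=(L_1)_{nc}=(L_2)_{nc}$. Choose $g\in G$ with $x=[g]$. The plan is to show that $C_1$ and $C_2$ are both forced to equal a single canonical compact group, namely $C:=\overline{\pi_2(\Gamma')}$ where $\Gamma'=g^{-1}\Gamma g\cap H'(\widehat U)$ and $\pi_2:H'(\widehat U)\to\op{C}(H(\widehat U))$ is the projection from \eqref{p1}.

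First I would reduce to the case $L_i\neq G$: if $L_1=G$ then $H=G$ and by Proposition \ref{str} there is no room for a nontrivial centralizer, so $L_2=G$ as well and we are done; hence assume both are proper. Then, as noted just before the statement, $gL_ig^{-1}\in\mathscr H$ for $i=1,2$. Since $\op{C}(H)\subset M$ centralizes $H$ and normalizes nothing new, each $L_i$ lies between $H$ and $H'(\widehat U)=H\cdot\op{C}(H)$, i.e. $L_i=H\,C_i$ with $C_i<\op{C}(H)$ a closed subgroup; moreover $[g]L_i$ closed together with $L_i\cap g^{-1}\Gamma g$ Zariski dense in $L_i$ forces $[g]H'(\widehat U)$ to be closed too (because $H'(\widehat U)$ normalizes $H$, and the orbit of the normalizer of a group with a closed orbit is closed). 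Now apply Proposition \ref{count}/\ref{rs} with the subgroup $H'=H'(\widehat U)$ in place of the $H'$ there: it gives $\overline{[g]H}=[g]H\,C$ with $C=\overline{\pi_2(\Gamma')}$, and $H\cdot C$ equal to the identity component of the Zariski closure of $\Gamma'=g^{-1}\Gamma g\cap H'(\widehat U)$.

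The key step is then: the Zariski density hypothesis on $L_i\cap g^{-1}\Gamma g$ in $L_i$ pins down $C_i$. Indeed $L_i\cap g^{-1}\Gamma g\subset H'(\widehat U)\cap g^{-1}\Gamma g=\Gamma'$, so its Zariski closure is contained in that of $\Gamma'$, which is $H\cdot C$; on the other hand $L_i=H\cdot C_i$ is the Zariski closure of $L_i\cap g^{-1}\Gamma g$ by assumption, so $H\cdot C_i\subseteq H\cdot C$, giving $C_i\subseteq C$. For the reverse inclusion, note every element of $\Gamma'$ has the form $(h,c)$ with $c=\pi_2(\gamma)$, and since $xL_i=[g]H\,C_i$ is closed while $\overline{[g]H}=[g]H\,C$, the argument of Proposition \ref{rs} (running over the sequences $\gamma_j(h_j,e)\to(h_0,c_0)$) shows $[g]H\,C_i$ is closed only if it already contains $[g]H\,C$; hence $C\subseteq C_i$. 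Therefore $C_1=C=C_2$ and $L_1=L_2$.

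The main obstacle I anticipate is the bookkeeping in the last paragraph: making precise that Zariski density of $L_i\cap g^{-1}\Gamma g$ in $L_i$, combined with closedness of $[g]L_i$, forces $C_i$ to be exactly $\overline{\pi_2(\Gamma')}$ rather than a proper subgroup. One has to be a little careful because a priori $L_i\cap g^{-1}\Gamma g$ could be a proper Zariski-dense-in-$L_i$ subgroup of $\Gamma'$, so the inclusion $C_i\subseteq C$ and the closedness-forces-$C\subseteq C_i$ direction must be argued separately — the first from Zariski density, the second from the description of $\overline{[g]H}$ in Proposition \ref{rs} applied verbatim with $H'(\widehat U)$. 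Once those two inclusions are in hand the conclusion is immediate. I would also double-check the reduction that $[g]H'(\widehat U)$ is closed, which uses only that $H'(\widehat U)$ normalizes $H$ and $\op{C}(H)$ is compact, so $[g]H\,C_i\subset[g]H'(\widehat U)$ is a closed subset of a space fibered over $[g]H$ with compact fibers.
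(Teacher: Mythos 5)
Your reductions and your step giving $C_i\subseteq C$ are fine (closedness of $[g]H'(\widehat U)$ from closedness of $[g]L_i$ times the compact group $\op{C}(H)$, and Zariski density of $L_i\cap g^{-1}\Gamma g$ placing the connected group $L_i$ inside the identity component $HC$ of the Zariski closure of $\Gamma'$). The gap is in the reverse step. From $[g]HC_i$ closed and containing $[g]H$ you do get the \emph{orbit} containment $[g]HC=\overline{[g]H}\subseteq [g]HC_i$, but this does not yield the \emph{group} containment $C\subseteq C_i$: if $c\in C$ then $(e,c)=\delta\,(h,c_i)$ for some $\delta\in g^{-1}\Gamma g$, and comparing components only gives $\delta\in\Gamma'$ with $cc_i^{-1}=\pi_2(\delta)$, i.e.\ $C\subseteq \pi_2(\Gamma')\,C_i$. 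Since $\pi_2(\Gamma')$ is countable, hence in general a proper (dense) subgroup of $C=\overline{\pi_2(\Gamma')}$, this is strictly weaker than $C\subseteq C_i$; even pushing it with a Baire category argument only shows that $C_i$ is open of finite index in $C$, not that $C_i=C$. Running "the argument of Proposition \ref{rs}" with $C_i$ in place of $C$ does not rescue this, because the $\Gamma$-identifications are exactly what that argument cannot see, and $\Gamma'\subseteq L_i$ is not known at this stage (indeed only a finite-index version of it is true).

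What closes the gap is a genuinely non-formal input about convex cocompact groups, and it is precisely what the paper's two-line proof uses: by Proposition \ref{sinc} (which rests on Lemma \ref{finite} about limit sets of normal convex cocompact subgroups), $L_i\cap g^{-1}\Gamma g$ has finite index in $g^{-1}\Gamma g\cap \op{N}_G(H)$, hence is commensurable with $\Gamma'$; commensurable groups have Zariski closures with the same identity component, so $L_i$ (the identity component of the Zariski closure of $L_i\cap g^{-1}\Gamma g$) equals $HC$ for both $i=1,2$. Equivalently, the paper simply conjugates by $g$, notes $gL_ig^{-1}\in\mH$, and quotes Corollary \ref{counth}, whose content is that $H\mapsto \op{N}_G(H_{nc})$ is injective on $\mH$. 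So either cite Corollary \ref{counth} directly (the paper's route), or insert the commensurability statement into your step (b); as written, the inference "hence $C\subseteq C_i$" is not justified.
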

\begin{proof} If $L_1$ or $L_2$ is equal to $G$, then the claim is trivial. Suppose that both $L_1$ and $L_2$ are proper subgroups of $G$.
If $x=[g]$, then both subgroups $H_1:=gL_1g^{-1}$ and $H_2:=gL_2g^{-1}$ belong to $ \mH$. Since 
 $(H_1)_{nc}=(H_2)_{nc}$, we have $H_1=H_2$ by Corollary \ref{counth}. Hence $L_1=L_2$.
\end{proof} 

We also define 
\begin{equation}\label{qu1}\cal{Q}_U:=\{vLv^{-1} :  L\in\cal{L}_U, v\in \op{N}(U) \}.\end{equation}
Since $\op{N}(U)=AN \op{C}_1(U) \op{C}_2(U)$ by Lemma \ref{nu}, and  the collection $\cal L_U$
is invariant under a conjugation by an element of
$AU\op{C}_1(U) \op{C}_2(U)$, we have
\be \label{qu} \cal{Q}_U=\{vLv^{-1} :  L\in\cal{L}_U, v\in U^\perp \}.\ee

\begin{lem}\label{xhuo}  For $U_0<U<N$, we have
$$X(H(U), U_0) =  \op{N}_G(H(U))\op{N}_G(U_0).$$
\end{lem}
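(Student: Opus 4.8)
The plan is to prove the two inclusions separately. For the easy direction, $\op{N}_G(H(U))\op{N}_G(U_0)\subset X(H(U),U_0)$: if $g=n_1 n_2$ with $n_1\in\op{N}_G(H(U))$ and $n_2\in\op{N}_G(U_0)$, then $gU_0g^{-1}=n_1(n_2U_0n_2^{-1})n_1^{-1}=n_1U_0n_1^{-1}\subset n_1H(U)n_1^{-1}=H(U)$, using $U_0\subset U\subset H(U)$ and that $n_1$ normalizes $H(U)$. So $g\in X(H(U),U_0)$. This direction is immediate.

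**The hard direction.** The real content is $X(H(U),U_0)\subset\op{N}_G(H(U))\op{N}_G(U_0)$. Let $g\in G$ satisfy $gU_0g^{-1}\subset H(U)$. First I would reduce to the standard model: by the remarks in Section~\ref{s:ba}, $U$ is conjugate to some $U_k$ by an element of $M$, and $M$ normalizes both $N$ and (being in $K\cap\op{N}_G(A)$) behaves well, so without loss of generality $U=U_k$ and $H(U)=\SO^\circ(k+1,1)$ sitting inside $G$ in the block form of Example~\ref{exam}. Since $gU_0g^{-1}$ is a nontrivial connected unipotent subgroup contained in $H(U)$, and any connected unipotent subgroup of $H(U)$ is conjugate inside $H(U)$ to a subgroup of $H(U)\cap N$, there exists $h\in H(U)$ with $h(gU_0g^{-1})h^{-1}\subset N$; set $g'=hg$, so $g'U_0g'^{-1}\subset N$ and it suffices to show $g'\in\op{N}_G(H(U))\op{N}_G(U_0)$ (then $g=h^{-1}g'$ with $h^{-1}\in\op{N}_G(H(U))$). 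Now both $U_0$ and $g'U_0g'^{-1}$ lie in $N$, and since $N$ is abelian and the fixed point $\infty\in\mathbb S^{d-1}$ of $N$ is characterized as the unique fixed point of any nontrivial $U_0\subset N$, I want to argue that $g'$ can be adjusted to normalize $U_0$. The point is that $g'$ conjugates the one-parameter (or higher-dimensional) group $U_0$ into another subgroup of $N$; both are unipotent subgroups fixing $\infty$.

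**Pinning down $g'$.** I would decompose $g'=k'a'n'$ via Iwasawa $G=KAN$ (with $N=N^-$). Since $n'\in N=\op{C}(N)\subset\op{N}_G(U_0)$ trivially (as $N$ is abelian, it centralizes $U_0$), and $a'\in A\subset\op{N}_G(U_0)$ (as $A$ normalizes every $U_j$, being diagonal — here I use $U_0\subset U=U_k$ and $A$ acts by dilation on $N$ preserving each coordinate subspace; more carefully, $A$ normalizes $U_0$ iff $A$ normalizes the linear span, which holds since $A$ acts as a scalar on all of $N$), it remains to handle $k'\in K$: we have $k'U_0k'^{-1}=a'^{-1}(g'U_0g'^{-1})a'n'^{-1}\cdots$ — actually cleaner: $g'U_0g'^{-1}\subset N$ and $n',a'$ normalize $N$ and normalize $U_0$'s ambient span appropriately, so $k'U_0k'^{-1}\subset N$ as well, forcing $k'$ to fix $\infty$, hence $k'\in M$. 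Then $k'U_0k'^{-1}$ is the image of $U_0$ under the standard $\SO(d-1)$-action on $\br^{d-1}=N$, so it is a $k'$-rotated copy of the coordinate subspace $U_k$'s subspace containing $U_0$. Here is where I must be careful: I only know $g'U_0g'^{-1}\subset N$, not that it equals $U_0$, so $k'$ need not normalize $U_0$ itself. I would instead produce a \emph{correction}: choose $m_0\in M$ with $m_0(k'U_0k'^{-1})m_0^{-1}$ lined up with $U_0$ — but this may fail if $\dim U_0<\dim U$ because $g'$ could tilt $U_0$ partly out of $U$. The resolution is that $U_0\subset U$ is immaterial to the final answer: I need $g'\in\op{N}_G(H(U))\op{N}_G(U_0)$, and $\op{N}_G(H(U))$ is large (contains $\mathrm{O}(k+1,1)\mathrm{O}(d-k-1)\cap G$). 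I expect the main obstacle to be exactly this: showing that after the Iwasawa reduction, the residual $M$-part $k'$ can always be written as $n_1 n_2$ with $n_1\in\op{N}_G(H(U))\cap M=\op{C}_1(U)\op{C}_2(U)$-type piece and $n_2\in\op{N}_G(U_0)\cap M$. I would prove this by a linear-algebra statement inside $\SO(d-1)$: if an orthogonal transformation $k'$ of $\br^{d-1}$ carries the coordinate $k$-subspace $V_0\supset(\text{span of }U_0)$ into $\br^{d-1}$ arbitrarily (no constraint!) — wait, there \emph{is} the constraint that $k'$ came from $g'$ with $g'U_0g'^{-1}\subset N$, which only says $k'$ fixes $\infty$, giving nothing on $V_0$. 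So the correct statement must use more: I should not have thrown away that $g'U_0g'^{-1}$ lies in $N$ \emph{as a subgroup isomorphic to $U_0$}, but that still allows $k'$ to send the span of $U_0$ to any isometric subspace. Thus $\op{N}_G(U_0)\cap M=\op{SO}(\dim U_0)\op{SO}(d-1-\dim U_0)$ (by Lemma~\ref{nu}) and $\op{N}_G(H(U))\cap M=\op{SO}(k)\op{SO}(d-1-k)$, and I must write an arbitrary $k'\in\op{SO}(d-1)$ (subject to fixing $\infty$ only) as a product from these two subgroups. In general $\op{SO}(d-1)\neq(\op{SO}(k)\op{SO}(d-1-k))(\op{SO}(\dim U_0)\op{SO}(d-1-\dim U_0))$, so something is still missing — namely the freedom in choosing $h\in H(U)$ at the start, and possibly also acting by elements of $\op{N}_G(U_0)$ on the right \emph{before} Iwasawa, which is what makes $X(H(U),U_0)$ right-$\op{N}_G(U_0)$-invariant. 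I would exploit that right-invariance: replace $g$ by $g\cdot n_2$ for suitable $n_2\in\op{N}_G(U_0)$ to arrange that the Iwasawa $K$-part already normalizes $U_0$, then conclude. Fleshing out this last maneuver — using the $\op{N}_G(U_0)$-action on the right to kill the residual $M$-obstruction — is the step I expect to be the crux, and the honest way to do it is: $X(H(U),U_0)/\op{N}_G(U_0)$ is a point-set in $G/\op{N}_G(U_0)$, compute $\op{N}_G(H(U))$'s orbit on it, and check it is everything by a dimension count matching $\dim X(H(U),U_0)=\dim\op{N}_G(H(U))+\dim\op{N}_G(U_0)-\dim(\op{N}_G(H(U))\cap\op{N}_G(U_0))$, together with connectedness of $X(H(U),U_0)$ (which follows since $G/\op{N}_G(H(U))=\mathcal C^{\dim U}$ is connected and the map $g\mapsto g H(U)g^{-1}$ realizes $X(H(U),U_0)$ as the preimage of the subvariety of spheres containing the fixed sphere-point of $U_0$, an orbit closure that is irreducible). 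So the proof structure is: (i) trivial inclusion; (ii) reduce to standard position; (iii) establish the dimension identity and irreducibility/connectedness of $X(H(U),U_0)$; (iv) conclude the $\op{N}_G(H(U))\times\op{N}_G(U_0)$-action on $X(H(U),U_0)$ by $(n_1,n_2)\cdot g=n_1gn_2^{-1}$ is transitive, hence $X(H(U),U_0)=\op{N}_G(H(U))\cdot e\cdot\op{N}_G(U_0)=\op{N}_G(H(U))\op{N}_G(U_0)$ since $e\in X(H(U),U_0)$.
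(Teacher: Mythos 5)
Your easy inclusion and your opening reductions (conjugating $gU_0g^{-1}$ into $H(U)\cap N$ by some $h\in H(U)$, then using $AN\subset\op{N}_G(U_0)$ to reduce to the $K$-part, which must fix $\infty$ and hence lie in $M$) follow the same skeleton as the paper's proof. But you then abandon this route on a mistaken premise. You assert that the hypothesis gives ``nothing on $V_0$'': in fact, writing $g'=k'a'n'$, the element $n'\in N$ centralizes $U_0$ and $a'\in A$ normalizes it, so $k'U_0k'^{-1}=g'U_0g'^{-1}$, and you arranged this group to lie in $H(U)\cap N=U$, not merely in $N$; this containment survives the Iwasawa reduction. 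Hence $k'\in M$ satisfies $k'(\op{span}U_0)\subset\op{span}U$, i.e. $\langle k'e_1,\dots,k'e_\ell\rangle\subset\langle e_1,\dots,e_k\rangle$ where $\ell=\op{dim}U_0$, $k=\op{dim}U$. That is exactly the constraint that makes the residual $M$-factorization work, and it is how the paper finishes: using the transitive action of $K\cap H(U)$ on $\ell$-tuples of orthonormal vectors in $\op{span}U$, one replaces $k'$ (by left multiplication by $\op{N}_G(H(U))$) with an element fixing $e_1,\dots,e_{\ell-1}$ and sending $e_\ell\mapsto\pm e_\ell$, which lies in $\op{C}_1(U_0)$, possibly after multiplying by an order-two element $\omega\in\op{N}_G(U_0)$. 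Your worry that $\SO(d-1)\neq(\SO(k)\SO(d-1-k))(\SO(\ell)\SO(d-1-\ell))$ concerns an unconstrained $k'$, which is not the situation at hand.

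The substitute argument is where the genuine gap lies. From a dimension identity plus irreducibility/connectedness of $X(H(U),U_0)$ you cannot conclude that the $\op{N}_G(H(U))\times\op{N}_G(U_0)$-orbit of $e$ is all of $X(H(U),U_0)$: an open dense orbit in an irreducible variety need not exhaust it (e.g. $\op{SL}_2(\br)$ acting on $\br^2$ has the open dense orbit $\br^2-\{0\}$), so smaller orbits in the boundary must be excluded, and that exclusion is precisely the statement being proved; moreover the dimension identity is only asserted, and computing $\op{dim}X(H(U),U_0)$ already amounts to understanding this variety. If you want to keep your geometric picture rather than return to the direct computation above, the honest completion is to prove transitivity outright: $g\in X(H(U),U_0)$ if and only if $U_0\subset g^{-1}H(U)g$, i.e. $U_0$ preserves the $k$-sphere $g^{-1}C_0$ where $C_0=\br^k\cup\{\infty\}$; a $U_0$-invariant $k$-sphere must contain $\infty$ and be an affine $k$-plane whose direction space contains $\op{span}U_0$, and $\op{N}_G(U_0)\supset NA(M\cap\op{N}_G(U_0))$ acts transitively on the set of such planes, so some $q\in\op{N}_G(U_0)$ carries $g^{-1}C_0$ to $C_0$, whence $qg^{-1}$ lies in the stabilizer of $C_0$, which is $\op{N}_G(H(U))$ by Example \ref{exam}, giving $g\in\op{N}_G(H(U))\op{N}_G(U_0)$. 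As written, however, your argument does not close.
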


\begin{proof}
 Without loss of generality, we may assume $U=U_m$ and $U_0=U_\ell$
with $1\le \ell \le m\le d-1$. Set $H=H(U_m)$. If $m=d-1$, then $H=G$, and the statement is trivial. Assume $m\le d-2$ below.
We will prove the inclusion $X(H, U_0) \subset\op{N}_G(H)\op{N}_G(U_0)$, as the other one is clear.
Let $g\in X(H, U_0)$ be arbitrary. By  multiplying $g$
by an element of $\op{N}_G(H)$ on the left as well as by an element of $\op{N}_G(U_0)$ on the right, we will reduce $g$ to an element of
$\op{N}_G(U_0)$, which implies the claim. 
In view of the Iwasawa decomposition $G=KAN$,
since $AN<\op{N}_G(U_0)$, we may assume that $g=k\in K$.
As $k\in X(H, U_0)$, we have 
$kU_0k^{-1} \subset H$. 
Since $K\cap H$ is a maximal compact subgroup of $H$, any maximal horospherical subgroups of $H$ are conjugate to each other by an element of $K\cap H$.
Hence there exists $w\in K\cap H$ such that $kU_0k^{-1}= wU_0w^{-1}$.

Since $w^{-1} k U_0 = U_0 w^{-1}k$, we deduce $w^{-1}k (\infty)= U_0(w^{-1} k(\infty))$. 
Since $\infty\in \mathbb S^{d-1}$ is the unique fixed point of $U_0$, $w^{-1} k(\infty)=\infty$. Hence $w^{-1}k\in K\cap (MAN)=M$.
Since $w\in H$, we may now assume that $k\in M$. From $kU_0\subset Hk$, we get
$kU_0(0)\subset Hk(0)=H(0)$ and hence $\langle ke_1,\cdots, ke_\ell \rangle\subset \langle e_1,\cdots, e_m\rangle.$
By considering the action of $H\cap K$ on space of $\ell$-tuples of orthonormal vectors in
the subspace $\langle e_1,\cdots, e_m\rangle$, we may assume $ke_1=e_1$, $\cdots$, $ke_{\ell-1}=e_{\ell-1}$, and $ke_{\ell}=\pm e_{\ell}$.
This implies that $k\in\op{C}_1(U_0)$, or $k \omega \in \op{C}_1(U_0)$
where $\omega \in M$ is an involution which fixes all $e_i$, $i\ne \ell, \ell+1$ and $\omega (e_i)=-e_i$
for $i=\ell, \ell+1$. As $\op{N}_G(U_0)$ contains $\op{C}_1(U_0)$ and $ \omega$, the proof is complete.
\end{proof}

\begin{prop}\label{explain}  Consider a closed orbit $x_0L$ for $L\in \mathcal Q_{U}$ and $x_0\in \RFM$.
 If $x\in \mS(U_0, x_0L )$ for  a connected closed subgroup $U_0 <U$, then there exists a subgroup $Q\in \mathcal Q_{U_0}$ such that
 \begin{itemize}
 \item  $\op{dim}Q_{nc}<\op{dim} L_{nc}$;
 \item $xQ$ is closed;
\item $ \overline{xU_0}\subset x Q$.
\end{itemize}
\end{prop}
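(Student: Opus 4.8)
The plan is to show the single claim that the subgroup $W$ furnished by the definition of $\mathscr{S}(U_0,x_0L)$ already lies in $\mathcal Q_{U_0}$; granting this, the proposition follows at once by taking $Q:=W$. Recall that $x\in\mathscr{S}(U_0,x_0L)$ provides a connected closed subgroup $W<L$ with $U_0\subset W$, $\dim W_{nc}<\dim L_{nc}$, $xW$ closed, and $\op{Stab}_W(x)$ Zariski dense in $W$. If $W\in\mathcal Q_{U_0}$, then $Q:=W$ satisfies $\dim Q_{nc}=\dim W_{nc}<\dim L_{nc}$, $xQ=xW$ is closed, and $\overline{xU_0}\subset\overline{xW}=xW=xQ$ because $U_0\subset W$; so all three assertions hold.

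To see $W\in\mathcal Q_{U_0}$, write $x=[g]$ and first note that $H:=gWg^{-1}\in\mathscr{H}$: it is proper (since $\dim W_{nc}<\dim L_{nc}\le\dim G$ forces $W\ne G$), contains the unipotent subgroup $gU_0g^{-1}$, has $\Gamma\ba\Gamma H$ closed because $xW$ is closed, and $H\cap\Gamma=g\op{Stab}_W(x)g^{-1}$ is Zariski dense in $H$. Hence, by the remark following \eqref{defslu} (which rests on Propositions \ref{sudef} and \ref{sin1}), $W$ is reductive; write $W=W_{nc}C_W$ with $W_{nc}$ simple non-compact and $C_W$ a closed subgroup of $\op{C}(W_{nc})$, and observe $U_0\subset W_{nc}$ since $U_0$ is unipotent and $C_W$ is compact. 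Also $W_{nc}$ is $G$-conjugate to $H(U_{k'})$, where $k'$ is the dimension of a maximal unipotent subgroup of $W_{nc}$; as $W_{nc}\ne G$ we have $\dim U_0\le k'\le d-2$.

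The key point is that $W$ can be brought into standard position by an element of $N$ alone. Since $W_{nc}$ is conjugate to $H(U_{k'})$, it stabilises a totally geodesic $(k'+1)$-plane $\tilde S\subset\bH^d$; the unipotent subgroup $U_0\subset W_{nc}$ stabilises $\tilde S$ and, being unipotent, fixes a point of the $k'$-sphere $\partial\tilde S$, which must be the unique fixed point $\infty$ of $U_0$ in $\mathbb S^{d-1}$. Thus $\partial\tilde S=P\cup\{\infty\}$ for an affine $k'$-plane $P\subset\br^{d-1}=\mathbb S^{d-1}\setminus\{\infty\}$ whose direction space $D$ contains $\op{Lie}(U_0)$, by translation-invariance of $P$ under $U_0$. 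Translating by $v:=\exp u^-(-p_0)\in N$ for any $p_0\in P$, a conjugation that centralises $U_0$, we get that $vW_{nc}v^{-1}$ stabilises $\op{hull}(D\cup\{\infty\})$; being a simple non-compact connected subgroup of dimension $\dim H(U_{k'})$, it must equal $H(\widehat U_0)$, where $\widehat U_0:=\exp u^-(D)$ is a connected subgroup of $N$ with $U_0\subset\widehat U_0$. Therefore $vWv^{-1}=H(\widehat U_0)C'$ with $C':=vC_Wv^{-1}\subset\op{C}(H(\widehat U_0))$, and $v\in N\subset\op{N}(U_0)$.

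It remains to verify that $L':=vWv^{-1}=H(\widehat U_0)C'$ lies in $\mathcal L_{U_0}$; then $W=v^{-1}L'v\in\mathcal Q_{U_0}$ since $v^{-1}\in\op{N}(U_0)$, and we are done by the first paragraph. Put $g':=gv^{-1}$, so $g'L'g'^{-1}=H\in\mathscr{H}$; hence $[g']L'$ is closed and $L'\cap g'^{-1}\Gamma g'=g'^{-1}(H\cap\Gamma)g'$ is Zariski dense in $L'$. As in the proof of Proposition \ref{sin1}, the Zariski density of $H\cap\Gamma$ in $H$ forces $\Lambda$ to meet the $k'$-sphere $\partial(\pi(g'H(\widehat U_0)))$ in at least two points; choosing $h\in H(\widehat U_0)$ with $(g'h)^{\pm}\in\Lambda$ and replacing $g'$ by $g'h$ (which changes neither $L'$ nor the closedness nor the Zariski density, as $h\in L'$), we may assume $[g']\in\RFM\subset\RFPM$, so $L'\in\mathcal L_{U_0}$. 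The step I expect to be the main obstacle is this conjugation into standard form while staying inside $\op{N}(U_0)$ together with the bookkeeping needed, after modifying the base point into $\RFM$, to land in $\mathcal L_{U_0}$ exactly as defined.
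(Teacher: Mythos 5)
Your proof is correct, and while it shares the paper's skeleton---conjugate the intermediate subgroup into the standard form $H(\widehat U)C$ of $\mathcal L_{U_0}$ by an element normalizing $U_0$---the key step is carried out by a genuinely different mechanism. The paper proceeds algebraically: Proposition \ref{sin1} gives $H=qH(\widehat U)Cq^{-1}$ with $[q]\in\RFM$, then $q^{-1}g\in X(H(\widehat U),U_0)$ and Lemma \ref{xhuo} (the identity $X(H(\widehat U),U_0)=\op{N}_G(H(\widehat U))\op{N}_G(U_0)$) produces the conjugator in $\op{N}_G(U_0)$, with the $\RFM$ base point supplied directly by Proposition \ref{sin1}. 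You proceed geometrically: since $U_0\subset W_{nc}$ is unipotent with unique fixed point $\infty$, the boundary sphere of the geodesic plane stabilized by $W_{nc}$ passes through $\infty$, so a single translation $v\in N$ (which even centralizes $U_0$) brings $W_{nc}$ to $H(\widehat U_0)$ with $U_0\subset\widehat U_0$; your identification $vW_{nc}v^{-1}=H(\widehat U_0)$ is stated tersely but is a routine Lie-algebra argument (a connected simple noncompact subgroup of $H'(\widehat U_0)$ of dimension $\dim H(\widehat U_0)$ must equal $H(\widehat U_0)$, since any homomorphism of its Lie algebra into the compact factor vanishes). The price of bypassing Lemma \ref{xhuo} is that you must re-verify the $\RFPM$ base-point condition for $L'=H(\widehat U_0)C'$, which you do by re-running the tail of the proof of Proposition \ref{sin1}; for completeness, note that Proposition \ref{proper} applies there because $[g']H'(\widehat U_0)=[g']L'\op{C}(H(\widehat U_0))$ is closed, $\op{C}(H(\widehat U_0))$ being compact. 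What your route buys is a conjugator lying in $N$ (indeed in $\op{C}(U_0)$), matching \eqref{qu} on the nose, and an explicit treatment of how the base-point condition transfers under the conjugation---bookkeeping that the paper's shorter appeal to Proposition \ref{sin1} and Lemma \ref{xhuo} leaves implicit.
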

\begin{proof} 
If $x=[g]\in \mS(U_0, x_0L )$, then $g\in X(H, U_0)$ for some $H\in \mH$ such that
 $\op{dim}H_{nc}<\op{dim} L_{nc}$.
Then $\cl{xU_0}\subset x (g^{-1} H g)$.
By Proposition \ref{sin1}, $H=q H(\widehat U) C q^{-1}$ for some $U_0<\widehat U<L\cap N$ and some $[q]\in \RFM$.
 Note that $q^{-1} g\in X(H(\widehat U), U_0)$. By Lemma \ref{xhuo}, we have
 $$q^{-1} g\in \op{N}_G(H(\widehat U)) \op{N}_G(U_0).$$ 
 Hence $g^{-1} H g = v H(\widehat U) Cv^{-1}$ for some $v\in \op{N}_G(U_0)$, and
 $\cl{x U_0} \subset  xv  H(\widehat U) C v^{-1}.$
It suffices to set $Q:=v  H(\widehat U) C v^{-1}$. \end{proof}

\begin{lem}\label{dcc}
Let $L=H(\widehat U) C$ for a connected closed subgroup $\widehat U<N$ and
closed subgroup $C<\op{C}(H(\widehat U))$. Let
$W=g^{-1} H(\til U)C_0g $ be a subgroup of $L$ where $g\in L$, $\til U$ is a proper connected closed subgroup of $\widehat U$ and $C_0$
is a closed subgroup of $H(\til U)$.
Then for any non-trivial closed connected subgroup $U<\widehat U$, $(L\cap X(W, U)) H(U)$ is a nowhere dense subset of $L$.
\end{lem}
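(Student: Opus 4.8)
The plan is to reduce the statement to a dimension count: I will show that $(L\cap X(W,U))H(U)$ is a semialgebraic subset of $L$ of dimension strictly smaller than $\dim L$, which forces it to be nowhere dense (the closure of a semialgebraic set of dimension $<\dim L$ is a closed semialgebraic set of the same dimension, hence has empty interior). The hypothesis that $\til U$ is a \emph{proper} connected subgroup of $\widehat U$ enters precisely to make this inequality hold — and, as it turns out, it holds only barely.

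First I would put the problem in a normal form. \emph{(i)} The group $W$ is reductive with $W_{nc}=g^{-1}H(\til U)g$ and $W/W_{nc}$ compact, so every connected unipotent subgroup of $W$ lies in $W_{nc}$; hence $X(W,U)=X(W_{nc},U)=g^{-1}X(H(\til U),U)$ by \eqref{eq.v5}. \emph{(ii)} Since $g\in L$, conjugation by $g$ preserves $L$, so $L\cap X(W,U)=g^{-1}\bigl(L\cap X(H(\til U),U)\bigr)$; as left translation by $g^{-1}$ is a homeomorphism of $L$, it suffices to treat the case $W=H(\til U)$, which is then a subgroup of $L$ with $\til U<\widehat U$. \emph{(iii)} Write $L=H(\widehat U)\times C$ (the intersection $H(\widehat U)\cap C$ is trivial because $C<\op{C}(H(\widehat U))$ while $H(\widehat U)$ has trivial center), and note that $C$ centralizes $\widehat U\supseteq U$ and $H(U)$. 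A short computation then gives $L\cap X(H(\til U),U)=Z\times C$ and $(L\cap X(H(\til U),U))H(U)=(ZH(U))\times C$, where $Z:=H(\widehat U)\cap X(H(\til U),U)$. So it is enough to prove that $ZH(U)$ is nowhere dense in $G':=H(\widehat U)\cong\SO^\circ(n+1,1)$, with $n:=\dim\widehat U$.

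Now the dimension count. Set $k:=\dim U$ and $\ell:=\dim\til U$; I may assume $k\le\ell$, since otherwise $H(\til U)\cong\SO^\circ(\ell+1,1)$ contains no $k$-dimensional unipotent subgroup, whence $Z=\emptyset$. The map $g\mapsto gUg^{-1}$ sends $Z$ onto the set $T$ of $k$-dimensional unipotent subgroups of $G'$ lying in $H(\til U)$, with each fiber a single coset of $\op{N}_{G'}(U)$; thus $\dim Z=\dim T+\dim\op{N}_{G'}(U)$. The set $T$ is the $H(\til U)$-conjugacy class of a fixed $k$-dimensional unipotent subgroup of $H(\til U)$, and in $\SO^\circ(m+1,1)$ the conjugacy class of a $k$-dimensional unipotent subgroup fibers over the boundary sphere $\mathbb{S}^m$ with fiber the Grassmannian of $k$-planes in the $m$-dimensional horospherical subgroup, so it has dimension $m+k(m-k)$. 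Hence $\dim T=\ell+k(\ell-k)$, $\dim G'-\dim\op{N}_{G'}(U)=n+k(n-k)$, and therefore $\dim Z=\dim G'-(n-\ell)(k+1)$. Finally, $Z$ is right-invariant under the minimal parabolic $P:=\op{N}_{H(U)}(U)$ of $H(U)$ (since $P\subseteq\op{N}_{G'}(U)$ and $X(H(\til U),U)$ is right-$\op{N}_G(U)$-invariant), and $P$ acts freely on the second factor of $Z\times H(U)$ while the multiplication map $Z\times H(U)\to G'$ is $P$-invariant; so $\dim ZH(U)\le\dim Z+\dim(H(U)/P)=\dim Z+k$. Combining,
\[
\dim ZH(U)\ \le\ \dim G'-(n-\ell)(k+1)+k\ <\ \dim G',
\]
the strict inequality holding because $n-\ell\ge1$. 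Since $Z$ is Zariski closed in $G'$ (defined by the linear conditions $\op{Ad}(g)\frak{u}\subseteq\frak{h}_{\til U}$), the set $ZH(U)$ is semialgebraic of dimension $<\dim G'$, hence nowhere dense; unwinding (i)--(iii) gives the claim.

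The part I expect to require the most care is the tightness of the last inequality: the codimension of $ZH(U)$ in $G'$ is only $(n-\ell)(k+1)-k$, which equals $1$ when $n-\ell=1$, so the bound $\dim ZH(U)\le\dim Z+k$ — which relies on replacing $\dim H(U)$ by $\dim(H(U)/P)=k$ via the right-$P$-invariance of $Z$ — must be used and cannot be loosened. The remaining points (that $W_{nc}=g^{-1}H(\til U)g$, that conjugation by $g\in L$ fixes $L$, and that $C$ decouples because it centralizes $H(\widehat U)\supseteq H(U)$) are routine.
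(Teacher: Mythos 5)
Your argument is correct, but it follows a genuinely different route from the paper's. After the same preliminary reductions (the paper removes $g$ and the compact factors through the coset identity $L\cap X(W,U)=h\,(H(\widehat U)\cap X(H(\til U),U))\,C$, you through the direct product decomposition $L=H(\widehat U)\times C$ — same effect), the paper proceeds by identifying the set exactly: by Lemma \ref{xhuo}, $X(H(\til U),U)H(U)=\op{N}_G(H(\til U))\op{N}_G(U)H(U)=(K\cap H(\til U))\,U^\perp H'(U)$, and nowhere density is then read off in the sphere space $\mathcal C^{k}=G/H'(U)$, because the image lies in the set of $k$-spheres tangent to the fixed sphere $S_0=(K\cap H(\til U))(\infty)$, whose dimension is strictly less than $d-1$. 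You never use Lemma \ref{xhuo} or the sphere space; instead you run a semialgebraic dimension count, getting $\dim Z=\dim G'-(n-\ell)(k+1)$ by fibering $Z$ over the variety of conjugates of $U$ and then $\dim ZH(U)\le \dim Z+k$ from the right-invariance of $Z$ under $\op{N}_{H(U)}(U)$; your orbit-dimension formulas and the final strict inequality check out, and you correctly identify that the naive bound $\dim Z+\dim H(U)$ fails when $n-\ell=1$, so the parabolic-invariance refinement is genuinely needed. Two small points deserve explicit justification if this were written out: the bound on the dimension of the image of $Z\times H(U)$ and the step ``semialgebraic of positive codimension, hence nowhere dense'' rest on the semialgebraic fiber-dimension theorem (Hardt) and on $\dim\overline S=\dim S$ for semialgebraic $S$, which should be cited; and the claim that connected unipotent subgroups of $W$ lie in $W_{nc}$ is best argued via the Jordan decomposition (an abstract continuous homomorphism of $\bb R$ into a compact group need not be trivial), though the paper glosses the identical point in \eqref{xhnc}. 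In terms of trade-offs, the paper's double-coset/tangency description is explicit and geometric and avoids semialgebraic machinery, while your count is quantitative — it exhibits the codimension $(n-\ell)(k+1)-k$ — and does not depend on the special structure of $X(H(\til U),U)$ provided by Lemma \ref{xhuo}, so it would adapt to other pairs of reductive subgroups with the same dimension gap.
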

\begin{proof} Write $g=hc\in H(\widehat U)C$. 
Note that \begin{align*}L\cap X(W, U)&= L\cap X( g^{-1} H(\til U) g , U)\\
&=L\cap X( h^{-1} H(\til U) h , U) \\
&=h (L\cap X(H(\til U), U))\\
&= h (H(\widehat U) \cap X(H(\til U), U))  C.
\end{align*}
Hence it suffices to show that $ (H(\widehat U) \cap X(H(\til U), U) ) H(U)$ is a nowhere dense subset of $H(\widehat U)$.
Without loss of generality, we may now assume $H(\widehat U)=G$. We observe that  using  Lemma \ref{xhuo},
 \begin{align*} &X (H(\til U), U) H(U) =\op{N}_G(H(\til U)) \op{N}_G(U) H(U)
 \\& = H(\til U) \op{C}_1(\tilde U)
 AN \op{C}_1(U)\op{C}_2(U) H(U)\\
 &=
(K\cap H(\til U)) U^\perp H'(U). \end{align*}

Let $\op{dim} \til U=m$ and $\op{dim}U=k$. 
If $k\geq m$, then $X(W,U)=\emptyset$. Hence we may assume that $1\le k\le m<d-1=\op{dim} N$.
Now, if we view the subset $(K\cap H(\til U)) U^\perp H'(U)/H'(U) $ in the space $\mathcal C^k=G/H'(U)$,
this set is contained in the set of all spheres $C\in \mathcal C^k$ which are tangent to
the $m$-sphere given by $S_0:=(K\cap H(\tilde U))(\infty)$.
Since $m<d-1$, it follows that $X (H(\til U), U) H(U)/H'(U)$ is a nowhere dense subset of $\mathcal C^k$,
and hence $X (H(\til U), U) H(U)$ is a nowhere dense subset of $G$.
\end{proof}
Recall from \eqref{fhu} that $F=\RFPM\cdot H(U)$.
\begin{lemma}\label{lem.SHU}
Let  $x_0\widehat L$ be a closed orbit of $\widehat L\in\cal L_U$ with $x_0\in\RFM$. 
If $U$ is a proper subgroup of $\widehat L\cap N$,
then $\mathscr{S}(U,x_0\widehat L)\cdot H(U)\cap F_{H(U)}$ is a proper subset of $x_0 \widehat L\cap F_{H(U)}$.
\end{lemma}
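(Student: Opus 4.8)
The plan is to show that the set $\mathscr{S}(U, x_0\widehat L)\cdot H(U)\cap F_{H(U)}$, which by the structure formula \eqref{hxo} is a countable union of sets of the form $x_0\bigl(\widehat L\cap X(W,U_0)\bigr)H(U)\cap F_{H(U)}$ over subgroups $W\in\mathscr H^\star_{x_0\widehat L}$, is a \emph{meager} (hence proper) subset of the locally compact space $x_0\widehat L\cap F_{H(U)}$. Since each $W$ in this union has $\dim W_{nc}<\dim\widehat L_{nc}$, it is of the form $g^{-1}H(\widetilde U)C_0 g$ with $g\in\widehat L$ and $\widetilde U$ a \emph{proper} connected closed subgroup of $\widehat L\cap N$ (after conjugating into $N$ by an element of $\widehat L$, using Proposition \ref{sin1} and the fact that $\op{N}(U)$ exhausts how $\widehat L\cap N$ meets conjugates). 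Crucially, because $U$ is a proper subgroup of $\widehat L\cap N$, we may take $\widetilde U$ with $\dim \widetilde U< \dim(\widehat L\cap N)$; this is precisely the hypothesis that makes Lemma \ref{dcc} applicable.

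The key step is therefore to invoke Lemma \ref{dcc}: for each such $W$ and each non-trivial connected closed $U<\widehat L\cap N$, the set $\bigl(\widehat L\cap X(W,U)\bigr)H(U)$ is nowhere dense in $\widehat L$. First I would reduce the set whose meagerness we want to the union $\bigcup_W \bigl(\widehat L\cap X(W,U)\bigr)H(U)$, noting $\mathscr S(U,x_0\widehat L)$ involves $X(W,U_0)$ for various $U_0<U$, but $X(W,U_0)\subset X(W,U)$ fails in general—so instead I would run the argument directly with $U_0$: by Lemma \ref{dcc} applied with the subgroup $U_0$ in place of $U$ (it only requires $U_0$ non-trivial connected closed inside $\widehat U\supseteq U_0$, which holds since $U_0<U<\widehat L\cap N$ and $\widetilde U\subsetneq\widehat U$ can be arranged), each $\bigl(\widehat L\cap X(W,U_0)\bigr)H(U_0)$ is nowhere dense; and since $H(U_0)\subseteq H(U)$ only enlarges, I would instead observe that what actually appears after translating $\mathscr{S}(U,x_0\widehat L)$ by $H(U)$ is governed by $X(W,U)H(U)$ once one notes that a point of $\mathscr S(U,x_0\widehat L)$ lies in some $x_0(\widehat L\cap X(W,U_0))$ with $U_0<U$, hence its $H(U)$-translate lies in $x_0(\widehat L\cap X(W,U_0))H(U)\subseteq x_0 \bigl(\widehat L\cap X(W,U)\bigr)H(U)$ provided $U_0$ can be taken to be $U$ itself after enlarging $W$'s conjugating data; to avoid this subtlety I would simply apply Lemma \ref{dcc} with the pair $(U_0, \widetilde U)$ and observe $H(U_0)\subset H(U)$ is irrelevant because nowhere density of $(\widehat L\cap X(W,U_0))H(U_0)$ and the fact that multiplying a nowhere dense set on the right by the \emph{fixed, finite-dimensional} subgroup $H(U)$ keeps it nowhere dense (a nowhere dense set times a submanifold of positive codimension, or more carefully: $(\widehat L\cap X(W,U_0))$ is contained in a countable union of submanifolds of $\widehat L$ of positive codimension by the proof of Lemma \ref{dcc}, and right-multiplication by $H(U)$ raises dimension by at most $\dim H(U)$, which by the codimension count in Lemma \ref{dcc} still leaves positive codimension).

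Concretely: for each $W$, write $\widehat L\cap X(W,U_0)$ as in the proof of Lemma \ref{dcc} in the form $h\bigl(H(\widehat U)\cap X(H(\widetilde U),U_0)\bigr)C$, reducing to showing $\bigl(H(\widehat U)\cap X(H(\widetilde U),U_0)\bigr)H(U)$ is nowhere dense in $H(\widehat U)$; passing to $\mathcal C^{\dim U_0}=H(\widehat U)/H'(U_0)$ this set maps into the spheres tangent to a fixed $(\dim\widetilde U)$-sphere, which is nowhere dense since $\dim\widetilde U<\dim(\widehat L\cap N)$—this is verbatim Lemma \ref{dcc} with $k=\dim U_0$, and then right-translating by $H(U)$ (which acts on $\mathcal C^{\dim U_0}$ through a compact-codimension motion) still gives a nowhere dense set. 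Then $\mathscr S(U,x_0\widehat L)\cdot H(U)\cap F_{H(U)}$ is contained in a countable union (over $W\in\mathscr H^\star_{x_0\widehat L}$, countable by Corollary \ref{counth}) of nowhere dense subsets of $x_0\widehat L$, hence is meager in $x_0\widehat L$; since $x_0\widehat L\cap F_{H(U)}$ is a locally compact, hence Baire, space of positive dimension, a meager subset cannot equal all of it, giving the claim.

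\textbf{Main obstacle.} The delicate point is the bookkeeping of which unipotent subgroup plays which role: $\mathscr S(U,x_0\widehat L)$ is defined via subgroups $W\supset U$ (not $\supset U_0$), so in applying \eqref{hxo} and Proposition \ref{explain} I must keep $U_0=U$, and the genuine content is that for each intermediate $W$ with $\dim W_{nc}<\dim\widehat L_{nc}$, the unipotent radical direction $\widetilde U$ attached to $W_{nc}$ is a \emph{proper} subgroup of $\widehat L\cap N$—this is exactly where the hypothesis ``$U$ is a proper subgroup of $\widehat L\cap N$'' is not literally used but rather the stronger structural fact from Proposition \ref{sin1} that $W_{nc}=g^{-1}H(\widetilde U)g$ forces $\dim\widetilde U=\dim W_{nc}-1<\dim\widehat L_{nc}-1=\dim(\widehat L\cap N)$. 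Wait—one must double check this: $\widehat L\in\mathcal L_U$ means $\widehat L=H(\widehat U)C$ with $U\subseteq\widehat U$, so $\widehat L\cap N=\widehat U$ (modulo the centralizer's trivial intersection), $\dim\widehat L_{nc}=\dim H(\widehat U)=\dim\widehat U+1$, and $\dim W_{nc}<\dim\widehat L_{nc}$ gives $\dim\widetilde U<\dim\widehat U=\dim(\widehat L\cap N)$ as needed. So the hypothesis that $U\subsetneq\widehat L\cap N$ is used to guarantee $\mathcal L_U$-membership together with $\widehat L\neq$ the ``top'' group, ensuring the singular set is a genuine proper-codimension phenomenon; I expect verifying this dimension inequality cleanly and confirming Lemma \ref{dcc} applies uniformly to all $W$ in the countable family to be the part requiring the most care, but it is essentially formal given the results already established.
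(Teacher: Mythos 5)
Your argument is, in its final form, the same as the paper's: write $\mS(U,x_0\widehat L)\cdot H(U)$ via \eqref{hxo} (with the subgroup $U$ itself) as a countable union of sets $x_0\bigl(\widehat L\cap X(W,U)\bigr)H(U)$, $W\in\mH^{\star}_{x_0\widehat L}$, note each is nowhere dense in $x_0\widehat L$ by Lemma \ref{dcc} (the hypothesis there that $\til U$ is proper in $\widehat U$ being exactly your dimension count $\dim\til U<\dim(\widehat L\cap N)$, which comes from $\dim W_{nc}<\dim\widehat L_{nc}$), use countability of $\mH^{\star}$, and conclude by Baire category; so the approach matches the paper. Two corrections to the write-up. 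First, the long detour about $X(W,U_0)$ for proper subgroups $U_0<U$ is a misreading of the definition: in \eqref{hxo} the subgroup is $U$ itself, as you eventually say in your last paragraph, so Lemma \ref{dcc} applies verbatim and no translation step is needed; moreover the auxiliary principle you invoke there, that right-multiplying a nowhere dense set by $H(U)$ stays nowhere dense ``by a dimension count,'' is false in general (e.g.\ $AN\cdot K=G$) and should simply be deleted, since Lemma \ref{dcc} already carries the $H(U)$ factor. Second, in the last step you prove meagerness in $x_0\widehat L$ but then apply the Baire category theorem to the subspace $x_0\widehat L\cap F_{H(U)}$; that inference is not valid as stated, because a set meager in $x_0\widehat L$ could a priori contain a closed subset of $x_0\widehat L$ with empty interior. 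The paper instead runs Baire against the open subset $F^*_{H(U)}\cap x_0\widehat L$ of $x_0\widehat L$; to be complete one should either check this open set is non-empty, or note that in the remaining case $x_0\widehat L\subset\partial F_{H(U)}$ one has $x_0\widehat L\cap F_{H(U)}=x_0\widehat L$ and Baire applies directly to the orbit.
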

\begin{proof} Choose $g_0\in G$ so that $x_0=[g_0]$.
Let $p:G\to \Gamma\ba G$ be the canonical projection map.
Then $p^{-1}( \mathscr{S}(U,x_0\widehat L)\cdot H(U))$ is a countable union
 $\gamma g_0(\widehat L\cap X(W, U)) H(U)$ where $\gamma \in \Gamma$ and $W \in \mH_{x_0\widehat L}^\star$ by \eqref{hxo}.
Hence by Lemma \ref{dcc},
$ \mathscr{S}(U,x_0\widehat L)\cdot H(U)$ is a countable union
 of nowhere dense subsets of $x_0L$.
Since $F^*_{H(U) }\cap x_0\widehat L$ is an open subset of $x_0\widehat L$,
it follows from the Baire category theorem that 
$F^*_{H(U)}\cap x_0\widehat L\not\subset \mathscr{S}(U,x_0\widehat L)\cdot H(U)$.
This proves the claim. \end{proof}

The following geometric property of a convex cocompact hyperbolic manifold with Fuchsian ends is one of its key features which is needed in the proof of our main theorems stated in the introduction.

\begin{prop}\label{SAVE}  Let $\M$ be a  convex cocompact hyperbolic manifold with Fuchsian ends.
Let  $x_0\widehat L$ be a closed orbit of $\widehat L\in\cal L_U$ with $x_0\in\RFM$ and with $\op{dim}(\widehat L\cap N)\ge 2$.  Either $x_0\widehat L$ is compact or $ \mathscr{S}(U,x_0\widehat L)$ contains
a compact orbit $zL_0$ with $L_0\in \mathcal L_U$. \end{prop}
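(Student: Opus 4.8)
The plan is to reduce to the case where $\widehat L$ is a proper subgroup of $G$ and then exploit the fact that the quotient $p(\operatorname{Stab}_{\widehat L}(x_0))\backslash \widehat L_{nc}$ is a convex cocompact hyperbolic manifold with Fuchsian ends whenever $x_0\widehat L$ is noncompact. Concretely, write $x_0=[g_0]$, set $H:=g_0\widehat L g_0^{-1}\in\mathscr H$, let $\Gamma':=g_0^{-1}\Gamma g_0\cap \widehat L$, and let $q:\widehat L\to\widehat L_{nc}=H(\widehat U)$ be the canonical projection. By Proposition \ref{prop.countability} (applied to the appropriate conjugate), $q(\Gamma')$ is a convex cocompact Zariski dense subgroup of $H(\widehat U)\simeq \SO^\circ(n+1,1)$ where $n=\dim(\widehat U)\ge 2$, and $\partial\tilde S\cap\Lambda=\Lambda(q(\Gamma'))$ where $\tilde S$ is the geodesic $(n+1)$-plane covering $\pi(x_0\widehat L)$. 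If $x_0\widehat L$ is not compact, then $q(\Gamma')$ is not a lattice in $H(\widehat U)$, so by Proposition \ref{proper}(2) its limit set is not all of the sphere $C_{g_0}$; hence the hyperbolic manifold $N_1:=q(\Gamma')\backslash \mathbb H^{n+1}$ is convex cocompact with non-empty domain of discontinuity. The crucial point is that since $M$ has Fuchsian ends, $\mathbb S^{d-1}-\Lambda$ is a union of round balls with disjoint closures, and intersecting these with the sphere $C_{g_0}$ shows that the domain of discontinuity of $q(\Gamma')$ inside $C_{g_0}\simeq\mathbb S^{n}$ is again a dense union of round balls with disjoint closures — that is, $N_1$ itself is a convex cocompact hyperbolic $(n+1)$-manifold with Fuchsian ends (using Definition \ref{d:rigid}(2)).

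Granting this, $N_1$ has non-empty totally geodesic boundary, so $\operatorname{BF}N_1\ne\emptyset$: there is a boundary frame in $\widehat L_{nc}$, i.e. a $z_0\in\widehat L$ with $z_0 H(U_{n-1})$ (a conjugate of $\check H$ inside $H(\widehat U)$) having compact image. Pulling this back through $q$ and translating, I would produce a point $z\in x_0\widehat L$ such that $zL_0$ is a compact orbit, where $L_0$ is a conjugate inside $\widehat L$ of $H(U')C'$ with $U'\subsetneq\widehat U$ of dimension $n-1\ge 1$ and $C'$ a suitable compact subgroup (the pullback of the center $C$ of $\widehat L$ together with the $\SO(d-1-n)$-type compact factor coming from $H(U')\subset H(\widehat U)$). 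One must check three things about $L_0$: that it lies in $\mathcal L_U$ (so $U\subset U'$ up to the conjugation — here one uses that $U$ is a proper connected subgroup of $\widehat L\cap N$ of dimension $\le n-1$, and after conjugating by an element of $M$ normalizing things appropriately we may place $U$ inside $U'$), that $zL_0$ is closed with $\operatorname{Stab}_{L_0}(z)$ Zariski dense in $L_0$ (this is exactly the statement that $\operatorname{BF}N_1$ consists of compact $\SO^\circ(n,1)$-orbits with Zariski dense stabilizers, which follows because each boundary hypersurface of $N_1$ is a closed hyperbolic $n$-manifold), and that $\dim(L_0)_{nc}=n<n+1=\dim\widehat L_{nc}$, so that $zL_0\subset\mathscr S(U,x_0\widehat L)$ by the description \eqref{hxo}. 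Since a closed hyperbolic $n$-manifold is compact, $zL_0$ is compact, as required.

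The main obstacle, I expect, is the bookkeeping of compact factors: $L_0$ must be an honest member of $\mathcal L_U$, which means it has to be of the precise form $H(\widehat U')C_0$ with $C_0\le \operatorname{C}(H(\widehat U'))$, and the boundary-frame stabilizer naturally produces $H(U_{n-1})$ together with the ambient compact group $\operatorname{C}(H(U_{n-1}))$ inside $H(\widehat U)$ — so one has to absorb both the $\SO(n-n)$-type factor internal to $H(\widehat U)$ and the external center $C$ of $\widehat L$ into a single compact subgroup of $\operatorname{C}(H(U'))$, and verify its stabilizer is Zariski dense (the external compact directions may contribute nothing Zariski dense, so one should take $C_0$ to be $\overline{\pi_2(\operatorname{Stab})}$ as in Proposition \ref{count} rather than the full compact group). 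A secondary subtlety is the reduction that lets us assume $U\subset U'$: a priori $U$ sits in $\widehat U$ in some position, and one needs to conjugate by an element of $M\cap H(\widehat U)$ (which preserves $\mathcal L_U$-membership by the remark following \eqref{qu1}) to align $U$ with a coordinate subspace contained in $U'$; this is possible precisely because $\dim U\le n-1$ and the boundary hypersurfaces of a Fuchsian-ended manifold can be chosen through any prescribed direction only after such a rotation — here one should instead pick the boundary frame of $N_1$ adapted to the already-fixed $U$, using that $\operatorname{BF}N_1$ is a nonempty union of compact orbits and $U$ (being proper in $\widehat U\cap N$) has codimension at least one, so it can be conjugated into $\check H\cap N$ within $H(\widehat U)$.
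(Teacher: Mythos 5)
Your proposal is correct and follows essentially the same route as the paper: identify $\pi(x_0\widehat L)$ as a properly immersed convex cocompact geodesic plane with Fuchsian ends (the paper cites Proposition \ref{PPP}, which is exactly the limit-set argument you re-derive), take a compact boundary component of its core to get a compact orbit of a codimension-one subgroup $H'(U_0)$ of $H(\widehat U)$, use Proposition \ref{count} to absorb the compact directions into $C_0$ so that $H(U_0)C_0\in\mathcal L_{U_0}$ with $zH(U_0)C_0$ compact, and finally conjugate by an element of $M\cap\widehat L$ to place $U$ inside $U_0$, landing the compact orbit in $\mathscr S(U,x_0\widehat L)$ by the dimension drop of the noncompact part. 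The two "subtleties" you flag are resolved exactly as you suggest, and as in the paper.
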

\begin{proof}
Write $\widehat L=H(\widehat U) C$ for a connected closed subgroup $U<\widehat U<N$. 
Since $x_0\widehat L$ is closed, $\pi(x_0\widehat L)=\pi(x_0H'(\widehat U))$ is a properly immersed convex cocompact geodesic plane
of dimension at least $3$ with Fuchsian ends
by Proposition \ref{PPP}. 
Suppose that $x_0L$ is not compact. Then $\pi(x_0L)$ has non-empty Fuchian ends.
This means that there exist a co-dimension one subgroup $U_0$ of $\widehat U$ and $z\in \widehat L$
 such that $zH'(U_0)$ is compact and $\pi(zH'(U_0))$ is a component of the core of $\pi(x_0\widehat L)$.
By Proposition \ref{count}, there
 exists a closed subgroup $C_0<\op{C}(H(U_0))\cap \widehat L$ such that $H(U_0)C_0\in \mathcal L_{U_0}$ and
$zH(U_0)C_0$ is compact. Let $m\in M\cap \widehat L$ be an element
such that $U\subset m^{-1}U_0m$. Then $zm (m^{-1} H(U_0)C_0 m)$
is a  compact orbit contained in $  \mathscr{S}(U,x_0\widehat L)$ and $m^{-1} H(U_0)C_0m\in \mathcal L_U$, finishing the proof.
\end{proof}

\section{Inductive search lemma}\label{s:ig}
In this section, we prove a combinatorial lemma \ref{lem.search}, which we call an {\it inductive search lemma}, and
use it to prove Proposition \ref{prop.2kthick} on the thickness of  a certain subset of $\br$, constructed
by the intersection of a global thick subset $\mathsf T$ and finite families of triples of subsets of $\br$ with controlled regularity, degree and the multiplicity
with respect to $\mathsf T$. This proposition will be used in the proof of the avoidance theorem \ref{avoid1} in the next section.

\begin{definition}
Let $J^*\subset  I$ be a pair of open subsets of $\mathbb{R}$.
\begin{itemize}
\item The degree of $(I, J^*)$ is defined to be the minimal   $\delta\in \mathbb N\cup\{\infty\}$ such that
for each connected component $I^\circ$ of $I$, the number of connected components of $J^*$ contained in $I^\circ$ is bounded by $\delta$.

\item  For $\beta>0$, the pair $(I,J^*)$ is said to be {\it $\beta$-regular} if for any connected component $I^\circ$ of $I$,
and any component $J^\circ$ of $J^* \cap I^\circ$,
$$J^\circ\pm \beta\cdot   |J^\circ | \subset I^\circ $$
where $|J^\circ|$ denotes  the length of $J^\circ$.
\end{itemize}
\end{definition}

\begin{definition}\label{defreg}
Let $\cal{X}$ be a family of countably many triples  $(I,J^*,J')$ of open subsets of $\br$ such that $I\supset J^*\supset J'$.
\begin{itemize}

\item
Given $\beta>0$ and $\delta\in \mathbb N$, we say that $\cal X$ is \textit{$\beta$-regular of degree $\delta$} if for every triple $(I,J^*,J')\in \cal X$, the pair $(I,J^*)$ is $\beta$-regular with degree at most $\delta$.
\item
Given a subset $\mathsf T\subset \br$, we say that $\cal X$ is of $\mT\textit{-multiplicity free}$ if for any two distinct triples $(I_1,J_1^*,J_1')$ and $(I_2,J_2^*,J_2')$ of $\cal X$, we have 
$$I_1\cap J_2'\cap \mT=\emptyset.$$
\end{itemize}
\end{definition}

For  a family $\mathcal X=\{(I_\la,J_\la^*, J_\la') : \la \in \La\}$, we will use the notation 
$$
I(\cal{X}):=\bigcup\limits_{\la\in\La}I_\la, \quad J^*(\cal{X}):=\bigcup\limits_{\la\in\La}J_\la^* \quad \text{and}\quad J'(\cal{X}):=\bigcup\limits_{\la\in\La}J_\la'.
$$

The goal of this section is to prove:
\begin{prop}[Thickness of $\mT-J'(\cal{X})$]\label{prop.2kthick} 
Given  $ n, k, \delta \in \mathbb N$, there exists a positive number
$\beta_0=\beta_0(n,k,\delta)$ for which the following holds:
let $\mT\subset\mathbb{R}$ be a globally $k$-thick set, and let $\cal{X}_1,\cdots,\cal{X}_\ell$, $\ell \le n$, be
$\beta_0$-regular families  of degree $\delta$ and of $\mT$-multiplicity free.
Let $\cal X= \bigcup_{i=1}^\ell \cal X_i$.
If $0\in \mT -I(\cal{X})$,  then 
$$\mT-J'(\cal{X})$$
is a $2k$-thick set.
\end{prop}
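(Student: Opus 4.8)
The plan is to prove the statement by induction on $\ell$, the number of families, with the base case $\ell = 0$ being trivial since then $J'(\cal X) = \emptyset$ and $\mT$ itself is $k$-thick, hence $2k$-thick. For the inductive step I would fix $\la \in [-2k\la_0, -\la_0] \cup [\la_0, 2k\la_0]$—wait, more precisely, fix a target scale: given $\lambda > 0$, I must produce a point of $\mT - J'(\cal X)$ in $[\lambda, 2k\lambda] \cup [-2k\lambda, -\lambda]$. Since $\mT$ is globally $k$-thick and $0 \in \mT$, there is a point $t_1 \in \mT \cap ([\lambda, k\lambda] \cup [-k\lambda, -\lambda])$. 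If $t_1 \notin J'(\cal X)$ we are done, so assume $t_1 \in J'_{\la_1}$ for some triple $(I_{\la_1}, J^*_{\la_1}, J'_{\la_1})$ in some family $\cal X_{i_1}$. The idea is then to "escape" the component of $J^*_{\la_1}$ containing $t_1$ by moving within $I_{\la_1}$: because the pair is $\beta_0$-regular, the component $I^\circ \ni t_1$ of $I$ extends at least $\beta_0 |J^\circ|$ beyond the component $J^\circ$ of $J^*$ containing $t_1$, and since $\mT$ is globally $k$-thick at $t_1$, using thickness repeatedly at scales comparable to $|J^\circ|$ we can find a point $t_2 \in \mT \cap I^\circ$ that lies outside $J^\circ$, hence outside $J^*_{\la_1} \supset J'_{\la_1}$ (here I use that the degree is at most $\delta$, so at most $\delta$ components of $J^*$ sit inside $I^\circ$ and only finitely many "jumps" are needed, each controlled by a factor depending on $k$ and $\delta$).

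The key bookkeeping point is this: once $t_2 \notin I_{\la_1}$ (or more carefully, once we have stepped out of the relevant $J^*$ component), the $\mT$-multiplicity free hypothesis guarantees that $t_2$ cannot lie in $J'_{\la'}$ for any \emph{other} triple $(I_{\la'}, J^*_{\la'}, J'_{\la'})$ of the \emph{same} family $\cal X_{i_1}$ whose $I_{\la'}$ contains $t_1$ — more precisely, if $t_2 \in I_{\la_1} \cap J'_{\la'} \cap \mT$ with $\la' \neq \la_1$ in the same family, that contradicts $\mT$-multiplicity freeness as long as we keep $t_2 \in I_{\la_1}$; and if $t_2 \notin I_{\la_1}$ then... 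Actually the cleanest way to organize this: at each stage we work inside one big interval $I^\circ$ coming from the current triple, choose $\beta_0$ large enough (as a function of $n, k, \delta$) that the portion of $I^\circ$ we need to search is large relative to all the $J^*$-components it meets, and use that within a single family the $J'$-sets we must avoid are, by multiplicity-freeness restricted to $\mT$, essentially disjoint along $\mT$ so each contributes at most one obstruction per ambient $I$-component. This reduces, after treating $\cal X_{i_1}$, to the same problem with one fewer family on a suitable sub-interval, at which point the induction hypothesis with $n-1$ (or rather $\ell - 1$) families and an updated regularity parameter finishes the argument. I would choose $\beta_0(n,k,\delta)$ recursively so that $\beta_0(\ell, k, \delta)$ dominates the blow-up incurred at one stage times $\beta_0(\ell-1,k,\delta)$; the final constant depends only on $n$ (the bound on $\ell$), $k$, and $\delta$, which is exactly what is claimed, and crucially the output thickness stays $2k$ and does not degrade with $\ell$.

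The main obstacle, and the delicate point I would spend the most care on, is the interaction between the "escape within $I_\la$" step and the multiplicity-free hypothesis: I must ensure that when I move from $t_1$ to a new point $t_2 \in \mT$ to escape one bad triple, I do not fall into the $J'$-set of a different triple of the same family in a way that cannot be controlled — this is precisely where $\mT$-multiplicity freeness ($I_1 \cap J'_2 \cap \mT = \emptyset$) does the work, but only if the escape point $t_2$ is still in $\mT$ and still inside the right $I$-component, so the regularity constant $\beta_0$ must be chosen large enough that the thickness-driven search for $t_2$ never has to leave $I^\circ$. Getting the quantitative dependence right — that $\beta_0$ depends only on $n, k, \delta$ and not on the specific families, and that the final thickness is exactly $2k$ — is where the bound on the degree $\delta$ and the bound $\ell \le n$ are used essentially, and is the heart of the "inductive search" mechanism referenced in the outline. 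I expect the formal proof to proceed via the combinatorial Lemma \ref{lem.search} (stated but not yet proved in the excerpt), which abstracts exactly this search procedure; so concretely I would first prove that lemma (a statement about choosing points in a $k$-thick set avoiding a controlled nested family of forbidden intervals) and then deduce Proposition \ref{prop.2kthick} by applying it to $\cal X = \bigcup_{i=1}^\ell \cal X_i$ with the forbidden sets $J'_\la$ organized by the containment structure $J'_\la \subset J^*_\la \subset I_\la$.
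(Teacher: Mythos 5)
Your overall plan has the right ingredients (escape via thickness plus $\beta_0$-regularity, $\mT$-multiplicity freeness to rule out re-entry into the same family, reduction of degree $\delta$ to degree $1$, a constant $\beta_0$ depending only on $n,k,\delta$), but the inductive structure you propose -- induction on the number of families, ``treating $\cal X_{i_1}$ and then reducing to the same problem with one fewer family on a suitable sub-interval'' -- has a genuine gap at exactly the point you flag as delicate. After you escape the triple of $\cal X_{i_1}$ inside $I_{\lambda_1}$, the new point may lie in $J'_\mu$ for a triple of a second family whose interval $J^*_\mu$ is \emph{much larger} than $J^*_{\lambda_1}$ (nothing in the hypotheses makes the scales nested). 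Escaping that triple then requires jumps at the scale of $|J^*_\mu|$, which can carry you far outside $I_{\lambda_1}$; once you leave $I_{\lambda_1}$, multiplicity freeness of $\cal X_{i_1}$ no longer protects you, and you can land in $J'$ of a \emph{different} triple of $\cal X_{i_1}$. So families are not eliminated one at a time on nested subintervals, and the claimed reduction to $\ell-1$ families does not hold as stated. The paper's inductive search lemma resolves precisely this: the induction is not on families eliminated but on the number of distinct families met by the collection of triples visited so far; at each stage one pivots at the point whose $J^*$ is \emph{largest} among all visited triples and searches at scale $Q(r+1)|J^*_\ell|$ with $Q(r)=(4k)^r-1$, and the graded distance estimates keep every visited point inside the $I$'s of all previously visited triples, so that $\beta$-regularity and $\mT$-multiplicity freeness can be invoked for all of them simultaneously; this is what forces each failed search to pick up a genuinely new family, terminating after at most $2^n$ steps.

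A second, related omission: you assert that the output thickness stays $2k$, but your sketch does not explain why the total displacement of the search is small relative to $|t_1|$. This is where the hypothesis $0\notin I(\cal X)$ is used essentially: since each visited point lies in some $J^*\subset I$ with $0\notin I$, $\beta_0$-regularity gives $|J^*|\le \epsilon\, |t|/(Q(n)k)$ for the relevant points, so each jump changes $|t|$ by a factor in $[1-\epsilon,1+\epsilon]$ and, with at most $2^n$ jumps and $\bigl(\tfrac{1+\epsilon}{1-\epsilon}\bigr)^{2^n-1}\le 2$, the final point differs from $t_1$ by a factor at most $2$; one must then also start the search at a slightly inflated scale $s\in\bigl[(1-\epsilon)^{-(2^n-1)}\lambda,\,2(1+\epsilon)^{-(2^n-1)}\lambda\bigr]$ so that the endpoint is guaranteed to land in $\pm[\lambda,2k\lambda]$. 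Without the largest-$J^*$ pivot bookkeeping and this use of $0\notin I(\cal X)$, neither the termination of your search nor the non-degradation of the thickness constant is established.
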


We prove Proposition \ref{prop.2kthick} using the inductive search lemma \ref{lem.search}.  The case of $n=1$ and $\delta=1$ is easy.
As the formulation of the lemma  is rather complicated in a general case,
we first explain  a simpler case of $n=2$ and $\delta=1$ in order to motivate the statement.

For simplicity, let us show that $\mT-(J'(\cal{X}_1)\cup J'(\cal{X}_2))$ is $4k$-thick instead of $2k$-thick, given that $\cal{X}_1$ and $\cal{X}_2$ are $8k^2$-regular families of degree $1$, and of $\mT$-multiplicity free. For any $r>0$, we need to find a point
\begin{equation*}
t\in\pm(r,4kr)\cap \Big( \mT- J'(\cal{X})\Big) 
\end{equation*}
where $\cal{X}=\cal X_1\cup \cal X_2$.

\begin{figure}[h]
\centering
        \includegraphics[totalheight=4cm]{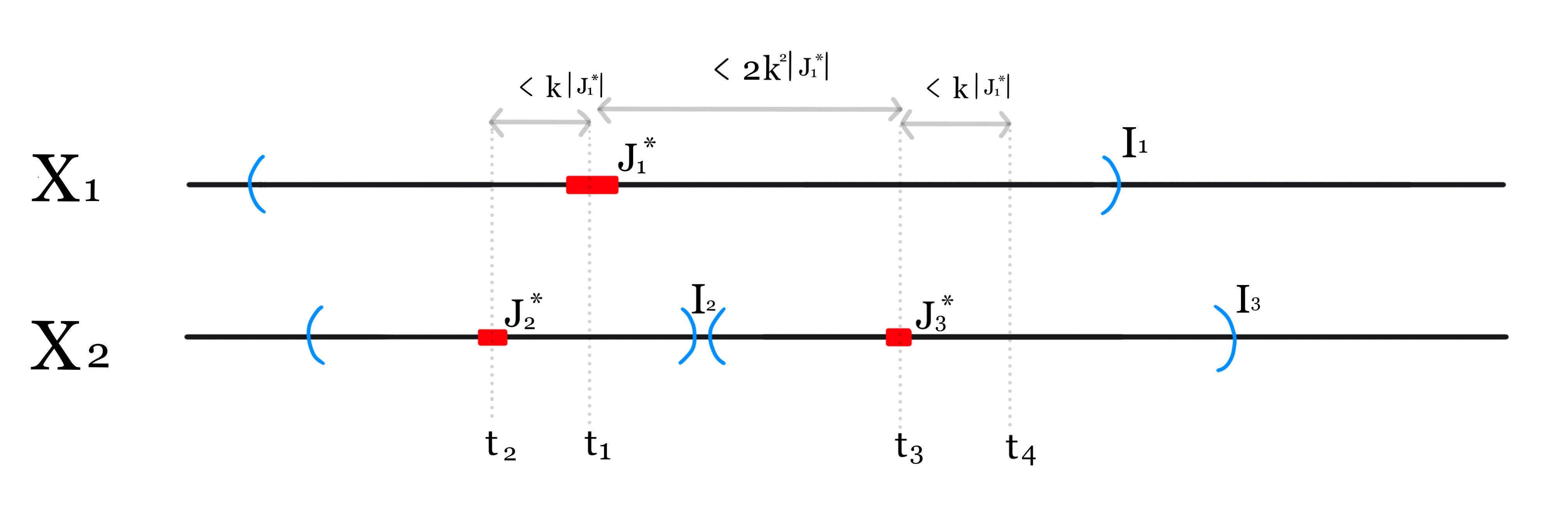}
\label{fig.t4} \label{fig.t3}
\label{fig.t2}
\end{figure}

First, we know that there exists $t_1\in\pm(2r,2kr)\cap \mT$, as $\mT$ is locally $k$-thick at $0$.
If $t_1\not\in J'(\cal{X}_1)\cup J'(\cal{X}_2)$, then we are done.
So we assume that $t_1\in J'(\cal{X}_1)$. Our strategy is then to search for a sequence in $\mT$ of length at most $4$, starting with $t_1$, say $(t_1,t_2,t_3,t_4)$ such that $$\tfrac{|t_{i-1}|}{\sqrt[3]{2}} \leq|t_i|\leq\sqrt[3]{2}|t_{i-1}|\quad\text{ for each $i=2,3,4$,}$$
 and the last element $t_4$ does not belong to $J'(\cal{X})$.
This will imply $|t_1|/2\leq|t_4|\leq 2|t_1|$ and hence 
$$t:=t_4\in\pm(r,4kr)\cap \Big(\mT-J'(\cal{X})\Big)$$ as desired, because $2r\leq|t_1|\leq 2kr$.

We next sketch how we find $t_2$ from $t_1$ and so on.
Let $t_1\in J_1'$ where $(I_1,J_1^*,J_1')\in \cal{X}_1$.
Since $\mT$ is locally $k$-thick at $t_1$, there exists
\begin{equation}\label{eq.t1}
t_2\in ( t_1 \pm(|J_1^*|,k|J_1^*|) ) \cap \mT.
\end{equation}
We will refer to $t_1$ as a pivot for searching $t_2$ in \eqref{eq.t1}, as $t_2$ was found in a symmetric interval around $t_1$.
Note that $t_2\in I_1 -J_1^*$ as $(I_1,J_1^*)$ is $k$-regular.
This implies that $t_2\not\in J'(\cal{X}_1)$ as the family $\cal X_1$ is of $\mT$-multiplicity free.
Now we will assume $t_2\in J_2'$ for some triple $(I_2,J_2^*, J_2')\in \cal{X}_2$, since otherwise, $t_2\not\in J'(\cal{X})$ and we are done.

To search for the next point $t_3\in \mT$, we choose our pivot
 between two candidates $t_1$ and $t_2$ as follows: we will choose $t_1$ if $|J_1^*|\geq |J_2^*|$, and $t_2$ otherwise.
Without loss of generality, we will assume $|J_1^*|\geq |J_2^*|$.
Since  $\mT$ is locally $k$-thick at $t_1$, we can find
\begin{equation*}
t_3\in (t_1 \pm 2k(|J_1^*|,k|J_1^*|) ) \cap \mT.
\end{equation*}

Note that $t_3\in I_1-J_1^*$ as the pair $(I_1,J_1^*)$ is $2k^2$-regular.
This implies $t_3\not\in J'(\cal{X}_1)$ as $\cal{X}_1$ is of $\mT$-multiplicity free.
Now we can assume that $t_3\in J_3'$ for some $(I_3, J_3^*, J_3')\in \cal{X}_2$, otherwise we are done.
One can check that $J_3^*$ cannot coincide with $J_2^*$.
We claim that $|J_1^*|\geq |J_3^*|$. Suppose not, i.e. $|J_3^*|>|J_1^*|$. Then we would have $|t_2-t_1|<k|J_3^*|$ and $|t_1-t_3|<2k^2|J_3^*|$, which implies that $t_2\in I_3$,  as 
the pair $(I_3,J_3^*)$ is $(2k^2+k)$-regular.
This is a contradiction as $\cal X_2$ is $\mT$-multiplicity free and hence $J_2'\cap I_3\cap \mT=\emptyset$.

Finally, we will choose $t_3$ as a pivot and search for $t_4$.
By the local $k$-thickness of $\mT$ at  $t_3$, we can find
\begin{equation*}
t_4\in ( t_3 \pm  (|J_3^*|,k|J_3^*|) )\cap \mT .
\end{equation*}
Since the pair $(I_3,J_3^*)$ is $k$-regular, we have $t_4\in I_3-J_3^*$.
From the fact that the pair $(I_1,J_1^*)$ is $(2k^2+k)$-regular, one can check that $t_4 \in I_1-J_1^*$.
As a result, $t_4 \in (I_1- J_1^*)\cup (I_3-J_3^*)$ and hence $t_4\not\in J'(\cal{X})$.

It remains to check that $|t_{i-1}|/\sqrt[3]{2}\leq|t_i|\leq\sqrt[3]{2}|t_{i-1}|$ for each $i=2,3,4$.
This does not necessarily hold for the current sequence, but will hold after passing to a subsequence where $t_{i-1}$ becomes a pivot for searching $t_i$ for all $i$.
In the previous case, $(t_1,t_3,t_4)$ will be such a subsequence, as $t_2$ was not a pivot for searching $t_3$.

It follows from the $\beta:=8k^2$-regularity of $(I_{i-1},J_{i-1}^*)$ that $|t_{i-1}|- 8k^2|J_{i-1}^*|>0$,  as $t_{i-1}\in J_{i-1}^*$ and $0\not\in I_{i-1}$.
On the other hand, observe that 
\begin{equation*}
t_i\in t_{i-1} \pm C_i(|J_{i-1}^*|,k|J_{i-1}^*|)\cap \mT
\end{equation*}
for some $C_i\leq 2k^2$.
This gives us the desired upper bound for $|t_i/t_{i-1}|$, as
\begin{equation*}
|t_i|<|t_{i-1}|+C_i|J_{i-1}^*|\leq (1+C_i(8k^2)^{-1} )|t_{i-1}|
\end{equation*}
and $1+C_i (8k^2)^{-1} \leq\sqrt[3]{2}$.
The lower bound is obtained similarly, completing the proof for $n=2$ and $\delta=1$.

The general case reduces to the case of $\delta=1$, by replacing $n$ by $n\delta$. Roughly speaking,  the following lemma gives 
an inductive argument for the search of a sequence of $t_i$'s
which is almost geometric in a sense that the ratio $|t_i|/|t_{i-1}|$ is coarsely a constant and which  lands on $\mT-J'(\cal X)$ in a time controlled by $n$.

\begin{lemma}[Inductive search lemma]\label{lem.search}
Let $k>1$, $n\in \mathbb N$ and  $0<\e<1$ be fixed.
There exists $\beta=\beta(n, k, \epsilon)>0$ for which the following holds:
Let $\mT\subset\mathbb{R}$ be a globally $k$-thick set, and
let  $\cal{X}_1,\cdots,\cal{X}_n$ be $\beta$-regular
 families of countably many triples $(I_\la, J^*_\la, J'_\la)$  with degree $1$, and of $\mT$-multiplicity free. 
Set  $\cal X=\cal X_1\cup\cdots\cup\cal X_n$, and assume $0\not\in I(\cal{X})$.
For any $t\in \mT\cap J'(\cal{X})$ and any $1\leq r\leq n$, we can find 
{distinct} triples
 $(I_1,J_1^*,J_1'),\cdots,(I_{m-1},J_{m-1}^*,J_{m-1}')\in \cal X$ with $2\leq m\leq 2^r$,
and a sequence of pivots
$$t=t_1\in \mT\cap J_1',  \;  t_2 \in \mT\cap J_2' ,\cdots, t_{m-1}\in \mT\cap J_{m-1}', \; t_m\in \mT$$
which satisfy  the following conditions:
\begin{enumerate}
\item either $t_m\not\in J'(\cal{X})$, or 
$t_m\in J_m'$ for some $(I_m,J_m^*,J_m')\in \cal X$, which is
{distinct from $(I_i, J_i^*, J_i')$ for all $1 \le i\leq m-1$,} and 
the collection $\{(I_i,J_i^*,J_i'):1\leq i\leq m\}$ intersects at least $(r+1)$ number of $\cal{X}_i$'s;
\item for all $1\leq i\leq j\leq m$,
$$|t_i-t_j|\leq 2((4k)^r-1)k\max\limits_{1\leq p\leq j-1}|J_p^*|;$$
\item
for each $1\leq i\leq m$, 
$$(1-\epsilon)^{i-1}|t_1|\leq|t_i|\leq(1+{\epsilon})^{i-1}|t_1|.$$
\end{enumerate}
In particular, for any $t\in \mT\cap J'(\cal{X})$, there exists $t'\in \mT-J'(\cal{X})$ such that
$$(1-\epsilon)^{2^n-1}|t|\leq|t'|\leq(1+{\epsilon})^{2^n-1}|t|.$$
\end{lemma}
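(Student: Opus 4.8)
We argue by induction on $r$, proving the three enumerated conditions simultaneously; the ``in particular'' statement is then the case $r=n$, since if $\{(I_i,J_i^*,J_i'):1\le i\le m\}$ meets at least $n+1$ of the $n$ families $\cal X_i$, this is vacuous, forcing the first alternative in (1), namely $t_m\notin J'(\cal X)$, and (3) gives the required estimate $(1-\epsilon)^{2^n-1}|t|\le |t_m|\le (1+\epsilon)^{2^n-1}|t|$ (note $m\le 2^n$). So I would first reduce to the $\delta=1$ case as indicated: a $\beta$-regular family of degree $\delta$ can be split into $\delta$ subfamilies each of degree $1$ (choosing, in each component $I^\circ$, a partition of the $\le\delta$ components of $J^*$ — regularity of each piece is inherited), at the cost of replacing $n$ by $n\delta$. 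Thus it suffices to prove Lemma~\ref{lem.search} as stated (degree $1$), and then Proposition~\ref{prop.2kthick} follows by applying it with $r=n$ and a suitably small $\epsilon$ (e.g.\ $\epsilon$ with $(1+\epsilon)^{2^{n\delta}-1}<2$, so that $|t'|/|t|\in(1/2,2)$; together with the global $k$-thickness of $\mT$ this upgrades a point of $\mT\cap\pm[\lambda,k\lambda]$ found near $0$ to a point of $(\mT-J'(\cal X))\cap\pm[\lambda,2k\lambda]$ — I would spell out this last bookkeeping step at the end, it is the easy part of the motivating discussion above).

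\textbf{The inductive step.} Given the sequence $t_1\in\mT\cap J_1',\dots,t_{m-1}\in\mT\cap J_{m-1}',t_m\in\mT$ produced at stage $r$, if $t_m\notin J'(\cal X)$ we are done (condition (1) holds with its first alternative). Otherwise $t_m\in J_m'$ for some new triple $(I_m,J_m^*,J_m')$, and the collection meets $\ge r+1$ of the $\cal X_i$. Now I select a \emph{pivot} $t_p$ ($1\le p\le m$) to be one with $|J_p^*|$ maximal among $\{|J_1^*|,\dots,|J_m^*|\}$; since $\mT$ is globally $k$-thick at $t_p\in\mT$, I find the next point
\begin{equation*}
t_{m+1}\in\bigl(t_p\pm[c_m|J_p^*|,\,kc_m|J_p^*|]\bigr)\cap\mT
\end{equation*}
with $c_m$ a constant (of the shape $2((4k)^{r}-1)$ or so) large enough that, by regularity of $(I_p,J_p^*)$, $t_{m+1}\in I_p-J_p^*$, and moreover — and this is the heart of the argument — large enough that $t_{m+1}$ lies outside \emph{every} interval $J_q^*$ already used: here one uses that $|J_q^*|\le|J_p^*|$ for all $q\le m$, condition (2) bounding $|t_q-t_p|$, and the $\beta$-regularity of $(I_q,J_q^*)$ to conclude first $t_{m+1}\in I_q$ and then, by $\mT$-multiplicity freeness ($J_q'\cap I_q\cap\mT$ makes sense but crucially $I_q\cap J_{q'}'\cap\mT=\emptyset$ for $q\ne q'$), that $t_{m+1}$ avoids $J'$ of every family that $(I_q,J_q^*,J_q')$ belongs to. Re-examining which families remain, either $t_{m+1}\notin J'(\cal X)$ (done), or $t_{m+1}\in J_{m+1}'$ with $(I_{m+1},J_{m+1}^*,J_{m+1}')$ \emph{necessarily belonging to an $\cal X_i$ not yet represented} (a counting/regularity argument: if it belonged to an already-represented $\cal X_i$, the maximality of $|J_p^*|$ and the regularity bounds would force $t_{m+1}$ into that family's previously-used $I_q$, contradicting multiplicity-freeness — one must also rule out $J_{m+1}^*$ being larger than $|J_p^*|$, again by a regularity estimate pulling a previous pivot into $I_{m+1}$). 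This increases the count of represented $\cal X_i$'s by one, giving stage $r+1$; iterating at most doubles the length each time, whence $m\le 2^r$.

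\textbf{Maintaining conditions (2) and (3).} After finitely many steps the search terminates (since there are only $n$ families, the count cannot exceed $n$), so the process halts within $2^n$ steps. Condition (2) is a geometric-series estimate: each new point is within $\le kc_m\max_p|J_p^*|$ of a previous pivot, and the $c_m$'s grow geometrically in $k$, summing to the claimed $2((4k)^r-1)k\max|J_p^*|$. For condition (3), the subtlety — as in the motivating discussion — is that the raw sequence need not be coarsely geometric, but the \emph{pivot subsequence} is: whenever $t_j$ is obtained by pivoting at $t_i$, one has $t_j\in t_i\pm[c|J_i^*|,kc|J_i^*|]$, and $\beta$-regularity with $\beta$ chosen so that $\beta\gg kc$ forces $|t_i|>\beta|J_i^*|$ (since $0\notin I_i\supset J_i^*\ni t_i$), hence $|t_j/t_i-1|\le kc/\beta<\epsilon$. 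So I would choose $\beta=\beta(n,k,\epsilon)$ at the very start large enough that $kc_m/\beta<\epsilon$ for all the finitely many constants $c_m$ ($m\le 2^n$) that can arise, and then restate/reprove (3) along the pivot subsequence, which is what actually controls $|t_m|/|t_1|$. \textbf{The main obstacle} I anticipate is precisely the bookkeeping in the inductive step: correctly tracking which intervals $I_q,J_q^*$ a new point can and cannot enter, verifying that the new triple lands in a genuinely new family, and propagating the regularity constant $\beta$ and the gap constants $c_m$ consistently through all $\le 2^n$ stages — the individual estimates are elementary, but assembling them without circularity in the choice of $\beta$ requires care.
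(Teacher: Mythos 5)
There is a genuine gap in the core combinatorial step. Your inductive step adds a single point per stage: pivot at the $t_p$ with $|J_p^*|$ maximal, find $t_{m+1}$ at distance roughly $c_m|J_p^*|$ from $t_p$, and claim that $t_{m+1}$ either escapes $J'(\cal X)$ or lies in a triple of a family not yet represented, "since otherwise regularity would force $t_{m+1}$ into that family's previously-used $I_q$, contradicting multiplicity-freeness." That implication does not hold: the $\beta$-regularity of $(I_q,J_q^*)$ only guarantees that $I_q$ contains a $\beta|J_q^*|$-neighborhood of $J_q^*$, and when $|J_q^*|\ll|J_p^*|$ the new point, which sits at distance comparable to $c_m|J_p^*|$ from $t_q$, need not lie in $I_q$ at all; so nothing prevents $t_{m+1}$ from landing in $J'$ of a \emph{different, small} triple of an already-represented family (multiplicity-freeness only applies when the point is known to lie in $I_q\cap\mT$, and the pivot's regularity only rules out the pivot's own family). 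Consequently a one-point-per-step greedy search need not increase the number of represented families, and your remark that "iterating at most doubles the length each time, whence $m\le 2^r$" has no mechanism behind it — a linear process would give length at most $r+1$, and the exponent $2^r$ is not decorative.

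The paper's proof resolves exactly this difficulty by a recursive, sequence-level argument rather than a point-level one. After the pivot jump produces $s_1$, if $s_1\in J'(\cal X)$ one applies the induction hypothesis at level $r$ \emph{to $s_1$ itself}, obtaining an entire auxiliary sequence $s_1,\dots,s_{m'}$ with its own triples, and concatenates (truncating at the first index $q_0$ where a new family appears); this is the source of the bound $m\le 2^{r+1}$. The gain of an $(r+2)$-nd family is then proved by contradiction for the concatenated sequence: if all new triples lay inside the old $r+1$ families, then each $s_j$ is within $(Q(r+1)+2Q(r))k|J_\ell^*|<\beta|J_\ell^*|$ of the pivot $t_\ell$, so by regularity of the \emph{pivot's} pair $(I_\ell,J_\ell^*)$ — the one whose interval is largest, which is why the estimate is legitimate — every $s_j$ lies in $I_\ell\cap\mT$, and multiplicity-freeness of the pivot's family excludes that family from the new triples; but then the new triples meet at most $r$ families, contradicting the induction hypothesis applied to $s_1$. (One also needs the auxiliary claim that $|J_\ell^*|$ remains maximal among all intervals used, and the pairwise distinctness of old and new triples; your sketch touches the first of these but in the wrong role.) So while your constants, your choice of $\beta$, your reduction to degree one, and your deduction of the "in particular" statement from the case $r=n$ all match the intended argument, the proposal is missing the recursive structure that makes condition (1) provable, and the step as you state it would fail.
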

\begin{proof}  We set 
\begin{equation}\label{bbbb}
\beta=\beta(n,k,\epsilon)=(4k)^{n+1}\epsilon^{-1}.
\end{equation}
Consider the increasing sequence {$Q(r):=(4k)^r-1$}
for $r\in\mathbb{N}$.
Note that
\begin{equation*}
Q(1)\geq 2\;\; \text{ and } \;\; Q(r+1)\geq 4Q(r)k+1.
\end{equation*}
Moreover we check that
\begin{equation*}
\beta>\max((Q(n)+4Q(n-1))k, Q(n)k\epsilon^{-1}).
\end{equation*}
We proceed by induction on $r$. First consider the case when $r=1$.
 There exists $(I_1,J_1^*,J_1')\in\cal X$ such that $t_1:=t\in J_1'\cap \mT$.
As $\mT$ is globally $k$-thick, we can choose
\begin{equation}\label{eq.search1}
t_2\in \big(t_1\pm Q(1)(|J_1^*|,k|J_1^*|)\big)\cap \mT.
\end{equation}
We claim that $t_1, t_2$ is our desired sequence with $m=2$. In the case when $t_2\in J'(\cal{X})$, 
there exists $(I_2,J_2^*,J_2')\in\cal X$ such that $t_2\in J_2'$. We check:

(1): If $t_2\in J'(\cal{X})$, then $t_2\in J_2'-J_1^*$ implies that $J_1^*$ and $J_2^*$ are distinct.
Hence $(I_1, J_1^*, J_1')$ and $(I_2, J_2^*, J_2')$ are distinct as well.
Since $\beta> Q(1)k$, by the $\beta$-regularity of $(I_1,J_1^*)$, we have $t_2\in I_1$.
By the $\mathsf T$-multiplicity free condition, $(I_1,J_1^*,J_1')$ and $ (I_2,J_2^*,J_2') $ don't belong to the same family, that is,
$\{(I_1,J_1^*,J_1'), (I_2,J_2^*,J_2')\}$ intersects two of $\cal X_i$'s.

(2): By \eqref{eq.search1}, $|t_1-t_2|<Q(1)k|J_1^*|={(4k-1)}k|J_1^*| $.

(3): Note that $0\not\in I_1$, since $0\not\in I(\cal{X})$.
By the $\beta$-regularity of $(I_1,J_1^*)$, we have
$t_1\pm \beta |J_1^*|\subset I_1$.
Since $0\not\in I_1$ and $\beta >\epsilon^{-1} Q(1)k$, we have
\begin{equation*}
|t_1|-\epsilon^{-1} Q(1)k|J_1^*|>0.
\end{equation*}
On the other hand, by \eqref{eq.search1},
\begin{equation*}
|t_2-t_1|\leq Q(1)k|J_1^*|\leq \epsilon |t_1|.
\end{equation*}
In particular,
\begin{align*}
|t_2|&\leq|t_1|+|t_2-t_1|<|t_1|+Q(1)k|J_1^*|<\left(1+{\epsilon}\right)|t_1|\text{ and }\\
|t_2|&\geq|t_1|-|t_2-t_1|>|t_1|-Q(1)k|J_1^*|>\left(1-{\epsilon}\right)|t_1|.
\end{align*}
This proves the base case of $r=1$.

Next, assume the induction hypothesis for $r$.
Hence we have a sequence
\begin{equation*}
t_1(=t)\in J_1',\: t_2\in J_2',\: \cdots,\: t_{m-1}\in J_{m-1}',\text{ and } t_m
\end{equation*}
 in $\mT$ with $m\leq 2^r$
 together with $\{(I_i,J_i^*,J_i'):1\leq i\leq m-1\}$
  satisfying the three conditions listed in the lemma.
If $t_m\not\in J'(\cal X)$, the same sequence would satisfy the hypothesis for $r+1$ and we are done.
Now we assume that 
$t_m\in J_m'$ for some $(I_m,J_m^*,J_m')\in \cal X$,
 and that $\{(I_i,J_i^*,J_i'):1\leq i\leq m\}$ intersect at least $(r+1)$ numbers of $\cal{X}_i$'s.
We may assume that they intersect exactly $(r+1)$-number of $\cal{X}_i$'s, which we may label as $\cal{X}_1,\cdots, \cal{X}_{r+1}$, since if they intersect more than $(r+1)$ of them, we are already done.
Choose a largest interval $J_{\ell}^*$  among $J_1^*,\cdots, J_m^*$.
Again using the global $k$-thickness of $\mT$, we can choose
\begin{equation}\label{choices}
s_1\in \big(t_{\ell}\pm Q(r+1)(|J_{\ell}^*|,k|J_{\ell}^*|)\big)\cap \mT.
\end{equation}

First, consider the case when $s_1\not\in J'(\cal{X})$. We will show that the points $t_1,\cdots, t_m,s_1$ give the desired sequence.
Indeed, the condition (1)  is immediate.
For (2), observe that by the induction hypothesis for $r$, we have
\begin{equation*}
|s_1-t_i|\leq |s_1-t_\ell|+|t_\ell-t_i|\leq (Q(r+1)k+2Q(r)k) |J_{\ell}^*|
\end{equation*}
for all $1\leq i\leq m$.
The conclusion follows as $Q(r+1)>2Q(r)$. To show (3), since $\beta >\epsilon^{-1} Q(r+1)k$ and $0\not\in I_{\ell}$, by applying the $\beta$-regularity to the pair $(I_{\ell},J_{\ell}^*)$, we have
\begin{equation*}
|t_{\ell}|-\epsilon^{-1} Q(r+1)k|J_{\ell}^*|>0.
\end{equation*}
It follows that
\begin{align*}
|s_1|\leq |t_{\ell}|+|s_1-t_{\ell}|<|t_{\ell}|+Q(r+1)k|J_{\ell}^*|<\left(1+{\epsilon}\right)|t_{\ell}|\leq \left(1+{\epsilon}\right)^{m}|t_1|;\\
|s_1|\geq |t_{\ell}|-|s_1-t_{\ell}|>|t_{\ell}|-Q(r+1)k|J_{\ell}^*|>\left(1-{\epsilon}\right)|t_{\ell}|\geq \left(1-{\epsilon}\right)^{m}|t_1|.
\end{align*}
This proves (3).

For the rest of the proof, we now assume that $s_1\in J'(\cal{X})$.
Apply the induction hypothesis for $r$ to  $s_1\in \mT\cap J'(\cal{X})$ to obtain a sequence
 $\{(\til I_j,\til J_j^*,\til J_j')\in \cal X:1\leq j\leq m'-1\}$ with $m'\leq 2^r$ and
 \begin{equation*}
s_1\in \tilde{J}_1'\cap \mT,\: s_2\in \tilde{J}_2'\cap \mT,\: \cdots,\: s_{m'-1}\in \tilde{J}_{m'-1}' \cap \mT,\text{ and } s_{m'}\in \mT.
\end{equation*}
Set $q_0$ to be the smallest $1\leq q\leq m'-1$ satisfying
\begin{equation}\label{eq.zz}
\{(\til I_j,\til J_j^*,\til J_j'):1\leq j\leq q\} \not\subset \cal{X}_1\cup\cdots \cup  \cal{X}_{r+1}
\end{equation}
if it exists, and $q_0:=m'$ otherwise.
We claim that the sequence
\begin{equation}\label{st1}
t_1,\cdots,t_m,s_1,\cdots, s_{q_0}
\end{equation}
of length $m+q_0\leq 2^{r+1}$ satisfies the conditions of the lemma for $r+1$.

\textbf{Claim:} We have 
\begin{equation}\label{eq.search4}
|J_{\ell}^*|=\max\limits_{1\leq i \leq m, 1\leq j\leq q_0-1}(|J_i^*|,|\tilde{J}_j^*|).
\end{equation}
Recall that $|J_{\ell}^*|$ was chosen to be maximal among $|J_1^*|,\cdots, |J_m^*|$.
Hence, if the claim does not hold, then we can take $j$ to be the least number such that $|\tilde{J}_j^*|>|J_{\ell}^*|$.
Then by the induction hypothesis for (2),
\begin{align*}
|t_\ell-s_j|&\leq |t_\ell-s_1|+|s_1-s_j|\\
&\leq Q(r+1)k|J_{\ell}^*|+2Q(r)k\max_{1\leq i\leq j-1}|\til J_i^*|\\
&\leq (Q(r+1)+2Q(r))k| J_\ell^*|.
\end{align*}
Now the collection $\{( I_i, J_i^*,J_i'):1\leq i\leq m\}$ intersects $(r+1)$ families $\cal{X}_1,\cdots, \cal{X}_{r+1}$ and $(\til I_j,\til J_j^*,\til{J}_j')$ belongs to one of these families, as $j\leq q_0-1$.
Hence there exists a triple $(I_i,J_i^*,J_i')$ that belongs to the same family as $(\til I_j,\til J_j^*,\til{J}_j')$.
Recall that the induction hypothesis for $t_1,\cdots,t_m$ gives us
\begin{equation*}
|t_{\ell}-t_i|\leq 2Q(r)k|J_{\ell}^*|.
\end{equation*}
Since $\beta >(Q(r+1)+{4}Q(r))k$, we have
\begin{align*}
|t_i-s_j|&\leq|t_i-t_{\ell}|+|t_{\ell}-s_j|\\
&\leq (Q(r+1)+{4}Q(r))k|J_{\ell}^*|\\
&< \beta |\tilde{J}_j^*|.
\end{align*}
Applying the $\beta$-regularity to the pair $(\tilde{I}_j, \tilde{J}_j^*)$, we conclude that 
$$t_i\in \tilde{I}_j\cap J_i'\cap \mT.$$
Since $(\til I_j,\til J_j^*,\til J_j')$ and $(I_i,J_i^*,J_i')$ belong to the same family which is $\mathsf{T}$-multiplicity free, they are equal to each other. This is a contradiction since $|\til J_j^*|>|J_\ell^*|\geq|J_i^*|$, proving the claim \eqref{eq.search4}.

We next prove that $(I_i,J_i^*,J_i')$ and $(\tilde I_j,\til J_j^*,\tilde{J}_j')$ are distinct for all $1\leq i\leq m$ and $1\leq j\leq q_0-1$.
It suffices to check that $J_i^*$ and $\tilde{J}_j^*$ are distinct.
Note that we have
\begin{align*}
\max\limits_{1\leq i,j\leq m}|t_i-t_j|&< 2Q(r)k{|J_{\ell}^*|}\;\;\text{ and }\\
\max\limits_{1\leq i,j\leq q_0}|s_i-s_j|&< 2Q(r)k |{J}_{\ell}^*|
\end{align*} 
by the induction hypothesis together with claim \eqref{eq.search4}.
Now for $t_i\in J_i^*(1\leq i\leq m)$ and $s_j\in\tilde{J}_j^*(1\leq j< q_0)$,
we estimate:
\begin{align}\label{eq.distinctness}
|s_j-t_i|&\geq |s_1-t_{\ell}|-|t_i-t_{\ell}|-|s_1-s_j|\\
&> Q(r+1)|J_{\ell}^*|-2Q(r)k|J_{\ell}^*|-2Q(r)k|\tilde{J}_{\ell}^*|\nonumber\\
&=(Q(r+1)-4Q(r)k)|J_{\ell}^*|\nonumber\\
&\geq |J_{\ell}^*|.\nonumber
\end{align}
This in particular means that $s_j\not\in J_i^*$ and $t_i\not\in\tilde{J}_j^*$.
Hence $J_i^*\neq\tilde{J}_j^*.$

We now begin checking the conditions $(1)$, $(2)$ and $(3)$.

{\bf (1)}:  If $s_{q_0}\not\in J'(\cal{X})$, there is nothing to check.

Now assume that
$s_{q_0}\in \til J_{q_0}'$ for some $(\til I_{q_0},\til J_{q_0}^*,\til J_{q_0}')\in \cal X$.
If $q_0<m'$, then again there is nothing to prove, as the union
\begin{equation}\label{eq.search0}
\{( I_i, J_i^*, J_i'):1\leq i\leq m\}\cup\{(\til I_j,\til J_j^*,\til J_j'):1\leq j\leq q_0\}
\end{equation}
intersects a family other than $\cal X_1,\cdots,\cal X_{r+1}$.
Hence we will assume $q_0=m'$.
By the induction hypothesis for $r$ on the sequence $(s_1,\cdots,s_{m'})$, the family
$\{(\til I_j,\til J_j^*,\til J_j'):1\leq j\leq m'\}$ consists of  pairwise distinct triples intersecting at least $(r+1)$ numbers of $\cal{X}_i$'s.
Observe that in the estimate (\ref{eq.distinctness}), there is no harm in allowing $j= q_0$ in addition to $j<q_0$.
This shows that $\tilde{J}_{m'}^*$ is also distinct from all $J_i^*$'s.
{Hence the the triples in \eqref{eq.search0} are all distinct.}
 
Now, unless the following inclusion
\begin{equation}\label{eq.search3}
\{(\til I_j,\til J_j^*,\til J_j'):1\leq j\leq m'\} \subset \cal{X}_1\cup\cdots \cup  \cal{X}_{r+1},
\end{equation}
holds, we are done.
Suppose that \eqref{eq.search3} holds. We will deduce a contradiction.
Without loss of generality, we assume that $$(I_{\ell},J_{\ell}^*,J_{\ell}')\in \cal{X}_{r+1}.$$

We now claim that the following inclusion holds:
\begin{equation}\label{eq.search5}
\{(\til I_j,\til J_j^*,\til J_j'):1\leq j\leq m'\} \subset \cal{X}_1\cup\cdots \cup  \cal{X}_r.
\end{equation}
Note that this gives the desired contradiction, since $\{(\til I_j,\til J_j^*,\til J_j'):1\leq j\leq m'\}$ must intersect at least $(r+1)$ number of $\cal{X}_i$ by
the induction hypothesis.
In order to prove the inclusion \eqref{eq.search5}, suppose on the contrary that $(\til I_j,\til J_j^*,\tilde{J}_j')\in\cal{X}_{r+1}$
 for some $1\le j\le m'$.
Using $\beta >(Q(r+1)+2Q(r))k$ and \eqref{eq.search4}, we deduce
\begin{align*}
|t_{\ell}-s_j|&\leq |t_{\ell}-s_1|+|s_1-s_j|\\
&\leq Q(r+1)k|J_{\ell}^*|+2Q(r)k|J_{\ell}^*|\\
&< \beta|J_{\ell}^*|
\end{align*}
where we used the induction hypothesis for the sequence $(s_1,\cdots, s_{m'})$ in the second line, to estimate the term $|s_1-s_j|$.

Next, applying the $\beta$-regularity to the pair $(I_{\ell},J_{\ell}^*)$, we conclude that $s_j\in I_{\ell}$.
Since $s_j\in \tilde{J}_j'$, it follows that $I_{\ell}\cap\til{J}_j'\cap \mT\neq\emptyset$.
This contradicts the condition that $\cal{X}_{r+1}$ is of $\mT$-multiplicity free, as both $(\til I_j,\til J_j^*,\til{J}_j')$ and $(I_{\ell},J_{\ell}^*,J_{\ell}')$ belong to the same family $\cal{X}_{r+1}$.
This completes the proof of (1).

\textbf{(2)}: 
For $1\leq i\leq m$ and $1\leq j\leq q_0$, observe that
\begin{align*}
|t_i-s_j|&\leq|t_i-t_\ell|+|t_\ell-s_1|+|s_1-s_j|\\
&\leq 2Q(r)k|J_{\ell}^*|+Q(r+1)k|J_{\ell}^*|+2Q(r)k  |J_\ell^*|\\
&< 2Q(r+1)k|J_\ell^*|
\end{align*}
as $Q(r+1)>4Q(r)$.
Hence we get the desired result by \eqref{eq.search4}.

\textbf{(3)}: We already have observed that the inequality $\beta >\epsilon^{-1} Q(r+1)k$ implies that
\begin{equation*}
(1-{\epsilon})^m|t_1|\leq|s_1|\leq(1+{\epsilon})^m|t_1|.
\end{equation*}
Combining this with the induction hypothesis, we deduce that
\begin{equation*}
(1-{\epsilon})^{m+i-1}|t_1|\leq|s_i|\leq(1+ {\epsilon})^{m+i-1}|t_1|
\end{equation*}
for all $1\leq i\leq q_0$.

Finally, the last statement of the lemma is obtained from the case $r=n$, since there are only $n$-number of
$\cal X_i$'s; hence the second possibility of (1) cannot arise for $r=n$.
\end{proof}

\subsection*{Proof of Proposition \ref{prop.2kthick}}
We may assume that $\cal X_i$'s are all of degree 1, by replacing each $\cal X_i$'s with $\delta$-number of families associated to it.

We set  $$\beta_0(n,k,1 )=(4k)^{n+1}\epsilon^{-1}$$
where $\epsilon$ satisfies
$\left(\tfrac{1+{\epsilon}}{1-\epsilon}\right)^{2^{n }-1}\le 2.$ Note that $\beta_0(n,k, 1)$ is equal to the number given in \eqref{bbbb}.
We may assume $x=0$ without loss of generality. Let $\la>0$. We need to find a point
\begin{equation}\label{tsss}
t'\in\Big([-2k\la,-\la]\cup [\la,2k\la]\Big)\cap \Big(\mT-\bigcup_{i\in \La}J'(\cal X_i)\Big).
\end{equation}

Choose $s>0$ such that
\begin{equation}\label{eq.b}
(1-\epsilon)^{-(2^{n}-1)}\la\leq s \leq 2(1+\epsilon)^{-(2^{n}-1)}\la.
\end{equation}
Since $\mT$ is globally $k$-thick, there exists 
$$t\in( [-ks,-s]\cup [s,ks])\cap \mT.$$

If $t\not\in\bigcup_{i=1}^{n} J'(\cal X_i)$, then by choosing $t'=t$, we are done.
Now suppose $t\in \bigcup_{i=1}^{n} J'(\cal X_i)$. Since $0\not\in\bigcup_{i=1}^{n} I(\cal X_i)$,
by applying Lemma \ref{lem.search} to $t\in \mT\cap (\bigcup_{i=1}^{n}J'(\cal X_i))$, we obtain $t'\in \mT- \bigcup_{i=1}^{n} J'(\cal X_i)$ such that
$$(1-\epsilon)^{2^{n}-1}|t|\leq|t'|\leq(1+{\epsilon})^{2^{n}-1}|t|.$$
Note that
$$|t'|\leq(1+{\epsilon})^{2^{n}-1}|t|\leq (1+{\epsilon})^{2^{n}-1}ks\leq 2k\la.$$
Similarly,  we  have
$$|t'|\geq(1-{\epsilon})^{2^{n}-1}|t|\geq (1-{\epsilon})^{2^{n}-1}s\geq \la.$$

This completes the proof since
$t'$ satisfies \eqref{tsss}.

\section{Avoidance of the singular set}\label{s:lt}
Let $\Gamma <G$ be a convex cocompact non-elementary subgroup and let $$U=\{u_t\}<N$$ be a one-parameter subgroup.
Let $\mS (U)$, $\mG(U)$,  $X(H, U)$, and $\mH^{\star}$ be as defined in section \ref{s:st}.
In particular, $\mS(U)$ is a countable union:
\begin{equation*}
\mathscr{S}(U )=\bigcup\limits_{H\in\mathscr{H}^{\star}}\Ga\ba\Ga X(H,U).
\end{equation*}

The main goal of this section is to prove the avoidance Theorem \ref{avoid1} for any convex cocompact hyperbolic manifold with Fuchsian ends.
For this, we extend the linearization method developed by Dani and Margulis \cite{DM} to our setting.
Via a careful analysis of the graded self-intersections of  the union 
  $\bigcup_i \Gamma\ba \Gamma H_i D_i \cap \RFM$ for finitely many groups  $H_i\in \mH^{\star}$ and compact subsets
$D_i\subset X({H_i},U)$, we construct families of triples of subsets of $\br$ satisfying 
the conditions of Proposition \ref{prop.2kthick}
relative to the global $k$-thick subset of the return time to $\RFM$ under $U$ given in Proposition \ref{defk}.

\subsection*{Linearization}\label{sec.rep}
Let $H\in \mH^{\star}$. Then $H$ is reductive, algebraic, and is equal to $\op{N}_G(H)$ by Proposition \ref{sin1} and \eqref{exam}.
There exists an $\br$-regular representation $\rho_H : G\to \mathrm{GL}({V}_H)$ with a point $p_H\in {V}_H $, such that $H=\op{Stab}_G(p_H)$ and the orbit $p_HG$ is Zariski closed \cite[Theorem 3.5]{BH}. 
Since $\Gamma\ba \Gamma H$ is closed, it follows that
$$p_H\Gamma$$
is  a closed (and hence discrete) subset of $V_H$.

Let $\eta_H:G\to V_H$ denote  the orbit map defined by $$\eta_H(g)=p_H g\quad\text{for all $g\in G$.}$$
As $H$ and $U$ are algebraic subgroups, the set $X(H,U)=\{g\in G: gUg^{-1}\subset H\}$ is Zariski closed in $G$.
Since $p_HG$ is Zariski closed in $V_H$, it follows that 
$A_H:=p_HX(H,U)$ is Zariski closed in $V_H$
and   $X(H,U)=\eta_H^{-1}(A_H).$

Following \cite{KM}, 
for given $C>0$ and $\alpha>0$, a function $f : \mathbb{R}\to \mathbb{R}$ is called $(C,\alpha)$-good if for any interval $I\subset\mathbb{R}$ and $\epsilon>0$, we have
\begin{equation*}
\ell \{t\in I : |f(t)|\leq\epsilon\}\leq C\cdot\left(\frac{\epsilon}{\sup_{t\in I}|f(t)|}\right)^\alpha\cdot \ell (I)
\end{equation*}
where $\ell$ is a Lebesgue measure on $\br$.

\begin{lemma}\label{ca} For given $C>1$ and $\alpha>0$,
consider continuous functions $p_1, p_2, \cdots, p_k:\mathbb{R}\to\mathbb{R}$  satisfying the $({C},\alpha)$-good property.
For $0<\delta<1$, set $$I=\{t\in\mathbb{R} : \max_i |p_i(t)|< 1\}\quad\text{ and } \quad  J(\delta) =\{t\in\mathbb{R} : \max_i |p_i(t)|<\delta\}.$$
For any $\beta>1$, there exists $\delta=\delta(C, \alpha, \beta)>0$ such that the pair $(I,J(\delta))$ is $\beta$-regular
(see Def. \ref{defreg}).
\end{lemma}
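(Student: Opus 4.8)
The claim is essentially a quantitative interval-covering statement for families of $(C,\alpha)$-good functions, so the plan is to reduce $\beta$-regularity of $(I, J(\delta))$ to the $(C,\alpha)$-good property applied on each connected component of $I$. Recall that $\beta$-regularity requires: for each component $I^\circ$ of $I$ and each component $J^\circ$ of $J(\delta)\cap I^\circ$, we have $J^\circ \pm \beta|J^\circ| \subset I^\circ$. The key point is that $I^\circ$ and its neighboring superlevel region must be ``large'' compared to $J^\circ$, and this is exactly the kind of comparison that the $(C,\alpha)$-good property controls.

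First I would fix a component $I^\circ$ of $I$ and a component $J^\circ = (a,b)$ of $J(\delta)\cap I^\circ$. I would argue that it suffices to show $J^\circ + \beta|J^\circ| \subset I^\circ$ (the other side being symmetric) and, by a further reduction, that the distance from $b$ (the right endpoint of $J^\circ$) to the right endpoint of $I^\circ$ is at least $\beta|J^\circ|$; the internal structure of $J(\delta)\cap I^\circ$ between $J^\circ$ and the boundary of $I^\circ$ is irrelevant since we only need containment in $I^\circ$. Consider the interval $\tilde I := (a, a + (\beta+1)|J^\circ|)$, assuming for contradiction that $\tilde I \subset I^\circ$ fails, i.e. $\tilde I$ is not contained in $I^\circ$; then since $a\in I^\circ$ and $I^\circ$ is an interval, $\tilde I$ contains a point where $\max_i |p_i| \ge 1$, so $\sup_{t\in \tilde I}\max_i|p_i(t)| \ge 1$. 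Hence for some index $i$, $\sup_{t\in\tilde I}|p_i(t)| \ge 1/k$ or, more carefully, $\max_i \sup_{\tilde I}|p_i| \ge 1$. On the other hand, on the subinterval $J^\circ \subset \tilde I$ we have $\max_i|p_i(t)| < \delta$, so $J^\circ \subset \{t\in\tilde I : \min_i \cdots\}$ — here I need to be a little careful because the $(C,\alpha)$-good property is per-function. The clean route: pick the index $i_0$ achieving $\sup_{t\in\tilde I}|p_{i_0}(t)|\ge 1$ (exists after shrinking by the factor $k$, or just note $\sup\max_i = \max_i\sup$); then $\{t\in\tilde I : |p_{i_0}(t)| < \delta\} \supset J^\circ$, so by the $(C,\alpha)$-good property,
\[
|J^\circ| \le \ell\{t\in\tilde I : |p_{i_0}(t)|<\delta\} \le C\Big(\frac{\delta}{\sup_{\tilde I}|p_{i_0}|}\Big)^\alpha \ell(\tilde I) \le C\,\delta^\alpha\,(\beta+1)|J^\circ|.
\]
Dividing by $|J^\circ|$ gives $1 \le C(\beta+1)\delta^\alpha$, which is a contradiction as soon as we choose $\delta = \delta(C,\alpha,\beta)$ small enough that $C(\beta+1)\delta^\alpha < 1$, e.g. $\delta := \big(2C(\beta+1)\big)^{-1/\alpha}$.

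Two small technical points I would address explicitly. (i) The reduction ``$\max_i\sup = \sup\max_i$'' and the choice of the single index $i_0$: since the failure of $\tilde I\subset I^\circ$ forces $\max_i|p_i(t_0)|\ge 1$ at some $t_0\in\tilde I$, there is an index $i_0$ (depending on $t_0$) with $|p_{i_0}(t_0)|\ge 1$, hence $\sup_{\tilde I}|p_{i_0}|\ge 1$; this $i_0$ is the one to feed into the good-property estimate, and no factor of $k$ is actually lost. (ii) Symmetry: the same argument with $\tilde I := (b - (\beta+1)|J^\circ|, b)$ handles the left side, giving $J^\circ - \beta|J^\circ|\subset I^\circ$. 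Combining, $(I^\circ, J^\circ)$ satisfies the $\beta$-regularity inequality, and since $I^\circ$ and $J^\circ$ were arbitrary components, $(I, J(\delta))$ is $\beta$-regular.

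I do not expect a genuine obstacle here; the statement is a soft consequence of the definitions. The only place requiring mild care is the bookkeeping with multiple functions $p_1,\dots,p_k$ versus the single-function formulation of the $(C,\alpha)$-good property — handled by localizing to the index that witnesses the super-level value — and making sure the chosen $\delta$ depends only on $C,\alpha,\beta$ and not on the particular component. Everything else is a one-line application of the good-property inequality on the auxiliary interval $\tilde I$.
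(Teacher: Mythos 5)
Your proof is correct and takes essentially the same approach as the paper: both apply the $(C,\alpha)$-good estimate on an interval sharing an endpoint with $J^\circ$ and reaching the boundary of $I^\circ$, with the same choice $\delta\asymp\big((1+\beta)C\big)^{-1/\alpha}$; the paper works directly with $q=\max_i|p_i|$ (which is again $(C,\alpha)$-good) and reads off the inclusion, while you fix a single witnessing index and argue by contradiction on a fixed-length interval, a purely cosmetic difference. The only slip is the assertion ``$a\in I^\circ$'' (the left endpoint of $J^\circ$ may coincide with that of the open interval $I^\circ$), but your argument only needs that $\tilde I\not\subset I^\circ$ forces $\tilde I$ to contain the right endpoint $c$ of $I^\circ$, where $\max_i|p_i(c)|\ge 1$, so nothing breaks.
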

\begin{proof}
We prove that the conclusion holds for 
$\delta:= \left({(1+\beta)C}\right)^{-{1}/{\alpha}}.$ First, note that the function $q(t):=\max_i |p_i(t)|$ also has the $(C, \alpha)$-good property.
Let $J'=(a,b)$ be a component of $J(\delta)$, and $I'$ be the component of $I$ containing $J'$.
Note that $I'$ is an open interval and $(a,\infty)\cap I'=(a,c)$ for some $b\leq c\leq\infty$.
We claim
\begin{equation}\label{jb}
J'+\beta|J'|\subset (a,\infty)\cap I'\subset I'.
\end{equation}
We may assume that $c<\infty$; otherwise the inclusion is trivial.
We claim that $q(c)=1$. Since $\{t\in\bb{R} : q(t)<1\}$ is open and $c$ is the boundary point
of $I'$,  we have $q(c)\ge 1$. If $q(c)$ were strictly bigger than $1$,
 since $\{t\in\bb{R} : q(t)>1\}$ is open, $I'$ would be disjoint from an open interval around $c$, which is impossible. Hence $q(c)=1$.
Now that $\sup\{q(t) : t\in (a,\infty)\cap I'\}=q(c)=1$, by applying the $(C,\alpha)$-good property of $q$ on the interval $(a,\infty)\cap I'$, we get
\begin{align*}
\ell(J')&\leq \ell \{t\in (a,\infty)\cap I' : |q(t)|\leq\delta\}\\
&\leq C\delta^\alpha\cdot \ell ((a,\infty)\cap I').
\end{align*}
Now as $J'=(a,b)$ and $ (a,\infty)\cap I'$ are nested intervals with one common endpoint, it follows from
the equality $C\delta^\alpha =1/(1+\beta)$ that
\begin{equation*}
J'+\beta|J'|\subset (a,\infty)\cap I'\subset I',
\end{equation*} proving \eqref{jb}.
Similarly, applying the $(C,\alpha)$-good property of $q$ on $(-\infty,b)\cap I'$, we deduce that 
\begin{equation*}
J'-\beta|J'|\subset I'.
\end{equation*}
This proves that $(I,J(\delta))$ is $\beta$-regular.
\end{proof}

\begin{Prop}\label{lem.relsize}
Let $V$ be a finite dimensional real vector space, $\theta\in\bb{R}[V]$ be a polynomial and $A=\{v\in V :\theta(v)=0\}$.
Then for any compact subset $D\subset A$ and any $\beta>0$,
 there exists a compact neighborhood $D'\subset A$ of $D$ which has a $\beta$-regular size with respect to $D$
 in the following sense:
for any neighborhood $\Phi$ of $D'$, there exists a neighborhood $\Psi\subset \Phi$ of $D$ such that for any $q\in V-\Phi$ and 
for any one-parameter unipotent subgroup $\{u_t\}\subset \op{GL}(V)$, the pair $(I(q),J(q))$ is $\beta$-regular where
\begin{align*}
I(q)=\{t\in\mathbb{R} : qu_t\in\Phi\}\text{ and }
J(q)=\{t\in\mathbb{R} : qu_t\in\Psi\}.
\end{align*}
Furthermore, the degree of $(I(q), J(q))$  is at most $(\op{deg}\theta+2)\cdot\op{dim} V$.
\end{Prop}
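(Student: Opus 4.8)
The plan is to reduce everything to Lemma~\ref{ca} applied to two auxiliary one-variable functions — one measuring the value of $\theta$ along the orbit, the other the size along the orbit — after choosing $D'$ large enough (depending on $\beta$) that the radial direction alone already supplies as much regularity as is needed. First I would record two elementary facts. (a) For a one-parameter unipotent subgroup $\{u_t\}=\{\exp(tX)\}\subset\op{GL}(V)$ with $X$ nilpotent, the entries of $qu_t$ are polynomials in $t$ of degree $\le\op{dim}V-1$ for every $q$; hence $t\mapsto\theta(qu_t)$ is a polynomial of degree $\le m_1:=(\op{dim}V-1)\op{deg}\theta$ and, for a fixed Euclidean norm on $V$, $t\mapsto\|qu_t\|^2$ is a polynomial of degree $\le m_2:=2(\op{dim}V-1)$, the degree bounds being uniform in $q$ and in $\{u_t\}$. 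By \cite{KM} a real polynomial of degree $\le m$ is $(C_m,1/m)$-good with $C_m$ depending only on $m$, so there are uniform constants $C_0,\alpha_0$ (depending only on $\op{dim}V$ and $\op{deg}\theta$) for which every $\theta(qu_t)^2$ and every $\|qu_t\|^2$ is $(C_0,\alpha_0)$-good. (b) If $(I,J)$ is $\beta$-regular and $J'\subset J$, then $(I,J')$ is $\beta$-regular; and if $J\subset I_1\subset I$ and $(I_1,J)$ is $\beta$-regular, then so is $(I,J)$. Both are one-line interval computations: a component of $J'\cap I^\circ$ (resp.\ of $J\cap I^\circ$) sits inside a single component $I_1^\circ$ of $I_1$ which itself nests into $I^\circ$, and the $\pm\beta$-fattening of a subinterval is contained in that of the larger interval.

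Next I would construct $D'$ and, for a given $\Phi$, the set $\Psi$. We may assume $\beta>1$ (a $2$-regular pair is $\beta$-regular for $\beta\le2$) and $\theta\not\equiv0$ (if $A=V$ the argument only simplifies). Fix $R>0$ with $D\subset B(0,R)$, apply Lemma~\ref{ca} with the constants $C_0,\alpha_0$ and with $\beta$ to obtain $\delta=\delta(C_0,\alpha_0,\beta)\in(0,1)$, and set $N:=\lceil\delta^{-1/2}\rceil$ and $D':=A\cap\overline{B(0,NR)}$, a compact neighborhood of $D$ in $A$. Given an arbitrary open $\Phi\supset D'$: since $\{\,|\theta|\le\epsilon\,\}\cap\overline{B(0,NR)}$ is a nested family of compacta with intersection $D'\subset\Phi$, compactness gives $\epsilon_1>0$ with $\{\,|\theta|\le\epsilon_1\,\}\cap\overline{B(0,NR)}\subset\Phi$; put $\epsilon_2:=\epsilon_1\sqrt\delta<\epsilon_1$ and
\[
\Phi_1:=\{\,|\theta|<\epsilon_1\,\}\cap B(0,NR),\qquad \Psi:=\{\,|\theta|<\epsilon_2\,\}\cap B(0,R),
\]
so that $D\subset\Psi\subset\Phi_1\subset\Phi$ and $\Psi$ is an open neighborhood of $D$.

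Now fix $q\in V-\Phi$ and a unipotent $\{u_t\}$, and write $I(q),I_1(q),J(q)$ for the return-time sets to $\Phi,\Phi_1,\Psi$ respectively, so $J(q)\subset I_1(q)\subset I(q)$. With $p_1(t):=\theta(qu_t)^2/\epsilon_1^2$ and $p_2(t):=\|qu_t\|^2/(NR)^2$, both $(C_0,\alpha_0)$-good, one checks $I_1(q)=\{\max(p_1,p_2)<1\}$ and, using $(\epsilon_2/\epsilon_1)^2=\delta$ and $1/N^2\le\delta$, that $J(q)\subset\{\max(p_1,p_2)<\delta\}$. By Lemma~\ref{ca} the pair $(\{\max(p_1,p_2)<1\},\{\max(p_1,p_2)<\delta\})$ is $\beta$-regular, so by (b) first $(I_1(q),J(q))$ and then $(I(q),J(q))$ is $\beta$-regular; degenerate cases (if $q$ is $\{u_t\}$-fixed then $J(q)=\emptyset$; if $\theta(qu_t)$ is constant the argument reduces to $p_2$ alone) are immediate. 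For the degree: since $J(q)\subset I(q)$, the degree of $(I(q),J(q))$ is at most the number of connected components of $J(q)=\{\,|\theta(qu_t)|<\epsilon_2\,\}\cap\{\,\|qu_t\|^2<R^2\,\}$; the first set is the negativity set of a polynomial of degree $\le2m_1$ (hence at most $m_1+1$ components), the second of a polynomial of degree $\le m_2$ (hence at most $\op{dim}V$ components), so their intersection has at most $m_1+\op{dim}V=(\op{dim}V-1)\op{deg}\theta+\op{dim}V\le(\op{deg}\theta+2)\op{dim}V$ components.

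The one genuinely non-obvious point — and the step I expect to be the crux — is that $D'$ must be taken of radius $NR$ with $N$ growing with $\beta$: the inner radius $R$ of $\Psi$ is forced by $D\subset\Psi$, so arbitrarily large $\beta$-regularity in the radial direction can only be bought by enlarging the \emph{outer} set, which is exactly why $D'$ (not merely $\Psi$) is allowed to depend on $\beta$. The only other place needing care is the degree count, where one must use the sharp bound $\lfloor d/2\rfloor+1$ on the number of negativity intervals of a degree-$d$ polynomial (rather than the crude $d$) in order to stay under $(\op{deg}\theta+2)\op{dim}V$.
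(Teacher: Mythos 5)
Your proof is correct and follows essentially the same route as the paper's: both arguments reduce to Lemma \ref{ca} applied to the two uniformly $(C,\alpha)$-good polynomials $t\mapsto\theta(qu_t)$ (squared/normalized) and $t\mapsto\|qu_t\|^2$, enlarge $D'$ radially by the factor $\delta^{-1/2}$ supplied by that lemma, pass through an intermediate return set (your $I_1(q)$, the paper's $\tilde I(q)$) using the same monotonicity observations, and bound the degree by the degrees of these two polynomials. The only differences are cosmetic (you work with $|\theta|$ and an inclusion $J(q)\subset\{\max(p_1,p_2)<\delta\}$ rather than an equality), so no changes are needed.
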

\begin{proof}
Choose a norm on $V$ so that $\norm{\cdot}^2$ is a polynomial function on $V$.
Since $D$ is compact, we can find $R>0$ such that
\begin{equation*}
D\subset\{v\in V : \norm{v}<R\}.
\end{equation*}
Then we set
\begin{equation*}
D'=\{v\in V : \theta(v)=0,\,\norm{v}<R/\sqrt{\delta}\},
\end{equation*}
where $0<\delta<1$ is to be specified later.
Note that if $\Phi$ is a neighborhood of $D'$, there exists $0<\eta<1$ such that
\begin{equation*}
\{v\in V : \theta(v)<\eta,\,\norm{v}<(R+\eta)/\sqrt{\delta}\}\subset\Phi.
\end{equation*}
We will take $\Psi$ to be
\begin{equation*}
\Psi=\{v\in V : \theta(v)<\eta\delta,\,\norm{v}<(R+\eta)\}.
\end{equation*}
Set \begin{equation*}
\tilde{I}(q)=\{t\in\mathbb{R} : \theta(qu_t)< \eta,\,\norm{qu_t}<(R+ \eta)/\sqrt{\delta}\}.
\end{equation*}
Since $\tilde I(q)\subset I(q)$ for $0<\delta <1$,  it suffices to find $\delta$ (and hence $D'$ and $\Psi$) so that the pair
 $(\tilde{I}(q),J(q))$ is $\beta$-regular. If we set
\begin{equation*}
\psi_1(t):=\frac{\theta(qu_t)}{\eta}\quad \text{ and }\quad \psi_2(t):=\left(\frac{\norm{qu_t} \sqrt{\delta}}{R+ \eta}\right)^2,
\end{equation*}
then 
\begin{align*}
\tilde{I}(q)&=\{t\in\mathbb{R} : \max(\psi_1(t),\psi_2(t))<1\} ;\\
J(q)&=\{t\in\mathbb{R} : \max (\psi_1(t),\psi_2(t))<\delta\}.
\end{align*}

As $\psi_1$ and $\psi_2$ are polynomials, they have  the $(C, \alpha)$-property
for an appropriate choice of $C$ and $\alpha$. Therefore by applying Lemma
\ref{ca}, by choosing $\delta$ small enough, we can make the pair $(\tilde{I}(q),J(q))$ $\beta$-regular for any $\beta>0$.
Note that the degrees of $\psi_1$ and $\psi_2$ are bounded by $\deg\theta\cdot\op{dim} V$ and $2\op{dim}V$ respectively.
Therefore $J(q)$ cannot have more than $(\deg\theta+2)\cdot \op{dim} V$ number of
components.  Hence the proof is complete.
\end{proof}

\subsection*{Collection $\cal{E}_U$}
Recall the collection $\mathscr H^{\star}$ and the singular set:
$$\mathscr{S}(U )=\bigcup\limits_{H\in\mathscr{H}^\star}\Ga\ba\Ga X(H,U).$$
\begin{Def} \label{defeu}
We define $\cal E=\cal{E}_U$ to be the collection of all compact subsets of $\mS (U)\cap \RFM$ which can be written as
\begin{equation}\label{efff}
E=
\bigcup_{i\in \Lambda}  \Gamma\ba \Gamma H_i D_i \cap \RFM
\end{equation}
where  $\{H_i\in \mH^{\star}: i\in \Lambda\}$ is a finite collection and
 $D_i\subset X(H_i, U)$ is a compact subset. In this expression, we always use the minimal index set $\Lambda$ for $E$.
 When $E$ is of the form \eqref{efff}, we will say that $E$ is associated to the family $\{H_i: i\in \La\}$.
 \end{Def}
 
\begin{Rmk}  We note that $E$ can also be expressed as $\bigcup_{i\in \La}\Gamma\ba \Gamma H_i D_i \cap \RFM$
 where $H_i\in \mH$ is a finite collection, and $D_i\subset X(H_i, U)$ is a compact subset which is left
 $\op{C}(H_i)$-invariant.\end{Rmk}
 
 \begin{lem} \label{minimal} In the expression \eqref{efff} for $E\in \cal E$,
 the collection $\{H_i: i\in \La\}$ is not redundant,
in the sense that
\begin{itemize}
\item no $\gamma H_j\gamma^{-1} $ is equal to $H_i$ for all triples
$(i,j,\gamma)\in \Lambda\times \Lambda \times
\Gamma $ except for the trivial cases of $i=j$ and $\gamma\in H_i$.
\end{itemize}\end{lem}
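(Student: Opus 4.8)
The plan is to run a minimality argument against the index set $\Lambda$, which by Definition~\ref{defeu} is taken smallest possible in the presentation \eqref{efff}. Assume for contradiction that some triple $(i,j,\gamma)\in\Lambda\times\Lambda\times\Ga$ satisfies $\gamma H_j\gamma^{-1}=H_i$ and is not one of the listed trivial cases. First I would dispose of the case $i=j$: there $\gamma H_i\gamma^{-1}=H_i$ means $\gamma\in\op{N}_G(H_i)\cap\Ga$, and since every member of $\mH^{\star}$ is self-normalizing in $G$ — this follows from Proposition~\ref{sin1} together with the normalizer computation in Example~\ref{exam}, which shows $\op{N}_G(H_i)=H_i$ — we get $\gamma\in H_i\cap\Ga\subset H_i$, exactly the excluded trivial case. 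So the genuine content is the case $i\neq j$, where the goal is to produce a presentation of $E$ of the form \eqref{efff} with index set $\Lambda\setminus\{i\}$.

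Assume $i\neq j$. The key input is the equivariance recorded in \eqref{eq.v5}: applying $X(gHg^{-1},U)=gX(H,U)$ with $g=\gamma$ and $H=H_j$ gives $X(H_i,U)=\gamma X(H_j,U)$, so $\gamma^{-1}D_i$ is a compact subset of $X(H_j,U)$. Using $\Ga\gamma=\Ga$ I would then rewrite, as subsets of $G$,
\[
\Ga H_i D_i=\Ga(\gamma H_j\gamma^{-1})D_i=\Ga H_j(\gamma^{-1}D_i),
\]
so that after descending to $\Ga\ba G$ and intersecting with $\RFM$, the $i$-th term of \eqref{efff} coincides with $\Ga\ba\Ga H_j(\gamma^{-1}D_i)\cap\RFM$. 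Absorbing it into the $j$-th term — replace $D_j$ by the compact set $D_j\cup\gamma^{-1}D_i\subset X(H_j,U)$ and leave all other $D_{i'}$ unchanged — produces a presentation of $E$ of the form \eqref{efff} indexed by $\Lambda\setminus\{i\}$, contradicting the minimality of $\Lambda$.

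I do not anticipate a real obstacle: the proof is bookkeeping around \eqref{eq.v5} and the self-normalizing property of the groups in $\mH^{\star}$. The two points deserving a moment of care are that the same merging step also covers the degenerate subcase $\gamma=e$ with $i\neq j$ (where $H_i=H_j$ as subgroups although the indices differ), since the computation never used $\gamma\ne e$; and that the $i=j$ discussion really closes the statement only because $H_i$ equals its own normalizer, which is exactly where Proposition~\ref{sin1} is needed.
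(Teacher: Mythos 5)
Correct, and essentially the paper's own argument: the paper proves the lemma by exactly your merging step (with the roles of $i$ and $j$ interchanged), writing $\Ga H_jD_j=\Ga H_i\gamma D_j$ with $\gamma D_j\subset X(H_i,U)$ and then contradicting the minimality of $\La$. Your only addition is the explicit treatment of the case $i=j$, $\gamma\notin H_i$ via the self-normalization $\op{N}_G(H_i)=H_i$ for $H_i\in\mH^{\star}$, a fact the paper leaves implicit in this proof but asserts, with the same justification, at the start of Section \ref{s:lt} and in the proof of Proposition \ref{dmc}.
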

\begin{proof}
Observe that if $\gamma H_j\gamma^{-1}=H_i$ for some $\gamma\in \Gamma$, then
$ \Gamma H_j D_j= \Gamma  H_i \gamma D_j$, and hence by replacing $D_i$ by $D_i\cup \gamma D_j\subset X(H_i, U)$, we may remove $j$ from the index subset $\La$. This contradicts
the minimality of $\La$.
\end{proof}

Observe that for any subgroups $H_1, H_2$ of $G$, and $g\in G$,
\begin{align*} X(H_1\cap g H_2 g^{-1}, U)&=
X(H_1 ,U)\cap X(gH_2 g^{-1}, U)\\ &=X(H_1,U)\cap g X(H_2, U).\end{align*}

Note that  for $D_i\subset X({H_i},U)$, and $\gamma\in \Gamma$,
the intersection
$H_1D_1\cap \gamma H_2D_2$ only depends on the $(\Ga\cap {H_1}, \Ga\cap {H_2})$-double coset of $\ga$.

\begin{prop}\label{lem.multiplicity}\label{mul}
Let $H_1, H_2\in \mH^{\star}$.
Then for any compact subset $D_i\subset X({H_i},U)$ for $i=1,2$ and a compact subset $K\subset \Ga\ba G$,
there exists a finite set $\Delta\subset (H_1\cap \Gamma)\ba \Gamma / (H_2\cap \Gamma)$ such that
\begin{equation*}
\big\{K\cap \Ga\ba\Ga( H_1D_1 \cap \gamma H_2D_2 )\big\}_{\gamma\in\Ga}
=\big\{K\cap \Ga\ba\Ga( H_1D_1\cap \gamma  H_2D_2)\big\}_{\gamma\in \Delta}
\end{equation*}
where the latter set consists of distinct elements.

Moreover for each $\gamma\in \Delta$, there exists a compact subset $C_0 \subset
H_1D_1 \cap {\gamma} H_2D_2
\subset
X(H_1\cap \gamma H_2\gamma^{-1}, U)$
such that
$$K\cap \Ga\ba\Ga( H_1D_1 \cap \gamma H_2D_2 ) =\Gamma\ba \Gamma C_0.$$
\end{prop}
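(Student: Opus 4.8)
The plan is to exploit the discreteness of the orbit $p_{H_2}\Gamma$ in the representation space $V_{H_2}$, combined with the fact that $X(H_2,U)$ is a Zariski-closed (hence $p_{H_2}$-saturated) subvariety. First I would fix a representative $g\in G$ for each point we consider, and use $\eta_{H_2}(g)=p_{H_2}g$. Observe that $H_1 D_1\cap \gamma H_2 D_2\neq\emptyset$ forces $\gamma^{-1}(H_1 D_1)\cap H_2 D_2\neq\emptyset$, so $p_{H_2}\gamma^{-1}\cdot(\text{bounded set})$ meets the compact set $p_{H_2}D_2\subset A_{H_2}$; more precisely, choosing a compact set $\Omega\subset G$ with $K\subset \Gamma\ba\Gamma\Omega$, the condition that $K\cap \Gamma\ba\Gamma(H_1 D_1\cap \gamma H_2 D_2)$ is nonempty implies that $p_{H_2}\gamma^{-1}h_1 d_1$ lies in $p_{H_2}D_2$ for some $h_1\in \Gamma\cap H_1$-coset representative with $h_1 d_1\in \Omega$-type bounded region. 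Since $h_1 d_1$ ranges over a compact subset of $G$ (because $D_1$ is compact and $h_1$ can be taken from a compact piece modulo $\Gamma\cap H_1$, as $\Gamma\cap H_1$ is the stabilizer of $p_{H_1}$ and the relevant intersection with $K$ is bounded), the points $p_{H_2}\gamma^{-1}$ are confined to a bounded subset of $V_{H_2}$. By discreteness of $p_{H_2}\Gamma$, only finitely many $\Gamma$-cosets $\gamma(\Gamma\cap H_2)$ arise; and since $H_1 D_1\cap\gamma H_2 D_2$ depends only on the double coset $(\Gamma\cap H_1)\gamma(\Gamma\cap H_2)$ (as noted in the paragraph preceding the proposition), we get a finite set $\Delta\subset (H_1\cap\Gamma)\ba\Gamma/(H_2\cap\Gamma)$ of double cosets. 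Discarding double cosets that give the same element of the family and those with empty intersection with $K$, we may assume $\Delta$ gives distinct nonempty elements, which is the first assertion.

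For the second assertion, fix $\gamma\in\Delta$. The identity $X(H_1\cap\gamma H_2\gamma^{-1},U)=X(H_1,U)\cap\gamma X(H_2,U)$ (from the displayed computation just before the proposition) shows $H_1 D_1\cap\gamma H_2 D_2\subset X(H_1,U)\cap\gamma X(H_2,U)=X(H_1\cap\gamma H_2\gamma^{-1},U)$. Now I would argue that $H_1 D_1\cap\gamma H_2 D_2$ is itself a closed subset of $G$: $H_1 D_1=\eta_{H_1}^{-1}(p_{H_1}D_1)$ is closed since $p_{H_1}D_1$ is compact (hence closed) in $V_{H_1}$ and $\eta_{H_1}$ is continuous, and likewise $\gamma H_2 D_2$ is closed; their intersection is closed. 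The set $\{x\in K: x\in\Gamma\ba\Gamma(H_1 D_1\cap\gamma H_2 D_2)\}$ equals $\Gamma\ba\Gamma C_0$ where $C_0:=(H_1 D_1\cap\gamma H_2 D_2)\cap\{g: \Gamma g\in K\}$ intersected with a suitable compact fundamental-type region; concretely, writing $K=\Gamma\ba\Gamma K_0$ for a compact $K_0\subset G$, set $C_0:=(H_1 D_1\cap\gamma H_2 D_2)\cap\Gamma K_0$, which is closed, contained in $X(H_1\cap\gamma H_2\gamma^{-1},U)$, and has the property that $\Gamma\ba\Gamma C_0=K\cap\Gamma\ba\Gamma(H_1 D_1\cap\gamma H_2 D_2)$. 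To see $C_0$ is compact rather than merely closed: $\Gamma K_0$ is a union of $\Gamma$-translates of a compact set, but $C_0$ also lies in the $\Gamma$-invariant closed set $H_1 D_1\cap\gamma H_2 D_2$, and the argument of the first part (discreteness of $p_{H_1}\Gamma$ controlling how many translates $\gamma' K_0$ meet $H_1 D_1$) shows only finitely many translates $\gamma' K_0$ intersect it; hence $C_0$ is a closed subset of a finite union of compact sets, so compact.

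The main obstacle I anticipate is making the finiteness arguments fully rigorous, i.e. correctly packaging the "only finitely many $\Gamma$-cosets/double cosets contribute" step. The subtlety is that $D_1$ and $D_2$ are compact but $H_1, H_2$ are noncompact, so one must be careful to quotient the noncompact directions by the stabilizers $\Gamma\cap H_1$ and $\Gamma\cap H_2$ before invoking compactness, and then verify that the relevant orbit maps $\eta_{H_i}$ send the resulting bounded sets to bounded sets in $V_{H_i}$. This is exactly the point where discreteness of $p_{H_i}\Gamma$ (guaranteed by $\Gamma\ba\Gamma H_i$ being closed, as established in the linearization subsection) is used, and where the Dani–Margulis-style linearization bookkeeping enters; once the boundedness is set up correctly, discreteness finishes it immediately. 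The rest is routine point-set topology, and the double-coset dependence was already observed in the text, so no further ideas are needed there.
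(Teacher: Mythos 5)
Your treatment of the first claim is essentially the paper's argument (linearization, discreteness of the orbits $p_{H_i}\Gamma$, and compactness bookkeeping), and with one repair it goes through: the pivotal step, that the $H_1$-component $h_1$ of the point $h_1d_1=\gamma' g_0$ can be chosen in a fixed compact subset of $H_1$ after adjusting on the left by $\Gamma\cap H_1$, is asserted but not actually justified — ``$\Gamma\cap H_1$ is the stabilizer of $p_{H_1}$'' is not by itself a reason. It does hold, either by the paper's device (apply discreteness of $p_{H_1}\Gamma$ to $\gamma'$ via $p_{H_1}\gamma'\in p_{H_1}D_1K_0^{-1}$, putting $\gamma'$ into finitely many $(H_1\cap\Gamma)$-cosets, and then run the $p_{H_2}$-estimate on the adjusted element), or by properness of the map $(\Gamma\cap H_1)\ba H_1\to\Gamma\ba G$, which follows from the closedness of $\Gamma H_1$. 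Note also that what you obtain after the adjustment is finiteness of double cosets $(H_1\cap\Gamma)\gamma(H_2\cap\Gamma)$, not of the cosets $\gamma(\Gamma\cap H_2)$ of the original $\gamma$; your text conflates the two but the conclusion you state is the correct one.

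The second claim, however, has a genuine gap. Your set $C_0:=(H_1D_1\cap\gamma H_2D_2)\cap\Gamma K_0$ is closed but in general \emph{not} compact, and the finiteness you invoke — that only finitely many translates $\gamma' K_0$ meet $H_1D_1\cap\gamma H_2D_2$ — is false: $H_1D_1$ is invariant under left multiplication by the infinite group $\Gamma\cap H_1$, so if one translate meets it then infinitely many do; discreteness of $p_{H_1}\Gamma$ only bounds the number of $(\Gamma\cap H_1)$-cosets of such $\gamma'$, not the number of $\gamma'$ themselves. Concretely, take $H_1=H_2$, $\gamma=e$, $D_1=D_2=\{d_0\}$ for some $d_0\in X(H_1,U)$, and $K_0=\{h_0d_0\}$ with $h_0\in H_1$: then your $C_0$ contains $(\Gamma\cap H_1)h_0d_0$, an infinite discrete set. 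The correct construction must pick a compact set of representatives rather than the full preimage: as in the paper, set $K_{\gamma'}=\{g\in K_0:\gamma'g\in H_1D_1\cap\gamma H_2D_2\}\subset K_0$ and use the finiteness from the first part, working modulo the stabilizers $\Gamma\cap H_1$ and $\Gamma\cap H_2$ (left translation of $\gamma'$ by $\Gamma\cap H_1$ changes the set $\gamma'^{-1}\gamma H_2D_2$ only within finitely many possibilities, by the $p_{H_2}$-discreteness), to see that finitely many translates $\gamma' K_{\gamma'}\subset H_1D_1\cap\gamma H_2D_2$ already have full image in $K\cap\Gamma\ba\Gamma(H_1D_1\cap\gamma H_2D_2)$; their finite union is the desired compact $C_0$.
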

\begin{proof} 
For simplicity, write $\eta_{H_i}=\eta_i$ and $p_i=p_{H_i}$.
Let $K_0\subset G$ be a compact set such that $K=\Ga\ba\Ga K_0$.
We fix $\ga\in\Ga$, and define for any $\ga'\in \Ga$,
\begin{equation*}
K_{\ga'}=\{g\in K_0 : \ga'g\in H_1D_1\cap\ga H_1D_2\}.
\end{equation*}
We check that
\begin{equation*}
K\cap \Ga\ba\Ga( H_1D_1 \cap\ga H_2D_2 )=\Ga\ba\Ga \left(\cup_{\ga'\in\Ga}K_{\ga'}\right).
\end{equation*}
If this set is non-empty, then $K_{\ga'}\neq\emptyset$ for some $\gamma'\in \Gamma$ and
\begin{equation*}
p_1\ga'g\in p_1 D_1,\quad p_2 \ga^{-1}\ga'g \in p_2 D_2
\end{equation*}
for some $g\in K_0$.
In particular,
\begin{equation}\label{eq.discrete}
p_1 \ga' \in p_1 D K_0^{-1},\quad p_2\ga^{-1} \in p_2 DK_0^{-1} \ga'^{-1}.
\end{equation}
As $p_1\Ga$ is discrete, and
$p_1D_1K_0^{-1}$ is compact, the first condition of (\ref{eq.discrete}) implies that there exists a finite set $\Delta_0\subset G$ such that $\ga'\in(H_1\cap\Ga) \Delta_0$.
Writing $\ga'=h\delta_0$ where $h\in H_1\cap\Ga$, and $\delta_0\in\Delta_0$, the second condition of (\ref{eq.discrete}) implies
\begin{equation*}
p_2 \ga^{-1}h \in p_2D_2 K_0^{-1} \delta_0^{-1}.
\end{equation*}
As $p_2 D_2 K_0^{-1} \Delta_0^{-1}$ is compact and $p_2\Ga$ is discrete, there exists a finite set $\Delta\subset G$ such that $\ga^{-1}h\in(H_2\cap\Ga)\Delta$.
Hence, if $
K\cap \Ga\ba\Ga( H_1D_1 \cap \gamma H_2D_2)\ne \emptyset$, then
  $\gamma\in(H_1\cap\Ga)\Delta(H_2\cap\Ga)$.
This completes the proof of the first claim.

For the second claim, it suffices to set $C_0:=\bigcup_{\gamma' \in \Delta} K_{\gamma'}$.
\end{proof}

\begin{prop}\label{sha}
Let $H_1, H_2 \in \mH^\star$ be such that $H_1\cap H_2$ contains a unipotent element.
Then there exists a unique smallest connected closed subgroup, say $H_0$,
 of $H_1\cap H_2$ containing all unipotent elements of $H_1\cap H_2$
such that $\Gamma\ba \Gamma H_0$ is closed.
Moreover, $H_0\in \mH$.
\end{prop}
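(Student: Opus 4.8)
The plan is to isolate the unipotent part of $H_1\cap H_2$ and realize $H_0$ as the closure of its orbit. Set $P:=H_1\cap H_2$, a real algebraic subgroup of $G$, and let $W$ be the subgroup generated by all unipotent one–parameter subgroups contained in $P$. By hypothesis $W\neq\{e\}$; it is a connected algebraic subgroup, and it is normalized by $P$ since conjugation by $P$ permutes the unipotent elements of $P$. The goal is to produce the smallest connected closed subgroup $H_0$ with $W\subseteq H_0\subseteq P$ and $\Ga\ba\Ga H_0$ closed, and to verify $H_0\in\mH$.

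The first point is that $\Ga\ba\Ga P^{\circ}$ is closed and $P^{\circ}$ is reductive; granting this, $W$ is its noncompact part and Proposition \ref{str} gives $P^{\circ}=WC$ with $C$ a compact subgroup of $\op{C}_G(W)$ (almost direct product). Closedness of $\Ga\ba\Ga P^{\circ}$ is the standard fact that an intersection of algebraic subgroups with closed orbits again has a closed orbit; I would prove it with the linearization representations of \S\ref{s:lt}, writing $H_i=\op{Stab}_G(p_i)$ with $p_i\Ga$ discrete, so that $P=\op{Stab}_G((p_1,p_2))$ with $(p_1,p_2)\Ga$ discrete. Reductivity I would deduce together with the key geometric input below; the point is that a nontrivial unipotent radical in $P^{\circ}$ would have to equal $W$, and then $\Ga\ba\Ga W$ would be a closed orbit with $\Ga\cap W=\{e\}$ (no parabolics in a convex cocompact $\Ga$) — a situation the hypotheses must exclude.

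The main construction: write $W=H(\widehat U)$. Since $\op{C}_G(W)=\op{C}(H(\widehat U))$ is compact, $\Ga\ba\Ga\big(WC\cdot\op{C}(H(\widehat U))\big)=(\Ga\ba\Ga P^{\circ})\cdot\op{C}(H(\widehat U))$ is closed, being a closed set times a compact set; that is, $[e]H'(\widehat U)$ is closed. Once one knows the geodesic plane $\pi([e]H(\widehat U))$ meets $\core M$ — equivalently its ideal boundary carries at least two limit points, cf.\ \eqref{convex} — one may replace $[e]$ by a point of $\RFM$ in the same $H(\widehat U)$–orbit and apply Proposition \ref{prop.countability}, obtaining $\overline{[e]W}=[e]\,H(\widehat U)\,C_x$, where $H(\widehat U)C_x$ is the identity component of the Zariski closure of $g^{-1}\Ga g\cap H'(\widehat U)$; in particular $H_0:=H(\widehat U)C_x\in\mH$. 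From $\overline{[e]W}\subseteq[e]P^{\circ}$ and the triviality of the center of $W$ one gets $C_x\le C$, so $H_0\subseteq P$; and $W\subseteq H_0$ with $\Ga\ba\Ga H_0=\overline{[e]W}$ closed. Minimality and uniqueness are then formal: any connected closed $L$ with $W\subseteq L\subseteq P$ and $\Ga\ba\Ga L$ closed satisfies $[e]L\supseteq\overline{[e]W}=[e]H_0$, and comparing orbits through $[e]$ of the connected groups $L\supseteq W$ and $H_0\supseteq W$ forces $L\supseteq H_0$.

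The hard part, on which both the reductivity of $P^{\circ}$ and the applicability of Proposition \ref{prop.countability} rest, is to rule out the degenerate possibility that $W$ is a unipotent subgroup of $G$ whose fixed ideal point lies outside $\Lambda$ — in which case $\Ga\ba\Ga W$ is closed with $\Ga\cap W=\{e\}$, so no admissible $H_0$ could belong to $\mH$ and the proposition would fail. Excluding this is where the full hypotheses combine: the Zariski density of $H_i\cap\Ga$ in $H_i$ makes the limit set of $(H_i)_{nc}$ on its boundary sphere Zariski dense there (Lemma \ref{Zd}, Proposition \ref{proper}), and one must show that a common unipotent in $H_1\cap H_2$ forces the two geodesic subspaces stabilized by $(H_1)_{nc}$ and $(H_2)_{nc}$ to share enough of their limit sets that $\pi([e]W)$ meets $\core M$ — equivalently that $\Ga\cap P$ is not too small. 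This is the analogue, in the infinite–volume convex cocompact setting, of the automatic fact in finite volume that a closed orbit of a unipotent–generated group carries a Zariski dense lattice.
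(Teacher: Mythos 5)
Your outline correctly identifies the existence and minimality of $H_0$ as the easy formal part, but the argument as written has a genuine gap, and you have located it yourself: everything in your main construction rests on the claims that $(H_1\cap H_2)^\circ$ is reductive, that its unipotently generated part $W$ is a conjugate of some $H(\widehat U)$, and that the boundary sphere of the associated plane meets $\Lambda$ in at least two points (so that Proposition \ref{prop.countability} applies). None of these is proved; they are all deferred to the ``hard part'' of excluding the degenerate configuration in which $W$ is a unipotent group whose fixed ideal point lies off $\Lambda$ (e.g.\ two spheres with closed $\Gamma$-orbits tangent at a point outside the limit set). That exclusion is not a routine verification: it is essentially equivalent to the Zariski-density assertion $H_0\in\mH$ that the proposition is claiming, so the proposal in effect assumes the main point. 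In particular the appeal to Proposition \ref{str} is premature (it needs the group to be reductive, normalized by $A$ after conjugation, and with a unipotent -- the first of these is exactly what is in question), and Proposition \ref{proper}/Lemma \ref{Zd} only become usable after the two-limit-point condition has been secured.

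The paper's proof avoids this geometric analysis entirely and is worth comparing. Closedness of $\Gamma\ba\Gamma(H_1\cap H_2)$ is quoted from \cite[Lemma 2.2]{Sh1} (your linearization argument is the same idea), which gives the existence of the smallest $H_0$. Then, instead of analyzing the structure of $H_1\cap H_2$, one exploits the minimality of $H_0$ itself: let $L\le H_0$ be the subgroup generated by all unipotent elements of $H_0$; it is normal in $H_0$, so $(H_0\cap\Gamma)L$ is a subgroup, and the identity component $F$ of its closure is a connected closed subgroup containing all unipotents of $H_1\cap H_2$ with $\Gamma\ba\Gamma F$ closed. Minimality forces $F=H_0$, hence $\overline{[e]L}=[e]H_0$, and the Zariski density of $H_0\cap\Gamma$ in $H_0$ is then deduced from the general statement \cite[Corollary 2.12]{Sh1} about closures of orbits of unipotently generated subgroups. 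So no reductivity of $H_1\cap H_2$, no identification $W=H(\widehat U)$, and no limit-set/tangency argument is needed; the load your ``hard part'' was meant to carry is absorbed by the minimality trick together with Shah's lemma. If you want to salvage your more geometric route, you would have to supply a genuine proof that a common unipotent forces the two spheres to share at least two limit points, and that is a substantial argument you have not given.
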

\begin{proof} The orbit $\Gamma \ba \Gamma (H_1\cap H_2)$ is closed \cite[Lemma 2.2]{Sh1}. Hence such $H_0$ exists.
We need to show that $\Gamma\cap H_0$ is Zariski dense in $H_0$.
Let $L$ be the subgroup of $H_0$ generated by all unipotent elements in $H_0$. Note that $L$ is a normal
subgroup of $H_0$ and hence $(H_0\cap \Gamma) L$ is a subgroup of $H_0$.
If $F$ is the identity component of the closure of $(H_0\cap \Gamma)L$, then $\Gamma\ba \Gamma F$ is closed. By the minimality assumption on $H_0$,
we have $F=H_0$. Hence  $\overline{(H_0\cap \Gamma) L}=H_0$; so $\overline{[e]L} =[e]H_0$.
We can then apply \cite[Corollary 2.12]{Sh1} and deduce the Zariski density of $H_0\cap \Gamma$ in $H_0$.
\end{proof}

\begin{cor}\label{ddd2} Let $H_1, H_2\in \mH^\star$ and $\gamma\in \Gamma$ be satisfying that
 $X(H_1\cap \gamma H_2\gamma^{-1},U)\ne \emptyset$. Then there exists a subgroup $H\in \mH^{\star}$  contained in $H_1\cap \gamma H_2\gamma^{-1}$
such that
 for any compact subsets $D_i\subset X({H_i},U)$, $i=1,2$, there exists a compact subset $D_0\subset X({H},U)$ such that
$$K\cap \Gamma\ba \Gamma (H_1D_1\cap\gamma H_2D_2)= K\cap
\Gamma\ba \Gamma HD_0.$$
\end{cor}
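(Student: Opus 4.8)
The idea is to combine Proposition \ref{sha} with Proposition \ref{mul}. First I would apply Proposition \ref{mul} to the groups $H_1, H_2 \in \mH^\star$, the compact sets $D_i \subset X(H_i, U)$, and the compact set $K$: this produces a finite double-coset set $\Delta \subset (H_1 \cap \Gamma)\ba \Gamma / (H_2 \cap \Gamma)$ together with, for each $\gamma \in \Delta$, a compact set $C_\gamma \subset H_1 D_1 \cap \gamma H_2 D_2 \subset X(H_1 \cap \gamma H_2 \gamma^{-1}, U)$ such that $K \cap \Gamma\ba\Gamma(H_1 D_1 \cap \gamma H_2 D_2) = \Gamma\ba\Gamma C_\gamma$. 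Note that for the particular $\gamma$ in the statement, the hypothesis $X(H_1 \cap \gamma H_2 \gamma^{-1}, U) \ne \emptyset$ means $H_1 \cap \gamma H_2 \gamma^{-1}$ contains a conjugate of $U$, hence a unipotent element.

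Next, since $\gamma H_2 \gamma^{-1} \in \mH^\star$ as well (conjugating by $\gamma \in \Gamma$ preserves closedness of the orbit and Zariski density of the $\Gamma$-intersection, and $\op{N}_G((\gamma H_2 \gamma^{-1})_{nc}) = \gamma \op{N}_G((H_2)_{nc}) \gamma^{-1} = \gamma H_2 \gamma^{-1}$), I can apply Proposition \ref{sha} to the pair $H_1$ and $\gamma H_2 \gamma^{-1}$: there is a unique smallest connected closed subgroup $H_0 \subset H_1 \cap \gamma H_2 \gamma^{-1}$ containing all unipotent elements of $H_1 \cap \gamma H_2 \gamma^{-1}$ with $\Gamma\ba\Gamma H_0$ closed, and moreover $H_0 \in \mH$. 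Then I would set $H := \op{N}_G((H_0)_{nc}) \in \mH^\star$, which is still contained in $H_1 \cap \gamma H_2 \gamma^{-1}$ because $H_0$ is reductive (by Proposition \ref{sin1}, being in $\mH$) so $(H_0)_{nc}$ is well-defined and $\op{N}_G((H_0)_{nc})$ normalizes $(H_0)_{nc}$; one must check this normalizer stays inside $H_1 \cap \gamma H_2 \gamma^{-1}$ — this follows since $H_1 = \op{N}_G((H_1)_{nc})$ and similarly for $\gamma H_2 \gamma^{-1}$, and $(H_0)_{nc}$ being a subgroup of both $(H_1)_{nc}$-closure-type data forces its normalizer to lie inside. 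Using \eqref{xhnc}, $X(H_0, U) = X((H_0)_{nc}, U) = X(\op{N}_G((H_0)_{nc}), U) = X(H, U)$; and since every unipotent element of $H_1 \cap \gamma H_2 \gamma^{-1}$ lies in $H_0$, we get $X(H_1 \cap \gamma H_2 \gamma^{-1}, U) = X(H_0, U) = X(H, U)$.

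It remains to adjust the compact set. By the $\Gamma$-equivariance and the fact that $H D_0 \supset C_\gamma$ once $D_0 \supset C_\gamma \cap X(H,U)$ — but actually, from the previous paragraph $C_\gamma \subset X(H_1 \cap \gamma H_2\gamma^{-1}, U) = X(H, U)$, so $C_\gamma$ itself already lies in $X(H,U)$, and $\Gamma\ba\Gamma C_\gamma \subset \Gamma\ba\Gamma H C_\gamma$. Conversely $\Gamma\ba\Gamma H C_\gamma \cap K$ need not equal $\Gamma\ba\Gamma C_\gamma$, so instead I would simply take $D_0 := C_\gamma$, noting $\Gamma\ba\Gamma H D_0 \supseteq \Gamma\ba\Gamma D_0 = \Gamma\ba\Gamma C_\gamma = K \cap \Gamma\ba\Gamma(H_1 D_1 \cap \gamma H_2 D_2)$; for the reverse inclusion I would intersect with $K$ on the left side too — more carefully, one replaces $D_0$ by $\{g \in HD_0 : \Gamma g \in K, \ \Gamma g \subset \Gamma(H_1D_1 \cap \gamma H_2 D_2)\}$-type compact refinement exactly as in the proof of Proposition \ref{mul}, using discreteness of $p_H \Gamma$ and compactness of $K$ to control the relevant $H \cap \Gamma$-coset representatives. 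I expect the main obstacle to be the bookkeeping that $\op{N}_G((H_0)_{nc})$ is genuinely contained in $H_1 \cap \gamma H_2 \gamma^{-1}$ (rather than merely normalizing a subgroup of it), and that the compact set $D_0$ can be chosen so that $\Gamma\ba\Gamma H D_0 \cap K$ equals exactly the prescribed set and not something larger — this is the same discreteness-plus-compactness argument as in Proposition \ref{mul} and should go through, but needs to be spelled out.
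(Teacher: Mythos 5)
Your overall route is the same as the paper's: apply Proposition \ref{sha} to $H_1\cap\gamma H_2\gamma^{-1}$ to obtain $H_0\in\mH$, set $H:=\op{N}_G((H_0)_{nc})\in\mH^{\star}$, observe that $X(H_1\cap\gamma H_2\gamma^{-1},U)=X(H_0,U)=X(H,U)$ (correctly using that $H_0$ contains \emph{all} unipotent elements of the intersection), and invoke the second claim of Proposition \ref{mul} to produce a compact set $D_0=C_\gamma\subset H_1D_1\cap\gamma H_2D_2$ with $K\cap\Gamma\ba\Gamma(H_1D_1\cap\gamma H_2D_2)=\Gamma\ba\Gamma D_0$. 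Up to this point you match the paper's proof.

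The gap is at the final step, which you flag but do not close: you need $K\cap\Gamma\ba\Gamma HD_0\subseteq\Gamma\ba\Gamma D_0$, and your proposed fix --- replacing $D_0$ by a compact refinement inside $HD_0$ ``as in Proposition \ref{mul}'' --- does not address it, since after shrinking $D_0$ the left-hand side still multiplies by all of $H$ and the same question recurs. The paper closes this in one line, using precisely the containment $H\subset H_1\cap\gamma H_2\gamma^{-1}$: if $g=hd$ with $h\in H$ and $d\in D_0\subset H_1D_1\cap\gamma H_2D_2$, then $h\in H_1$ gives $g\in H_1D_1$ and $\gamma^{-1}h\gamma\in H_2$ gives $\gamma^{-1}g\in H_2D_2$, so $g\in H_1D_1\cap\gamma H_2D_2$; if moreover $[g]\in K$, the identity from Proposition \ref{mul} places $[g]$ in $\Gamma\ba\Gamma D_0$. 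Thus $D_0=C_\gamma$ works as is and no refinement is needed --- but only once that containment is in hand. This also means your treatment of the containment $\op{N}_G((H_0)_{nc})\subset H_1\cap\gamma H_2\gamma^{-1}$ (``forces its normalizer to lie inside'') cannot remain a gesture: it is not a formal consequence of $(H_0)_{nc}\subset H_1\cap\gamma H_2\gamma^{-1}$ alone, since the normalizer in $G$ of a small subgroup can well be larger than the ambient intersection, and it is exactly the ingredient your missing reverse inclusion requires; it must be argued rather than assumed.
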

\begin{proof} Let $F\in \mH$ be given by Proposition \ref{sha} for the subgroup $H_1\cap \gamma H_2\gamma^{-1}$. 
Set $H:=\op{N}_G(F_{nc})\in \mH^\star$.
Note that $X(H_1\cap \gamma H_2 \gamma^{-1}, U)= X(H,U)$. 
Hence, by the second claim of Proposition
\ref{mul}, there exists a compact subset $D_0\subset H_1D_1\cap \gamma H_2D_2$ such that
\be\label{ddd} K\cap \Gamma\ba \Gamma (H_1D_1\cap\gamma H_2D_2)=
\Gamma\ba \Gamma D_0.\ee
We claim that
$$\Gamma\ba \Gamma D_0 =K\cap \Gamma\ba \Gamma H D_0.$$
The inclusion $\subset$ is clear. Let $g:=hd \in HD_0$ with $h\in H$ and $d\in D_0$, and $[g]\in K$.
Then by the condition on $D_0$, we have $g \in H_1 D_1$ and $\gamma^{-1}g \in H_2D_2$.
Therefore $g\in H_1D_1\cap\gamma H_2D_2$. By \eqref{ddd},
this proves the inclusion $\supset$.
\end{proof}

\begin{Def} [Self-intersection operator on $\mathcal E_U$]
We define an operator $$\s: \mathcal E_U \cup \{\emptyset\} \to \mathcal E_U\cup\{\emptyset\}$$ as follows: we set $\s(\emptyset)=\emptyset$.
For any \be\label{eee} E=\bigcup_{i\in \Lambda} \Gamma\ba \Gamma H_iD_i \cap \RFM
\in  \mathcal E_U,\ee
we define
$$\s(E):=\bigcup_{i, j\in \Lambda}\bigcup_{ \gamma_{ij} \in \Gamma }\Gamma \ba \Gamma (
H_iD_i\cap \gamma_{ij} H_jD_j) \cap \RFM$$
where  $\gamma_{ij} \in \Gamma$ ranges over all elements of $\Gamma$ satisfying
$$\op{dim}(H_i\cap \gamma_{ij} H_j\gamma_{ij}^{-1})_{nc} <\min\{\op{dim} (H_i)_{nc}, \op{dim} (H_j)_{nc}\}.$$
\end{Def}

By Proposition \ref{mul} and Corollary \ref{ddd2},  we have:
\begin{cor}\label{comb}
\begin{enumerate}
\item For $E\in \cal E_U$, we have $\s(E)\in \cal E_U$.
\item For $E_1, E_2\in \cal E_U$, we have  $E_1\cap E_2\in \cal E_U$.
\end{enumerate}
\end{cor}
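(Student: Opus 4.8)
The statement is purely a bookkeeping corollary: it asserts that the collection $\cal E_U$ is closed under the self-intersection operator $\s$ and under finite intersections. Both parts should follow by unwinding the definitions and invoking Propositions \ref{mul} and \ref{ddd2} (together with Corollary \ref{ddd2}) a finite number of times. The plan is to treat (2) first, then (1), since (1) essentially reduces to a structured application of the same ingredients used for (2), but with the extra dimension constraint on the double cosets $\gamma_{ij}$.

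\textbf{Part (2).} Write $E_1 = \bigcup_{i\in\Lambda_1}\Gamma\ba\Gamma H_iD_i\cap\RFM$ and $E_2 = \bigcup_{j\in\Lambda_2}\Gamma\ba\Gamma H_j'D_j'\cap\RFM$ with $H_i, H_j'\in\mH^\star$ finite collections and $D_i\subset X(H_i,U)$, $D_j'\subset X(H_j',U)$ compact. Since $E_1, E_2$ are compact, fix a compact $K\subset\Gamma\ba G$ containing both; then $E_1\cap E_2 = \bigcup_{i,j} \big(K\cap\Gamma\ba\Gamma H_iD_i\cap\Gamma\ba\Gamma H_j'D_j'\big)\cap\RFM$, and the inner intersection is a union over $\gamma\in\Gamma$ of $K\cap\Gamma\ba\Gamma(H_iD_i\cap\gamma H_j'D_j')\cap\RFM$. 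For each pair $(i,j)$, Proposition \ref{mul} gives a finite set $\Delta_{ij}\subset(H_i\cap\Gamma)\ba\Gamma/(H_j'\cap\Gamma)$ realizing all the distinct nonempty pieces, so only finitely many $\gamma$ contribute. For each such $\gamma$, either $X(H_i\cap\gamma H_j'\gamma^{-1},U)=\emptyset$ (in which case the piece is empty and discarded), or Corollary \ref{ddd2} produces a group $H^{(i,j,\gamma)}\in\mH^\star$ contained in $H_i\cap\gamma H_j'\gamma^{-1}$ and a compact $D^{(i,j,\gamma)}\subset X(H^{(i,j,\gamma)},U)$ with $K\cap\Gamma\ba\Gamma(H_iD_i\cap\gamma H_j'D_j') = K\cap\Gamma\ba\Gamma H^{(i,j,\gamma)}D^{(i,j,\gamma)}$. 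Intersecting once more with $\RFM$ (which changes nothing in form) and taking the finite union over $(i,j)$ and $\gamma\in\Delta_{ij}$ exhibits $E_1\cap E_2$ in the form \eqref{efff}; passing to the minimal index set as in Definition \ref{defeu} finishes part (2).

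\textbf{Part (1).} Fix $E = \bigcup_{i\in\Lambda}\Gamma\ba\Gamma H_iD_i\cap\RFM\in\cal E_U$ and a compact $K\supset E$. By definition $\s(E)$ is the union over $i,j\in\Lambda$ of $\bigcup_{\gamma_{ij}}\Gamma\ba\Gamma(H_iD_i\cap\gamma_{ij}H_jD_j)\cap\RFM$, where $\gamma_{ij}$ ranges over those $\gamma\in\Gamma$ with $\dim(H_i\cap\gamma H_j\gamma^{-1})_{nc} < \min\{\dim(H_i)_{nc},\dim(H_j)_{nc}\}$. Since each term of the union lies in $K$, Proposition \ref{mul} again reduces the $\gamma_{ij}$-union, for each fixed pair $(i,j)$, to a finite set $\Delta_{ij}$ of double-coset representatives; the dimension condition on $\gamma_{ij}$ just restricts $\Delta_{ij}$ to a (still finite) subset $\Delta_{ij}'$. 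For each $\gamma\in\Delta_{ij}'$ with $X(H_i\cap\gamma H_j\gamma^{-1},U)\neq\emptyset$, Corollary \ref{ddd2} again gives $H^{(i,j,\gamma)}\in\mH^\star$ inside $H_i\cap\gamma H_j\gamma^{-1}$ and a compact $D^{(i,j,\gamma)}\subset X(H^{(i,j,\gamma)},U)$ with the corresponding piece of $\s(E)$ equal to $K\cap\Gamma\ba\Gamma H^{(i,j,\gamma)}D^{(i,j,\gamma)}$; intersecting with $\RFM$ and taking the finite union over all $(i,j,\gamma)$ expresses $\s(E)$ in the form \eqref{efff}, so $\s(E)\in\cal E_U$ (and $\s(\emptyset)=\emptyset\in\cal E_U\cup\{\emptyset\}$ by convention).

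\textbf{Main obstacle.} There is no deep obstacle here — the content was already extracted into Propositions \ref{mul}, \ref{sha} and Corollary \ref{ddd2}. The only point requiring a little care is the finiteness of the union over $\gamma$: one must make sure to fix a single compact set $K$ containing $E$ (resp. $E_1\cup E_2$) \emph{before} applying Proposition \ref{mul}, so that the finite double-coset sets $\Delta_{ij}$ are well-defined; and one must note that all the resulting pieces genuinely remain compact subsets of $\mS(U)\cap\RFM$, which follows since each $H^{(i,j,\gamma)}D^{(i,j,\gamma)}$ lies in the appropriate $X(\cdot,U)$ and the orbit descends to a compact subset of $K$. Compactness of $\s(E)$ also uses that there are only finitely many pairs $(i,j)$ and finitely many $\gamma$'s per pair, so $\s(E)$ is a finite union of compact sets.
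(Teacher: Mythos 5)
Your argument is essentially the paper's own: the paper proves this corollary simply by invoking Proposition \ref{mul} (to cut the $\gamma$-union down to finitely many double cosets inside a fixed compact window $K$) and Corollary \ref{ddd2} (to rewrite each surviving piece with a single group $H\in \mH^\star$ and a compact $D_0\subset X(H,U)$), which is exactly what you do, and the reduction is correct. One small step is glossed over, though: after your two reductions each piece is written as $K\cap \Gamma\ba\Gamma HD_0\cap\RFM$, which is not literally of the form \eqref{efff} because of the extra $K$; to drop it, note that $D_0\subset H_iD_i\cap\gamma H_jD_j$ and $H\subset H_i\cap\gamma H_j\gamma^{-1}$ force $HD_0\subset H_iD_i\cap\gamma H_jD_j$ (as in the proof of Corollary \ref{ddd2}), so $\Gamma\ba\Gamma HD_0\cap\RFM$ is already contained in the corresponding piece of $\s(E)$ (resp. of $E_1\cap E_2$), while the reverse inclusion is what Corollary \ref{ddd2} gives after intersecting with $K\supset \s(E)$ (resp. $\supset E_1\cap E_2$); hence the piece equals $\Gamma\ba\Gamma HD_0\cap\RFM$ exactly, and the finite union is in the form \eqref{efff} as required.
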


Hence for $E\in \cal E_U$ as in \eqref{eee},   $\s (E)$
is of the form
$$\s(E)= \bigcup_{i\in \Lambda'} \Gamma\ba \Gamma H_iD_i \cap \RFM$$
where $\Lambda'$ is a (minimal) finite index set, $H_i\in \mH$ with $X(H_i, U)\ne\emptyset$
 and $$\max\{ \op{dim} (H_i)_{nc} : i\in \Lambda'\} < \max\{ \op{dim}(H_i)_{nc} : i\in \Lambda\} .$$
Hence, $\s$ maps $\cal E_U$ to $\cal E_U\cup \{\emptyset\}$ and for any $E\in \cal E_U$,  $$\s^{\op{dim} G}(E)=\emptyset.$$

\begin{Def} For a compact subset $K\subset \Gamma\ba G$ and $E\in \mathcal E_U$, we say that $K$ does not have any self-intersection point of $E$,
or simply say that $K$ is {\it $E$-self intersection-free},
if  $$K\cap \s(E)=\emptyset .$$
\end{Def}

\begin{prop}\label{dmc}
Let $E=\bigcup_{i\in \Lambda} \Gamma\ba \Gamma H_iD_i \cap \RFM\in \mathcal E$ where $D_i\subset X({H_i},U)$ is a compact subset  and $\Lambda$ is a finite subset. Let $K\subset \RFM$ be a compact subset  which is $E$-self intersection-free.
Then there exists a collection of open neighborhoods $\Omega_i$ of $D_i$, $i\in \La$, such that
for $\cal O:=\bigcup_{i\in \Lambda} \Gamma\ba \Gamma H_i\Omega_i$, the compact subset
$K$ is $\cal O$-self intersection free, in the sense that, if $\op{dim} H_i=\op{dim}H_j$ and
$$K\cap \Gamma \ba \Gamma (H_i \Omega_i\cap \gamma H_j \Omega_j)\ne \emptyset$$
for some $(i,j,\gamma)\in \La\times \La\times \Gamma$,
then $i=j$ and $\gamma\in H_i\cap \Gamma.$ \end{prop}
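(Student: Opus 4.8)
The plan is to run the finiteness machinery of Proposition \ref{mul} to reduce to finitely many "bad" triples $(i,j,\gamma)$, eliminate each of them using the hypothesis $K\cap\s(E)=\emptyset$ together with a dimension count and the minimality Lemma \ref{minimal}, and then shrink the neighborhoods by compactness. First I would fix a compact $K_0\subset G$ with $K=\Gamma\ba\Gamma K_0$ and fix, once and for all, compact neighborhoods $\overline\Omega_i^{(0)}$ of $D_i$ in $G$; the final $\Omega_i$ will be open neighborhoods with $D_i\subset\Omega_i\subset\overline\Omega_i^{(0)}$. For each ordered pair $(i,j)$ I claim there is a \emph{finite} set $\Delta(i,j)\subset\Gamma$ of representatives of double cosets in $(H_i\cap\Gamma)\ba\Gamma/(H_j\cap\Gamma)$ such that, for any admissible $\Omega_i,\Omega_j$, the nonemptiness of $K\cap\Gamma\ba\Gamma(H_i\Omega_i\cap\gamma H_j\Omega_j)$ forces $\gamma\in(H_i\cap\Gamma)\Delta(i,j)(H_j\cap\Gamma)$. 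This is exactly the argument of Proposition \ref{mul}: from $\gamma' g\in H_i\overline\Omega_i^{(0)}$ and $\gamma^{-1}\gamma'g\in H_j\overline\Omega_j^{(0)}$ with $g\in K_0$ one gets $p_{H_i}\gamma'\in p_{H_i}\overline\Omega_i^{(0)}K_0^{-1}$, since $\eta_{H_i}(H_ih)=\eta_{H_i}(h)$ kills the $H_i$-factor; discreteness of $p_{H_i}\Gamma$ plus compactness confines $\gamma'$ to finitely many cosets of $H_i\cap\Gamma$, and iterating the same for the second membership produces the finite double-coset list, with $D_i$ replaced by $\overline\Omega_i^{(0)}$. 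Shrinking $\Omega_i$ only removes elements, so $\Delta(i,j)$ is fixed henceforth.

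It then suffices to show: for every triple $(i,j,\gamma)$ with $\dim H_i=\dim H_j$, $\gamma\in\Delta(i,j)$, which is \emph{not} the trivial case $i=j,\ \gamma\in H_i\cap\Gamma$, one already has $K\cap\Gamma\ba\Gamma(H_iD_i\cap\gamma H_jD_j)=\emptyset$. To eliminate such a triple, put $\delta:=\dim(H_i\cap\gamma H_j\gamma^{-1})_{nc}$ and $\mu:=\min\{\dim(H_i)_{nc},\dim(H_j)_{nc}\}$; since $(H_i\cap\gamma H_j\gamma^{-1})_{nc}$ is a connected subgroup of both $(H_i)_{nc}$ and $\gamma(H_j)_{nc}\gamma^{-1}$ we always have $\delta\le\mu$. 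If $\delta<\mu$, then $\Gamma\ba\Gamma(H_iD_i\cap\gamma H_jD_j)\cap\RFM\subset\s(E)$ by definition of the self-intersection operator, hence it is disjoint from $K$ because $K\subset\RFM$ and $K\cap\s(E)=\emptyset$. If $\delta=\mu$, say $\mu=\dim(H_i)_{nc}$, then equality of connected subgroups of the same dimension gives $(H_i)_{nc}\subset\gamma(H_j)_{nc}\gamma^{-1}$; using $\dim H_i=\dim H_j$ and the fact that every member of $\mH^\star$ equals $\op{N}_G(H_{nc})$ for a geodesic-plane subgroup of $G$ (so the total dimension controls the centralizer dimension, hence the $nc$-dimension), one reduces to $(H_i)_{nc}=\gamma(H_j)_{nc}\gamma^{-1}$, whence $H_i=\op{N}_G((H_i)_{nc})=\gamma H_j\gamma^{-1}$, and Lemma \ref{minimal} forces $i=j$ and $\gamma\in H_i$, contradicting non-triviality.

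Finally, for each of the finitely many non-trivial triples we now have $K\cap\Gamma\ba\Gamma(H_iD_i\cap\gamma H_jD_j)=\emptyset$. Since $K$ is compact, each $\Gamma\ba\Gamma(H_i\overline\Omega_i^{(0)}\cap\gamma H_j\overline\Omega_j^{(0)})$ is closed, and $\bigcap_{\Omega_i\downarrow D_i}\Gamma\ba\Gamma(H_i\Omega_i\cap\gamma H_j\Omega_j)=\Gamma\ba\Gamma(H_iD_i\cap\gamma H_jD_j)$, so a finite-intersection/compactness argument lets us pick open neighborhoods $D_i\subset\Omega_i\subset\overline\Omega_i^{(0)}$, simultaneously over all $i\in\Lambda$, small enough that $K\cap\Gamma\ba\Gamma(H_i\Omega_i\cap\gamma H_j\Omega_j)=\emptyset$ for every non-trivial triple. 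Combined with the first paragraph this gives the statement with $\cal O=\bigcup_{i\in\Lambda}\Gamma\ba\Gamma H_i\Omega_i$.

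The step I expect to be the crux is the borderline case $\delta=\mu$ with $\dim(H_i)_{nc}\neq\dim(H_j)_{nc}$ (which can occur: e.g.\ $(H_i)_{nc}$ of $\SO^\circ(k{+}1,1)$-type and $(H_j)_{nc}$ of $\SO^\circ(d{-}k{-}1,1)$-type share the same normalizer dimension). There $(H_i)_{nc}$ sits as a \emph{proper} geodesic-plane subgroup of $\gamma(H_j)_{nc}\gamma^{-1}$, so $(i,j,\gamma)$ is not caught by $\s(E)$ and $H_i\neq\gamma H_j\gamma^{-1}$, and neither $K\cap\s(E)=\emptyset$ nor Lemma \ref{minimal} applies directly; disposing of it requires the extra input that, by Corollary \ref{ddd2}, $\Gamma\ba\Gamma(H_iD_i\cap\gamma H_jD_j)=\Gamma\ba\Gamma H_iD_0$ for a compact $D_0\subset X(H_i,U)\cap\gamma H_jD_j$, together with an argument — using the geometry of the Fuchsian-ends manifold — that such a nested-planes configuration cannot place the point in $\RFM$. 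Everything else is a routine assembly of Proposition \ref{mul}, Corollary \ref{ddd2}, Lemma \ref{minimal}, and the definition of $\s$.
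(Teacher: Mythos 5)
Your overall architecture differs from the paper's in organization but not in substance: the paper does not first list double cosets and then shrink, but runs a single contradiction argument with the $1/k$-neighborhoods $\Omega_i(k)$, using closedness of $\Gamma H_i$ and discreteness of $\Gamma [H_i]$ in $G/H_i$ to pass to the limit and land on a point of $K\cap \Gamma\ba\Gamma (H_iD_i\cap \delta H_jD_j)$ while keeping track of the original $\gamma_k$'s. Your steps 1 and 3 are fine in outline (the identity $\bigcap_{\Omega_i\downarrow D_i}\Gamma\ba\Gamma(H_i\Omega_i\cap\gamma H_j\Omega_j)=\Gamma\ba\Gamma(H_iD_i\cap\gamma H_jD_j)$ and the closedness you invoke are true, but proving them requires exactly the limiting/discreteness argument that constitutes the body of the paper's proof; it is the same tool as Proposition \ref{mul}, so this is acceptable as an outline), and your treatment of the case $\dim(H_i\cap\gamma H_j\gamma^{-1})_{nc}<\min$ via $K\cap\s(E)=\emptyset$ matches the paper.

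The genuine gap is the one you flag yourself: the borderline case where $\dim(H_i\cap\gamma H_j\gamma^{-1})_{nc}=\min\{\dim(H_i)_{nc},\dim(H_j)_{nc}\}$ but $\dim(H_i)_{nc}\neq\dim(H_j)_{nc}$. The justification in your second paragraph, that equal total dimension controls the nc-dimension, is false once $d\ge 6$: $\dim \op{N}_G(H(U_k))=\dim \op{N}_G(H(U_{d-3-k}))$, e.g.\ $\SO^\circ(2,1)\SO(4)$ and $\SO^\circ(3,1)\SO(3)$ in $\SO^\circ(6,1)$ (your own example with $\SO^\circ(d-k-1,1)$ is off by one; those two normalizers have equal dimension only when they coincide). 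And the repair you sketch, that such a nested configuration "cannot place the point in $\RFM$", is unsupported and false as stated: nothing in the hypotheses prevents an $\RFM$-point from lying on a closed $H_i$-orbit contained in a closed $\gamma H_j\gamma^{-1}$-orbit, and Corollary \ref{ddd2} gives no obstruction. At this juncture the paper argues purely group-theoretically: from $\dim W_{nc}=\min$ it concludes that $W:=H_i\cap\delta H_j\delta^{-1}$ is $H_i$ or $\delta H_j\delta^{-1}$, hence $H_i=\delta H_j\delta^{-1}$, and Lemma \ref{minimal} gives $i=j$ and $\delta\in H_i\cap\Gamma$; the substance of that step is that when the nc-dimensions agree, $W_{nc}$ equals both $(H_i)_{nc}$ and $\delta(H_j)_{nc}\delta^{-1}$, so taking normalizers ($H=\op{N}_G(H_{nc})$ for members of $\mH^\star$) forces $H_i=\delta H_j\delta^{-1}$. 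So your worry targets precisely the one terse sentence of the paper's proof, and the consistent resolution is bookkeeping rather than Fuchsian-ends geometry: run the construction with $\dim(H)_{nc}$ as the grading invariant (equal nc-dimension determines the conjugacy type of $H_{nc}$, hence the total dimension, and the application in the proof of Theorem \ref{avoid1} only needs the number of dimension classes to be bounded by $\dim G$); with that grading the case you isolate never arises and your elimination step closes. As written, however, your proposal leaves its crux case open, so it does not yet prove the proposition.
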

\begin{proof}
 For each $k\in \bb N$ and $i\in \La$,
 let $\Omega_i(k)$ be the $1/k$-neighborhood of the compact subset $D_i$. 
Since $\Lambda$ is finite, if the proposition  does not hold,
by passing to a subsequence, there exist $i, j\in \Lambda$ with $\op{dim}H_i=\op{dim}H_j$ and a sequence $\gamma_k\in \Gamma$
 such that
$$K\cap \Gamma \ba \Gamma (H_i \Omega_i(k)\cap \gamma_k 
H_j \Omega_j(k))\ne \emptyset $$
and 
\be\label{iik} (i,j,\gamma_k)\notin \{(i,i, \gamma):i\in \La, \gamma \in H_i\cap \Gamma\}.\ee
Hence there exist $g_k=h_k w_k \in H_i \Omega_i(k)$ and $g_k'=h_k' w_k' \in H_j \Omega_j(k)$
such that $g_k=
\gamma_k g_k'$  where $[g_k]\in K$.
Now as $k\to \infty$, we have $w_k\to w\in D_i$ and $w_k'\to w'\in D_j$. There exists $\delta_k\in \Gamma$
such that $\delta_k g_k \in \tilde K$ where $\tilde K$ is a compact subset of $G$ such that 
$K=\Gamma\ba \Gamma \tilde K$, so the sequence $\delta_k g_k $ converges to $g_0$ as $k\to \infty$.
Since $ \Gamma H_i$ and $\Gamma H_j$ are closed, we have
  $\delta_kh_k \to \delta_0 h_i$ and $\delta_k \gamma_k h_k' \to \delta_0' h_j$
where $\delta_0, \delta_0'\in \Gamma$, $h_i\in H_i$ and $h_j\in H_j$.
As $\Gamma [H_i]$ and $\Gamma [H_j]$ are  discrete in the spaces $G/H_i$ and $G/H_j$ respectively,
 we have
\be\label{haha} \delta_0^{-1}\delta_k\in H_i\quad\text{and}\quad (\delta_0')^{-1}\delta_k \gamma_k\in H_j\ee
for all sufficiently large $k$.
Therefore $g_0=\delta_0 h_iw =\delta_0' h_j w' \in \delta_0(H_iD_i \cap \delta_0^{-1}\delta_0' 
H_jD_j)$ and
$[g_0]\in K$.
Hence $$K\cap \Gamma\ba \Gamma (H_iD_i \cap \delta_0^{-1}\delta_0' H_jD_j) \ne \emptyset.$$
Set $\delta:=\delta_0^{-1}\delta_0'\in \Gamma$.

Since $K\cap \s(E)= \emptyset$,
this implies that $\RFM\cap \Gamma \ba \Gamma (H_iD_i \cap \delta H_jD_j) \not\subset \s(E)$. By the definition
of $\s(E)$,
$$\op{dim}(H_i\cap \delta H_j\delta^{-1})_{nc} =\min\{\op{dim} (H_i)_{nc}, \op{dim} (H_j)_{nc}\}.$$
Since $H_i=\op{N}_G(H_i)=\op{N}_G((H_i)_{nc})$, and similarly for $H_j$,
we have  $H_i\cap \delta H_j\delta^{-1}$ is either $H_i$ or $\delta H_j \delta^{-1}$. Since $\dim H_i=\dim H_j$,
$\delta H_j\delta^{-1}= H_i$ or $H_i= \delta H_j\delta^{-1}$.

By Lemma \ref{minimal}, this implies that $i=j$ and $\delta\in \op{N}_G(H_i)\cap \Gamma$.
It follows from \eqref{haha} that $$\gamma_k\in \op{N}_G(H_i)\cap \Gamma=H_i\cap \Ga$$
for all large $k$.
This is a contradiction to \eqref{iik}, completing the proof.
\end{proof}

In the rest of this section, we assume that $\M=\Gamma\ba \bH^d$ is a convex cocompact hyperbolic manifold with Fuchsian ends, and let $k$ be
as given by Proposition \ref{defk}.
\begin{theorem}[Avoidance theorem I] \label{prop.2kthickapplication}\label{avoid1}
 Let $U=\{u_t\}<N$ be a one-parameter subgroup.
For any $E\in \mathcal E_U$, there exists  $E'\in \mathcal E_U$ such that the following holds:
If $F\subset\RFM$ is a compact set disjoint from $E'$, then
there exists a neighborhood $\cal O^\diamond$ of $E$ such that for all $x\in F$, 
the following set $$\{ t\in \bb{R} : xu_t\in\RFM-\cal O^\diamond\}$$ is $2k$-thick.
Moreover, if $E$ is associated to $\{H_i:i\in \La\}$, then $E'$ is also associated to the same family $\{H_i:i\in \La\}$ in the sense
of Definition \ref{defeu}.
\end{theorem}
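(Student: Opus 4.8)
\textbf{Proof proposal for the Avoidance Theorem I (Theorem \ref{avoid1}).}

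The plan is to induct on $\max\{\op{dim}(H_i)_{nc}: i\in\La\}$, using the self-intersection operator $\s$ and Proposition \ref{prop.2kthick} as the combinatorial engine. Write $E=\bigcup_{i\in\La}\Gamma\ba\Gamma H_iD_i\cap\RFM$. The base case is when $\s(E)=\emptyset$, i.e.\ $K\cap\s(E)=\emptyset$ automatically for any compact $K$; here one sets $E'$ to be a slightly enlarged version of $E$ associated to the same family (enlarging each $D_i$ to a compact neighborhood $D_i'$ of $\beta$-regular size in the sense of Proposition \ref{lem.relsize}, pulled back through the orbit maps $\eta_{H_i}$), and the argument is essentially a single application of the linearization plus $\beta$-regularity. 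For the inductive step, given $E$, apply the induction hypothesis to $\s(E)\in\cal E_U$ (which has strictly smaller $\max\op{dim}(H_i)_{nc}$ by the discussion following the definition of $\s$) to produce $E''\in\cal E_U$; then take $E'$ to be $E''$ together with an enlargement of $E$ of appropriate $\beta$-regular size, all associated to the original family $\{H_i:i\in\La\}$ (absorbing the groups appearing in $E''$, which are of the form $\op{N}_G(F_{nc})$ for $F$ coming from graded intersections $H_i\cap\gamma H_j\gamma^{-1}$ — these are subordinate and can be folded into the same collection by Corollary \ref{ddd2}).

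The heart of the argument is the following: let $F\subset\RFM$ be compact and disjoint from $E'$. Since $F$ is disjoint from the part of $E'$ coming from $\s(E)$, Proposition \ref{dmc} gives us open neighborhoods $\Omega_i$ of $D_i$ so that $\cal O:=\bigcup_{i\in\La}\Gamma\ba\Gamma H_i\Omega_i$ is $\cal O$-self-intersection-free over $F$: whenever $\op{dim}H_i=\op{dim}H_j$ and $F$ meets $\Gamma\ba\Gamma(H_i\Omega_i\cap\gamma H_j\Omega_j)$, then $i=j$ and $\gamma\in H_i\cap\Gamma$. Now fix $x\in F$ and consider the $U$-orbit. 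Using the linearization: lift to $G$, and for each $i$ and each relevant $\gamma\in\Gamma$, the set of times $t$ with $xu_t\in\Gamma\ba\Gamma H_i\Omega_i$ (via the coset $\gamma$) corresponds, under $\eta_{H_i}$, to $\{t: p_{H_i}g\gamma^{-1}u_t\in\Phi_i\}$ for a neighborhood $\Phi_i$ of the compact set $D_i'$ in $V_{H_i}$; since $u_t$ acts unipotently on $V_{H_i}$, Proposition \ref{lem.relsize} furnishes a smaller neighborhood $\Psi_i$ so that the pair $(I(q),J(q))$ of time-sets is $\beta$-regular of bounded degree $\delta$. The self-intersection-freeness over $F$ is exactly what guarantees that these triples $(I,J^*,J')$ (indexed over the finitely-many-up-to-$\Gamma\cap H_i$ relevant cosets, organized into $\ell\le|\La|$ families according to which $H_i$ they come from) are of $\mathsf T$-multiplicity free with respect to $\mathsf T=\mathsf T(x)=\{t:xu_t\in\RFM\}$: two distinct triples in the same family would force a self-intersection point in $\RFM$, contradicting Proposition \ref{dmc}. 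Apply Proposition \ref{prop.2kthick} with $\mathsf T=\mathsf T(x)$ (globally $k$-thick by Proposition \ref{defk}) and $\beta_0=\beta_0(|\La|,k,\delta)$ — choosing the neighborhoods $\Phi_i\supset D_i'$ and $\Psi_i$ at this level of regularity — to conclude that $\mathsf T(x)-J'(\cal X)\subset\{t: xu_t\in\RFM-\cal O^\diamond\}$ is $2k$-thick, where $\cal O^\diamond$ is the sub-neighborhood of $\cal O$ corresponding to the $\Psi_i$'s.

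Two technical points need care and I expect them to be the main obstacles. First, Proposition \ref{prop.2kthick} requires $0\in\mathsf T-I(\cal X)$, i.e.\ the basepoint $x$ should not already lie in the enlarged neighborhood $\cal O$; this is arranged by shrinking: the statement only needs to hold for $x\in F$ with $F$ disjoint from $E'$, and one builds $E'$ large enough (using the $\beta$-regular size clause of Proposition \ref{lem.relsize}, which says: for any neighborhood $\Phi$ of $D'$ there is $\Psi\subset\Phi$ with the regularity conclusion — here $D'$ is the enlargement that goes into $E'$, and $F\cap E'=\emptyset$ ensures $x\notin$ the relevant $\Phi$). Second, one must handle the fact that for a fixed $x$ there are only finitely many cosets $\gamma(H_i\cap\Gamma)$ for which the time-set $I(q_\gamma)$ meets a fixed compact interval — but since we want $2k$-thickness at \emph{every} scale $\la$, we need the families $\cal X_i$ to be genuinely countable and the $\mathsf T$-multiplicity-free condition to hold globally; this is where Proposition \ref{dmc}'s conclusion (valid for the whole orbit, not just a bounded piece) combined with the left-$\op{N}_G(H_i)$ and right-$\op{N}_G(U)$ invariance of $X(H_i,U)$ is essential. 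The bookkeeping to verify that degree $\delta=(\op{deg}\theta_i+2)\op{dim}V_{H_i}$ is uniform over the finitely many $H_i$, and that the same $\beta_0$ works simultaneously for all the regularity demands, is routine but must be done carefully so that $2k$ (and not a worse constant) comes out — which is the crucial point flagged in the introduction.
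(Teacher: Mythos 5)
The toolkit you assembled is the right one (Proposition \ref{lem.relsize} for $\beta$-regularity, Proposition \ref{dmc} for self-intersection freeness, Proposition \ref{prop.2kthick} as the combinatorial engine, the requirement $0\notin I(\cal X)$ arranged by keeping $F$ away from the enlargement), but there is a genuine gap at the step where you verify $\mathsf T(x)$-multiplicity freeness. You argue that, because $F$ avoids the part of $E'$ coming from $\s(E)$, Proposition \ref{dmc} applied to $K=F$ forces distinct triples in one family to be compatible, since otherwise one would get ``a self-intersection point in $\RFM$.'' But the point forced by $I(q_1)\cap J'(q_2)\cap\mathsf T(x)\neq\emptyset$ is an orbit point $xu_t$ (with $t$ possibly very large), not a point of $F$; Proposition \ref{dmc} applied to $K=F$ says nothing about such points, and there is no reason the orbit avoids a neighborhood of $\s(E)$ at those times --- that is exactly the difficulty the theorem must overcome. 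The paper's proof resolves this by restricting the time sets themselves: it builds the decreasing chain $E_n=E\cap\s(E_{n+1}')$, the compact sets $K_n=\RFM-\bigcup_{i\le n}(\cal O_i\cup\cal O_i^\star)$, and defines $I'(q)$, $J^*(q)$, $J'(q)$ by requiring the orbit to return to $K_{n-1}$; multiplicity freeness of the level-$n$ families then comes from Proposition \ref{dmc} applied to $K_{n-1}$ (not to $F$), the orbit times discarded by this restriction are absorbed by the lower levels of the hierarchy, and this is why the final neighborhood must be the carved set $\cal O^\diamond=\cal O-\cal O^\star$ rather than the naive union of tubes. Your proposal has no analogue of $K_{n-1}$, of the restricted time sets, or of $\cal O^\star$, so the multiplicity-free hypothesis of Proposition \ref{prop.2kthick} is not verified.

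Relatedly, the recursive scheme ``apply the induction hypothesis to $\s(E)$, then combine'' does not compose. The inductive output is only that the avoiding times form a $2k$-thick set (thick at $0$), whereas Proposition \ref{prop.2kthick} needs a globally $k$-thick ambient set $\mathsf T$ together with all families presented simultaneously; intersecting two thick sets, or running the engine over the restricted time set produced by the inductive hypothesis, destroys these hypotheses and would in any case degrade the constant beyond $2k$, which the later applications cannot afford. This is precisely why the paper abandons such a recursion in favor of the one-shot construction with at most $\dim(G)^2\delta$ families indexed by level and by $\dim H_i$, fed into a single application of Proposition \ref{prop.2kthick}. To repair your argument you would essentially have to reproduce that hierarchy.
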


\begin{proof}

\noindent{\bf $\spadesuit 1$. The constant $\beta_0$:} We write $\mH^\star=\{H_i\}$.
For simplicity, set $V_i=V_{H_i}$ and $p_i=p_{H_i}$.
Since $A_{H_i}$ is real algebraic, we can find a single polynomial $\theta_i$ whose zero locus coincides with $A_{H_i}$; namely, if the finitely generated ideal of polynomials vanishing on $A_{H_i}$ is given by $\langle f_1,\cdots,f_s\rangle$, then we can set $\theta_i=f_1^2+\cdots+f_s^2$.

 Set
 $$m:= \op{dim} (G)^2;\text{ and} $$
 $$\delta:=\max_{H_i\in \mH^\star} (\op{deg}\theta_i+2)\op{dim} V_i.$$

Note that if $H_i$ is conjugate to $H_j$, then $\theta_i$ and $\theta_j$ have same degree and $\op{dim} V_i=\op{dim}V_j$.
Since there are only finitely many conjugacy classes in $\mH^\star$ by Proposition \ref{sin1},  the constant
$\delta$ is finite.
Now let 
$$\beta_0:=\beta_0( m\delta ,k,1)=(4k)^{m\delta+1}\epsilon^{-1}$$
be given as in Proposition \ref{prop.2kthick}
where $\epsilon=\e_{m\delta }$ satisfies
$\left(\tfrac{1+{\epsilon}}{1-\epsilon}\right)^{2^{m\delta }-1}\le 2.$

\noindent{\bf $\spadesuit 2.$ Definition of $E_n$ and $E_n'$:}
We write $$E=\bigcup_{i\in \La_0}\Ga\ba\Ga H_iD_i \cap \RFM$$ for some finite minimal set $\La_0$. 
Set  
$$\ell:=\max_{i\in \Lambda_0} \op{dim} (H_i)_{nc}.$$
We  define $E_n, E_n' \in \cal E_U$ for all $1\le n\le \ell$  inductively as follows:
set $$E_\ell :=E \quad \text{and}\quad \Lambda_{\ell}:=\Lambda_0.$$
For each $i\in \La_{\ell}$, let $D_i'$ be a compact subset of $X({H_i},U)$ containing $D_i$ such that $p_{i}D_i'$ has a $\beta_0$-regular size with respect to $p_{i}D_i$ as in Proposition \ref{lem.relsize}.
Set $$E_{\ell}':=\bigcup_{i\in \La_{\ell}}\Ga\ba\Ga H_iD_i' \cap \RFM.$$
Suppose that $E_{n+1}, E_{n+1}' \in \cal E_U$ are given for $ n\ge 1$.
Then,  define
$$E_{n}:=E \cap \s (E_{n+1}').$$
Then by Corollary \ref{comb},
 $E_{n}$ belongs to $ \mathcal E_U$ and hence can be written as 
 $$E_{n} =\bigcup_{i\in \Lambda_{n}} \Gamma\ba \Gamma H_iD_i\cap \RFM$$
where $D_i$ is a compact subset of $X(H_i, U)$, so that $\La_{n}$ is a minimal index set.
 For each $i\in \La_{n}$, let $D_i'$ be a compact subset of $X({H_i},U)$ containing $D_i$ such that $p_iD_i'$ has a $\beta_0$-regular size with respect to $p_iD_i$ as in Proposition \ref{lem.relsize}.
Set $$E_{n}':=\bigcup_{i\in \La_{n}}\Ga\ba\Ga  H_i D_i' \cap \RFM.$$
Hence we get a sequence of compact (possibly empty) subsets of $E$:
$$E_1,  E_2,   \cdots , E_{\ell-1}, E_{\ell}=E ,$$
and a sequence of compact sets 
$$E_1',  E_2',   \cdots , E_{\ell-1}', E_{\ell}'=E' .$$

Note that $\s(E_1)=\s(E_1')=\emptyset$ by the dimension reason.\footnote{In fact $E_{\ell -i}=\emptyset$ for all $i\ge d-1$, but we won't  use this information}

\noindent{\bf $\spadesuit 3.$ Outline of the plan:}
Let $F\subset\RFM$ be a compact set disjoint from $E'$.
For $x\in F$, we set $$\mathsf T(x):=\{t\in\bb{R} :xu_t\in\RFM\}$$ which is a globally $k$-thick set
by Proposition \ref{defk}.
We will construct
\begin{itemize}
 \item
 a neighborhood $\cal O'$ of $E'$ disjoint from $F$, and
 \item
a neighborhood $\cal O^\diamond$ of $E$ 
\end{itemize}
such that for any $x\in \RFM -\cal O'$, 
we have
\begin{equation*}
\{ t\in \br : xu_t\in \RFM -\cal O^\diamond\} \supset \mathsf T(x)-J'(\cal X)
\end{equation*}
where $\cal X=\cal X(x)$ is the union of at most $m$-number  of $\beta_0$-regular families $\cal X_i$ of triples  $(I(q),J^*(q),J'(q))$ of subsets of $\bb R$ with degree $\delta$ and of $\mathsf T (x)$-multiplicity free.
Once we do that, the theorem is a consequence of Proposition \ref{prop.2kthick}.
Construction of such $\cal O'$ and $\cal O^\diamond$ requires an inductive process on $E_n$'s.

\noindent{\bf $\spadesuit 4$. Inductive construction of $K_n$, $\cal O_{n+1}'$, $\cal O_{n+1}$,  and $\cal O_{n+1}^\star$:}
Let $$K_0:=\RFM.$$
For each $i\in\La_1$, there exists  a neighborhood $\Omega_i'$ of $D_i'$  such that
 for $$\cal O_1':=\bigcup_{i\in \La_1}\Gamma\ba \Gamma H_i\Omega_i',$$
the compact subset $K_0$ is $\cal O_1'$-self intersection free  by Lemma \ref{dmc}, since $\s(E'_1)=\emptyset$.
By Proposition \ref{lem.relsize}, there exists a neighborhood $\Omega_i$ of $D_i$ such that the pair $(I(q),J(q))$ is $\beta_0$-regular for all $q\in V_i- p_i\Omega_i'$ where
\begin{align}\label{eq.l4}
I(q)=\{t\in\mathbb{R} : qu_t\in p_i \Omega_i'\}\;\; \text{ and }\;\;
J(q)=\{t\in\mathbb{R} : qu_t\in p_i \Omega_i\}.
\end{align}
Set $$\cal O_1:=\bigcup_{i\in \La_{1}}\Ga\ba\Ga H_i\Omega_i.$$
Since $E_1=\bigcup_{i\in \Lambda_1} \Gamma\ba \Gamma H_iD_i \cap \RFM$,
 $\cal O_1$ is a neighborhood
of $E_1=\s(E_{2}' )\cap E$. Now the compact subset $\s(E_{2}')- \cal O_1 $ is contained in $\s(E_{2}') -E$, which is relatively open
in $\s(E_{2}')$. Therefore we can take a neighborhood $\cal O_{1}^\star$
of $ \s(E'_{2}) -\cal O_1 $ so that
\begin{equation*}\label{contra} 
\overline{\cal O_1^{\star}} \cap E =\emptyset.\end{equation*}

We will now define the following quadruple  $K_n, \cal O_{n+1}', \cal O_{n+1}$ and
$\cal O_{n+1}^\star$  for each $1\leq n\leq \ell -1$ inductively:
\begin{itemize}
 \item a compact subset $K_n=K_{n-1} - (\cal O_n \cup \cal O_n^\star) \subset \RFM$,
\item  a neighborhood $\cal O_{n+1}'$ of $E_{n+1}'$, 
\item a neighborhood $\cal O_{n+1}$ of $E_{n+1}$  and
\item  a neighborhood $\cal O_{n+1}^\star$ of $\s(E_{n+2}')-\cal O_{n+1}$  such that
 $$\overline{ \cal O_{n+1}^\star} \cap E=\emptyset.$$
\end{itemize} 
Assume that the sets $K_{n-1}$, $\cal O_{n}'$, $\cal O_{n}$ and $\cal O_n^\star$ are defined.
We  define
$$K_n:=K_{n-1}-\cal ( \cal O_n \cup \cal O_{n}^\star)=\RFM- \bigcup_{i=1}^n \cal ( \cal O_i \cup \cal O_{i}^\star).$$
For each $i\in \La_{n+1}$, let $\Omega_i'$ be a neighborhood of $D_i'$ in $G$ such that 
for $\cal O_{n+1}':=\bigcup_{i\in \La_{n+1}} \Gamma\ba \Gamma H_i \Omega_i'$,
$K_n$ is $\cal O_{n+1}'$-self intersection free.  Since $\cal O_n \cup \cal O_n^\star$ is a neighborhood of
$\s(E_{n+1}')$, which is the set of all self-intersection points of $E_{n+1}'$,  such collection of
$\Omega_i'$, $i\in \Lambda_{n+1}$ exists by Lemma \ref{dmc}.

Since $F\subset\RFM$ is compact and disjoint from $E'$, we can also assume $\Ga\ba\Ga H_i\Omega_i'$ is disjoint from $F$, by shrinking $\Omega_i'$ if necessary.
More precisely, writing $F=\Ga\ba\Ga\til F$ for some compact subset $\til F\subset G$, this can be achieved by choosing a neighborhood $\Omega_i'$ of $D_i'$ so that $p_i\Omega_i'$
is disjoint from $p_i\Ga\til F$; and this is possible since $p_i\Ga\til F$ is a closed set disjoint from  a compact subset $p_iD_i'$.
After choosing $\Omega_i'$ for each $i\in\La_{n+1}$, define 
the following neighborhood of $E'_{n+1}$:
$$\cal O_{n+1}':=\bigcup_{i\in \La_{n+1}}\Ga\ba\Ga H_i\Omega_i'.$$

We will next define $\cal O_{n+1}$.
By Lemma \ref{lem.relsize}, there exists a neighborhood $\Omega_i$ of $D_i$ such that the pair $(I(q),J(q))$ is $\beta_0$-regular for all $q\in V_i-p_i\Omega_i'$ where
\begin{align*}
I(q)=\{t\in\mathbb{R} : qu_t\in p_i\Omega_i'\}\text{ and }
J(q)=\{t\in\mathbb{R} : qu_t\in p_i\Omega_i\}.
\end{align*}
We then define the following neighborhood of $E_{n+1}=\s(E_{n+2}' )\cap E$:
$$\cal O_{n+1}:=\bigcup_{i\in \La_{n+1}}\Ga\ba\Ga H_i\Omega_i .$$
Since the compact subset $\s(E_{n+2}')- \cal O_{n+1} $ is contained in the set
$\s(E_{n+2}') -E$, which is relatively open
inside $\s(E_{n+2}')$, we can take a neighborhood $\cal O_{n+1}^\star$
of $ \s(E'_{n+2}) -\cal O_{n+1} $ so that
\begin{equation*}
\overline{\cal O_{n+1}^\star} \cap E =\emptyset.\end{equation*}
This finishes the inductive construction.

\noindent{\bf $\spadesuit 5$. Definition of $\cal O'$ and $\cal O^\diamond$:}
We  define:
 $$\cal O':=\bigcup_{n=1}^\ell \cal O_n', \quad \cal O:=\bigcup_{n=1}^\ell \cal O_n \quad
 \cal O^\star:= \bigcup_{n=1}^\ell \overline{\cal O_n^\star} .$$
Note that $\cal O'$ and $\cal O$ are neighborhoods of $E'$ and $E$ respectively.
Since $E\cap \cal O^{\star}=\emptyset$, the following defines a neighborhood
of $E$:
\be\label{ooo} \cal O^{\diamond}:=\cal O - \cal O^\star .\ee

\noindent{\bf $\spadesuit 6$. Construction of $\beta_0$-regular families of $\mathsf T (x)$-multiplicity free:}

Fix $x\in F\subset\RFM-\cal O'$.
Choose a representative $g\in G$ of $x$.
We write each $\Lambda_n$ as the disjoint union 
$$\La_n=\bigcup_{j\in \theta_n} \Lambda_{n, j}$$
where
$\La_{n, j}=\{i\in \La_n:\op{dim}H_i=j\}$ and $\theta_n=\{j: \La_{n, j}\ne \emptyset\}$.
Note that $\# \theta_n <\dim G$.

Fix $1\le n\le \ell$, $j\in \theta_n$ and $i\in \La_{n,j}$.
For each $q\in p_i \Ga g$, we define the following subsets of $\br$:
\begin{itemize}
\item $I(q):=\{t: qu_t\in p_i\Omega_i'\}$ \text{ and}
\item $J(q):=\{t: qu_t\in p_i \Omega_i\}$.
\end{itemize}
In general, $I(q)$'s have high multiplicity among $q$'s in $ \bigcup_{i\in \Lambda_{n,j}} p_i\Gamma g$,
 but the following subset $I'(q)$'s will be multiplicity-free, and this is is why we defined $K_{n-1}$ as carefully as above:
\begin{itemize}
\item
$I'(q):=\{t :\text{for some } a\geq 0, \text{ }[t,t+a]\subset I(q)\text{ and } xu_{t+a}\in K_{n-1}\}$;
\item
$J^*(q):=I'(q)\cap J(q)$;
\item
$
J'(q):=\{t\in J(q) : x u_t\in K_{n-1}\}$.
\end{itemize}

Observe that $I'(q)$ and $J^*(q)$ are unions of finitely many intervals, $J'(q)\subset\mathsf T (x)$ and 
that $$J'(q)\subset J^*(q)\subset I'(q).$$

Now, for each $1\le n \le \ell$ and $j\in \theta_n$,
  define the family \be\label{nj} \cal X_{n,j}=\big\{(I(q),J^*(q),J'(q))) : q\in \bigcup_{i\in \Lambda_{n,j}} p_i\Ga g\big\}.\ee

We claim that each $\cal X_{n,j}$ is a $\beta_0$-regular family with degree at most $\delta$ and $\mathsf T (x)$-multiplicity free.

Note for each $q\in p_i\Ga g$, the number of connected components of $J^*(q)$ is less than or equal to that of $J(q)$. 
Now that $J^*(q)\subset J(q)$ and all the pairs $(I(q),J(q))$ are $\beta_0$-regular pairs of degree at most $\delta$, it follows
that  $\cal X_{n,j}$'s are $\beta_0$-regular families with degree at most $\delta$.

We now claim that $\cal X_{n,j}$ has $\mathsf T (x)$-multiplicity free, that is, 
 for any distinct indices $q_1, q_2\in \bigcup_{i\in \Lambda_{n,j}} p_i\Ga g$ of $\cal X_{n,j}$,
$$ I(q_1)\cap J'(q_2)=\emptyset.$$
We first show that
$$I'(q_1)\cap I'(q_2)=\emptyset.$$
 Suppose not. Then there exists
 $t\in I'(q_1)\cap I'(q_2)$ for some $q_1=p_i\ga_1g$ and $q_2=p_k\ga_2 g$, where $i, k\in \La_{n, j}$.
Then for some $a\geq 0$, we have $[t,t+a]\subset I(q_1)\cap I(q_2)$ and $xu_{t+a}\in K_{n-1}$.
In particular, $$x u_{t+a}\in \Gamma\ba \Gamma (\ga_1^{-1} H_i\Omega_i' \cap\ga_2^{-1}H_k\Omega_k')
\cap K_{n-1}.$$
Since $K_{n-1}$ is $\mathcal O_{n}'$-self intersection free, and $\op{dim}H_i=\op{dim}H_k=j$,
we deduce from Proposition \ref{dmc} that
this may happen only when $i=k$, and $\ga_1\ga_2^{-1}\in H_i\cap \Gamma$.
Hence we have  $$q_1=q_2.$$
This shows that $I'(q)$'s are pairwise disjoint.
Now  suppose that there exists an element $t\in I(q_1)\cap J'(q_2)$.
Then by the disjointness of $I'(q_1)$ and $I'(q_2)$, it follows that
$$t\in (I(q_1)-I'(q_1))\cap J'(q_2).$$
By the definition of $I'(q_1)$, we have $xu_t\not\in K_{n-1}$.
This contradicts the assumption that $t\in J'(q_2)$.

\noindent{\bf $\spadesuit 7$. Completing the proof:}
Let $\cal X:=\bigcup_{1\le i\le \ell, j\in \theta_n}\cal X_{n,j}$.
In view of Proposition \ref{prop.2kthick}, it remains to
check that the condition
$t\in \mathsf T(x)- J'(\cal X)$ implies that $xu_t\notin \mathcal O^\diamond$
where $\cal O^\diamond$ is given in \eqref{ooo}.

Suppose that there exists  $t\in  \mathsf T(x)- J'(\cal X)$ such that  $xu_t\in \cal O^\diamond$.
Write the neighborhood
$\mathcal O^\diamond$  as the disjoint union 
$$\cal O^\diamond=\bigcup_{n=1}^{\ell} \left(\cal O_n- (\cup_{i\le n-1} \cal O_{i} \cup \cal O^\star\right)).$$
Let  $n\le \ell $ be such that 
  $$xu_t\in \cal O_{n}- (\bigcup_{i=1}^{ n-1} \cal O_{i}\cup \cal O^\star).$$ 
  Since $t\in \mathsf T(x)-J'(\cal X)$, we have
 $xu_t\in \RFM -K_{n-1}$. Since $K_{n-1}=\RFM- \bigcup_{i=1}^{n-1} \cal ( \cal O_i \cup \cal O_{i}^\star)$, 
$$xu_t\in  \bigcup_{i=1}^{ n-1}  \cal O_{i}\cup \cal O_{i}^\star .$$
This is a contradiction, since $\bigcup_{i=1}^{\ell} \cal O_i^\star\subset \cal O^\star$.
\end{proof}

As  $\mH^{\star}$ is countable and
$X(H_i, U)$ is $\sigma$-compact, 
the intersection $\mS(U)\cap \RFM$ can be exhausted by the union of the increasing sequence of $E_j\in \mathcal E_U$'s.
Therefore, we deduce:

\begin{corollary}\label{cor.unifreclin}\label{lin} There exists an increasing sequence of compact subsets $E_1\subset E_2\subset \cdots $ in $\cal E_U$
with $\mathscr{S}(U)\cap \RFM=\bigcup\limits_{j=1}^\infty E_j$
 which satisfies the following:
Let $x_i\in \RFM$ be a sequence converging to $x\in \mathscr{G}(U)\cap \RFM$.
Then for each $j\in \bb N$, there exist a neighborhood $\cal O_j$ of $E_j$ and $i_j\ge 1$ such that
\begin{equation*}
\{t\in\bb{R} : x_iu_t\in\op{RF}\M-\cal O_j\}
\end{equation*}
is $2k$-thick for all $i\ge i_j$.
\end{corollary}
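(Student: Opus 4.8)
The statement to be proved, Corollary \ref{cor.unifreclin}, is a uniform-over-sequences version of the Avoidance Theorem \ref{avoid1}. The plan is to deduce it formally from Theorem \ref{avoid1} together with the exhaustion of the singular set $\mS(U)\cap \RFM$ by compact subsets in $\cal E_U$, using the fact that a generic point $x\in \mG(U)\cap\RFM$ avoids \emph{every} $E'\in\cal E_U$ (because it lies on no closed orbit of a proper reductive subgroup containing $U$), and then running a standard compactness argument to shift from ``$x$ avoids $E_{j+1}'$'' to ``$x_i$ avoids $E_{j+1}'$ for all large $i$''.

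First I would construct the exhausting sequence $E_1\subset E_2\subset\cdots$. Since $\mH^\star$ is countable (Corollary \ref{counth}), enumerate it as $\{H_1,H_2,\dots\}$; since $\Gamma\ba\Gamma X(H_i,U)$ is $\sigma$-compact, for each $i$ write $X(H_i,U)=\bigcup_m D_{i,m}$ with $D_{i,m}$ compact and increasing. Then set $E_j:=\bigcup_{i\le j}\Gamma\ba\Gamma H_i D_{i,j}\cap\RFM$, which is a compact member of $\cal E_U$ associated to $\{H_1,\dots,H_j\}$, the sequence is increasing, and $\bigcup_j E_j=\mS(U)\cap\RFM$ by Proposition \ref{sudef}. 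Next, apply Theorem \ref{avoid1} to each $E_j$: this yields $E_j'\in\cal E_U$, associated to the \emph{same} family $\{H_1,\dots,H_j\}$, such that for any compact $F\subset\RFM$ disjoint from $E_j'$ there is a neighborhood $\cal O_j$ of $E_j$ for which $\{t:xu_t\in\RFM-\cal O_j\}$ is $2k$-thick for all $x\in F$.

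The remaining point is to produce, for each $j$, a single compact set $F_j\subset\RFM$ that is disjoint from $E_j'$ and contains $x_i$ for all large $i$. Here I would use that $x\in\mG(U)$: since $E_j'$ is a finite union $\bigcup_{i\le j}\Gamma\ba\Gamma H_iD_i'\cap\RFM$ of subsets of $\mS(U)$, and $x$ lies on no closed $H_i$-orbit (equivalently $x\notin\Gamma\ba\Gamma X(H_i,U)$ for any $i$, by definition of the generic set), we have $x\notin E_j'$. As $E_j'$ is compact and $x_i\to x$, there is a compact neighborhood $F_j$ of $x$ in $\RFM$ disjoint from $E_j'$ and an index $i_j$ with $x_i\in F_j$ for all $i\ge i_j$. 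Applying Theorem \ref{avoid1} with this $F_j$ gives the neighborhood $\cal O_j$ of $E_j$ such that $\{t:x_iu_t\in\RFM-\cal O_j\}$ is $2k$-thick for every $i\ge i_j$, which is exactly the assertion.

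\textbf{Expected main obstacle.} There is no deep obstacle here — all the hard analysis is already packaged in Theorem \ref{avoid1} (the linearization, the inductive search lemma, and the $2k$-thickness with a bound independent of the compact set). The only points requiring care are bookkeeping ones: (i) verifying that the $E_j$ built from the $\sigma$-compact exhaustion genuinely lie in $\cal E_U$ and increase to $\mS(U)\cap\RFM$ (one must be slightly careful that finitely many $H_i$ appear at each stage and that the minimal-index-set convention of Definition \ref{defeu} is respected — redundant $H_i$ can simply be dropped by Lemma \ref{minimal}); and (ii) making sure the compact set $F_j$ disjoint from $E_j'$ can be chosen \emph{inside} $\RFM$, which is immediate since $x\in\RFM$, $\RFM$ is closed, and $x\notin E_j'$. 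Once these are in place, the corollary follows by directly invoking Theorem \ref{avoid1} sequence-by-sequence.
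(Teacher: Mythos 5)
Your proposal is correct and takes essentially the same route as the paper: the paper likewise exhausts $\mS(U)\cap \RFM$ by an increasing sequence $E_j\in\cal E_U$ (arranged so that $E_{j+1}\supset E_j'$) and then applies Theorem \ref{avoid1} to the compact set $\{x_i : i\ge i_j\}$, whereas you argue directly that $x\notin E_j'$ (since $E_j'\subset\mS(U)\cap\RFM$ and $x$ is generic) and take a compact neighborhood of $x$ inside $\RFM$ disjoint from $E_j'$ — a purely cosmetic difference in bookkeeping, with no gap in either step.
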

\begin{proof} For each $j\ge 1$,
 we may assume $E_{j+1}\supset E_j'$ where $E_j'$ is given by Theorem \ref{avoid1}.
For each $j\ge 1$, there exists $i_j\in\bb{N}$ such that $x_i\not\in E_{j+1}$ for all $i\geq i_j$.
Applying Proposition \ref{prop.2kthickapplication} to a compact subset $F=\{x_i : i\geq i_j\}$ of $\RFM$, we obtain 
a neighborhood $\cal O_j$ of $E_j$ such that
\begin{equation*}
\{t\in\bb{R} : x_iu_t\in\RFM-\cal O_j\}
\end{equation*}
is $2k$-thick for all $i\geq i_j$.
\end{proof}
Indeed we will apply Corollary \ref{lin} for the sequence $\{x_i\}$ contained in a closed orbit $x_0L$
of a proper connected closed subgroup $L<G$, which can be proved in the same way:

\begin{thm}[Avoidance Theorem II]\label{lin2}
Consider a closed orbit $x_0 L$  for some $x_0\in\RFM$ and $L\in \mathcal Q_U$. There exists an increasing sequence of compact subsets $E_1\subset E_2\subset \cdots $ 
in $\cal E_U$ with
$
\mathscr{S}(U,x_0L)\cap \RFM=\bigcup\limits_{j=1}^\infty E_j,
$
which satisfies the following:  if $x_i\to x$ in $\op{RF}\M\cap x_0L$ with $x\in \mathscr{G}(U, x_0L)$, then
 for each $j\in \bb N$,
there exist $i_j\ge 1 $ and  an open neighborhood $\cal O_j\subset x_0L$ of $ E_j$ such that  
\begin{equation*}
\{t\in \bb{R} : x_iu_t\in\op{RF}\M-\cal O_j\}
\end{equation*}
is a $2k$-thick set for all $i\ge i_j$.
\end{thm}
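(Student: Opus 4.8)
The plan is to mimic the proof of Theorem~\ref{avoid1} (Avoidance Theorem~I) almost verbatim, replacing the ambient space $\Gamma\ba G$ by the closed orbit $x_0L$ and the global singular set $\mS(U)$ by the relative singular set $\mS(U,x_0L)$. First I would record, using \eqref{hxo}, that
$$\mS(U,x_0L)\cap\RFM=\bigcup_{W\in\mH^\star_{x_0L}} x_0\bigl(L\cap X(W,U_0)\bigr)\cap\RFM,$$
where $\mH^\star_{x_0L}$ is the countable (by Corollary~\ref{counth}) collection of subgroups $W=g_0^{-1}Hg_0\cap L$, $H\in\mH^\star$, with $\dim W_{nc}<\dim L_{nc}$. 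Since each $L\cap X(W,U_0)$ is $\sigma$-compact and $L\cap x_0^{-1}\Gamma x_0$ acts on it properly, the intersection $\mS(U,x_0L)\cap\RFM$ can be exhausted by an increasing sequence of compact subsets $E_j$, each a finite union $\bigcup_{i} x_0(L\cap X(W_i,U_0))D_i\cap\RFM$ with $D_i$ compact. This is the relative analogue of the collection $\cal E_U$; all the structural lemmas used in the proof of Theorem~\ref{avoid1} --- Proposition~\ref{sha}, Corollary~\ref{ddd2}, Proposition~\ref{mul}, Proposition~\ref{dmc}, the self-intersection operator $\s$ and Corollary~\ref{comb} --- are stated for general convex cocompact $\Gamma$ and restrict verbatim to the reductive group $L$ (one runs the linearization inside $L$-representations $\rho_W|_L$, or equivalently uses the $G$-representations $\rho_H$ with $p_H g$ replaced by $p_H g_0\ell$, $\ell\in L$, noting $x_0L$ is closed so the relevant orbits of $L\cap x_0^{-1}\Gamma x_0$ are discrete). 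The thick-return input Proposition~\ref{defk} is unchanged: for $x\in\RFM\cap x_0L$ the set $\mathsf T(x)=\{t:xu_t\in\RFM\}$ is still globally $k$-thick, with the \emph{same} $k$ depending only on the systole of the double of $\core M$, since the obstruction argument there is purely about the limit set $\Lambda$ of $\Gamma$ and does not see the orbit $x_0L$.

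Next I would state and prove the relative analogue of Theorem~\ref{avoid1}: for every $E\in\cal E_U$ supported in $\mS(U,x_0L)\cap\RFM$ there exists $E'$ in the same collection such that any compact $F\subset\RFM\cap x_0L$ disjoint from $E'$ admits a neighborhood $\cal O^\diamond\subset x_0L$ of $E$ with $\{t:x u_t\in(\RFM\cap x_0L)-\cal O^\diamond\}$ being $2k$-thick for all $x\in F$. The proof is the six-step ($\spadesuit 1$--$\spadesuit 7$) argument of Theorem~\ref{avoid1} copied line by line, with the constant $\beta_0=\beta_0(m\delta,k,1)$ where now $m=(\dim L)^2$ and $\delta=\max_{W\in\mH^\star_{x_0L}}(\deg\theta_W+2)\dim V_W$ (finite because there are finitely many conjugacy classes in $\mH^\star$ by Proposition~\ref{sin1}). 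One builds the chains $E_n,E_n'$ via $E_n=E\cap\s(E_{n+1}')$, the neighborhoods $\cal O_n',\cal O_n,\cal O_n^\star$ and compact sets $K_n$, the $\beta_0$-regular families $\cal X_{n,j}$ of triples $(I(q),J^*(q),J'(q))$ indexed by $q\in\bigcup_{i\in\Lambda_{n,j}}p_{W_i}(x_0^{-1}\Gamma x_0\cap\cdots)$-translates, checks $\mathsf T(x)$-multiplicity freeness via the self-intersection-free property of $K_{n-1}$ (Proposition~\ref{dmc} applied inside $L$), and concludes by Proposition~\ref{prop.2kthick} exactly as before. Finally, the statement of Theorem~\ref{lin2} follows by exhausting $\mS(U,x_0L)\cap\RFM=\bigcup E_j$ with $E_{j+1}\supset E_j'$, fixing $i_j$ with $x_i\notin E_{j+1}$ for $i\ge i_j$ (possible because $x\in\mG(U,x_0L)$ lies in no $E_j$), and applying the relative avoidance statement to the compact set $F=\{x_i:i\ge i_j\}\subset\RFM\cap x_0L$.

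The one point that genuinely needs care --- and which I expect to be the main obstacle --- is verifying that the linearization machinery localizes to $x_0L$ cleanly. Concretely: given $H\in\mH^\star$ with $W=g_0^{-1}Hg_0\cap L\in\mH^\star_{x_0L}$, one must check that $p_H\,\Gamma g_0 L$ is still discrete (equivalently, that $x_0L$ being closed forces the relevant $L\cap g_0^{-1}\Gamma g_0$-orbit in $V_H$ to be discrete), that $X(W,U_0)$ inside $L$ is the zero set of a polynomial on $L$ of controlled degree so that Proposition~\ref{lem.relsize} applies, and that the graded-intersection/self-intersection combinatorics (Propositions~\ref{sha}, \ref{mul}, Corollary~\ref{ddd2}) go through with $H_i\cap\gamma H_j\gamma^{-1}$ replaced by $W_i\cap\gamma W_j\gamma^{-1}$ for $\gamma\in L\cap x_0^{-1}\Gamma x_0$ --- here one uses that for $H\in\mH$ the group $H_0$ of Proposition~\ref{sha} still lies in $\mH$, and that $\dim(W_i\cap\gamma W_j\gamma^{-1})_{nc}$ can be compared against $\dim L_{nc}$ rather than $\dim G$. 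All of these are routine once one observes that $L$ is itself a reductive algebraic group in which $\Gamma_L:=L\cap x_0^{-1}\Gamma x_0$ is a convex cocompact Zariski-dense subgroup (by Propositions~\ref{proper} and \ref{count}, since $x_0L$ is closed with $x_0\in\RFM$), so the entire apparatus of Sections~\ref{s:st}--\ref{s:lt} applies to $(\Gamma_L,L)$ in place of $(\Gamma,G)$; the only non-formal ingredient, the global $k$-thickness of $\mathsf T(x)$, is inherited from the ambient manifold for free.
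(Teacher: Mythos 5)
Your route would very likely succeed, but it is a genuinely different and much heavier route than the one the paper takes, and as written it defers the real work. The paper does \emph{not} redevelop the linearization/self-intersection machinery inside $L$: Theorem \ref{lin2} is deduced exactly as Corollary \ref{lin} is, by reusing the ambient Avoidance Theorem \ref{avoid1} verbatim. The two observations that make this possible are: (i) by \eqref{hxo}, $\mS(U,x_0L)\cap\RFM=x_0L\cap\bigcup\Gamma\ba\Gamma X(H,U)\cap\RFM$, the union being over the countably many $H\in\mH^\star$ with $H\subset g_0Lg_0^{-1}$ and $\op{dim}H_{nc}<\op{dim}L_{nc}$; since each $X(H,U)$ is $\sigma$-compact, this set is exhausted by an increasing sequence of \emph{ambient} sets $E_j\in\cal E_U$ associated to these $H$'s (this also explains why the statement insists $E_j\in\cal E_U$ rather than some relative collection, a point your intrinsic construction would still have to reconcile); (ii) the companion sets $E_j'$ from Theorem \ref{avoid1} are associated to the same family $\{H_i\}$, hence $E_j'\subset\bigcup_i\Gamma\ba\Gamma X(H_i,U)$, so by \eqref{hxo} again $E_j'\cap x_0L\cap\RFPM\subset\mS(U,x_0L)$; since $x\in\mG(U,x_0L)$, the compact set $E_j'$ misses $x$, so $x_i\notin E_j'$ for $i\ge i_j$. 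One then applies Theorem \ref{avoid1} to the compact set $F=\{x_i:i\ge i_j\}\subset\RFM$ and takes $\cal O_j:=\cal O^\diamond\cap x_0L$; the resulting return-time set is unchanged because $x_iu_t\in x_0L$ automatically ($U\subset L$). So no relative linearization, no relative $\mH^\star$, no rerun of $\spadesuit 1$--$\spadesuit 7$ is needed; the only point that requires any thought is (ii), and it is handled by \eqref{hxo}, not by rebuilding the apparatus.

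Concerning your plan itself: the ``one point that genuinely needs care'' that you flag is a self-created obstacle, and the claims you offer to dispose of it are not quite right. The discreteness statement you would need in an intrinsic development is not that ``$p_H\Gamma g_0L$ is discrete'' (it is not, being a union of $L$-orbits), but discreteness of the relevant $\Gamma_L$-orbit for an $L$-representation with stabilizer $W$; and the assertion that $\Gamma_L=g_0^{-1}\Gamma g_0\cap L$ is Zariski dense in $L$ is not automatic for an arbitrary closed orbit of some $L\in\cal Q_U$ with base point in $\RFM$ (Proposition \ref{count} only identifies the Zariski closure as $H(\widehat U)$ times a closed subgroup of the compact factor, which may be smaller than the compact factor of $L$), so quoting Sections \ref{s:st}--\ref{s:lt} ``for $(\Gamma_L,L)$ in place of $(\Gamma,G)$'' needs an adjustment there. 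These are repairable, and your key non-formal input --- that the return times to $\RFM$ along $U$-orbits of points of $\RFM\cap x_0L$ are globally $k$-thick with the same ambient $k$ --- is correctly identified as inherited for free. But since the ambient Theorem \ref{avoid1} already applies to any compact $F\subset\RFM$, the intrinsic rebuild buys nothing here, and in its present form your argument is a programme whose structural steps are asserted (``restrict verbatim'', ``routine'') rather than verified.
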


\section{Limits of $\RFM$-points in $F^*$ and generic points}\label{s:ge}
 Let  $\M=\Ga\ba \bH^d$ be a convex cocompact hyperbolic manifold with Fuchsian ends.
Recall that $\La\subset \bb{S}^{d-1}$ denotes the  limit set of $\Gamma$.
In this section, we collect some geometric lemmas which are needed in modifying a sequence limiting on an $\RFM$ point (resp. limiting
on a point in $\RFM\cap \mG(U)$)
 to a sequence of $\RFM$-points (resp. whose limit still remains inside $\mG(U)$).
Recall from Defintion \ref{d:rigid} that $\Omega=\bb S^{d-1}-\La$.
\begin{lemma}\label{lem.liminfnotsingleton}
Let $C_n\to C$ be a sequence of convergent circles in $ \bb{S}^{d-1}$.
If $C\not\subset \cl{B}$ for any component $B$ of $\Omega$,
then 
$$\#\limsup_{n\to \infty} C_n\cap \Lambda \ge 2.$$
\end{lemma}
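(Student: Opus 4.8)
The plan is to exploit the hypothesis that $C$ is not contained in the closed complementary ball $\cl{B_i}$ of any component $B_i$ of $\Omega$. Recalling that $\Omega = \bigcup_i B_i$ is dense in $\mathbb S^{d-1}$ with the $\cl{B_i}$ mutually disjoint, the condition $C \not\subset \cl{B_i}$ for every $i$ forces $C$ to meet infinitely many distinct $B_i$'s (or to meet $\Lambda$ in more than a point already). Indeed, since $\Omega$ is dense, $C\cap \Omega$ is dense in $C$, so $C$ meets infinitely many components $B_{i_1}, B_{i_2},\dots$, and moreover $C\cap\Lambda$ is nonempty and in fact $C\cap\Lambda$ contains the boundary points separating these components along $C$, so $\#(C\cap\Lambda)\ge 2$.

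So first I would establish the deterministic fact: $\#(C\cap\Lambda)\ge 2$. One clean way: pick two distinct components $B_i, B_j$ of $\Omega$ both meeting $C$ (possible by the above, since $C\not\subset\cl{B_i}$ for all $i$ together with density of $\Omega$ in $C$). Walking along an arc of $C$ from a point of $C\cap B_i$ to a point of $C\cap B_j$, one exits $\cl{B_i}$ at some first point $p\in C$, and $p\notin\Omega$ since leaving $\cl{B_i}$ one cannot be inside another $\cl{B_k}$ immediately (their closures are disjoint and $p$ is a limit of points in $C\setminus\cl{B_i}$); hence $p\in\Lambda$. Choosing the arc in two directions, or using a third component, produces a second such boundary point $q\in C\cap\Lambda$ with $q\ne p$. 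Thus $\#(C\cap\Lambda)\ge 2$, with $p,q$ moreover lying in distinct complementary balls' boundaries (each $p,q$ is in $\Lambda = \mathbb S^{d-1}\setminus\bigcup B_i$).

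Next I would transfer this to the sequence $C_n$. Fix two points $p, q\in C\cap\Lambda$ as above with $p\ne q$; I want to produce points $p_n\in C_n\cap\Lambda$ with $p_n\to p'$ and $q_n\in C_n\cap\Lambda$ with $q_n\to q'$ for some $p'\ne q'$, which gives $\#\limsup_n (C_n\cap\Lambda)\ge 2$. The key observation is a perturbation/stability statement: since $p$ is a point of $C$ that is a boundary point of a complementary ball $\cl{B_i}$ crossed transversally by $C$ (i.e. $C$ genuinely passes from inside $B_i$ to outside, not tangent), for $n$ large $C_n$ also crosses from inside $B_i$ to outside near $p$, hence $C_n$ meets $\partial B_i$ near $p$, and $\partial B_i\subset\Lambda$ since $B_i$ is a component of $\Omega$ and limit sets contain the boundaries of complementary disks (more precisely $\partial B_i \subset \Lambda$ because $\Lambda$ is closed and $B_i$ is a maximal complementary ball). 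So $C_n\cap\Lambda$ contains a point $p_n\to p$. The same argument near $q$ with its ball $B_j$ (where $B_j\ne B_i$) yields $q_n\in C_n\cap\Lambda$ with $q_n\to q$. Since $p\ne q$, we get two distinct accumulation points of the sets $C_n\cap\Lambda$, i.e. $\#\limsup_n(C_n\cap\Lambda)\ge 2$.

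The main obstacle is the \emph{transversality} issue in the perturbation step: a priori $C$ might meet $\partial B_i$ tangentially, in which case a small perturbation $C_n$ could miss $\cl{B_i}$ entirely near $p$ and one loses the point $p_n$. I would handle this by not fixing one ball but using that $C$ meets \emph{infinitely many} components $B_{i_k}$; for each of them $C\cap B_{i_k}\ne\emptyset$ is an \emph{open} condition on the circle, so for $n$ large $C_n\cap B_{i_k}\ne\emptyset$ too, and then the "first exit point" argument applied to $C_n$ gives $C_n\cap\Lambda\ne\emptyset$ robustly; choosing two components $B_{i_1}, B_{i_2}$ that are "separated along $C$" (their arcs on $C$ are disjoint and the gap between them is a definite arc) forces $C_n$ to have exit points in two disjoint arcs, hence two distinct points of $C_n\cap\Lambda$ with limits in those two arcs, which are disjoint and so the limits differ. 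Making "separated along $C$" precise and checking it survives the limit $C_n\to C$ (using that convergence of circles is uniform, so arcs converge) is the technical heart; once set up, the rest is routine compactness.
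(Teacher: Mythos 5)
There is a genuine gap. Your whole mechanism for producing points of $C_n\cap\Lambda$ consists of perturbing crossings of $C$ with components of $\Omega$, but the hypothesis does not guarantee that $C$ meets any component at all: $C\not\subset\cl B$ for every component $B$ is perfectly consistent with $C\subset\Lambda$ (e.g.\ a circle lying in the limit set but not in any single boundary sphere $\partial B_i$), and this case genuinely occurs where the lemma is applied. Your step claiming that $C\cap\Omega$ is dense in $C$, hence that $C$ meets infinitely many components, does not follow from density of $\Omega$ in $\bb{S}^{d-1}$ (density of an open set in the sphere says nothing about its trace on a fixed circle), so the fallback you propose for the tangency problem rests on an unjustified premise. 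Incidentally, in the cases your argument does address the tangency worry is not the real issue: if $C$ meets the \emph{open} ball $B_i$ and also the open complement of $\cl{B_i}$, both are open conditions, so $C_n$ crosses $\partial B_i\subset\Lambda$ for large $n$ with no transversality hypothesis; and if $C$ meets two distinct components, the disjointness of their closures already makes the two limit points distinct, so the ``separated along $C$'' machinery is unnecessary.

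The case you cannot reach is exactly the hard one, and it shows that some global finiteness input is unavoidable: one must exclude the scenario in which each $C_n$ lies, except for an arc of diameter tending to $0$, inside a single component of $\Omega$, so that $C_n\cap\Lambda$ degenerates to (at most) one point in the limit. Nothing in your proposal rules this out when $C\subset\Lambda$. The paper's proof handles all cases at once by contradiction: if no $\epsilon_0$-separated pair exists in $C_n\cap\Lambda$ for infinitely many $n$, then $C_n$ minus a small arc is connected and contained in $\Omega$, hence in one component $B_n$; since $\op{diam}(C_n)$ is bounded below, $B_n$ must be large, and only finitely many components of $\Omega$ have diameter bounded below (disjoint round balls — the paper uses closedness of $\Gamma B$ and finitely many $\Gamma$-orbits, though a packing argument also works); passing to a fixed $B_i$ and letting $\epsilon\to 0$ gives $C\subset\cl{B_i}$, contradicting the hypothesis. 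Your argument uses neither the connectedness step nor any finiteness of large components, and without them the conclusion about $\limsup_n(C_n\cap\Lambda)$ cannot be obtained from information about $C\cap\Lambda$ alone.
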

\begin{proof} Without loss of generality, we may assume that $\infty\notin \Lambda$ and hence  consider
$\Lambda$ as a subset of the Euclidean space $\br^{d-1}$. Note that there is one component, say, $B_1$ of $\Omega$ which
contains $\infty$ and all other components of $\Omega$ are contained in the complement of $B_1$, which is a (bounded) round ball
in $\br^{d-1}$.
Since $B_1^c$ has a finite Lebesgue measure,
there are only finitely many components of $\Omega$ whose diameters are bounded from below by a fixed positive number.

Let $\delta=0.5\op{diam}(C)$ so that we may assume $\op{diam}(C_n)>\delta$ for all sufficiently large $n\gg 1$.
It suffices to show  that there exists $\e_0>0$ such that
$C_n\cap\La$ contains  $\xi_n, \xi_n'$ with
 $d(\xi_n,\xi_n')\geq\epsilon_0$ for all sufficiently large $n$.
Suppose not. Then for any $\epsilon>0$, there exists an interval $I_n\subset C_n$ such that $\op{diam}(I_n)\leq\epsilon$ and $C_n-I_n\subset\Omega$ for some infinite sequence of $n$'s.
Since $C_n-I_n$ is connected, there exists a component $B_n$ of $\Omega$ such that $C_n\subset\cal{N}_\epsilon(B_n)$, where $\cal{N}_\epsilon(B_n)$ denotes the $\epsilon$-neighborhood of $B_n$.
In particular, we have $\op{diam}(B_n)+\epsilon>\delta$.
Taking $\epsilon$ smaller than $0.5 \delta$, this means that $\op{diam}(B_n)>0.5\delta$.
On the other hand, there are only finitely many components of $\Omega$ whose diameters are greater than $0.5\delta$, say $B_1,\cdots,B_{\ell}$.
Let $\epsilon_0>0$ be such that $\cal{N}_{\epsilon_0}(B_1),\cdots,\cal{N}_{\epsilon_0}(B_{\ell})$ are all disjoint.
Then  by passing to a subsequence, there exists $B_i$ such that
 $C_n\subset\cal{N}_\epsilon(B_i)$ for all small $0<\epsilon <\epsilon_0$ and all $n\ge 1$; hence $C\subset\cal{N}_\epsilon(B_i)$.
Since this holds  for all sufficiently small $\epsilon>0$, we get that $C\subset \overline{B_i}$, yielding a contradiction to the hypothesis on $C$.
\end{proof}

In the next two lemmas, we set $U^-=U$ and $U^+=U^t$.
\begin{lemma}\label{lem.propersphere}
Let $U<N$ be a connected closed subgroup.  Let $[g]L$ be a closed orbit for some $L\in\cal L_U$ and $[g]\in \RFM$.
Let $S_0$ and $S^*$ denote the boundaries of $\pi(gH(U))$ and $\pi(gL)$ respectively.
If $S$ is a sphere such that $S_0\subset S\subsetneq S^*$ and $\Ga S$ is closed, then $[g]\in\mathscr{S}(U^\pm,[g] L)$.
\end{lemma}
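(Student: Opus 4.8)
\textbf{Proof plan for Lemma \ref{lem.propersphere}.}

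The plan is to exhibit a proper closed subgroup $W$ of $L$ containing $U^\pm$, with $[g]W$ closed and $\op{Stab}_W([g])$ Zariski dense in $W$, which is exactly what membership in $\mathscr S(U^\pm,[g]L)$ requires (in the sense of \eqref{defslu}). Write $L = H(U)C$ and fix the reductive decomposition; since $\Ga S$ is closed, the stabilizer $\op{Stab}_G(\tilde S)$ of the geodesic plane $\tilde S = \hull(S)$ is a conjugate of $H'(U_m)$ for $m = \dim S$, and $\Gamma \cap \op{Stab}_G(\tilde S)$ projects to a Zariski dense (indeed convex cocompact, by Proposition \ref{proper} together with Lemma \ref{Zd}, since $\# S\cap\Lambda\ge 2$ as $S\supset S_0$ and $[g]\in\RFM$ forces $\#S_0\cap\Lambda\ge 2$ by \eqref{convex}) subgroup of the non-compact part. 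So the first step is: the hypothesis $S_0\subset S\subsetneq S^*$ translates, via the dictionary between spheres and geodesic planes in section \ref{s:ba}, into the existence of a subgroup $\widehat U$ with $U\subset\widehat U\subsetneq \widehat U^* := L\cap N$ (where $H(\widehat U^*) = L_{nc}$ after conjugating appropriately) such that $g\, X(\text{stab of }S,\,U^\pm)$ is nonempty — concretely, $g^{-1}(\text{stab of }\tilde S)g$ contains $U^\pm$ because $S_0 = \partial\pi(gH(U))\subset S$ means $gH(U)^\pm\subset S = g\cdot(\text{something})$, i.e.\ $gH(U)g^{-1}$ stabilizes $\tilde S$ up to the identification.

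The key steps, in order: (i) Using that $S_0 = \partial\pi(gH(U)) \subset S$, deduce that $gU^\pm g^{-1}$ is contained in the stabilizer $H_S$ of the geodesic plane $\hull(S)$; since $U^\pm$ is unipotent and $H_S\in\mathscr H$ (as $\Ga S$ closed and $S\cap\Lambda$ Zariski dense in $S$ by Lemma \ref{Zd}), this gives $g\in X(H_S, U^\pm)$. (ii) Set $W := L\cap g^{-1}H_S g$. Because $S\subset S^*$, the plane $\hull(S)$ is contained in $\hull(S^*) = \pi(gL)$, so $g^{-1}H_S g$ is genuinely a subgroup of $L$ (after accounting for the centralizer factor $C$; one checks $g^{-1}H_S g\subset \op{N}_G(\text{subplane})\cap L$ lies inside $L$ using that $\pi(gL)$ already contains the sub-plane). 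Because $S\subsetneq S^*$ strictly, $\dim W_{nc} = \dim H(S)_{nc} < \dim H(S^*)_{nc} = \dim L_{nc}$, so $W$ is a \emph{proper} witness in the sense of \eqref{defslu}. (iii) The orbit $[g]W$ is closed: it equals $[g]L \cap ([g]\,g^{-1}H_S g)$ and $\Ga\ba\Ga H_S$ is closed. (iv) $\op{Stab}_W([g]) = g^{-1}(\Gamma\cap H_S)g \cap L$ is Zariski dense in $W$: the projection of $\Gamma\cap H_S$ to $(H_S)_{nc}$ is convex cocompact Zariski dense by Proposition \ref{proper}, Lemma \ref{Zd} and Proposition \ref{sin1}, and this survives intersection with $L$ since $W_{nc} = g^{-1}(H_S)_{nc}g$ by the dimension count in (ii). Conclude $[g]\in\mathscr S(U^\pm, [g]L)$.

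The main obstacle I expect is step (ii): verifying that $g^{-1}H_S g$ actually sits \emph{inside} $L = H(U)C$ rather than merely inside $G$, and that the intersection $W = L\cap g^{-1}H_S g$ has non-compact part of the expected dimension (i.e.\ that the strict inclusion $S\subsetneq S^*$ is not "absorbed" by the compact centralizer factor $C$ and genuinely produces a drop in $\dim W_{nc}$). This requires the bookkeeping of section \ref{s:ba}: one writes $g = h c$ with $h\in H(\widehat U^*)$, $c\in C$ relative to the closed orbit $[g]L$, uses that $C$ fixes the plane $\pi(gH(U))$ pointwise (as $\op{C}(H(U))$ acts trivially on $\tilde S_0$, cf.\ the discussion around \eqref{defp}), and reduces to the analogous statement with $L$ replaced by $H(\widehat U^*) = \SO^\circ(m^*+1,1)$ where everything is classical. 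One also needs to handle the two cases $U^- $ versus $U^+$; here the symmetry is handled exactly as in the proof of Lemma \ref{lem.R0}/Lemma \ref{rmk.1psg}, interchanging the roles of $g^+$ and $g^-$, and the argument is identical because $H(U)$ is generated by $U$ together with its transpose, so $U^\pm\subset H(U)\subset H_S$ simultaneously whenever $S_0\subset S$. A minor additional point is confirming $\#\,S\cap\Lambda\ge 2$ so that Lemma \ref{Zd} applies and $H_S\in\mathscr H$ genuinely; this follows since $S\supset S_0$ and $[g]\in\RFM\cdot H(U)$ forces $\#S_0\cap\Lambda\ge 2$ by \eqref{convex}.
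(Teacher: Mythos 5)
Your choice of witness subgroup is where the argument breaks. In step (iv) you take $W=L\cap g^{-1}H_Sg$ and claim $\op{Stab}_W([g])$ is Zariski dense in $W$ because the projection of $\Gamma\cap H_S$ to $(H_S)_{nc}$ is Zariski dense. That is not enough: Zariski density in the noncompact factor says nothing about the compact directions of $W$, and in general $g^{-1}\Gamma g\cap H_S$ is \emph{not} Zariski dense in $g^{-1}H_Sg$ (nor in its intersection with $L$). For instance, take $L=G$, $d=4$, and $S$ the boundary circle of a closed totally geodesic surface whose normal holonomy generates a finite subgroup of $\SO(2)$: then $\overline{\pi_2(g^{-1}\Gamma g\cap H'(\tilde U))}$ is a proper subgroup $C_0$ of $\op{C}(H(\tilde U))\cong\SO(2)$, so your $W$ (which contains the full $\SO(2)$ factor) fails the Zariski-density requirement of \eqref{defslu}. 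The paper avoids this precisely by not using the full stabilizer of $\hull(S)$: having written $S=\partial\pi(gH(\tilde U))$ for some connected $\tilde U$ with $U\subset\tilde U\subsetneq\widehat U$ (which you essentially do), it notes $[g]H'(\tilde U)$ is closed by Proposition \ref{proper} and then applies Proposition \ref{count} to take $W=H(\tilde U)C_0$ with $C_0=\overline{\pi_2(g^{-1}\Gamma g\cap H'(\tilde U))}$, i.e.\ the identity component of the Zariski closure of $g^{-1}\Gamma g\cap H'(\tilde U)$. For that $W$ the Zariski density of the stabilizer holds by construction, $[g]W=\overline{[g]H(\tilde U)}$ is closed, $U^\pm\subset H(\tilde U)\subset W$, and $\dim W_{nc}=\dim H(\tilde U)<\dim H(\widehat U)=\dim L_{nc}$ because $S\subsetneq S^*$. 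Passing to the Zariski closure of the stabilizer is the key step you are missing, and it cannot be bypassed by citing Lemma \ref{Zd} and Proposition \ref{sin1}.

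Two secondary points. In (ii), the claim that $g^{-1}H_Sg$ "is genuinely a subgroup of $L$" is false: the stabilizer of the smaller plane contains rotations about it that do not preserve $\hull(S^*)$; this does not hurt you since you intersect with $L$ anyway, but the sentence as written is not an argument. In (iii), the identity $[g]W=[g]L\cap[g]g^{-1}H_Sg$ is unjustified (only the inclusion $\subset$ is clear), so closedness of $[g]W$ does not follow as stated; if you insist on an intersection-type witness you would need the fact that orbits of intersections of subgroups with closed orbits are closed (\cite[Lemma 2.2]{Sh1}, as invoked in Proposition \ref{sha}), whereas with the paper's choice of $W$ closedness comes directly from Proposition \ref{count}.
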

\begin{proof}
Write $L=H(\widehat U)C\in \mathcal L_U$.
Since $S_0\subset S\subsetneq S^*$, there exists a connected proper subgroup $\tilde U $ of $\widehat U$, containing $U$
such that $S$ is the boundary of $\pi(gH(\tilde U))$.
Since $\Ga S$ is closed, $[g]H'(\tilde U)$ is closed by Proposition \ref{proper}. Now the claim follows from Proposition \ref{prop.countability}
and the  definition of $\mathscr{S}(U^\pm,[g]L)$.
\end{proof}

\begin{lemma}\label{lem.frameshift}
Let $U<N$ be a connected closed subgroup with dimension $m\ge 1$, and let $U_{\pm}^{(1)}, \cdots, U_{\pm}^{(m)}$ be one-parameter subgroups generating $U^{\pm}$.
Consider a closed orbit $yL$ where $L\in \mathcal L_U$ and $$y\in F_{H(U)}^*\cap\RFM\cap  \bigcap_{i=1}^m \mG(U_{\pm}^{(i)}, yL).$$ 
If  $x_n\to y$ in $yL$, then, by passing to a subsequence,
there exists a sequence  $h_n\to h$ in $H(U)$ so that  
$$x_nh_n \in \RFM\cap yL\quad\text{ and }yh\in \RFM\cap  \bigcap_{i=1}^m \mG(U^{(i)}_{\pm}, yL) .$$
\end{lemma}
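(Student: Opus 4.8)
The goal is to perturb a sequence $x_n\to y$ inside the closed orbit $yL$ to a sequence $x_nh_n\to yh$ with $h_n,h\in H(U)$ such that each $x_nh_n$ lands in $\RFM$ while the limit $yh$ stays in $\RFM$ and remains generic for all the one-parameter subgroups $U^{(i)}_\pm$ simultaneously. The first step is to translate the hypotheses into statements about limit points on spheres. Write $x_n=[g_n]$ with $g_n\to g$, where $y=[g]$, and let $C_n:=C_{g_nH(U)}$ be the boundary sphere of $\pi(g_nH(U))$, so that $C_n\to C_0=C_{gH(U)}=\partial\pi(gH(U))$ in $\mathcal C^{\dim U}$ (or, more precisely, in the space of round spheres in $\mathbb S^{d-1}$, after conjugating $U$ to a standard $U_k$ by an element of $M$ fixing $0$ and $\infty$). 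Because $y\in F^*_{H(U)}$, we have $C_0\not\subset\overline{B_i}$ for any component $B_i$ of $\Omega$, by the description \eqref{fffs} of $F^*_{H(U)}$. Hence Lemma \ref{lem.liminfnotsingleton} applies and gives $\#\limsup_{n\to\infty}(C_n\cap\Lambda)\ge 2$.

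\textbf{Choosing the translating elements.} Pick two distinct points $\xi^+,\xi^-$ in $\limsup_n(C_n\cap\Lambda)$; after passing to a subsequence there are $\xi_n^\pm\in C_n\cap\Lambda$ with $\xi_n^\pm\to\xi^\pm$. Since for any two distinct points of $C_n$ there is $h\in H(U)$ with $g_nh(\infty)=\xi_n^+$ and $g_nh(0)=\xi_n^-$ (the fact recorded right after \eqref{convex}), we get $h_n\in H(U)$ with $(g_nh_n)^\pm=\xi_n^\pm\in\Lambda$, i.e. $x_nh_n\in\RFM$; and since $yL$ is a closed $L$-invariant set containing $x_n$ and $h_n\in H(U)\subset L$, we have $x_nh_n\in yL$. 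By compactness of the relevant piece of $H(U)$ (or rather: $h_n$ can be chosen in a bounded subset of $H(U)$, since $\xi_n^\pm$ stay in a fixed compact part of $\mathbb S^{d-1}\smallsetminus\{$a neighborhood of the diagonal$\}$), we may pass to a further subsequence so that $h_n\to h\in H(U)$, whence $x_nh_n\to yh$ and $yh=[gh]$ with $(gh)^\pm=\xi^\pm\in\Lambda$, so $yh\in\RFM\cap yL$.

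\textbf{Maintaining genericity.} This is the main point, and where I expect the real work. We must ensure $yh\in\bigcap_{i=1}^m\mG(U^{(i)}_\pm,yL)$, i.e. $yh$ avoids the singular sets $\mathscr S(U^{(i)}_\pm,yL)$ for each $i$. The idea is that $yh$ fails to be generic for some $U^{(i)}_+$ (say) exactly when the point $gh$ lies in some $g_0(\widehat L\cap X(W,U^{(i)}_+))$ for $W\in\mathscr H^\star_{yL}$ with $\dim W_{nc}<\dim L_{nc}$; by Lemma \ref{lem.propersphere} (applied with the roles of the spheres: $S_0=\partial\pi(ghH(U^{(i)}))\subset\partial\pi(ghH(U))$ and $S^*=\partial\pi(gL)$), this forces the sphere $\partial\pi(ghH(U))$ to be contained in an intermediate sphere $S$ with $S_0\subset S\subsetneq S^*$ and $\Gamma S$ closed — equivalently, the two limit points $\xi^+,\xi^-$ both lie on such a proper intermediate sphere. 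But the set of pairs $(\eta^+,\eta^-)\in C_0\times C_0$ lying on a common proper intermediate sphere is a countable union of positive-codimension subvarieties of $C_0\times C_0$ (countability coming from Corollary \ref{rcount}/\ref{counth}, and positive codimension because each such intermediate sphere is proper in $S^*$), and likewise for $U^{(i)}_-$ and for all $i=1,\dots,m$. Meanwhile, the hypothesis $y=[g]\in\bigcap_i\mG(U^{(i)}_\pm,yL)$ tells us the "central" pair $(g(\infty),g(0))$ — or rather a nearby configuration — is generic, and by the same Lemma \ref{lem.liminfnotsingleton} argument we in fact have freedom to choose $(\xi^+,\xi^-)$ among uncountably many pairs in $\limsup_n(C_n\cap\Lambda)$ whenever that limsup is infinite; if it is merely a two-point set we instead need to argue that those two particular points cannot lie on a proper intermediate sphere, using that $C_0$ meets $\Lambda$ in a set not contained in any such sphere (a consequence of $C_0\cap\Lambda$ being Zariski dense in $C_0$, Lemma \ref{Zd}, together with $x_n\to y$ and the genericity of $y$). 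So after discarding the countably many "bad" pairs we obtain $(\xi^+,\xi^-)$ for which $yh$ is generic for every $U^{(i)}_\pm$, and then carrying out the construction of $h_n,h$ above with this choice completes the proof.

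\textbf{Expected obstacle.} The delicate part is the genericity bookkeeping: reconciling the two cases (limsup infinite vs. exactly two points), and checking that ``$yh$ singular for $U^{(i)}_\pm$'' really is governed by a countable union of proper subvarieties in the space of boundary-point pairs, which requires combining Lemma \ref{lem.propersphere}, the countability from Corollary \ref{counth}, and the Zariski-density Lemma \ref{Zd}; the frame-shift construction itself (the middle paragraph) is routine once the target configuration $(\xi^+,\xi^-)$ has been pinned down.
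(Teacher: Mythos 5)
Your middle paragraph — producing $h_n\in H(U)$ from a pair of limit points $\xi_n^{\pm}\in C_n\cap\La$ with $\xi_n^{\pm}\to\xi^{\pm}$ distinct, so that $(g_nh_n)^{\pm}=\xi_n^{\pm}$, $x_nh_n\in\RFM\cap yL$ and $h_n\to h$ — is essentially the paper's construction and is fine. The genericity step, which you yourself flag as the real work, has two genuine gaps. First, your criterion for when $yh$ fails to be generic is wrong in the direction you need. Lemma \ref{lem.propersphere} only says that containment of the boundary sphere in a proper intermediate sphere with closed $\Gamma$-orbit \emph{implies} singularity; to know which pairs $(\xi^+,\xi^-)$ to avoid you need the converse, and the correct converse (via Proposition \ref{explain}) is much weaker than what you assert: if $yh\in\mS(U^{(i)}_-,yL)$, there are $L_0\in\cal L_{U^{(i)}}$ and $\alpha\in N\cap L$ with $[gh]\alpha L_0$ closed, and since $\alpha$ fixes only the forward endpoint, the resulting sphere $S\in\cal Q$ contains $\xi^+$ but in general \emph{not} $\xi^-$, and certainly not the whole sphere $\partial\pi(ghH(U))$ nor the circle $\partial\pi(ghH(U^{(i)}))$. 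So singularity forces only one endpoint at a time into the bad set $\bigcup_{S\in\cal Q}(C_0\cap S)$ ($\xi^+$ for $U^{(i)}_-$, $\xi^-$ for $U^{(i)}_+$); a pair can be bad without lying on any common intermediate sphere, so excluding only the pairs you describe does not give $yh\in\bigcap_i\mG(U^{(i)}_{\pm},yL)$.

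Second, you have no mechanism for actually finding a good pair. Lemma \ref{lem.liminfnotsingleton} (stated for circles, not $m$-spheres) only yields $\#\limsup_n(C_n\cap\La)\ge 2$; when $\dim U\ge 2$ the bad set is a countable union of proper subspheres of the $m$-sphere, hence typically uncountable, so ``discarding the countably many bad pairs'' is not available, and your fallback in the two-point case — that $C_0\cap\La$ is not contained in an intermediate sphere — says nothing about whether the two specific accessible limit points avoid $\bigcup_{S\in\cal Q}(C_0\cap S)$. The paper resolves exactly this with two steps missing from your sketch: (a) it replaces the spheres $S_n$ by carefully chosen circles $C_n\to C_0\subset S_0$ with $g_0^{\pm}\in C_0$, $C_0\not\subset S$ for every $S\in\cal Q$ and $C_0\not\subset\partial B_i\cap S_0$ (possible because $\cal Q$ is countable and, by the genericity of $y$ together with Lemma \ref{lem.propersphere}, $S_0$ is not contained in any $S\in\cal Q$), so that each $C_0\cap S$ has at most two points and $\Psi=\bigcup_{S\in\cal Q}C_0\cap S$ is countable; and (b) it proves that $\limsup_n(C_n\cap\La)$ is \emph{uncountable}, by exhibiting a Hausdorff limit of $C_n\cap\La$ as a set homeomorphic to the one-point compactification of $\limsup_n\mathsf T_n$, where $\mathsf T_n$ are the globally $k$-thick return-time sets of Proposition \ref{defk}. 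Only with both of these can one pick $\xi^{\pm}\in\limsup_n(C_n\cap\La)-\Psi$, and then the one-endpoint-at-a-time singularity analysis above yields the genericity of $yh$.
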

\begin{proof} Recall from Section \ref{sec.notation} that $\mathcal C^k$ denotes the space of all oriented $k$-spheres in $\bb S^{d-1}$.
Let $g_0\in G$ be such that $y=[g_0]$ and $S^*$ denote the boundary of $\pi(g_0L)$.
Let $\mathcal Q$ be the collection of all spheres $S\subsetneq S^*$ such
 that $S\cap \La \ne \emptyset$ and $\Gamma S$ is closed
in $\mathcal C^{\op{dim}S}$. By Corollary \ref{rcount} and Remark \ref{Qcount}, $\cal Q$ is countable.
Choose a sequence $g_n \to g_0$ in $G$ as $n\to\infty$, so that $x_n=[g_n]$. 
Let $S_n$ and $S_0$ denote the boundaries of $\pi(g_nH(U))$ and $\pi(g_0H(U))$ respectively
so that $S_n\to S_0$ in $\mathcal C^m$ as $n\to\infty$.

We will choose a circle $C_0\subset S_0$ and a sequence of circles $C_n\subset S_n$ so that
$C_n \to C_0$ and $\limsup ( C_n\cap \La)$ contains two distinct points outside of $\cup_{S\in \cal Q} S$.
If $m=1$, we set $C_0=S_0$. When $m\geq 2$, we choose a circle $C_0\subset S_0$ as follows. 
Note that $S_0$ is not contained in any sphere in $\cal Q$ by the assumption on $y$ and Lemma \ref{lem.propersphere}.
Hence for any $S\in \cal Q$,  $S_0\cap S$ is a proper sub-sphere of $S_0$.
 Since $y\in F^*_{H(U)}$,
for any component $B_i$ of $\Omega$, $S_0\not\subset \cl{B_i}$ and hence
$S_0\cap \partial B_i$ is a proper sub-sphere of $S_0$. 
Choose a circle $C_0\subset S_0$  such that $\{g_0^+, g_0^- \}\subset C_0\cap \La$, 
 $C_0\not\subset S$ for any $S\in \cal Q$, and
$C_0\not\subset \partial B_i\cap S_0$ for all $i$.
This is possible, since $\cal Q$ is countable.
  Since $S_n\to S_0$, we can find a sequence of circles $C_n\subset S_n$  such that $C_n \to C_0$.
We claim that  $\limsup_n (C_n\cap\La) $ is uncountable.
Since $\# C_0\cap \La \ge 2$ and $C_0\not\subset \partial B_i$,
 $C_0\not\subset \overline{B_i}$ for all $i$.
Therefore, by Lemma \ref{lem.liminfnotsingleton}, for any infinite subsequence $C_{n_k}$ of $C_n$,
\begin{equation*}
\# \limsup_k (C_{n_k}\cap\La) \ge 2.
\end{equation*}
By passing to a subsequence, we can find two distinct points $\xi_n,\xi_n'\in C_n\cap\La$ which converge to two distinct points $\xi,\xi'$ of $ C_0\cap \Lambda$ respectively as $n\to\infty$.
Choose a sequence $p_n\to p \in G$ such that $p_n^+=\xi_n$, $p_n^-=\xi_n'$, $p^+=\xi$ and $p^-=\xi'$.
Let $\langle u_t\rangle<N$ be a one-parameter subgroup such that $p_nu_t^-=C_n-\{\xi_n\}$.
By Proposition \ref{lem.thickreturntime}, $T_n=\{t: [p_n]u_t\in \RFM\}$ is a global $k$-thick subset, and hence
$\mathcal T:=\limsup_n T_n$ is a global $k$-thick subset contained in the set $\{t: [p]u_t\in \RFM\}$. Then $C_n \cap \La$  converges, in the Hausdorff topology, to a compact subset $L\subset C_0\cap \La$ homeomorphic
to the one-point compactification of $\mathcal T$.
Therefore  $L$ is uncountable, so is $\limsup_n (C_n\cap\La)$, proving the claim.

Let $\Psi:=\cup_{S\in \cal Q} C_0\cap S$, i.e., the union of all possible intersection points of $C_0$ and spheres in $\cal Q$.
Since $C_0\not\subset S$ for any $S\in \cal Q$, $\#C_0\cap S \le 2$.
Hence $\Psi$ is countable, and hence  $\limsup_n (C_n\cap\La) -\Psi$ is uncountable.
Note that this works for any infinite subsequence of $C_n$'s.
Therefore we can choose sequences
 $\xi_n^-,\xi_n^+\in C_n\cap\La$ converging to
  distinct points $\xi^-,\xi^+$ of $(C_0\cap \La)-\Psi$ respectively, by passing to a subsequence.
 As $\xi^-,\xi^+\in C_0$ and $C_0\subset S_0$, there exists a frame $g_0h=(v_0, \cdots, v_{d-1})\in g_0H(U)$ whose first vector $v_0$ is tangent to the geodesic $[\xi^-, \xi^+]$. Setting $g:=g_0h$, we claim that 
$$[g] \in\bigcap_i \mG(U^{(i)}_\pm,yL).$$
Suppose that  $[g] \in\mS(U^{(i)}_\pm,yL)$ for some $i$.
We will assume $[g]\in\mS(U^{(i)}_-,yL)$, as the case when $[g]\in\mS(U^{(i)}_+,yL)$ can be dealt similarly, by changing the role of $g^-$ and $g^+$ below. 
For simplicity, set $U^{(i)}:=U^{(i)}_- $. Now by Proposition \ref{explain},
there exist $L_0\in\cal L_{U^{(i)}}$  and $\alpha\in N\cap L$ such that $(L_0)_{nc}\lneq L_{nc}$ and $[g] \alpha L_0$ is closed. 
Let $S$ denote the boundary of $\pi(g\alpha L_0)$. Since $\alpha  \in N\cap L$, we have
$(g\alpha)^+=g^+=\xi^+  \in S\cap \La\cap C_0$. Since $S\subsetneq S^*$, $S\cap \La \ne \emptyset$ and
 $\Ga S$ is closed, we have $S\in \cal Q$.
It follows that $\xi^+\in\Psi$, contradicting the choice of $\xi^+$. This proves the claim.

Now choose a vector $v_0^{(n)}$ which is tangent to the geodesic $[\xi_n^-,\xi_n^+]$.
We then extend $v_0^{(n)}$ to a frame $g_nh_n\in g_nH(U)$ so that $g_nh_n$ converges to $g=g_0h$ as $n\to\infty$.
Since $\{\xi_n^{\pm}\}\subset \La$, we have $[g_nh_n]\in \RFM$.
This completes the proof.
\end{proof}

We will need the following lemma later.
\begin{lemma}\label{lem.bddgeodimage} Let $k\ge 1$.
Let $\chi$ be a $k$-horosphere in $\bb{H}^{k+1}$ resting at $p\in\partial\bb{H}^{k+1}$, and $\cal P$ be the geodesic $k$-plane in $\bb{H}^{k+1}$.
Let $\xi\in\partial\cal P$, $\delta$ be the geodesic joining $\xi$ and $p$, and $q=\delta\cap \chi$.
There exists $R_0>1 $ such that for any $R>R_0$, if $d(\chi,\cal P)<R-1$, then $d(q,\cal P)<R$.
\end{lemma}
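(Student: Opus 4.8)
The plan is to work entirely inside the $2$-dimensional hyperbolic geometry of $\bb{H}^{k+1}$ and reduce to an explicit estimate in the upper half-plane containing the relevant configuration. First I would fix the model: normalize coordinates so that $p=\infty$, so that $\chi$ is a horizontal Euclidean $k$-plane at height $1$ (say), and so that the geodesic $\delta$ joining $\xi$ and $p$ is a vertical line. Then $q=\delta\cap\chi$ is the point on that vertical line at height $1$. The geodesic $k$-plane $\cal P$ has $\xi\in\partial\cal P$, so $\cal P$ (viewed in the ball/half-space model) is a Euclidean hemisphere (or vertical hyperplane) whose boundary sphere passes through $\xi$; since $\delta$ ends at $\xi$, the vertical line $\delta$ is tangent to $\cal P$ at $\xi$ in the boundary. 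The key geometric fact is that $q$, $\cal P$, and $\chi$ all lie in the totally geodesic hyperbolic plane $\Pi$ spanned by $\delta$ and the geodesic in $\cal P$ through $\xi$ pointing "toward" $\cal P$; more precisely, the nearest-point projections of $q$ and of $\chi$ onto $\cal P$ land in a common geodesic plane, so all four distances $d(q,\cal P)$, $d(\chi,\cal P)$, and the relevant foot-points can be computed in a single copy of $\bb{H}^2$. This is the reduction step.

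Next, in that copy of $\bb{H}^2$ I would set up the standard half-plane picture: $\cal P$ becomes a geodesic (a semicircle or vertical line), $\delta$ a vertical geodesic ending at the endpoint $\xi$ of $\cal P$ on the real axis, $\chi$ a horocycle based at $\infty$ (a horizontal line at some height $y_0$), and $q=(\,\xi\text{-coordinate},\,y_0)$ the point where $\delta$ meets $\chi$. Then $d(\chi,\cal P)$ is realized along a geodesic perpendicular to $\cal P$ hitting the horocycle $\chi$, and $d(q,\cal P)=d(q,\text{foot of }q\text{ on }\cal P)$. Both are now elementary functions of the one remaining parameter — essentially the ratio of $y_0$ to the Euclidean radius of the semicircle $\cal P$, together with the horizontal offset, which is zero because $\delta$ passes through the endpoint $\xi$ of $\cal P$. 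I expect the computation to show $d(q,\cal P)=d(\chi,\cal P)+c$ for an explicit bounded correction term $c=c(\cdot)$ that stays below $1$ once $d(\chi,\cal P)$ is large enough; concretely, as $d(\chi,\cal P)\to\infty$ the point $q$ sits essentially "directly above" the closest point of $\cal P$ along the cusp at $\xi$, and the horocycle through $q$ is the one realizing $d(\chi,\cal P)$ up to an $O(1)$ error. Choosing $R_0$ so that this $O(1)$ error plus the slack is less than $1$ for all $R>R_0$ gives the claim: $d(\chi,\cal P)<R-1 \Rightarrow d(q,\cal P)<R$.

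Concretely, the steps in order: (1) reduce to $\bb{H}^2$ by the totally-geodesic-plane argument above; (2) place $\cal P$ as the unit semicircle $\{x^2+y^2=1,\ y>0\}$ with endpoint $\xi=1$, place $\delta=\{x=1,\ y>0\}$, and $\chi=\{y=y_0\}$, so $q=(1,y_0)$; (3) compute $d(q,\cal P)$ as an explicit monotone function of $y_0$ (using that the distance from a point to a geodesic in $\bb{H}^2$ has a closed form), and likewise $d(\chi,\cal P)$ — here one must be slightly careful since a horocycle is not at bounded distance from a geodesic in general, but $d(\chi,\cal P)$ is still finite because $\xi=1\in\partial\cal P$ lies on the real axis and the horocycle is based at $\infty\ne 1$, so the horocycle eventually exits every neighborhood of $\partial\cal P\cap\mathbb{R}$; in fact $d(\chi,\cal P)=\operatorname{arcsinh}$ of something like $\tfrac{1}{2}(y_0 + y_0^{-1}) - \text{(lower order)}$, comparable to $\log y_0$; (4) compare the two expressions and extract $R_0$. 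The main obstacle I anticipate is step (1): justifying that $q$, $\cal P$, $\chi$, and all the relevant foot-points genuinely lie in one totally geodesic $\bb{H}^2\subset\bb{H}^{k+1}$, so that the higher-dimensional distances equal the $2$-dimensional ones. The cleanest way around it is to note that the group of isometries of $\bb{H}^{k+1}$ fixing $\xi$ and $p$ (hence the vertical line $\delta$) acts on $\cal P$ fixing $\xi$, and one can use a rotation in this stabilizer to move the foot-point of $q$ on $\cal P$, and the nearest point of $\chi$ to $\cal P$, into the plane $\Pi=\operatorname{span}(\delta,\,\text{geodesic in }\cal P\text{ through }\xi\text{ orthogonal to }\partial\cal P)$; once everything is in $\Pi$ the problem is genuinely $2$-dimensional. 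I do not expect the final inequality to be delicate — any crude bound on the $O(1)$ correction suffices — so the real content is organizing the reduction to $\bb{H}^2$ cleanly.
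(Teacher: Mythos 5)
Your overall route is the same as the paper's: reduce to a totally geodesic copy of $\bb{H}^2$ containing $q$ and meeting $\cal P$ orthogonally, and then settle the two-dimensional case. The only difference is that the paper simply quotes the $k=1$ case from \cite[Lemma 4.2]{MMO2}, whereas you propose to recompute it; your computation is correct and in fact clean: with $\cal P$ the unit semicircle, $\xi=1$, $\chi=\{y=y_0\}$, $q=(1,y_0)$, one has $\sinh d(q,\cal P)=y_0/2$ and $\sinh d(\chi,\cal P)=\tfrac12(y_0-y_0^{-1})$ for $y_0\ge 1$ (and $d(\chi,\cal P)=0$ for $y_0\le 1$), so $d(q,\cal P)-d(\chi,\cal P)$ is decreasing in $y_0$ and bounded by $\operatorname{arcsinh}(1/2)<1$, which gives the lemma (even with any $R_0>1$).

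The one genuine flaw is in your justification of the reduction step. The group of isometries of $\bb{H}^{k+1}$ fixing $\xi$ and $p$ does \emph{not} act on $\cal P$: in the upper half-space model with $p=\infty$ and $\partial\cal P$ the unit sphere, that stabilizer consists of dilations and rotations about the vertical line over $\xi$, and these move the hemisphere $\cal P$; moreover rotating "the foot-point of $q$ into $\Pi$" would rotate $\cal P$ along with it, so it proves nothing about the original configuration. The correct symmetry argument uses a different subgroup: let $\Pi$ be the vertical $2$-plane over the Euclidean line through $\xi$ and the center of the sphere $\partial\cal P$ (this is your intended plane, and it is the paper's choice: it contains $\delta$ and $q$ and meets $\cal P$ orthogonally). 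The rotations of $\br^{k}$ fixing that line pointwise extend to isometries of $\bb{H}^{k+1}$ which fix $\Pi$ pointwise and preserve both $\cal P$ and $\chi$; by uniqueness of the nearest point on the totally geodesic plane $\cal P$ (and of the common perpendicular between $\chi$ and $\cal P$ when they are disjoint), the foot of $q$ and the segment realizing $d(\chi,\cal P)$ are fixed by this group, hence lie in $\Pi$. This yields exactly the identities $d_{\bb{H}^{k+1}}(q,\cal P)=d_{\Pi}(q,\cal P\cap\Pi)$ and $d_{\bb{H}^{k+1}}(\chi,\cal P)=d_{\Pi}(\chi\cap\Pi,\cal P\cap\Pi)$ that the paper asserts; alternatively one can bypass the reduction entirely, since the half-space formula $\sinh d((x,t),\cal P)=\bigl|\,|x|^2+t^2-1\bigr|/(2t)$ is valid in all dimensions and gives the same two quantities directly. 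With that repair your argument is complete.
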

\begin{proof}
For $k=1$, this is shown in \cite[Lemma 4.2]{MMO2}. Now let $k\ge 2$.
Consider a geodesic plane $\bb{H}^2\subset\bb{H}^{k+1}$ which passes through $q$ and orthogonal to $\cal P$.
Then $\chi\cap\bb{H}^2$  and $\cal P\cap\bb{H}^2$ are  a horocycle and a geodesic
in $\bb{H}^2$ respectively.
As  $d_{\bb{H}^{k+1}}(\chi,\cal P)=d_{\bb{H}^2}(\chi\cap\bb{H}^2,\cal P\cap\bb{H}^2)$ and  $d_{\bb{H}^{k+1}}(q,\cal P)=d_{\bb{H}^2}(q,\cal P\cap\bb{H}^2)$,
the conclusion follows from the case $k=1$.
\end{proof}
Recall the definition of $\check H=H(U_{d-2})$ from Section \ref{s:ri}.
\begin{lemma}\label{lem.zc3}
Let $U<\check H\cap N$  be a non-trivial connected closed subgroup.
If the boundary of $\pi(gH(U))$ is contained in $\partial B$ for some component $B$ of $\Omega$, then $[g]\in\BFM\cdot\op{C}(H(U))$.
\end{lemma}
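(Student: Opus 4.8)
The statement asserts that if the boundary sphere $S_0 := \partial(\pi(gH(U)))$ is contained in $\partial B$ for some component $B$ of $\Omega$, then $[g]$ lies in $\BFM\cdot\op{C}(H(U))$; that is, $g$ (up to $\Gamma$ on the left and $\op{C}(H(U))$ on the right) is a boundary frame sitting over $\partial B$. The idea is entirely geometric: the hypothesis forces the geodesic $(k+1)$-plane $\pi(gH(U))$ to lie inside $\hull(\partial B)$, and since $\partial B = \partial(\pi(z_i\check H))$ for some index $i$ (where $\BFM = \bigcup_i z_i\check H$ and $\pi(z_i\check H) = \Gamma\ba\Gamma\hull(B)$), one should be able to realign the frame $g$ so that its first $d-1$ vectors become tangent to the boundary hypersurface of $\core M$ lying over $\partial B$.

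\textbf{Key steps.} First I would reduce to the standard subgroup: choosing $m\in M$ with $mUm^{-1} \subset \check H\cap N$, replace $U$ by $U_j = mUm^{-1}$ (where $j = \dim U$) and $g$ by $gm^{-1}$, using that $M$ fixes $0$ and $\infty$ so $C_{gm^{-1}} = C_g$ and $H(U_j) = mH(U)m^{-1}$; this is harmless since $\BFM\cdot\op{C}(H(U))$ transforms correspondingly. So assume $U = U_j \subset \check H\cap N$ and $H(U) = H(U_j) = \SO^\circ(j+1,1) \subset \check H = \SO^\circ(d-1,1)$. Next, the hypothesis $S_0 = C_g = gH(U)^+ \subset \partial B$ together with $\partial B = gz H'(\check V\text{-stuff})$... more precisely: lift $B$ to one of the balls $B_i$ with $\partial(\pi(z_i\check H)) = \Gamma$-orbit of $\partial B_i$; then there is $\gamma\in\Gamma$ with $\gamma C_g \subset \partial B_i$. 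Replacing $g$ by $\gamma g$ (allowed), we get $C_g \subset \partial B_i = z_i\check H^+$ (identifying $z_i$ with a representative in $G$ and $\partial B_i$ with the boundary sphere of the geodesic hyperplane $\pi(z_i\check H)$). The inclusion of boundary spheres $gH(U)^+ \subset z_i\check H^+$ of the respective totally geodesic subspaces then forces, by the rigidity of totally geodesic subspaces of $\bH^d$ (a $(j+1)$-plane whose boundary lies in the boundary of a $(d-1)$-plane must lie inside that $(d-1)$-plane), that $\pi(gH(U)) \subset \pi(z_i\check H)$, hence $gH(U) \subset z_i\check H\cdot\op{Stab}$, i.e.\ $g \in z_i\check H\cdot\op{N}(H(U))$ modulo the subgroup fixing the hyperplane pointwise. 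Unwinding: $g \in z_i\check H\cdot\op{C}(H(U))$ after also absorbing the $M\cap\check H$ factors that rotate the frame within $z_i\check H$ — because $\check H$ acts transitively on frames tangent to its own hyperplane, any such $g$ can be written $z_i h c$ with $h\in\check H$, $c\in\op{C}(H(U))$, and then $z_i h \in \BFM$ since $\BFM = \bigcup z_i\check H$ is $\check H$-invariant. This gives $[g] = [z_i h c] \in \BFM\cdot\op{C}(H(U))$.

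\textbf{Main obstacle.} The delicate point is the passage from the containment of boundary spheres $C_g \subset \partial B_i$ to the containment of frames $g \in z_i\check H\cdot\op{C}(H(U))$, keeping precise track of which right-multiplications are permitted. One must check that the stabilizer in $G$ of the totally geodesic $(k+1)$-plane $\pi(gH(U))$ that also acts trivially on it is contained in $\op{C}(H(U))$ after the standardization, and that the ambiguity in choosing $h\in\check H$ realizing the frame is exactly absorbed by $\check H$-invariance of $\BFM$. A clean way to organize this: work with the boundary-frame characterization directly — $g$ is a boundary frame over $\partial B_i$ iff its first $d-1$ vectors are tangent to the geodesic hyperplane with boundary $\partial B_i$ — and show $gH(U)$ contains such a frame, using that $C_g \subset \partial B_i$ together with $\#(C_g\cap\Lambda)$ possibly being $0$ or $1$ here (note $C_g \subset \partial B_i$ means $C_g\cap\Lambda \subset \partial B_i\cap\Lambda$, which could be empty if $B_i$ is a boundary ball whose bounding sphere meets $\Lambda$ only in a thin set — but $\partial B_i\cap\Lambda = \Lambda(p(\Gamma_{\tilde S}))$ is the full limit set of a cocompact Fuchsian group, hence uncountable). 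So in fact $C_g\cap\Lambda$ is large, and one can choose two distinct limit points $\xi^\pm \in C_g\cap\partial B_i\cap\Lambda$, extend to a frame in $gH(U)$ with $v_0$ tangent to $[\xi^-,\xi^+]$ and $v_1,\dots,v_j$ spanning the tangent to $\pi(gH(U))$ within the hyperplane $\hull(\partial B_i)$, then rotate within $\check H$ to make $v_0,\dots,v_{d-2}$ tangent to $\hull(\partial B_i)$; the residual freedom is an element of $\op{C}(H(U)) = \SO(d-j-1)$. The verification that all these moves stay inside the asserted set, done carefully, completes the proof.
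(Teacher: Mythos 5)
Your proposal is correct and takes essentially the same route as the paper: reduce to the standard $U=U_k$ by conjugation, use that $\pi(gH(U))=\hull(C_g)\subset\hull(\partial B)$, and then compensate the frame by an element of $\op{C}(H(U))$, with any $H(U)$- or $\check H$-factor absorbed by the right $\check H$-invariance of $\BFM$. Two cosmetic points: the conjugating element should be taken in $\check H\cap M$ (so that $\BFM m=\BFM$; a general $m\in M$ does not preserve $\BFM$), and the detour through limit points $\xi^{\pm}\in\Lambda$ is unnecessary since the lemma requires no recurrence (indeed $\partial B\subset\Lambda$, so $C_g\cap\Lambda$ is automatically all of $C_g$), while the final adjusting rotation is by $\op{C}(H(U))$ rather than by a general element of $\check H$.
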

\begin{proof}
As  $U$ is equal to $mU_km^{-1}$ for some $m\in\check H\cap M$ and $1\le k\le d-2$, the general case is easily reduced to the case when  $U=U_k$.
Since $g=(v_0, \cdots, v_{d})$ has its first $(k+1)$-vectors tangent to the geodesic $(k+1)$-plane $\pi(gH(U_k))$ and
$\partial (\pi (gH(U_k)) \subset\partial B$, we can 
use an element $c\in \op{C}(H(U_k))=\SO(d-k-2)$ to modify the next  $(d-k-2)$-vectors so that $gc$
has its first $(d-1)$-vectors tangent to $\op{hull}(\partial B)$. Then $[gc]\in\BFM$, proving the claim.
\end{proof}

\begin{lemma}\label{lem.geometric}\label{geo} Let $U<\check H\cap N$ be a non-trivial connected closed subgroup.
If $x_n\in\op{RF}\M\cdot U$ is a sequence converging to some $x\in\op{RF}\M$, then passing to a subsequence, there exists $u_n\in U$ such that $x_nu_n\in\op{RF}\M$ and at least one of the following holds:
\begin{enumerate}
\item
$u_n\to e$ and hence $x_nu_n\to x$, or
\item
$x=zc$ for some $z\in\op{BF}\M$ with $c\in\op{C}(H(U))$, and $x_nu_n$ accumulates on $z\check{H}c$.
\end{enumerate}
\end{lemma}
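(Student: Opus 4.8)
\textbf{Proof proposal for Lemma \ref{lem.geometric}.}

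The plan is to lift everything to $\mathbb{H}^d$ and argue with the geometry of the limit set and the round balls $B_i$ comprising $\Omega$. Write $x_n = [g_n]$ with $g_n \to g$, where $[g] = x \in \RFM$, so $g^\pm \in \La$. Since $x_n \in \RFM\cdot U$, we may write $x_n = y_n v_n$ with $y_n \in \RFM$ and $v_n \in U$; equivalently, after adjusting representatives, $g_n = \gamma_n h_n$ with $[h_n]\in \RFM$ and $g_n h_n^{-1}$ bounded — more cleanly, the hypothesis says precisely that for each $n$ there is some $w \in U$ with $[g_n]w \in \RFM$, i.e. the set $\mathsf T_n := \{\, u \in U : [g_n]u \in \RFM \,\}$ is non-empty. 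The first dichotomy to set up: either the sets $\mathsf T_n$ stay within a bounded region of $U$ (after passing to a subsequence), or they escape to infinity. In the bounded case, pick $u_n \in \mathsf T_n$ with $u_n \to u_\infty \in U$ by compactness; then $x_n u_n \to [g]u_\infty$. Since $\RFM$ is closed and $x_nu_n \in \RFM$, we get $[g]u_\infty \in \RFM$. If $u_\infty = e$ we are in case (1). If $u_\infty \neq e$, then $x \in \RFM$ and $xu_\infty \in \RFM$, so by Lemma \ref{lem.R0} (applied with a one-parameter subgroup of $U$ through $u_\infty$, or its analogue) we would land in $F^*_{H(U)}$; but actually what we want here is case (2), so the bounded-with-$u_\infty \neq e$ situation needs to be folded into case (1) by instead choosing $u_n$ as close to $e$ as possible within $\mathsf T_n$ — this requires knowing $\mathsf T_n$ accumulates at $e$, which is false in general, so the correct split is really ``$0$ is an accumulation point of $\bigcup_n \mathsf T_n \cdot$(translates)'' versus not.

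Let me restructure. The cleaner approach: consider the circles (i.e. $\dim U$-spheres) $S_n := \partial\pi(g_n H(U))$ and $S := \partial\pi(g H(U))$, so $S_n \to S$ in the appropriate $\mathcal C^{\dim U}$. Since $g^\pm \in \La$ and $g^\pm \in S$, we have $\# S \cap \La \ge 2$, whence $x \in \RFM\cdot U$ already and in particular $x \in \RFM\cdot H(U)$. Two cases by whether $S \subset \overline{B}$ for some component $B$ of $\Omega$. If $S \not\subset \overline{B_i}$ for all $i$: then $x = [g] \in F^*_{H(U)}$ by \eqref{fffs}, hence $x \in \RFM \cap F^*_{H(U)} \subset \RFM\cdot U$ by Lemma \ref{lem.R1}... and more importantly, $S_n$ is close to $S$ which meets $\La$ in $\ge 2$ points robustly, so by Lemma \ref{lem.liminfnotsingleton} the sets $S_n \cap \La$ have $\limsup$ of cardinality $\ge 2$; one extracts (after passing to a subsequence) points $\xi_n^\pm \in S_n \cap \La = g_nU(0)\cup\{g_n(\infty)\}$ converging to distinct $\xi^\pm \in S\cap\La$, picks $u_n \in U$ with $(g_nu_n)^- = \xi_n^-$ (possible since $\xi_n^- \in g_nU(0)$) and with $g_nu_n \to gh$ for the appropriate $h\in H(U)$ realizing the geodesic $[\xi^-,\xi^+]$; here one must also control $(g_nu_n)^+$, which stays at $g_n(\infty)\to g(\infty) \in \La$ if $U$ acts trivially on the forward endpoint — true since $U < N$ fixes $\infty$. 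Then $[g_nu_n]\in\RFM$ and, after adjusting by a further bounded element, $u_n \to e$: this is case (1), provided the $\xi_n^\pm$ could be chosen converging to $g^\pm$ themselves, which is arranged by taking $\xi_n^- \to g^- = g(0)$ and $\xi_n^+ = g_n(\infty)$.

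If instead $S \subset \overline{B}$ for some component $B$ of $\Omega$: since $\# S \cap \La \ge 2$ forces $S \cap \partial B$ to contain $\ge 2$ points and $B$ is a round ball, $S \subset \partial B$; then by Lemma \ref{lem.zc3}, $x = [g] \in \BFM\cdot\op{C}(H(U))$, so write $x = zc$ with $z \in \BFM$, $c \in \op{C}(H(U))$. This is the source of case (2). Now $S_n \to S \subset \partial B$, and we must produce $u_n \in U$ with $[g_nu_n] \in \RFM$ accumulating on $z\check H c$. The point is that $g_n(\infty) \to g(\infty) \in \partial B$, and $\partial B = z\check H^+$ (after absorbing $c$), so for large $n$ the forward endpoint $g_n(\infty)$ is near $\partial B$; choosing $\xi_n^- \in g_nU(0) \cap \La$ — which exists since $x_n \in \RFM\cdot U$ means $S_n\cap\La \ne \emptyset$, and in fact contains a point of $\partial B$ or of $\La \cap \partial B_j$ nearby — one arranges $[g_nu_n] \in \RFM$ with $(g_nu_n)^-$ ranging over a set whose closure, as $n\to\infty$, sweeps out $\partial B \cap \La$; since $\check H$ acts transitively on pairs of points of $\partial B = \check H^+$, the orbit closure $\bigcup_n [g_nu_n]$ accumulates on all of $z\check H c$. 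The main obstacle is precisely this last step: showing that the accumulation is on the \emph{entire} compact orbit $z\check H c$ and not merely on some proper subset. I expect this to follow by combining the $k$-thick recurrence of Proposition \ref{lem.thickreturntime} (which guarantees $\{t : [g_nu_n]a_t \in \RFM\}$, or rather the $\check V$-translates staying in $\RFM$, is thick and forces the backward endpoints to populate $\partial B$ densely) with the minimality of the $\check H$-action on $z\check H$ (a compact orbit of the semisimple group $\check H = \SO^\circ(d-1,1)$, whose $N$-horospherical action is minimal on $z\check H \cap \RFPM$ — cf. section \ref{s:um}); concretely, one shows $\limsup_n \overline{\{[g_nu_n]\}} \supset z\check H c$ by checking it is a non-empty closed $\check H c$-invariant... no, rather $\check V^{\pm}$ and $N\cap\check H$-invariant... subset, hence everything. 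Passing to the subsequence where one of the two cases holds persistently completes the proof.
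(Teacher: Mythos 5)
Your overall skeleton --- split according to whether the limit sphere $C_0=\partial\pi(g_0H(U))$ lies in $\overline{B}$ for some component $B$ of $\Omega$, and invoke Lemma \ref{lem.zc3} in the boundary case to write $x=zc$ --- matches the paper's, but both substantive steps are missing. In your Case A ($C_0\not\subset\overline{B_i}$ for all $i$, so $x\in F^*$) you must actually produce $u_n\to e$ with $g_nu_n(0)\in\Lambda$, i.e.\ points of $\Lambda$ on $g_nU(0)$ at $U$-parameter distance $o(1)$ from $g_n(0)$. Lemma \ref{lem.liminfnotsingleton} only says that $\limsup_n(C_n\cap\Lambda)$ contains two points; it gives no control whatsoever on where they sit, so ``arranged by taking $\xi_n^-\to g^-$'' is precisely the assertion to be proved, not a choice you are free to make. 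The paper proves it by taking $u_n$ of \emph{minimal} norm subject to $g_nu_n(0)\in\Lambda$ and showing that if $\alpha=\limsup_n\|u_n\|>0$, then the caps $g_nB_U(\|u_n\|)(0)\subset\Omega$ eventually lie in a single component $B$ and limit onto a cap of $C_0$ inside $\overline B$ whose closure meets $\partial B$ in two points (the center $g_0(0)$, plus a second point supplied exactly by the minimality of $\|u_n\|$), which forces $C_0\subset\partial B$ --- contradicting Case A. Without some version of this maximal-cap argument your Case A is circular.

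In your Case B the identification $x=zc\in\op{BF}M\cdot\op{C}(H(U))$ is fine, but the heart of conclusion (2) --- that the points $x_nu_n\in\op{RF}M$ actually accumulate on $z\check{H}c$ --- is left as a plan (``I expect this to follow\dots''), and the plan aims at the wrong target: ``accumulates on $z\check{H}c$'' in the statement means that a subsequence of $x_nu_n$ converges to \emph{a point of} $z\check{H}c$ (this is how the lemma is used in Proposition \ref{cs} and Lemma \ref{onetwo}), not that every point of the orbit is a limit point, so minimality of the $\check H$- or horospherical action and thick recurrence are beside the point. What is needed, and what the paper supplies, is a hyperbolic-geometry estimate showing that for the minimal-norm $u_n$ the basepoint $\pi(g_nu_n)$ enters any $\epsilon$-neighborhood of the plane $\hull(\partial B)=\pi(z\check H)$: one works inside the planes $\cal H_n=\hull(C_n)$, compares the distance from the horosphere $\chi_n=\pi(g_nU)$ to $\cal P_n=\hull(C_n\cap\partial B)$ (shown to be uniformly bounded) with the radius $R_n\to\infty$ describing $\cal N_\epsilon(\hull(\partial B))\cap\cal H_n$, and applies Lemma \ref{lem.bddgeodimage}; compactness of $z\check H$ then gives the accumulation. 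None of this appears in your proposal, so as written neither case is complete.
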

\begin{proof}
If $x_n$ belongs to $\op{RF}\M$ for infinitely many $n$, we simply take $u_n=e$.
So assume that $x_n\not\in\op{RF}\M$ for all $n$.
Choose a sequence $g_n\to g_0$ in $G$ so that $x_n=[g_n]$ and $x=[ g_0]$.
As $x\in \RFM$, we have $\{g_0(0), g_0(\infty) \}\subset \La$.  As $x_n \in \RFPM -\RFM$, 
we have $g_n(\infty)\in \La$ and $g_n(0)\in \Omega$.
For each $n$, choose an element $u_n\in U$ so that
$0< \alpha_n:=\|u_n\| \le \infty$ is the minimum of $\|u\|$ for all $u\in U$ satisfying  $g_nu(0) \in\La$.
Set
\begin{equation*}
\alpha:=\limsup\limits_{n}\alpha_n.
\end{equation*}
If $\alpha=0$, then we are in case (1). Hence we will assume $0<\alpha\leq\infty$.
Let $C_n$ denote the boundary of $\pi(g_nH(U))$ and $C_0$ the boundary of $\pi(g_0H(U))$. Then $C_n\to  C_0$ in $\cal C^{\op{dim} U}$.
Recall that $B_U(r)$ denotes the ball of radius $r$ centered at $0$ inside $U$.
Set $$\cal B_n:=g_nB_U(\alpha_n)(0) \;\;\text{ and } \;\;\cal B_0:= g_0B_U(\alpha)(0).$$
Then $\cal B_n\subset C_n\cap \Omega $, and $\partial \cal B_n\cap \La\ne \emptyset$ by the choice of $u_n$.
By passing to a subsequence, we have $\alpha_n \to \alpha$ and $\cal B_n\to \cal B_0$ as $n\to\infty$ and hence
the diameter of $\cal B_n$ in $\bb{S}^{d-1}$ is bounded below by some positive number.
Hence, passing to a subsequence, we may assume that
$\cal B_n$ are all contained in the same component, say $B$ of $\Omega$.
Consequently, $\cal B_0\subset\cl B$.

We claim that $\# \cl{\cal B_0}\cap \partial B\ge 2$.
First note that  $g_0(0)\in\La$. 
If $\alpha=\infty$, then $g_n u_n (0)\to g_0(\infty)\in \La\cap \overline{\cal B_0}$.
If $\alpha<\infty$, then $u_n $ converges to some $ u\in U$, passing to a subsequence, and $u\ne e$, as $\alpha>0$.
Now, $g_nu_n(0)\to g_0u(0)\in \La \cap \overline{\cal B_0}$. Since $\La\cap \cl{B}\subset \partial B$, this  proves the claim.

  Therefore $\cal B_0$ is contained in $\partial B$, and hence so is $C_0$. By Lemma \ref{lem.zc3}, this implies that $x=zc$ for some $z\in \BFM$ and $c\in\op{C}(H(U))$.
We proceed to show that $x_nu_n$ accumulates on $z\check Hc$.
Since $c\in\op{C}(H(U))$, we may assume $c=e$ by replacing $x$ with $xc^{-1}$, and $x_n$ with $x_nc^{-1}$.

We claim that the distance between $\pi(g_nu_n)$ and the plane $\pi(g_0\check H)$ tends to $0$ as $n\to\infty$.
Since $x\check H=[g_0]\check H$ is compact, $g_nu_n\in g_n\check H$ and $\pi(g_n\check H)$ is a geodesic plane nearly parallel to $\pi(g_0\check H)$ for all  large $n$,
 this claim implies  that $[g_n]u_n$ accumulates on $z\check H$, completing the proof.

Now, to prove the claim, let $D_n:=C_n\cap \partial B$, and $\cal P_n:=\hu(D_n)$. Let $k=\op{dim}U$.
Since $C_n$ is a $k$-sphere meeting the $(d-2)$-sphere $\partial B\subset \mathbb S^{d-1}$, and $C_n\not\subset\partial B$, it follows that $D_n$ is a $(k-1)$-sphere.
We set $\cal{H}_n:=\op{hull}(C_n)$, $\cal H_0:=\op{hull}(C_0)$ and $\cal H:=\op{hull}(\partial B)=\pi(g_0\check H)$.
Then $\cal H_n\cap \cal H=\cal P_n$.
Let $\epsilon>0$ be arbitrary, and $\cal{N}_\epsilon(\cal H) $ denote  the $\epsilon$-neighborhood of $\cal H$ in $\bb{H}^d$.
Letting $d_{\cal H_n}(\cdot,\cdot)$ denote the hyperbolic distance in $\cal H_n$, we may write
\begin{equation*}
\cal{N}_\epsilon (\cal H) \cap \cal{H}_n=\{p\in \cal {H}_n : d_{\cal H_n}(p, \cal P_n)< R_n\}
\end{equation*}
for some $R_n>0$.
This is because $\cal{N}_\epsilon (\cal H) \cap \cal H_n$ is convex and invariant under family of isometries, whose axes of translation and rotation are contained in $\cal P_n$.
As $C_n\to C_0\subset \partial B$ as $n\to\infty$, it follows that $R_n\to\infty$ as $n\to\infty$.
Let $\chi_n\coloneqq\pi(g_nU)$, and $\chi_0\coloneqq\pi(g_0U)$, which are $k$-horospheres contained in $\cal H_n$ and $\cal H_0$ respectively.

We next show that there is a uniform upper bound for $d_{\cal H_n}(\cal P_n,\chi_n)$, $n\in \bb N$.
To see this, we only need to consider those $\cal P_n$'s which are disjoint from $\chi_n$, as $d_{\cal H_n}(\cal P_n,\chi_n)=0$ otherwise.
Since $\chi_n\to\chi_0$ and $C_n\to C_0$ as $n\to\infty$, it suffices to check that the diameters of $D_n$ with respect to the
spherical metric on $\bb{S}^{d-1}$ have a uniform positive lower bound.
Let us write $C_n-D_n=E_n\cup E_n'$, where $E_n$ is a connected component of $C_n-D_n$ meeting $B$, and $E_n'$ is the other component.
Since $C_n\to C_0$ as $n\to\infty$, a uniform lower bound for both $\op{diam}(E_n)$ and $\op{diam}(E_n')$ will give a uniform upper bound for $\op{diam}(D_n)$.
Since $\cal B_n\subset E_n$, $\op{diam}(E_n)>\op{diam}(\cal B_0)/2$ for all sufficiently large $n$.
On the other hand, note that $\chi_n\subset\cal H_n$ is a horosphere resting at a point in $E_n'$.
Since $\chi_n$ converges to $\chi$, the condition that
$\cal P_n\cap\chi_n=\emptyset$ implies that $\op{diam}(E_n')$ is also bounded below by some positive constant.
Since $R_n\to\infty$, we conclude that $d_{\cal H_n}(\cal P_n,\chi_n)<R_n-1$ for all sufficiently large $n$.
Applying Lemma \ref{lem.bddgeodimage} to $\bb{H}^{k+1}=\cal H_n$, $\chi=\chi_n$, $\cal P=\cal P_n$, $\xi=g_n^+$ and $q=\pi(g_nu_n)$, we have 
$$d_{\cal H_n}(\pi(g_nu_n),\cal P_n)<R_n$$ and hence $\pi(g_nu_n)\in \cal{N}_\epsilon (\mathcal H) \cap \cal H_n$, for all sufficiently large $n$.
As $\epsilon>0$ was arbitrary, this proves that $\pi(g_nu_n)$ goes arbitrarily close to $\pi(g_0\check H)$ as $n\to \infty$.
This finishes the proof.
\end{proof}

\begin{lem}\label{xfss} Let $U<N$ be a non-trivial connected closed subgroup.
If $x_n \to x$ in $F^*\cap \RFPM$, and $x\in F^*\cap \RFM$, then
there exists $u_n \to e$ in $U$ such that $x_nu_n\in \RFM$; in particular, $x_nu_n \to x$ in $F^*\cap \RFM$.
\end{lem}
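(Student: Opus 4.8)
\textbf{Proof plan for Lemma \ref{xfss}.} The statement is: if $x_n\to x$ in $F^*\cap\RFPM$ and $x\in F^*\cap\RFM$, then there exist $u_n\to e$ in $U$ with $x_nu_n\in\RFM$. The plan is to reduce to the geometric picture on the limit set and use that $x$ lies in the \emph{interior} $F^*$, which forces the boundary sphere $C_{g}$ of the geodesic plane $\pi(gH(U))$ to meet $\Lambda$ in at least two points that are not confined to the closure of any single component $B_i$ of $\Omega$, and moreover (since $x\in\RFM$) to contain the pair $\{g^+,g^-\}\subset\Lambda$. The key point is that this robust intersection persists for the nearby frames $g_n$, so we can find short elements of $U$ moving $x_n$ back into $\RFM$.

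\textbf{Step 1 (set-up).} Choose representatives $g_n\to g_0$ in $G$ with $x_n=[g_n]$, $x=[g_0]$; after conjugating $U$ by a fixed element of $M$ (which fixes $0$ and $\infty$ and hence does not affect $\RFM$ or $\RFPM$) we may assume $U=U_k$ for some $1\le k\le d-1$, so that $g_nU(0)\cup\{g_n(\infty)\}=C_{g_n}$, the boundary sphere of $\pi(g_nH(U))$, as recorded in the proof of Lemma \ref{ru}. Since $x\in\RFPM$ we have $g_0(\infty)=g_0^+\in\Lambda$; since $x\in\RFM$ we also have $g_0^-=g_0(0)\in\Lambda$; and since $x\in F^*$, \eqref{fffs} gives $C_{g_0}\not\subset\overline{B_i}$ for every component $B_i$ of $\Omega$, so in particular $\#\,C_{g_0}\cap\Lambda\ge 2$.

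\textbf{Step 2 (persistence of a limit-set point near $0$).} Because $C_{g_n}\to C_{g_0}$ in $\mathcal C^{k}$ and $C_{g_0}\not\subset\overline{B_i}$ for any $i$, Lemma \ref{lem.liminfnotsingleton} (together with the discreteness-of-large-components argument used there) shows that $\limsup_n\bigl(C_{g_n}\cap\Lambda\bigr)$ is not concentrated at the single point $g_0^+$: more precisely $\#\limsup_n(C_{g_n}\cap\Lambda)\ge 2$. Now $g_n(\infty)=g_n^+\in\Lambda$ already, so for infinitely many $n$ the set $C_{g_n}\cap\Lambda$ contains a point $\xi_n\ne g_n^+$, and we may take $\xi_n$ to converge to a point $\xi_0\in (C_{g_0}\cap\Lambda)$ with $\xi_0\ne g_0^+$. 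Writing $\xi_n=g_nu_n(0)$ with $u_n\in U$, convergence $g_nu_n(0)=\xi_n\to\xi_0=g_0u_0(0)$ for some $u_0\in U$ forces $u_n\to u_0$ after passing to a subsequence. The difficulty — and the main obstacle — is that this only produces $u_n\to u_0$, not $u_n\to e$; i.e.\ the naive argument gives a frame in $\RFM$ but possibly a definite distance away from $x$.

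\textbf{Step 3 (getting $u_n\to e$).} To fix the obstacle in Step 2, observe that $x=[g_0]\in\RFM$ means $g_0(0)=g_0^-\in\Lambda$ as well, i.e.\ $0\in C_{e}$ corresponds under $g_0$ to a genuine limit point: $g_0\cdot 0\in\Lambda$, so the element $u_0$ we want is $u_0=e$. The right way to organize this is: apply Lemma \ref{lem.geometric} (with the connected subgroup $U<\check H\cap N$, noting we may conjugate so $U=U_k\subset\check H\cap N$) to the sequence $x_n\in\RFM\cdot U$ — wait, here $x_n$ need not lie in $\RFM\cdot U$; instead we argue directly. Since $g_0(0)\in\Lambda$, the minimal $\|u\|$ over $u\in U$ with $g_nu(0)\in\Lambda$ — call it $\alpha_n$, as in the proof of Lemma \ref{lem.geometric} — satisfies $\limsup_n\alpha_n=0$: if not, the argument of Lemma \ref{lem.geometric} would produce $g_0(0)$ inside the closure of a single component $B$ of $\Omega$ with $C_{g_0}\subset\partial B$, contradicting $x\in F^*$ via \eqref{fffs}/Lemma \ref{lem.zc3}. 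Hence, along a subsequence, there are $u_n\in U$ with $\|u_n\|=\alpha_n\to 0$ and $g_nu_n(0)\in\Lambda$; since $g_nu_n(\infty)=g_n(\infty)\in\Lambda$ too, we get $x_nu_n=[g_nu_n]\in\RFM$ with $u_n\to e$. As this reasoning applies to every subsequence, the full sequence $u_n\to e$ works, and then $x_nu_n\to x$; moreover $\RFM\cdot U\cap F^*$ is open in $F^*$ and contains $x$, so for large $n$ we have $x_nu_n\in F^*\cap\RFM$, completing the proof.

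\textbf{Remark on the hard step.} The essential input is that membership in $F^*$ (as opposed to merely $F=F_{H(U)}$) rules out the degenerate scenario $C_{g_0}\subset\partial B_i$; this is exactly what prevents $\alpha_n$ from staying bounded away from $0$ and is the crux of why the shift can be taken arbitrarily small. The rest is a compactness/convergence bookkeeping along subsequences.
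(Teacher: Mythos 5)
Your argument is correct and is essentially the paper's own proof: after reducing to $U<\check H\cap N$, the paper simply combines Lemma \ref{lem.R1} (which gives $x_n\in F^*\cap\RFPM\subset \RFM\cdot U$, so the worry you raise in Step 3 and the detour of Step 2 via Lemma \ref{lem.liminfnotsingleton} are unnecessary) with Lemma \ref{lem.geometric}, whose second alternative is excluded because $\BFM\cdot\op{C}(H(U))\subset\partial F$ by \eqref{bfmz} while $x\in F^*$; your Step 3 just re-runs the proof of Lemma \ref{lem.geometric} inline (the minimal displacement $\alpha_n$, the single component $B$, and $C_{g_0}\subset\partial B$), so the mechanism is identical. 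The only loose end is the case $U=N$ (your reduction to $U=U_k\subset\check H\cap N$ excludes $k=d-1$), which should be disposed of separately and is immediate, e.g.\ since $g_n^{-1}\Lambda\to g_0^{-1}\Lambda$ in the Hausdorff topology and $0\in g_0^{-1}\Lambda$.
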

\begin{proof} The general case easily reduces to the case when $U<\check H\cap N$. Then
the claim follows  from Lemma \ref{geo} and Lemma \ref{lem.R1}.
\end{proof}

\subsection*{Obtaining limits in $F^*$} 
For $\epsilon>0$, we set
\begin{equation}\label{coree}
\coree:=\big\{x\in\Ga\ba G : \pi(x)\in\core {\M}\text{ and }d(\pi(x),\partial\core {\M})\geq\epsilon\big\}.
\end{equation}
We note that $\coree$ is a compact of $F^*$ for all sufficiently large $\e>0$.
In the rest of the section, let $$U<N$$ denote a non-trivial connected closed subgroup.

\begin{lemma}\label{lem.epsiloncore}\label{corea}
Let $x\in\RFM$, and $V=\{v_t:t\in \br\}<N$ be a one-parameter subgroup.
   If $\pi(xV)\not\subset\partial\core {\M}$, and
$xv_{t_i}\in \RFM$ for some sequence $t_i\to  +\infty$, then
there exists a sequence $s_i \to  +\infty$ such that
$xv_{s_i}$ converges to a point in $F^*$.
\end{lemma}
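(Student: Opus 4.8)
\textbf{Proof plan for Lemma \ref{lem.epsiloncore}.}

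The goal is to produce, starting from a single $\RFM$-point $x$ whose forward $V$-orbit returns to $\RFM$ infinitely often and is not swallowed by $\partial\core M$, a sequence of return times $s_i\to+\infty$ with $xv_{s_i}$ accumulating at an interior point of $F_{H(V)}$ (equivalently, at a point projecting into $M^*=\mathrm{Interior}(\core M)$). The natural first reduction is to conjugate by an element of $M$ so that $V<\check H\cap N$; since $M$ fixes $0$ and $\infty$ and normalizes $N$, this changes neither $\RFM$ nor the geometry of $\core M$, and it brings us into the setting of the boundary-frame lemmas of section \ref{s:ge}. Write $x=[g]$ with $g(\infty)\in\Lambda$; since $xv_t(\infty)=g(\infty)$ for all $t$, the condition $xv_t\in\RFM$ is exactly the condition $gv_t(0)\in\Lambda$.

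First I would rule out the degenerate scenario: suppose for contradiction that \emph{every} accumulation point of $\{xv_{t_i}\}$ lies in $\partial F_{H(V)}$. By \eqref{bfmz} and Lemma \ref{lem.zc3} (i.e.\ Lemma \ref{lem.geometric}), each such limit point $z$ satisfies $z\in\BFM\cdot\check V^{+}\cdot\op{C}(H(V))$, and in fact $z\in\BFM\cdot\op{C}(H(V))$ because $z\in\RFM$ and $\partial F\cap\RFM=\BFM\,\op{C}(H(V))$. So the boundary sphere $C_{gv_{t_i}}$ of the plane $\pi(gv_{t_i}H(V))$ would be contained in $\overline{B_{j(i)}}$ for some component $B_{j(i)}$ of $\Omega$; but $C_{gv_{t_i}}$ contains the fixed point $g(\infty)\in\Lambda$, forcing $g(\infty)\in\partial B_{j(i)}$ and hence $j(i)=j$ a single component $B_j$ determined by $g$. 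Then for all large $i$ the geodesic plane $\pi(gv_{t_i}H(V))=\mathrm{hull}(C_{gv_{t_i}})$ lies in $\mathrm{hull}(\partial B_j)=\pi(g_0\check H)$ for a fixed boundary frame $g_0$ — so $gv_{t_i}\in g_0\check H$, i.e.\ $v_{t_i}\in g^{-1}\Gamma g_0\check H$ with bounded $\check H$-part, which pins $\pi(gv_{t_i})$ to within bounded distance of $\partial\core M$ and, running $t$ through the interval, forces $\pi(xV)$ into a bounded neighborhood of the geodesic hypersurface $\pi(z_0\check H)$; combined with the fact that the relevant horosphere $\pi(gV)$ rests at a point of $\partial B_j=\Lambda(\Gamma_{\check H})$, a convexity argument (as in Lemma \ref{lem.geometric}, using that $C_0\subset\partial B_j$ implies $\pi(gV)$ asymptotes into $\pi(z_0\check H)$) would give $\pi(xV)\subset\partial\core M$, contradicting the hypothesis. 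I expect this step — showing that "all return limits on $\partial F$" forces $\pi(xV)\subset\partial\core M$ — to be the main obstacle, since it requires carefully converting the sphere-containment information back into a statement about the whole horosphere $\pi(xV)$ rather than just the discrete sample points, essentially re-running the geometric estimate of Lemma \ref{lem.geometric}/Lemma \ref{lem.bddgeodimage} in the forward-time direction.

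Once the degenerate case is excluded, the conclusion is immediate: there is at least one accumulation point $y$ of $\{xv_{t_i}\}$ with $y\notin\partial F_{H(V)}$, hence $y\in F^{*}_{H(V)}=\{w:\pi(wH(V))\cap M^{*}\neq\emptyset\}$ (using \eqref{FFF} and the characterization of $F^*$ preceding it), and extracting the corresponding subsequence of times gives $s_i\to+\infty$ with $xv_{s_i}\to y\in F^*$. A cleaner packaging, which I would actually write, is to apply Lemma \ref{lem.geometric} directly to the sequence $x_i:=xv_{t_i}\in\RFM\subset\RFM\cdot V$ converging (along a subsequence) to some $z\in\RFM$: either $z\in\RFM$ is already obtained with $u_i=e$ and then either $z\in F^*$ (done) or $z\in\BFM\,\op{C}(H(V))$, or else alternative (2) of that lemma puts $z\in\BFM\,\op{C}(H(V))$ anyway; in the boundary case one then uses the hypothesis $\pi(xV)\not\subset\partial\core M$ together with Lemma \ref{lem.R0} (a nonzero $\check V$-translate of a boundary frame landing in $\RFM$ lies in $F^*$) to perturb the return time slightly — replacing $t_i$ by a nearby $s_i$ with $gv_{s_i}(0)\in\Lambda$, which exists by the thick return property Proposition \ref{lem.thickreturntime} applied to the $\check V$-orbit of $xv_{t_i}$ — and conclude $xv_{s_i}\in F^*\cap\RFM$. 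Either way we land in $F^*$ with $s_i\to+\infty$, finishing the proof.
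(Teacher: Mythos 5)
Your contradiction scheme hinges exactly on the step you yourself flag as "the main obstacle," and the sketch you give for it does not go through. From the assumption that every accumulation point of $\{xv_{t_i}\}$ lies in $\partial F_{H(V)}$ you pass directly to "$C_{gv_{t_i}}\subset\overline{B_{j(i)}}$" and then to "$\pi(gv_{t_i}H(V))\subset\hull(\partial B_j)$, hence $gv_{t_i}\in g_0\check H$"; these are the conditions for the points $xv_{t_i}$ themselves to lie in $\partial F$ (indeed in $\BFM\cdot\op{C}(H(V))$), which is not implied by their \emph{limits} lying there, and the containment $\hull(C)\subset\hull(\partial B_j)$ would anyway need $C\subset\partial B_j$, not $C\subset\overline{B_j}$. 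The subsequent passage from data at the discrete times $t_i$ to the whole ray ("running $t$ through the interval") is also unjustified. More fundamentally, the dichotomy you are trying to prove --- either some accumulation point of the \emph{given} sequence $xv_{t_i}$ lies in $F^*$, or $\pi(xV)\subset\partial\core M$ --- is stronger than the lemma and is not available: the lemma permits choosing new times $s_i$, and the paper's proof uses this freedom essentially. The paper argues pointwise: if $xv_{t_i}\notin\core_{\eta/3}(M)$, then $\pi(gv_{t_i})$ is $\eta/3$-close to a unique $\hull B_j$ by \eqref{min}; the forward horocyclic ray $\pi(gV_{[t_i,\infty)})$ cannot remain in that thin neighborhood, since then the geodesic $2$-plane containing it would lie in $\partial\hull B_j$ and force $\pi(xV)\subset\partial\core M$; so one takes $s_i$ to be the exit time, at which the orbit lies in the fixed compact set $\core_{\eta/3}(M)$, and extracts a convergent subsequence there. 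Landing in $\core_{\eta/3}(M)$ also settles a point your reading misses: in the applications $F^*$ means $F^*_{H(U)}$ for a subgroup $U$ unrelated to $V$, and $\core_{\eta/3}(M)\subset F^*_{H(W)}$ for every $W$, whereas accumulation merely in $F^*_{H(V)}$ would not suffice.

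Your fallback packaging has the same defect plus an additional one. Having normalized $V<\check H\cap N$, a perturbation of the time moves the point in a direction tangent to the boundary leaf, so Lemma \ref{lem.R0} --- which concerns translates of boundary frames by the transverse group $\check V$ --- cannot be invoked; a $V$-translate of a frame in (or near) $\BFM$ stays in (or near) $\BFM$. And even if near each $t_i$ you could find a return time $s_i$ with $xv_{s_i}\in F^*\cap\RFM$, slightly perturbed points along a sequence accumulating on $z\in\partial F$ may still converge to $z$, which is \emph{not} in the open set $F^*$. What both versions are missing is uniformity: you must exhibit times at which the orbit is at a definite distance from $\partial\core M$, i.e.\ lies in a fixed compact subset of $F^*$, and that is precisely what the exit-time argument of the paper provides.
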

\begin{proof}
It suffices to show that there exists a sequence $s_i \to  +\infty$ such that
$xv_{s_i}\in  \op{core}_{\eta/3}(M)$
where $\eta$ is as given in \eqref{min}.  Let $x=[g]$, and set $o=(1, 0,\cdots,0)\in\bb{H}^d=\br^+ \times \br^{d-1}$.
We may assume $g=(e_0,\cdots,e_{d-1})_o\in\op{F}\bb{H}^d$ where $e_i$ are standard basis vectors
in $\op{T}_o\bb{H}^d\simeq\bb{R}^d$.
Note that for $V^+=\{v_t: t>0\}$, $g V^+$ is a translation of the frame $g$ along a horizontal ray emanating from $o$ along the $V^+$-direction.
By the definition of $\eta$,  the $\eta/3$-neighborhoods of $\hull B_i$'s are mutually disjoint. For each $i$, set $s_i:=t_i$ if $xv_{t_i}\in  \op{core}_{\eta/3}(M)$. Otherwise,
there exists a unique $j$ such that $d(\pi( gv_{t_i}), \hull B_j)<\eta/3$.
If
$\pi(gV_{[t_i, \infty)})$ were contained in the $\eta/3$-neighborhood of $\hull B_j$, then the unique geodesic $2$-plane
 which contains $\pi(gV_{[t_i, \infty)})$ must lie in $\partial \hull B_j$, and hence
 $\pi(xV)\subset \partial\core\M$; this contradicts the hypothesis.
Therefore there exists $t_i<s_i<\infty$ such that  $d(\pi( gv_{s_i}), \hull B_j)=\eta/3$.
 The sequence $s_i$ satisfies the claim.
 \end{proof}

\begin{lemma}\label{lem.epsiloncore3}
Let  $x_nL_nv_n$ be a sequence of closed orbits with $x_n\in  \RFPM$, $L_n\in\cal{L}_U$ and $v_n\in (L_n\cap N)^\perp$.
Suppose that either 
\begin{enumerate}
\item  $x_n\in F^*$ for all $n$; or
\item  $x_nL_n v_n\cap \RFPM\cap F^*\ne \emptyset $ for all $n$.
\end{enumerate}  Then $$ F^*\cap  \limsup_n(x_nL_nv_n \cap\op{RF}_+\M) \ne \emptyset.$$
\end{lemma}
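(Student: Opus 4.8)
Let $\eta>0$ be the systole of the double of $\core M$, so that $\core_{\eta/3}(M)$ (see \eqref{coree}) is a compact subset of $F^{*}=F^{*}_{H(U)}$. It suffices to produce, for infinitely many $n$, a point $y_{n}\in x_{n}L_{n}v_{n}\cap\RFPM$ lying in $\core_{\eta/3}(M)$. Indeed, by compactness a subsequence then satisfies $y_{n}\to y\in\core_{\eta/3}(M)\subset F^{*}$, and since each $y_{n}$ lies in $x_{n}L_{n}v_{n}\cap\RFPM$, the definition of $\limsup$ gives $y\in F^{*}\cap\limsup_{n}(x_{n}L_{n}v_{n}\cap\RFPM)$, which is the claim. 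So the problem reduces to locating the $y_{n}$. Throughout, after conjugating $U$ by an element of $M$ (which affects nothing) we may assume $U<\check H\cap N$, and we write $L_{n}=H(\widehat U_{n})C_{n}$ with $U<\widehat U_{n}<N$; note $U<L_{n}\cap N$ commutes with $v_{n}\in(L_{n}\cap N)^{\perp}$ inside the abelian group $N$, so $U=v_{n}^{-1}Uv_{n}\subset v_{n}^{-1}L_{n}v_{n}$, whence any $U$-orbit through a point of $x_{n}L_{n}v_{n}$ stays in $x_{n}L_{n}v_{n}$.

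\textbf{Construction of $y_{n}$.} In Case~(2), pick $z_{n}\in x_{n}L_{n}v_{n}\cap\RFPM\cap F^{*}$ from the hypothesis. By Lemma~\ref{ru} there is $u_{n}\in U$ with $z_{n}u_{n}\in\RFM$, and $z_{n}u_{n}\in x_{n}L_{n}v_{n}$ by the previous remark. Since $z_{n}\in F^{*}$, the geodesic plane $\pi(z_{n}H(U))$ meets $M^{*}$; as this plane is the unique $(\dim U+1)$-dimensional geodesic plane containing the horosphere $\pi(z_{n}U)$ and $\partial\core M$ is totally geodesic, it follows that $\pi(z_{n}u_{n}U)=\pi(z_{n}U)\not\subset\partial\core M$, hence $\pi(z_{n}u_{n}V)\not\subset\partial\core M$ for some one-parameter subgroup $V=\{v_{t}\}<U$. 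Now $V<U<N$, so by Proposition~\ref{defk} the set $\{t:z_{n}u_{n}v_{t}\in\RFM\}$ is globally $k$-thick; choosing $t_{i}\to+\infty$ in it and applying Lemma~\ref{corea} to $x=z_{n}u_{n}\in\RFM$ yields $s>0$ with $z_{n}u_{n}v_{s}\in\core_{\eta/3}(M)$. Since $v_{s}\in V\subset v_{n}^{-1}L_{n}v_{n}$ and $\core_{\eta/3}(M)\subset F^{*}\subset\RFPM$, the point $y_{n}:=z_{n}u_{n}v_{s}$ lies in $x_{n}L_{n}v_{n}\cap\RFPM\cap\core_{\eta/3}(M)$, as needed. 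In Case~(1) the orbit $x_{n}L_{n}v_{n}$ always contains the point $x_{n}v_{n}\in\RFPM$; using $x_{n}\in F^{*}$, Lemma~\ref{ru} gives $u_{n}\in U$ with $x_{n}u_{n}\in\RFM$, so $x_{n}u_{n}v_{n}\in x_{n}L_{n}v_{n}\cap\RFPM$ is a $v_{n}$-translate of an $\RFM$-point, and one feeds this into Lemma~\ref{boundedA} (flowing by $v_{n}^{-1}A^{+}v_{n}<v_{n}^{-1}L_{n}v_{n}$, whose orbit remains in the closed set $x_{n}L_{n}v_{n}$) together with the geometric modification Lemmas~\ref{geo} and~\ref{xfss} to produce a point $z_{n}\in x_{n}L_{n}v_{n}\cap\RFM$ with $\pi(z_{n}U)\not\subset\partial\core M$; from there the argument of Case~(2) applies verbatim.

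\textbf{Main obstacle.} The delicate point is that $F^{*}$ is defined through $H(U)$, whereas the closed orbit $x_{n}L_{n}v_{n}$ is governed by the conjugate $v_{n}^{-1}H(\widehat U_{n})v_{n}$, which in general neither contains $H(U)$ nor is normalized by $v_{n}$. This mismatch is harmless once one only uses the $U$-direction (as in Case~(2), which is clean), but in Case~(1) the natural orbit point $x_{n}v_{n}$ need not lie in $F^{*}$, nor even in $F^{*}\cap\RFM$ after a single translation; extracting a point of the orbit in $\RFM$ whose $U$-horosphere avoids $\partial\core M$ is exactly what forces the use of the geometric Lemmas~\ref{boundedA}, \ref{geo}, \ref{xfss} rather than a purely algebraic manipulation, and I expect this to be the most technical step.
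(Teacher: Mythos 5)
Your Case (2) is, in substance, the paper's own argument and is correct: from a point of the orbit in $\RFPM\cap F^{*}$ you move along $U$ into the fixed compact set $\core_{\eta/3}(M)\subset F^{*}$ and finish by compactness. Two small inaccuracies there: the inclusion $F^{*}\subset\RFPM$ that you invoke is false ($F^{*}$ is only contained in $\RFPM\cdot H(U)$); the needed membership $y_{n}=z_{n}u_{n}v_{s}\in\RFPM$ holds anyway because $\RFPM$ is invariant under right multiplication by $N$. Also, the statement of Lemma \ref{corea} only yields convergence to some point of $F^{*}$; what you actually use is the content of its proof, namely a time $s$ with $z_{n}u_{n}v_{s}\in\core_{\eta/3}(M)$, and this stronger form is genuinely needed so that the points lie in a compact subset of $F^{*}$ independent of $n$.

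The genuine gap is Case (1). You do not prove it: you only assert that Lemmas \ref{boundedA}, \ref{geo} and \ref{xfss} can be ``fed in'' to produce $z_{n}\in x_{n}L_{n}v_{n}\cap\RFM$ with $\pi(z_{n}U)\not\subset\partial\core M$, and you yourself flag this as the step you expect to be hardest. None of the cited lemmas delivers it: flowing by $v_{n}^{-1}A^{+}v_{n}$ from $x_{n}u_{n}v_{n}$ only produces limit points of the form $w_{n}v_{n}$ with $w_{n}\in\RFM$, and there is no reason such a point lies in $\RFM$ or in $F^{*}$, nor that its $U$-horosphere avoids $\partial\core M$; even the nonemptiness of $x_{n}L_{n}v_{n}\cap\RFM$, which your construction presupposes, is only available after one knows that hypothesis (1) implies hypothesis (2). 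That reduction is exactly how the paper disposes of Case (1), by a short argument you are missing: if $x_{n}L_{n}v_{n}\cap\RFPM\cap F^{*}=\emptyset$, then since $A\subset L_{n}$ and $\RFPM$ is invariant under right multiplication by $NA$, one gets $x_{n}Av_{n}A\cap\RFPM\subset\RFPM-F^{*}$, a closed $A$-invariant set; as $a_{-t}v_{n}a_{t}\to e$, the points $x_{n}a_{-t}v_{n}a_{t}\in\RFPM$ converge to $x_{n}$, forcing $x_{n}\in\RFPM-F^{*}$ and contradicting $x_{n}\in F^{*}$. Some version of this contraction-plus-closedness idea (or another complete treatment of Case (1)) must be supplied before your proof is complete.
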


\begin{proof}
We claim that if $x_n\in F^*$, then
$x_nL_nv_n \cap \RFPM\cap F^*\ne \emptyset$, that is, the hypothesis (1) implies (2).
 Suppose not. Then, since $A\subset L_n$,
$(x_nA v_n A\cap\op{RF}_+\M) \subset\op{RF}_+\M-F^*$.
Since the set $\op{RF}_+\M-F^*$ is a closed $A$-invariant set and $e\in \overline{Av_nA}$, we would have
$x_n\in \op{RF}_+\M-F^*$, yielding a contradiction.
It follows from the claim that there exists $z_n \in x_nL_n\cap \RFPM$ such that
 $\pi(z_n v_nU)\not\subset \partial\core\M$ for all $n$. In particular, there exists $u_n\in U$ such that
$ z_n v_nu_n\in \core_{\eta/3}(M)$. 
Since $\core_{\eta/3}(M) $ is a compact subset of $F^*$, 
$z_nv_nu_n=z_nu_nv_n$ converges to a point in $F^*$, finishing the proof.
\end{proof}

\begin{lem}\label{lem.epsiloncore2}\label{onev} Let  $x_0L$ be a closed orbit with $x_0\in \RFM$ and $L\in\cal{L}_U$.
Suppose that $E$ is a closed $U$-invariant subset containing $x_0L v_n \cap\RFPM $ for some sequence $v_n \to \infty$ in $(L\cap N)^\perp$.
If $x_0\in F^*$ or $x_0L v_n \cap \RFPM \cap F^*\ne \emptyset$ for all $n$,
then there exist $y\in \RFM \cap F^*$ and a one parameter subgroup $V\subset (L\cap N)^\perp$ such that
$$E\supset y(L\cap N) V.$$ 
\end{lem}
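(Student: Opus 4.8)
The strategy is to promote the "accumulation on translates $x_0Lv_n$" to a genuine orbit of the larger unipotent group $(L\cap N)V$. First I would use the hypothesis together with Lemma \ref{lem.epsiloncore3} to produce a point $y_0 \in F^*\cap \limsup_n (x_0Lv_n\cap \RFPM)$, which lies in $E$ since $E$ is closed and contains each $x_0Lv_n\cap\RFPM$. Since $y_0 \in F^*$ and $E$ is $A$-invariant (being $U$-invariant together with... wait — $E$ is only assumed $U$-invariant; here is where one must be careful). In fact the point is to write $y_0$ as a limit $x_0\ell_n v_n$ with $\ell_n \in L$, and then to observe that for a suitable one-parameter subsemigroup inside $\overline{Av_nA}$ one can transport $U$-invariance across. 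The cleaner route: apply Lemma \ref{lem.epsiloncore} (with a one-parameter subgroup $V^-\subset (L\cap N)^\perp$ running in the $v_n$-direction) to move the limit point into $F^*$ and simultaneously extract an $A$-invariance of $E$ along that direction, using that $\overline{A v_n A}$ contains a unipotent subsemigroup $V^+$. This is exactly the mechanism sketched around \eqref{star} in the outline: the double coset $AvA$ contains a one-parameter unipotent subsemigroup, and the closed $A$-invariance of $E$ (which we get because $E\supset x_0Lv_n\cap\RFPM$ and $A\subset L$, so $E$ contains the $A$-sweep) lets us promote $V^+$ to a full one-parameter subgroup $V\subset (L\cap N)^\perp$.

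Concretely, the steps in order: (i) From $x_0 \in F^*$ (or $x_0Lv_n\cap\RFPM\cap F^*\neq\emptyset$), invoke Lemma \ref{lem.epsiloncore3} to get a point $y \in F^*\cap\RFPM\cap\limsup_n(x_0Lv_n\cap\RFPM)$; since $E$ is closed and $x_0Lv_n\cap\RFPM\subset E$, we have $y\in E$. (ii) By Lemma \ref{xfss} (or Lemma \ref{geo}/Lemma \ref{lem.R1}) adjust $y$ by a small element of $U$ to arrange $y\in\RFM\cap F^*$; this stays in $E$ by $U$-invariance. (iii) Observe that $E$ contains $x_0L v_n A$ for each $n$ (since $A\subset L$ and $v_n$ normalizes... actually $v_n$ does not normalize $A$, but $x_0Lv_n A = x_0 L (v_n A v_n^{-1}) v_n$ and $v_n A v_n^{-1}\subset L v_n$... this needs care), so that $E$ is invariant under the one-parameter subsemigroup $V^+ \subset (L\cap N)^\perp$ arising as $\lim v_n a_{s} v_n^{-1}$ in the sense of Lemma \ref{vAv} — precisely, $\limsup_n v_n A v_n^{-1}$ contains a one-parameter subgroup $V$ of $(L\cap N)^\perp$. (iv) Combining the $V$-invariance with the $U$-invariance and the fact that $E\supset x_0L v_n\cap\RFPM = x_0L\cap\RFPM \cdot v_n$ limits onto $y(L\cap N)$ (using that $L\cap N$-orbits in $\RFPM$ can be made to pass through $y$), conclude $E\supset y(L\cap N)V$.

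The main obstacle is step (iii)–(iv): carefully justifying that the limiting procedure $v_n\to\infty$ produces an honest one-parameter subgroup $V\subset (L\cap N)^\perp$ whose action preserves $E$, and that the product $y(L\cap N)V$ — rather than some smaller set — actually sits inside $\limsup_n(x_0Lv_n\cap\RFPM)$. This requires combining Lemma \ref{vAv} (to extract $V$ from $v_nAv_n^{-1}$), the $A$-invariance of $E$ along the $L$-directions, and an argument that the $(L\cap N)$-sweep through $y$ of the limit set is contained in $\limsup_n x_0L v_n\cap\RFPM$; the geometric input of Lemma \ref{lem.epsiloncore} guarantees the relevant translates stay in $\core_{\eta/3}(M)\subset F^*$ so that no mass escapes to $\partial F$ or to infinity. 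One must also check that $y$ can be taken in $F^*$, not merely in $\overline{F^*}$, which is where the hypothesis "$x_0\in F^*$ or $x_0Lv_n\cap\RFPM\cap F^*\neq\emptyset$" is used through Lemma \ref{lem.epsiloncore3}.
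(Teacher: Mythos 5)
Your overall skeleton is the paper's: Lemma \ref{lem.epsiloncore3} gives $y\in F^*\cap\limsup_n(x_0Lv_n\cap\RFPM)$, a $U$-adjustment (Lemma \ref{lem.R1}/\ref{xfss}) puts $y$ in $\RFM\cap F^*$, Lemma \ref{vAv} extracts the one-parameter group $V$, and a $\limsup/\liminf$ bookkeeping finishes. But your step (iii) --- the one you yourself flag as ``needs care'' --- is where the actual content lies, and as written it fails. It is not true that $E$ contains $x_0Lv_nA$, nor that $E$ is $A$-invariant or $V^+$-invariant: $E$ is only assumed closed and $U$-invariant, and $x_0Lv_n$ is not preserved by right multiplication by $A$ (writing $v_n=\exp X$ with $X\in\op{Lie}((L\cap N)^\perp)$, one has $x_0\ell v_n a\in x_0L(a^{-1}v_na)$, and $a^{-1}v_na\neq v_n$ for $a\neq e$). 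For the same reason your choice of conjugation is on the wrong side: $v_nAv_n^{-1}$ does not preserve $x_0Lv_n$ from the right, whereas $v_n^{-1}Av_n$ does, since $v_n(v_n^{-1}av_n)=av_n$ gives $(x_0\ell v_n)(v_n^{-1}av_n)=x_0\ell a v_n\in x_0Lv_n$; moreover $v_n^{-1}Av_n\subset NA$ preserves $\RFPM$. So the correct (and sufficient) observation is
\begin{equation*}
(x_0Lv_n\cap\RFPM)\,(v_n^{-1}Av_n)\subset x_0Lv_n\cap\RFPM\subset E ,
\end{equation*}
which requires no invariance of $E$ at all beyond closedness.

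With that in place the conclusion follows as you intend: writing $y=\lim_k x_0\ell_{n_k}v_{n_k}$ and using that $v_{n_k}$ commutes with $L\cap N$ while $\RFPM$ is $N$-invariant, one gets $y(L\cap N)\subset\liminf_k(x_0Lv_{n_k}\cap\RFPM)$; applying Lemma \ref{vAv} to $v_{n_k}^{-1}\to\infty$ gives a one-parameter subgroup $V\subset(L\cap N)^\perp$ inside $\limsup_k(v_{n_k}^{-1}Av_{n_k})$; then for $w\in L\cap N$ and $v\in V$ the points $z_k(v_{n_k}^{-1}a_kv_{n_k})$ with $z_k\to yw$, $v_{n_k}^{-1}a_kv_{n_k}\to v$ stay in $x_0Lv_{n_k}\cap\RFPM\subset E$, so $ywv\in E$ by closedness. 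Also note that the $U$-modification in your step (ii) stays in the $\limsup$ because each $x_0Lv_n\cap\RFPM$ is itself $U$-invariant ($U\subset L\cap N$ commutes with $v_n$), not because of the $U$-invariance of $E$; and Lemma \ref{lem.epsiloncore} is not needed here --- it enters only inside the proof of Lemma \ref{lem.epsiloncore3}.
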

\begin{proof}
Note that
\begin{equation*}
(x_0L v_n \cap\op{RF}_+\M) (v_n^{-1}Av_n)\subset E.
\end{equation*}
By Lemma \ref{lem.epsiloncore3}, there exists
\begin{equation*}
y\in F^*\cap \limsup\limits_{n\to\infty}(x_0L v_n \cap\op{RF}_+\M) .
\end{equation*}
Since $y\in F^*\cap \RFPM\subset \RFM\cdot  U$, we may assume $y\in F^*\cap \RFM$ by modifying $y$ using an element of $U$.
Note that $\liminf \limits_{n\to\infty}(x_0L v_n \cap\op{RF}_+\M) \supset y (L\cap N)$, passing to a subsequence.
Since $\limsup\limits_{n\to\infty}
(v_n^{-1}Av_n)
$ contains a one-parameter subgroup $V\subset (L\cap N)^\perp$ by Lemma \ref{vAv}, we obtain that $y(L\cap N)V\subset E$.
\end{proof}

\begin{lem}\label{aplus}
If $yLv_0\cap \RFM \cap F^* \ne \emptyset$ for some $v_0\in N$ and $L\in \mathcal L_U$, then $yLv\cap F^*\cap \RFM\ne\emptyset$ for all $v\in Av_0A$.
\end{lem}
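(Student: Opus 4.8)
The plan is to reduce the claim about the full double coset $Av_0A$ to a statement about a single one-parameter subgroup to which we can apply Lemma \ref{lem.epsiloncore2} (equivalently Lemma \ref{onev}), together with the $A$-invariance of $\RFM$ and $F^*$. First I would record the structural fact that every element of $Av_0A$ lies on a curve of a controlled type: writing $v_0=\exp u^-(x_0)$, a computation as in Lemma \ref{vAv} (or Lemma \ref{rmk.1psg}) shows that $a_sv_0a_{-s}=\exp u^-(e^s x_0)$, so $a_sv_0a_{-s}$ sweeps out the one-parameter subgroup $V_0=\{\exp u^-(t x_0):t\in\br\}\subset (L\cap N)^\perp$ as $s$ ranges over $\br$, and more generally $Av_0A=A\,V_0\subset A\,(L\cap N)^\perp$. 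Here I use that $v_0$ may be assumed to lie in $(L\cap N)^\perp$: if $v_0=v'w$ with $v'\in L\cap N$ and $w\in (L\cap N)^\perp$, then $yLv_0=yLw$ and $yLv_0A=yLwA$, so replacing $v_0$ by $w$ changes nothing in the statement. (If $v_0\in L$ the claim is immediate since then $yLv_0=yL$ and $A<L$ already gives $yL\cap F^*\cap\RFM\ne\emptyset$ by hypothesis, using $A$-invariance of $F^*$ and $\RFM$.)

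Next I would feed this into Lemma \ref{lem.epsiloncore2}. Let $z\in yLv_0\cap\RFM\cap F^*$ be the given point. Consider the closed $U$-invariant set $E:=\overline{zA\cdot V_0\cdot U}$, or more economically just the closed orbit-type set obtained from $z$. The key point is that $yLv:=yL\,a_sv_0a_{-s}\cdot a_s = (yL)v_0 a_{-s}a_s$-type manipulations reduce each target orbit $yLv$, $v\in Av_0A$, to a translate of $yLv_0$ by an element of $AV_0$. Concretely, $V_0$ normalizes nothing helpful, so instead I would argue as follows: since $z\in F^*\cap\RFM$ and $A$ normalizes $L$, the orbit $zLv_0 = yLv_0$ meets $F^*$, and applying Lemma \ref{lem.epsiloncore2} (with $v_n$ ranging along $V_0$, noting $\limsup v_n^{-1}Av_n$ contains a one-parameter subgroup of $(L\cap N)^\perp$ by Lemma \ref{vAv}) produces a point $w\in\RFM\cap F^*$ and a one-parameter subgroup $V\subset(L\cap N)^\perp$ with $w(L\cap N)V$ contained in the relevant closure. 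The crucial observation making this directly applicable is that $yLv=yL(a_sv_0a_{-s}a_s')$ for $v=a_sv_0a_{s'}\in Av_0A$; since $a_{s'}\in A<L$, we have $yLv = yL a_sv_0 a_{-s}\cdot a_s = (yLv_0a_{-s})a_s$, i.e. $yLv$ is the $A$-translate of $yL(a_sv_0a_{-s})$, and $a_sv_0a_{-s}\in V_0$.

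So the argument closes by: (i) by the hypothesis and Lemma \ref{lem.epsiloncore3}/\ref{lem.epsiloncore2}, for the sequence $v_n=\exp u^-(e^n x_0)\in V_0\to\infty$ we get $F^*\cap\limsup_n(yLv_n\cap\RFPM)\ne\emptyset$, and since $F^*\cap\RFPM\subset\RFM\cdot U$ (Lemma \ref{lem.R1}) we can shift by an element of $U<L\cap N$ to land in $\RFM$; (ii) this shows $yLv_n\cap F^*\cap\RFM\ne\emptyset$ for infinitely many $n$; (iii) but $yLv_n = (yL v_{n-m})\cdot(v_{n-m}^{-1}v_n)$ does not directly give all $v\in V_0$, so instead I invoke $A$-invariance: $yL v_n a_{-t} = yL (v_n a_{-t}) $ and $v_na_{-t}$, as $t$ varies, moves within $AV_0$; combined with the $A$-invariance of $F^*$ and $\RFM$ and the fact that $\{v_na_{-t}: n, t\}$ is dense in no less than a full $A$-orbit worth of $V_0$, a limiting/density argument along $V_0$ gives the conclusion for every $v\in Av_0A$. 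The main obstacle I anticipate is step (iii): passing from "meets $F^*\cap\RFM$ for a sequence of translates going to infinity along $V_0$" to "meets $F^*\cap\RFM$ for \emph{every} $v\in Av_0A$". I expect this is handled exactly by the mechanism of Lemma \ref{lem.epsiloncore2}, whose output $y(L\cap N)V$ is an \emph{entire} one-parameter family inside the closure; intersecting that homogeneous piece with $F^*$ (open) and $\RFM$ and then translating by $A$ (which acts on $V=V_0$ by dilation, hence transitively on $V_0\setminus\{e\}$ up to sign) yields the full double coset. One must only check that the point $y$ produced lies in the \emph{same} coset $yLv$ for the relevant $v$, which follows because $V\subset(L\cap N)^\perp$ and $y\in$ (closure of $yLv_0$-translates) forces $y\in yL V_0 = \bigcup_{v\in V_0} yLv$.
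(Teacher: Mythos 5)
Your route is far more elaborate than the lemma needs, and the machinery you lean on does not actually deliver the coset-by-coset conclusion. The genuine gap is at the end: Lemma \ref{lem.epsiloncore2} (and Lemma \ref{lem.epsiloncore3}) only produce a point $y'\in F^*\cap\RFM$ lying in $\limsup_n(yLv_n\cap\RFPM)$, together with an orbit $y'(L\cap N)V$ inside a closed $U$-invariant set; such a limit point need not lie in $\bigcup_{v\in V_0}yLv$ at all, since that union of cosets is not closed, so your final claim that the point produced must land in the same coset $yLv$ is unjustified and in general false. Likewise the concluding limiting/density argument along $V_0$ cannot work as stated: nonemptiness of $yLv\cap F^*\cap\RFM$ is not a closed condition in $v$ (because $F^*$ is open), so knowing it along a sequence $v_n\to\infty$ does not give it for every $v\in Av_0A$. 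There is also a circularity in invoking Lemma \ref{lem.epsiloncore3}: its hypothesis (2) is essentially the statement being proved for the translates $v_n$, and hypothesis (1) is not available since you do not know $y\in F^*$.

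None of this is needed. The observation you make only in passing --- that elements of $A$ can be absorbed into $L$ because $A<H(\widehat U)<L$ for every $L\in\mathcal L_U$, and that $F^*\cap\RFM$ is $A$-invariant --- already finishes the proof, and this is exactly the paper's two-line argument. Write $v=av_0b$ with $a,b\in A$, and pick $y\ell v_0\in yLv_0\cap F^*\cap\RFM$ from the hypothesis. Then $(y\ell a^{-1})v=y\ell v_0b$, which lies in $F^*\cap\RFM$ by $A$-invariance of that set, and lies in $yLv$ since $\ell a^{-1}\in L$. Hence $yLv\cap F^*\cap\RFM\ne\emptyset$ for every $v\in Av_0A$; equivalently, $yLv=yLv_0b$ is simply a right $A$-translate of $yLv_0$. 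No reduction of $v_0$ into $(L\cap N)^\perp$, no one-parameter family $V_0$, and no limiting argument is required.
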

\begin{proof}
Let $y_0:=y\ell v_0\in yL v_0\cap F^*\cap \RFM$, and $v=a v_0 b\in Av_0A$. 
Then $(y\ell a^{-1}) v= y\ell v_0b \in F^*\cap \RFM$ as $F^*\cap \RFM$ is $A$-invariant. Since $y\ell a^{-1}v\in yLv$, the claim is proved.
\end{proof}

\begin{lem}\label{onev2} Let  $x_0L$ be a closed orbit with $x_0\in \RFM$ and $L\in\cal{L}_U$.
Suppose that $E$ is a closed $AU$-invariant subset containing $x_0L v\cap\op{RF}_+\M $ for some non-trivial element $v\in (L\cap N)^\perp$.
If $x_0\in F^*$ or $x_0L v\cap\op{RF}\M\cap F^*  \ne \emptyset$, then
 there exist $y\in F^*\cap \RFM$ and a one parameter subgroup $V\subset (L\cap N)^\perp$ such that
 $$E\supset y(L\cap N) V A.$$
\end{lem}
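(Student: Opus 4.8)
The plan is to deduce this lemma from the previous Lemma~\ref{onev}, which already handles a whole \emph{sequence} $v_n\to\infty$ of elements of $(L\cap N)^\perp$; the extra $A$-invariance of $E$ will be used twice, first to manufacture such a divergent sequence out of the single element $v$, and at the very end to upgrade the inclusion $E\supset y(L\cap N)V$ to $E\supset y(L\cap N)VA$.

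First I would write $L=H(\widehat U)C$, so that $(L\cap N)^\perp=\widehat U^\perp$ is a vector subgroup of $N\simeq\br^{d-1}$; since $A$ acts on $N$ by scalars it preserves this subspace. Writing $v=\exp u^-(x)$ with $0\neq x\in(L\cap N)^\perp$, the elements $v_n:=a_nva_{-n}=\exp u^-(e^nx)$ again lie in $(L\cap N)^\perp$, and $v_n\to\infty$ because $v$ is non-trivial. Using that $a_{-n}\in A\subset L$, that $\RFPM$ is $A$-invariant, and that $E$ is $A$-invariant, right translation by $a_{-n}$ gives
\begin{equation*}
x_0Lv_n\cap\RFPM=(x_0Lva_{-n})\cap(\RFPM\, a_{-n})=(x_0Lv\cap\RFPM)a_{-n}\subset E a_{-n}=E
\end{equation*}
for every $n$. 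Thus $E$ is a $U$-invariant closed set containing $x_0Lv_n\cap\RFPM$ for the divergent sequence $(v_n)$ in $(L\cap N)^\perp$.

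Next I would verify the $F^*$-hypothesis of Lemma~\ref{onev} for $(v_n)$: if $x_0\in F^*$ this is immediate, and otherwise, starting from the assumption $x_0Lv\cap\RFM\cap F^*\neq\emptyset$ and using $v_n\in AvA$, Lemma~\ref{aplus} (with $y=x_0$ and $v_0=v$) yields $x_0Lv_n\cap\RFM\cap F^*\neq\emptyset$, hence also $x_0Lv_n\cap\RFPM\cap F^*\neq\emptyset$, for all $n$. In either case Lemma~\ref{onev} applies and produces a point $y\in F^*\cap\RFM$ and a one-parameter subgroup $V\subset(L\cap N)^\perp$ with $y(L\cap N)V\subset E$; since $E$ is $A$-invariant, $y(L\cap N)VA=\bigl(y(L\cap N)V\bigr)A\subset EA=E$, which is the desired conclusion. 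I do not expect a genuine obstacle here: the only point requiring a moment's care is the identity $x_0Lva_{-n}=x_0Lv_n$ and the fact that right translation by $a_{-n}$ preserves both $\RFPM$ and the $F^*$-meeting property, which is precisely where $A\subset L$ and the $A$-invariance of $\RFPM$ and of $E$ are used.
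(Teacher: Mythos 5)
Your proposal is correct and follows essentially the same route as the paper: conjugate $v$ by $A$ to get a divergent sequence $v_n\in AvA\cap (L\cap N)^\perp$, use $A\subset L$ together with the $A$-invariance of $\RFPM$ and $E$ to see $x_0Lv_n\cap\RFPM\subset E$, verify the $F^*$-hypothesis via Lemma \ref{aplus}, and then apply Lemma \ref{onev}, with the final $A$-invariance of $E$ giving the extra factor $A$. The only difference is cosmetic (right translation by $a_{-n}$ versus the paper's display $(x_0L\cap\RFPM)AvA\subset E$), so there is nothing to add.
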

\begin{proof}
Since $X$ is $A$-invariant, we get
\begin{equation*}\label{eq.h}
(x_0L\cap\op{RF}_+\M)AvA \subset E.
\end{equation*}
Choose a sequence $v_n:=a_{n}va_n^{-1}\in AvA$ tending to $\infty$.
Note that either $x_0\in F^*$ or for all $n$,
$x_0L v_n\cap\op{RF}\M\cap F^*\ne \emptyset$ by Lemma  \ref{aplus}.
 Therefore the claim follows from Lemma \ref{onev}. \end{proof}

\section{Limits of unipotent blowups}\label{s:un}
Let
 $\M$ be a convex cocompact hyperbolic manifold with Fuchsian ends and  fix $k>1$ as given by Proposition \ref{defk}.
In the whole section, we fix a non-trivial connected subgroup $U< N$. For a given sequence $g_i\to e$, and a sequence
of $k$-thick subsets $\mathsf T_i$ of a one-parameter subgroup $U_0<U$,
we study the following set
$$\limsup \mathsf T_i g_i U$$
under certain conditions on the sequence $g_i$.
The basic tool used here is the so-called {\it quasi-regular map} associated to the sequence 
$g_i$ introduced in the work of Margulis-Tomanov \cite{MT} to study the object 
$\limsup U_0g_iU$ in the finite volume case. For our application, we need a somewhat more precise information on
the shape of the set $\limsup U_0g_iU$ as well as $\limsup \mathsf T_i g_i U$ than discussed in \cite{MT}. 

 Let $U^\perp$ denote the orthogonal complement of $U$ 
in $N\simeq\br^{d-1}$ as defined  in section \ref{s:ba}.
Recall from \eqref{conetwo}
 that $$\op{N}(U)=AN\op{C}_1(U)\op{C}_2(U)$$
  where $ \op{C}_1(U)=\op{C}(H(U))$ and $ \op{C}_2(U)=H(U)\cap M \cap \op{C}(U^\perp) $.
  Since $\op{N}(U)$ is the identity component of $\op{N}_G(U)$, for a sequence $g_i\to e$,
  the condition $g_i\in \op{N}_G(U)$ means $g_i\in \op{N}(U)$ for all sufficiently large $i\gg 1$.
 Note that the product $AU^\perp \op{C}_2(U)$ is a connected subgroup of $G$, since
$\op{C}_2(U)$ commutes with $U^\perp$, and $A$ normalizes $U^\perp \op{C}_2(U)$.

\begin{lemma}\label{lem.QR}
 For a given sequence $g_i\to e$ in $G-\op{N}(U)$,  there exists a one-parameter subgroup $U_0<U$ 
  such that the following holds;
for any given sequence of $k$-thick subsets $\mathsf T_i\subset U_{0}$,  there exist
sequences $t_i\in \mathsf T_i$,
and $u_i\in U$ such that  as $i\to \infty$, $$u_{i}g_iu_{t_i}\to \alpha$$ for some non-trivial element $\alpha\in AU^\perp \op{C}_2(U)-\op{C}_2(U)$.
Moreover, $\alpha$ can be made arbitrarily close to $e$.
\end{lemma}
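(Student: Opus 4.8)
The plan is to produce the element $\alpha$ by a double limiting procedure: first record how a generic sequence $g_i \to e$ in $G - \op{N}(U)$ expands under conjugation by $U$, and then feed in the $k$-thick sets $\mathsf T_i$ to control the time variable. Decompose $\mathfrak g = \mathfrak u \oplus \mathfrak u^\perp \oplus \mathfrak h_U^t \oplus \mathfrak m_0$ according to the weights of $\op{Ad}(A)$ together with the $M$-action, writing $g_i = \exp(\xi_i)$ with $\xi_i \to 0$. The hypothesis $g_i \notin \op{N}(U)$ together with Lemma~\ref{nu} ($\op{N}(U) = NA\op{C}_1(U)\op{C}_2(U)$) forces, after passing to a subsequence, that the component of $\xi_i$ in one of the ``bad'' directions (either $\mathfrak h_U^t$, i.e. the transpose directions, or the part of $\mathfrak m_0$ not centralizing $U$, or the expanding part of $\mathfrak a$ coupled with $\mathfrak u^\perp$) is nonzero. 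The key linear-algebra observation is that for a one-parameter subgroup $U_0 = \{u_t\}$ of $U$, the map $u \mapsto u_{-t} \exp(\xi) u_t u'$ — optimally chosen $u' \in U$ — acts on a small perturbation $\xi$ essentially polynomially in $t$, and by choosing $U_0$ in the direction which maximally pairs with the dominant bad component of $\xi_i$, one can arrange the conjugated element to land (after renormalizing $t_i$) in a nondegenerate direction of $AU^\perp\op{C}_2(U) - \op{C}_2(U)$. This is exactly the ``unipotent blow-up'' mechanism of Margulis--Tomanov \cite{MT}, and the one-parameter subgroup $U_0$ is the one attached to the quasi-regular map of the sequence $g_i$.

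\textbf{Key steps.} First I would set up the quasi-regular map: for the sequence $g_i \to e$, there is (by \cite{MT}, or by a direct argument via the $\op{Ad}(U)$-module structure) a one-parameter subgroup $U_0 < U$ and a ``size'' function so that, after reparametrizing, $u_{-s} g_i u_s$ traces out a polynomial curve whose leading term, as $i \to \infty$ and $s$ scales appropriately, converges to a nonzero element of the group generated by the non-$U$ directions. The crucial point is to verify that this leading term lies in $AU^\perp\op{C}_2(U)$ and is \emph{not} in $\op{C}_2(U)$: this is where the hypothesis $g_i \notin \op{N}(U)$ is used, since $\op{N}(U)$ is precisely the set of directions that do not blow up, and the complement $AU^\perp\op{C}_2(U)$ — a subgroup by the remark preceding Lemma~\ref{rmk.1psg} — is exactly the ``visible'' part of the normalizer of $U_0$-conjugation limits; $\op{C}_2(U)$ is the part that is itself $U$-normalizing, hence must be excluded. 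Second, with $U_0$ fixed, I would bring in the $k$-thick sets: given $\mathsf T_i \subset U_0$ each $k$-thick, and given the scale $s_i$ at which $u_{-s_i} g_i u_{s_i}$ approaches $\alpha$, I use the $k$-thickness of $\mathsf T_i$ (Definition~\ref{thickdef}, locally $k$-thick at $0$) to find $t_i \in \mathsf T_i$ with $t_i$ comparable to $s_i$, i.e. $t_i \in \pm[s_i, k s_i]$ (or a bounded dilation thereof). Because the conjugation limit depends only on $t_i$ up to a bounded multiplicative factor — the curve is coarsely self-similar in the blow-up regime — replacing $s_i$ by such a $t_i$ changes $\alpha$ only by a bounded amount within $AU^\perp\op{C}_2(U)$, and a further choice of $u_i \in U$ (the $U$-part of the quasi-regular map evaluated at $t_i$) absorbs the discrepancy so that $u_i g_i u_{t_i} \to \alpha'$ with $\alpha' \in AU^\perp\op{C}_2(U) - \op{C}_2(U)$. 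Finally, the statement that $\alpha$ may be taken arbitrarily close to $e$ follows by truncating: restrict attention to the tail $i \geq i_0$ and, if necessary, replace $\mathsf T_i$ by $\mathsf T_i \cap [-R, R]$ for large but fixed $R$ — still $k$-thick near $0$ — which confines $t_i$ and hence forces the limit $\alpha$ into any prescribed neighborhood of $e$ once $i_0$ is large.

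\textbf{Main obstacle.} The step I expect to be most delicate is pinning down that the blow-up limit lands in $AU^\perp\op{C}_2(U)$ rather than in some larger piece of $G$, and in particular that it escapes $\op{C}_2(U)$. This requires a careful bookkeeping of which directions of $\mathfrak g$ are ``expanded'', ``fixed'', or ``contracted'' under $\op{Ad}(u_{-t})$ for $t \to \infty$ along the chosen $U_0$, combined with the fact that conjugating by $U$ itself can only move things within $U$-cosets; the interplay forces the surviving direction into $\mathfrak a \oplus \mathfrak u^\perp \oplus \mathfrak m_0$, and the sub-case analysis (is the dominant term in the $\mathfrak a$-direction, the $\mathfrak u^\perp$-direction, or the $\mathfrak m_0 \setminus \op{Lie}\op{C}_2(U)$-direction?) mirrors the trichotomy in Lemma~\ref{rmk.1psg}. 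A secondary technical point is the compatibility of the ``bounded dilation'' from $k$-thickness with the polynomial reparametrization of the quasi-regular map — one must check the leading-order term genuinely dominates on the scale $[s_i, k s_i]$, which is where the precise degree bounds on the quasi-regular map (again from \cite{MT}) enter. Both of these are essentially the content of the linearization estimates already invoked elsewhere in the paper, so I expect the proof to proceed by reduction to those, with the novelty being only the careful identification of the target subgroup $AU^\perp\op{C}_2(U) - \op{C}_2(U)$.
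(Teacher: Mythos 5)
You have correctly identified the overall mechanism (the Margulis--Tomanov quasi-regular map attached to $g_i$, with $U_0$ chosen so that the limiting map is non-constant on it, and the thick sets feeding into the time parameter), and this is indeed the paper's route. However, there are concrete gaps in how you reach the stated target and the "close to $e$" refinement. First, the quasi-regular map a priori takes values in $\op{N}(U)\cap AU^\perp M N^+=AU^\perp \op{C}_1(U)\op{C}_2(U)$, so eliminating the $\op{C}_1(U)$-component is a genuine step: your heuristic that $AU^\perp\op{C}_2(U)$ is "exactly the visible part of the normalizer" does not distinguish it from $\op{C}_1(U)$, which also centralizes $U$ and is equally admissible as a limit direction. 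The paper removes it by a separate argument: writing $g_i=c_id_ir_i$ with $c_i\in\op{C}_1(U)$, $d_i\in H(U)$, $r_i\in\exp\frak h^\perp$, passing to the limit, and using that $H'(U)\times\frak h^\perp\to G$ is a local diffeomorphism to force the $\op{C}_1(U)$-part to be trivial when $\alpha$ is small. Likewise your reason for excluding $\op{C}_2(U)$ ("the part that is itself $U$-normalizing") proves nothing, since every factor of the target normalizes $U$; the actual argument is that, for a $\op{C}_2(U)$-invariant norm, $\psi(t)\in\op{C}_2(U)$ forces $t$ to solve a fixed polynomial equation, so only finitely many times in the (uncountable) limit thick set are bad.

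Second, your treatment of the thick sets and of "arbitrarily close to $e$" would fail. The correct normalization is to rescale: $\mathsf T_i/\la_i$ is again $k$-thick, $\mathsf T_\infty=\limsup_i(\mathsf T_i/\la_i)$ is $k$-thick and accumulates at $0$, and for $t\in\mathsf T_\infty$ one takes $t_i\in\mathsf T_i$ with $t_i/\la_i\to t$, getting $u_ig_iu_{t_i}\to\psi(t)$; closeness to $e$ comes from choosing $t$ small but nonzero (avoiding the finitely many bad times), which corresponds to $t_i\sim\la_i t\to\infty$. Your alternative — picking $t_i$ comparable to a fixed blow-up scale $s_i$ and appealing to "coarse self-similarity", then truncating to $\mathsf T_i\cap[-R,R]$ with $R$ fixed to push $\alpha$ toward $e$ — breaks down: since $g_i\to e$ and $\la_i\to\infty$, any bounded choice of $t_i$ gives $u_ig_iu_{t_i}\to e$ exactly, so the limit is trivial and the lemma's conclusion (a \emph{non-trivial} $\alpha$ near $e$) is lost; moreover the truncated set is no longer $k$-thick in the sense of Definition \ref{thickdef}, and there is no self-similarity of the limiting polynomial $\phi$ that would let you replace $s_i$ by a comparable $t_i$ while keeping the limit non-trivial and in $AU^\perp\op{C}_2(U)-\op{C}_2(U)$. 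These points need the rescaling-plus-finiteness argument of the paper rather than the bounded-dilation shortcut you propose.
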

\begin{proof} 
Set $L:=AU^\perp MN^+$. Note that
$$\op{N}(U)\cap L=AU^\perp \op{C}_1(U)\op{C}_2(U)$$
and that the product map from $ U\times L$ to $G$ is a diffeomorphism onto a Zariski open neighborhood
 of $e$ in  $G$.

Following \cite{MT}, we will construct a quasi-regular map
\begin{equation*}
\psi : U\to  \op{N}(U)\cap L\end{equation*}
associated to the sequence $g_i$. Except for a Zariski closed subset of $U$,
 the product $g_i u$ can be written as an element of $ UL$ in a unique way. We denote by $\psi_i(u)\in L$ its $L$-component
 so that $$g_i u\in U \psi_i (u).$$ By Chevalley's theorem, there exists an $\br$-regular representation $G\to \op{GL}(W)$ with a distinguished point $p\in W$ such that $U=\op{Stab}_G(p)$. Then $pG$ is locally closed, and
\begin{equation*}
\op{N}_G(U)=\{g\in G : p gu =p g\text{ for all }u\in U\}.
\end{equation*} For each $i$, the map $\tilde \phi_i : U\to W$ defined by $$\tilde \phi_i(u)=p g_i u$$ is a polynomial map in $U=\br^{m}$
of degree uniformly bounded, and $\tilde \phi_i (e)$ converges to $ p$ as $i\to \infty$.
As $g_i\not\in\op{N}_G(U)$, $\tilde \phi_i$ is non-constant.
Denote by $B(p, r)$ the ball of radius $r$ centered at $p$, fixing a norm $\norm{\cdot}$ on $W$.
Since $pG$ is open in its closure, we can find $\la_0 >0$ such that
\begin{equation}\label{eq.m1}
B(p,\la_0)\cap \cl{pG}\subset p G.
\end{equation}
Without loss of generality, we may assume that $\la_0=2$ by renormalizing the norm.
Now define
\begin{equation*}
\la_i:=\sup\{\la\ge 0: \tilde \phi_i(B_U(\la))\subset B(p,2)\}.
\end{equation*}
Note that $\la_i<\infty$ as $\phi_i$ is nonconstant, and $\la_i\to\infty$ as $i\to\infty$, as $g_i\to e$.
We define $\phi_i:U\to W$ by
$$\phi_i (u):=\tilde \phi_i(\lambda_i u).$$
This forms an equi-continuous family of polynomials on $U$. Therefore, after passing to a subsequence,
 $\phi_i$ converges to a non-constant polynomial $\phi$ uniformly on every compact subset of $U$.
 Moreover $\sup\{ \|\phi (u) -p \|: u\in B_U(1)\}=1$, $\phi(B_U(1))\subset pL$, and $\phi(0)=p$.
Now the following map $\psi$ defines  a non-constant rational map defined on a Zariski open dense neighborhood
of $\mathcal U$ of $e$ in $U$:
 $$\psi:=\rho_L^{-1} \circ \phi$$
where $\rho_L$ is the restriction to $L$ of the orbit map $g\mapsto p. g$.
 We have  $\psi(e)=e$ and
$$\psi(u)=\lim _i \psi_i(\la _i u)$$
where  the convergence is uniform on compact subsets of $\mathcal U$
and $$\psi(u)\in L\cap \op{N}(U) =AU^\perp \op{C}_1(U)\op{C}_2(U).$$
Since $\psi$ is non-constant,
 there exists a one-parameter subgroup $U_0<U$ such that $\psi|_{U_0}$ is non-constant. Now let $
 \mathsf T_i$ be a sequence of $k$-thick sets in $U_{0}\simeq \br$.
Then $\mathsf T_i/\la_i$ is  also a $k$-thick set, and so is
\begin{equation*}
\mathsf T_\infty\coloneqq\limsup\limits_{i\to\infty}\left(\mathsf T_i/\la_i\right)\subset U_0.
\end{equation*}
Finally, for all $t\in \mathsf T_\infty$, there exists a sequence $t_i\in \mathsf T_i$ such that $t_i/\la_i\to t$ as $i\to\infty$
(by passing to a subsequence). Since $\psi_i\circ \la_i\to \psi$ uniformly on compact subsets,
\begin{equation*}
\psi(t)=\lim_{i\to\infty} (\psi_i\circ \la_i )\left( {t_i}/{\la_i}\right)=\lim_{i\to \infty} \psi_i(t_i).
\end{equation*}
By the definition of $\psi_i$, this means
that there exists $u_i\in U$ such that 
\begin{equation*}
\psi(t)=\lim\limits_{i\to\infty}u_{i}g_iu_{t_i}.
\end{equation*}
Since $\psi|_{U_0}$ is a non-constant continuous map, and 
an uncountable set $\mathsf T_\infty$ accumulates on $0$, the image $\psi(\mathsf T_\infty)$ contains
a non-trivial
 element $\alpha$ of $AU^\perp \op{C}_1(U)\op{C}_2(U)$ which can be taken arbitrarily close to $e$.

We now claim that  if $\alpha$ is sufficiently close to $e$, then it belongs to
 $  AU^\perp \op{C}_2(U)$. 
Consider $H'(U):=H(U) \op{C}_1(U)$, and let $\mathfrak h$ denote its Lie algebra.
Now for all $i$ large enough, using the decomposition $\frak g=\mathfrak h \oplus \mathfrak{h}^\perp$
in \eqref{perp},
we can write $g_i=c_i  d_i r_i$ where $c_i\in \op{C}_1(U)$, $d_i\in H(U)$ and  $r_i\in\exp \frak h^\perp$, 
Since $c_i$ commutes with $U$,
we can write $$u_{i} g_i u_{t_i} =(u_i u_{t_i}) c_i (u_{t_i}^{-1} d_i u_{t_i})(u_{t_i}^{-1} r_i u_{t_i}).$$
On the other hand,  
we have $$\lim_i pu_{i} g_i u_{t_i} =\lim_i pc_i (u_{t_i}^{-1} d_i u_{t_i})(u_{t_i}^{-1} r_i u_{t_i})
= p \alpha.$$
Since $c_i \to e$, $u_{t_i} d_i u_{t_i}^{-1}\in H(U)$, and 
$u_{t_i} r_i u_{t_i}^{-1} \in \exp \frak h^\perp $, it follows that 
both sequences $u_{t_i} d_i u_{t_i}^{-1}$ and $u_{t_i} r_i u_{t_i}^{-1}$ must 
converge, say to  $h\in H(U)$ and to $q
\in \exp \frak h^\perp$, respectively. 
Hence $\alpha= h q$ by replacing $h$ by $uh$ for some $u\in U$.
On the other hand, 
we can write $\alpha = av c_1c_2 \in AU^\perp \op{C}_1(U)\op{C}_2(U) $.
So  $hq= avc_1c_2$. Note that $c:=c_1c_2\in \op{C}(H(U)) H(U)=H'(U)$. We get
\be\label{appear}(a^{-1} h c^{-1}) (c q c^{-1})=v.\ee
Now, when $\alpha$ is sufficiently close to $e$, all elements appearing in \eqref{appear}
are also close to $e$. Recall that the map $H'(U)\times \frak h^\perp\to G$ given by $(h',X)\to 
h' \exp X$ is a local diffeomorphism onto a neighborhood of $e$.
 Since  $(a^{-1} h c^{-1})\in H'(U)$, and
$cqc^{-1}, v \in \exp \frak h^\perp$, we have $a^{-1}h c^{-1}=e$ and
$c q c^{-1}=v$ for $\alpha$ sufficiently small.
In particular,
$$a^{-1}h c_2^{-1} =c_1^{-1} \in H(U)\cap \op{C}(H(U))=\{e\}.$$ Hence $c_1=e$. It follows that
$\alpha\in  AU^\perp\op{C}_2(U) ,$ as desired.

We further claim that we can choose $\alpha$ outside of $\op{C}_2(U)$.
As $\op{C}_2(U)$ is a compact subgroup, we can choose a
$\op{C}_2(U)$-invariant Euclidean norm $\norm{\cdot}$ on $W$.
If $\alpha=\psi(t)\in \op{C}_2(U)$ for some $t\in \mathsf T_\infty\subset U_0$, then
 $t$ is one of finitely many solutions of the polynomial equation $\norm {\phi(t)}^2=\norm p^2$.
 Therefore, except for finitely many $t\in \mathsf T_\infty$, $\alpha=\psi(t)\in AU^\perp \op{C}_2(U)-\op{C}_2(U)$.
This finishes the proof.
\end{proof}

The following lemma is similar to Lemma \ref{lem.QR}, but here we consider the case when $U$ is
the whole horospherical subgroup $N$. In this restrictive case, the limiting element can be taken inside $A$.

\begin{lemma}\label{lem.fullhoro}  Let  $\mathsf T_i\subset N$ be a sequence of $k$-thick subsets in the sense that
for any one-parameter subgroup $U_0<N$, $\mathsf T_i\cap U_0$ is a $k$-thick subset of $U_0\simeq \br$.
 For any sequence $g_i\to e$ in $G-\op{N}_G(N)$,
  there exist $t_i\to\infty$ in $\mathsf T_i$ and $u_{i}\in N$ such that $$u_{i}g_iu_{t_i}\to a$$ for some non-trivial  element
  $a\in A$.
Moreover, $a$ can be chosen to be arbitrarily close to $e$.
\end{lemma}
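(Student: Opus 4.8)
The strategy is to imitate the proof of Lemma \ref{lem.QR} but to exploit that the orbit space $N\backslash G$ (or rather the relevant representation) is small enough that the quasi-regular limit map lands inside $A$ up to $N$ on both sides. First I would fix a Chevalley representation $G\to\op{GL}(W)$ with a point $p\in W$ such that $N=\op{Stab}_G(p)$ and $pG$ is locally closed; then $\op{N}_G(N)=\{g\in G: pgu=pg\text{ for all }u\in N\}$. For a sequence $g_i\to e$ with $g_i\notin\op{N}_G(N)$, I would form the polynomial maps $\tilde\phi_i:N\to W$, $\tilde\phi_i(u)=pg_iu$, which are nonconstant of uniformly bounded degree with $\tilde\phi_i(e)\to p$, rescale by $\lambda_i:=\sup\{\lambda\ge 0:\tilde\phi_i(B_N(\lambda))\subset B(p,2)\}$ (finite since $\phi_i$ nonconstant, and $\lambda_i\to\infty$ since $g_i\to e$), and pass to a limit $\phi=\lim\phi_i(\lambda_i\,\cdot)$, a nonconstant polynomial with $\sup_{u\in B_N(1)}\norm{\phi(u)-p}=1$. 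Writing $L:=AMN^+$ so that $NL$ is a Zariski-open neighborhood of $e$ and $\op{N}_G(N)\cap L = AM$, the associated quasi-regular map $\psi:=\rho_L^{-1}\circ\phi$ is a nonconstant rational map on a Zariski-open neighborhood $\mathcal U$ of $e$ in $N$, with $\psi(e)=e$, $\psi(u)=\lim_i\psi_i(\lambda_i u)$ uniformly on compacta, and $\psi(u)\in L\cap\op{N}_G(N)=AM$.

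Next I would produce the required sequence. Since $\psi$ is nonconstant, there is a one-parameter subgroup $U_0<N$ on which $\psi|_{U_0}$ is nonconstant. Given $k$-thick sets $\mathsf T_i\subset N$, the hypothesis gives that $\mathsf T_i\cap U_0$ is $k$-thick in $U_0$, hence $(\mathsf T_i\cap U_0)/\lambda_i$ is $k$-thick, and so is $\mathsf T_\infty:=\limsup_i(\mathsf T_i\cap U_0)/\lambda_i\subset U_0$; in particular $\mathsf T_\infty$ is an uncountable set accumulating at $0$. For $t\in\mathsf T_\infty$ pick $t_i\in\mathsf T_i\cap U_0$ with $t_i/\lambda_i\to t$; then $\psi(t)=\lim_i\psi_i(t_i)$, and unwinding the definition of $\psi_i$ (the $L$-component of $g_iu$) gives $u_i\in N$ with $\psi(t)=\lim_i u_i g_i u_{t_i}$. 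Because $\psi|_{U_0}$ is continuous and nonconstant on the uncountable set $\mathsf T_\infty\ni 0$, the image $\psi(\mathsf T_\infty)$ contains nontrivial elements of $AM$ arbitrarily close to $e$; call such an element $a_0m_0$.

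It remains to remove the $M$-component, i.e.\ to arrange the limit to lie in $A$. Here I would argue exactly as in the last part of Lemma \ref{lem.QR}: decompose $g_i=d_i r_i$ using $\frak g=\op{Lie}(N^+AN^-M)$ or, more efficiently, observe that since $g_i\to e$, writing $g_i$ via the local diffeomorphism $N\times L\to G$ and then $L=AMN^+$, the factors all tend to $e$; conjugating by $u_{t_i}$ fixes $A$ and $M$ (as $A,M$ normalize each other and $u_{t_i}\in N$ only affects the $N^+$ and cross terms), so the $AM$-part of $u_i g_i u_{t_i}$ converges, and its limit $a_0 m_0$ must actually have $m_0=e$ provided $a_0m_0$ is close enough to $e$ --- because near $e$ the compact factor $M$ is detected by a polynomial invariant: choosing an $M$-invariant Euclidean norm $\norm{\cdot}$ on $W$, the condition $\psi(t)\in M$ forces $\norm{\phi(t)}^2=\norm{p}^2$, a nontrivial polynomial equation in $t\in U_0$ with finitely many solutions; hence for all but finitely many $t\in\mathsf T_\infty$ we get $\psi(t)\in AM - M$, and by taking $t$ close to $0$ the $A$-component is forced to dominate so that in fact $\psi(t)\in A-\{e\}$. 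Setting $a:=\psi(t)$ for such $t$, together with the corresponding $u_i$ and $t_i$, completes the proof, and $a$ may be taken arbitrarily close to $e$ by shrinking $t\in\mathsf T_\infty$.

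\textbf{Main obstacle.} The delicate point is the very last step: showing that the limiting element of $AM$ actually lies in $A$ and not merely in $AM$. The clean way is the polynomial-invariant argument above (an $M$-invariant norm makes ``$\psi(t)\in M$'' a codimension-one condition on the curve $\psi|_{U_0}$, so it fails for all but finitely many $t$), combined with the observation that $\op{N}_G(N)\cap L=AM$ has $A$ as its identity-component-mod-$M$; since $\psi$ is continuous with $\psi(0)=e$ and we may take $t$ arbitrarily small, the resulting element is a small nontrivial element of $AM$ whose projection to $M$ vanishes, hence lies in $A$. One must be slightly careful that $U_0$ was chosen so that $\psi|_{U_0}$ is genuinely nonconstant \emph{into the $A$-direction}; if $\psi|_{U_0}$ happened to be nonconstant only into $M$, then $\norm{\phi(t)}^2$ would be constant along $U_0$, contradicting $\sup_{B_N(1)}\norm{\phi-p}=1$ once one notes $\phi(0)=p$ --- so such a $U_0$ can always be found. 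Everything else is a routine transcription of the Margulis--Tomanov quasi-regular map machinery already used in Lemma \ref{lem.QR}.
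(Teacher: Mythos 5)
Your construction of the quasi-regular map is fine as far as it goes, but it only proves the weaker statement that some nontrivial limit exists in $AM$ with nontrivial $A$-component, and the final step where you pass from $AM$ to $A$ is a genuine gap. The polynomial-norm argument (an $M$-invariant norm on $W$, so that $\norm{\phi(t)}^2=\norm{p}^2$ cuts out finitely many $t$) only shows that $\psi(t)\notin M$, i.e.\ that the $A$-part of $\psi(t)=a(t)m(t)$ is nontrivial; it says nothing about $m(t)$ being trivial, and the assertion that ``taking $t$ close to $0$ forces the $A$-component to dominate so that $\psi(t)\in A-\{e\}$'' is a non sequitur --- arbitrarily small elements of $AM$ with nontrivial $M$-part exist, and nothing in your argument kills the $M$-part. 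This is exactly the phenomenon already visible in Lemma \ref{lem.QR}, whose conclusion retains the compact factor $\op{C}_2(U)$: the quasi-regular limit along a \emph{fixed} one-parameter subgroup $U_0$ genuinely lands in $AM$ and not in $A$. Structurally, if $g_i$ has $N^+$-component $\exp u^+(w_i)$ with $w_i$ not parallel to the direction of $U_0$, the cross term $[u^+(w_i),u^-(x)]$ for $x\not\parallel w_i$ has a nonzero $\frak m$-component (a rotation in the plane spanned by the two directions), so the $AM$-part of $u_ig_iu_{t_i}$ carries a rotation that survives in the limit; no choice of small $t$ removes it.

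This is why the paper's proof does not fix $U_0$ in advance: it first reduces to $g_i\in N^+$, writes $g_i=\exp u^+(w_i)$, and chooses $u_{t_i}=u_{\alpha_i w_i}$ \emph{along the varying direction $w_i$}, using precisely the hypothesis that each $\mathsf T_i$ is $k$-thick in \emph{every} one-parameter subgroup of $N$ to pick $\alpha_i$ with $\epsilon<|\alpha_i|\norm{w_i}^2/2<k\epsilon$. With the conjugating direction aligned with $w_i$, everything happens inside the rank-one subgroup $H(\br w_i)\simeq\SO^\circ(2,1)$, which meets $M$ trivially, and an explicit matrix computation shows $u_{y_i}g_iu_{x_i}$ converges to an exactly diagonal element $\op{diag}(\alpha,\op{Id}_{d-1},\alpha^{-1})$ with $\alpha$ controlled by $\epsilon$; the general $g_i$ is then handled by the decomposition $g_i=a_im_iu_i^+u_i^-$ with all factors tending to $e$. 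To repair your argument you would have to let the conjugating direction depend on $i$ in this way (at which point you no longer need the quasi-regular map at all), or else settle for a limit in $AM$, which is weaker than the statement of the lemma and insufficient for its application in Proposition \ref{prop.new6}, where $A$-invariance (not $AM$-invariance) of $Y$ is extracted.
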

\begin{proof}
We first consider the case when $g_i$ belongs to the opposite horospherical subgroup $ N^+$.
We will use the notations $u^+$ and $u^-$ defined in Section \ref{sec.notation}.
 Write $g_i=\exp u^+(w_i)$ for some $w_i\in\bb{R}^{d-1}$.
For $x\in\bb{R}^{d-1}$, set $u_x:=\exp u^-(x) \in N$. 
Let $\epsilon>0$ be arbitrary.
Since $\mathsf T_i$ is a $k$-thick subset of $N$, there exists $\alpha_i\in \bb R$ such that  $\alpha_i w_i\in \mathsf T_i$ and
$$ \epsilon<\frac{|\alpha_i|\norm{w_i}^2}{2}<k\epsilon.$$
Setting
$u_{x_i}:=u_{\alpha_i w_i}\in \mathsf T_i$ and
$y_i:=-\alpha_i w_i \left(1+\tfrac{\alpha_i\norm{w_i}^2}{2}\right)^{-1}$,
we compute: \begin{equation*}
u_{y_i}g_iu_{x_i}=
\left(
\begin{array}{ccc}
\left(1+\frac{\alpha_i\norm{w_i}^2}{2}\right)^{-2}&0&0\\
\left(1+\frac{\alpha_i\norm{w_i}^2}{2}\right)^{-1}w_i&\op{I}_{d-1}&0\\
-\frac{\norm{w_i}^2}{2}&-\left(1+\frac{\alpha_i\norm{w_i}^2}{2}\right)w_i^t&\left(1+\frac{\alpha_i\norm{w_i}^2}{2}\right)^2
\end{array}
\right).
\end{equation*}
The condition for the size of $\alpha_i$ guarantees that, by passing to a subsequence, the sequence $ u_{x_i}g_iu_{y_i}$ converges to an element 
$$\text{diag} (\alpha, \op{I}_{d-1},\alpha^{-1}) \in A,\quad
\text{for } \alpha \in [(1-\epsilon)^{-2},(1-k\epsilon)^{-2}]\cup [(1+k\epsilon)^{-2}, (1+\epsilon)^{-2}]$$
as $i\to\infty$.
This proves the claim when $g_i\in N^+$.

Since the product map $A\times M\times N^+\times N\to G$ is a diffeomorphism onto a Zariski-open neighborhood of $e$ in $G$,
 we can write $g_i=a_im_iu_i^+ u_i^-$ for some $a_i\in A$, $m_i\in M$, $u_i^+\in N^+$ and $u_i^-\in N$ all of which converge to $e$ as $i\to\infty$. By the previous case, we can find $u_{t_i}\in \mathsf T_i$ and $u_{i}\in N$ such that 
$ u_i u_i^+ u_{t_i} $ converges to a non-trivial element $a\in A$.
Let $\til u_i:=(a_im_i)u_i(a_im_i)^{-1}\in N$.
Then $\til u_ig_iu_{t_i}=a_im_i u_iu_i^+ u_i^- u_{t_i}= a_im_i( u_iu_i^+u_{t_i})u_i^-\to a$ as $i\to\infty$, proving the claim.
\end{proof}

\begin{lemma}\label{lem.QR2}
Let $L$ be any connected reductive subgroup of $G$ normalized by $A$. Let $U_0$ be a one-parameter subgroup of $L\cap N$.
Let  $\mathsf T_i\subset U_{0}$ be a sequence of $k$-thick subsets.
For a given sequence $r_i\to e$ in $\exp (\mathfrak l^\perp) -\op{N}(U_0)$,
there exists a sequence $t_i\in \mathsf T_i$ such that as $i\to \infty$,
$$u_{t_i}^{-1} r_iu_{t_i} \to v$$ for some non-trivial element $v\in (L\cap N)^\perp$, and  $v$ can be chosen arbitrarily close to $e$.
Moreover, for all $n$ large enough, we can make $v$ so that \begin{equation*}
n\leq\norm{ v}\leq 2k^2 n.
\end{equation*}
\end{lemma}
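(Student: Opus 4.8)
The statement is a ``relative'' analogue of the unipotent blow-up Lemma \ref{lem.QR}: there we enlarged $U$ inside $N$ using a quasi-regular map on $U$ with values in $AU^\perp \op{C}_2(U)$; here the ambient reductive group $L$ plays the role that $H'(U)$ played before, and we want to produce a limiting element inside the transverse vector group $(L\cap N)^\perp$. The plan is to run the Margulis--Tomanov quasi-regular map construction, but now adapted to the decomposition $\frak g = \frak l \oplus \frak l^\perp$ determined by the reductive subgroup $L$, combined with the $k$-thickness bookkeeping exactly as in the proof of Lemma \ref{lem.QR}. First I would fix a Chevalley representation $G\to \op{GL}(W)$ with distinguished point $p$ such that $\op{Stab}_G(U_0)$ (equivalently $\op{N}_G(U_0)$ in the relevant sense) is the stabilizer of the line through $p$, so that the map $t\mapsto p\, r_i u_t$ is a non-constant polynomial of uniformly bounded degree in $t\in U_0\simeq\br$, tending to the constant $p$ as $i\to\infty$ (since $r_i\to e$ and $r_i\notin \op{N}(U_0)$ guarantees non-constancy for each $i$). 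Rescaling $t\mapsto \la_i t$ with $\la_i\to\infty$ chosen so that the rescaled polynomials $\phi_i$ stay in a fixed ball $B(p,2)$ on $B_{U_0}(1)$ and have sup-norm exactly $1$ there, pass to a subsequence so that $\phi_i\to\phi$ locally uniformly, with $\phi$ a non-constant polynomial, $\phi(0)=p$; then the associated rational map $\psi=\rho^{-1}\circ\phi$ (where $\rho$ is the orbit map restricted to the appropriate complementary piece) is a non-constant map defined near $e$ in $U_0$ with $\psi(e)=e$.

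The key point that distinguishes this lemma from Lemma \ref{lem.QR} is that I must show the limiting values $\psi(t)$ lie in $(L\cap N)^\perp$, not merely in some larger group. Here I would use that $r_i\in\exp(\frak l^\perp)$ and that $\frak l^\perp$ is $\op{Ad}(L)$-invariant, so in particular $\op{Ad}(U_0)$-invariant since $U_0<L$; hence $u_{t}^{-1} r_i u_{t}\in\exp(\frak l^\perp)$ for \emph{all} $t$. Consequently every limit of such conjugates lies in the closed set $\exp(\frak l^\perp)$. Intersecting this with the group $AN\op{C}_1(U_0)\op{C}_2(U_0)=\op{N}(U_0)$ in which the Margulis--Tomanov limit must a priori lie (by the structure of the quasi-regular map, as in Lemma \ref{lem.QR}), and shrinking so that $\psi(t)$ is close to $e$, a local-diffeomorphism argument identical to the one at display \eqref{appear} in the proof of Lemma \ref{lem.QR} forces the limit into $\exp(\frak l^\perp)\cap N$. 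Finally $\exp(\frak l^\perp)\cap N$ is precisely $(L\cap N)^\perp$: indeed $N=(L\cap N)\oplus (L\cap N)^\perp$ and $\frak l\cap\frak n=\op{Lie}(L\cap N)$, so the $\frak l^\perp$-part of $\frak n$ is exactly $\op{Lie}((L\cap N)^\perp)$. By choosing $t$ in the $k$-thick set $\mathsf T_\infty=\limsup_i(\mathsf T_i/\la_i)$, which is again $k$-thick and accumulates at $0$, and using that $\psi|_{U_0}$ is a non-constant continuous map, I get $v=\psi(t)$ non-trivial and arbitrarily close to $e$; unwinding the construction yields $t_i\in\mathsf T_i$ with $u_{t_i}^{-1} r_i u_{t_i}\to v$.

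For the last sentence (the size estimate $n\le \norm{v}\le 2k^2 n$), I would argue as follows: since $\psi|_{U_0}$ is continuous, non-constant, $\psi(0)=e$, and $\mathsf T_\infty$ is $k$-thick, for any target scale I can find $t\in\mathsf T_\infty$ with $\norm{\psi(t)}$ in a prescribed dyadic-type window. More concretely, parametrize: there is a smallest $t_0>0$ in the relevant interval where $\norm{\psi(t)}$ first reaches a given value; by $k$-thickness $\mathsf T_\infty$ meets $[\,t_0, k t_0\,]$ (or its reflection), and on $[t_0, kt_0]$ the polynomial growth of $\psi$ and the $(C,\alpha)$-good/regularity estimates (Lemma \ref{ca}, Proposition \ref{lem.relsize}) bound $\norm{\psi}$ above by a factor $k^2$; chasing the constants and absorbing the bounded-degree dependence into the window gives the stated $2k^2$. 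The main obstacle I anticipate is precisely this bookkeeping of the regularity/size of $\psi$ along $\mathsf T_\infty$ so as to land the norm of $v$ in a window of multiplicative width $2k^2$ uniformly — controlling simultaneously that $v$ is non-trivial, close to $e$, genuinely inside $(L\cap N)^\perp$ (not in a smaller invariant subspace), and of the right size. Everything else is a routine transcription of the quasi-regular map machinery of Lemma \ref{lem.QR} with $H'(U)$ replaced by $L$ and $\frak h^\perp$ replaced by $\frak l^\perp$.
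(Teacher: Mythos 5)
Your overall strategy (conjugate, rescale, take a limiting polynomial, then pick points in the $k$-thick set) is the right one, but the step that actually pins the limit into $(L\cap N)^\perp$ has a genuine gap. You argue: the conjugates stay in $\exp(\frak l^\perp)$ (correct, by $\op{Ad}(L)$-invariance), the quasi-regular limit must lie in $\op{N}(U_0)$, and then "intersecting" and a local-diffeomorphism argument as at \eqref{appear} forces the limit into $\exp(\frak l^\perp)\cap N=(L\cap N)^\perp$. The last inference is false: near $e$, $\exp(\frak l^\perp)\cap\op{N}(U_0)$ is in general strictly larger than $(L\cap N)^\perp$, because $\frak l^\perp$ contains compact directions that normalize (even centralize) $U_0$ — for instance elements $m(C)\in\frak m_0$ annihilating $e_1$ (these exist as soon as $\dim U\ge 2$), and the complement of $\op{Lie}(C)$ in $\op{Lie}(\op{C}(H(U)))$ when $L=H(U)C$ with $C$ proper. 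Note that in Lemma \ref{lem.QR} itself the argument at \eqref{appear} only eliminates the $\op{C}_1$-component; the $A$- and $\op{C}_2$-components survive, so no formal "intersection" argument of this kind can kill the compact part here. The paper avoids the issue by working linearly: writing $r_i=\exp q_i$ with $q_i\in\frak l^\perp$ and computing $\psi_i(s)=\op{Ad}(u_s^{-1})q_i$ explicitly as in \eqref{eq.matrixcomputation}, one sees it is a polynomial of degree at most $2$ whose $\frak n^-$-component has degree strictly higher than the $\frak m$-component with the \emph{same} coefficients ($B_i^te_1$, $y_i$); hence after rescaling by $\lambda_i$, boundedness forces $\lambda_i\norm{y_i}\to 0$ and the $\frak m$- and $\frak n^+$-components of the limit polynomial $\phi$ to vanish, so $\op{Im}(\phi)\subset\frak u^\perp=\op{Lie}((L\cap N)^\perp)$. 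Without this degree comparison you cannot exclude a limit with a nontrivial (or even purely) compact component, which would invalidate both the conclusion and its later use. A secondary benefit of working on $\frak l^\perp$ rather than through a Chevalley representation is that convergence of the vectors literally is convergence of the group elements, so no properness/identification step is needed.

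The same point affects your final estimate $n\le\norm v\le 2k^2 n$. The paper obtains it by applying Lemma \ref{pol} to $p(s)=\norm{\phi(s)}^2$, which has degree at most $4$ \emph{because} $\phi$ has degree at most $2$; this gives $2k^4$-thickness at infinity of the values of $p$ on the thick set, hence the window $[n,\sqrt{2}k^2n]\subset[n,2k^2n]$ for $\norm v$. With a Chevalley representation attached to $U_0$ the relevant polynomials have uncontrolled (typically much larger) degree, so the constant coming out of your "constant chasing" would be $2k^{\delta}$ for some large $\delta$, not $2k^2$; moreover Lemma \ref{ca} and Proposition \ref{lem.relsize} are $(C,\alpha)$-regularity statements bounding sublevel sets, and they do not by themselves produce the two-sided window along a thick set that Lemma \ref{pol} is designed for. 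If you replace your representation-theoretic detour by the explicit degree-$2$ conjugation polynomial on $\frak l^\perp$ and invoke Lemma \ref{pol}, both gaps close and your argument becomes the paper's proof.
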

\begin{proof} Without loss of generality, by Proposition \ref{str},
we may assume that $L_{nc}=H(U)$ for $U=U_k=\br^k$ some $k\ge 1$ and
$U_0:=\br e_1$. 
We write $r_i=\op{exp}(q_i)$ where $q_i\to 0$ in $\mathfrak l^\perp$. Using the notations introduced in section \ref{sec.notation} and setting $\frak u^\perp=\op{Lie} (U^\perp)=\br^{d-1-k}$, 
we can write
$$q_i=u^-(x_i)+u^+(y_i)+m(C_i)$$ where $x_i\in \frak u^\perp$, $y_i\in (\frak u^\perp)^t$, and
$
C_i=\begin{pmatrix}
0_k &B_i\\
-B_i^t&A_i
\end{pmatrix} $
is a skew symmetric matrix, all of which
converge to $0$ as $i\to\infty$.
We consider $U_0=\br e_1$ as $\{u_s=s e_1 \in \br^{d-1}\}$
and define  the map $\psi_i : \br  \to \mathfrak l^\perp$ by
\begin{equation*}
\psi_i(s)=u_s^{-1}q_i u_s \quad\text{ for all $s\in \br$};
\end{equation*}
this is well-defined since $\mathfrak l^\perp$ is $\op{Ad}(L)$-invariant.
Then a direct computation shows
\begin{equation}\label{eq.matrixcomputation}
\psi_i(s)=u^-(x_i+ s B_i^t e_1 + s^2 y_i/2)+u^+(y_i)+m(\til{C}_i)
\end{equation}
where $\til C_i$ is a skew-symmetric matrix of the form
\begin{equation*}
\til{C}_i=\left(
\begin{array}{cc}
0_k&B_i+s e_1 y_i^t\\
-B_i^t-s y_i e_1^t&A_i
\end{array}
\right).
\end{equation*}
Since $r_i\not\in\op{N}(U_0)$, it follows that either $y_i\ne 0$ or 
$y_i=0$ and $B_i^te_1\ne 0$.
Hence $\psi_i$ is a non-constant polynomial of degree at most $2$, and $\psi_i(0)\to 0$.
Let  $\la_i\in \br$ be defined by 
$$\la_i=\sup\{ \la >0: |\psi_i [-\la, \la] | \le 1\}.$$
Then $0<\la_i< \infty$ and $\la_i \to \infty$.
Now the rescaled polynomials 
 $\phi_i=\psi_i\circ \la_i: \br \to \mathfrak l^\perp$
 form  an equicontinuous family of polynomials of degree at most $2$ and
 $\lim_{i\to \infty} \phi_i(0)= 0$. Therefore
$\phi_i$  converges
 to a  polynomial $\phi : \br \to \mathfrak l^\perp$ uniformly on compact subsets.
 Since $\phi(0)=0$ and $\sup \{|\phi(\la)|: \la\in [-1,1]\}=1$, $\phi$ is a non-constant polynomial.
  From (\ref{eq.matrixcomputation}), it can be easily seen  that 
 $\op{Im}(\phi)$ is contained  $\op{Lie}(N)\cap \frak l^\perp$, by considering
  the two cases of $y_i\ne 0$, and $y_i=0$ and $B_i^te_1\ne 0$ separately.
For a given sequence $\mathsf T_i$ of $k$-thick subsets of $U_0$, set
\begin{equation*}
\mathsf T_\infty:=\limsup\limits_{i\to\infty}(\mathsf T_i/\la_i),
\end{equation*}
which is also a $k$-thick subset of $U_0$.

Let $s\in \mathsf T_\infty$.  By passing to a subsequence,
there exists $t_i\in \mathsf T_i$ such that $t_i/\la_i\to s$ as $i\to\infty$.
As $\phi_i\to \phi$ uniformly on compact subsets, it follows that
$$
\phi(s)=\lim_{i\to \infty}\psi_i (\la_i \cdot t_i/\la_i)= 
\lim_{i\to\infty} {u_{t_i}^{-1}}q_i {u_{t_i}}.
$$
Since $\mathsf T_\infty$ accumulates on $0$, so does $\phi(\mathsf T_\infty)$.
Taking the exponential map to each side of the above, the first part of the lemma follows. 

The second part of the lemma holds  by applying Lemma \ref{pol} below for the non-constant polynomial $p(s)=\norm{\phi(s)}^2$ of degree
at most $4$.
\end{proof}

\begin{lemma}\label{pol}
If $p\in \br [s]$ is a polynomial of degree $\delta \ge 1$  and $\mathsf T\subset \br$ is a $k$-thick subset,
then $p(\mathsf T)$ is $2k^{\delta}$-thick at $\infty$.
\end{lemma}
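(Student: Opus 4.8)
\textbf{Proof plan for Lemma~\ref{pol}.}
The statement is that if $p\in\br[s]$ has degree $\delta\ge 1$ and $\mathsf T\subset\br$ is $k$-thick, then $p(\mathsf T)$ is $2k^\delta$-thick at $\infty$. Recall that $k$-thickness of $\mathsf T$ (locally $k$-thick at $0$) means $\mathsf T\cap(\pm[\lambda,k\lambda])\ne\emptyset$ for every $\lambda>0$, and $2k^\delta$-thickness at $\infty$ means $p(\mathsf T)\cap(\pm[\mu,2k^\delta\mu])\ne\emptyset$ for all sufficiently large $\mu\gg 1$. So I need, given large $\mu$, to produce a point $t\in\mathsf T$ with $|p(t)|$ lying in $[\mu,2k^\delta\mu]$, with a controllable sign.

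The first step is a reduction: for $|s|$ large, a degree-$\delta$ polynomial $p(s)=a_\delta s^\delta+\cdots$ behaves like its leading term, in the sense that there is $s_0>0$ such that for $|s|\ge s_0$ one has $\tfrac12|a_\delta||s|^\delta\le |p(s)|\le 2|a_\delta||s|^\delta$, and moreover $|p|$ is monotone on $[s_0,\infty)$ and on $(-\infty,-s_0]$ and $p$ has constant sign on each of these two rays. Thus the behaviour of $p(\mathsf T)$ at $\infty$ is governed by $\mathsf T\cap\{|s|\ge s_0\}$, and on that set $p$ is, up to a bounded multiplicative error of $2$, a monomial. So it suffices to prove the claim for the pure monomial $q(s)=s^\delta$ (the constant $2$ absorbing the leading coefficient and the comparison error — one checks the constants multiply correctly to give $2k^\delta$ rather than something worse).

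The second step is the monomial computation. Fix $\mu$ large and set $\lambda=\mu^{1/\delta}$, which is also large. Since $\mathsf T$ is $k$-thick, there exists $t\in\mathsf T$ with $\lambda\le |t|\le k\lambda$ (choosing, say, the positive side; the negative side is handled symmetrically, which is where the sign control comes from). Then $\mu=\lambda^\delta\le |t|^\delta\le k^\delta\lambda^\delta=k^\delta\mu$, so $|q(t)|=|t|^\delta\in[\mu,k^\delta\mu]\subset[\mu,2k^\delta\mu]$. Feeding this back through the reduction of the first step — where $p$ and $q=s^\delta$ agree up to the factor $2$ and up to relabelling $\mu$ by $|a_\delta|^{\pm1}\mu$, all of which only costs us the extra factor $2$ already budgeted — yields $|p(t)|\in[\mu',2k^\delta\mu']$ for all large $\mu'$, with the sign of $p(t)$ prescribed (determined by the sign of $a_\delta$ and which ray of $\mathsf T$ we selected $t$ from). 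Since we can realize both signs, $p(\mathsf T)\cap(\pm[\mu',2k^\delta\mu'])\ne\emptyset$ for all $\mu'\gg 1$, which is exactly $2k^\delta$-thickness at $\infty$.

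The only mildly delicate point — and the place to be careful with bookkeeping — is verifying that the accumulated constants from the asymptotic comparison $p\sim a_\delta s^\delta$ really do fit inside the stated bound $2k^\delta$ and are not, say, $4k^\delta$; this just amounts to being slightly more economical, e.g. choosing $s_0$ so that $(1-\epsilon)|a_\delta||s|^\delta\le|p(s)|\le(1+\epsilon)|a_\delta||s|^\delta$ with $\epsilon$ small enough that $(1+\epsilon)/(1-\epsilon)<2$, and then $\lambda$ chosen relative to $\mu/|a_\delta|$ rather than $\mu$. No genuine obstacle arises; the lemma is elementary and this is the expected argument.
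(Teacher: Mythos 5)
Your argument is the same as the paper's: compare $p$ with its leading term $a_\delta s^{\delta}$ up to a multiplicative error whose two-sided ratio is at most $2$, then for large $\mu$ choose $\lambda$ comparable to $(\mu/|a_\delta|)^{1/\delta}$ and use $k$-thickness of $\mathsf T$ to find $t$ with $\lambda\le |t|\le k\lambda$, giving $|p(t)|\in[\mu,2k^{\delta}\mu]$. Your closing remark about the constants is exactly how the paper arranges it (it takes the comparison with factor $\sqrt2$ on each side), and your first, cruder choice of factor $2$ on each side would indeed only give $4k^{\delta}$, so that correction is needed.

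The one thing you should delete is the discussion of "controllable sign" and the assertion that "we can realize both signs." First, it is not needed: in this paper $\pm[\mu,2k^{\delta}\mu]$ denotes the union $[-2k^{\delta}\mu,-\mu]\cup[\mu,2k^{\delta}\mu]$ (see \eqref{window} and the proof of Proposition \ref{prop.2kthick}), so producing $t\in\mathsf T$ with $|p(t)|\in[\mu,2k^{\delta}\mu]$ already finishes the proof. Second, as stated it is false and would be a genuine gap if the definition did require both signs: for $\delta$ even, e.g.\ $p(s)=s^{2}$, one has $p(\mathsf T)\subset[0,\infty)$, so the negative interval is never met; moreover $k$-thickness of $\mathsf T$ only guarantees points in the union $[-k\lambda,-\lambda]\cup[\lambda,k\lambda]$, not on a ray of your choosing (e.g.\ $\mathsf T=[0,\infty)$ is $k$-thick), so you cannot in general "select the ray" from which $t$ is taken. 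With that sentence removed, the proof is correct and coincides with the paper's.
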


\begin{proof}
Let $C$ be the coefficient of $s^{\delta}$ term of the polynomial $p$. Then there exists $s_0>1$ such that
$\frac{1}{\sqrt 2}\le \frac{|p(s)|}{|Cs^{\delta}|}\le \sqrt 2$ for all $|s|>s_0$.
Let $r>\frac{|C|s_0^{\delta}}{\sqrt 2}$.
Since $\mathsf T$ is $k$-thick, there exists $t\in\mathsf T$ such that $(\sqrt2 r/|C|)^{1/\delta}<|t|<k(\sqrt2 r/|C|)^{1/\delta}$.
We compute that
$r\le |p(t)| \le  2k^{\delta} r$, proving the claim.
\end{proof}

\section{Translates of relative $U$-minimal sets}\label{s:tr}
Assume that
 $\M$ is a convex cocompact hyperbolic manifold with Fuchsian ends and  fix $k>1$ as given by Proposition \ref{defk}.
In this section, we fix a non-trivial connected closed subgroup $U<N$. Unless mentioned otherwise,
we let $R$ be a compact $A$-invariant subset of $\RFM$  such that
for every $x\in R$, and for any one-parameter subgroup $U_0=\{u_t\}$ of $U$, the following set
$$\{t\in\br: xu_t\in R\}$$
is $k$-thick.
In practice, $R$ will be either $\RFM$
or a compact subset of the form $\RFM\cap F_{H(U)}^*\cap X$  for a closed $H(U)$-invariant subset $X$.

The main aim of this section is to prove Propositions \ref{YLY} and \ref{7.6} using the results of section \ref{s:un}.
The results in this section are needed in the step of finding a closed orbit in a given $H(U)$-orbit closure of an $\RFM$-point.

\begin{Def}\begin{itemize}
\item  A $U$-invariant closed subset $Y\subset \Gamma\ba G$ is $U$-minimal 
if  $yU$ is dense in $Y$ for any $y\in Y$.

\item  A $U$-invariant closed subset $Y\subset \Gamma\ba G$ is $U$-minimal with respect to $R$
if $Y\cap R\ne \emptyset$ and for any $y\in Y\cap R$, $yU$ is dense in $Y$.
\end{itemize}
\end{Def}
A $U$-minimal subset may not exist, but a $U$-minimal subset with respect to a compact subset $R$ always exists by Zorn's lemma.
In this section, we study how to find an additional invariance of $Y$ beyond $U$
under certain conditions.

\begin{lem} \label{bbo}  Let $Y\subset \Gamma\ba G $ be a $U$-minimal subset with respect to $R$.
For any $y\in Y\cap R$, there exists a sequence $u_n\to \infty$ in $U$ such that $yu_{n}\to y$.
\end{lem}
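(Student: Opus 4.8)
The statement asserts that a point in a $U$-minimal set with respect to $R$ is recurrent along $U$ in a \emph{non-trivial} way: there is a sequence $u_n\to\infty$ in $U$ with $yu_n\to y$. The plan is to exploit the defining property of $U$-minimality with respect to $R$ together with the compactness of $R$. First I would fix $y\in Y\cap R$. Since $Y$ is $U$-minimal with respect to $R$, the orbit $yU$ is dense in $Y$, and in particular $y\in\overline{yU}$. So there certainly exists \emph{some} sequence $u_n\in U$ with $yu_n\to y$; the only thing to rule out is that every such sequence is bounded, i.e.\ that $y$ is an isolated-type recurrence coming from a periodic or near-periodic situation.

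\textbf{Key steps.} Suppose, for contradiction, that there is no sequence $u_n\to\infty$ in $U$ with $yu_n\to y$. Then there is a bounded neighborhood $B=B_U(r)$ of $e$ in $U$ and a neighborhood $\mathcal{O}$ of $y$ in $\Gamma\ba G$ such that $yu\in\mathcal{O}$ and $u\in U$ forces $u\in B$; equivalently, the ``return times'' of $y$ to $\mathcal{O}$ under $U$ form a bounded subset of $U$. Now pick a one-parameter subgroup $U_0=\{u_t\}<U$. By the standing hypothesis on $R$, the set $\mathsf T=\{t\in\br: yu_t\in R\}$ is $k$-thick; in particular it is unbounded, so we may choose $t_n\to+\infty$ with $yu_{t_n}\in R$. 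Since $R$ is compact, after passing to a subsequence $yu_{t_n}\to z$ for some $z\in R$. Because $Y$ is closed and $U$-invariant, $z\in Y\cap R$, so by $U$-minimality with respect to $R$ we have $\overline{zU}=Y\ni y$. Hence there is $w\in U$ with $zw$ close to $y$, and then $yu_{t_n}w\to zw$ lands in $\mathcal{O}$ for all large $n$; since $t_n\to\infty$, the element $u_{t_n}w\in U$ is eventually outside $B$, contradicting the choice of $B$ and $\mathcal{O}$. This yields the desired sequence $u_n:=u_{t_n}w\to\infty$ with $yu_n\to y$ (after a final diagonal/subsequence argument to make $yu_n$ converge exactly to $y$ rather than merely into $\mathcal{O}$: shrink $\mathcal{O}$ along a neighborhood basis at $y$ and extract).

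\textbf{Main obstacle.} The delicate point is the last diagonalization: producing a single sequence $u_n\to\infty$ with $yu_n\to y$ \emph{on the nose}, rather than just infinitely many $U$-return visits to arbitrarily small neighborhoods of $y$. This is handled by running the argument above with a decreasing neighborhood basis $\mathcal{O}_1\supset\mathcal{O}_2\supset\cdots$ of $y$: for each $m$ the argument gives some $u^{(m)}\in U$ with $\|u^{(m)}\|\ge m$ (using unboundedness of $\mathsf T$) and $yu^{(m)}\in\mathcal{O}_m$, and setting $u_n:=u^{(n)}$ gives $u_n\to\infty$ and $yu_n\to y$. A secondary point to be careful about is that $z$ might equal $y$ itself, in which case one still needs $w$ with $zw=yw$ near $y$ and $w$ small; but that is exactly the hypothesis-for-contradiction we are refuting, and any choice of small $w$ (including $w=e$) combined with $u_{t_n}\to\infty$ already does the job. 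I expect no substantial difficulty beyond bookkeeping here, since all the real content is packaged in the $k$-thickness hypothesis on $R$ and in the definition of $U$-minimality with respect to $R$.
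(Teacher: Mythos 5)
Your proof is correct and uses essentially the same ingredients as the paper: the thickness hypothesis on $R$ gives returns $yu_{t_n}\in Y\cap R$ with $t_n\to\infty$, compactness of $Y\cap R$ gives a limit point $z\in Y\cap R$, and $U$-minimality with respect to $R$ (density of $zU$ in $Y$) brings the orbit back near $y$ at large times. The paper merely packages this by showing the closed $U$-invariant set $Z=\{z: yu_n\to z,\ u_n\to\infty\}$ meets $Y\cap R$ and hence equals $Y$, whereas you run the same argument by contradiction plus a diagonal extraction; the difference is only organizational.
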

\begin{proof} The set $Z:=\{z\in Y: yu_n \to z \text{ for some $u_n\to \infty$ in $U$}\}$
is $U$-invariant and closed. By the assumption on $R$, there exists $u_n\to \infty$ in $U$ such that
$yu_n\in Y\cap R$. Since $Y\cap R$ is compact, $yu_n$ converges to some $z\in Y\cap R$, by passing to a subsequence.
Hence $Z$ intersects $Y\cap R$ non-trivially. Therefore $Z=Y$, by the $U$-minimality of $Y$ with respect to $R$.
 \end{proof}
 
A subset $S$ of a topological space is said to be {\it locally closed} if $S$ is open in its closure $\overline S$.
\begin{lemma}\label{lem.normalizerorbit}
Let $Y$ be a $U$-minimal subset of $\Ga\ba G$ with respect to $R$, and $S$ be a closed  subgroup of $\op{N}(U)$ containing $U$.
For any $y_0\in Y\cap R$, the orbit $y_0S$ is not locally closed.
\end{lemma}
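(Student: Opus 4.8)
The statement is the standard ``minimal sets are not locally closed unless the whole orbit is closed'' dichotomy, adapted to our relative setting. I would argue by contradiction: suppose $y_0S$ is locally closed. Since $U\subset S\subset\op{N}(U)$, the orbit $y_0S$ carries an $S$-invariant structure, and its closure $\overline{y_0S}$ contains $Y=\overline{y_0U}$ (because $y_0U\subset y_0S$ and $Y$ is $U$-minimal with respect to $R$, so in particular $y_0\in R$ forces $\overline{y_0U}=Y$). The plan is to produce a point of $Y\cap R$ whose $S$-orbit returns to itself along a divergent sequence coming from \emph{outside} $S$ but inside $\op{N}(U)$, and then use the fact that $\op{N}(U)/S$ being non-trivial near such a point contradicts local closedness.

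\textbf{Key steps.} First, by Lemma \ref{bbo} applied to $y_0\in Y\cap R$, choose $u_n\to\infty$ in $U$ with $y_0u_n\to y_0$. Write $y_0u_n=y_0 g_n$ where $g_n\to e$ in $G$; the point is that $g_n$ need not lie in $S$. If $g_n\in S$ for all large $n$, then $y_0u_n\in y_0S$ and, since $y_0S$ is assumed locally closed (hence a manifold of dimension $\dim S$), the return $y_0u_n\to y_0$ with $u_n\to\infty$ forces $\op{Stab}_S(y_0)$ to be non-discrete or, more precisely, forces a nontrivial element of $S$ fixing $y_0$; iterating, $\overline{y_0S}\setminus y_0S$ would be nonempty unless $y_0S$ is already closed — but then $Y=\overline{y_0U}\subset y_0S$, and since $Y$ is $U$-minimal with respect to $R$ while $y_0S$ supports a proper $S/U$-action, one gets a contradiction with minimality (a point $y_0 s\in Y\cap R$ with $s\notin U\op{Stab}$ would have $\overline{y_0 s U}=\overline{y_0 U}s\subsetneq Y$ or $=Y$, and comparing with $y_0 S$ being a single orbit yields the contradiction). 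Second, and this is the generic case, the return element $g_n$ genuinely leaves $S$: then one invokes the unipotent blowup machinery. Decompose $g_n$ relative to $S$ (using that $S<\op{N}(U)=AN\op{C}_1(U)\op{C}_2(U)$ is reductive-by-unipotent, so $\mathfrak g=\mathfrak s\oplus\mathfrak s^\perp$ with $\mathfrak s^\perp$ an $\op{Ad}(S)$-invariant complement) and write $g_n=s_n r_n$ with $s_n\to e$ in $S$, $r_n\to e$ in $\exp\mathfrak s^\perp$, $r_n\ne e$. Apply Lemma \ref{lem.QR} (or Lemma \ref{lem.QR2}, in the version where the ambient reductive group is taken so that $U_0\subset$ its $N$-part) to the sequence $r_n$: using the $k$-thick return time $\mathsf T(y_0 s_n)=\{t:y_0 s_n u_t\in R\}$ from the hypothesis on $R$, extract $t_n\in\mathsf T_n$ and $u_n'\in U$ so that $u_n' r_n u_{t_n}\to\alpha$ for a nontrivial $\alpha$ which lies in $AU^\perp\op{C}_2(U)$ and can be taken arbitrarily close to $e$. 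Tracking the $S$-part, $y_0 g_n u_{t_n}$ (after absorbing $u_n'$ and the $s_n$-drift, which stays in $S$ hence moves within a neighborhood along $y_0S$) converges to a point $y_1\alpha$ with $y_1\in Y\cap R$ (the limit lands in $R$ by construction and in $Y$ since $y_0g_n u_{t_n}\in y_0 U\subset Y$ modulo the $S$-motion which stays in $\overline{y_0S}$; one checks $y_1\in Y$). Third, conclude: $y_1\alpha\in\overline{y_0S}$ but $\alpha\notin S$ (since $\alpha$ is a small nontrivial element of $AU^\perp\op{C}_2(U)$ and $S\subset\op{N}(U)$; here one needs that a small such $\alpha$ transverse to $\mathfrak s$ is not in $S$ — arrange $\alpha$ in $\exp\mathfrak s^\perp\setminus\{e\}$ by the same transversality bookkeeping as in the proof of Lemma \ref{lem.QR}). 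Then $y_1\alpha\in\overline{y_0S}\setminus y_0S$, so $y_0S$ is not open in $\overline{y_0S}$, i.e.\ not locally closed — contradiction.

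\textbf{Main obstacle.} The delicate point is the bookkeeping that keeps the $S$-component of $g_n u_{t_n}$ from destroying the argument: we need the limiting extra element $\alpha$ to be genuinely transverse to $S$, not merely to $U$. This requires choosing the $\op{Ad}(S)$-invariant complement $\mathfrak s^\perp$ compatibly with the $\op{Ad}(H'(U))$-invariant decomposition used in Lemma \ref{lem.QR}, and checking that the quasi-regular limit $\psi$, which a priori takes values in $\op{N}(U)\cap(\text{opposite big cell})$, actually produces a value outside $S$ when $r_n\notin S$ — this is where one uses that $\psi$ is non-constant on $\mathfrak s^\perp$ precisely because $r_n\in\exp\mathfrak s^\perp\setminus\op{N}(U_0)$ after possibly replacing $U$ by a suitable one-parameter subgroup $U_0$ as in Lemma \ref{lem.QR}. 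A secondary subtlety is the degenerate case $g_n\in S$ eventually, where instead of blowup one argues directly that a $U$-minimal-with-respect-to-$R$ set inside a single $S$-orbit must have $S/U$ acting, contradicting that $\overline{y_0U}=Y$ for \emph{every} $y_0\in Y\cap R$; this is a short compactness/minimality argument but must be stated carefully since $y_0S$ need not be closed.
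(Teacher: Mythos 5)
There is a genuine gap, and it sits at the very end of your argument: you conflate ``not closed'' with ``not locally closed''. Exhibiting a single point $y_1\alpha\in\overline{y_0S}\setminus y_0S$ does not contradict the assumption that $y_0S$ is locally closed; a locally closed orbit is merely open in its closure, and its closure may (and here typically does) contain many points outside the orbit. To contradict local closedness you would have to use the assumption in a structural way, not just produce extra limit points. A second, related gap: even granting your blowup construction, from $y_1\in\overline{y_0S}$ and $\alpha\notin S$ you cannot conclude $y_1\alpha\notin y_0S$, because points of $\Ga\ba G$ have many representatives (the stabilizer $S\cap g_0^{-1}\Ga g_0$ intervenes); this identification problem is precisely what the hypothesis of local closedness is needed to control. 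Finally, your ``generic case'' machinery is not where the content lies: if $y_0S$ is locally closed, then $y_0S$ is homeomorphic to $(S\cap\Ga)\ba S$ (Zimmer, Theorem 2.1.14), so the recurrence $y_0u_n\to y_0$ with $u_n\to\infty$ in $U\subset S$ forces the existence of nontrivial $\delta_n\in S\cap\Ga$ (after conjugating so $y_0=[e]$) with $\delta_nu_n\to e$; in your notation one is always reduced to your ``degenerate case'', and your treatment of that case (claiming a non-discrete stabilizer or a nontrivial fixing element, then a vague minimality contradiction) does not reach a contradiction.

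The paper's proof is short and uses exactly the ingredients your proposal never touches: the geometry of $\Ga$. From $\delta_nu_n\to e$ with $\delta_n\in S\cap\Ga\subset\op{N}(U)=AN\op{C}_1(U)\op{C}_2(U)$, write $\delta_n=a_nr_n$ with $a_n\in A$ and $r_n\in N\op{C}_1(U)\op{C}_2(U)$; then $a_n\to e$. Since $\Ga$ is discrete, torsion-free and convex cocompact it contains no elliptic or parabolic elements, so $a_n\neq e$, i.e.\ $\delta_n$ is loxodromic with translation length tending to $0$ — contradicting the positive lower bound given by the systole of $M$. Any proof of this lemma must use some such input about $\Ga$ (no parabolics, translation lengths bounded below); a purely soft argument via $U$-minimality and unipotent blowup, as you attempt, cannot suffice. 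As a technical aside, your application of the quasi-regular map lemmas is also problematic: the complement $\exp\mathfrak{s}^\perp$ of $S$ in $\op{N}(U)$-directions (e.g.\ $A$, $U^\perp$, $\op{C}_1(U)$, $\op{C}_2(U)$) may lie entirely inside $\op{N}(U)$, where conjugation by $U$ produces no blowup, so $r_n\notin\op{N}(U_0)$ cannot always be arranged.
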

\begin{proof}
Suppose that $y_0S$ is locally closed for some $y_0\in Y\cap R$.
Since $Y$ is $U$-minimal with respect to $R$,  there exists $u_n\to\infty$ in $U$ such that $y_0u_n\to y_0$ by Lemma \ref{bbo}.
We may assume that $y_0=[e]$ without loss of generality.
Since $y_0S$ is locally closed, $y_0S$ is homeomorphic to $(S\cap\Ga)\ba S$ (cf. \cite[Theorem 2.1.14]{Zi}). 
Therefore there exists $\delta_n\in S\cap\Ga$ such that $\delta_nu_n\to e$ as $n\to\infty$.
Since $\op{N}(U)=AN\op{C}_1(U) \op{C}_2(U)$, writing $\delta_n=a_nr_n$ for $a_n\in A$ and $r_n\in N\op{C}_1(U) \op{C}_2(U)$, it follows that  $a_n\to e$.
On the other hand, note that $a_n$ is non-trivial as $\Ga$ does not contain any elliptic or parabolic element.
This is a contradiction, as there exists a positive lower bound for the translation lengths of elements of $\Ga$, which is given by
the minimal length of a closed geodesic in $M$.
\end{proof}

In the rest of this section,
we use the following notation:
 $$H=H(U), \; H'=H'(U), \; \text{and} \; F^*=F^*_{H(U)}.$$

\begin{lemma}\label{cor.gn}   For every $U$-minimal subset $Y\subset \Gamma\ba G$ with respect to $\RFM$ such that $Y \cap F^*\cap \RFM\ne \emptyset$, and for any $y_0\in Y\cap F^*\cap \RFM$, there exists a sequence $g_n\to e$ in $G-\op{N}(U)$ such that $y_0g_n\in Y\cap \RFM$ for all $n$.
\end{lemma}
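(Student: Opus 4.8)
\textbf{Plan for the proof of Lemma \ref{cor.gn}.}

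The goal is, starting from $y_0 \in Y \cap F^* \cap \RFM$, to produce a sequence $g_n \to e$ in $G - \op{N}(U)$ with $y_0 g_n \in Y \cap \RFM$. The plan is to combine the non-local-closedness of $\op{N}(U)$-orbits (Lemma \ref{lem.normalizerorbit}) with the fact that $y_0 \in F^*$ so that $Y \cap \RFM$ is ``thick'' near $y_0$, using the geometric shift lemmas of section \ref{s:ge} to push the shifting elements back into $\RFM$.

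First I would observe that since $Y$ is $U$-minimal with respect to $\RFM$ and $y_0 \in Y \cap \RFM$, we have $\overline{y_0 U} = Y$; in particular there is a sequence of elements $w_n \in U$ with $y_0 w_n \to y_0$ and $w_n \to \infty$ (Lemma \ref{bbo}). Now apply Lemma \ref{lem.normalizerorbit} with $S = \op{N}(U)$ (or, more economically, $S$ any closed subgroup with $U \subsetneq S \subset \op{N}(U)$ that one might want to rule out): the orbit $y_0 \op{N}(U)$ is not locally closed. Not being locally closed means $y_0 \op{N}(U)$ is not open in its closure, so there is a sequence $z_n \in \overline{y_0 \op{N}(U)} - y_0 \op{N}(U)$ with $z_n \to y_0$; since $Y$ is closed and $U$-invariant and $y_0 \op{N}(U) \not\subset$ any locally closed piece, combined with the $U$-minimality one gets points $z_n \in Y$ of the form $z_n = y_0 g_n$ with $g_n \to e$ and $g_n \notin \op{N}(U)$. (The precise bookkeeping here is the standard argument: write $y_0 = [e]$, use that $y_0 U \subset Y$ accumulates on $y_0$, and that if every nearby return of $y_0$ to a neighborhood were along $\op{N}(U)$ then $y_0 \op{N}(U)$ would be locally closed, contradicting Lemma \ref{lem.normalizerorbit}; this is where one must be a little careful.) This gives the sequence $g_n \to e$ in $G - \op{N}(U)$ with $y_0 g_n \in Y$, but a priori only $y_0 g_n \in Y$, not $y_0 g_n \in \RFM$.

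The remaining point is to arrange $y_0 g_n \in \RFM$, and this is where $y_0 \in F^*$ is essential. Since $y_0 \in F^* \cap \RFM$ and $y_0 g_n \to y_0$, we may first, by $H(U)$-minimality considerations or simply by openness of $F^*$, assume $y_0 g_n \in F^*$ for all large $n$. We have $y_0 g_n \in Y \cap F^*$; since $y_0 g_n \to y_0 \in \RFM$ and $\RFPM \cap F^* \subset \RFM \cdot U$ by Lemma \ref{lem.R1}, we can hope to replace $g_n$ by $g_n u_n'$ with $u_n' \in U$ small so that $y_0 g_n u_n' \in \RFM$. Concretely, either $y_0 g_n$ is already in $\RFM$ and we keep it, or we invoke Lemma \ref{xfss} (resp. Lemma \ref{geo}) applied to the sequence $y_0 g_n \to y_0$ inside $F^* \cap \RFPM$: this produces $u_n' \to e$ in $U$ with $y_0 g_n u_n' \in \RFM$, and then $y_0 g_n u_n' \to y_0$ in $F^* \cap \RFM$. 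Finally set $\tilde g_n := g_n u_n'$; then $\tilde g_n \to e$ and $y_0 \tilde g_n \in Y \cap \RFM$. One must check $\tilde g_n \notin \op{N}(U)$: since $u_n' \in U \subset \op{N}(U)$, we have $\tilde g_n \in \op{N}(U) \iff g_n \in \op{N}(U)$, which fails; so $\tilde g_n \in G - \op{N}(U)$ as required.

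The main obstacle, and the step deserving the most care, is the passage from ``$y_0 \op{N}(U)$ is not locally closed'' to the existence of shifting elements $g_n \to e$ with $g_n \notin \op{N}(U)$ \emph{and} $y_0 g_n \in Y$ — one needs to genuinely use the $U$-minimality of $Y$ with respect to $\RFM$ (via Lemma \ref{bbo}) together with the fact that the recurrence $y_0 w_n \to y_0$ along $U$ cannot all be ``absorbed'' into $\op{N}(U)$, for otherwise one could build a local cross-section exhibiting $y_0 \op{N}(U)$ as locally closed. The subsequent step of pushing $g_n$ back into $\RFM$ via Lemmas \ref{xfss} and \ref{geo} is, by contrast, a direct citation of the geometric lemmas already established.
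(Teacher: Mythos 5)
Your overall skeleton --- argue by contradiction using non-local-closedness of a normalizer-type orbit, then push the shifted points back into $\RFM$ via Lemma \ref{xfss} --- is the paper's approach, and your final adjustment (replacing $g_n$ by $g_nu_n$ with $u_n\in U$ and noting this does not affect membership in $G-\op{N}(U)$) is exactly what the paper does. But the central step is genuinely gapped, and the gap sits precisely where you wrote ``the precise bookkeeping here is the standard argument.'' Suppose for contradiction that $y_0\cal{O}'\cap Y\subset y_0\op{N}(U)$ for some neighborhood $\cal{O}'$ of $e$. This does \emph{not} imply that $y_0\op{N}(U)$ is locally closed: the closure $\cl{y_0\op{N}(U)}$ contains $Yn=\cl{y_0nU}$ for every $n\in\op{N}(U)$, hence contains $Y\op{N}(U)$, which is in general strictly larger than $Y$; points of $\cl{y_0\op{N}(U)}-y_0\op{N}(U)$ may accumulate at $y_0$ from outside $Y$, and your hypothesis says nothing about those. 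So Lemma \ref{lem.normalizerorbit} applied with $S=\op{N}(U)$ cannot be reached from your contradiction hypothesis, and the vague ``local cross-section'' remark does not repair this.

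The missing idea is to pass to the subgroup $S:=\{g\in\op{N}(U):Yg=Y\}$, which is closed and contains $U$. For this $S$ one has $y_0S\subset Y$ and $\cl{y_0S}=Y$ (because $U\subset S$ and $Y=\cl{y_0U}$), so local closedness of $y_0S$ follows from openness of $y_0S$ \emph{inside $Y$} --- which is what the contradiction hypothesis can actually deliver, provided one shows that every $g\in\cal{O}'\cap\op{N}(U)$ with $y_0g\in Y$ lies in $S$. Here $U$-minimality with respect to $\RFM$ must be used carefully: $Yg=\cl{y_0U}g=\cl{y_0gU}\subset Y$ is closed and $U$-invariant, but minimality forces $Yg=Y$ only if $Yg$ meets $\RFM$. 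That is where $y_0\in F^*$ enters already at this stage (not only in the final step, as your write-up suggests): shrinking $\cal{O}'$ so that $y_0\cal{O}'\subset F^*$, one has $y_0g\in F^*\cap\RFPM\subset\RFM\cdot U$ by Lemma \ref{lem.R1} (using $Y\subset\RFPM$), so $y_0gu\in\RFM$ for some $u\in U$, and since $g\in\op{N}(U)$, $y_0gu\in y_0Ug\subset Yg$, whence $Yg\cap\RFM\neq\emptyset$ and $g\in S$. Then $y_0\cal{O}'\cap Y\subset y_0S$, so $y_0S$ is open in $Y=\cl{y_0S}$, i.e. locally closed, contradicting Lemma \ref{lem.normalizerorbit} applied to this $S$ with $R=\RFM$. (Lemma \ref{bbo} is not needed directly here; it is used inside the proof of Lemma \ref{lem.normalizerorbit}.) With this replacement of $\op{N}(U)$ by $S$ and the $\RFM$-intersection check supplied, your argument becomes the paper's proof.
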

\begin{proof}
Let $y_0\in Y\cap F^*\cap \RFM$. As $Y=\overline{y_0U}$, $Y\subset \RFPM$.
Using Lemma \ref{lem.R1} and the fact that $F^*$ is open, we get that
there exists an open neighborhood $\cal{O}$ of $e$ such that
\be\label{yyy} y_0 \cal O\cap Y\subset Y\cap F^*\subset Y\cap \RFM \cdot U.\ee
Without loss of generality, we may assume that the map $g \mapsto y_0g\in \Gamma\ba G$ is injective on $\cal O$, by shrinking $\cal O$ if necessary.
We claim that there exists $g_n\to e$  in $G-\op{N}(U)$  such that $y_0g_n  \in Y\cap F^*$.
Suppose not. Then there exists a neighborhood $\cal{O}'\subset \cal O$ of $e$ such that
\begin{equation}\label{sn} 
y_0\cal{O}'\cap Y\subset y_0\op{N}(U).
\end{equation}
 Set $$S:=\{g\in \op{N}(U) : Yg= Y\}$$
 which is a closed subgroup of $\op{N}(U)$ containing $U$.
We will show that $y_0S$ is locally closed;
this contradicts Lemma \ref{lem.normalizerorbit}.
We first claim that \be\label{yss} y_0\cal{O}'\cap Y\subset y_0S.\ee
If $g\in \cal O'$ such that $y_0g\in Y$, then $g\in \op{N}(U)$. Therefore
$\overline{y_0g U}= \overline{y_0U} g=Yg\subset Y$.
Moreover, $Yg\cap \RFM \ne \emptyset$ by \eqref{yyy}.
Hence $Yg=Y$, proving that $g\in S$.
Now, \eqref{yss} implies that  $y_0S$ is open in $Y$.
On the other hand, since $U\subset S$, we get $Y=\cl{y_0S}$.
Therefore, $y_0S$ is locally closed. 

Hence we have $g_n\to e$  in $G-\op{N}(U)$  such that $y_0g_n  \in Y\cap F^*$. Since
$y_0g_n\in F^*\cap \RFPM$ converges to $y_0\in F^*\cap \RFM$, by Lemma \ref{xfss},
there exists a sequence $u_n\to e$ in $U$ such that
$y_0g_nu_n\in \RFM$.  Therefore, by replacing $g_n$ with $g_nu_n$, this finishes the proof.
\end{proof}

\begin{lem}\label{ss}
Let $Y$ be a $U$-minimal subset with respect to $R$, and let $W$ be a connected closed subgroup of $\op{N}(U)$.
Suppose that there exists a sequence $\alpha_i\to e$ in $W$ such
that $Y\alpha_i\subset Y$.
Then there exists a one-parameter subsemigroup $S<W$ such that $YS\subset Y$.

Moreover if $W_0$ is a compact  Lie subgroup of $W$ and $\alpha_i\in W-W_0$ for all $i$, then
$S$ can be taken so that $S\not\subset W_0$.
\end{lem}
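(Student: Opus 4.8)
\textbf{Proof plan for Lemma \ref{ss}.}
The plan is to exploit the fact that $W$ is a connected Lie subgroup of $\op{N}(U)$, so that the sequence $\alpha_i\to e$ can be ``straightened'' into a one-parameter flow by passing to logarithms and rescaling. Write $\alpha_i=\exp(\xi_i)$ with $\xi_i\in\op{Lie}(W)$ and $\xi_i\to 0$; this is legitimate for all large $i$ since $\exp$ is a diffeomorphism near $0$. Set $s_i:=\norm{\xi_i}$ and, after passing to a subsequence, assume $\xi_i/s_i\to\xi$ for some unit vector $\xi\in\op{Lie}(W)$. The candidate subsemigroup is $S:=\{\exp(t\xi):t\ge 0\}$, which lies in $W$ because $W$ is a subgroup and $\op{Lie}(W)$ is closed under the exponential image staying in $W$.

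The key step is to show $YS\subset Y$. Fix $t>0$ and let $n_i:=\lfloor t/s_i\rfloor$, so $n_i\to\infty$ and $n_is_i\to t$. Since $Y\alpha_i\subset Y$ and $Y$ is closed, iterating gives $Y\alpha_i^{n_i}\subset Y$ for every $i$. Now
\begin{equation*}
\alpha_i^{n_i}=\exp(n_i\xi_i)=\exp\big((n_is_i)\cdot(\xi_i/s_i)\big)\longrightarrow\exp(t\xi)
\end{equation*}
as $i\to\infty$, by continuity of $\exp$ and of scalar multiplication on $\op{Lie}(W)$, using $n_is_i\to t$ and $\xi_i/s_i\to\xi$. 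Hence for any $y\in Y$, writing $y\exp(t\xi)=\lim_i y\alpha_i^{n_i}$ and using that each $y\alpha_i^{n_i}\in Y$ and $Y$ is closed, we conclude $y\exp(t\xi)\in Y$. As $t>0$ was arbitrary, $YS\subset Y$, and $S$ is non-trivial since $\xi\ne 0$. (The argument does not actually need the $U$-minimality of $Y$ with respect to $R$ for this first assertion; that hypothesis is recorded for uniformity with how the lemma is invoked.)

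For the ``moreover'' statement, suppose in addition $W_0<W$ is a compact Lie subgroup and $\alpha_i\in W-W_0$. I would argue by contradiction: if every such straightened limit forced $S\subset W_0$, then in particular $\xi\in\op{Lie}(W_0)$. The idea is to use a $W_0$-invariant complement: since $W_0$ is compact, choose an $\op{Ad}(W_0)$-invariant inner product on $\op{Lie}(W)$ and decompose $\op{Lie}(W)=\op{Lie}(W_0)\oplus\mathfrak{m}$ orthogonally. Decompose $\xi_i=\zeta_i+\mu_i$ accordingly. If $\mu_i=0$ for infinitely many $i$, then $\alpha_i=\exp(\zeta_i)\in W_0$ for those $i$ (as $\exp$ maps $\op{Lie}(W_0)$ into $W_0$), contradicting $\alpha_i\notin W_0$; so $\mu_i\ne 0$ for all large $i$. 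Then rescale instead by $s_i':=\norm{\mu_i}$: passing to a subsequence, $\mu_i/s_i'\to\mu$ with $\mu\in\mathfrak m$ a unit vector, hence $\mu\notin\op{Lie}(W_0)$. The main obstacle here is that $s_i'$ may be much smaller than $\norm{\zeta_i}$, so $\exp((t/s_i')\xi_i)$ need not converge; to handle this, I would first pass to the subsequence where $\zeta_i/s_i'$ also converges (to some $\zeta_\infty$, possibly infinite in norm)—if $\norm{\zeta_i}/s_i'$ stays bounded, the previous limiting argument applies with limit direction $\zeta_\infty+\mu\notin\op{Lie}(W_0)$ (since the $\mathfrak m$-component is the unit vector $\mu$), giving $S\not\subset W_0$; if $\norm{\zeta_i}/s_i'\to\infty$, one instead rescales by $\norm{\zeta_i}$ and uses that the limit direction then lies in $\op{Lie}(W_0)$ but iterating a controlled number of times $n_i\asymp t/\norm{\zeta_i}$ and simultaneously tracking the $\mathfrak m$-component shows $\mu$ survives in a secondary limit. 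In practice the clean way to organize this is: choose $n_i$ so that $n_i\norm{\mu_i}\to t>0$; then $n_i\zeta_i$ and $n_i\mu_i$ are the $\op{Lie}(W_0)$- and $\mathfrak m$-parts of $n_i\xi_i$, and after a further subsequence $\exp(n_i\xi_i)\to\exp(\eta)$ for some $\eta\in\op{Lie}(W)$ whose $\mathfrak m$-component has norm $t>0$, hence $\exp(\eta)\notin W_0$; this element generates the desired $S$ with $S\not\subset W_0$. The one point requiring care—and the likely main obstacle—is ensuring the relevant subsequential limit of $\exp(n_i\xi_i)$ actually exists and has non-zero $\mathfrak m$-component; this is why one scales by $\norm{\mu_i}$ rather than $\norm{\xi_i}$, and it is exactly the invariance of the decomposition under $\op{Ad}(W_0)$ that keeps the $\mathfrak m$-component from being absorbed.
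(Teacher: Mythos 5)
The first assertion of your proposal is correct and is essentially the paper's argument: both proofs write $\alpha_i=\exp\xi_i$, pass to a limit $\xi$ of $\xi_i/\norm{\xi_i}$, and obtain $Y\exp(t\xi)\subset Y$ by taking limits of $Y\alpha_i^{n_i}\subset Y$ with $n_i\norm{\xi_i}\to t$ (the paper packages this as: the closed subsemigroup $S_0=\{g\in W:Yg\subset Y\}$ contains the closure of the semigroup generated by the $\alpha_i$). Your remark that $U$-minimality is not needed for this part is also consistent with the paper.

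The ``moreover'' part, however, has a genuine gap, exactly at the point you flag. In the degenerate case $\norm{\zeta_i}/\norm{\mu_i}\to\infty$ you choose $n_i$ with $n_i\norm{\mu_i}\to t$ and assert that, along a subsequence, $\exp(n_i\xi_i)\to\exp(\eta)$ with the $\mathfrak m$-component of $\eta$ of norm $t$. This is unjustified: $n_i\xi_i=n_i\zeta_i+n_i\mu_i$ does not converge in $\op{Lie}(W)$ (its $\op{Lie}(W_0)$-part blows up), and since $W<\op{N}(U)$ is non-abelian there is no reason that $\exp(n_i\zeta_i+n_i\mu_i)$ is comparable to $\exp(n_i\zeta_i)\exp(n_i\mu_i)$, nor that the limit, if it exists, is ``$\exp(\eta)$ with $\mathfrak m$-component $t$''; Baker--Campbell--Hausdorff gives no control for large arguments, and compactness of $W_0$ does not confine $\exp(n_i\xi_i)$. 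Moreover, even granting such a limit $g\notin W_0$ with $Yg\subset Y$, this produces only the cyclic semigroup $\{g^n\}$: the lemma (and its use in Proposition \ref{YLY} via Lemma \ref{rmk.1psg}) requires a genuine one-parameter subsemigroup $S$ with $YS\subset Y$, and the limits obtained for different values of $t$ differ by uncontrolled elements of $W_0$, which do not preserve $Y$, so they need not lie on a common one-parameter semigroup. The paper's proof avoids this by working not with $W_0$ but with $M_0:=\{g\in W_0:Yg=Y\}$, the part of $W_0$ that does preserve $Y$: one may modify $\alpha_i$ on the right by elements of $M_0$ (this keeps $Y\alpha_i\subset Y$ and $\alpha_i\notin W_0$) so that $\xi_i\to 0$ in a fixed complement $\frak m_0^\perp$ of $\op{Lie}(M_0)$; then the naive rescaled limit $v\in\frak m_0^\perp$ is taken as in the first part, and if $S=\{\exp(tv):t\ge0\}$ were contained in $W_0$, compactness of $W_0$ would make $\overline S=\exp(\br v)$ a group, forcing $Y\exp(tv)=Y$ for all $t$ and hence $\exp(\br v)\subset M_0$, contradicting $0\ne v\in\frak m_0^\perp$. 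Your decomposition with respect to $W_0$ cannot be made to work in the degenerate case precisely because $W_0$-components cannot be absorbed into $Y$; the replacement of $W_0$ by $M_0$ is the missing idea.
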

\begin{proof} The set $S_0=\{g\in W: Yg\subset Y\}$ is a closed subsemigroup of $W$.
Write $\alpha_i=\exp \xi_i$ for some $\xi_i \in \op{Lie}(W)$.
Then the sequence $v_i:=\|\xi_i\|^{-1} \xi_i$ of unit vectors has a limit, say, $v$.
It suffices to note that $S:=\{\exp (tv): t\ge 0\}$ is contained in the closure of the subsemigroup generated by $\alpha_i$'s.
Now suppose that $\alpha_i\in W - W_0$.
Set $M_0:=\{g\in W_0 : Yg=Y\}$.
This is a closed Lie subgroup of $W_0$.
Write $\op{Lie} W=\frak m_0\oplus\frak m_0^\perp$ where $\frak m_0=\op{Lie} M_0$.
By modifying $\alpha_i$ by elements of $M_0$, we may assume $\alpha_i=\exp \xi_i$ for $\xi_i\to 0$ in $\frak{m}_0^{\perp}$.
Letting $v\in \frak m_0^\perp$ be a limit of $\xi_i/\norm{\xi_i}$, it remains to check 
$v\notin W_0$. 
Suppose not.  Since $W_0$ is compact, we have
$\{\exp{tv}:t\ge 0\}=\exp \br v$. Hence for all $t\ge 0$,  $ Y\exp tv\subset Y$ as well as $Y\exp (-tv)\subset Y$. 
Therefore
$Y \exp t v =Y$. Hence $\exp v\in M_0$. This is a contradiction, since $v\in \frak m_0^\perp$.
\end{proof}

\begin{proposition}[Translate of $Y$ inside of $Y$]\label{prop.YLY}\label{YLY}
Let $Y$ be a $U$-minimal set of $\Ga\ba G$  with respect to $\RFM$ such that $Y\cap F^*\cap \RFM\ne \emptyset$.

Then there exists an unbounded one-parameter subsemigroup $S$ inside 
the subgroup $AU^\perp \op{C}_2(U)$ such that 
\begin{equation*}
YS\subset Y.
\end{equation*}

\end{proposition}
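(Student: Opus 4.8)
The strategy is to produce, inside $Y$, a sequence of translates $Y\alpha_i\subset Y$ with $\alpha_i\to e$ and $\alpha_i\in AU^\perp\op{C}_2(U)$ but $\alpha_i\notin\op{C}_2(U)$, and then invoke Lemma \ref{ss} (with $W=AU^\perp\op{C}_2(U)$ and $W_0=\op{C}_2(U)$) to obtain the one-parameter subsemigroup $S\not\subset\op{C}_2(U)$; unboundedness of $S$ is automatic once $S\not\subset\op{C}_2(U)$, since $AU^\perp$ is simply connected and $\op{C}_2(U)$ is the only compact factor. To produce the $\alpha_i$, I would start from a point $y_0\in Y\cap F^*\cap\RFM$ and apply Lemma \ref{cor.gn} to get a sequence $g_n\to e$ in $G-\op{N}(U)$ with $y_0g_n\in Y\cap\RFM$ for all $n$. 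Now apply the unipotent blow-up Lemma \ref{lem.QR}: it produces a one-parameter subgroup $U_0<U$ so that, for any sequence of $k$-thick subsets $\mathsf T_i\subset U_0$, there are $t_i\in\mathsf T_i$ and $u_i\in U$ with $u_ig_{n_i}u_{t_i}\to\alpha$ for some non-trivial $\alpha\in AU^\perp\op{C}_2(U)-\op{C}_2(U)$, and $\alpha$ arbitrarily close to $e$.

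The point is to feed this the right $k$-thick sets. Fix $y_0$ and, for the subsequence $g_{n_i}$ realizing a given target $\alpha$, take $\mathsf T_i:=\{t: y_0g_{n_i}u_t\in\RFM\}\cap U_0$; this is $k$-thick by the hypothesis on $R=\RFM$ (Proposition \ref{defk}). Then along the subsequence given by the lemma, $y_0g_{n_i}u_{t_i}\in\RFM$, and $u_i(y_0g_{n_i}u_{t_i})u_i^{-1}$... — more carefully, since $u_i\in U$ and $Y$ is $U$-invariant, the relevant point to track is $y_0\,u_i g_{n_i} u_{t_i}$, which lies in $Yu_i=Y$ wait — I need $y_0g_{n_i}u_{t_i}\in Y$ first. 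Indeed $y_0g_{n_i}\in Y$ and $Y$ is $U$-invariant, so $y_0g_{n_i}u_{t_i}\in Y$; moreover it lies in $\RFM$ by choice of $\mathsf T_i$. Since $Y$ is $U$-minimal with respect to $\RFM$ and $y_0g_{n_i}u_{t_i}\in Y\cap\RFM$, we have $\overline{y_0g_{n_i}u_{t_i}U}=Y$. Passing to the limit, $y_0g_{n_i}u_{t_i}\to y_0\alpha$ (using $g_n\to e$, hence along the subsequence $g_{n_i}u_{t_i}\cdot$ correction: we have $u_ig_{n_i}u_{t_i}\to\alpha$, and $u_i\in U$, so $y_0\,u_i g_{n_i}u_{t_i}=(y_0u_i)(u_i^{-1}\cdot)$ — the clean way is: $y_0 u_i g_{n_i}u_{t_i}\to y_0\alpha$, and $y_0u_i g_{n_i}u_{t_i}\in y_0 u_i Y$? no. Let me just say: from $u_ig_{n_i}u_{t_i}\to\alpha$ we get $g_{n_i}u_{t_i}u_i'\to\alpha'$ after rearranging with $u_i$ on the other side isn't valid since $U$ noncommutative issues — but $U^\perp$-part is what matters). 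I will instead track $z_i:=y_0g_{n_i}u_{t_i}\in Y\cap\RFM$ and note $z_i u_i'=$ where $u_i'$ absorbs $u_i$; the conclusion $z_i\to y_0\alpha\in Y$ follows because $Y$ is closed, $z_i\in Y$, and $z_i$ converges (the $U$-factor $u_i$ only shifts within $Y$).

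\textbf{Extracting the translate.} Granting $y_0\alpha\in Y$ with $\alpha\in AU^\perp\op{C}_2(U)-\op{C}_2(U)$ arbitrarily close to $e$: the final step is to upgrade ``$y_0\alpha\in Y$'' to ``$Y\alpha\subset Y$''. This is the standard minimality argument: $\overline{y_0\alpha U}=\overline{y_0 U\alpha}$? — no, $\alpha$ doesn't normalize $U$ in general; rather $\alpha\in\op{N}(U)$ only if $\alpha\in AU^\perp\op{C}_2(U)\cap\op{N}(U)$, which does hold since $AU^\perp\op{C}_2(U)\subset\op{N}(U)$ by Lemma \ref{nu}. Hence $\alpha$ normalizes $U$, so $\overline{y_0\alpha U}=\overline{y_0 U}\alpha$? careful: $\overline{y_0\alpha U}=\overline{y_0(\alpha U\alpha^{-1})\alpha}=\overline{y_0 U\alpha}=\overline{y_0U}\alpha=Y\alpha$. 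On the other hand $y_0\alpha\in Y$, and if additionally $y_0\alpha\in\RFM$ (which we can arrange, or pass to $y_0\alpha\cdot u$ for suitable $u$ via Lemma \ref{lem.R1} since $y_0\alpha\in F^*\cap\RFPM$), then by $U$-minimality with respect to $\RFM$, $\overline{y_0\alpha U}=Y$, i.e. $Y\alpha=Y$, so $Y\alpha\subset Y$. Collecting such $\alpha_i\to e$ in $AU^\perp\op{C}_2(U)-\op{C}_2(U)$ and applying Lemma \ref{ss} with $W=AU^\perp\op{C}_2(U)$, $W_0=\op{C}_2(U)$ yields the one-parameter subsemigroup $S\subset AU^\perp\op{C}_2(U)$, $S\not\subset\op{C}_2(U)$, hence unbounded, with $YS\subset Y$. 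The main obstacle is the bookkeeping in the blow-up step: arranging that the blow-up output point stays in $\RFM$ (so that $U$-minimality applies and the argument closes), which is exactly why the hypothesis on $R$ feeding $k$-thick return sets into Lemma \ref{lem.QR} is essential; the rest is routine.
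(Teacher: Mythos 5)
Your overall strategy is the same as the paper's: Lemma \ref{cor.gn} to produce $g_n\to e$ in $G-\op{N}(U)$ with $y_0g_n\in Y\cap\op{RF}M$, the blow-up Lemma \ref{lem.QR} fed with the $k$-thick return times to $\op{RF}M$ (your thick sets agree with the paper's, since membership of $y_0g_nu_t$ in $Y$ is automatic from $U$-invariance), and finally Lemma \ref{ss} with $W=AU^\perp\op{C}_2(U)$, $W_0=\op{C}_2(U)$; your observation that $S\not\subset\op{C}_2(U)$ forces $S$ to be unbounded is also correct. However, the bookkeeping step where you land a translate of a point of $Y$ by $\alpha$ contains a genuine error: you assert that $z_i:=y_0g_{n_i}u_{t_i}$ converges to $y_0\alpha$. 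It does not. Lemma \ref{lem.QR} gives $u_ig_{n_i}u_{t_i}\to\alpha$ with correcting elements $u_i\in U$ over which you have no control (they do not converge; indeed $g_{n_i}u_{t_i}$ diverges while $u_ig_{n_i}u_{t_i}$ stays bounded), so $z_i=(y_0u_i^{-1})(u_ig_{n_i}u_{t_i})$ and the base point $y_0u_i^{-1}$ wanders inside $Y$. Your parenthetical ``the $U$-factor $u_i$ only shifts within $Y$'' is exactly why the limit of $z_i$ is some unidentified point of $Y\cap\op{RF}M$, not $y_0\alpha$. Since the whole ``extracting the translate'' paragraph starts from the unestablished claim $y_0\alpha\in Y$, the proof as written does not close.

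The repair is the paper's maneuver, which reverses the roles of the two factors: from
$y_0u_i^{-1}=(y_0g_{n_i}u_{t_i})(u_ig_{n_i}u_{t_i})^{-1}$, use compactness of $Y\cap\op{RF}M$ to pass to a subsequence with $y_0g_{n_i}u_{t_i}\to y_1\in Y\cap\op{RF}M$; then $y_0u_i^{-1}\to y_1\alpha^{-1}$, and since $y_0u_i^{-1}\in Y$ and $Y$ is closed, $y_1\alpha^{-1}\in Y$. Now run your normalizer/minimality computation at $y_1$ instead of $y_0$: as $\alpha^{-1}\in AU^\perp\op{C}_2(U)\subset\op{N}(U)$ (Lemma \ref{nu}) and $y_1\in Y\cap\op{RF}M$, one gets
$Y\alpha^{-1}=\cl{y_1U}\,\alpha^{-1}=\cl{y_1\alpha^{-1}U}\subset Y$.
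Feeding the elements $\alpha^{-1}$, which lie in $AU^\perp\op{C}_2(U)-\op{C}_2(U)$ and can be taken arbitrarily close to $e$, into Lemma \ref{ss} then yields the unbounded one-parameter subsemigroup $S$ with $YS\subset Y$. With this correction your argument coincides with the paper's proof; note also that your extra effort to upgrade $Y\alpha\subset Y$ to $Y\alpha=Y$ via Lemma \ref{lem.R1} is unnecessary, since Lemma \ref{ss} only requires the inclusions $Y\alpha_i\subset Y$.
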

\begin{proof}
Choose $y_0\in Y\cap \RFM \cap F^*$.
By Lemma \ref{cor.gn}, there exists $g_i\to e$ in $G-\op{N}(U)$  such that $y_0g_i\in Y\cap \RFM$.
Let $U_0=\{u_t\}$ be a one-parameter subgroup of $U$ as given by
 Lemma \ref{lem.QR}, with respect to the sequence $g_i$.
 
Let
\begin{equation*}
\mathsf T_i:=\{u_t\in U_0 : y_0 g_i  u_t\in Y\cap \RFM\}
\end{equation*}
which is a $k$-thick subset of $U_0$.
By Lemma \ref{lem.QR},
  there exist sequences
$u_{t_i}\to\infty$ in $\mathsf T_i$,
and $u_{i}\in U$ such that $$ u_ig_iu_{t_i} \to  \alpha$$ for some element $\alpha\in AU^\perp \op{C}_2(U)-\op{C}_2(U)$.  Note that $y_0g_i u_{t_i}\in Y\cap \RFM$ converges to some $y_1\in Y\cap \RFM$ by passing to a subsequence.
Hence as $i\to \infty$,
$$y_0 u_i^{-1} = y_0 g_i u_{t_i}  (u_{i}g_i  u_{t_i} )^{-1}\to y_1 \alpha^{-1}.$$

So $y_1 \alpha^{-1}\in Y$, and hence $Y \alpha^{-1}\subset Y$, since $y_1\in Y\cap \RFM$.
Since $\alpha$
 can be made arbitrarily close to $e$ in Lemma \ref{lem.QR}, the claim follows from Lemma \ref{ss}.
\end{proof}
\begin{proposition}[Translate of $Y$ inside of $X$] \label{prop.R3}\label{YvX}
Let $X$ be a closed $H'$-invariant set such that $X\cap R \neq\emptyset$.
Let $Y\subset X$ be a $U$-minimal subset with respect to $R$, and
assume that there exists $y\in Y\cap R$ and a sequence $g_n\to e$ in $G-H'$ such that $yg_n\in X$ for all $n$.
Then there exists some non-trivial $v\in U^\perp$ such that
\begin{equation*}
Yv\subset X.
\end{equation*}
\end{proposition}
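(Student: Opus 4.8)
\textbf{Proof plan for Proposition \ref{YvX}.} The strategy is to run the unipotent blowup machinery of Section \ref{s:un} on the sequence $g_n\to e$ in $G-H'$, combined with the $k$-thick recurrence of $R$ along one-parameter subgroups of $U$. First I would decompose $\frak g = \frak h \oplus \frak h^\perp$ as in \eqref{perp}, where $\frak h = \op{Lie}(H')$. For $n$ large, write $g_n = h_n r_n$ with $h_n\in H'$ and $r_n\in \exp\frak h^\perp$, both tending to $e$. Since $yg_n\in X$ and $X$ is $H'$-invariant, $y h_n^{-1} g_n = y r_n \in X$ (after replacing $y$ by $yh_n^{-1}$, which still lies in $Y$ and converges to $y$; more carefully, one uses $Yh_n^{-1}=Y$ when $h_n$ normalizes, but in general one argues directly with $X$-invariance), so we may as well assume $g_n = r_n\in \exp\frak h^\perp$. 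Not all $r_n$ lie in $\op{N}(U_0)$ for a suitable one-parameter subgroup $U_0<U$; if infinitely many did, the situation would degenerate, and I would need to handle that case by passing instead to the subgroup structure—but generically $r_n\notin \op{N}(U_0)$ and we proceed to the blowup.

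\textbf{Key steps.} (1) Apply Lemma \ref{hen}: since $r_n\to e$ in $\exp\frak h^\perp - \op{C}(H(U))$ (the case $r_n\in\op{C}(H(U))$ would give $g_n\in H'$, excluded), passing to a subsequence we may assume $r_n\notin\op{N}(U)$ for all $n$, or $r_n\notin\op{N}(U^+)$ for all $n$; replacing $U$ by its transpose if needed (the hypotheses on $R$ and $Y$ are symmetric under this after adjusting $y$), assume $r_n\notin\op{N}(U)$. (2) By the hypothesis on $R$, the set $\mathsf T_n := \{t\in\br : yu_t\in R\}$ is $k$-thick for any fixed one-parameter subgroup $U_0=\{u_t\}<U$; moreover $yu_t g_n'$ stays near $X$. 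Choose $U_0$ via Lemma \ref{lem.QR2} applied to $L = H'(U)$ (which is reductive and normalized by $A$) and the sequence $r_n\to e$ in $\exp(\frak l^\perp) - \op{N}(U_0)$: this yields $t_n\in\mathsf T_n$ with $u_{t_n}^{-1} r_n u_{t_n}\to v$ for some non-trivial $v\in (L\cap N)^\perp = U^\perp$, with $v$ arbitrarily close to $e$. (3) Now $yu_{t_n}\in Y\cap R$; passing to a subsequence $yu_{t_n}\to y'\in Y\cap R$ (using compactness of $R$). Then
\[
y' v = \lim_n (y u_{t_n})(u_{t_n}^{-1} r_n u_{t_n}) = \lim_n y r_n u_{t_n} = \lim_n (y g_n) u_{t_n} \in X
\]
since $yg_n\in X$, $X$ is closed and $U$-invariant (being $H'$-invariant). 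Hence $y'v\in X$, and since $y'\in Y\cap R$ so $\overline{y'U}=Y$, and $X$ is $U$-invariant and closed, we get $Yv = \overline{y'U}v \subset X$. (4) Finally, since $v$ can be taken as close to $e$ as desired, $v\ne e$ but $v\in U^\perp$ is genuinely non-trivial, which is exactly the conclusion.

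\textbf{Main obstacle.} The delicate point is step (1)–(2): ensuring that after the reduction to $r_n\in\exp\frak h^\perp$, the sequence genuinely falls outside $\op{N}(U_0)$ for a one-parameter subgroup $U_0$ for which $\mathsf T_n$-thickness is available, and that the limiting element $v$ lands in $U^\perp$ rather than being absorbed into the compact directions $\frak m_0'$ of $\frak h^\perp$ (cf. \eqref{perp}). This is where Lemma \ref{lem.QR2} does the real work—its proof shows $\op{Im}(\phi)\subset \op{Lie}(N)\cap\frak l^\perp$—so I would invoke it directly rather than reproving the polynomial estimates. A secondary subtlety is the bookkeeping when $g_n$ has a nontrivial $H'$-component $h_n$: one must verify that replacing $y$ by $\lim y h_n^{-1} = y$ and using $Y$'s $U$-minimality with respect to $R$ is legitimate, i.e. that $yh_n^{-1}$ still recurs to $R$ along $U_0$; this follows because $h_n\to e$ and $R$ is compact $A$-invariant with the thickness property, so small perturbations preserve enough recurrence, or more cleanly one absorbs $h_n$ into the choice of base point within $Y\cap R$ at the outset.
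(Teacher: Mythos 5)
Your steps (2)--(4) are exactly the engine of the paper's own proof: take the $k$-thick return set $\mathsf T=\{t:yu_t\in R\}$, apply Lemma \ref{lem.QR2} with $L=H'(U)$ (so $\frak l^\perp=\frak h^\perp$ and $(L\cap N)^\perp=U^\perp$) to the sequence in $\exp\frak h^\perp-\op{N}(U_0)$, pass to a limit $y'\in Y\cap R$ of $yu_{t_n}$, and conclude $Yv=\cl{y'U}v\subset X$ by $U$-minimality. The genuine gap is in your step (1), in the case $r_n\in\op{N}(U)$. Your plan is to invoke Lemma \ref{hen} and, in the branch $r_n\notin\op{N}(U^+)$, to ``replace $U$ by its transpose''; but the hypotheses are not symmetric under this switch: $Y$ is $U$-minimal with respect to $R$ (not $U^+$-minimal), the thickness assumption on $R$ in this section is only for one-parameter subgroups of $U$, and the desired conclusion is about this particular $Y$ and about $v\in U^\perp$. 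Running the blowup along a subgroup of $U^+$ would at best put a translate of $\cl{y'U^+}$ inside $X$, which is not the statement. Moreover the case you are trying to dodge is in fact the trivial one: by \eqref{perp} and Lemma \ref{nu} one has $\frak h^\perp\cap\op{Lie}(\op{N}(U))=\op{Lie}(U^\perp)$, so for $r_n\in\exp\frak h^\perp$ close to $e$ either $r_n\notin\op{N}(U)$ (and your blowup applies verbatim), or $r_n\in U^\perp-\{e\}$, in which case the proposition holds immediately with $v=r_n$, since $yr_n\in X$ and hence $Yr_n=\cl{yU}r_n\subset X$. This is how the paper disposes of that case; no appeal to Lemma \ref{hen} and no change of $U$ is needed, and without this observation your argument simply has no valid route when $r_n$ normalizes $U$.

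A secondary slip is the reduction to $g_n\in\exp\frak h^\perp$. With your order $g_n=h_nr_n$, the assertion that $H'$-invariance of $X$ gives $yr_n=yh_n^{-1}g_n\in X$ is not justified: $X$ is invariant under right multiplication by $H'$, and $yh_n^{-1}g_n$ is not of the form $(\text{point of }X)\cdot H'$; also $yh_n^{-1}$ need not lie in $Y$, since $Y$ is only $U$-invariant. The clean fix is either to decompose in the other order, $g_n=r_nh_n$ with $r_n\in\exp\frak h^\perp$ (possible because $\frak h^\perp$ is $\op{Ad}(H')$-invariant), so that $yr_n=yg_nh_n^{-1}\in XH'=X$ with the base point $y$ untouched, or to conjugate: $y(h_nr_nh_n^{-1})=yg_nh_n^{-1}\in X$ and $h_nr_nh_n^{-1}\in\exp\frak h^\perp$ still tends to $e$. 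Your suggested repairs (``replacing $y$ by $yh_n^{-1}$'' or ``absorbing $h_n$ into the base point'') do not work as stated, precisely because $Y$ is not $H'$-invariant.
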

\begin{proof}
Let $\frak h$ denote the Lie algebra of $H'$.
We may write $g_n=r_nh_n$ where $h_n\in H'$ and $r_n\in \exp \frak h^\perp$.
By replacing $g_n$ with $g_nh_h^{-1}$, we may assume $g_n =r_n$.
If $r_n \in U^\perp$ for some $n$,
 then the claim follows since $y_0r_n \in X$ and hence $Yr_n\subset X$.
 Hence we assume that $r_n\notin U^\perp$ for all $n$. 
 We have from \eqref{perp}
 $$\frak h^\perp\cap \op{Lie}(\op{N}(U))=\op{Lie}U^\perp.$$ Hence
 $r_n \notin \op{N}(U)$ for all $n$.
Therefore
 there exists a one-parameter subgroup $U_0=\{u_t\} <U$ such that $r_n\notin \op{N}(U_0)$.
Let
\begin{equation*}
\mathsf T=\{t\in \br  : yu_t\in R\}.
\end{equation*}
Since $y\in R$, it follows that $\mathsf T$ is a $k$-thick subset of $\br$ by the assumption on $R$.
Hence, by Lemma \ref{lem.QR2}, there exists $t_n\in \mathsf T$ such that $u_{t_n}^{-1}r_nu_{t_n}\to v$ for some non-trivial $v\in U^\perp$.
Observe
\begin{equation*}
(yu_{t_n})(u_{t_n}^{-1}r_nu_{t_n})=yr_nu_{t_n}\in X.
\end{equation*}
Passing to a subsequence, $yu_{t_n}\to y_0$ for some $y_0\in Y\cap R$, and hence $y_0v\in X$.
It follows $Yv\subset X$.
\end{proof}
For a one-parameter subgroup $V=\{v_t:t\in \br\}$ and a subset $I\subset \br$,
the notation $V_I$ means the subset $\{v_t: t\in I\}$.
\begin{lemma}\label{lem.V_I}
Let $X$ be a closed $AU$-invariant set of $\Gamma\ba G$, and $V$ be a one-parameter subgroup of $U^\perp$.
Assume that $R:=X\cap\op{RF}\M\cap F^*$ is non-empty and compact.
If $x_0V_I\subset X$ for some $x_0\in R$ and a closed interval $I$ containing $0$, then $X$ contains a $V$-orbit of a point in $R$.
\end{lemma}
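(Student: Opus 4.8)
The statement to prove is Lemma~\ref{lem.V_I}: given a closed $AU$-invariant set $X$, a one-parameter subgroup $V<U^\perp$, the non-empty compact set $R=X\cap\op{RF}M\cap F^*$, and the hypothesis $x_0V_I\subset X$ for some $x_0\in R$ and a closed interval $I\ni 0$, we want to produce a point $y\in R$ with $yV\subset X$. The idea is a standard renormalization (``dilation'') argument exploiting the $A$-invariance of $X$: conjugating $V$ by $a_t$ rescales the interval $I$, so for a suitable sequence $a_{t_n}\to\infty$ the translated intervals $a_{-t_n}V_I a_{t_n}$ exhaust all of $V$. The only thing to watch is keeping the base point inside the compact set $R$, and this is exactly what the thick-return machinery of the previous sections (and Lemma~\ref{lem.epsiloncore}, Lemma~\ref{lem.epsiloncore3}) is designed to supply.

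\textbf{Step 1: Reduce to the case $x_0V_{[0,\infty)}\subset X$.} First, if $I$ is a single point, then by $A$-invariance $x_0A v_{t} A\subset X$ for $t$ near $0$, but actually one can get a genuine interval: conjugating $v_t$ by $a_s$ replaces $t$ by $e^{\pm 2s}t$ (the precise exponent depending on the normalization of $V\subset U^\perp$ under $A$), so from $x_0v_t\in X$ for one small $t\neq0$ and $A$-invariance one already gets $x_0v_s\in X$ for a half-line (or full line) of $s$. So without loss of generality $I$ is a nondegenerate interval containing $0$ in its interior, and then by the same dilation (choosing $s\to\pm\infty$) we get $x_0V_{[0,\infty)}\subset X$ or $x_0V_{(-\infty,0]}\subset X$; replacing $V$ by $V^{-1}$ if necessary, assume $x_0V_{[0,\infty)}\subset X$.

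\textbf{Step 2: Find a recurrent sequence along $V$ inside $R$.} Since $V<U^\perp\subset N$, $V=\{v_t\}$ is a one-parameter unipotent subgroup of $N$, and Proposition~\ref{lem.thickreturntime} applies: for $x_0\in\RFM$ the set $\mathsf T(x_0)=\{t:x_0v_t\in\RFM\}$ is globally $k$-thick, in particular unbounded above. If $\pi(x_0V)\subset\partial\core M$, handle this degenerate case separately (then $x_0V\subset\BFM\cdot\op{C}(H(U))$ by Lemma~\ref{lem.zc3}, and one argues directly that $X$ already contains a full $V$-orbit of an $\RFM$-point using that $\BFM$ is a union of compact $\check H$-orbits — actually then $x_0$ itself lies in $F^*$ only if $x_0\notin\partial F$, so this case needs the hypothesis $x_0\in F^*$ to rule it out, giving a contradiction unless $x_0V$ is unbounded in a way compatible with $F^*$). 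Otherwise $\pi(x_0V)\not\subset\partial\core M$, so by Lemma~\ref{lem.epsiloncore} there is a sequence $s_n\to+\infty$ with $s_n\in\mathsf T(x_0)$ and $x_0v_{s_n}$ converging to a point $y\in F^*$; since also $x_0v_{s_n}\in\RFM\cap X$ and $X$ is closed, $y\in X\cap\RFM\cap F^*=R$.

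\textbf{Step 3: Transport the half-line to a full line at the limit point.} For each $n$ write $y_n:=x_0v_{s_n}$. Then $y_n V_{[-s_n,\infty)}=x_0v_{s_n}V_{[-s_n,\infty)}=x_0V_{[0,\infty)}\subset X$, because $v_{s_n}v_t=v_{s_n+t}$ and $s_n+t\geq0$ for $t\geq-s_n$. Since $y_n\to y$ and $s_n\to+\infty$, for any fixed compact interval $[-T,T]$ we have $[-T,T]\subset[-s_n,\infty)$ for all large $n$, hence $yv_t=\lim_n y_nv_t\in X$ for all $t\in[-T,T]$ ($X$ closed). Letting $T\to\infty$ gives $yV\subset X$ with $y\in R$, as desired.

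\textbf{Main obstacle.} The only genuinely delicate point is Step~2: ensuring the limit of the translated base points stays in $F^*$ (not merely in $\RFM$), since $R$ is required to meet $F^*$. This is precisely why the hypothesis ``$x_0\in F^*$'' and the compactness of $R$ are in the statement, and why Lemma~\ref{lem.epsiloncore} (escaping the $\eta/3$-neighborhoods of the $\hull B_i$) is the right tool: it guarantees one can choose the return times $s_n$ so that $x_0v_{s_n}$ stays in $\op{core}_{\eta/3}(M)$, a compact subset of $F^*$, hence has a limit in $F^*$. The degenerate case $\pi(x_0V)\subset\partial\core M$ must be excluded using $x_0\in F^*$, which forces $\pi(x_0H(U))$ to meet the interior of the convex core and hence $\pi(x_0V)$ cannot lie entirely in a single $\partial B_i$; this is a short argument via Lemma~\ref{lem.zc3} and the description of $F^*$ in \eqref{fffs}.
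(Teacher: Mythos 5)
There is a genuine gap, and it sits exactly where the compactness hypothesis on $R$ is supposed to do its work. In your Step 1 you claim that from $x_0v_t\in X$ and $A$-invariance ``one already gets $x_0v_s\in X$ for a half-line,'' and hence that you may assume $x_0V_{[0,\infty)}\subset X$ with the \emph{same} base point $x_0$. This is false: $AU$-invariance gives $x_0V_Ia_s=(x_0a_s)\bigl(a_s^{-1}V_Ia_s\bigr)\subset X$, so the conjugation that dilates the interval simultaneously moves the base point along $A$. All you can extract is a one-parameter subsemigroup $V^+<V$ and a point $x_1\in\limsup_n x_0a_n^{-1}$ with $x_1V^+\subset X$; to know that such a limit $x_1$ exists and lies in $R$ you must use that $R$ is $A$-invariant (as $X$, $\RFM$, $F^*$ all are) and \emph{compact}, so that $\cl{x_0A}\subset R$. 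Your proposal never invokes the compactness of $R$ at this point, which is the heart of the lemma; this is precisely how the paper's proof begins.

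Even after correcting Step 1 (a half-line at some $x_1\in R$), your Steps 2--3 promote it to a full $V$-orbit by sliding the base point far along $V$ and taking a limit, using Lemma \ref{lem.epsiloncore} to land in $F^*$. Two problems arise. First, Lemma \ref{lem.epsiloncore} produces times $s_i\to+\infty$ with $x_1v_{s_i}$ converging to a point of $F^*$, but it does \emph{not} assert $s_i\in\mathsf{T}(x_1)$, so your claim that $x_1v_{s_n}\in\RFM$ is unjustified; the limit is a priori only in $\RFPM\cap F^*$ (as $\RFPM$ is $N$-invariant and closed), and one must correct by an element of $U$ via Lemma \ref{lem.R1} (harmless, since $U$ commutes with $V$ and $X$, $F^*$ are $U$-invariant). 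Second, and more seriously, the degenerate case $\pi(x_1V)\subset\partial\core M$ is genuinely possible even for $x_1\in F^*\cap\RFM$: membership in $F^*$ is governed by the sphere attached to the $H(U)$-directions, while $V\subset U^\perp$, so the horocycle $\pi(x_1V)$ can lie in a boundary hull without $x_1$ lying in $\partial F$. Your parenthetical does not resolve this case, and in it your limiting procedure may only yield a point of $\RFM\cap X$ lying in $\partial F$, i.e.\ not in $R$. The paper's proof sidesteps both issues by never leaving $R$ in the $V$-direction: it moves $x_1$ a small amount along $V^+$ (staying in the open set $F^*$), corrects by $U$ into $\RFM$ using Lemma \ref{lem.R1}, and then performs a \emph{second} $A$-renormalization of the enlarged half-line $v_s^{-1}V^+$, with all base points remaining in the compact $A$-invariant set $R$.
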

\begin{proof}
Choose a sequence $a_n\in A$ such that $\liminf_{n\to \infty} a_nV_Ia_n^{-1}$ contains a subsemigroup $V^+$  of $V$ as $n\to\infty$.
Then 
\begin{equation*}
(x_0a_n^{-1})(a_nV_Ia_n^{-1})=x_0V_Ia_n^{-1}\subset X.
\end{equation*}
 By passing to a subsequence, we have
 $x_0a_n^{-1}$ converges to some $ x_1\in \RFM$; so $x_1V^+\subset X$.
Since $R$ is compact, so is $\cl{x_0A}\cap F^*$, which implies that $x_1\in \cl{x_0A}\cap F^*$.
Since $x_1$ belongs to the open set $ F^*$, it follows $x_1v_s\in F^*$ for all sufficiently small $s\in \br$.
By Lemma \ref{lem.R1}, this implies that  $x_1v_sU\cap\op{RF}\M\neq\emptyset$ for some $s>0$ with $v_s\in V^+$.
Note that
\begin{equation*}
(x_1v_sU)(v_s^{-1}V^+)=x_1UV^+\subset X.
\end{equation*}
Choose $x_2\in x_1v_sU\cap\op{RF}\M\subset X\cap\op{RF}\M\cap F^*$. Then  $x_2(v_s^{-1}V^+)\subset X$.
Similarly as before, let $a_n\in A$ be a sequence such that $\liminf_{n\to \infty} a_n(v_s^{-1}V^+)a_n^{-1}= V$ and such that $x_2 a_n^{-1}$ converges to some $x_3\in R$. 
From
\begin{equation*}
(x_2a_n^{-1})(a_nv_s^{-1}V^+a_n^{-1})=x_2v_s^{-1}V^+a_n^{-1}\subset X,
\end{equation*}
we conclude  that
$x_3V\subset X.$ This finishes the proof.
\end{proof}

\begin{proposition}\label{prop.YVIX}\label{7.6}
Let $X$ be a closed $H'$-invariant set.
Assume  that $R:=X\cap F^*\cap\op{RF}\M$ is a non-empty compact set, and let $Y\subset X$ be a $U$-minimal subset with respect to $R$.
Suppose that there exists $y\in Y\cap R$ such
  that $X-yH'$ is not closed.
Then there exist an element $z\in R$ and a  non-trivial connected closed subgroup $V < U^\perp$ such that
$$z UV \subset X.$$ \end{proposition}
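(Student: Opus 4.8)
The plan is to feed the hypothesis into Proposition \ref{YvX} and Proposition \ref{YLY}, and then to splice the two extra invariances they produce into a segment of a one‑parameter $U^\perp$‑flow inside $X$, which Lemma \ref{lem.V_I} converts into the assertion.

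First I would extract, from ``$X-yH'$ not closed'', a sequence $g_n\to e$ in $G-H'$ with $yg_n\in X$. Indeed $yH'\subset X$ since $X$ is closed and $H'$‑invariant, so non‑closedness of $X-yH'$ gives $x_n\in X-yH'$ with $x_n\to yh$, $h\in H'$; then $x_nh^{-1}\in X$, $x_nh^{-1}\to y$, $x_nh^{-1}\notin yH'$, and writing $x_nh^{-1}=yg_n$ we get $g_n\to e$ in $G-H'$. Since $X\cap R\neq\emptyset$ and $Y\subset X$ is $U$‑minimal with respect to $R$ by hypothesis, Proposition \ref{YvX} then yields a non‑trivial $v\in U^\perp$ with $Yv\subset X$; and since the proof of Proposition \ref{YvX} goes through Lemma \ref{lem.QR2}, whose last clause controls $\|v\|$, one may take $v=\exp u^-(x)$ in any prescribed neighbourhood of $e$.

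Independently, since $Y\cap F^*\cap\RFM=Y\cap R\neq\emptyset$ (and the proof of Proposition \ref{YLY} uses only a base point of $Y$ in this set, for which $\overline{y_0U}=Y$), Proposition \ref{YLY} gives an unbounded one‑parameter subsemigroup $S$ of $AU^\perp\op{C}_2(U)$ with $YS\subset Y\subset X$; by Lemma \ref{rmk.1psg} this $S$ has one of the three forms $\{\exp(t\xi_A)\exp(t\xi_C)\}$, $\{v'\exp(t\xi_A)v'^{-1}\exp(t\xi_C)\}$, $\{\exp(t\xi_V)\exp(t\xi_C)\}$ $(t\geq0)$. Now I would use that $X$ is invariant under $A$ and under $\op{C}_2(U)\subset H(U)\subset H'$, and that $\op{C}_2(U)$ centralizes $U^\perp$: right‑multiplying $YS\subset X$ by $\exp(-t\xi_C)$ kills the $\op{C}_2(U)$‑factor, and conjugating by $\exp(-t\xi_A)$ (using $Xa=X$ and that $\op{Ad}(\exp(t\xi_A))$ scales the $N$‑directions by $e^{\pm t}$) removes the $A$‑factor. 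In the third form this gives $Y\exp(t\xi_V)\subset X$, in the second $Y\exp u^-((1-e^{\pm t})x_0)\subset X$ (with $v'=\exp u^-(x_0)$), both for $t\geq0$. In the first form $S$ alone gives nothing new for $X$, so I multiply $Y\exp(t\xi_A)\exp(t\xi_C)\subset Y$ on the right by the element $v=\exp u^-(x)$ of the previous paragraph, commute $\exp(t\xi_C)$ past $v$, discard it, and strip the $A$‑factor, obtaining $Y\exp u^-(e^{\pm t}x)\subset X$ for $t\geq0$. In every case this is a genuine one‑parameter family $Y\exp u^-(s\xi)\subset X$, $s\in I_0$, for a non‑trivial $U^\perp$‑direction $\xi$ and a non‑degenerate interval $I_0$ one of whose endpoints is $0$, except in the subcase $I_0=[1,\infty)$ (first form, ``$+$'' sign) where $\xi=x$ is small; there I shift, choosing $s_1\in\op{int}I_0$, setting $y_0':=y_0\exp u^-(s_1x)\in X\cap F^*\cap\RFPM$ (using smallness of $x$), and pushing $y_0'$ back into $R$ by a small $u''\in U$ via Lemmas \ref{ru} and \ref{xfss} (note $y_0u''\in Y$). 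In all cases this produces $x_0\in R$, a non‑trivial one‑parameter subgroup $V<U^\perp$, and a closed interval $I\ni0$ with $x_0V_I\subset X$.

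Finally, Lemma \ref{lem.V_I} applies ($X$ is closed and $AU$‑invariant, and $R=X\cap\RFM\cap F^*$ is non‑empty compact), giving $z\in R$ and a one‑parameter subgroup $V<U^\perp$ with $zV\subset X$. Since $X$ is $U$‑invariant, $zU\subset X$; and $U,V$ lie in the abelian group $N$, so $zUV=zVU\subset XU=X$, which is the assertion. The step I expect to be the main obstacle is the middle one: correctly matching the three shapes of $S$ with the appropriate invariance of $X$ at each move, and in the purely $A$‑directional case both importing the translate $v$ and then shifting the resulting parameter interval so that it contains $0$ without letting the base point leave the compact set $R$; the explicit $\op{Ad}(A)$‑computations behind ``$e^{\pm t}$'' and ``$1-e^{\pm t}$'' are routine.
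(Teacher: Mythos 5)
Your proposal is correct and follows the same skeleton as the paper's proof: reduce via Lemma \ref{lem.V_I} to producing $x_0\in R$ and $x_0V_I\subset X$ with $0\in I$, feed the sequence $g_n\to e$ in $G-H'$ into Propositions \ref{YvX} and \ref{YLY}, and run the trichotomy of Lemma \ref{rmk.1psg}. Where you genuinely diverge is in the hardest case, when $S$ has pure $A\op{C}_2(U)$ direction. The paper handles it by passing to $Y_0=Y\op{C}_2(U)$, upgrading $Y_0A^+\subset Y_0$ to full $A$-invariance of $Y_0$ via the $U\op{C}_2(U)$-minimality of $Y_0$ with respect to $R$ together with compactness and $A$-invariance of $R$, and then using that $Av_0A$ contains a one-parameter semigroup of $U^\perp$. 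You instead multiply $YS\subset Y$ on the right by the translate $v=\exp u^-(x)$ from Proposition \ref{YvX}, strip the $\op{C}_2(U)$- and $A$-factors by the $H'$- and $A$-invariance of $X$, and, in the expanding sign where the resulting parameter interval is $[1,\infty)$, shift the base point along the family and push it back into $R$. This works, but note two things: (i) it relies on $v$ being arbitrarily close to $e$, which is not in the statement of Proposition \ref{YvX} and has to be extracted from its proof (the ``arbitrarily close to $e$'' clause of Lemma \ref{lem.QR2}, plus the observation that in the case $r_n\in U^\perp$ one may take $n$ large); you flag this, which is the right thing to do, and it is the price you pay for avoiding the paper's minimality-based $A$-invariance upgrade; (ii) your correction of the shifted base point into $R$ is simpler than you make it: since $F^*$ and $X$ are $H'$-invariant, hence $U$-invariant, and $N$ is abelian, Lemma \ref{ru} alone already gives $u''\in U$ with $z:=y_0'u''\in X\cap F^*\cap\RFM=R$ and $zV_I=y_0'V_Iu''\subset X$; no smallness of $u''$ and no appeal to Lemma \ref{xfss} is needed. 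Finally, your application of Proposition \ref{YLY} to a $Y$ that is $U$-minimal with respect to $R$ rather than $\RFM$ is exactly the paper's own usage, and your parenthetical justification of why its proof survives is reasonable, so this is a shared feature rather than a gap on your side.
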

\begin{proof}
Since $X-yH'$ is not closed, there exists
 a sequence $g_n\to e$ in $G-H'$ such that $yg_n\in X$ for all $n\ge 1$.
By Lemma \ref{lem.V_I}, it suffices to find $x_0\in R$ and
 a one-parameter subgroup $V<U^\perp$ such that $x_0V_I\subset X$  for some interval $I<\br$ containing $0$.
It follows from Propositions  \ref{prop.YLY} and \ref{prop.R3} that 
$$Yv_0\subset X\quad\text{ and} \quad YS\subset Y$$
where $v_0\in U^\perp-\{e\}$ and $S$ is an unbounded one-parameter subsemigroup of
$AU^\perp \op{C}_2(U)$.
By Lemma \ref{rmk.1psg}, $S$ is either of the form
\begin{enumerate}
\item
$S=\{\exp(t\xi_V)\exp(t\xi_C) : t\ge 0\}$, or
\item
$S=\{\left(v\exp(t\xi_A)v^{-1}\right)\exp(t\xi_C) : t\ge 0\}$
\end{enumerate}
for some $\xi_A\in\op{Lie}(A)-\{0\}, \xi_C\in\op{Lie}(\op{C}_2(U)), \xi_V\in\op{Lie}(V)-\{0\},$ and $v\in U^\perp$.

{\noindent {\bf  Case $(1)$:}}
Since $X$ is $H'(U)$-invariant and $C_2(U)\subset H'(U)$, we may assume $YS\subset X$ with $\xi_C=0$; so the claim follows.

{\noindent {\bf  Case $(2)$:}} Set $$Y_0:=Y \op{C_2}(U).$$
It is easy to check that $Y_0$ is a $U\op{C}_2(U)$-minimal subset of $X$ with respect to $R$.
First suppose that  $v=e$. Let $A^+:=\{\exp(t\xi_A): t\ge 0\}$.
Since $YS\subset Y$ and $\xi_C\in\op{Lie}(\op{C}_2(U))$, it follows that
$Y_0 A^+\subset Y_0$. Choose $y\in Y\cap R$, and let $a_n\to \infty$ be a sequence in $A^+$.
Since $R$ is compact and $A$-invariant, $ya_n$ converges to some 
$
z_0\in R$
by passing to a subsequence.
Since $Y_0A^+\subset Y_0$, we have $z_0\in Y_0\cap R$.
Since $\liminf a_{-n}A^+=A$, 
we get $z_0A\subset Y_0$. Since $z_0A U \op{C}_2(U)=z_0 U \op{C}_2(U) A$, and $Y_0$ is $U \op{C}_2(U)$-minimal with respect to $R$,
we obtain
 $Y_0A\subset Y_0 .$
Since $v_0$ commutes with $\op{C}_2(U)$,
we also get $Y_0 v_0\subset X$.
Therefore
$Y_0Av_0\subset Y_0v_0\subset X$.
By the $A$-invariance of $X$, it follows $Y_0(Av_0A)\subset X$.
Since $Av_0A$ contains some $V^+$, the claim follows.

Next suppose $v\neq e$. Since $\op{C}_2(U)$ commutes with $v$,
it follows that $$Y_0 v A^+ v^{-1}\subset Y_0 .$$
Since  $X$ is $A$-invariant, we get
\begin{equation*}
Y_0(vA^+v^{-1})A\subset Y_0A\subset X.
\end{equation*}
Set $V:=\exp \br (\log v)$.
Since  $vA^+v^{-1} A$ contains $V_I$ for some interval $I$ containing $0$ for any subsemigroup $A^+$ of $A$.
we get $Y_0V_I\subset X$, finishing the proof.
\end{proof}

\section{Closures of orbits inside $\partial F$ and non-homogeneity}\label{s:cl}
Let $\M=\Ga\ba \bH^d$ be a convex cocompact hyperbolic manifold with non-empty Fuchsian ends.
Let $U$ be a connected closed subgroup of $\check H\cap N$,
$H:=H(U)$ and $\BFM$ be as in \eqref{eq.BFMDEF}.
Then $$\partial F= \BFM\cdot \check V^+ \cdot H'(U)\quad\text{and}\quad
\partial F\cap \RFM=\BFM \cdot \op{C}(H(U)).$$
In this section, we classify closures
of $xH(U)$ and $xAU$ for $x\in \partial F -\RFM$ (Theorem \ref{b-ratner}); they are never homogeneous.

\begin{theorem}\label{thm.ratner}
If $x=zc \in\BFM\cdot\op{C}(H(U))$ with $z\in \BFM$ and $c\in \op{C}(H(U))$.
Then \begin{enumerate}
\item $\cl{xU}=xL$ for some $L\in\cal Q_U$ contained in $c^{-1} \check H c$;
\item $\cl{xH(U)}=xL$ for some $L\in\cal L_U$ contained in $c^{-1} \check H c$, and  for any $y\in \mG(U, xL)$,
$\cl{yU}=xL$;
\item $\cl{xAU}=\cl{xH(U)}$. \end{enumerate}
\end{theorem}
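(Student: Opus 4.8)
\textbf{Proof strategy for Theorem \ref{thm.ratner}.}
The plan is to reduce everything to the closed orbit $z\check H$ with $z\in\BFM$, since $x=zc$ and $c$ centralizes $H(U)$, and then to run the induction-free portion of the argument inside this compact $\SO^\circ(d-1,1)$-orbit. Since $\op{C}(H(U))$ normalizes $H(U)$ and commutes with it, conjugating by $c$ replaces $U$ by itself and $H(U)$ by itself, so without loss of generality we may assume $c=e$ and $x=z\in\BFM$; the subgroups appearing in the conclusion will then be conjugated back by $c$, landing inside $c^{-1}\check Hc$ as claimed. The key point is that $z\check H$ is a \emph{compact} homogeneous space: $z\check H\simeq (\check H\cap z^{-1}\Gamma z)\ba\check H$ with $\check H\cap z^{-1}\Gamma z$ a cocompact lattice in $\check H\simeq\SO^\circ(d-1,1)$, because $\pi(z\check H)=\Gamma\ba\Gamma\hull(B_i)$ is a closed hyperbolic $(d-1)$-manifold (a boundary component of $\core M$). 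Now $U<\check H\cap N$ is a connected unipotent subgroup of $\check H$, and $H(U)=\langle U,U^t\rangle<\check H$ is generated by one-parameter unipotent subgroups. Therefore Ratner's orbit closure theorem (as quoted in the introduction, see also Remark \ref{EEnd}(2) and \cite{Ra2},\cite{Sh0}) applies verbatim inside the finite-volume homogeneous space $z\check H$: the closure of $zU$ (resp.\ $zH(U)$, resp.\ $zAU$) is a homogeneous subset $zL$ for a connected closed subgroup $L<\check H$ containing $U$ (resp.\ $H(U)$, resp.\ $AU$).

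For parts (1) and (2) it then remains to identify the type of $L$. First, $L$ is generated by unipotent one-parameter subgroups together with (for (2), (3)) being normalized by $A$; by Corollary \ref{dic} and Proposition \ref{str}, $L$ is, up to conjugation in $\check H$, of the form $H(\widehat U)C$ with $U<\widehat U<\check H\cap N$ and $C<\op{C}(H(\widehat U))$ --- for part (1) we must allow $L$ conjugate to $vH(\widehat U)Cv^{-1}$ by an element $v\in\op{N}(U)$, as in the definition of $\cal Q_U$, since the closure of a $U$-orbit (rather than an $H(U)$-orbit) need not be $A$-normalized. That $zL$ is closed forces $L\cap z^{-1}\Gamma z$ to be a lattice in $L$, hence Zariski dense; since $z\in\BFM\subset\RFM$, this places $L\in\cal L_U$ (resp.\ its $\op{N}(U)$-conjugate in $\cal Q_U$), matching the definitions \eqref{deflu}, \eqref{qu1}. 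The last clause of (2), that $\cl{yU}=xL$ for every $y\in\mG(U,xL)$, follows because $xL=z\check HCc$ with the relevant lattice acting, and by Ratner's measure classification / the ergodicity of the $U$-action on this finite-volume closed orbit (or directly from Moore ergodicity applied to $L$): a $U$-generic point, by definition not lying in any closed orbit of a proper reductive subgroup containing $U$, cannot have $\cl{yU}$ strictly smaller than $xL$, since any proper closure would again be homogeneous of the excluded type.

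For part (3), the equality $\cl{xAU}=\cl{xH(U)}$: the inclusion $\cl{xH(U)}\subset\cl{xAU}$ is immediate from $A\subset H(U)$... wait, rather $\cl{xAU}\subset\cl{xH(U)}$ since $AU\subset H(U)$. For the reverse inclusion, let $L_1=\cl{xAU}$ (homogeneous, $A$-normalized, containing $AU$, hence of the form $H(\widehat U)C$ by the structure results once we check $L_1$ contains a unipotent subgroup --- it contains $U$). I would argue that $H(U)\subset L_1$: since $L_1$ is $A$-normalized and contains $U$, its Lie algebra contains $\op{Lie}(U)$ and, being $\op{Ad}(A)$-invariant and containing the negative root vectors spanning $\op{Lie}(U)$, must by simplicity of the ambient semisimple part contain also the opposite root vectors, i.e.\ $U^t$; hence $H(U)=\langle U,U^t\rangle\subset L_1$, giving $\cl{xH(U)}\subset xL_1=\cl{xAU}$. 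Combined with the trivial inclusion this yields (3).

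\textbf{Main obstacle.} The genuinely delicate point is the identification in part (1) that the closure of a \emph{$U$-orbit} (as opposed to an $H(U)$-orbit) has the form $xvLv^{-1}$ with $v\in U^\perp$ and $L\in\cal L_U$, i.e.\ that the ``extra'' conjugating element coming out of Ratner's theorem can be taken in $U^\perp$ --- this requires invoking Lemma \ref{xhuo} / Proposition \ref{explain} and the description $\op{N}(U)=AN\op{C}_1(U)\op{C}_2(U)$ from Lemma \ref{nu} to absorb the $AU\op{C}_1(U)\op{C}_2(U)$-part, exactly as in the passage to \eqref{qu}. Everything else is a routine transfer of the finite-volume Ratner theorem to the closed orbit $z\check H$ plus bookkeeping with the structure of reductive $A$-normalized subgroups already developed in Section \ref{s:ba}.
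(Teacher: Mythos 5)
Your parts (1) and (2) follow the paper's route exactly: reduce to the compact homogeneous space $xc^{-1}\check Hc$ (the stabilizer being a cocompact lattice in $\check H$) and quote Ratner/Shah, with the bookkeeping that places $L$ in $\cal Q_U$, resp.\ $\cal L_U$. That part is fine.

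Part (3), however, has a genuine gap. You set $\cl{xAU}=xL_1$ and treat it as homogeneous ``by Ratner,'' but Ratner's orbit closure theorem applies only to subgroups generated by unipotent one-parameter subgroups; $AU$ is not such a subgroup ($A$ is diagonalizable), so the homogeneity of an $AU$-orbit closure is precisely what has to be established (it would follow from the Mozes/Shah--Weiss theory of epimorphic subgroups, which you neither invoke nor prove, and which the paper does not use). The Lie-algebra step you substitute is also false as stated: an $\op{Ad}(A)$-invariant subalgebra containing $\op{Lie}(A)\oplus\op{Lie}(U)$ need not contain the opposite root spaces --- $\op{Lie}(AU)$ itself, or $\op{Lie}(AN)$, is a counterexample --- so ``simplicity of the ambient group'' does not force $U^t\subset L_1$. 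The paper proves (3) differently and without these inputs: since $AU$ is cocompact in $H(U)$, one has $\cl{xAU}\,(K\cap H(U))=\cl{xH(U)}=xL$; if $U=L\cap N$ the claim is Theorem \ref{tm}, and otherwise Lemma \ref{lem.SHU} shows $\mathscr S(U,xL)\cdot(K\cap H(U))$ is a proper subset of $xL$, so $\cl{xAU}$ must contain a point $y\in\mG(U,xL)$; then part (2) gives $xL=\cl{yU}\subset\cl{xAU}\subset xL$. You would need either this genericity argument or a genuine epimorphic-subgroup theorem to close your part (3).
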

\begin{proof} Since $x$ is contained in the compact homogeneous space $xc^{-1} \check H c$, 
the claims (1) and (2) are  special cases of Ratner's theorem  \cite{Ra2}, which were also proved by Shah independently \cite{Sh0}.
So we only need to discuss the proof of $(3)$. We show that $\cl{xAU}=xL$ where $L$ is given by (2).
If $U=L\cap N$, then the claim follows from Theorem \ref{tm}. Suppose that $U$ is a proper subgroup of $L\cap N$.
Since $\cl{xAU} (K\cap H(U))= \cl{xH(U)}=xL$ and $\mathscr{S}(U,xL)\cdot (K\cap H(U))$ is a proper subset of $x L$ (cf. Lemma \ref{lem.SHU}), there exists $y\in \cl{xAU}\cap \mG(U, xL)$. 
Hence (3) follows from (2). \end{proof}

\begin{lemma}\label{lem.HVPH} Let $V^+\subset N$ be a one-parameter subsemigroup which is not contained in $\check H$.
Then  $V^+H(U)$ is a closed subset of $G$.
\end{lemma}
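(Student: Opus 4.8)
\textbf{Proof plan for Lemma \ref{lem.HVPH}.}

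The statement is that if $V^+=\{v_t:t\ge0\}\subset N$ is a one-parameter subsemigroup with $V^+\not\subset\check H=\SO^\circ(d-1,1)$, then $V^+H(U)$ is closed in $G$. The plan is to argue geometrically, using the action of $G$ on $\mathbb S^{d-1}$ and the fact that $H(U)\subset\check H$, together with the decomposition $N=(\check H\cap N)\oplus\check V$ where $\check V=\mathbb R e_{d-1}$. First I would reduce to the model case: since $U<\check H\cap N$, conjugating by an element of $\check H\cap M$ we may assume $U=U_k$ for some $1\le k\le d-2$, so $H(U)=\SO^\circ(k+1,1)$ and $C_{eH(U)}=\partial\,\hull(\mathbb R^k\cup\{\infty\})$ is the standard $k$-sphere through $0$ and $\infty$. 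Write $v_t=\exp u^-(tw)$ for a fixed nonzero $w\in\mathbb R^{d-1}$; the hypothesis $V^+\not\subset\check H$ means $w$ has nonzero $e_{d-1}$-component, i.e.\ $w\notin\mathbb R^{d-2}=\mathrm{Lie}(\check H\cap N)$ under our coordinates.

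The key step is to show that the map $[0,\infty)\times H(U)\to G$, $(t,h)\mapsto v_th$, is proper. Suppose $v_{t_n}h_n\to g_0$ in $G$ with $t_n\in[0,\infty)$, $h_n\in H(U)$. I would first rule out $t_n\to\infty$: the point $v_{t_n}h_n(\infty)$ equals $v_{t_n}(\infty)=\infty$ for all $n$ (as both $v_{t_n}$ and $H(U)$ fix $\infty$ --- wait, $H(U)$ does not fix $\infty$); more carefully, track the sphere $C_{v_{t_n}h_n}=v_{t_n}H(U)^+=v_{t_n}C_{eH(U)}=v_{t_n}(\mathbb R^k\cup\{\infty\})$, which is the $k$-sphere through $t_nw$ and $\infty$ spanned by $t_nw+\mathbb R^k$. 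If $t_n\to\infty$ this sphere degenerates (its "finite part" $t_nw+\mathbb R^k$ runs off to infinity in $\mathbb R^{d-1}$), so $C_{v_{t_n}h_n}$ cannot converge to the sphere $C_{g_0}$, contradicting continuity of $g\mapsto C_{gH(U)}$ (equivalently of $G\to G/H'(U)=\mathcal C^k$). Hence, after passing to a subsequence, $t_n\to t_\infty\in[0,\infty)$, so $v_{t_n}\to v_{t_\infty}$ and therefore $h_n=v_{t_n}^{-1}(v_{t_n}h_n)\to v_{t_\infty}^{-1}g_0$ in $G$; since $H(U)$ is closed, $v_{t_\infty}^{-1}g_0\in H(U)$, so $g_0\in v_{t_\infty}H(U)\subset V^+H(U)$. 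This proves closedness.

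The main obstacle is making the degeneration argument in the case $t_n\to\infty$ rigorous: one must check that the $k$-sphere through $t_nw$ with tangent $k$-plane $t_nw+\mathbb R^k$ genuinely fails to have a limit in $\mathcal C^k$, and here the hypothesis $w\notin\mathbb R^{d-2}$ is essential --- if $w\in\mathbb R^{d-2}$ (so $V^+\subset\check H$) the spheres $v_{t_n}C_{eH(U)}$ all lie on the fixed $(d-2)$-sphere $\partial\,\hull(\mathbb R^{d-2}\cup\{\infty\})$ and can limit onto a $k$-sphere inside it while $t_n\to\infty$, so $V^+H(U)$ need not be closed; thus the argument must use $w\notin\mathbb R^{d-2}$ crucially. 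I would handle this either by passing to the boundary $\mathbb S^{d-1}\cup\bH^d$ and noting that $\hull(C_{v_{t_n}h_n})=v_{t_n}\hull(C_{eH(U)})=\pi(v_{t_n}H(U))$ is a geodesic $(k+1)$-plane whose closest-point distance to a fixed basepoint tends to $\infty$ (because $v_{t_n}$ translates along a horocycle based at $\infty$ and $w$ has a component transverse to $\check H$), so it has no limiting plane; or, more algebraically, by using the Chevalley-type embedding $G\to\mathrm{GL}(V)$ with $H'(U)=\mathrm{Stab}(p)$ and showing $pv_{t_n}\to\infty$ in $V$ when $t_n\to\infty$, by an explicit computation of the polynomial $t\mapsto pv_t$ on the line $\mathbb R w$, whose leading term is nonzero precisely when $w\notin\mathrm{Lie}(\check H\cap N)$. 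Either route reduces the lemma to a finite explicit computation, which I would not carry out here.
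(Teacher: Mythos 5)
Your argument is essentially correct, but it takes a genuinely different and much heavier route than the paper. The paper's proof is purely algebraic and takes three lines: by the Iwasawa decomposition $H(U)=UA(K\cap H(U))$ and the identities $V^+U=UV^+$, $AV^+=V^+A$, $AU=UA$, one writes $V^+H(U)=AUV^+(K\cap H(U))$; since $UV^+$ is a closed subset of $N$ and $A\times N\to AN$ is a diffeomorphism onto the closed set $AN$, the set $AUV^+$ is closed in $G$, and right multiplication by the compact group $K\cap H(U)$ preserves closedness. Your geometric properness argument also works: in the case $t_n\to\infty$, the spheres $C_{v_{t_n}h_n}=(t_nw+\mathbb{R}^k)\cup\{\infty\}$ (equivalently the planes $\pi(v_{t_n}H(U))=\hull\bigl((t_nw+\mathbb{R}^k)\cup\{\infty\}\bigr)$) escape to $\{\infty\}$ because the Euclidean distance from $0$ to $t_nw+\mathbb{R}^k$ is $t_n\|w^\perp\|\to\infty$, where $w^\perp\neq 0$ is the component of $w$ orthogonal to $\mathbb{R}^k$; this forces $d(o,\pi(v_{t_n}h_n))\to\infty$ and contradicts $v_{t_n}h_n\to g_0$, while the bounded-$t_n$ case is handled correctly by closedness of $H(U)$. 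The deferred ``finite computation'' is routine, so I would not call it a gap, but the comparison shows what the algebraic route buys: no boundary geometry, no case analysis, and no estimate at all.

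One genuine error in your discussion is the claim that the hypothesis $V^+\not\subset\check H$ is essential and that $V^+H(U)$ ``need not be closed'' when $w\in\mathbb{R}^{d-2}$. This is false: the paper's proof uses no hypothesis on $V^+$ beyond $V^+\subset N$, and your own escape argument only needs $w\notin\mathbb{R}^k=\op{Lie}(U)$. Indeed, if $w\in\mathbb{R}^{d-2}\setminus\mathbb{R}^k$, the spheres $(t_nw+\mathbb{R}^k)\cup\{\infty\}$ do lie on the fixed $(d-2)$-sphere $\mathbb{R}^{d-2}\cup\{\infty\}$, but they still drift off to $\{\infty\}$ as $t_n\to\infty$ (their distance to the origin is $t_n\|w^\perp\|\to\infty$), so they cannot ``limit onto a $k$-sphere inside it''; and if $w\in\mathbb{R}^k$, then $V^+\subset H(U)$ and the set in question is $H(U)$ itself. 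The hypothesis $V^+\not\subset\check H$ in the statement is there for the way the lemma is used afterwards (e.g.\ the injectivity and properness of the product map $A\times(N\cap\check H)\times V^+\to G$ in the proof of Lemma \ref{lem.HVPH2}), not for the closedness assertion itself.
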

\begin{proof}
Since the product map $A\times N\to AN$ is a diffeomorphism and $AN$ is closed, 
the product subset $AW$ is closed in $G$ for any closed subset $W$ of $N$.
Hence $AUV^+$ is a closed subset of $AN$.
We use Iwasawa decompositions  $H(U)=UA(K\cap H(U))$, and the fact that $AV^+=V^+A$
in order to write 
$V^+ H(U)=AUV^+(K\cap H(U))$.
Hence the conclusion follows from compactness of  $K\cap H(U)$.
\end{proof}

\begin{lemma}\label{lem.HVPH2} Let $V^+\subset N$ be as in Lemma \ref{lem.HVPH}.
If $g_i\in \check H$ is a sequence such that
$g_iv_ih_i$ converges for some $v_i\in V^+$ and $h_i\in H(U)$ as $i\to\infty$,
 then, after passing to a subsequence, there exists $p_i\in AU$ such that 
 $g_ip_i$ converges to an element of $\check H$ as $i\to\infty$.
\end{lemma}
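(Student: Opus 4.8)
\textbf{Plan for the proof of Lemma \ref{lem.HVPH2}.}
The statement asserts a kind of "decompactification" property: if elements of $\check H$ (an unbounded group, though with $\check H\cap K$ compact) are brought back near $e$ by multiplying on the right by elements of the closed set $V^+H(U)$ (which by Lemma \ref{lem.HVPH} is indeed closed), then the $V^+$-part of that correction can be absorbed into $\check H$ itself, up to a correction by the subgroup $AU\subset \check H\cap H(U)$. The plan is to exploit the Iwasawa-type decomposition $H(U)=UA(K\cap H(U))$ used in the proof of Lemma \ref{lem.HVPH}, together with the fact that $V^+$ is a one-parameter subsemigroup of $N$ not lying in $\check H$, so that $\check V:=\br e_{d-1}$ (the complement $(\check H\cap N)^\perp$) "feels" the $V^+$-direction nontrivially.

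First I would write $h_i=u_i a_i m_i$ with $u_i\in U$, $a_i\in A$, $m_i\in K\cap H(U)$, using the Iwasawa decomposition of $H(U)$, and set $w_i:=v_i u_i a_i\in N A$; since $A$ normalizes $N$ and $V^+A=AV^+$, we have $g_i v_i h_i = g_i w_i' a_i' m_i$ for suitable $w_i'\in N$ and $a_i'\in A$, after reorganizing so that the $N$-part is on the left of the $A$-part. Passing to a subsequence, $m_i\to m\in K\cap H(U)$ (compactness), so $g_i w_i' a_i'$ converges. The key reduction is then: decompose $w_i'\in N=\br^{d-1}$ as $w_i'=\beta_i + \gamma_i$ with $\beta_i\in \check H\cap N$ and $\gamma_i\in\check V$; the $\beta_i$-part together with $a_i'$ lies in $\check H$ (since $\check H\cap NA = (\check H\cap N)A$), so it can be merged into $g_i$. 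This leaves me needing to control $g_i'\gamma_i a_i''$ where $g_i'\in\check H$ and $\gamma_i\in\check V$, $a_i''\in A$; but the subgroup generated by $\check V$ and $A$ inside $G$ is $A\check V$, and $g_i'\gamma_i a_i''$ converging forces (by looking at how $\check V$ sits transversally to $\check H$ and using that $\check H \cdot A\check V$ is locally a product near $e$, via the decomposition $\frak g = \frak{\check h}\oplus \op{Lie}(\check V)$ as $\op{Ad}(\check H)$-modules — note $\check V$ is not $\check H$-invariant, so I must instead argue in a neighborhood of $e$ where $\check H \times \check V \to G$, $(g,v)\mapsto gv$, or rather $(g, v, a)\mapsto g v a$, is a diffeomorphism onto a neighborhood of $e$) that $\gamma_i\to 0$ and $a_i''\to$ something in $A\subset\check H$.

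Here is the cleaner route I would actually take. Since $g_iv_ih_i\to q$ and $\check H\cap K$ is compact while $V^+H(U)$ is closed, I claim $v_i$ stays bounded: indeed writing $g_i = \check u_i \check a_i \check m_i$ in Iwasawa form for $\check H$, the product $\check u_i\check a_i\check m_i v_i h_i$ converging, combined with $\check m_i$ bounded, shows $\check u_i \check a_i (\check m_i v_i \check m_i^{-1}) \check m_i h_i$ converges; the middle factor $\check m_i v_i\check m_i^{-1}$ lies in a bounded subset of $N$ only if $v_i$ is bounded, which I can extract from the structure of the limit (if $v_i\to\infty$ the $NA$-coordinates would blow up in a way incompatible with convergence in $G$). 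Once $v_i\to v_0\in \cl{V^+}$, I get $g_i h_i \to q v_0^{-1}$ along a subsequence; writing $h_i=u_ia_im_i$ again, $g_i u_i a_i \to q v_0^{-1} m^{-1}$, and since $g_i u_i a_i\in\check H$ (as $U\subset\check H\cap N$ and $A\subset\check H$) this is exactly the desired conclusion with $p_i:=u_ia_i\in AU$.

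The main obstacle I anticipate is the boundedness claim for $v_i$: a priori $v_i$ could escape to infinity in $V^+$ while $h_i$ escapes to infinity in $H(U)$ in a compensating way, and ruling this out requires care because $V^+$ and $H(U)$ are not transverse — they intersect in a semigroup direction precisely when $V^+\subset \check H$, which is excluded by hypothesis, but one still has $V^+\cap \cl{AU}$-type degeneracies to dismiss. I would handle this by applying the properness statement implicit in Lemma \ref{lem.HVPH} (the map $V^+\times H(U)\to G$ is proper onto its closed image, at least after quotienting by the finite-dimensional stabilizer $V^+\cap H(U)$ which is trivial since $V^+\not\subset\check H\supset H(U)$), so that boundedness of $g_iv_ih_i$ modulo the unbounded group $\check H$ forces boundedness of the pair $(v_i,\overline{h_i})$ modulo $\check H\cap H(U) \supset AU$; this is where the hypothesis $V^+\not\subset\check H$ is essential and where I expect to spend the most effort making the argument rigorous.
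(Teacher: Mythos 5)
Your reduction breaks at the claim that $v_i$ stays bounded: that claim is false, and the lemma is designed precisely to cover the case where it fails. Take $g_i=a_{-i}\in A\subset\check H$, $h_i=a_{i}\in A\subset H(U)$, and $v_i=\exp u^-(e^{i}\xi)\in V^+$, where $V^+=\{\exp u^-(t\xi):t\ge 0\}$; then $g_iv_ih_i=\exp u^-(\xi)$ is constant, hence convergent, while $v_i\to\infty$ (and the conclusion of the lemma holds with $p_i=a_i$, since $g_ip_i=e$). The assertion ``if $v_i\to\infty$ the $NA$-coordinates would blow up'' ignores exactly this compensation by the $A$-part of $h_i$, and the properness behind Lemma \ref{lem.HVPH} cannot rescue it, because the left factor $g_i\in\check H$ is allowed to be unbounded, so no boundedness of $(v_i,h_i)$ can be extracted from convergence of $g_iv_ih_i$. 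Moreover, even on a subsequence where $v_i$ does converge to some $v_0$, your next step ``$g_ih_i\to qv_0^{-1}$'' is not justified: $v_i$ sits between $g_i$ and $h_i$, and removing it would require $h_i^{-1}v_ih_i$ (or $g_iv_ig_i^{-1}$) to converge, which is not given since $h_i$ may be unbounded. Your first sketch has the same defect at its key step: from convergence of $g_i'\gamma_ia_i''$ one cannot conclude $\gamma_i\to 0$ (a constant nontrivial $\gamma_i$ is perfectly possible), and a local diffeomorphism near $e$ is of no use because neither the factors nor the limit need be anywhere near $e$; what is needed is a global injectivity-plus-properness statement.

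The paper's proof makes no boundedness claim on $v_i$. Write $g_i=\til k_i\til a_i\til n_i\in(K\cap\check H)A(N\cap\check H)$ and $h_i=u_ia_ik_i\in UA(K\cap H(U))$, and absorb the compact factors $\til k_i,k_i$ after passing to a subsequence. Since $N$ is abelian and conjugation by $A$ preserves both $N\cap\check H$ and the ray $V^+$, one gets $g_iv_ih_i=(\til a_ia_i)\,(a_i^{-1}\til n_iu_ia_i)\,(a_i^{-1}v_ia_i)$ with the three factors lying in $A$, $N\cap\check H$ and $V^+$ respectively. The product map $A\times(N\cap\check H)\times V^+\to G$ is injective and proper (inside $AN$ the $\check V$-coordinate of the $N$-part detects the $V^+$-factor, because $V^+\not\subset\check H$), so all three factors converge; the product of the first two is exactly $g_iu_ia_i=\til a_i\til n_iu_ia_i\in\check H$, which therefore converges, and $p_i:=u_ia_i\in AU$ works. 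Note finally that your idea of splitting $v_iu_i$ into its $(N\cap\check H)$- and $\check V$-components and absorbing the former into $g_i$ would, even if pushed through, naturally produce a correction in $(N\cap\check H)A$ rather than in $AU$ as the statement requires; keeping $u_ia_i$ intact and conjugating $v_i$ across $a_i$, as above, avoids both this issue and the unbounded-$v_i$ problem at once.
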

\begin{proof}
We write $g_i=\til k_i\til a_i \til n_i \in  (K\cap\check H) A(N\cap\check H)$ and $h_i=u_ia_ik_i\in UA(K\cap H(U))$.
Since $K\cap \check H$ and $K\cap H(U)$ are compact, we may assume without loss of generality that
$\til k_i=k_i=e$ for all $i$.
Observe that
\begin{align*}
g_iv_i h_i&=\til a_i \til n_iv_iu_ia_i\\
&=\til a_ia_i(a_i^{-1} \til n_iu_ia_i)(a_i^{-1}v_ia_i)
\end{align*}
where $\til a_ia_i\in A$, $a_i^{-1} \til n_iu_ia_i\in N\cap\check H$, and $a_i^{-1}v_ia_i\in V^+$.
Since $g_iv_ih_i$ converges as $i\to\infty$
and
the product map $A\times (N\cap \check H)\times V^+\to G$ is an injective proper map, it follows that
all three sequences $\til a_ia_i$, $a_i^{-1} \til n_iu_ia_i$  and $a_i^{-1}v_ia_i$ are convergent as $i\to\infty$.
Noting that $$g_iu_ia_i=\til a_i \til n_iu_ia_i=\til a_ia_i(a_i^{-1} \til n_iu_ia_i),$$ 
it remains to set $p_i:=u_ia_i\in AU$ to finish the proof.
\end{proof}

For $z\in \BFM$, $\pi(z\check H\check V^+ \check H)=\pi(z\check H\check V^+ )$ is the closure of a Fuchsian end, of the form
 $S_0\times [0,\infty)$
where $S_0=\pi(z\check H)$. 
\begin{lemma} \label{lem-ra} Let $z\in \BFM$.
Let $zL$ be a closed orbit contained in $\BFM$ for some $L\in \mathcal L_U$ contained in $\check H$, 
and $V^+\subset N$ be a one-parameter subsemigroup such that $\check HV^+ = \check H\check V^+$.
Then both $zL V^+ H(U)$  and $zLV^+$ are closed.
\end{lemma}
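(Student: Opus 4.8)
\textbf{Proof plan for Lemma \ref{lem-ra}.}

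The plan is to reduce both closedness statements to a single statement about sequences in $\check H$ and then apply Lemma \ref{lem.HVPH2}. First I would set up the following. Since $zL$ is a closed orbit contained in $\BFM$ with $L\in\mathcal L_U$ and $L<\check H$, we may write $z=[g]$ with $g\check H$ one of the compact orbits $z_j\check H$ in the decomposition of $\BFM$; in particular $\Gamma\cap g\check Hg^{-1}$ is a cocompact (convex cocompact) lattice in $g\check Hg^{-1}$, and $\Gamma\cap gLg^{-1}$ is a cocompact lattice in $gLg^{-1}$ by Proposition \ref{PPP} (here $zL$ being closed inside $\BFM$ forces $\pi(zL)$ to be a closed geodesic plane, i.e.\ $S=\pi(zH'(U))$ is compact). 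So $gLg^{-1}$ is contained in $g\check Hg^{-1}$ and meets $\Gamma$ in a cocompact lattice of itself. Replacing $g$ by an element of $g\check H$, I may assume $gLg^{-1}\cap\Gamma=:\Delta$ is a cocompact lattice in $gLg^{-1}$.

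Next I would handle $zLV^+H(U)$. Suppose $[g\ell_nv_nh_n]\to x_0$ in $\Gamma\ba G$ with $\ell_n\in L$, $v_n\in V^+$, $h_n\in H(U)$; after passing to a subsequence and using that $\Delta$ is a cocompact lattice in $gLg^{-1}$, I can choose $\gamma_n\in\Gamma$ of the form $\gamma_n = g\ell_n'g^{-1}$ with $\ell_n'\in L$ and $\ell_n''\coloneqq\ell_n'\ell_n$ lying in a fixed compact subset $D\subset L$, so that $g\ell_n''v_nh_n\to g_0$ in $G$ with $[g_0]=x_0$. Since $L<\check H$, the element $g\ell_n''$ lies in the compact set $gD\subset g\check H$; passing to a subsequence it converges to some $g\ell_\infty\in g\check H$. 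Then $v_nh_n = (g\ell_n'')^{-1}(g\ell_n''v_nh_n)$ converges in $G$. Now I apply Lemma \ref{lem.HVPH}: since $V^+$ is not contained in $\check H$ (because $\check HV^+=\check H\check V^+$ and $\check V^+\not\subset\check H$), the set $V^+H(U)$ is closed in $G$, so the limit of $v_nh_n$ lies in $V^+H(U)$, say equals $v_\infty h_\infty$. Hence $g_0 = g\ell_\infty v_\infty h_\infty\in gLV^+H(U)$, and therefore $x_0=[g_0]\in zLV^+H(U)$. This proves $zLV^+H(U)$ is closed. For $zLV^+$ the argument is identical with $h_n=e$, using that $V^+$ itself (a one-parameter semigroup of $N$, hence of $AN$) is closed in $G$; alternatively $zLV^+ = zLV^+H(U)\cap(\text{a slice})$, but the direct argument is cleaner.

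The step I expect to be the main obstacle is justifying the reduction that lets me take $g\ell_n''$ to lie in a \emph{fixed compact subset of $\check H$} — i.e.\ absorbing the $L$-part into the $\Gamma$-translate. This is where cocompactness of $\Delta$ in $gLg^{-1}$ is essential, and it is exactly the place where Lemma \ref{lem.HVPH2} is designed to be used: rather than arguing by hand, I would invoke Lemma \ref{lem.HVPH2} with the sequence $g_n\coloneqq g\ell_n''\in\check H$ (after conjugating so that $g=e$, which is harmless since all statements are about the orbit and $\check H,H(U),V^+$ are all subgroups of the standard copies), obtaining $p_n\in AU$ with $g_np_n$ convergent in $\check H$; combined with $AU<H(U)$ and the closedness of $V^+H(U)$ from Lemma \ref{lem.HVPH}, this yields the convergence of the relevant product inside $gLV^+H(U)$. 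The remaining bookkeeping — that $\check HV^+=\check H\check V^+$ guarantees $V^+\not\subset\check H$, that $\Delta$ is cocompact, and that one may normalize $g$ — is routine. I would then state the $zLV^+$ case as the specialization $H(U)=\{e\}$ of the argument (noting $V^+$ is closed in $G$ directly).
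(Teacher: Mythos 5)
There is a genuine gap, and it sits exactly at the step you gloss over. Writing $x_n=[g\ell_n''v_nh_n]$ with $\ell_n''$ in a fixed compact $D\subset L$ (via cocompactness of $\Gamma\cap gLg^{-1}$) is fine, but the convergence $x_n\to x_0$ in $\Gamma\ba G$ only says that $\gamma_n g\ell_n''v_nh_n\to g_0$ for \emph{some} $\gamma_n\in\Gamma$, with no control whatsoever on $\gamma_n$; your assertion that one can arrange the $\gamma_n$ to be of the form $g\ell_n'g^{-1}\in gLg^{-1}$ "so that $g\ell_n''v_nh_n\to g_0$ in $G$" is precisely the content of the lemma, not a consequence of cocompactness. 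Since $gDV^+H(U)$ is a closed subset of $G$, the entire difficulty is to rule out that \emph{other} $\Gamma$-translates of this set accumulate at $g_0$ (compact $\cdot$ closed is closed in $G$, but $\Gamma\cdot(\text{closed})$ need not be closed). The paper handles this with a geometric separation argument that your proposal never invokes: all the points $\pi(\ell_iv_ih_i)$ lie in $\hull\cl B$ for the boundary component $B$ with $\hull(\partial B)=\pi(z\check H)$, and by \eqref{min} the translates $\hull(\gamma B)$, $\gamma\notin\check H\cap\Gamma$, stay a definite distance $\eta$ away from $\hull B$; this forces (after replacing $\gamma_i$ by $\gamma\gamma_i$ for one fixed $\gamma$ and passing to a subsequence) $\gamma_i\in\check H\cap\Gamma$. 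Without some argument of this kind your proof does not establish closedness.

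A secondary confusion is your account of where Lemma \ref{lem.HVPH2} enters. If the reduction to a fixed compact subset of $\check H$ were available, Lemma \ref{lem.HVPH2} would be superfluous: you could pass to a convergent subsequence of $g\ell_n''$ directly and finish with Lemma \ref{lem.HVPH}, as your main paragraph in fact does. In the paper, Lemma \ref{lem.HVPH2} is needed for the opposite reason: after the geometric step one only knows $\gamma_i\ell_i\in\check H$, and this sequence may be \emph{unbounded} in $\check H$; the lemma corrects it by elements $p_i\in AU$ so that $\gamma_i\ell_ip_i$ converges in $\check H$, the closedness of $\Gamma L$ puts the limit in $\Gamma L$, and then $p_i^{-1}v_ih_i\in AUV^+H(U)=V^+H(U)$ converges by Lemma \ref{lem.HVPH}. (Trying to replace Lemma \ref{lem.HVPH2} by cocompactness of $\Gamma\cap\check H$ in $\check H$ just reintroduces an unknown $\Gamma$-element and does not close the argument.) So the overall architecture you propose — reduce to sequences in $\check H$, then apply Lemmas \ref{lem.HVPH} and \ref{lem.HVPH2} — matches the paper, but the missing geometric separation step is the heart of the proof, and as written the proposal is incomplete.
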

\begin{proof} 
Without loss of generality, we assume $z=[e]$. 
Let $B$ denote the component of  $\Omega$ such that $\hull(\partial B)=\pi(\check H)$ for the projection map $\pi:G\to \bH^d$. 
Since  $\check HV^+ = \check H\check V^+$, we have  $\pi (\check HV^+\check H)= \hull \cl {B}$.
Note that  if $\gamma (\hull (B))\cap \hull (B)\ne \emptyset$ for $\gamma\in \Gamma$,
then $\gamma\in \check H\cap \Gamma=\op{Stab}_\Gamma (B)$.

Suppose that $\gamma_i \ell_i v_i h_i$ converges to some element $g\in G$ where $\gamma_i\in \Gamma$, $\ell_i\in L$, $v_i \in V^+$ and $h_i\in H(U)$.
Since $\pi(\gamma_i \ell_i v_i h_i )\in \Gamma \hull \cl B$, and $\Gamma \hull \cl {B}$ is a closed subset of $\bH^d$,
we have  $\pi(g)\in \Gamma \hull \cl {B}$. Without loss of generality, we may assume $\pi(g)\in \hull \cl {B}$ by replacing $\gamma_i$
 by $\gamma \gamma_i$ for some $\gamma\in \Gamma$ if necessary.
 
We claim that by passing to a subsequence, $$\gamma_i\in \check H\cap \Gamma.$$ 
Let $\cal O$ be a neighborhood of $\pi(g)$ such that
 $$\cal O\cap \Gamma \hull \cl{B}\subset \hull \cl{B};$$ such $\cal O$ exists, since $d(\hull (\gamma B), \hull (B))\ge \eta
 $  for all $\gamma\in \Gamma -(\check H\cap \Gamma)$ where $\eta>0$ is given in \eqref{min}.
 By passing to a subsequence, we may assume that $\pi(\gamma_i \ell_i v_i h_i )\in \cal O$.
 Since $\pi(\ell_i v_i h_i)\in \hull  \cl{B}$ for all $i$, it follows that 
$\pi(\gamma_i \ell_i v_i h_i )\in \hull \cl{B}$ for all $n$.
Therefore $\gamma_i\in \check H\cap \Gamma$.
Applying Lemma \ref{lem.HVPH2} to the sequence $(\ga_i\ell_i)v_ih_i\to g$, there exists $p_i\in AU$ such that $\ga_i\ell_ip_i\to h$ in $\check H$ as $i\to\infty$.
Since  $\Ga L$ is closed, we have $h\in \Ga L$.

Since  $p_i^{-1}v_ih_i\in AUV^+ H(U)= V^+H(U)$ and \be\label{pvhh} \lim_{i\to \infty} p_i^{-1}v_ih_i = h^{-1}g,\ee  
 we have
  $h^{-1}g\in V^+H(U)$ by Lemma \ref{lem.HVPH}.
Therefore, $g=h(h^{-1}g)\in\Ga LV^+H(U)$. This proves that $\Ga LV^+H(U)$ is closed.
Note that in the above argument,
if $h_i=e$ for all $i$, then $h^{-1} g = \lim p_i^{-1}v_i \in AUV^+$. Hence $g=h (h^{-1} g)\in \Gamma L AUV^+=\Gamma L V^+$.
This proves that $\Ga L V^+$ is closed.
\end{proof}

Note that  $x\in \RFPM -\RFM\cdot H(U) $ if and only if $x\in (\RFPM\cap \partial F_{H(U)})- \BFM \cdot \op{C}(H(U))$.
\begin{thm}\label{b-ratner} Let $x\in \RFPM -\RFM\cdot H(U) $.
Then there exist a compact orbit $zL\subset \BFM$ with $L\in \mathcal L_U$,
an element $c\in \op{C}(H(U))$ and a one-parameter subsemigroup $V^+\subset N$ with $\check HV^+ = \check H\check V^+$ such that
\begin{enumerate}\item $\overline{xH(U)}= zL V^+ H(U) c$;
\item $\overline{xAU}=zLV^+c$.
\end{enumerate}
Moreover the closure of the geodesic plane $\pi(xH(U))$ is diffeomorphic to a properly immersed submanifold
$S\times [0, \infty)$
where $S=\pi(zL)$ is a compact geodesic plane inside $\BFM$.
\end{thm}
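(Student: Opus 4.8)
\textbf{Proof plan for Theorem \ref{b-ratner}.}
The starting point is the structure of $\partial F_{H(U)}$ established earlier. Since $x \in \RFPM - \RFM\cdot H(U)$, we have $x \in (\RFPM \cap \partial F_{H(U)}) - \BFM\cdot\op{C}(H(U))$, so by \eqref{bfmz} we may write $x = z_0 v c$ with $z_0 \in \BFM$, $c \in \op{C}(H(U))$, and $v \in \check V^+ - \{e\}$. Conjugating $v$ by $\op{C}(H(U))$ does not change membership in $\BFM$, so after adjusting we may in fact arrange $x = z_0 v c$ with $z_0\check H$ a compact boundary orbit. The first step is to pass to the closure $\overline{z_0 U}$ inside the compact homogeneous space $z_0\check H$: this is handled entirely by Ratner's theorem (and Shah \cite{Sh0}), exactly as in Theorem \ref{thm.ratner}, giving a closed orbit $zL \subset \BFM$ with $L \in \mathcal L_U$, $L \subset \check H$, such that $\overline{z_0 U} = z_0 L$ (up to relabeling the base point so $z \in \overline{z_0 U}$). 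Note $z_0 v U = z_0 U v$ since $v \in \check V$ centralizes $U < \check H\cap N$, so this reduces the problem to understanding translates of $z_0 L$ by $\check V^+$ and by $H(U)$.

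The geometric heart of the argument is Lemma \ref{lem-ra}: applied to the compact orbit $zL$, the one-parameter subsemigroup $V^+$ obtained from $v$ (choosing $V^+$ so that $\check H V^+ = \check H \check V^+$), it tells us that both $zLV^+H(U)$ and $zLV^+$ are closed subsets of $\Gamma\ba G$. So the plan for part (1) is: show $\overline{xH(U)} = zLV^+H(U)c$. The inclusion $\subset$ follows because $xH(U) = z_0 v H(U) c = z_0 v c\, (c^{-1}H(U)c)$ and $c^{-1}H(U)c$... — actually more directly, $z_0 v U c \subset zL\check V^+ H(U) c$ after translating, and the right side is closed by Lemma \ref{lem-ra}; so $\overline{xH(U)}$ is contained in it. For the reverse inclusion one uses that $\overline{z_0 U} = z_0 L$, then that $AU$-invariance of $\overline{xAU}$ (from Theorem \ref{thm.ratner}(3) applied inside a compact orbit, $\overline{z_0 A U} = z_0 L$) propagates the $V^+$-direction: since $v \in \check V^+$ and $AvA$ contains a subsemigroup of $\check V^+$, the closure $\overline{xAU}$ must contain $zLV^+c$, and then saturating by $H(U)$ gives all of $zLV^+H(U)c$. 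This simultaneously yields part (2): $\overline{xAU} = zLV^+c$, with the inclusion $\subset$ coming from closedness of $zLV^+$ (Lemma \ref{lem-ra}) and the fact that $xAU = z_0 v c AU = z_0 A U v c \subset zL\check V^+ c$.

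For the final diffeomorphism claim, I would note that $\pi(zL) = S$ is a compact geodesic plane contained in $\BFM$ — it is the convex cocompact (indeed compact, by Proposition \ref{PPP} since $zL \subset \BFM$ forces the limit set of $p(\Gamma_{\tilde S})$ to be a full sphere) geodesic plane of dimension $\dim L_{nc}$ sitting on a boundary component of $\core M$. Then $\pi(zLV^+H(U)) = \pi(zLV^+)$ is the union over $s \ge 0$ of the geodesic planes $\pi(zL a_s v)$ translated along the horospherical $\check V^+$-direction normal to $S$; since $\check H V^+ = \check H\check V^+$, this foliation is exactly the product structure of the Fuchsian end $S_0 \times [0,\infty)$ restricted to the sub-plane $S$, so $\overline{\pi(xH(U))} \cong S\times[0,\infty)$, and properness of the immersion follows from closedness of $zLV^+H(U)$ together with Proposition \ref{proper}-type reasoning. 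The main obstacle I anticipate is the reverse inclusion in part (1) — showing $\overline{xH(U)}$ is not stuck in something smaller than $zLV^+H(U)c$: this requires carefully combining the $A$-invariance of the closure, the minimality statement $\overline{z_0 U} = z_0 L$ inside the compact orbit, and the promotion of the semigroup direction $v$ to the full $V^+$ via the $AvA$-trick, being careful that $v$ genuinely escapes $\check H$ so that Lemmas \ref{lem.HVPH} and \ref{lem.HVPH2} apply.
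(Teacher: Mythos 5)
Your outline follows the same route as the paper (write $x=z_0vc$ with $z_0\in\BFM$, $v\in\check V^+-\{e\}$, $c\in\op{C}(H(U))$; apply Theorem \ref{thm.ratner} in the compact boundary orbit; use Lemma \ref{lem-ra} for closedness of $zLV^+H(U)$ and $zLV^+$; get the reverse inclusion from $A$-invariance and the $AvA$-trick; read off the product structure at the end), but there is a genuine gap in how you extract the closed orbit from Ratner's theorem. Theorem \ref{thm.ratner}(1) gives $\cl{z_0U}=z_0L'$ with $L'\in\mathcal Q_U$, i.e.\ $L'=v_0^{-1}Lv_0$ for some $L\in\mathcal L_U$ contained in $\check H$ and some $v_0\in\check H\cap N$ which need not be trivial. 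Thus $\cl{z_0U}=zLv_0$ with $z=z_0v_0^{-1}$: it is a \emph{translate} of an $L$-orbit, not itself an orbit of a member of $\mathcal L_U$ based at a point of $\cl{z_0U}$, so your assertion ``$\cl{z_0U}=z_0L$ with $L\in\mathcal L_U$, up to relabeling the base point'' is false in general. Because you then drop $v_0$, all your subsequent containments are written with $\check V^+$, and these fail whenever $v_0\neq e$: for instance $z_0vU\subset zL\check V^+H(U)$ would require $v_0v\in L\check V^+H(U)$, which breaks down when $v_0\notin L$ (e.g.\ $L\cap N=U$ and $v_0\in U^\perp\cap\check H$ non-trivial, in which case $Lv_0vA\neq LvA$); likewise $\cl{xAU}$ equals $zLV^+c$ for the semigroup $V^+$ generated by $v_0v$, not $zL\check V^+c$. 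This is precisely why the theorem is stated with an arbitrary one-parameter subsemigroup $V^+\subset N$ satisfying $\check HV^+=\check H\check V^+$ rather than with $\check V^+$ itself; your parenthetical ``choosing $V^+$ so that $\check HV^+=\check H\check V^+$'' points at the issue but is never used, since every displayed inclusion reverts to $\check V^+$.

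The repair is exactly the paper's device: set $z:=z_0v_0^{-1}\in\BFM$ and $V^+:=\{\exp(t\log(vv_0)):t\ge 0\}$, which satisfies $\check HV^+=\check H\check V^+$ because $v\neq e$, and note $V^+\subset A(v_0v)A\cup\{e\}$, so that $zLV^+H(U)=zL\cup zL(v_0v)H(U)$ and $zL(V^+{-}\{e\})=zLv_0vA$. With this $V^+$ your two inclusions work as you intended: $xH(U)\subset zLV^+H(U)c$ since $v_0v\in V^+$ and the right side is closed by Lemma \ref{lem-ra}; and $\cl{xAU}\supset\cl{z_0U}vAc=zLv_0vAc$, whose closure contains $zLV^+c$, which after $H(U)$-saturation gives the reverse inclusion in (1), while the closedness and $AU$-invariance of $zLV^+$ give (2). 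The final product-structure claim is fine once one observes (as the paper does) that $\pi(zLV^+H(U))=\pi(zLV^+)$ because $H(U)=UA(K\cap H(U))$ and $AU<L$ commutes past $V^+$; also note $\pi(zL)$ is compact simply because $zL$ is a closed subset of the compact set $\BFM$, with no need to invoke limit-set considerations.
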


\begin{proof} 
We claim that we can write $x=z _0vc$ for some non-trivial $v\in \check V^+$, $z_0\in \BFM$ and $c\in \op{C}(H(U))$.

Let $k=\op{dim}U$ and $C\subset\bb S^{d-1}$ denote the $k$-dimensional sphere stabilized by $H(U)$, and $g\in G$ be such that $x=[g]$. 

In view of the identification $\Ga\ba G\simeq\op{F}\cal M$ induced from \eqref{map},
the hypothesis $x\in\RFPM-\RFM\cdot H(U)$ implies that there exists a component $B_i$ of $\Omega$ such that $gC\subset \cl{B_i}$ and $gC$ is tangent to $\partial B_i$. 
Let $S\subset\cl{B_i}$ be the unique $d-2$ dimensional sphere tangent to $\partial B_i$ containing $C$.
Considering $g=(v_1,\cdots,v_d)$ as an orthonormal frame in $\bb H^d$, we can obtain a frame tangent to $\op{hull}(S)$ by rotating the last $d-(\op{dim}(U)+1)$ vectors of $g$.
Since any frame tangent to $\op{hull}(S)$ can be written as $zv$ for some $z\in\BFM$ and a nontrivial $v\in\check V^+$, and the process of rotating the last $d-(\op{dim}(U)+1)$ coordinates corresponds to a right multiplication in $\Ga\ba G$ by an element of $c\in\op{C}(H(U))$, this proves the claim.

 Without loss of generality, we may assume $c=e$. By Theorem \ref{thm.ratner},  $\overline{z_0U}= z_0v_0^{-1} Lv_0$ where $L\in \mathcal L_U$ is contained in $\check H$ and
$v_0\in \check H\cap N$. Hence $\overline{xH(U)}$ contains $z L(v_0v)H(U) $ for $z:=z_0v_0^{-1}\in \BFM$.
Set $V^+:=\{\exp t(\log(vv_0)):t\ge 0 \}$.

Note that $V^+$ is contained in $A(v_0v) A\cup \{e\}$, and hence
$$zL\cup zL v_0 v H(U) =zLV^+ H(U)$$
and $\check H V^+=\check H \check V^+$ since $v\ne e$.

Since $\overline{xH(U)} \subset  zL \cup z L(v_0v)H(U)$, and $zL$ lies in the closure of $z L(v_0v)H(U) $,
the claim (1) follows since $zLV^+H(U)$ is closed by Lemma \ref{lem-ra}.
For the claim (2), note that $\cl{xAU}\supset \cl{z_0U}vA=zLV^+$.
By Lemma \ref{lem-ra}, $zLV^+$ is $AU$-invariant and closed.
Since $x\in zLV^+$, we conclude $\cl{xAU}=zLV^+$. 

To see the last claim, observe that
 $\pi(z LV^+H(U))=\pi(zLV^+AU)=\pi(z L V^+)$ since $V^+AU=AUV^+$, and $AU<L$.
Since $\check HV^+ = \check H\check V^+$,  and
$\pi(zL)$ is a compact geodesic plane (without boundary) in $\pi(z\check H)$, we get
$\pi(z\check H V^+) \simeq \pi(z\check H) \times [0, \infty)$ and
$\pi(zLV^+)\simeq \pi(zL)\times [0, \infty)$.
\end{proof}

\begin{Rmk}\label{NCL}
An immediate consequence of Theorem \ref{b-ratner} is that
 if $P\subset \M$ is a geodesic plane
such that $P\cap \core {\M}=\emptyset$ but $\overline P\cap \core {\M}\ne \emptyset$, then
 $P$ is not properly immersed in $\M$ and $\overline{P}$ is a properly immersed submanifold with non-empty boundary.
\end{Rmk}

\section{Density of almost all $U$-orbits}\label{s:err}
Let $\Gamma <G=\SO^\circ (d,1)$ be a Zariski dense convex cocompact subgroup. 
The action of $N$ on $\RFPM$ is minimal, and hence any $N$-orbit is dense in $\RFPM$ 
 \cite{Win}. Given a non-trivial connected closed subgroup $U$ of $N$,
 there  exists a dense $U$-orbit in $\RFPM$ \cite{MS}. In this section, we deduce from
 \cite{MO} and \cite{MS} that 
 almost every $U$-orbit is dense in $\RFPM$ with respect to the Burger-Roblin measure  in the case
 of a convex cocompact hyperbolic manifold with Fuchsian ends (Corollary \ref{gen}).

The critical exponent $\delta=\delta_\Gamma$ of $\Gamma$ is defined to be the infimum $s\ge 0$ such
that the Poincare series $\sum_{\gamma\in \Gamma} e^{-s d(o, \gamma(o))}$ converges for any $o\in \bH^d$.
It is known that $\delta$ is equal to the Hausdorff dimension of the limit set $\Lambda$ and $\delta=d-1$ if and only if
$\Gamma$ is a lattice in $G$ \cite{Su1}.

Denote by $\mathsf m^{\BR}$ the $N$-invariant Burger-Roblin measure supported on $\RFPM$;
it is characterized as a unique locally finite Borel measure supported on $\RFPM$ (up to a scaling) by (\cite{Bu}, \cite{Ro}, \cite{Win}).
We won't give an explicit formula of this measure as we will  only use the fact that its support is equal to $\RFPM$, together 
with the following theorem:
recall that a locally finite $U$-invariant measure $\mu$ is ergodic if every $U$-invariant measurable
subset has either zero measure or zero co-measure, and is conservative if for any 
measurable subset $S$ with positive measure, $\int_U 1_{S}(xu)du =\infty$ for $\mu$-almost all $x$,
where $du$ denotes the Haar measure on $U$.
\begin{thm} [\cite{MO}, \cite{MS}]\label{thm.MOMS}
 Let $U< N$ be a connected closed subgroup, and let $\Gamma$ be a convex cocompact Zariski dense subgroup of $G$.
Then $\mathsf m^{\BR}$  is $U$-ergodic and conservative if $\delta> \op{co-dim}_N(U).$
\end{thm}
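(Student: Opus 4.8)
\textbf{Proof proposal for Theorem \ref{thm.MOMS}.} The plan is to reduce to known ergodicity/conservativity results for the Burger--Roblin measure under the full horospherical group $N$, or more precisely to the versions for intermediate horospherical subgroups already available in the literature. First I would recall the explicit description of $\m^{\BR}$: in the Hopf-type coordinates on $\G\ba G$, $\m^{\BR}$ disintegrates along $N$-orbits into (a fixed multiple of) the $\Gamma$-conformal density $\{\mu_o\}$ of dimension $\delta$ on the contracting boundary directions, times Lebesgue measure along $N$, against the BMS-type transverse measure. The key point to extract is that the push-forward of $\m^{\BR}$ to the space $\La\subset \bb S^{d-1}$ along the projection $[g]\mapsto g^-$ (or $g^+$) is in the class of the Patterson--Sullivan measure $\mu_o$, which by convex cocompactness of $\Gamma$ is a $\delta$-regular (Ahlfors regular) measure of exact dimension $\delta=\dim_H\La$.

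The heart of the argument is a Hopf-ratio / shadow-lemma computation. Write $N=\br^{d-1}$ and $U\simeq \br^{\dim U}$ with orthogonal complement $U^\perp$, so $\op{co-dim}_N(U)=\dim U^\perp$. Conservativity of the $U$-action is equivalent, by the Hopf ratio ergodic theorem applied to $\m^{\BR}$, to the divergence of a series of the form $\sum_{\gamma\in\Gamma}\big(\text{measure of the shadow of }\gamma\big)\cdot\big(\text{size of the }U\text{-slice hitting that shadow}\big)$. Using the Sullivan shadow lemma, the $\mu_o$-measure of the shadow of a ball of radius $e^{-t}$ is comparable to $e^{-\delta t}$, while the relevant $U$-direction picks up a factor $e^{(d-1)t}$ from the Lebesgue density along $N$ and loses a factor $e^{-(\dim U^\perp) t}$ coming from the transverse $U^\perp$-directions that must be controlled; the net exponent is $(d-1-\delta-\dim U^\perp)t$ — wait, one must be careful: the correct bookkeeping (as in \cite{MO}, \cite{MS}) gives that the series is comparable to $\sum_\gamma e^{(\delta - \op{co-dim}_N(U))\, d(o,\gamma o)}$-type sums weighted by orbital counting, which diverges precisely when $\delta>\op{co-dim}_N(U)$. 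I would carry this out by (i) fixing a compact transversal $\Omega\subset\RFPM$ of positive $\m^{\BR}$-measure, (ii) covering the relevant part of $\La$ by a Besicovitch family of shadows, (iii) estimating $\int_U 1_\Omega(xu)\,du$ from below by the corresponding shadow sum, and (iv) invoking $\delta$-regularity of $\mu_o$ to convert the geometric sum into the Poincaré-type sum and concluding divergence from $\delta>\op{co-dim}_N(U)$. This is exactly the mechanism behind the cited theorems of Mohammadi--Oh (for $d=3$) and Maucourant--Schapira (general $d$), so in practice the cleanest route is to quote their statements verbatim rather than reprove the shadow estimates.

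Once conservativity is in hand, ergodicity follows from a Hopf argument: $\m^{\BR}$ is $N$-invariant and $N$-ergodic (indeed the $N$-action on $\RFPM$ is uniquely ergodic up to the $A$-direction, or at least ergodic, by Winter \cite{Win} and Roblin \cite{Ro}), and the $A$-action normalizes $U$ and contracts $U^\perp$ into $U$ under $a_{-t}$ as $t\to+\infty$. A standard Mautner-type / contraction argument then shows that any $U$-invariant measurable function is invariant under the group generated by $U$ and $\bigcup_{t}a_{-t}U^\perp a_t$, which is dense in $N$, hence is $N$-invariant, hence constant a.e. by $N$-ergodicity of $\m^{\BR}$. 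Conservativity is what makes the contraction argument legitimate (it rules out the "dissipative" obstruction where the $U$-orbits escape to infinity and no recurrence is available).

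\textbf{Main obstacle.} The genuinely delicate step is the conservativity estimate: one needs the lower bound on $\int_U 1_\Omega(xu)\,du$ to be robust enough to survive the passage from the full horospherical foliation to the $U$-subfoliation, and this requires the $\delta$-regularity (Ahlfors regularity) of the Patterson--Sullivan measure together with a careful Besicovitch covering of $\La$ by shadows adapted to the $U$-direction. For convex cocompact $\Gamma$ this regularity is available (Sullivan), so the estimate goes through exactly as in \cite{MO} and \cite{MS}; the role of the present paper is simply to record that the convex cocompact hyperbolic manifolds with Fuchsian ends fall under the hypotheses of those theorems (they are convex cocompact and Zariski dense), so that $\m^{\BR}$ is $U$-ergodic and conservative whenever $\delta>\op{co-dim}_N(U)$. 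I would therefore phrase the proof as: the hypotheses of \cite[Theorem]{MO}, \cite[Theorem]{MS} are met, and those theorems give precisely the asserted conclusion.
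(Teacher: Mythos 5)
The paper does not actually prove this statement: it is imported verbatim from Mohammadi--Oh (\cite{MO}, $d=3$) and Maucourant--Schapira (\cite{MS}, general $d$), which is exactly the route you settle on, so your approach coincides with the paper's. Your heuristic shadow-lemma/Hopf sketch is not what the proof rests on (and its exponent bookkeeping is admittedly loose), but since the operative step is to check the hypotheses --- $\Gamma$ convex cocompact and Zariski dense, $\delta>\op{co-dim}_N(U)$ --- and quote those theorems, this matches the paper.
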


\begin{lem}\label{dop}
Suppose that $\Gamma_1<\Gamma_2$ are convex cocompact subgroups of $G$ with $[\Gamma_1:\Gamma_2]=\infty$.
Then $\delta_{\Gamma_1}<\delta_{\Gamma_2}$.
\end{lem}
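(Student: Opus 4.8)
The plan is to establish the equivalent dichotomy that $\delta_{\Gamma_1}\le \delta_{\Gamma_2}$ always, with equality only when $[\Gamma_2:\Gamma_1]<\infty$; since we are given that the index is infinite, this forces the strict inequality. The inequality $\delta_{\Gamma_1}\le\delta_{\Gamma_2}$ is immediate: the Poincar\'e series of $\Gamma_1$ is dominated term by term by that of $\Gamma_2$ because $\Gamma_1\subseteq\Gamma_2$ (equivalently $\La(\Gamma_1)\subseteq\La(\Gamma_2)$, and for convex cocompact groups $\delta$ equals the Hausdorff dimension of the limit set). So suppose $\delta:=\delta_{\Gamma_1}=\delta_{\Gamma_2}$. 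We may assume $\Gamma_1$ is non-elementary: otherwise $\delta_{\Gamma_1}=0$ while $\delta_{\Gamma_2}>0$ (a non-elementary convex cocompact group has positive critical exponent, and $\Gamma_2$ is non-elementary in every situation in which this lemma is used), contradicting $\delta_{\Gamma_1}=\delta_{\Gamma_2}$. The goal is then to conclude $[\Gamma_2:\Gamma_1]<\infty$, contradicting the hypothesis.

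The heart of the argument is Patterson--Sullivan theory. Each $\Gamma_i$, being non-elementary and convex cocompact, is of divergence type at its critical exponent and carries a finite nonzero $\Gamma_i$-conformal density $\nu_i$ of dimension $\delta$, with $\op{supp}\nu_i=\La(\Gamma_i)$. The key observation is that $\nu_2$, being a $\delta$-dimensional conformal density for $\Gamma_2$, is in particular a $\delta$-dimensional conformal density for $\Gamma_1$, since the conformal equivariance relation for $\Gamma_2$ restricts to the subgroup $\Gamma_1\subseteq\Gamma_2$; and $\delta=\delta_{\Gamma_1}$ is precisely the critical dimension of $\Gamma_1$. By the uniqueness of the conformal density of the critical dimension for a divergence-type group (Sullivan), applied to $\Gamma_1$, one gets $\nu_2=c\,\nu_1$ for some $c>0$. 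Comparing supports gives $\La(\Gamma_2)=\op{supp}\nu_2=\op{supp}\nu_1=\La(\Gamma_1)$.

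It remains to deduce finiteness of the index from $\La(\Gamma_1)=\La(\Gamma_2)$. The convex hulls then coincide, $\hull(\La(\Gamma_1))=\hull(\La(\Gamma_2))$, and this set is $\Gamma_2$-invariant, so the covering $\Gamma_1\ba\bH^d\to\Gamma_2\ba\bH^d$ of degree $[\Gamma_2:\Gamma_1]$ restricts to a covering
$$\core(\Gamma_1\ba\bH^d)=\Gamma_1\ba\hull(\La(\Gamma_1))\longrightarrow \core(\Gamma_2\ba\bH^d)=\Gamma_2\ba\hull(\La(\Gamma_2))$$
of the same degree. Since both $\Gamma_i$ are convex cocompact, both convex cores are compact; a covering map between compact spaces has finite degree (its fibers are compact and discrete), so $[\Gamma_2:\Gamma_1]<\infty$, the desired contradiction. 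Hence $\delta_{\Gamma_1}<\delta_{\Gamma_2}$.

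The only substantive input is the uniqueness theorem for critical-dimension conformal densities of divergence-type groups, which is what makes the Patterson--Sullivan measure ``remember'' the whole group $\Gamma_1$ and thereby forces $\La(\Gamma_1)=\La(\Gamma_2)$; this is the step I would be most careful about citing precisely. It is worth noting that the softer fact that the limit sets are Ahlfors $\delta$-regular is genuinely insufficient here, since a proper closed subset of a $\delta$-Ahlfors regular set can again be $\delta$-Ahlfors regular. The remaining ingredients---that non-elementary convex cocompact groups are of divergence type and have finite Patterson--Sullivan measure, and that their critical exponent is the Hausdorff dimension of the limit set---are classical (Sullivan) and already invoked in the excerpt.
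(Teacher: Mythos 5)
Your proof is correct, and it shares its second half with the paper: the deduction that $\La(\Gamma_1)=\La(\Gamma_2)$ forces $[\Gamma_2:\Gamma_1]<\infty$ via the covering $\Gamma_1\ba\hull(\La)\to\Gamma_2\ba\hull(\La)$ of compact convex cores is exactly the argument in the paper. Where you diverge is in the analytic input. The paper argues in the contrapositive order: it uses the compact-core covering argument to verify that $\La_{\Gamma_1}\ne\La_{\Gamma_2}$ (given infinite index), notes that convex cocompact groups are of divergence type, and then quotes Dal'bo--Otal--Peign\'e \cite[Proposition 9]{DOP}, which says precisely that a divergence-type subgroup whose limit set differs from that of the ambient group has strictly smaller critical exponent. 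You instead assume $\delta_{\Gamma_1}=\delta_{\Gamma_2}$ and show the limit sets must then coincide, by viewing the Patterson--Sullivan density of $\Gamma_2$ as a $\delta_{\Gamma_1}$-dimensional conformal density for $\Gamma_1$ and invoking Sullivan's uniqueness theorem for critical-dimension densities of divergence-type groups; this in effect re-proves the relevant part of the DOP proposition (whose own proof runs through the same Hopf--Tsuji--Sullivan/shadow-lemma machinery). The trade-off: the paper's citation is exactly tailored and makes the lemma a two-line reduction, while your route is self-contained modulo one classical theorem. Concerning the step you flagged: the uniqueness you need is the statement that for a non-elementary divergence-type group every conformal density of the critical dimension on the full boundary (not merely those supported on the limit set) is proportional to the Patterson--Sullivan density; this is part of the Hopf--Tsuji--Sullivan dichotomy and is available in Roblin's memoir \cite{Ro}, which the paper already cites, so the citation issue is easily resolved. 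Finally, your aside about elementary groups is the right instinct: the lemma as literally stated is vacuous or false when $\Gamma_2$ is elementary, but in the paper's only application ($\Gamma_1$ a cocompact lattice in $\SO^\circ(d-1,1)$ inside a Zariski-dense $\Gamma_2$) both groups are non-elementary, and the paper's own proof makes the same implicit assumption.
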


\begin{proof} Note that a convex cocompact subgroup is of divergent type (\cite{Su1}, \cite{Ro}). Hence
the claim follows from  \cite[Proposition 9]{DOP} if we check that
 $\Lambda_{\Gamma_1}\ne \Lambda_{\Gamma_2}$.
 
 If $\Lambda:=\Lambda_{\Gamma_1}= \Lambda_{\Gamma_2}$, then their convex hulls are the same,
 and hence the convex core of the manifold $\Gamma_i\ba \bH^d$ is equal to $\Gamma_i\ba \text{hull}(\Lambda)$,
 which is compact.
 Since we have a covering map $\Gamma_1\ba \text{hull}(\Lambda) \to \Gamma_2 \ba \text{hull}(\Lambda)$,
 it follows that
$[\Gamma_1:\Gamma_2]<\infty$.
\end{proof}

\begin{lem}\label{lem.n-2}
If $\Gamma\ba \bH^d$ is a convex cocompact hyperbolic manifold with Fuchsian ends, then  $\delta >d-2 $. \end{lem}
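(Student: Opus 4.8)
The statement to prove is: if $M = \Gamma\ba\bH^d$ is a convex cocompact hyperbolic manifold with non-empty Fuchsian ends, then $\delta = \delta_\Gamma > d-2$. The plan is to exploit the structure of such manifolds: by Definition \ref{d:rigid}, $\core M$ has non-empty totally geodesic boundary, and each boundary component is a closed hyperbolic $(d-1)$-manifold. First I would fix one such boundary component, which corresponds (via the discussion in section \ref{s:ri}, e.g. the decomposition $\op{BF}M = \bigcup z_i\check H$) to a compact orbit $z_i\check H$ where $\check H = \SO^\circ(d-1,1)$, and whose fundamental group is a cocompact lattice $\Gamma_0$ in a conjugate of $\check H$. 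Such a $\Gamma_0$ is a convex cocompact subgroup of $\Gamma$ whose limit set is a round $(d-2)$-sphere $\partial B$ for some component $B$ of $\Omega$, so $\delta_{\Gamma_0} = d-2$.

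Next I would observe that $[\Gamma : \Gamma_0] = \infty$. Indeed, $\Lambda_{\Gamma_0} = \partial B$ is a proper subset of $\Lambda = \Lambda_\Gamma$ (since $\Gamma$ is Zariski dense, or more concretely since $M$ has Fuchsian ends the limit set is not contained in any proper sphere); then by the same argument as in Lemma \ref{finite}/Lemma \ref{dop}, if the index were finite the limit sets would coincide, a contradiction. Now apply Lemma \ref{dop} to the inclusion $\Gamma_0 < \Gamma$ of convex cocompact subgroups with infinite index: this gives $\delta_\Gamma > \delta_{\Gamma_0} = d-2$, which is exactly the claim.

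There is a small subtlety worth flagging: one must make sure $\Gamma_0$ is genuinely a subgroup of $\Gamma$ and not merely a subgroup of a conjugate of $\check H$ inside $G$ — but this is immediate, since $\Gamma_0 = \Gamma \cap g\check H g^{-1}$ for the appropriate $g$ realizing the boundary component as a compact $\check H$-orbit, and this intersection is a cocompact lattice in $g\check H g^{-1}$ precisely because the orbit $z_i\check H$ is compact (cf. Proposition \ref{proper} and the structure of $\BFM$). The non-emptiness of the Fuchsian ends is what guarantees $\BFM \ne \emptyset$, hence such a $\Gamma_0$ exists; this is the only place the hypothesis enters.

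The main obstacle — though it is really a bookkeeping point rather than a deep one — is verifying the hypotheses of Lemma \ref{dop}, namely that both $\Gamma_0$ and $\Gamma$ are convex cocompact and that the index is infinite. The convex cocompactness of $\Gamma_0$ follows from the compactness of the boundary component of $\core M$ (it is itself a closed hyperbolic $(d-1)$-manifold group, hence a uniform lattice in $\check H$, which is in particular convex cocompact). The infinite index follows from $\Lambda_{\Gamma_0} \subsetneq \Lambda_\Gamma$ together with the minimality of the $\Gamma$-action on its limit set, exactly as in the proof of Lemma \ref{dop}. Once these are in place the inequality $\delta > d-2$ is immediate, and no estimate on Poincaré series needs to be carried out directly since Lemma \ref{dop} packages that work.
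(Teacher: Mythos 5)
Your argument is essentially identical to the paper's proof: the paper also takes a cocompact lattice $\Gamma_0<\Gamma$ in a conjugate of $\SO(d-1,1)$ coming from a boundary component of $\core M$ (limit set $\partial B_i$, so $\delta_{\Gamma_0}=d-2$), notes $[\Gamma:\Gamma_0]=\infty$ since otherwise $\Lambda=\partial B_i$, and concludes by Lemma \ref{dop}. The only point you omit is that the paper's convention of ``Fuchsian ends'' includes the empty-ends case, which it dispatches in one line ($\Gamma$ a lattice, so $\delta=d-1>d-2$); your restriction to non-empty ends otherwise matches the paper's treatment.
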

\begin{proof}
If $\Gamma$ is a lattice, then $\La=\bb S^{d-1}$ and $\delta=d-1$.
If $\Gamma\ba \bH^d$ is a convex cocompact hyperbolic manifold with non-empty Fuchsian ends, then
$\Gamma$ contains a cocompact lattice $\Gamma_0$ in a conjugate of $\SO(d-1,1)$ whose limit
set is equal to $\partial B_i$ for some $i$. Now $[\Gamma: \Gamma_0]=\infty$; otherwise, $\Lambda=\partial B_i$.
 Hence $\delta >\delta_{\Gamma_0}=d-2$ by Lemma \ref{dop}.
\end{proof}

\begin{cor}\label{gen}
Let $\M=\Gamma\ba \bH^d$ be a convex cocompact hyperbolic manifold with Fuchsian ends. Let $U<N$ be any non-trivial
 connected closed subgroup. Then for  $\mathsf{m}^{\BR}$-almost every $x\in\op{RF}_+\M$, 
$$\cl{xU}=\op{RF}_+\M.$$
\end{cor}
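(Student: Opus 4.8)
\textbf{Proof proposal for Corollary \ref{gen}.}

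The plan is to deduce the statement from the ergodicity and conservativity of $\mathsf m^{\BR}$ under the $U$-action. First, recall that for a convex cocompact hyperbolic manifold with Fuchsian ends, Lemma \ref{lem.n-2} gives $\delta = \delta_\Gamma > d-2$. Since $U$ is a non-trivial connected closed subgroup of $N \simeq \br^{d-1}$, we have $\op{co-dim}_N(U) \le d-2 < \delta$. Hence Theorem \ref{thm.MOMS} applies and tells us that $\mathsf m^{\BR}$ is $U$-ergodic and conservative.

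Next I would run the standard Hopf-type argument to pass from ergodicity plus conservativity to density of almost every orbit. The support of $\mathsf m^{\BR}$ is exactly $\RFPM$, which is a second countable space; fix a countable basis $\{O_j\}$ of open subsets of $\RFPM$, each with $\mathsf m^{\BR}(O_j) > 0$ (which holds since $O_j$ meets the support). For each $j$, conservativity of the $U$-action implies that for $\mathsf m^{\BR}$-almost every $x$, the orbit $xU$ returns to $O_j$ for an unbounded set of times; more precisely, the set $S_j := \{x : \int_U \mathbf 1_{O_j}(xu)\, du = \infty\}$ is $U$-invariant and has full measure by conservativity applied to $O_j$. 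In particular $\mathsf m^{\BR}(S_j) = \mathsf m^{\BR}(\RFPM)$, and since $S_j$ is $U$-invariant with positive measure, ergodicity gives that its complement is $\mathsf m^{\BR}$-null. Then $S := \bigcap_j S_j$ is a $\mathsf m^{\BR}$-conull $U$-invariant set, and every $x \in S$ has $\overline{xU}$ meeting every $O_j$, hence $\overline{xU} \supseteq \RFPM$. Since $\RFPM$ is $N$-invariant and closed and $U < N$, $\overline{xU} \subseteq \RFPM$ always, so $\overline{xU} = \RFPM$ for all $x \in S$.

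The main obstacle — if there is one — is purely bookkeeping: making sure the conservativity statement as quoted in Theorem \ref{thm.MOMS} (which asserts that for any positive-measure set $S$, $\int_U \mathbf 1_S(xu)\, du = \infty$ for $\mathsf m^{\BR}$-a.e.\ $x$) is applied correctly to get that almost every orbit is recurrent to each basic open set, and that the intersection over the countable basis remains conull. No genuinely hard analysis is needed here; the real content is imported from \cite{MO}, \cite{MS} via Theorem \ref{thm.MOMS} and from Lemma \ref{lem.n-2}. One should also note at the start that the lattice case is trivially included: then $\RFPM = \Gamma\ba G$ and $\delta = d-1 > d-2$, so the same argument applies verbatim (or one simply invokes Ratner/Moore). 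I would write the argument uniformly using $\delta > d-2 \ge \op{co-dim}_N(U)$.
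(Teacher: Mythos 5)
Your proposal is correct, but it takes a different route from the paper. The paper also starts from Lemma \ref{lem.n-2} and Theorem \ref{thm.MOMS}, but then reduces to a one-parameter subgroup and invokes the Hopf ratio ergodic theorem, using as denominator the positive base eigenfunction $\phi_0$ of the Laplacian (which exists and is unique in $L^2(M)$ since $\delta>(d-1)/2$, by Sullivan) together with the finiteness $\mathsf m^{\BR}(\phi_0)=\|\phi_0\|^2<\infty$ from \cite[Lem 6.7]{KO}; density of a.e.\ $U$-orbit in $\operatorname{supp}(\mathsf m^{\BR})=\RFPM$ is then read off from the positivity of the limiting ratio. You instead run a soft second-countability argument: for each basic open set $O_j$ of $\RFPM$ (which has positive measure since the support is all of $\RFPM$), the set of $x$ whose orbit meets $O_j$ is $U$-invariant of positive measure, hence conull by ergodicity (or directly conull by the strong form of conservativity as stated before Theorem \ref{thm.MOMS}), and intersecting over a countable basis gives the conclusion. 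Your version is more elementary, needs no reduction to one-parameter subgroups, and avoids the spectral input ($\delta>(d-1)/2$, Sullivan's eigenfunction, and the $\mathsf m^{\BR}$-integrability of $\phi_0$); in fact ergodicity alone suffices, since $\{x: xU\cap O_j\neq\emptyset\}$ is already invariant, open, and of positive measure. What the paper's Hopf-ratio argument buys beyond the corollary is a genuine ratio equidistribution statement along orbits, which is stronger than mere density, but it is not needed for the statement as formulated, so your argument is a valid and self-contained alternative.
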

\begin{proof} Without loss of generality, we may assume that $U=\{u_t\}$ is a one-parameter subgroup.
By Lemma \ref{lem.n-2} and Theorem \ref{thm.MOMS}, $\mathsf{m}^{\BR}$ is $U$-ergodic and conservative. Since  $\op{RF}_+\cal M$ is second countable and the $U$-action on it is continuous, the claim follows. \end{proof}

Since $F^*_{H(U)}\cap \RFPM$ is a non-empty open subset, it follows that almost all $U$-orbits in 
$F^*_{H(U)}\cap \RFPM$ are dense in $\RFPM$.

\section{Horospherical action in the presence of a compact factor}\label{s:um}
Let $\M=\Gamma\ba \bH^d$ be  a convex cocompact hyperbolic manifold with Fuchsian ends and
 fix a non-trivial connected closed subgroup $U$ of $N$.
 Consider a closed orbit $xL$ for $x\in \RFM$ where $L \in \mathcal Q_U$ and $U=L\cap N$. The subgroup $U$ is a horospherical subgroup of $L$, which is known to act
  minimally on $xL\cap \RFPM$ provided $L=L_{nc}$.
 In this section, we extend the $U$-minimality on $xL$ in the case when $L$ has a compact factor.

\begin{theorem} \label{thm.new5}\label{tm} \label{lem.denseAorbit}
Let $X:=xL$ be a closed orbit where
 $x\in\RFPM$, and  $L \in \mathcal Q_U$. Let $U:=L\cap N$. Then the following holds:
\begin{enumerate}
\item
 $X\cap\op{RF}_+\M$ is $U$-minimal.
\item $X$ is $L_{nc}$-minimal.
\item If $L\in \mathcal L_U$ and $x\in \RFM$, then $X\cap \RFM$ contains a dense $A$-orbit.
 \item
 For any non-trivial connected closed subgroup $U_0<U$,  for $\mathsf m_X^{\BR}$-almost all $x\in X$,
$$\cl{xU_0}=X\cap\op{RF}_+\M.$$
\end{enumerate}
\end{theorem}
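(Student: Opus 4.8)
\textbf{Proof strategy for Theorem \ref{thm.new5}.}

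The plan is to treat the four assertions in order, reducing each to a statement about the non-compact factor $L_{nc}=H(U)$ (up to the conjugation built into $\mathcal Q_U$), where minimality results are already available, and then promoting along the compact center $C$ of $L$.

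First I would dispose of a harmless normalization. Since $L\in\mathcal Q_U$, we may write $L=vL_0v^{-1}$ with $L_0\in\mathcal L_U$ and $v\in U^\perp$ by \eqref{qu}; as $v$ centralizes $U$ and $U^\perp\subset N$ preserves $\RFPM$, it suffices to prove everything for $L_0$, so assume $L=H(\widehat U)C\in\mathcal L_U$ with $\widehat U=L\cap N=U$ and $C<\op{C}(H(U))$ a closed (hence compact) subgroup. Writing $p:L\to H(U)$ for the canonical projection (the kernel being $C$), the point is that $\op{Stab}_L(x)$ projects under $p$ to a convex cocompact Zariski-dense subgroup $\Delta:=p(\op{Stab}_L(x))$ of $H(U)$, by Proposition \ref{proper} and Proposition \ref{count}; and $X\cap\RFPM$ is precisely the $C$-invariant lift to $(\op{Stab}_L(x))\ba L$ of $\Delta\ba H(U)$'s own renormalized-frame-bundle set $\op{RF}_+$, since $C\subset M$ fixes both $0,\infty$ and hence does not affect the condition $g^+\in\Lambda(\Delta)$.

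For (1): by the classical minimality of the horospherical action $U$ on $\op{RF}_+(\Delta\ba H(U))$ (Winter \cite{Win}; this is the $L=L_{nc}$ case quoted in the section preamble), every $U$-orbit is dense there. Given $y\in X\cap\RFPM$, its image $p(y)$ has dense $U$-orbit downstairs; lifting, $\overline{yU}$ surjects onto $p(X)\cap\op{RF}_+$, so $\overline{yU}\supset (X\cap\RFPM)\cdot C_y$ for some closed subset $C_y\subset C$ containing $e$; since $U$ and $C$ commute and $U$ acts minimally on each $C$-translate simultaneously, a standard argument (the set of $c\in C$ with $(X\cap\RFPM)c\subset\overline{yU}$ is a closed subsemigroup of the compact group $C$, hence a subgroup, and it is nonempty) upgrades $C_y$ to all of $C$, giving $\overline{yU}=X\cap\RFPM$. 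For (2): identical, using instead that $H(U)$ acts minimally on $X$ via $p$ and that $X$ (all of it, not just the $\RFM$ part) is the full $C$-lift. For (4): this is exactly Corollary \ref{gen} applied inside the hyperbolic manifold $\Delta\ba\bH^{m+1}$ with Fuchsian ends (where $m=\dim U$), which has $\delta_\Delta>m-1$ by Lemma \ref{lem.n-2}, so $\mathsf m^{\BR}$-a.e.\ $U_0$-orbit is dense in $\op{RF}_+(\Delta\ba H(U))$; lifting this density statement through the $C$-fibration and using that $\mathsf m_X^{\BR}$ is by definition the $C$-invariant lift of the downstairs Burger--Roblin measure, together with the commuting $C$-action to fill out fibers as in (1), gives density of $\overline{xU_0}$ in $X\cap\RFPM$ for a.e.\ $x$.

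The main obstacle is (3): producing a \emph{dense $A$-orbit} in $X\cap\RFM$. Downstairs, $\Delta\ba H(U)$ is a convex cocompact (possibly with Fuchsian ends) hyperbolic manifold, and $\op{RF}(\Delta\ba H(U))$ carries a topologically transitive $A$-action — the geodesic flow on the non-wandering set is topologically transitive because the Bowen--Margulis--Sullivan measure is ergodic and fully supported (and $\op{RF}$ is a compact extension of that by the $M$-fiber... here I must be careful: $A$-transitivity on $\op{RF}$, not merely on the unit tangent bundle, follows from the mixing of the BMS measure for the frame flow, which holds since $\Delta$ is convex cocompact Zariski-dense, cf.\ Winter \cite{Win}). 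Granting $A$-transitivity downstairs, I would lift: pick $y\in X\cap\RFM$ with $p(y)$ having dense $A$-orbit, so $\overline{yA}\supset(X\cap\RFM)c_0$ for some... no — $A$ does not commute with $C$ in a way that lets me fill fibers by the semigroup trick, because $A$ normalizes but the $C$-fiber over a dense $A$-orbit need not equidistribute. The fix is to combine with (2): $X$ is $L_{nc}$-minimal, $L_{nc}=H(U)\supset A$, so I instead run the argument for the $H(U)$-action and extract an $A$-dense point inside a single $H(U)$-minimal piece using that $A\ba H(U)$ has a dense-orbit point for the right $\op{Stab}$; concretely, transitivity of $A$ on $\op{RF}(\Delta\ba H(U))$ is exactly what is needed and the $C$-coordinate is handled by noting $X\cap\RFM=(\text{downstairs }\op{RF})\times C$ as a $C$-bundle and choosing the starting frame so that its $C$-coordinate, under the skew-product $A$-cocycle over the transitive base, has dense orbit — this last point (density of a cocycle orbit over a transitive base with compact fiber) is the genuinely delicate step and is where I expect to spend the most effort, invoking the mixing/unique-ergodicity of the frame flow to control the $C$-cocycle.
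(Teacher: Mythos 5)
Your reduction to $L=H(U)C\in\mathcal L_U$, the fibration of $X$ over $p(\op{Stab}_L(x))\ba H(U)$, and the description of $X\cap\RFPM$ and of $\m^{\BR}_X$ as $C$-invariant lifts all match the paper's setup, but your mechanism for filling the compact factor does not work, and that is precisely the content of the theorem. Since $C\subset M\cap L$, the set $X\cap\RFPM$ is already $C$-invariant, so your set $\{c\in C:(X\cap\RFPM)c\subset\cl{yU}\}$ is nonempty if and only if $\cl{yU}=X\cap\RFPM$: the semigroup trick begs the question. The real obstruction is that $\cl{yU}$ could a priori be a proper, holonomy-twisted sub-bundle over the minimal downstairs system (a $U$-invariant ``graph'' over $p(\op{Stab}_L(x))\ba H(U)$), and downstairs minimality together with density of the monodromy image in $C$ does not by itself rule this out. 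Nowhere do you use the Zariski density of $\op{Stab}_L(x)$ in $L$, which is exactly the hypothesis built into $\mathcal L_U$ that makes the statement true; the paper uses it via \cite[Lemma 4.13]{BQ} to get $AU$-minimality of $X\cap\RFPM$, and then removes the $A$ by showing (Proposition \ref{prop.new6}) that every $U$-minimal set relative to $\RFM$ meeting $F^*$ is $A$-invariant — a step that requires the unipotent blowup Lemma \ref{lem.fullhoro} applied to the sequence produced by Lemma \ref{cor.gn}, plus the thick-return property. Since your arguments for (2) and (4) are routed through the same fiber-filling step, they inherit this gap (your appeal to Corollary \ref{gen} for (4) is the right ingredient, but the final ``fill out fibers as in (1)'' is exactly what is missing).

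For (3) you propose mixing of the frame flow for the BMS measure plus a skew-product argument to make the $C$-cocycle orbit dense, and you yourself flag that last step as unresolved; completing it would amount to proving mixing of a compact-group ($C$-)extension of the frame flow of $p(\op{Stab}_L(x))\ba H(U)$, i.e.\ re-proving in measurable form the very compact-factor statement the theorem is about (Guivarc'h--Raugi/Winter-type arguments, again hinging on Zariski density in $H(U)C$, not just in $H(U)$). The paper's route is much lighter: once (1) is known, topological transitivity of $A$ on $X\cap\RFM$ is proved directly upstairs by pushing a small piece $xQ_0A$ with elements $a_n$ so that $a_nQ_0a_n^{-1}\to U$ and invoking the $U$-minimality of $X\cap\RFPM$, and a dense $A$-orbit then follows from transitivity by \cite{DK}; no mixing is needed. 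So the decisive missing idea throughout your proposal is an argument, exploiting Zariski density of the stabilizer, that forces extra invariance (under $A$, equivalently filling $C$) of $U$-orbit closures or relative minimal sets; without it, parts (1)--(4) do not follow from the downstairs statements alone.
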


The subgroup $L\in \cal Q_U$ is of the form $v^{-1} H(U)C v$ where $H(U)C\in \mathcal L_U$ and $v\in N$.
A general case can be easily reduced to the case where $L\in \cal L_U$.  In the following,  we assume $L=H(U)C\in \cal L_U$.
 As before, we set 
 $$H=H(U),  \; H'=H'(U),  \;    \text{and} \;\; F^*=F^*_{H(U)}$$
 and let $\pi_1:H'\to H$ and $\pi_2:H'\to \op{C}(H)$ be the canonical projections.
 In order to define $\mathsf m_X^{\BR}$, choose $g\in G$ so that $[g]=x$.
If we identify $H\simeq\op{SO}^\circ(k,1)$, then by Proposition \ref{count}, $S:=\pi_1(g^{-1}\Gamma g\cap HC)\ba\bb{H}^k$ is a convex cocompact hyperbolic manifold with Fuchsian ends.
Now $\pi_1(g^{-1}\Gamma g\cap HC)\ba H$ is the  frame bundle of $S$, on which there exists
the Burger-Roblin measure as discussed in section \ref{s:err}.
In the above statement, the notation $\m_X^{\BR}$ means  the $C$-invariant lift of this measure to $X=xHC$. 

We first prove the following, which is a more concrete version of Proposition \ref{YLY} in the case at hand:
\begin{proposition}\label{prop.new6}
Let $X$ be as in Theorem \ref{lem.denseAorbit}.
Any $U$-minimal set $Y$ of $X$ with respect to $\RFM$ such that $Y\cap F^*\cap \RFM\ne \emptyset$ is $A$-invariant.
\end{proposition}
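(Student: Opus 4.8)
The goal is to upgrade the conclusion of Proposition \ref{YLY} — that $YS\subset Y$ for an unbounded one-parameter subsemigroup $S<AU^\perp\op{C}_2(U)$ — to full $A$-invariance of $Y$, in the concrete situation where $X=xL$ is a closed orbit of $L=H(U)C\in\mathcal L_U$ and $U=L\cap N$ is the full horospherical subgroup of $L=H(U)C$. The key structural simplification here is that $U^\perp\cap L=\{e\}$, so the subsemigroup $S$ produced by Proposition \ref{YLY} cannot have a nontrivial $U^\perp$-component once we intersect with $L$; in fact, since $Y\subset X=xL$ and $X$ is $H'$-invariant with $\op{C}(H(U))\subset H'$, translating $Y$ by elements of $\op{C}_2(U)\subset H(U)$ and of $U^\perp$ keeps us inside $X$, and the ``new'' direction in $S$ must be the $A$-direction. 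So the plan is: first apply Proposition \ref{YLY} to obtain an unbounded one-parameter subsemigroup $S<AU^\perp\op{C}_2(U)$ with $YS\subset Y$; then use Lemma \ref{rmk.1psg} to put $S$ into one of its three normal forms.

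In each of the normal forms, I would argue as follows. If $S=\{\exp(t\xi_V)\exp(t\xi_C):t\ge0\}$ with $\xi_V\in\op{Lie}(U^\perp)-\{0\}$: since $Y\subset xL$ and $xL$ is $U$-minimal-type, a $U^\perp$-translate of $Y$ that lands back in $Y$ would violate the properness/closedness of $xL$ together with $U^\perp\cap L=\{e\}$ — more precisely, $Y\exp(t\xi_V)\exp(t\xi_C)\subset Y\subset xL$ forces $\exp(t\xi_V)\in L$ (after absorbing $\exp(t\xi_C)\in\op{C}_2(U)\subset L$ and using that $\op{Stab}_L(x)$-cosets determine $L$), hence $\xi_V=0$, a contradiction; so this case does not occur. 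In the remaining two cases $S$ has the form $\{\exp(t\xi_A)\exp(t\xi_C):t\ge0\}$ or $\{(v\exp(t\xi_A)v^{-1})\exp(t\xi_C):t\ge0\}$ with $\xi_A\in\op{Lie}(A)-\{0\}$; here $\xi_C\in\op{Lie}(\op{C}_2(U))\subset\op{Lie}(H(U))$ and $v\in U^\perp$, but again $v$ must be trivial for $YS\subset Y\subset xL$ to hold since conjugating the $A$-direction out of $L$ by a nontrivial $U^\perp$-element would move $Y$ off $xL$. So $S=\{\exp(t\xi_A)\exp(t\xi_C):t\ge0\}$, i.e.\ $YA^+\exp(\br_{\ge0}\xi_C)\subset Y$ for a one-parameter subsemigroup $A^+$ of $A$.

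From $YA^+\exp(\br_{\ge0}\xi_C)\subset Y$ I would finish exactly as in Case (2) of the proof of Proposition \ref{7.6}: pass to $Y_0:=Y\op{C}_2(U)$, which is a $U\op{C}_2(U)$-minimal subset of $X$ with respect to $\RFM$ (one checks minimality from that of $Y$), so that $\xi_C$ can be absorbed and $Y_0A^+\subset Y_0$. Pick $y\in Y\cap F^*\cap\RFM$ (nonempty by hypothesis), take $a_n\to\infty$ in $A^+$; since $\RFM$ is compact and $A$-invariant, $ya_n$ subconverges to some $z_0\in Y_0\cap\RFM$, and since $\liminf a_{-n}A^+=A$ we get $z_0A\subset Y_0$. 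As $z_0A U\op{C}_2(U)=z_0U\op{C}_2(U)A$ and $Y_0$ is $U\op{C}_2(U)$-minimal with respect to $\RFM$, this yields $Y_0A\subset Y_0$, hence $Y_0$ is $A$-invariant; restricting back, $YA=Y$ after noting $\op{C}_2(U)$ and $A$ commute and $Y_0\cap\RFM$ meets $Y$. \textbf{The main obstacle} I anticipate is the bookkeeping in ruling out the $U^\perp$-components of $S$: one must be careful that the element of $S$ lies in $L$ modulo $\op{C}_2(U)$, using that $Y\subset xL$, $xL$ is a closed (hence locally closed) orbit so stabilizer cosets separate points, and $U^\perp\cap L=\{e\}$ — essentially the same ``$A$-part must be trivial'' rigidity argument used in Lemma \ref{lem.normalizerorbit}, adapted to translations staying inside a closed orbit rather than inside $Y$ itself. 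Once that is cleanly established, the rest is a direct transcription of the Case (2) argument of Proposition \ref{7.6}.
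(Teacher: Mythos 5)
Your elimination of the $U^\perp$-components of the semigroup $S$ coming from Proposition \ref{YLY} is essentially sound: since $xL$ is a closed orbit, for $g$ near $e$ the condition $yg\in xL$ with $y\in xL$ forces $g\in L$, and $L\cap U^\perp\subset L\cap N\cap U^\perp=\{e\}$, so both $U^\perp$-type normal forms of Lemma \ref{rmk.1psg} are impossible; moreover from $Y\exp(t\xi_A)\exp(t\xi_C)\subset Y$ one even gets equality by $U$-minimality with respect to $\RFM$. The genuine gap is your last sentence. $A$-invariance of $Y_0=Y\op{C}_2(U)$ does not give $A$-invariance of $Y$: what your argument actually yields is that for each $a\in A$ there is some $c=c(a)\in\op{C}_2(U)$ with $Ya=Yc$ (both sets are $U$-minimal with respect to $\RFM$ and share an $\RFM$-point, hence coincide), i.e.\ invariance of $Y$ under the \emph{twisted} one-parameter group $\{\exp(t\xi_A)\exp(t\xi_C)\}$. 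Nothing in your setup rules out a nontrivial compact twist $t\mapsto\exp(t\xi_C)$ winding into a torus of $\op{C}_2(U)\simeq\SO(\dim U)$, which is nontrivial as soon as $\dim U\ge 2$ (your proof is fine when $\dim U=1$, where $\op{C}_2(U)$ is trivial). The weaker conclusion is also insufficient downstream: the proof of Theorem \ref{thm.new5}(1) needs $Y$ itself to be $A$-invariant in order to conclude $Y=X\cap\op{RF}_+M$ from the $AU$-minimality of $X\cap\op{RF}_+M$.

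The reason the paper does not argue through Proposition \ref{YLY} is exactly this loss of precision. In the present situation $U=L\cap N$ is the \emph{full} horospherical subgroup of $H(U)$ and $C$ commutes with $H(U)$, so one can apply the sharper blow-up Lemma \ref{lem.fullhoro} (rather than the quasi-regular map Lemma \ref{lem.QR} underlying Proposition \ref{YLY}) to the sequence $g_i$ produced by Lemma \ref{cor.gn}: the limiting element lies exactly in $A$, with no $U^\perp$- and, crucially, no $\op{C}_2(U)$-component. Then $Ya\subset Y$ for nontrivial $a\in A$ arbitrarily close to $e$, Lemma \ref{ss} produces a subsemigroup $A_+<A$ with $YA_+\subset Y$, and $U$-minimality with respect to $\RFM$ together with the $A$-invariance of $\RFM$ upgrades $Ya\subset Y$ to $Ya=Y$, hence $Y$ is $A$-invariant. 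To repair your write-up you must replace the appeal to Proposition \ref{YLY} by this full-horospherical blow-up; exploiting that $U$ is horospherical in $L$ is precisely the point of the proposition.
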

\begin{proof}
Let $Y$ be a $U$-minimal set of $X$ with respect to $\RFM$.
Let $y_0\in Y\cap F^*\cap \RFM$.
By Lemma \ref{cor.gn}, there exists a sequence $g_i\to e$ in $HC-\op{N}(U)$ such that 
$y_0g_i\in Y\cap \RFM$ for all $i\ge 1$.

Since $U$ is a horospherical subgroup of $H$ and $C$ commutes with $H$, we can apply Lemma \ref{lem.fullhoro} to the sequence $g_i^{-1}$ and
the sequence of $k$-thick sets $\mathsf T_i:=\{u\in U : y_0g_i u \in Y\cap \RFM \}$ of $U$.
This gives us  sequences $u_{t_i}\to\infty$ in $\mathsf T_i$ and $u_{i}\in U$ such that 
as $i\to \infty$,
$$u_{t_i}^{-1} g_iu_{i}\to a$$ for some non-trivial element $a\in A$.
Since $y_0u_{t_i}$ converges to some $y_1\in Y\cap \RFM$ by passing to a subsequence, we have 
$$y_1 a=\lim (y_0u_{t_i})(u_{t_i}^{-1}g_i u_i) \in Y.$$ Since $\overline{y_1}U=Y$, we get  $Ya\subset Y$.
Since $a$ can be made arbitrarily close to $e$ by Lemma \ref{lem.fullhoro}, 
there exists a  subsemigroup $A_+$ of $A$ such that $YA_+\subset Y$ by Lemma \ref{ss}.
Moreover, for any $a\in A_+$, $Ya\cap \RFM\ne \emptyset$ as $\RFM$ is $A$-invariant. Therefore, $Ya=Y$.
It follows that $Ya^{-1}=Y$ as well. Hence $Y$ is $A$-invariant. \end{proof}

We now present:
\subsection*{Proof of Theorem \ref{thm.new5}}
First suppose that $xL\cap F^*\ne \emptyset$. We may then assume $x\in F^*\cap \RFM$.
Let $Y$ be a $U$-minimal set of $X$ with respect to $\RFM$.
If $Y$ were contained in $\partial F$, then $Y\subset \partial F\cap \RFM$. Since  $\op{Stab}_L(x)$ is Zariski dense in $L$ by the definition of $\cal L_U$,
it follows from \cite[Lemma 4.13]{BQ} that $X\cap\op{RF}_+\M$ is $AU$-minimal. 
Therefore we have $$\cl{YA}=X\cap \RFPM$$ and hence $X$ has to be contained in 
the closed $A$-invariant subset $\partial F\cap \RFM$ as well, yielding a contradiction.
Therefore, $$Y\cap F^*\cap \RFM\ne \emptyset.$$

Hence, by Proposition \ref{prop.new6}, $Y$ is $A$-invariant.
Therefore the claim (1) follows from the $AU$-minimality of $X\cap\op{RF}_+\M$ if $x\in F^*$. 
Now suppose $xL\subset \partial F$.
 In this case, it suffices to consider the case when $U$ is a proper subgroup of $N$; otherwise $L=G$ and has no compact factor.
Hence we may assume without loss of generality that $U\subset \check H\cap N$.
As $xL$ is closed, Theorem \ref{b-ratner} implies that $xL\subset \BFM \cdot \op{C}(H(U))$. Hence by modifying $x$ by an element of $\op{C}(H(U))$,
we may assume that $X$ is contained in a compact homogeneous space of
$\check H=\SO^\circ(d-1,1)$, which is the frame bundle of a convex cocompact hyperbolic manifold with empty Fuchsian ends.
Therefore the claim (1) follows from the previous case of $x\in F^*$, since $F^*=\RFM$ in the finite volume case.

Claim (2) follows from (1) since $\RFPM \cdot H$ is closed, and $X\subset \RFPM\cdot  H$.

For the claim (3), it suffices to show that the $A$ action on $X\cap\op{RF}\M$ is topologically transitive  (cf. \cite{DK}).
Let $x$, $y\in X\cap\op{RF}\M$ be arbitrary, and $\cal{O}$, $\cal{O}'$ be open neighborhoods of $e$ in $H$.
The set $U U^tA(M\cap H)$ is a Zariski open neighborhood of $e$ in $H$ where $U^t$ is the expanding horospherical subgroup
of $H$ for the action of $A$. 
Choose an open neighborhood $Q_0$ of $e$ in $U$, and an open neighborhood $P_0$ of $e$ in $U^tA(M\cap H)$ such that $Q_0P_0\subset\cal{O}$.

We claim that $xQ_0A\cap y\cal{O}'\neq\emptyset$, which implies $x\cal{O}A\cap y\cal{O}'\neq\emptyset$.
Suppose that this is not true. Then
\begin{equation*}
xQ_0A\subset\Ga\ba G-y\cal{O}'
\end{equation*}
where the latter is a closed set.
Now, choose a sequence $a_n\in A$ such that $a_n Q_0a_n^{-1}\to U$ as $n\to\infty$, and observe
\begin{equation*}
xa_n^{-1}(a_n Q_0a_n^{-1})=xQ_0a_n^{-1}\subset \Ga\ba G-y\cal{O}'.
\end{equation*}
Passing to a subsequence, $xa_n^{-1}\to x_0$ for some $x_0\in\op{RF}\M$, and we obtain 
that $x_0U$ is contained in the closed subset $\Ga\ba G-y\cal{O}'$.
This contradicts the $U$-minimality of $X\cap \op{RF}_+\M$, which is claim (1).
This proves (3).

For the claim (4), note that by Corollary  \ref{gen}, almost all $U_0$-orbits in $\pi_1(g^{-1}\Ga g \cap HC)\ba H$ are dense
in the corresponding $\RFPM$-set.
It follows that for almost all $x$, the closure $\overline{xU_0}$ contains a $U$-orbit of $X$.
Hence (4) follows from the claim (1).

\section{Orbit closure theorems: beginning of the induction}\label{sec.W-orbit}\label{s:or}
In the rest of the paper, let $\M=\Gamma\ba \bH^d$ be a convex cocompact hyperbolic $d$-manifold with Fuchsian ends, and $G=\SO^\circ(d,1)$.
Let $U< N$ be a non-trivial connected proper closed subgroup, and $H(U)$ be its associated simple Lie subgroup of $G$.

Let $\mathcal L_U$ and $\mathcal Q_U$ be as defined in \eqref{deflu} and \eqref{qu1}.
The remainder of the paper is devoted to the proof of the next theorem from which Theorem \ref{mtp} follows:
\begin{theorem}\label{thm.H'UMIN}\label{mainth}
\begin{enumerate}
\item
For any $x\in \RFM$,
\begin{equation*}
\cl{xH(U)}=xL\cap F_{H(U)}
\end{equation*}
where  $xL$ is a closed orbit of some $L\in \mathcal L_{U}$.
\item Let $x_0\widehat L$ be a  closed orbit for some $\widehat L\in\cal{L}_U$ and $x_0\in \RFM$.
\begin{enumerate} 
\item  For any $x\in x_0\widehat L\cap \RFPM$, 
 \begin{equation*}
\cl{xU}=x  L\cap\op{RF}_+\M
\end{equation*}
where  $x L$ is a closed orbit of  some $L\in \cal{Q}_U$.
\item  For any $x\in x_0\widehat L\cap \RFM$,
 \begin{equation*}
\cl{xAU}= xL\cap\op{RF}_+\M
\end{equation*}
where  $x L$ is a closed orbit of  some $L\in \cal{L}_U$.\end{enumerate}
\item Let $x_0\widehat L$ be a  closed orbit for some $\widehat L\in\cal{L}_U$ and $x_0\in \RFM$. Let $x_iL_i 
\subset x_0\widehat L$ be a sequence of  closed orbits intersecting $\RFM$  where $x_i\in\RFPM$, $L_i\in\cal{Q}_U$. 
Assume that  no infinite subsequence of $x_iL_i$ is contained in a subset
of the form $y_0L_0D$ where  $y_0L_0$ is a closed orbit of $L_0\in \cal L_U$ with $\op{dim}L_0<\op{dim} \hat L$  and $D\subset \op{N}(U)$ is a compact subset.
 Then 
$$\lim\limits_{i\to\infty}\text{ }( x_i L_i \cap\op{RF}_+\M)=x_0\widehat L\cap \op{RF}_+\M.$$
\end{enumerate}
\end{theorem}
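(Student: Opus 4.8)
The three parts of Theorem \ref{mainth} are to be proved simultaneously by induction on $m:=\op{co-dim}_{\widehat L\cap N}(U)$; write $(1)_m$, $(2)_m$, $(3)_m$ for the three assertions restricted to those $U$ with codimension at most $m$ inside the relevant $\widehat L\cap N$. The base case $m=0$ is the case $U=\widehat L\cap N$: then $H(U)$ is a horospherical subgroup of $\widehat L$ (after conjugating by the centralizer), so $(1)_0$ and $(2a)_0$, $(2b)_0$ follow from the minimality results of Section \ref{s:um} (Theorem \ref{tm}), and $(3)_0$ follows from the ergodicity/density input of Section \ref{s:err} (Corollary \ref{gen}) together with Theorem \ref{b-ratner} to handle the orbits landing in $\partial F$. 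The inductive scheme, as announced in the outline, is: $(2)_m \wedge (3)_m \Rightarrow (1)_{m+1}$; $(1)_{m+1}\wedge(2)_m\wedge(3)_m \Rightarrow (2)_{m+1}$; $(1)_{m+1}\wedge(2)_{m+1}\wedge(3)_m \Rightarrow (3)_{m+1}$. Since all of $(1)$, $(2)$, $(3)$ for a fixed codimension are proved together, it suffices to present the statement $(3)_{m+1}$ assuming $(1)_{m+1}$, $(2)_{m+1}$, and $(3)_m$; this is the content of the final displayed equidistribution statement, and I now sketch its proof.

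\textbf{Proof of $(3)_{m+1}$.} Let $x_iL_i\subset x_0\widehat L$ be closed orbits meeting $\RFM$, with $L_i\in\cal Q_U$, and suppose no subsequence is trapped in a set $y_0L_0D$ of the stated form. Set $Y:=\limsup_i (x_iL_i\cap\RFPM)$, a closed $U$-invariant (indeed $\op{N}(U)$-invariant, by the $\op{N}(U)$-invariance built into $\cal Q_U$ via conjugation) subset of $x_0\widehat L\cap\RFPM$; it suffices to show $Y\supset x_0\widehat L\cap\RFPM$, since then $\liminf$ also fills up and the Hausdorff limit exists. First I would reduce to the generic situation: pick $z_i\in x_iL_i\cap\RFM$; after passing to a subsequence $z_i\to z_\infty\in x_0\widehat L\cap\RFM$. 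If for infinitely many $i$ the point $z_i$ lies in $\mS(U, x_0\widehat L)$, then $z_i$ is contained in a closed orbit $w_iL_i'$ with $L_i'\in\cal L_U$ and $\op{dim}(L_i')_{nc}<\op{dim}\widehat L_{nc}$; by the countability of $\mH^\star$ (Corollary \ref{counth}) and the structure of $\mS(U,x_0\widehat L)$ in \eqref{hxo}, such orbits either stabilize to a fixed $L_0$-orbit (and then the trapping hypothesis, modulo the compact $D\subset\op{N}(U)$ absorbing the conjugating elements, is violated) or their dimensions must eventually be that of $\widehat L$ itself; the delicate bookkeeping here is the place where one genuinely uses Proposition \ref{SAVE} and Proposition \ref{explain} to show that infinitely many $x_iL_i$ escaping every fixed intermediate closed orbit forces $z_\infty\in\mG(U, x_0\widehat L)$ after discarding a harmless subsequence. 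So I may assume $z_i\to z_\infty\in\mG(U_0,x_0\widehat L)$ for a one-parameter subgroup $U_0<U$ witnessing genericity (using Corollary \ref{gen} inside each $x_iL_i$ to even choose $z_i$ generic there).

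\textbf{The core dynamical input.} Now apply the version of the accumulation theorem adapted to sequences inside a closed orbit, namely Theorem \ref{lin2} (Avoidance Theorem II) to the closed orbit $x_0\widehat L$: for the sequence $z_i\to z_\infty\in\mG(U_0,x_0\widehat L)$ and any $T_i\to\infty$, the sets $\{t: z_iu_t\in\RFM-\cal O_j\}$ are $2k$-thick for $i$ large, uniformly in $i$. Combined with the global $k$-thickness of the return time (Proposition \ref{defk}) and Lemma \ref{geo}/Lemma \ref{xfss} to push the accumulating points into $\RFM$ (indeed into $F^*$ when needed), one extracts from $z_iL_i$ a point $y\in Y$ with $\limsup_j y_jU$ meeting $\mG(U_0, x_0\widehat L)$ — this is the analogue of Theorem \ref{mtl} internal to $x_0\widehat L$. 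For such a generic point $y'$, the inductive hypothesis $(2)_{m+1}$ (applied with $\widehat L$, since $\op{co-dim}_{\widehat L\cap N}(U)=m+1$) gives $\overline{y'U}=x_0\widehat L\cap\RFPM$. Hence $Y\supset x_0\widehat L\cap\RFPM$, completing $(3)_{m+1}$ and therefore the induction.

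\textbf{Main obstacle.} The hard part is not the dynamical extraction but the dichotomy "trapped in a fixed intermediate orbit, or else generic": one must rule out that the $x_iL_i$ accumulate on a moving family of intermediate closed orbits whose dimensions are bounded below $\op{dim}\widehat L$ yet which are not all contained in a single $y_0L_0D$. This requires exploiting that for each fixed $H\in\mH^\star$ there are, up to $\Gamma$-conjugacy and up to the compact freedom in $X(H,U)$, only finitely many relevant configurations (Proposition \ref{mul}, Corollary \ref{ddd2}, Lemma \ref{minimal}), together with Proposition \ref{SAVE} guaranteeing that a non-compact intermediate closed orbit itself harbors a compact one of smaller dimension — so an infinite escaping sequence cannot hide inside finitely many bounded-dimension strata. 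Making this compactness-of-configurations argument precise, while carrying the compact set $D\subset\op{N}(U)$ correctly through all the conjugations by $U^\perp$ relating $\cal Q_U$ to $\cal L_U$, is the technical heart of $(3)_{m+1}$.
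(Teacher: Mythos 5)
There is a genuine gap, and it is structural: you assert that because $(1)$, $(2)$, $(3)$ are proved together, "it suffices to present" the implication $(1)_{m+1}\wedge(2)_{m+1}\wedge(3)_m\Rightarrow(3)_{m+1}$. It does not suffice. The induction step consists of three implications, and the two you omit — $(2)_m\wedge(3)_m\Rightarrow(1)_{m+1}$ and $(1)_{m+1}\wedge(2)_m\wedge(3)_m\Rightarrow(2)_{m+1}$ — are the heart of the theorem and of the paper (sections \ref{s:1} and \ref{s:2}): finding a closed orbit of some $L\in\mathcal L_U$ inside $\overline{xH(U)}$ via relative $U$-minimal sets and unipotent blowup (Propositions \ref{YLY}, \ref{7.6}, \ref{closed}), the enlargement step via Proposition \ref{cor.lin} with the careful bookkeeping of base points in $F^*\cap\RFM$ (Proposition \ref{enlarge}, Lemma \ref{lem.frameshift}), and for $(2)_{m+1}$ the use of Proposition \ref{SAVE} and Lemmas \ref{lem.AUbarmeetsSU}--\ref{onetwo} to force $\overline{xU}$ to meet the singular set in $F^*$. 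None of this is addressed, so the proposal does not prove the stated theorem even granting your sketch of part (3).

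Moreover, within your sketch of $(3)_{m+1}$ the pivotal step is unjustified: you claim that the non-trapping hypothesis, via countability of $\mH^\star$ and Propositions \ref{explain} and \ref{SAVE}, forces a subsequence of base points $z_i\in x_iL_i\cap\RFM$ to converge to a point of $\mG(U_0,x_0\widehat L)$. There is no reason for this; limit points of $\RFM$-points chosen on infinitely many distinct intermediate closed orbits can perfectly well lie in $\mS(U,x_0\widehat L)$ (the singular set is typically dense), and Theorem \ref{lin2} cannot be applied to $z_i\to z_\infty$ unless $z_\infty$ is already known to be generic. The paper's route is different: it works with $E=\limsup_i(x_iL_i\cap\RFPM)$ directly, first producing inside $E$ a closed orbit $yL$ with $\dim(L\cap N)>\dim U$ and $y\in F^*\cap\RFM$ (Lemma \ref{lem.limsup}, which itself splits into the cases $v_i\to\infty$ versus $v_i$ bounded and uses $(2)_{m+1}$, Lemma \ref{LLL} and the non-trapping hypothesis to rule out $r_i\in\op{N}(U)$), and then runs a maximality argument (Theorem \ref{prop.indRA2}): genericity is arranged for the base point of the already-found orbit $yL$ via Theorem \ref{tm}(4), the non-trapping hypothesis excludes $x_i\in yL\op{N}(U)$, and Proposition \ref{lem.lin} together with Lemma \ref{onev} produces invariance under a strictly larger unipotent group, contradicting maximality unless $L=\widehat L$. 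Your "trapped or generic" dichotomy replaces this mechanism with a claim that is false as stated, so the sketch of $(3)_{m+1}$ also does not go through as written.
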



We will prove (1), (2), and (3) of Theorem \ref{thm.H'UMIN} by induction on the co-dimension of $U$ in $N$
and the co-dimension of $U$ in $\widehat L \cap N$, respectively.

For simplicity, let us say $(1)_m$ holds, if $(1)$ is true for all $U$ satisfying $\op{co-dim}_N(U)\leq m$.
We will say $(2)_m$ (resp. $(2.a)_m$, $(2.b)_m$)  holds, if $(2)$  (resp. $(a)$ of $(2)$, $(b)$ of $(2)$) is true for all $U$ and $\widehat{L}$ satisfying $\op{co-dim}_{\widehat L \cap N}(U)\leq m$ and similarly for $(3)_m$.

\subsection*{Base case of $m=0$}
Note that the bases cases $(1)_0$, and $(3)_0$ are trivial, and that $(2)_0$ follows from Theorem \ref{tm}.

We will deduce $(1)_{m+1}$ from $(2)_m$ and $(3)_m$  in section \ref{s:1}, and  $(2)_{m+1}$
 from $(1)_{m+1}$, $(2)_m$, and  $(3)_m$ in section \ref{s:2}, and finally deduce
 $(3)_{m+1}$ from  $(1)_{m+1}$, $(2)_{m+1}$ and $(3)_m$ in section \ref{s:3}.

\begin{remark}\label{checkh}
When $\op{co-dim}_{\widehat L \cap N}(U)\geq 1$ and $\widehat L\in \cal L_U$, we may assume
without loss of generality that
$$U\subset \widehat L\cap N \cap \check H$$
 by replacing $U$ and $\widehat L$ by their conjugates using an element $m\in M$.
\end{remark}

\begin{remark}\label{rfpm}
In the case when $x\in \partial F_{H(U)}$, Theorem \ref{mainth} (1) and (2) follow from 
 Theorem \ref{thm.ratner}, and if $x_0\in \partial F_{H(U)}$, (3) follows from the work of Mozes-Shah \cite{MS}.
So the main new cases of Theorem \ref{mainth} are when $x, x_0\in F_{H(U)}^*$.
\end{remark}

We will use following observation:
\subsection*{Singular $U$-orbits under the induction hypothesis} 
Recall the notation $\mathscr S(U, x\widehat L)$ and
$\mathscr G(U, x\widehat L)$ from \eqref{gxol}.

\begin{lemma}\label{lem.gp}

Suppose  that $(2.a)_m$ is true and that for $x\in \RFM$, $xU$ is contained in a closed orbit $x\widehat L$
for some $\widehat L\in\cal{L}_U$.
 \begin{enumerate}
\item   If $\op{co-dim}_{\widehat L\cap N}(U)\le m+1$, then 
for any $x_0\in \mathscr S(U, x\widehat L)\cap \RFPM$,
$$\overline{x_0U}=x_0 L\cap \RF_+\M $$
where  $x_0 L$ is a closed orbit of  some subgroup $L<\widehat L$ contained in $\cal{Q}_U$, satisfying
 $\op{dim} L_{nc} <\op{dim}\widehat L_{nc}$.

\item  If $\op{co-dim}_{\widehat L\cap N}(U)\le m$, then 
for any $x_0\in \mathscr G(U, x\widehat L)$,
$$\overline{x_0U}=x_0\widehat L\cap \RF_+\M .$$

\end{enumerate}
\end{lemma}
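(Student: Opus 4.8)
The plan is to derive both statements from the inductive hypothesis $(2.a)_m$ applied inside a suitable smaller closed orbit; the bulk of the work is to produce that orbit and to keep track of the relevant dimension inequalities.

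For part (1), I would start from $x_0\in\mS(U,x\widehat L)\cap\RF_+M$ and apply Proposition \ref{explain} with $U_0=U$ to the closed orbit $x\widehat L$ (note $\widehat L\in\cal L_U\subset\cal Q_U$ and $x\in\RFM$). This produces a subgroup $Q\in\cal Q_U$ with $\op{dim}Q_{nc}<\op{dim}\widehat L_{nc}$, a closed orbit $x_0Q$, and $\cl{x_0U}\subset x_0Q$. Since $Q_{nc}$ and $\widehat L_{nc}$ are of the form $H(\,\cdot\,)$ and $\op{dim}H(V)$ is strictly increasing in $\op{dim}V$, the inequality on $nc$-parts forces $\op{dim}(Q\cap N)<\op{dim}(\widehat L\cap N)$, and as $U\subset Q\cap N$ this gives $\op{co-dim}_{Q\cap N}(U)\le\op{co-dim}_{\widehat L\cap N}(U)-1\le m$. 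After conjugating $Q$ by an element of $U^\perp$ (harmless: $N$ is abelian, so $U^\perp$ centralizes $U$ and intertwines the $U$-actions) to arrange $Q\in\cal L_U$, and after moving the base point into $x_0Q\cap\RFM$ — which is non-empty because $x_0A^+\subset x_0Q$ and $\cl{x_0A^+}\cap\RFM\ne\emptyset$ by Lemma \ref{boundedA} — one may invoke $(2.a)_m$ for the $U$-orbit closure inside $x_0Q$. This yields $\cl{x_0U}=x_0L\cap\RF_+M$ for a closed orbit of some $L\in\cal Q_U$ with $L\subset Q$, hence $\op{dim}L_{nc}\le\op{dim}Q_{nc}<\op{dim}\widehat L_{nc}$. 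Finally, since $x_0U\subset x_0\widehat L$ and $x_0\widehat L$ is closed, $\cl{x_0U}=x_0L\cap\RF_+M\subset x_0\widehat L$; using that $x_0L\cap\RF_+M$ is $A$-invariant, meets $\RFM$ (again Lemma \ref{boundedA}), and that a closed orbit of a reductive subgroup is recovered from its trace on $\RF_+M$ (Theorems \ref{tm} and \ref{count}), a routine argument gives $L\subset\widehat L$. This proves (1).

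For part (2), with $\op{co-dim}_{\widehat L\cap N}(U)\le m$ and $x_0\in\mG(U,x\widehat L)$, I would apply $(2.a)_m$ directly inside the closed orbit $x\widehat L$ (legitimate now, since the co-dimension is at most $m$), obtaining $\cl{x_0U}=x_0L\cap\RF_+M$ with $L\in\cal Q_U$, and $L\subset\widehat L$ as in (1). It remains to show $L=\widehat L$. The output of the orbit closure theorem comes with $\op{Stab}_L(x_0)$ Zariski dense in $L$ (equivalently, one reads this off from Proposition \ref{count} applied to the closed orbit $x_0L$). If $\op{dim}L_{nc}<\op{dim}\widehat L_{nc}$, then $W:=L$ is a connected closed subgroup of $\widehat L$ containing $U$ with $\op{dim}W_{nc}<\op{dim}\widehat L_{nc}$, $x_0W$ closed and $\op{Stab}_W(x_0)$ Zariski dense in $W$, so $x_0\in\mS(U,x\widehat L)$, contradicting genericity. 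Hence $\op{dim}L_{nc}=\op{dim}\widehat L_{nc}$; since $L_{nc}\subset\widehat L_{nc}$ are then conjugate copies of the same $\SO^\circ(\,\cdot\,,\,\cdot\,)$, we get $L_{nc}=\widehat L_{nc}=:H_0$. Both $L$ and $\widehat L$ then lie in $\op{N}_G(H_0)=H_0\op{C}(H_0)$, whose orbit through $x_0$ is closed, and Proposition \ref{count} identifies $L$ and $\widehat L$ with one and the same group $H_0\cdot\cl{\pi_2(g_0^{-1}\Ga g_0\cap H_0\op{C}(H_0))}$, so $L=\widehat L$.

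I expect the main obstacle to be the bookkeeping around $\cal Q_U$ versus $\cal L_U$ and the relocation of the base point into $\RFM$ before applying $(2.a)_m$, together with verifying that the closed orbit produced by $(2.a)_m$ genuinely carries a Zariski-dense stabilizer — this last property being precisely what lets the genericity of $x_0$ be exploited in part (2). Everything else should reduce to dimension counts and the standard fact that closed reductive orbits are determined by their intersection with $\RF_+M$.
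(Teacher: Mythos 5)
Your part (1) and the first half of part (2) follow essentially the same route as the paper: Proposition \ref{explain} applied with $U_0=U$, writing the resulting $Q$ as $vL_0v^{-1}$ with $L_0\in\cal L_U$ and $v\in U^\perp$, the dimension count showing $\op{co-dim}_{L_0\cap N}(U)\le m$, and then $(2.a)_m$ applied to $x_0vU\subset x_0vL_0$ before conjugating back by $v^{-1}$; in (2) the same genericity argument forces $\op{dim}L_{nc}=\op{dim}\widehat L_{nc}$. One notational caution in (1): after conjugating $Q$ into $\cal L_U$ the relevant closed orbit is $(x_0v)L_0$, not $x_0L_0$, so the base point must be translated to $x_0v$ (this is what your remark about $U^\perp$ intertwining the $U$-actions amounts to, and it is how the paper phrases it); with that reading, your use of Lemma \ref{boundedA} to locate an $\RFM$-point on that orbit is a correct precaution the paper leaves implicit, as is your sketch of $L\subset\widehat L$, which the paper does not even verify. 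Where you genuinely diverge is the conclusion of part (2). The paper finishes dynamically in one line: since $L\subset\widehat L$ and $\op{dim}L_{nc}=\op{dim}\widehat L_{nc}$, one gets $L\cap N=\widehat L\cap N$, the horospherical subgroup of $\widehat L$, and Theorem \ref{tm}(1) says $x\widehat L\cap\RFPM$ is $(\widehat L\cap N)$-minimal, so the nonempty closed $(L\cap N)$-invariant set $\cl{x_0U}=x_0L\cap\RFPM$ must be all of it. You instead finish group-theoretically: $L_{nc}=\widehat L_{nc}=:H_0$, both groups lie in $\op{N}(H_0)=H_0\op{C}(H_0)$, and Proposition \ref{count} identifies each with $H_0\cdot\cl{\pi_2(\cdot)}$. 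This works, but it is the longer road: to pin down $L$ and $\widehat L$ you still need both orbit-inclusion arguments (that $\cl{x_0H_0}\subset x_0L$ forces $H_0C\subset L$, and that $x_0L\cap\RFM\subset x_0H_0C$ forces $L\subset H_0C$, essentially the content of Lemma \ref{LLL}), plus a base point in $\RFM$ for Proposition \ref{count}; and the Zariski density of the stabilizer along the specific orbit $x\widehat L$, which your identification and the genericity step both use, is exactly the convention the paper's argument absorbs into Theorem \ref{tm}. In short, the paper's minimality argument buys brevity and avoids bookkeeping of the compact factor, while your version makes explicit where Zariski-dense stabilizers enter; both are valid.
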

\begin{proof} 
Suppose that $\op{co-dim}_{\widehat L\cap N}(U)\le m+1$ and that  $x_0\in \mathscr S(U, x\widehat L)\cap \RFPM$.
By Proposition \ref{explain}, we
get $$\overline{x_0U}\subset x_0 Q$$
for some closed orbit $x_0Q$ where $Q\in \mathcal Q_U$ satisfies $\op{dim} Q_{nc} <\op{dim} \widehat L_{nc}$.

Now $Q= vL_0 v^{-1}$ for some $L_0\in \mathcal L_U$ and $v\in U^\perp$.
We have  $x_0 U v =x_0vU \subset x_0 v L_0$.
Since  $\op{co-dim}_{N\cap L_0}(U)=\op{co-dim}_{N\cap Q}(U)\le  m$,
 by applying $(2)_m$, we get
$$\overline{x_0vU}=x_0 v L\cap \RF_+\M $$
for some  closed orbit $x_0vL$ where $L\in \mathcal Q_U$ is contained in $L_0$.
Therefore
$$\overline{x_0U}=x_0 v Lv^{-1}\cap \RF_+\M .$$
As $vLv^{-1}\in \mathcal Q_U$ and  $\op{dim} L_{nc} \le \op{dim} Q_{nc}
<\op{dim}\widehat L_{nc}$,
 the claim (1) is proved.

To prove (2), note that
by $(2.a)_m$, we get $\overline{x_0U}=x_0L\cap \RF_+\M$ for some closed orbit $x_0L$ with
$L\in \mathcal Q_U$ such that $L\subset \widehat L$.
Since $x_0\in \mathscr G(U, x\widehat L)$, we have 
$\op{dim} L_{nc}=\op{dim} \widehat L_{nc}$.

Since $L\subset \widehat  L$,  $L\cap N$ is a horospherical subgroup of $\widehat L$.
By Theorem \ref{tm}, $L\cap N$ acts minimally on $x\widehat L$, and hence $L=\widehat L$.
\end{proof}

\section{Generic points, uniform recurrence and additional invariance}\label{s:li}\label{s:ge2}
The primary goal of this section is to prove Propositions \ref{cor.lin} and \ref{lem.lin}
in obtaining additional invariances
using a sequence converging to a generic point of an intermediate closed orbit; the main ingredient is  Theorem \ref{lin2} (Avoidance theorem II). The results in this section are main tools in the enlargement steps of the proof of Theorem \ref{thm.H'UMIN}.

In this section,  we let $U<N$ be a non-trivial connected closed subgroup. We suppose that \begin{itemize}
\item $(2)_m$ and $(3)_m$ are true;
\item $x\widehat L$ is a closed orbit for some $x\in \RFM$, and
$\widehat L\in\cal{L}_U$;
\item $\op{co-dim}_{\widehat L \cap N}(U)\le m+1$.
\end{itemize}

 We let $\{U^{(i)}\}$ be a finite collection of one-parameter subgroups generating $U$.
In the next two propositions, we let $X$ be a closed $U$-invariant subset of $x_0\widehat L$ such that
 $$X\supset xL\cap\op{RF}_+\M$$ for some closed orbit $xL$ where  $L\in \cal Q_U$ is a proper subgroup of $\widehat L$
 and  $$x\in \bigcap_{i}\mathscr{G}(U^{(i)},xL)\cap \RFM.$$

\begin{prop}[Additional invariance I] \label{cor.lin}
 Suppose that there exists a sequence $x_i\to x$ in $X$ where $x_i=x\ell_ir_i$ with $x\ell_i\in xL \cap\RFM$ and $r_i
\in \exp \frak l^\perp-\op{N}(U)$.

Then there exists a sequence $v_n\to\infty$ in $ ( L\cap N )^\perp$ such that 
\begin{equation*}
xL v_n\cap\op{RF}_+\M \subset X.
\end{equation*}
\end{prop}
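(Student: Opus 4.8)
The plan is to exploit the $k$-thick recurrence of $\RFM$-points along one-parameter subgroups of $U$ together with the unipotent blow-up machinery of section \ref{s:un} and the Avoidance Theorem II (Theorem \ref{lin2}), following the strategy outlined in the introduction around equations \eqref{prog}--\eqref{star}. First I would fix one of the generating one-parameter subgroups, say $U_0=U^{(i)}=\{u_t\}$, chosen (via Lemma \ref{lem.QR2}) so that the sequence $r_i\in\exp\frak l^\perp-\op{N}(U_0)$ produces a nontrivial limit of conjugates $u_{t}^{-1}r_iu_{t}$ inside $(L\cap N)^\perp$. Since $x\ell_i\in xL\cap\RFM$ and $x\ell_i\to x\in\RFM\cap\bigcap_j\mG(U^{(j)},xL)$, I would apply Theorem \ref{lin2} to the closed orbit $xL$ and the sequence $x\ell_i\to x$: for each $j$ there is a neighborhood $\O_j$ of $E_j$ in $xL$ and an index $i_j$ so that $\mathsf T_i:=\{t: x\ell_iu_t\in\RFM-\O_j\}$ is $2k$-thick for $i\ge i_j$. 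The point of avoiding a neighborhood of $E_j$ is that, upon passing to a limit, the accumulation point of $x\ell_iu_{t_i}$ will lie outside $\mS(U_0,xL)$, hence be generic, which is exactly what \eqref{prog} requires.

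Next I would feed the $2k$-thick sets $\mathsf T_i$ (rescaled appropriately) into Lemma \ref{lem.QR2}: there exist $t_i\in\mathsf T_i$ such that $u_{t_i}^{-1}r_iu_{t_i}\to v$ for some nontrivial $v\in(L\cap N)^\perp$, with $v$ as small as we like, and moreover (by the ``$n\le\|v\|\le 2k^2n$'' clause) we can arrange a whole sequence $v_n\to\infty$ by repeating the argument with $t_i$ chosen in successively larger scales. Writing $x_i=x\ell_ir_i$, we get
\begin{equation*}
x_i u_{t_i}=x\ell_i u_{t_i}\,(u_{t_i}^{-1}r_iu_{t_i}),
\end{equation*}
and $x\ell_iu_{t_i}\in\RFM-\O_j$ lies in the compact set $xL\cap\RFM$, so after passing to a subsequence it converges to some $y\in xL\cap\RFM$ which avoids $E_j$. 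Doing this for all $j$ via a diagonal argument, $y\in\mG(U_0,xL)$. Then $x_iu_{t_i}\to yv\in X$ (since $X$ is closed and $U$-invariant, $x_iu_{t_i}\in X$), whence $yvU\subset X$, i.e. $\overline{yvU}\subset X$. Since $\op{co-dim}_{L\cap N}(U)\le m$ and $y$ is $U_0$-generic in $xL$, Lemma \ref{lem.gp}(2) — which uses $(2.a)_m$, valid under $(2)_m$ — gives $\overline{yU}=xL\cap\RF_+M$, hence $\overline{yvU}=\overline{yU}\,v=(xL\cap\RF_+M)v=xLv\cap\RF_+M\subset X$. One subtlety: to apply Lemma \ref{lem.gp}(2) I actually need $y$ generic for $U$ (all generators), not just for $U_0$; I would handle this by choosing $U_0$ and running the avoidance argument so that the limit $y$ avoids the singular sets $\mS(U^{(j)},xL)$ for every $j$ simultaneously — this is possible because each $\mS(U^{(j)},xL)\cap\RFM$ is a countable increasing union of compact $E_j$'s, and Theorem \ref{lin2} applies to each, so a single diagonalization over the finitely many generators and the countably many $E_j$'s suffices.

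Finally, to upgrade from a single $v$ to a sequence $v_n\to\infty$: I would observe that the construction produces, for each prescribed scale $n$, a point $v^{(n)}\in(L\cap N)^\perp$ with $n\le\|v^{(n)}\|\le 2k^2n$ and $xL v^{(n)}\cap\RF_+M\subset X$, using the last assertion of Lemma \ref{lem.QR2}. Setting $v_n:=v^{(n)}$ gives the desired unbounded sequence. The main obstacle is the genericity bookkeeping in \eqref{prog}: one must ensure the limit point $y$ of $x\ell_iu_{t_i}$ is simultaneously $U^{(j)}$-generic in $xL$ for all $j$ and also genuinely escapes the (possibly infinitely many intermediate) pieces of the singular set. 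This is precisely what the carefully-constructed exhaustion $\bigcup E_j=\mS(U_0,xL)\cap\RFM$ in Theorem \ref{lin2} buys us, and the uniform $2k$-thickness (independent of $i$) is what makes the blow-up limit survive; I would spend most of the proof verifying that these neighborhoods can be chosen coherently across the finite family of generators and that the blow-up element $v$ can be kept nontrivial and of controlled size while $x\ell_iu_{t_i}$ stays in $\RFM$ minus the bad neighborhoods.
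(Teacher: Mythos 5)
There is a genuine gap, and it sits at the heart of your argument: you claim that because $x\ell_iu_{t_i}$ avoids a neighborhood $\cal O_j$ of $E_j$, the limit point $y$ "lies outside $\mS(U_0,xL)$, hence is generic," and that a diagonal argument over $j$ (and over the generators $U^{(i)}$) makes this rigorous. Theorem \ref{lin2} only gives, for each \emph{single} compact piece $E_j$, a neighborhood $\cal O_j$ and a threshold $i_j$ so that the return times avoiding $\cal O_j$ are $2k$-thick; it says nothing about avoiding $\bigcup_j E_j$ simultaneously. Thick subsets of $\br$ are not stable under intersection (two $2k$-thick sets can be disjoint), so you cannot intersect the avoidance sets for $j\le J$ and still feed a thick set into Lemma \ref{lem.QR2}; and in a diagonal scheme $J=J(i)\to\infty$ the neighborhoods that $\cal O_{J(i)}$ provides around any fixed $E_{j_0}$ may shrink with $i$, so the limit of $x\ell_iu_{t_i}$ can perfectly well fall back into the singular set. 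This is exactly the infinite-volume obstruction the paper emphasizes when contrasting with \eqref{dmg}: in finite volume, Lebesgue-measure additivity lets one avoid $\bigcup_{j\le i}\cal O_j$ at once and the limit is automatically generic, but here no such additivity is available, and the limit point cannot be forced to be generic. A telltale sign of the gap is that your proof never invokes $(3)_m$, even though it is a standing hypothesis of this section.

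The paper's proof does not attempt to make the limit point generic. For each scale $n$ and each $j$ it produces $y_j(n)\in(\RFM\cap xL)-\cal O_j$ and $v_j(n)$ with $n\le\|v_j(n)\|\le 2k^2 n$, exactly as you do, and then splits into two cases. If some $y_j(n)\in\mG(U,xL)$, one concludes as you propose via Lemma \ref{lem.gp}(2). Otherwise all $y_j(n)$ are singular, and then $(2)_m$ identifies $\overline{y_jU}=y_jL_j\cap\RFPM$ for proper $L_j\in\cal Q_U$; the key observation is that $y_j\notin\cal O_j$, with $E_j$ exhausting $\mS(U_0,xL)\cap\RFM$, verifies the non-concentration hypothesis of the equidistribution statement $(3)_m$ for the sequence of closed orbits $y_jL_j$, so $\limsup_j\big(y_jL_j\cap\RFPM\big)=xL\cap\RFPM$. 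Taking $j\to\infty$ (with $v_j(n)\to v_n$ bounded between $n$ and $2k^2n$) and using that the \emph{fixed base point} $x$ is generic (the standing assumption $x\in\bigcap_i\mG(U^{(i)},xL)$, again via Lemma \ref{lem.gp}(2)) yields $xLv_n\cap\RFPM\subset X$. So the genericity is ultimately exploited at $x$, not at the accumulation points, and the singular case is resolved by the induction hypothesis $(3)_m$ rather than by avoidance alone; without this mechanism your argument does not close.
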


\begin{proof}
Since $r_i\notin \op{N}(U)$, we can fix a one-parameter subgroup  $U_0=\{u_t:t\in \br\}$ in the family $\{U^{(i)}\}$ such that
$r_i\notin \op{N}(U_0)$ by passing to a subsequence.

Let $E_j$,  $j\in \mathbb N$, be  a sequence of compact subsets
in $\mS(U_0, xL)\cap \RFM$
given by Theorem \ref{lin2}.
  Set $z_i:=x\ell_i\in x L\cap \RFM$. Fix $j\in \bb N$ and $n\gg 1$.
  Since  $z_i\to x$ and $x\in \mG(U_0, xL)$,   there exist $i_j\ge 1$  and an open neighborhood $\cal O_j$
   of $E_j$ such that
 for each $i\ge i_j$, the set \begin{equation*}
\mathsf T_i=\{t\in\bb{R} : z_iu_t\in \op{RF}\M -\cal O_j\},
\end{equation*}
is  $2k$-thick by loc. cit.
We apply Lemma \ref{lem.QR2} to the sequence $\mathsf T_i$. We can find a sequence
${t_i=t_i(n)}\in \mathsf T_i$, $i\ge i_j$ and elements $y_j=y_j(n), v_j=v_j(n)$ satisfying that  as $i\to\infty$,
\begin{itemize}
\item
$z_i u_{t_i}\to y_j\in ( \RFM\cap xL)- \cal O_j $;
\item
$u_{t_i}^{-1}r_iu_{t_i}\to v_j\in (L\cap N)^\perp$ with $n \leq\norm{v_j}\leq (2k^2) n$.
\end{itemize}

So as $i\to \infty$, $$ x_i u_{t_i} =z_i r_i u_{t_i}\to y_jv_j\quad \text{in } X.$$

Note that since $L$ is a proper subgroup of $\widehat L$, we have
$\op{co-dim}_{ L\cap N}(U)\le m$ by Lemma \ref{sin3}.

If $y_j$ belongs to $\mG(U, xL)$, then $ \overline{y_jU}v_j=x L \cap \RFPM$
by Lemma \ref{lem.gp}(2), and hence 
$$X\supset \overline{y_j v_j U}=\overline{y_jU}v_j=x L v_j\cap \RFPM .$$
Hence the claim follows if $y_j(n) \in \mG(U, xL)$ for an infinite subsequence of $n$'s.

Now we may suppose that  for all $n\ge 1$ and $j\ge 1$, $y_j(n) \in \mS(U, xL)\cap \RFPM$, after passing to a subsequence.
Fix $n$, and set $y_j=y_j(n)$ and $v_j=v_j(n)$. Then, since $\op{dim}_{L\cap N}U\le m$,  by $(2)_m$, 
we have
\be\label{yju} \cl{y_jU}=y_jL_j\cap\op{RF}_+\M\ee for some 
closed $y_jL_j$ where $L_j\in\cal{Q}_U$
is  contained in $\widehat L$ and $\op{dim}(L_j)_{nc} <\op{dim} \widehat L_{nc}$.
Write $L_j= w_j^{-1} L_j' w_j$ for $L_j'\in\cal L_U$ and
$w_j\in  U^\perp$.
We claim that
 the sequence $y_jL_j=y_j w_j^{-1} L_j' w_j$ satisfies the hypothesis of $(3)_m$.
It follows from the condition $ y_j\in ( \RFM\cap xL)- \cal O_j $ for all $j$ that no infinite subsequence of $y_jL_j$ is contained in a subset
of the form $y_0L_0D\subset \mathscr S(U, x L)$  where $y_0L_0$ is closed, $L_0\subset \cal Q_U$ and $D\subset \op{N}(U)$ is a compact subset.
Hence, by $(3)_m$, we have
\begin{equation*}
\limsup_j  y_j L_j \cap\op{RF}_+\M =x L\cap \RF_+\M .
\end{equation*} Therefore for each fixed $n\gg 1$ and $y_j=y_j(n)$,
 $$\limsup_j \cl{y_jU}=x L\cap \RF_+\M .$$
By passing to a subsequence, there exists  $u_j\in U$ such that
$y_j u_j$ converges to $x$. As $n\le \| v_j(n) \| \le (2k^2) n$, the sequence $v_j(n) $ converges to some $v_n\in 
(L\cap N)^\perp$ as $j\to \infty$,
after passing to a subsequence.
Therefore
$$ \limsup_j \overline{y_j(n) v_j(n) U}=\limsup_j \overline{y_j(n) U}v_j(n) \supset  \overline{xU}v_n=xL v_n\cap \RFPM $$
where the last equality follows from Lemma \ref{lem.gp}(2), since $\op{co-dim}_{ L\cap N}(U)\le m$.
\end{proof}

Note that in the above proposition, $y_i=x\ell_ir_i$ is not necessarily in $\RFM$, and hence we cannot apply the avoidance
theorem \ref{lin2}
 to the sequence $y_i$ directly.
We instead applied it to the sequence $x\ell_i$.

In the proposition below, we will consider a sequence $x_i\to y$ inside $\RFM$, and apply Theorem \ref{lin2} to the sequence $x_i$.
\begin{prop}[Additional invariance II] \label{lem.linearization2}\label{lem.lin}
Suppose that there exists a sequence  $x_i\in X\cap\op{RF}\M-xL\cdot\op{N}(U)$, converging to $x$ as $i\to\infty$.
Then there exists a sequence $v_j\to \infty$ in $(N\cap L)^\perp$ such that
\begin{align*}
xL v_j \cap\op{RF}_+\M \subset X\quad\text{ and}\quad 
xLv_j  \cap\op{RF}\M \neq\emptyset .
\end{align*}
The same works for a sequence $x_i\in \op{RF}\M-xL\cdot\op{N}(U)$ such that $\limsup x_i U\subset X$.
\end{prop}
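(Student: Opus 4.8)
\textbf{Proof plan for Proposition \ref{lem.lin}.}

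The strategy mirrors that of Proposition \ref{cor.lin}, the difference being that now the approximating sequence $x_i$ already lies in $\op{RF}M$, so the avoidance theorem (Theorem \ref{lin2}) can be applied directly to $x_i$ rather than to a projection of it onto $xL$. First I would use the hypothesis $x_i\notin xL\cdot\op{N}(U)$ together with the decomposition $\frak g=\frak l\oplus\frak l^\perp$ to write $x_i=z_ir_i$ with $z_i\in xL$, $z_i\to x$, and $r_i\to e$ in $\exp\frak l^\perp-\op{N}(U)$ (shrinking, one may take $z_i\in\RFM$ after a small perturbation by $U$, invoking Lemma \ref{xfss} or directly noting $z_i$ may be chosen in $\op{RF}M\cap xL$ since $x_i\in\RFM$). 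Actually, the cleaner route is to keep $x_i$ itself: since $r_i\notin\op{N}(U)$, passing to a subsequence we fix a one-parameter subgroup $U_0=\{u_t\}<U$ from the generating family with $r_i\notin\op{N}(U_0)$ for all $i$.

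Next I would invoke Theorem \ref{lin2} for the closed orbit $xL$ and the subgroup $U_0$: there is an increasing sequence $E_j\in\mathcal E_{U_0}$ exhausting $\mS(U_0,xL)\cap\RFM$, and since $x_i\to x$ with $x\in\mG(U_0,xL)\subset\bigcap_i\mG(U^{(i)},xL)$, for each $j$ there exist $i_j$ and a neighborhood $\mathcal O_j$ of $E_j$ so that $\mathsf T_i=\{t:x_iu_t\in\op{RF}M-\mathcal O_j\}$ is $2k$-thick for $i\ge i_j$. Then apply Lemma \ref{lem.QR2} (with $L$ in place of $L$, $U_0<L\cap N$, and the sequence $r_i\in\exp\frak l^\perp-\op{N}(U_0)$) to extract $t_i\in\mathsf T_i$ such that $u_{t_i}^{-1}r_iu_{t_i}\to v_j$ with $n\le\norm{v_j}\le 2k^2n$ for a prescribed $n$, while simultaneously $x_iu_{t_i}=z_i(u_{t_i}^{-1}r_iu_{t_i})^{-1}\cdot(\text{stuff})$ — more precisely, writing $x_iu_{t_i}=(z_iu_{t_i})(u_{t_i}^{-1}r_iu_{t_i})$, the factor $z_iu_{t_i}\in xL\cap\op{RF}M$ converges along a subsequence to some $y_j\in(xL\cap\op{RF}M)-\mathcal O_j$, whence $x_iu_{t_i}\to y_jv_j\in X$, so $y_jv_j\in X$ and $\overline{y_jv_jU}\subset X$. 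Note $y_j\in\op{RF}M$ here, which is the key point.

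Now the dichotomy: if $y_j\in\mG(U,xL)$ for infinitely many of the parameter choices, then by Lemma \ref{lem.gp}(2) (valid since $L\subsetneq\widehat L$ forces $\op{co-dim}_{L\cap N}(U)\le m$, using Lemma \ref{sin3}) we get $\overline{y_jU}v_j=xL\cap\op{RF}_+M$, so $xLv_j\cap\op{RF}_+M\subset X$; and $xLv_j\cap\op{RF}M\ne\emptyset$ because $y_jv_j$ has $y_j\in\op{RF}M$, giving a point of $\op{RF}M$ in $xLv_j$ up to the $A$-translation trick of Lemma \ref{aplus}. Otherwise $y_j\in\mS(U,xL)\cap\RFPM$ for all $j$; then by $(2)_m$, $\overline{y_jU}=y_jL_j\cap\op{RF}_+M$ with $L_j=w_j^{-1}L_j'w_j\in\mathcal Q_U$, $L_j\subset\widehat L$, $\op{dim}(L_j)_{nc}<\op{dim}\widehat L_{nc}$, and the condition $y_j\in(xL\cap\op{RF}M)-\mathcal O_j$ prevents any infinite subsequence of $y_jL_j$ from lying in $y_0L_0D$ with $D\subset\op{N}(U)$ compact; so $(3)_m$ gives $\limsup_j y_jL_j\cap\op{RF}_+M=xL\cap\op{RF}_+M$. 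Taking a further diagonal subsequence (in $n$ and $j$), with $v_j(n)\to v_n\in(L\cap N)^\perp$ of norm $\sim n$, and $u_j\in U$ with $y_j(n)u_j\to x$, we obtain $\limsup_j\overline{y_j(n)v_j(n)U}\supset\overline{xU}v_n=xLv_n\cap\op{RF}_+M$, again by Lemma \ref{lem.gp}(2). This yields the desired sequence $v_n\to\infty$. The final sentence of the proposition (for a sequence with merely $\limsup x_iU\subset X$ rather than $x_i\to x$) is handled identically: one replaces "$x_iu_{t_i}\to y_jv_j$" by noting $y_jv_j\in\limsup x_iU\subset X$, since the $u_{t_i}$ can be absorbed into the $U$-orbit.

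\textbf{Main obstacle.} The delicate point, exactly as in Proposition \ref{cor.lin}, is the bookkeeping in the case $y_j\in\mS(U,xL)$: one must ensure that the sequence of intermediate orbits $y_jL_j$ genuinely satisfies the non-degeneracy hypothesis of $(3)_m$, i.e. that it does not get trapped in a single lower-dimensional closed orbit thickened by a compact piece of $\op{N}(U)$. This is where the uniform escape $y_j\notin\mathcal O_j$ provided by Theorem \ref{lin2} — with thickness constant $2k$ independent of $j$ — is essential, and verifying that this escape translates into the required hypothesis of $(3)_m$ (rather than just "$y_j$ escapes every fixed compact set") requires care about how $\mathcal E_{U_0}$ exhausts the singular set and how compact subsets $y_0L_0D$ sit inside members of $\mathcal E_{U_0}$.
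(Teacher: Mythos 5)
Your overall architecture (avoidance theorem, unipotent blowup via Lemma \ref{lem.QR2}, then the generic/singular dichotomy fed into $(2)_m$ and $(3)_m$) is the same as the paper's, but there is a genuine gap at exactly the point you flag as the ``main obstacle'', and your ``cleaner route'' does not get around it. Writing $x_i=z_ir_i$ with $z_i=x\ell_i\in xL$ and $r_i\in\exp\frak l^\perp$, the element $z_i$ need \emph{not} lie in $\op{RF}M$ even though $x_i$ does; if one could arrange $z_i\in\op{RF}M$ (your parenthetical suggestion), Proposition \ref{cor.lin} would apply verbatim and this proposition would be redundant, and perturbing $z_i$ by $U$ via Lemma \ref{xfss} does not help because the thick return times you must use are those of $x_i$, not of $z_i$. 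Consequently your assertions that $z_iu_{t_i}\in xL\cap\op{RF}M$ and that its limit $y_j$ lies in $(xL\cap\op{RF}M)-\cal O_j$ are unjustified: the avoidance applied to $x_i$ controls only $x_iu_{t_i}$, so what you actually obtain is $\tilde x_j=\lim x_iu_{t_i}\in\op{RF}M-\cal O_j$, while $y_j=\tilde x_jv_j^{-1}$ lies merely in $xL\cap\RFPM$ and has been displaced by an element of $(L\cap N)^\perp$ of norm as large as $2k^2n$; nothing prevents $y_j$ from landing inside $\cal O_j$, or even inside $E_j$, so the hypothesis of $(3)_m$ for the orbits $y_jL_j$ is not verified. (A secondary mismatch: Theorem \ref{lin2} is stated for sequences in $\op{RF}M\cap xL$, and your $x_i$ lie in $x_0\widehat L$ but not in $xL$, so it cannot be invoked for them as such.)

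The paper closes this gap by enlarging the linearization data \emph{before} applying avoidance: since $B(2k^2n)\subset(L\cap N)^\perp$ centralizes $U_0$, the sets $D_i^*=D_iB(2k^2n)$ still lie in $X(H_i,U_0)$, so $\tilde E_j=\bigcup_i\Gamma\ba\Gamma H_iD_i^*\cap\op{RF}M$ belongs to $\cal E_{U_0}$ and is associated to the same family $\{H_i\}$; one then applies Theorem \ref{avoid1} (rather than Theorem \ref{lin2}) to the compact set $\{x_i\colon i\ge i_j\}$, using that the resulting $\tilde E_j'$ is disjoint from $\mG(U_0,xL)$ to see $x_i\notin\tilde E_j'$ for large $i$. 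With this fattening, the escape $\tilde x_j\notin\tilde{\cal O}_j$ does transfer to the escape of $y_j=\tilde x_jv_j^{-1}$ from a suitable neighborhood of the original $E_j$, which is what feeds $(3)_m$; this is the missing mechanism in your write-up. Note also that the correct statement is $y_jv_j=\tilde x_j\in\op{RF}M$ (which is what gives $xLv_j\cap\op{RF}M\neq\emptyset$), not $y_j\in\op{RF}M$: in the paper's proof $y_j$ is only an $\RFPM$-point of $xL$.
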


\begin{proof} Let $x_i\in \op{RF}\M-xL\cdot\op{N}(U)$ be a sequence converging to $x$ 
 such that $\limsup x_i U\subset X$. Write $x_i=xg_i$ for $g_i\to e$ in $\widehat L$.
Since $L$ is reductive, we can write $g_i=\ell_ir_i$ where
$\ell_i\to e$ in $L$ and $r_i\to e$ in $\exp \frak l^\perp$ as $i\to\infty$. By the assumption on $x_i$,
 there exists  a one-parameter subgroup  $U_0=\{u_t:t\in \br\}$ among $U^{(i)}$ such that $r_i\notin \op{N}(U_0)$ by passing to a subsequence.

For $R>0$, we set 
$B(R):=\{v\in (L\cap N)^\perp\cap \widehat L : \norm{v}\leq R\}$. Fix $j$ and $ n\in\bb{N}$. Let $E_j, \cal O_j$ be  given by Theorem \ref{lin2} for $xL$ with respect to $U_0$.
Then $E_j$ is of the form $$E_j=\bigcup_{i\in \La_j}\Gamma\ba \Gamma H_i D_i\cap \RFM$$
where $H_i\in \mathscr H^\star$ satisfies
$\op{dim} (H_i)_{nc} <\op{dim} L_{nc}$ and $ D_i$ is a compact subset of $X(H_i, U_0)\cap L$.
As $B(2k^2 n)\subset \op{C}(U_0)$, we have $D_j^* :=D_j  B(2k^2 n)$ is a compact subset of  $X(H_i, U_0)$.
Hence the following set 
$$\tilde E_j:= \bigcup_{i\in \La_j}\Gamma\ba \Gamma H_i D_i^*\cap \RFM$$
belongs to $\mathcal E_{U_0}$ and is associated to the family $\{H_i: i\in \La_j\}$, as defined in \eqref{defeu}.

Let $\tilde E_j' \in \mathcal E_{U_0}$ be a compact subset given by Theorem \ref{avoid1}, which is also associated to
the same family $\{H_i: i\in \La_j\}$.
Note that for any $z\in  \tilde E_j'$, the closure $\overline{zU_0}$ is contained in $ \Gamma\ba \Gamma H_i D_i^*$ for some $i\in \La_j$.
In particular, $\tilde E_j'$ is a compact subset disjoint from $ \mathscr{G}(U_0,xL)$.
Since $x_i\to x$ and $x \in \mathscr{G}(U_0,xL)$,
there exists $i_j\ge 1$ such that $x_i\notin  \tilde E_j'$ for all $i\ge i_j$.
By Theorem \ref{avoid1}, there exists a neighborhood $\tilde {\cal O}_j$ of $\tilde E_j$
such that for each $i\ge i_j$, the set
\begin{equation*}
\mathsf T_i=\{t\in\bb{R} : x_iu_t\in\op{RF}\M- \tilde{\cal O}_j\}
\end{equation*}
is $2k$-thick. Applying Lemma \ref{lem.QR2} to $\mathsf T_i$, and $r_i\to e$, we can find ${t_i}=t_i(n)\in \mathsf T_i$ such that $u_{t_i}^{-1}r_iu_{t_i}\to v_j$ for 
some $v_j=v_j(n)\in (L\cap N)^\perp$, with $n\leq\norm{v_j}\leq 2k^2\cdot n$.
Passing to a subsequence, $x_iu_{t_i}$ converges to some $ \tilde x_j(n)\in \op{RF}\M- \tilde{\cal O}_j $ as $i\to\infty$.
Set $$z_i:=x\ell_i,\;\text{ and }\; \cal O_j:=\tilde{\cal O}_j B(2k^2 n)\cap xL.$$
Since
$x_iu_{t_i}=z_i u_{t_i} (u_{t_i}^{-1}r_iu_{t_i})$, we have
$$z_i u_{t_i}\to y_j \in (\RFPM\cap x L )- \cal O_j $$
where $y_j= y_j(n):=  \tilde x_j(n) v_j^{-1}$. 

We check that $E_j\subset \cal O_j$ as $B(2k^2 n) B(2k^2 n)$ contains $e$. It follows that $y_j\notin E_j$.
Since
$ \tilde x_j(n)\in \cl{y_jU}v_j\subset X, $
we have  $\cl{y_jU}v_j\cap \RFM\ne \emptyset$.
Given these, we can now
 repeat verbatim the proof of Proposition \ref{cor.lin} to complete the proof.
 \end{proof}

Theorem \ref{mtl} in the introduction can be proved similarly to the proof of Proposition \ref{cor.lin}.

\noindent{\bf Proof of Theorem \ref{mtl}}
Let $E_j$, $j\in \mathbb N$, be  a sequence of compact subsets of $\mS(U_0)\cap \RFM$
given by Thoerem \ref{lin2}.
Fix $j\in \mathbb N$. Then there exist $i_j\ge 1$ and a neighborhood $\cal O_j$ of $E_j$ such that 
$$\{t\in \br: x_{i} u_t\in \RFM-\cal O_j\}$$ is $2k$-thick for all $i\ge i_j$. Hence
 we can find a sequence $t_i\in  [-2kT_{i},-T_{i}]\cup [T_{i},2kT_{i}]$
such that $x_{i}u_{t_i}\in\op{RF}\M-\cal O_j$ for all $i\ge i_j$.
Hence, by passing to a subsequence,
 $x_iu_{t_i}$ converges to some
$y_j\in\op{RF}\M-\cal O_j$ as $i\to\infty$.
If $y_j\in \mathscr{G}(U)$ for some $j$,
then $(2)_m$ and Lemma \ref{lem.gp}(2)
imply that $\overline{y_jU}=  \RFPM$, proving the claim.

Now, we assume that $y_j\in\mathscr{S}(U,x \widehat L)$ for all $j$.
Then by $(2)_m$  and Lemma \ref{lem.gp}(1), we have
$$ \cl{y_jU}=y_jL_j\cap\op{RF}_+\M $$ for some 
closed $y_jL_j$ where $L_j\in\cal{Q}_U$ is a proper subgroup of $G$.
Similarly to the proof of Theorem \ref{cor.lin}, we can show that  the sequence $y_jL_j$ satisfies the hypothesis of $(3)_m$. Hence, by applying $(3)_m$ to the sequence $y_jL_j$, we get
\begin{equation*}
\limsup \;(y_jL_j\cap\op{RF}_+\M) = \RF_+\M .
\end{equation*} 
Therefore $\limsup\; y_jU=\limsup\; \cl{y_jU}=\RFPM$.
This, together with Theorem \ref{thm.new5}(4), finishes the proof.

\section{$H(U)$-orbit closures: proof of $(1)_{m+1}$} \label{subsec.a}\label{s:1}
We fix a non-trivial connected 
proper subgroup $U<N$. Without loss of generality, we may assume
$$U<N\cap \check H$$
using a conjugation by an element of $M$.
We set
 $$H=H(U), \;\; H'=H'(U), \;\;  F=F_{H(U)}, \; \; F^*=F^*_{H(U)},\;\; \text{and}\;\; \partial F=\partial F_{H(U)}.$$
By the assumption $U<N\cap \check H$, we have
$$\partial F\cap \RFM= \BFM\cdot  \op{C}(H) .$$

We will be using the following observation:
\begin{lem}\label{LLL} Let $x_1L_1$ and $x_2L_2$ be  closed orbits where $x_1, x_2\in \RFM$, $L_1\in \cal Q_U$ and
 $L_2\in \mathcal L_U$.
If $x_1L_1\cap \RFM \subset x_2L_2$,
then $L_1\subset L_2$ and $x_1L_1\subset x_2L_2$.
\end{lem}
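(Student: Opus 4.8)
The plan is to first reduce to understanding the closure $\cl{x_1 L_1}$ inside $\RFM$. Since $L_1 \in \mathcal Q_U$, write $L_1 = v^{-1} H(\widehat U) C v$ with $v \in U^\perp$ and $H(\widehat U)C \in \mathcal L_U$; conjugating $x_1$ by $v$ I may as well assume $L_1 = H(\widehat U)C \in \mathcal L_U$ itself (this only changes $x_1$ within $\RFM$, since $v \in N \cap \op{C}(U)$ normalizes nothing relevant but $x_1 v$ still lies in $\RFPM$, and by Lemma \ref{ru} I can slide it back into $\RFM$; alternatively just observe $x_1 L_1 = (x_1 v^{-1})(H(\widehat U)C) v$ and apply the $\mathcal L_U$-case to the translate). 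So assume $L_1 \in \mathcal L_U$. By Theorem \ref{tm}(3), $x_1 L_1 \cap \RFM$ contains a dense $A$-orbit, hence $x_1 L_1 \cap \RFM$ is contained in the closure of a single $A$-orbit; in particular $\cl{x_1 L_1 \cap \RFM} \supset x_1 L_1$ because $\RFM$ is dense in no larger set than $F^*_{H(\widehat U)}$-type sets — more simply, $x_1 L_1 \cap \RFM$ is a non-empty $A$-invariant subset whose closure, being closed and containing a point whose $(L_1)_{nc}$-orbit is dense in $x_1 L_1$ by Theorem \ref{tm}(2), must contain all of $x_1 L_1$.

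Granting that $\cl{x_1 L_1 \cap \RFM} = x_1 L_1$ (or at least $\supset x_1 L_1$), the hypothesis $x_1 L_1 \cap \RFM \subset x_2 L_2$ together with the closedness of $x_2 L_2$ gives
\[
x_1 L_1 \subset \cl{x_1 L_1 \cap \RFM} \subset \cl{x_2 L_2} = x_2 L_2,
\]
which is the second assertion. For the first assertion, from $x_1 L_1 \subset x_2 L_2$ pick a representative $g_1 \in G$ with $x_1 = [g_1]$ and $x_1 \in x_2 L_2$, so that $x_2 = [g_1 \ell_2^{-1}]$ for some $\ell_2 \in L_2$; replacing $g_1$ we may assume $x_1 = x_2 = [g_1]$. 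Then $[g_1] L_1 \subset [g_1] L_2$ forces $L_1 \subset \op{Stab}$-saturation: concretely, $\Gamma g_1 L_1 \subset \Gamma g_1 L_2$, and since both orbits are closed with $\op{Stab}_{L_i}([g_1])$ Zariski dense in $L_i$, the subgroup $g_1^{-1}\Gamma g_1 \cap L_1$ is Zariski dense in $L_1$ and contained in $g_1^{-1}\Gamma g_1 \cap L_2$; taking Zariski closures gives $L_1 = \cl{g_1^{-1}\Gamma g_1 \cap L_1}^{\,\circ} \subset \cl{g_1^{-1}\Gamma g_1 \cap L_2}^{\,\circ} = L_2$ (using that $L_2 \in \mathcal L_U$ also has Zariski-dense stabilizer, or directly that $L_1$ is generated by its unipotents and $A$ all of which lie in $L_2$ once the orbits coincide).

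The main obstacle I anticipate is the reduction step justifying $\cl{x_1 L_1 \cap \RFM} \supset x_1 L_1$ cleanly in the case where $x_1 L_1 \subset \partial F$ (a possible Fuchsian-end situation): there one must invoke that $x_1 L_1$ sits inside a compact homogeneous space of some conjugate of $\check H$ after modifying by $\op{C}(H)$, as in the proof of Theorem \ref{thm.new5}, so that $\RFM = F^*$ locally and minimality of $(L_1)_{nc}$ (Theorem \ref{tm}(2)) applies. I would handle the $\mathcal Q_U$-versus-$\mathcal L_U$ bookkeeping by conjugating at the very start and then work entirely with $L_1, L_2 \in \mathcal L_U$, citing Corollary \ref{counth} / Proposition \ref{count} to pass between the subgroup $L_i$ and the data $g_1 L_i g_1^{-1} \in \mathscr H$ to make the Zariski-density argument rigorous.
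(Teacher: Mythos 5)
Your proposal breaks at the key intermediate claim that $\cl{x_1L_1\cap \RFM}\supset x_1L_1$. This is false in general: for a noncompact closed orbit, $x_1L_1\cap \RFM$ is a \emph{proper} closed subset of $x_1L_1$ (it is the renormalized frame bundle of the corresponding convex cocompact plane, which misses the frames over the Fuchsian ends); already for $L_1=G$ the claim would say $\RFM$ is dense in $\Gamma\ba G$, which fails whenever $\op{Vol}(M)=\infty$. The justification you give conflates two different density statements: Theorem \ref{tm}(2) gives density of each $(L_1)_{nc}$-orbit \emph{inside} $x_1L_1$, and Theorem \ref{tm}(3) gives an $A$-orbit dense \emph{inside} $x_1L_1\cap\RFM$; but $x_1L_1\cap\RFM$ is not $(L_1)_{nc}$-invariant (only $AM$-invariant), so neither statement lets you pass from a point of $x_1L_1\cap\RFM$ to all of $x_1L_1$. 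Consequently the chain $x_1L_1\subset\cl{x_1L_1\cap\RFM}\subset x_2L_2$ collapses, and with it your derivation of $L_1\subset L_2$ from the orbit inclusion (which in addition assumes $\op{Stab}_{L_i}(x_1)$ is Zariski dense in $L_i$ at the given base points, something the lemma does not grant). Your opening reduction is also not innocent: writing $L_1=v^{-1}H(\widehat U)Cv$, replacing $L_1$ by $H(\widehat U)C$ replaces the orbit $x_1L_1$ by its right translate $x_1L_1v^{-1}$, and since $\RFM$ is not $N$-invariant the hypothesis $x_1L_1\cap\RFM\subset x_2L_2$ does not transfer to the translated orbit.

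The paper's proof runs in the opposite logical order and needs no such density. If $x_1L_1\cap F^*\neq\emptyset$, one may take $x_1\in F^*$; since $F^*\subset\RFM\cdot H(U)$ and $H(U)\subset L_2$, the hypothesis upgrades to $x_1L_1\cap F^*\subset x_2L_2$, and the openness of $F^*$ gives a neighborhood $\cal O$ of a representative $g_1$ with $g_1L_1\cap\cal O\subset g_2L_2$; as $e\in g_1^{-1}g_2L_2$ one gets $g_1^{-1}g_2L_2=L_2$, and since the connected group $L_1$ is topologically generated by $L_1\cap g_1^{-1}\cal O$, this yields $L_1\subset L_2$ \emph{first}, after which $x_1L_1\subset x_2L_2$ follows from $x_1L_1\cap x_2L_2\neq\emptyset$. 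If instead $x_1L_1\cap F^*=\emptyset$, then $x_1L_1\cap\RFM\subset\BFM\cdot\op{C}(H)$, and Theorem \ref{tm}(4) forces $x_1L_1$ to be compact, hence contained in $\RFM$, so the hypothesis literally gives $x_1L_1\subset x_2L_2$ and the same local argument gives $L_1\subset L_2$. Note that this argument works directly for $L_1\in\cal Q_U$, so no conjugation reduction is needed. To repair your write-up you would need to replace the density claim by this local (identity-neighborhood) comparison and treat the boundary case separately.
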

\begin{proof} 
Since $L_2$ contains $H$, we get
that  $x_1L_1\cap \RFM \cdot H \subset x_2L_2$.
Suppose that  $x_1L_1\cap F^*\ne \emptyset$. We may assume $x_1\in F^*$. 
Recall from (3.2) that $F^*\subset\RFM\cdot H$. Hence, we have $x_1L_1\cap F^*\subset x_2L_2$.
 Since $F^*$ is open, there exist $g_1, g_2\in G$ such that
 $[g_i]=x_i$, and $g_1L_1\cap \cal O\subset g_2L_2$ for some open neighborhood $\cal O$ of $g_1$.
 It follows that $L_1\cap g_1^{-1}\cal O\subset g_1^{-1}g_2 L_2$. Since $e\in g_1^{-1}g_2 L_2$,
we have  $g_1^{-1}g_2L_2=L_2$.
 Since $L_1$ is topologically generated by $L_1\cap g_1^{-1}\cal O$, we deduce $L_1\subset L_2$.
 Since $x_1L_1\cap x_2L_2\ne \emptyset$, it follows that $x_1L_1\subset x_2L_2$.

 Now consider the case when $x_1L_1\cap F^*=\emptyset$. In this case, $x_1L_1\cap \RFM \subset \RFM\cap  \partial F$. 
By Theorem \ref{tm}(4), we can assume that $\overline{x_1U}=x_1L_1\cap \RFPM$.
 As $x_1$ is contained in $ \BFM \cdot \op{C}(H)$, so is $\overline{x_1U}$.
 It follows that $x_1L_1$ is compact and hence is contained in $ \RFM$. Hence the hypothesis implies
 that $x_1L_1\subset x_2L_2$, which then implies $L_1\subset L_2$ by the same argument in the previous case.
 \end{proof}

\begin{lemma}\label{lem.inc}
Let $y_1L_1$ and $y_2L_2$ be closed orbits where $y_1\in\RFM$, $y_2\in\RFPM$,
 $L_1\in\cal Q_U$ and $L_2\in\cal L_U$. 
If $y_1L_1\subset y_2L_2 D$ for some subset $D\subset\op{N}(U)$, then there exists $d\in D$
such that $L_1\subset d^{-1}L_2d$ and $y_1L_1\subset y_2L_2d$.
\end{lemma}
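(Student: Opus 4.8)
The statement is a conjugacy-normalization lemma in the same spirit as Lemmas \ref{xhuo} and \ref{explain}: we are told $y_1L_1 \subset y_2L_2D$ with $D\subset \op{N}(U)$, and we must locate a single $d\in D$ for which $L_1\subset d^{-1}L_2 d$ and $y_1L_1\subset y_2L_2 d$. The plan is to first reduce to a statement about Lie subgroups of $G$ after choosing representatives, then exploit the finiteness coming from discreteness of $\Gamma$-orbits of the relevant points in the linearizing representation of $L_2$ (as in the proof of Proposition \ref{mul}), and finally use the rigidity of the subgroups $L_1,L_2\in \cal Q_U\cup\cal L_U$ together with Lemma \ref{LLL} to promote the containment of a dense subset of $L_1$ into $d^{-1}L_2 d$ to a containment of the whole group.

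First I would pick representatives $g_1,g_2\in G$ with $[g_i]=y_i$. The hypothesis $y_1L_1\subset y_2L_2D$ unwinds to: for every $\ell_1\in L_1$ there exist $\gamma\in\Gamma$, $\ell_2\in L_2$, $d\in D$ with $\gamma g_1\ell_1 = g_2\ell_2 d$. Since $D\subset \op{N}(U)=AN\op{C}_1(U)\op{C}_2(U)$ (Lemma \ref{nu}) is a subset, not a subgroup, the first task is to see that only finitely many cosets of $L_2$-translates (equivalently, finitely many values of $\gamma$ modulo $g_2^{-1}\Gamma g_2 \cap L_2$) actually occur as $\ell_1$ ranges over a fixed compact neighborhood of $e$ in $L_1$. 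This is exactly the mechanism in Proposition \ref{mul}: using the $\br$-regular representation $\rho_{L_2}:G\to\op{GL}(V_{L_2})$ with $p_{L_2}$ whose stabilizer is (the normalizer of) $L_2$ and with $p_{L_2}\Gamma$ discrete — available because $g_2 L_2 g_2^{-1}\in\mH$ up to the $\op{C}$-factor — one gets that $p_{L_2}\gamma g_2$ stays in a compact set, hence $\gamma$ lies in finitely many $(\Gamma\cap g_2 L_2 g_2^{-1})$-cosets. After absorbing those finitely many $\gamma$'s (and the fact that $D\subset\op{N}(U)$ is covered by countably many compact pieces but $L_1\cap(\text{nbhd of }e)$ is connected), a Baire-category / connectedness argument — $L_1$ near $e$ is connected and is covered by the closed sets $\{\ell_1 : \gamma g_1\ell_1\in g_2L_2 d\}$ for the surviving finitely many pairs $(\gamma,d)$ — forces one single pair $(\gamma_0,d_0)$ to work on a neighborhood of $e$ in $L_1$, i.e. $\gamma_0 g_1(L_1\cap \cal O)\subset g_2 L_2 d_0$.

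From $\gamma_0 g_1(L_1\cap\cal O)\subset g_2L_2 d_0$, write $h:=d_0 g_1^{-1}\gamma_0^{-1}g_2$ (or the appropriate rearrangement) and deduce $(L_1\cap\cal O)\subset g_1^{-1}\gamma_0^{-1}g_2 L_2 d_0 = (g_1^{-1}\gamma_0^{-1}g_2 d_0^{-1}) d_0 L_2 d_0$... more cleanly: conjugating, $d_0 L_1 d_0^{-1}$ near $e$ lands in a fixed coset of $L_2$; since $e\in L_1$ the coset must be $L_2$ itself (once we note the relevant element lies in $\op{N}_G(L_2)\subset H'(\widehat U)$), giving $d_0(L_1\cap\cal O)d_0^{-1}\subset L_2$. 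Because $L_1$ is connected and topologically generated by any neighborhood of $e$, this upgrades to $d_0 L_1 d_0^{-1}\subset L_2$, i.e. $L_1\subset d_0^{-1}L_2 d_0$. Finally $y_1L_1\cap y_2L_2 d_0\neq\emptyset$ together with $L_1\subset d_0^{-1}L_2d_0$ (so $y_2 L_2 d_0$ is a union of $d_0^{-1}L_2d_0$-cosets through $y_2 L_2 d_0$, in particular an $L_1$-invariant set on the right) yields $y_1L_1\subset y_2L_2 d_0$, with $d:=d_0$. In the degenerate case $y_1L_1$ (or its relevant orbit) fails to meet $F^*$, I would instead invoke Theorem \ref{tm}(4) and Theorem \ref{b-ratner} exactly as in the last paragraph of the proof of Lemma \ref{LLL} to reduce to a compact orbit and argue there.

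The main obstacle I anticipate is the finiteness/selection step: turning the \emph{set-theoretic} containment $y_1L_1\subset y_2L_2 D$ — where $D$ is merely some subset of $\op{N}(U)$, possibly unbounded — into a statement about finitely many cosets, and then isolating one $d\in D$. The bounded/compact case of $D$ is immediate from Proposition \ref{mul}; for unbounded $D$ one should first observe that it suffices to test the containment on a \emph{compact} neighborhood of $e$ in $L_1$ (using connectedness and that $L_1$ is generated by such a neighborhood), and that the image of $g_1(L_1\cap\cal O)$ in $\Gamma\ba G$ is compact, so only the part of $y_2L_2 D$ meeting a fixed compact set is relevant — and that part, by the same discreteness-of-$p_{L_2}\Gamma$ argument applied with the compact set $\overline{g_1(L_1\cap\cal O)}$, is covered by finitely many pieces $y_2 L_2 d$. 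Handling the interaction of the $\op{C}(H)$-factor inside $L_2$ with the representation $\rho_{L_2}$ (so that $\op{Stab}_G(p_{L_2})$ is exactly $\op{N}_G((L_2)_{nc})$, not something smaller) is a bookkeeping point I would need to be careful about, but it is the same bookkeeping already carried out in Section \ref{s:st}.
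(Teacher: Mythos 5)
Your overall reduction scheme (isolate a single pair $(\gamma_0,d_0)$ valid on a neighborhood of $e$ in $L_1$, then use connectedness and $e\in L_1$ to get $L_1\subset d_0^{-1}L_2d_0$, hence $y_1L_1\subset y_2L_2d_0$) would be fine \emph{if} you could actually isolate such a pair, but the selection step is exactly where the proposal breaks. The lemma allows $D$ to be an arbitrary subset of $\op{N}(U)$ --- in particular uncountable and non-discrete, e.g.\ a small ball transverse to $L_2$. In that situation the translates $y_2L_2d$, $d\in D$, meeting a fixed compact set form a genuinely infinite (uncountable) family: each $y_2L_2d$ is a lower-dimensional subset and uncountably many of them pass through any neighborhood of $y_2$. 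So neither your claim that "the part of $y_2L_2D$ meeting a compact set is covered by finitely many pieces $y_2L_2d$" nor a Baire-category argument (which requires a \emph{countable} cover by closed sets) can force one $d_0$ to work on an open subset of $L_1$. Proposition \ref{mul} only bounds the number of relevant $\Gamma$-double cosets $\gamma$, not the number of relevant $d$'s; even for compact $D$ the extraction of a single $d$ is precisely the nontrivial content of the lemma, and it is dynamical rather than point-set topological.

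The paper's proof sidesteps all of this with one observation your proposal is missing: by Theorem \ref{tm}(4) one may choose the base point $y_1$ so that $\cl{y_1U}=y_1L_1\cap\RFPM$. Writing the single point $y_1=y_2\ell_2 d$ already selects $d\in D$; replacing $y_2$ by $y_2\ell_2\in\RFPM$ and using that $d$ normalizes $U$ (and $U\subset L_2$) gives $y_1L_1\cap\RFPM=\cl{y_2U}d\subset y_2L_2d$, i.e.\ the entire $\RFPM$-part of the orbit is dragged into one translate with no finiteness or category argument at all. The remaining work is the upgrade: if $y_1L_1d^{-1}$ meets $F^*$ one runs the openness argument from the first part of Lemma \ref{LLL}; if not, then $y_2\in\RFPM-F^*\subset\BFM\cdot\op{N}(U)$, and one invokes Theorem \ref{thm.ratner} to write $\cl{y_2U}=y_2L_2'$ with $L_2'\subset L_2$, then uses the $A(L_1\cap N)$-invariance of $y_1L_1\cap\RFPM$ and Lemma \ref{sin3} to conclude $L_1=d^{-1}L_2'd\subset d^{-1}L_2d$. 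Your treatment of the degenerate case gestures toward Lemma \ref{LLL} and Theorem \ref{b-ratner}, but without the density input $\cl{y_1U}=y_1L_1\cap\RFPM$ neither case of your argument closes.
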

\begin{proof}
By Theorem \ref{thm.new5}(4), we may assume $\cl{y_1U}=y_1L_1\cap\RFPM$. By the assumption, $y_1=y_2\ell_2 d$
for some $\ell_2\in L_2$ and $d\in D$. Since $y_2\ell_2 =y_1d^{-1}$ and { {$\op{N}(U)$ preserves $\RFPM$,}
$y_2\ell_2\in \RFPM$. Hence we may replace $y_2$ by $y_2\ell_2$, and hence
 assume that $y_1=y_2d$.}
Since
\begin{equation}\label{eq.qqu}
y_1L_1\cap\RFPM=\cl{y_2dU}=\cl{y_2U}d\subset y_2L_2d,
\end{equation}
and $F^*\subset\RFPM  \cdot H$, we get $y_1L_1d^{-1}\cap F^*\subset y_2L_2$.

If $y_1L_1d^{-1}\cap F^*\neq\emptyset$,  using the openness of $F^*$, the conclusion follows as in the first part of the proof of Lemma \ref{LLL}.
Now consider the case when $y_1L_1d^{-1}\cap F^* =\emptyset$. In particular, $y_2=y_1d^{-1}$ belongs to 
\begin{equation*}
\RFPM-F^*\subset\op{BF}\M\cdot\op{N}(U)
\end{equation*}
by \eqref{bfmz}.
 It follows from Theorem \ref{thm.ratner} that $\cl{y_2U}=y_2L_2'$ for some $L_2'\in\cal Q_U$ contained in $L_2$.
In view of \eqref{eq.qqu}, we get $y_1L_1\cap\RFPM= y_1d^{-1}L_2'd$.
Therefore $d^{-1}L_2'd\subset L_1$.
Since $y_1L_1\cap\RFPM$ is $A(L_1\cap N)$-invariant, it follows that $d^{-1}L_2'd\in\cal L_U$ and $d^{-1}L_2'd\cap N=L_1\cap N$.
As a result, $(L_1)_{nc}=d^{-1}(L_2')_{nc}d$.
By Lemma \ref{sin3}, we get that
$L_1=d^{-1}L_2'd\subset d^{-1}L_2d$ and that $y_1L_1=y_2L_2'd\subset y_2L_2d$.
\end{proof}

 The following proposition says that the  classification of $H'$-orbit closures
  yields the classification of $H$-orbit closures:
\begin{prop}\label{lem.W'toW}\label{hpu}
Let $x\in\RFM$, and assume that there exists $U<\til{U}<N$ such that $xH'(\til{U})$ is closed, and
\begin{equation*}
\cl{xH'}=xH(\til{U})\cdot\op{C}(H)\cap F.
\end{equation*}
Then there exists a closed subgroup $C<\op{C}(H(\tilde U))$ such that 
\begin{equation*}
\cl{xH}=xH(\til{U})C\cap F.
\end{equation*}
\end{prop}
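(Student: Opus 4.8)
The idea is to descend from the $H'(\tilde U)$-orbit closure statement to the $H(U)$-statement by cutting out the action of the compact centralizer $\op{C}(H)$. Write $\tilde L := H(\tilde U)$ and choose $g\in G$ with $x=[g]$. Since $xH'(\tilde U)$ is closed and $x\in\RFM$ (hence $xH(\tilde U)$ meets $\RFM$), Proposition \ref{count} applies: $\cl{xH(\tilde U)} = x H(\tilde U) C_0$ where $C_0 = \cl{\pi_2(g^{-1}\Gamma g\cap H'(\tilde U))}$, and $H(\tilde U)C_0$ is the identity component of the Zariski closure of $g^{-1}\Gamma g\cap H'(\tilde U)$; in particular $H(\tilde U)C_0\in\mathcal L_U$ and $xH(\tilde U)C_0$ is closed. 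So the natural candidate is $C := C_0$, and the real content is to identify $\cl{xH}$ inside the ambient closed orbit $xH(\tilde U)C_0$.

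First I would observe the easy inclusion $\cl{xH}\subset \cl{xH'}\cap F$. By hypothesis $\cl{xH'} = x\tilde L\cdot\op{C}(H)\cap F$, and since $\tilde L\supset H$ and $\op{C}(H)\supset \op{C}(\tilde L)$ we have $x\tilde L\op{C}(H)\subset x H(\tilde U)C_0$ provided $\op{C}(H)$ normalizes... actually more carefully: $\op{C}(H)$ centralizes $H$ but need not lie in $H(\tilde U)C_0$. The cleaner route is to work with $Y:=\cl{xH}$ directly as an $H$-invariant (indeed $AU$-invariant) closed subset of $F$, take a $U$-minimal subset $Y_0\subset Y$ with respect to $\RFM$, and run the machinery of Section \ref{s:tr}/Section \ref{s:um}. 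Since $x\in F^*$ or $x\in\partial F$, split into cases exactly as in the proof of Theorem \ref{thm.new5}: if $Y$ meets $F^*$, then some $U$-minimal $Y_0$ meets $F^*\cap\RFM$, so by Proposition \ref{prop.YLY} (translate of $Y_0$ inside itself) combined with the hypothesis identifying $\cl{xH'}$, I can promote the $H$-invariance of $Y$ to $A$-invariance and then, using that $\cl{xH'}=\cl{xH}\cdot(K\cap H(\tilde U))$ modulo the compact group, recover that $Y = Y_0 \cdot (\text{something in }\op{C}(H))$. If $Y\subset\partial F$, invoke Theorem \ref{thm.ratner}: everything lives in a compact homogeneous space of $\check H$ and the conclusion is Ratner's theorem with $C$ chosen accordingly.

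The key step, and the main obstacle, is the following: from $\cl{xH'} = x\tilde L\op{C}(H)\cap F$ I need to extract exactly which subgroup $C$ of $\op{C}(H(\tilde U))$ occurs, i.e. to show $\cl{xH} = x\tilde L C\cap F$ rather than something strictly smaller. The point is that $\op{C}(H)$ and $\tilde L$ together generate $H'(\tilde U)$ only up to the finite-index/compact subtleties, and $\op{C}(H)\cap \tilde L = \op{C}(\tilde L)$ may be nontrivial; so I must analyze the projection $\pi_2: H'(\tilde U)\to\op{C}(H(\tilde U))$ restricted to the relevant stabilizers. Concretely: let $C := \cl{\pi_2(g^{-1}\Gamma g\cap H'(\tilde U))}$ as above. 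Then $x\tilde L C$ is closed, $\tilde L$-invariant, and $\op{C}(\tilde L)$-invariant, and one checks $x\tilde L C\subset x\tilde L\op{C}(H)$ — indeed $C\subset\op{C}(H(\tilde U))\subset\op{C}(H)$ since $H\subset H(\tilde U)$. Conversely, $\cl{xH}$ is $H$-invariant and $A$-invariant, and by Theorem \ref{tm} applied to the closed orbit $x\tilde L C$ (which lies in $\mathcal L_U$), the subgroup $\tilde L\cap N = \tilde U$ acts minimally on $x\tilde L C\cap\RFPM$; since $\cl{xH}$ contains $x(\tilde L\cap N)=x\tilde U$-translates... here one needs that $\cl{xH}\supset x\tilde L C\cap F$, which should follow because $\cl{xH}$ contains $\cl{x\tilde U}$ (as $U<\tilde U$ would need $\tilde U\subset$ the group generated by $H$ and $A$... which fails unless $\tilde U = U$).

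Let me restate the endgame more honestly: the inclusion $\cl{xH}\supset x\tilde L C\cap F$ is what requires work. I would get it by showing $\cl{xH}$ is large enough to contain an $H'(\tilde U)$-orbit closure's worth of points — precisely, $\cl{xH}\cdot(K\cap H(\tilde U)) \supset \cl{xH'} = x\tilde L\op{C}(H)\cap F$, because $H'(\tilde U) = H(\tilde U)\op{C}(H(\tilde U))$ and $H(\tilde U)$ is generated by $H$ together with $K\cap H(\tilde U)$ (this is the statement that $H$ is a "full-rank" reductive subgroup: $H\cdot(K\cap H(\tilde U))$ is Zariski dense in, hence topologically generates, $H(\tilde U)$ when... no — actually $H(U)\subsetneq H(\tilde U)$ properly, so $H(U)(K\cap H(\tilde U))$ need not be $H(\tilde U)$). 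So the correct mechanism must be: $\cl{xH}$ is $AU$-invariant with $xH$ meeting $\RFM$, apply Proposition \ref{prop.YLY} and Proposition \ref{prop.YVIX} to enlarge $U$ inside $\cl{xH}$ to all of $\tilde U$ — but that enlargement is exactly the $(2)_{m+1}$-type argument and is not available here.

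\emph{Therefore the honest plan is:} the hypothesis $\cl{xH'}=x\tilde L\op{C}(H)\cap F$ already knows the orbit closure of $xH'$; since $xH = xH'\cdot(\text{transversal to }\op{C}(H)\text{ in }H')$ is obtained by removing the compact $\op{C}(H)$-directions, and $\cl{xH}$ is closed and $H$-invariant, I claim $\cl{xH} = \cl{xH'}\cap (x_0\tilde L C)$ where $x_0\tilde L C := \cl{x H(\tilde U)}$ by Proposition \ref{count}. The inclusion $\subset$ is clear since $\cl{xH}\subset\cl{xH'}$ and $\cl{xH}\subset \cl{xH(\tilde U)} = x\tilde L C$ (as $H\subset H(\tilde U)$). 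For $\supset$: take $z\in \cl{xH'}\cap x\tilde L C$; write it via the $H'$-orbit closure description and use that $\op{C}(H)\cap x\tilde L C$-directions are redundant. The main obstacle is making this last intersection argument rigorous — showing that the $\op{C}(H)/\op{C}(H(\tilde U))$-directions in $\cl{xH'}$ are precisely cut off by intersecting with $x\tilde L C$, which amounts to the decomposition $H'(U) = H'(\tilde U)\cdot(\text{compact complement})$ and the fact that $p(\Gamma_{\tilde S})$-orbits behave well. I would carry this out by choosing $C := \cl{\pi_2(g^{-1}\Gamma g\cap H'(\tilde U))}$, verifying $H(\tilde U)C\in\mathcal L_U$ with $xH(\tilde U)C$ closed via Proposition \ref{count}, then establishing the two inclusions for $\cl{xH}= xH(\tilde U)C\cap F$ using Theorem \ref{thm.new5}(2) (which gives $\cl{xH(U)}\supset$ the $H(U)_{nc}=H(U)$-minimal set inside $xH(\tilde U)C$) for $\supset$, and the containments $\cl{xH}\subset\cl{xH'}\cap\cl{xH(\tilde U)}$ for $\subset$.
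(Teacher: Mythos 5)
Your setup is the same as the paper's — taking $C:=\cl{\pi_2(g^{-1}\Gamma g\cap H'(\til U))}$ via Proposition \ref{count}, so that $X:=xH(\til U)C$ is a closed orbit of a member of $\mathcal L_U$, and recording the easy inclusion $\cl{xH}\subset X\cap F$ — but you never actually prove the hard inclusion $X\cap F\subset\cl{xH}$. Your final reformulation $\cl{xH}=\cl{xH'}\cap xH(\til U)C$ is not a reduction: since $C\subset\op{C}(H(\til U))\subset\op{C}(H)$ we have $xH(\til U)C\subset xH(\til U)\op{C}(H)$, so $\cl{xH'}\cap xH(\til U)C=xH(\til U)C\cap F$ and your claim is verbatim the statement to be proved. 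The tool you then cite for it, Theorem \ref{thm.new5}(2), asserts that the closed orbit $X$ is $L_{nc}$-minimal with $L_{nc}=H(\til U)$; it says nothing about $H(U)$-orbits (the whole issue is precisely that $H(U)$-orbit closures inside $X$ could a priori be proper subsets), so it cannot deliver ``$\cl{xH}$ contains an $H(U)$-minimal set filling $X$''. You also correctly note that $H(U)\cdot(K\cap H(\til U))$ does not generate $H(\til U)$ and that the enlargement machinery of type $(2)_{m+1}$ is unavailable here, but you stop without a substitute, so the reverse inclusion is a genuine gap.

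What closes this gap in the paper is a transitivity-plus-rigidity step absent from your outline: by Theorem \ref{tm}(3) there is $y\in X$ with $\cl{yA}=X\cap\RFM$; the hypothesis, rewritten as $\cl{xH}\cdot\op{C}(H)=\cl{xH'}=xH(\til U)\op{C}(H)\cap F$, produces a single element $c_0\in\op{C}(H)$ with $yc_0\in\cl{xH}$; then $A$-invariance of $\cl{xH}$ gives $Xc_0\cap\RFM=\cl{yA}\,c_0\subset\cl{xH}\subset X$, and Lemma \ref{LLL} together with the $H(\til U)$-minimality of $X$ forces $Xc_0=X$, whence $X\cap\RFM\subset\cl{xH}$ and the conclusion follows. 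Some version of this — a dense $A$-orbit in $X\cap\RFM$ transported into $\cl{xH}$ by one element of $\op{C}(H)$, and the rigidity lemma to absorb that element — is exactly the missing idea in your proposal.
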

\begin{proof}
By Proposition \ref{cor.HUminimal} and Theorem \ref{tm}(2), there exists a closed subgroup $C <\op{C}(H(\tilde U))$ such that
$H(\til{U})C\in \mathcal L_U$ and $X:= xH(\til{U})C $
is a closed $H(\til U)$-minimal subset.
In particular, $\cl{xH}\subset X\cap F.$
Now, by Theorem \ref{lem.denseAorbit}(3), there exists $y\in X$ such that $\cl{yA}=X\cap\RFM.$
Since $C$ is contained in $ \op{C}(H)$ and
$$
\cl{xH}\cdot\op{C}(H)=\cl{xH'}=xH(\til{U})\cdot\op{C}(H)\cap F,$$
there exists $c_0\in\op{C}(H)$ such that $yc_0\in\cl{xH}$.
Since $\cl{yA}c_0=\cl{yc_0A}\subset \cl{xH}$ and $c_0\in \op{C}(H)$,  it follows
$Xc_0 \cap\op{RF}\M \subset\cl{xH}\subset X.$
Applying Lemma \ref{LLL}, we get $Xc_0=\overline{xH}=X$.
\end{proof}

In the rest of this section,  fix $m\in \mathbb N\cup\{0\}$ and assume
that $$1\le \op{co-dim}_N(U)=m+1 .$$ 
In order to describe the closure of $xH(U)$,  in view of Theorem \ref{thm.ratner},
we assume that  $$x\in F^*\cap \RFM.$$
 By Proposition \ref{hpu},
it suffices to show that
\begin{equation}\label{ff} \cl{xH'}= x L \op{C}(H)\cap F\end{equation}
for some closed orbit $xL$ for some $L\in \mathcal L_U$.

 In the rest of this section, we set $$X:=\cl{xH'} \text{ and assume that $xH'$ is not closed, i.e., } X\ne xH'.$$
 We also assume that $(2)_m$ holds in the entire section.
\begin{lemma} [Moving from $\mathcal Q_U$ to 
$\mathcal L_U$] \label{lem.maximal}\label{alpha}
If $x_0L\cap\op{RF}_+\M\subset X$ for some  closed orbit $x_0L$ with
$x_0\in \RFM$, and $L \in\cal{Q}_U -\mathcal L_U$, then
\begin{equation*}
x_1\widehat{L}\cap\op{RF}_+\M\subset X
\end{equation*}
for some closed orbit $x_1\widehat L$ with $x_1\in \RFM$, and $\widehat{L}\in\cal{L}_U$ with $\op{dim} (\widehat L\cap N) > \op{dim} (L\cap N)$. 
Moreover, $x_1$ can be taken to be any element of the set $\limsup_{t\to +\infty} x_0u a_{-t}$ for any $u\in U$.
\end{lemma}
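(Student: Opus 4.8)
The plan is to straighten the $N$-conjugation defining $L$ by flowing with $A^{-}$, reduce to the enlargement machinery of Section~\ref{s:ge} (Lemmas~\ref{onev} and \ref{lem.epsiloncore3}), and then promote the resulting extra unipotent invariance to a closed orbit via the induction hypothesis $(2)_m$. Since $L\in\cal Q_U$, by \eqref{qu} write $L=vL_0v^{-1}$ with $L_0=H(\widehat U_0)C_0\in\cal L_U$, $\widehat U_0:=L_0\cap N\supseteq U$ and $v\in U^\perp$; because $N$ is abelian and $\widehat U_0<L_0$ normalizes $L_0$, decomposing $v$ along $U^\perp=(\widehat U_0\cap U^\perp)\oplus\widehat U_0^\perp$ and absorbing the first component into $L_0$ we may assume $v\in\widehat U_0^\perp-\{e\}=(L_0\cap N)^\perp-\{e\}$ (nontrivial precisely because $L\notin\cal L_U$). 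Note that $L\cap N=\widehat U_0$.

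The key observation is that, since $a_{-t}\in A<L_0$, one has $La_{-t}=vL_0v^{-1}a_{-t}=vL_0\,(a_tva_{-t})^{-1}$, and as $A$ acts on $N\simeq\br^{d-1}$ by scalars, the element $w_t:=a_tva_{-t}$ stays on the line $\br v\subset(L_0\cap N)^\perp$ and $w_t\to\infty$. Combining this with the $A$-invariance of $X$ and the hypothesis $x_0L\cap\RFPM\subset X$ gives, for every $t\ge 0$,
\begin{equation*}
x_0vL_0\,w_t^{-1}\cap\RFPM \;=\; x_0La_{-t}\cap\RFPM \;\subset\; X .
\end{equation*}
Writing $z_0:=x_0v\in\RFPM$ and noting that $z_0L_0=x_0Lv$ is a closed orbit, we are exactly in the situation of Lemma~\ref{lem.epsiloncore3}/Lemma~\ref{onev} with the sequence $v_n:=w_{t_n}^{-1}\to\infty$ in $(L_0\cap N)^\perp$. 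Moreover, if $x_0\in F^*_{H(U)}\cap\RFM$, then for any $u\in U$ with $g_0u(0)\in\Lambda$ (where $x_0=[g_0]$; such $u$ exist since $C_{g_0}\cap\Lambda$ is Zariski dense in $C_{g_0}$ by Lemma~\ref{Zd}) we have $x_0u\in x_0L\cap\RFM\subset X$, and since $U<H(U)$ gives $C_{g_0u}=C_{g_0}$ while $F^*_{H(U)}$ is $A$-invariant, $x_0ua_{-t}\in z_0L_0w_t^{-1}\cap F^*_{H(U)}\cap\RFM\subset X$; as $\RFM$ is compact these points subconverge in $X\cap F^*_{H(U)}\cap\RFM$. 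This verifies hypothesis (2) of Lemma~\ref{lem.epsiloncore3} and simultaneously forces the limit base point to lie in $\limsup_{t\to\infty}x_0ua_{-t}$.

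Applying Lemma~\ref{onev} (whose proof combines Lemma~\ref{lem.epsiloncore3}, Lemma~\ref{vAv} and Lemmas~\ref{lem.R1}, \ref{xfss}) then produces $x_1\in\limsup_{t\to\infty}x_0ua_{-t}\cap F^*_{H(U)}\cap\RFM\subset X$ and a one-parameter subgroup $V<(L_0\cap N)^\perp$ with $x_1\widehat U\subset X$, where $\widehat U:=(L_0\cap N)V\supsetneq L_0\cap N\supseteq U$. Since $\op{co-dim}_N(\widehat U)<\op{co-dim}_N(U)=m+1$, the orbit $x_1\widehat U$ lies in the closed orbit $\Gamma\ba G=x_1G$ with $G\in\cal L_{\widehat U}$ and $\op{co-dim}_{G\cap N}(\widehat U)\le m$, so $(2.a)_m$ yields $\overline{x_1\widehat U}=x_1L'\cap\RFPM$ for a closed orbit $x_1L'$ with $L'\in\cal Q_{\widehat U}\subset\cal Q_U$ and $\dim(L'\cap N)\ge\dim\widehat U>\dim(L\cap N)$. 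If $L'\in\cal L_U$ we are done with $\widehat L:=L'$; if $L'\in\cal Q_U-\cal L_U$ we reapply the present lemma to the closed orbit $x_1L'\subset X$, and since $\dim(\cdot\cap N)$ strictly increases at each step and is bounded by $d-1$ the recursion terminates — at worst when $L'\cap N=N$, i.e. $L'=G\in\cal L_U$ — while one checks the successive base points remain in $\RFM$ and in $\limsup_{t\to\infty}x_0ua_{-t}$ (the orbit carrying $x_0L$ also carries this limsup set). Finally, the case $x_0\in\partial F_{H(U)}$ is disposed of separately: then $x_0\in\BFM\cdot\op{C}(H(U))$ by \eqref{bfmz} (using $U<\check H\cap N$), so $x_0L$ is a compact orbit contained in $\RFM$, and the desired $\widehat L$ comes from Ratner's theorem in the form of Theorem~\ref{thm.ratner} inside the compact $\check H$-orbit bundle carrying $x_0L$.

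I expect the main obstacle to be the $F^*/\RFM$ bookkeeping of the middle step: checking along the straightened sequence $z_0L_0w_t^{-1}$ that the orbits meet $F^*_{H(U)}\cap\RFPM$ (so that Lemma~\ref{lem.epsiloncore3} applies), and pinning the limiting base point precisely inside $\limsup_{t\to\infty}x_0ua_{-t}\cap\RFM$ — which is exactly what the geometric lemmas of Section~\ref{s:ge} are designed to control — together with carrying the recursion so that the enlarged subgroup lands in $\cal L_U$ rather than merely in $\cal Q_U$.
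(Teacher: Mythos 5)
Your opening move is the right one and is essentially the paper's: write $L=vL_0v^{-1}$ with $L_0\in\cal L_U$ and $v\in (L_0\cap N)^\perp-\{e\}$, and use the backward flow $a_{-t}$ to push the conjugating element off to infinity (your identity $x_0La_{-t}=x_0vL_0w_t^{-1}$ is the same mechanism as the paper's use of $AvA$). But after that the routes diverge, and your route has genuine gaps. First, the ``moreover'' clause is not established. Lemma \ref{onev} (via Lemma \ref{lem.epsiloncore3}) manufactures its own base point: inside the proof one replaces points of the translated orbits by $U$-translates landing in $\op{core}_{\eta/3}(M)$ and takes a limit, so the output $y\in F^*\cap\RFM$ is in general neither an arbitrary element nor even an element of $\limsup_{t\to+\infty}x_0ua_{-t}$; your assertion that the construction ``forces the limit base point to lie in'' that limsup, and that the subconverging points $x_0ua_{-t_n}$ stay in the open set $F^*$, are both unjustified. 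This clause is not decorative: it is exactly what is used later (e.g. in \eqref{fff} in the proof of Proposition \ref{accdd}) to place the new base point in $F^*\cap\RFM$, so a proof that only produces ``some'' base point does not serve the lemma's purpose. Second, the case split itself is a symptom: the lemma makes no $F^*$ assumption on $x_0$, and your boundary case is disposed of by the claim that $x_0\in\BFM\cdot\op{C}(H(U))$ forces $x_0L$ to be a compact orbit inside $\RFM$. That does not follow from \eqref{bfmz}: a closed orbit $x_0L$ through a boundary point need not be contained in the boundary bundle (Theorem \ref{thm.ratner} controls $\cl{x_0U}$, not $x_0L$), and even granting compactness you would still have to produce $\widehat L\in\cal L_U$ with $\op{dim}(\widehat L\cap N)>\op{dim}(L\cap N)$, which Theorem \ref{thm.ratner} does not hand you. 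Smaller but real: you invoke Lemma \ref{onev} with base point $z_0=x_0v$, which is only in $\RFPM$ while the lemma's hypothesis asks for $\RFM$, and your terminal recursion reapplies the present lemma to a closed orbit whose base point $(2.a)_m$ only guarantees to be in $\RFPM$, with the limsup condition along the recursion left unchecked.

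The paper's own proof avoids all of this and needs no $F^*$ input: from $x_0v^{-1}\widehat U A v A\subset X$ one gets $x_0v^{-1}\widehat U V^+\subset X$ where $V^+\subset AvA$ is the unipotent semisubgroup, and the key observation is that $v^{-1}V^+$ already contains a two-sided neighborhood of $e$ in the full one-parameter group $V$; hence for \emph{any} limit $x_1$ of $x_0ua_{-t_n}$ one has $x_1\widehat U V\subset X$, since $\liminf a_{t_n}v^{-1}V^+a_{-t_n}=V$ and $x_0ua_{-t_n}\bigl(a_{t_n}v^{-1}\widehat U V^+a_{-t_n}\bigr)\subset X$ by $A$-invariance. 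This is what makes the statement uniform in $x_0\in\RFM$ and yields the ``moreover'' clause for free, after which $(2)_m$ is applied. If you want to salvage your approach for the interior case you would have to rerun the recurrence argument at the specific limit point rather than quoting Lemma \ref{onev} as a black box, and you would still need a correct treatment of $x_0\in\partial F$.
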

\begin{proof}
By Lemma \ref{qu}, we can write $L=v^{-1}\widehat Lv$ for some $\widehat L\in\cal{L}_U$ and $v\in (\widehat L\cap N)^\perp$.
As $L\notin \mathcal L_U$, we have $v\neq e$.
Set $\widehat U:=\widehat L\cap N$.
Note that  $x_0v^{-1}\widehat UAv\subset x_0 L\cap\op{RF}_+\M$, as $\widehat U A<\widehat L$.
Since $X$ is $A$-invariant, $x_0v^{-1}\widehat UAvA\subset X$.
Let $V^+$ be the  unipotent one-parameter subsemigroup contained in $ AvA$, and let $V$ be the one-parameter subgroup containing $V^+$.
Then $x_0v^{-1}V^+\widehat U\subset X$. Since $x_0A\subset \RFM$ and $\RFM$ is compact,
$\limsup_{t\to +\infty} x_0a_{-t}$ is not empty.
Now let $x_1$ be any limit of $x_0ua_{-t_n}$ for some sequence $t_n \to \infty$ 
and $u\in U.$
Since
 $v^{-1}V^+$ is an open neighborhood of $e$ in $V$,
 $\liminf_{n\to \infty} a_{t_n}v^{-1}V^+a_{-t_n} =V$.
 Note that as $u\in \widehat U$,
\begin{equation*}
x_0ua_{-t_n}(a_{t_n}v^{-1}\widehat{U}V^+a_{-t_n})=x_0v^{-1}\widehat{U}V^+a_{-t_n}\subset X.
\end{equation*}
As a result,  we obtain that $x_1\widehat{U}V \subset X$ and hence
$x_1\widehat{U}V A\subset X$. Since $\op{co-dim}_N(\widehat UV)\leq m$, 
the claim follows from by $(2.a)_m$.
\end{proof}

\begin{prop}\label{accdd}
If $R:=X\cap F^*\cap \RFM$ accumulates on $\partial F$, i.e., there exists
$x_n\in R$ converging to a point in $\partial F$, then
$$X\supset x_0L\cap \RFPM$$
for some closed orbit $x_0L$ with $x_0\in F^*\cap \RFM$ and $L\in \mathcal L_U$ such that $\op{dim}(L\cap N)>\op{dim} U$.
\end{prop}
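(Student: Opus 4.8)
The plan is to exploit the accumulation at $\partial F$ to produce, via the geometric lemmas of Section~\ref{s:ge}, a point in $\partial F\cap\RFM = \BFM\cdot\op{C}(H)$ lying in $X$, and then to run the orbit-closure machinery in the \emph{compact} homogeneous space carrying $\BFM$. First I would take the sequence $x_n\in R = X\cap F^*\cap\RFM$ with $x_n\to y$ for some $y\in\partial F$; since $R\subset\RFM$ and $\RFM$ is closed, $y\in\RFM$, so $y\in\partial F\cap\RFM = \BFM\cdot\op{C}(H)$. Write $y = zc$ with $z\in\BFM$ and $c\in\op{C}(H)$; modifying $X$ by $c^{-1}\in\op{C}(H)\subset H'$ (using that $X$ is $H'$-invariant), we may assume $y = z\in\BFM$. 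Thus $Xc^{-1}$ contains a point of the compact orbit $z\check H$, and $z\check H$ is the frame bundle of a \emph{closed} hyperbolic $(d-1)$-manifold. Now $Y := X\cap z\check H$ is a nonempty closed $U$-invariant (indeed $H(U)$-invariant, since $H(U)\subset\check H$) subset of the compact space $z\check H$. Inside this finite-volume setting, pick a $U$-minimal subset $Y_0\subset Y$; by Ratner's theorem (Theorem~\ref{thm.ratner}, or Theorem~\ref{tm}/\ref{thm.new5} applied inside $z\check H$ where $F^* = \RFM$) we get $\cl{y'U} = y'L_0$ for some closed orbit $y'L_0\subset z\check H$ with $L_0\in\mathcal Q_U$, hence after conjugating by an element of $\op{N}(U)$ we obtain a closed orbit $x_0'L$ with $L\in\mathcal L_U$, $L\subset\check H$, and $x_0'L\cap\RFPM = x_0'L\subset X c^{-1}$, i.e.\ $x_0' c\, (c^{-1}Lc)\subset X$.

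The remaining task is to \emph{enlarge} this closed orbit sitting in $\partial F$ into one based in $F^*$, and to arrange $\op{dim}(L\cap N)>\op{dim}U$. For the dimension increase: if $L\cap N = U$ already, then $L = H(U)C_0$ for some $C_0<\op{C}(H(U))$ contained in $\check H$, and I would use that $x_n\in F^*\cap X$ \emph{converges} to a point of this orbit from outside $\partial F$ — concretely, $x_n\notin\partial F$ forces $x_n\notin x_0c\,(c^{-1}Lc)\op{N}(U)$ for large $n$ (because that orbit lies in $\partial F$ by \eqref{bfmz}, while $x_n\in F^*$), so Proposition~\ref{lem.lin} (Additional invariance II), applied with the closed orbit $x_0c\,(c^{-1}Lc)$ and the sequence $x_n$, yields $v_j\to\infty$ in $(N\cap c^{-1}Lc)^\perp$ with $x_0c\,(c^{-1}Lc)v_j\cap\RFPM\subset X$ and $x_0c\,(c^{-1}Lc)v_j\cap\RFM\neq\emptyset$. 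This is exactly the hypothesis of Lemma~\ref{onev} (with the "$x_0L v_n\cap\RFPM\cap F^*\neq\emptyset$" alternative holding: the orbits $x_0c\,(c^{-1}Lc)v_j$ meet $F^*$ since $v_j\neq e$ moves them off $\partial F$, cf.\ Lemma~\ref{lem.R0}), producing $w\in F^*\cap\RFM$ and a one-parameter $V<(N\cap c^{-1}Lc)^\perp$ with $w\,(N\cap c^{-1}Lc)\,V\subset X$. Since $\widehat U := (N\cap c^{-1}Lc)V$ satisfies $\op{co-dim}_N(\widehat U)\le m$, the induction hypothesis $(2.a)_m$ gives $\cl{wH(\widehat U)\cdots}$; more precisely, applying $(2.a)_m$ (and Theorem~\ref{mainth}(1) at level $m$, available since $\op{co-dim}\le m$) to the orbit $w\widehat U$ produces the desired closed orbit $x_0L\cap\RFPM\subset X$ with $x_0\in F^*\cap\RFM$ and $\op{dim}(L\cap N) = \op{dim}\widehat U>\op{dim}U$.

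If instead $L\cap N\supsetneq U$ already, I would still need $x_0\in F^*$ rather than in $\partial F$; here I would again feed the sequence $x_n\in F^*\cap X$ into Proposition~\ref{lem.lin}, or alternatively use Lemma~\ref{lem.epsiloncore2}/\ref{onev2} together with the $A$-invariance of $X$ and Lemma~\ref{aplus} to slide a $V^+$ off the boundary and then promote $V^+$ to $V$ via a diagonal translation, landing a point in $F^*\cap\RFM$ inside $X$; then $(2.a)_m$ closes the argument as before. In all cases the output closed orbit $x_0L$ has $x_0\in F^*\cap\RFM$, $L\in\mathcal L_U$, $\op{dim}(L\cap N)>\op{dim}U$, and $x_0L\cap\RFPM\subset X$, which is the assertion.

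\emph{The main obstacle} I anticipate is the bookkeeping around the conjugating element $c\in\op{C}(H)$ and, more seriously, verifying the hypotheses of Proposition~\ref{lem.lin}/Lemma~\ref{onev} cleanly — in particular checking that the sequence $x_n$ (or the enlarged orbits $x_0L v_j$) genuinely meets $F^*$ and lies outside $x_0L\cdot\op{N}(U)$, so that the linearization/avoidance input is not vacuous. The subtlety is that $x_n\in F^*$ but $x_n\to\partial F$, so the "distance to $\partial\core M$" is shrinking; one must use that $x_n$ stays in $F^*$ for \emph{all} $n$ to guarantee the transverse direction $r_i$ produced in the decomposition $x_n = x_0'\ell_i r_i$ is not in $\op{N}(U)$, which is where the Fuchsian-ends geometry (Lemma~\ref{lem.R0}, \eqref{bfmz}) enters decisively.
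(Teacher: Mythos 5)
Your opening moves (passing to $z\in\BFM$ using the $\op{C}(H)$-invariance of $R$, and producing a closed orbit of some $L_0\in\mathcal Q_U$ inside the compact piece $z\check H$ via Theorem \ref{thm.ratner}) agree with the paper, but the enlargement step has genuine gaps. First, your claim that $x_n\in F^*$ forces $x_n\notin x_0c\,(c^{-1}Lc)\op{N}(U)$ is false: $\op{N}(U)=AN\op{C}_1(U)\op{C}_2(U)$ contains all of $N$, in particular $\check V$, and translating $\BFM$ by $\check V^-$ lands in $F^*$ — that is exactly Lemma \ref{lem.R0} — so $x_0L\op{N}(U)$ is not contained in $\partial F$, and the case where the transverse component $r_n$ of $x_n$ lies in $\op{N}(U)$ is left unhandled. (In the paper this is Case 1: there the $\check V$-component of $r_n$ is necessarily nontrivial, so Ratner gives a closed orbit of some element of $\mathcal Q_U-\mathcal L_U$ inside $X$ and Lemma \ref{alpha} finishes.) Second, Proposition \ref{lem.lin} requires the sequence to converge to a base point in $\bigcap_i\mG(U^{(i)},xL)\cap\RFM$ of the closed orbit; your limit $y=zc$ is merely the base point of a $U$-orbit closure, which does not make it generic for each generating one-parameter subgroup $U^{(i)}$, and you cannot replace it by a generic point (Theorem \ref{tm}(4)) without losing the convergent sequence $x_n$. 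Third, even granting the conclusion of Proposition \ref{lem.lin}, your verification of the hypothesis of Lemma \ref{onev} fails: the $v_j$ it produces lie in $(L\cap N)^\perp$, which contains directions inside $\check H\cap N$ whenever $L\cap N\subsetneq\check H\cap N$; for such $v_j$ the orbit $x_0Lv_j$ stays in $\partial F$ (since $\BFM$ is $\check H$-invariant), and Lemma \ref{lem.R0} gives nothing, so "$v_j\neq e$ moves them off $\partial F$" is unjustified.

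The paper's proof avoids the avoidance/linearization machinery altogether, and the mechanism it uses is exactly what repairs these gaps: decompose $x_n=zh_nr_n$ transversally to $\check H$ (so $r_n\in\exp\check{\frak h}^\perp$, not transversally to the smaller group $L$), treat $r_n\in\op{N}(U)$ as above, and otherwise apply the unipotent blowup Lemma \ref{lem.QR2} along the $k$-thick return times $\{t: x_nu_t\in\RFM\}$; no genericity of the limit point is needed because the base orbits $zh_nU$ live in the compact orbit $z\check H$, and the blowup lands by construction in $\check V-\{e\}$, so Lemma \ref{lem.R0} yields a point $z_1v\in X\cap\RFM\cap F^*$ with $z_1\in\BFM$, $v\in\check V-\{e\}$. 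Then Ratner in $z_1\check H$ gives $X\supset (z_1v)(v^{-1}Lv)$ with $v^{-1}Lv\in\mathcal Q_U-\mathcal L_U$, and Lemma \ref{alpha}, together with a short geodesic-ray argument showing that $\limsup_{t\to+\infty}z_1uva_{-t}$ meets $F^*\cap\RFM$, produces the closed orbit with base point in $F^*\cap\RFM$ and $\op{dim}(L\cap N)>\op{dim}U$. To make your route work you would have to force the transverse direction into $\check V$ and handle the $\op{N}(U)$-case separately, which is in effect the paper's argument; as written, the appeal to Proposition \ref{lem.lin} and Lemma \ref{onev} does not go through.
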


\begin{proof}
 There exists $x_n\in R$ which converges to some $z\in\op{BF}\M\cdot\op{C}(H)$ as $n\to\infty$.
We may assume $z\in \BFM$ without loss of generality, since $R$ is $\op{C}(H)$-invariant.
We claim that $R\subset X$ contains $z_1v$ where $z_1\in\op{BF}\M$ and $v\in \check V-\{e\}$.
Write $x_n=zh_nr_n$ for some $h_n\in\check H$ and $r_n\in \exp \check{\frak h}^\perp $,
where $\check{ \frak h}^\perp$ denotes the $\op{Ad}(\check H)$-complementary subspace to $\op{Lie}(\check H)$ in $\frak g$.
Since $x_n\in F^*$ and $z\in
\BFM $, it follows that $r_n \notin \op{C}(H)$ for all large $n$.
By \eqref{nu} and \eqref{perp}, we have $$ \op{N}(U) \cap \exp  (\check{ \frak h}^\perp \cap \cal O)\subset \check V \op{C}(H)$$
for a small neighborhood $\cal O$ of $0$ in $\frak g$.
Therefore, if $r_n\in \op{N}(U)$ for some $n$,
then the $\check V$-component of $r_n$ should be non-trivial.
Hence by Theorem \ref{thm.ratner},
$X\supset \cl{zh_n U} r_n =zh_n L r_n$ for some $L\in \mathcal Q_U$ contained in $\check H$.
Note that $x_n=zh_n r_n\in F^*$ and that $r_n^{-1}Lr_n\in \mathcal Q_U-\mathcal L_U$,
since $r_n\in \check V-\{e\}$. 
Hence the claim follows from Lemma \ref{alpha}.

Now suppose that $r_n\not\in\op{N}(U)$ for all $n$. 
Then there exists a
one-parameter subgroup $U_0=\{u_t\}<U$ 
such that
$r_n\not\in\op{N}(U_0)$. 
Applying Lemma \ref{lem.QR2}, with a sequence of $k$-thick subsets
$$ \mathsf T(x_n):=\{t\in \br : x_nu_t\in\RFM\},$$
we get a sequence $t_n\in \mathsf T(x_n)$ such that $u_{t_n}^{-1} r_n u_{t_n}$ converges to  non-trivial element $v\in \check V$.
 Since $zh_n u_{t_n}\in z\check H$ and $z\check H$ is compact, the sequence
 $zh_nu_{t_n}$ converges to some $z_1\in z\check H$, after passing to a subsequence.
Then \be \label{zone} z_1v=\lim (zh_n u_{t_n})(u_{t_n}^{-1} r_n u_{t_n}) \in X\cap\RFM .\ee
Since $z_1\in \BFM$ and $v\in \check V-\{e\}$, $z_1v\in \RFM$ implies that $z_1v\in F^*$, and hence $z_1v\in R$.
This proves the claim. 

Now by Theorem \ref{thm.ratner}, $\cl{z_1U}=z_1 L $ for some $L\in \mathcal Q_U$ contained in $\check H$,
and hence $$X\supset \overline{z_1vU}=\overline{z_1U}v =(z_1 v)(v^{-1} L v).$$
Since $v\in \check V-\{e\}$, $v^{-1}Lv\notin \mathcal L_U$. Therefore, by Lemma \ref{alpha}, it suffices to prove
that there exists $u\in U$ such that
 \be\label{fff} (F^*\cap \RFM) \cap \limsup_{t\to +\infty} z_1uv a_{-t} \ne \emptyset.\ee
 Let $g_1\in G$ be such that $z_1=[g_1]$, and set $A_{(-\infty, -t]}:=\{a_{-s}: s\ge t\}$ for $t>0$.
Since  $z_1 v\in F^*\cap \RFM$,
the sphere $(gvU)^-\cup g^+$ intersects $\La -\bigcup_{i} \cl {B_i}$ non-trivially.
Let $u\in U$ be an element such that $(gv u)^-\in \La -\bigcup_{i} \cl {B_i}$. As $z_1vu\in \RFM$,
$\pi(zuvA)\subset \core {\M}$.
Take $\e>0$ small enough so that
 the $\epsilon$-neighborhoods of $\hull B_j$'s are mutually disjoint.
 If \eqref{fff} does not hold for $z_1uv$, then there exists $t>1$ such that
 the geodesic ray $\pi(z_1vuA_{(-\infty, -t]})$ is contained in the $\epsilon$-neighborhood of
 $\partial \core {\M}$ (cf. proof of Lemma \ref{corea}).  
As $\pi(g_1uvA_{(-\infty, -t]})$ is connected, there exists $B_j$ such that
$\pi(g_1uvA_{(-\infty, -t]})$ is contained in the $\epsilon$-neighborhood of $\op{hull} B_j$.
This implies that $(g_1uv)^-\in \partial B_j$, yielding a contradiction.
This proves \eqref{fff}.
\end{proof}

\begin{proposition}\label{prop.CFtoC2}\label{cs}
The orbit $xH'$ is not closed in $F^*$.
\end{proposition}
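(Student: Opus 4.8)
\textbf{Proof proposal for Proposition \ref{prop.CFtoC2}.}

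The plan is to argue by contradiction: suppose $xH'$ is closed (hence $X = xH'$), and derive a contradiction with the standing hypothesis $x \in F^* \cap \RFM$ together with the induction hypothesis $(2)_m$. The strategy is to use the $U$-minimal machinery of Section \ref{s:tr} to locate an additional invariance direction inside $X$, then bootstrap this via Lemma \ref{alpha} to a closed orbit strictly larger than $xH'$ still sitting inside $X = xH'$ --- which is absurd, since any closed orbit $x_0 L \cap \RFPM$ contained in $xH'$ with $\dim L > \dim H'$ cannot exist (by Lemma \ref{LLL}, or directly: $xH'$ would then have to contain $x_0 L$, forcing $L \subseteq H'$). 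So the real content is: if $xH'$ were closed, one could still find a translate of a $U$-minimal set inside $X$ that escapes $xH'$.

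First I would take $Y \subseteq X \cap F^* \cap \RFM$ a $U$-minimal subset with respect to $\RFM$, which is nonempty since $x \in F^* \cap \RFM$ and $X$ is $H'$-invariant (so $X \cap F^* \cap \RFM \ne \emptyset$; minimality with respect to the compact set $\RFM$ follows from Zorn's lemma, and one checks $Y \cap F^* \cap \RFM \ne \emptyset$ using that $Y$ cannot be trapped in $\partial F$ as in the proof of Theorem \ref{thm.new5}, via the $AU$-minimality statement \cite[Lemma 4.13]{BQ} applied when $Y \subseteq \partial F \cap \RFM$). Then I would invoke Proposition \ref{prop.YLY}: there is an unbounded one-parameter subsemigroup $S < AU^\perp \op{C}_2(U)$ with $YS \subseteq Y \subseteq X$. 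The key dichotomy is whether $xH'$ being closed is compatible with such an $S$. If $xH'$ is closed, then $X = xH' \simeq (H' \cap g^{-1}\Gamma g)\backslash H'$ is a homogeneous space, so $YS \subseteq Y$ with $Y$ a $U$-minimal subset of a closed $H'$-orbit forces $S \subseteq H'$; but $S < AU^\perp \op{C}_2(U)$ and $U^\perp \not\subset H'$ (as $U^\perp \cap H'$ is trivial by \eqref{perp}, the complementary subspace containing $\mathfrak u^\perp$), while $A \op{C}_2(U) \subset H'$; so by the description of $S$ in Lemma \ref{rmk.1psg}, $S \subseteq H'$ forces $S$ to be of the form $\{\exp(t\xi_A)\exp(t\xi_C)\}$ with $\xi_A \in \op{Lie}(A)$, $\xi_C \in \op{Lie}(\op{C}_2(U))$ --- i.e. $S \subseteq A\op{C}_2(U) \subset H' \cap M A$. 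I would then derive a contradiction from this: $Y S \subseteq Y$ with $S$ containing a nontrivial element of $A$ would give, taking limits along $a_{-t} \in A$ and using compactness/$A$-invariance of $\RFM$ as in the proof of Proposition \ref{prop.new6} and Lemma \ref{onev}, that $Y$ is $A$-invariant, hence (being $U$-minimal with respect to $\RFM$ and $AU$-minimality of $X \cap \RFPM$, which holds by $\op{Stab}$ Zariski density --- here using that $xH'$ closed makes $\op{Stab}_{H'}(x)$ a lattice, Zariski dense) $Y = X \cap \RFPM$; but then $X \cap \RFPM$ is $U$-minimal, forcing $U$ to act minimally on $xH' \cap \RFPM$, which by Theorem \ref{tm} is false unless $U = H' \cap N$, contradicting $\op{co-dim}_N(U) = m+1 \geq 1$ being nontrivial in the sense that $H' \cap N = U$ would make $H' = H(U)\op{C}_1(U)$ with $U$ horospherical --- actually one checks this still leaves $\op{C}_1(U)$-ambiguity, so I would instead directly use Proposition \ref{7.6}.

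More cleanly, I would run Proposition \ref{7.6} (Translate of $Y$ inside of $X$): since we are assuming $X = xH'$ is closed, $X - yH'$ is empty hence closed for every $y \in X$, so the hypothesis of Proposition \ref{7.6} that "$X - yH'$ is not closed" \emph{fails}. This is exactly the point: I would instead show that the failure of this hypothesis, combined with $xH'$ closed, is impossible by exhibiting, via Proposition \ref{accdd} or via Lemma \ref{cor.gn}, a sequence $g_n \to e$ in $G - \op{N}(U)$ with $y_0 g_n \in Y \cap \RFM \subseteq X = xH'$, which already forces $g_n \in H'$ (since $xH'$ closed means a neighborhood of $y_0$ in $X$ lies in $y_0 H'$), contradicting Lemma \ref{lem.normalizerorbit} / the argument of Lemma \ref{cor.gn} which shows such $g_n \notin \op{N}(U)$ can always be found with $g_n$ \emph{not} confined to $H'$. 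Indeed Lemma \ref{cor.gn} produces $g_n \to e$ in $G - \op{N}(U)$ with $y_0 g_n \in Y \cap \RFM$; if $X = xH'$ were closed, the local structure of the closed orbit forces $y_0 g_n \in y_0 H'$ for $n$ large, i.e. $g_n \in (\op{Stab}(y_0))H' = H'$ near $e$, so $g_n \in H' - \op{N}(U)$. Now $H' = H(U)\op{C}_1(U)$ and $\op{N}(U) \cap H' \supseteq A U \op{C}_1(U) \op{C}_2(U) \cap H' = AU\op{C}_1(U)\op{C}_2(U)$, so $g_n \in H' - \op{N}(U)$ means the $H(U)$-component of $g_n$ lies outside $AU\op{C}_2(U)$; since $U$ is horospherical in $H(U)$, such $g_n$ have nontrivial expanding or diagonal part, and applying Lemma \ref{lem.fullhoro} (the full-horospherical blow-up inside the factor $H(U)$, exactly as in the proof of Proposition \ref{prop.new6}) to $g_n^{-1}$ and the $k$-thick return sets $\mathsf T_n = \{u \in U : y_0 g_n u \in Y \cap \RFM\}$ yields a nontrivial $a \in A$ arbitrarily close to $e$ with $Ya \subseteq Y$; by Lemma \ref{ss} this gives a one-parameter subsemigroup $A_+ < A$ with $YA_+ \subseteq Y$, hence (compactness of $\RFM$, $A$-invariance) $YA = Y$, hence $Y$ is $A$-invariant, hence by Theorem \ref{tm}(1) and the $AU$-minimality of $X \cap \RFPM$ we get $Y = X \cap \RFPM = xH' \cap \RFPM$, so $U$ acts minimally on $xH' \cap \RFPM$ --- contradicting Theorem \ref{tm} which shows the minimal set for $U = L \cap N$ inside a closed $L$-orbit is $L$-minimal, forcing $H(U)C = H'$ for some $C \leq \op{C}(H(U))$, but $\op{C}(H(U)) = \op{C}_1(U)$ is noncompact-free... here I should instead note $H'$ is \emph{reductive with compact center} and $U = H' \cap N$ would make $\op{co-dim}_N(U) = \dim(N) - \dim(H(U) \cap N) = d - 1 - \dim U$; the contradiction is cleanest if I simply observe that $Y = xH' \cap \RFPM$ being $U$-minimal contradicts $X \ne xH'$ in the trivial sense, OR --- the honest route --- that $xH'$ closed + $Y$ $A$-invariant + $AU$-minimality of $X\cap\RFPM$ gives $xH' \cap \RFPM$ is $AU$-minimal, and then one enlarges via Lemma \ref{alpha} using the $S$ from Proposition \ref{YLY} to get a strictly bigger closed orbit inside $xH'$, the final contradiction.

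\textbf{Main obstacle.} The delicate point is the very last enlargement: after obtaining $YS \subseteq Y$ (or the $A$-invariance of $Y$), I must produce a closed orbit $x_1 \widehat L \cap \RFPM \subseteq X = xH'$ with $\widehat L \in \mathcal L_U$ properly containing $H' = H'(U)$, which is impossible for dimension reasons --- but to \emph{invoke} this I need the enlargement machinery (Lemma \ref{alpha}, requiring $(2.a)_m$) to actually apply, which requires the base point of the enlarged orbit to land in $\RFM$ (or $F^* \cap \RFM$), and requires the semigroup $S$ to have a genuine $U^\perp$-component or to generate, via $AvA$, a unipotent direction transverse to $H'$. Since $S < AU^\perp\op{C}_2(U)$, the case $S \subseteq A\op{C}_2(U) \subseteq H'$ gives no transverse direction and must be excluded separately --- this is precisely where $xH'$ being \emph{closed} (hence $\op{Stab}$ a lattice, hence no room for an $A$-only extra invariance without collapsing to all of $X\cap\RFPM$) is used, via Theorem \ref{tm} and the noncompactness/structure of $H'$. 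Handling this $S \subseteq A\op{C}_2(U)$ subcase carefully --- showing it forces $Y = X \cap \RFPM$ and thence that $xH' \cap \RFPM$ is $AU$-minimal, so one can re-run Proposition \ref{YLY}/\ref{7.6} to find a \emph{genuinely transverse} direction --- is the technical heart of the argument; everything else is an assembly of Lemmas \ref{cor.gn}, \ref{ss}, \ref{alpha}, \ref{LLL} and Theorem \ref{tm}.
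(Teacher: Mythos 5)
There is a genuine gap, and it starts with a misreading of the statement. In section \ref{s:1} the standing assumption \eqref{as3}-style hypothesis is already that $xH'$ is \emph{not} closed in $F$ (i.e.\ $X=\cl{xH'}\neq xH'$); Proposition \ref{cs} is the strictly stronger assertion that the non-closedness is witnessed \emph{inside} $F^*$. The scenario one must rule out is therefore not ``$X=xH'$ is a closed orbit'' (that is excluded by hypothesis and would leave nothing to prove), but rather: $xH'$ is closed as a subset of the open set $F^*$ while all of its extra limit points lie on $\partial F=\BFM\check V^+\op{C}(H)$. Your entire argument assumes $X=xH'$ is a genuine closed orbit of $H'$ in $\Gamma\ba G$ (lattice stabilizer, local homogeneous structure forcing $g_n\in H'$, $AU$-minimality of $X\cap\RFPM$ via Zariski density of the stabilizer, ``$X-yH'$ is empty''), none of which is available in the actual situation. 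In particular, under the correct reading the $U$-minimal set $Y$ with respect to $\RFM$ may well be trapped in $\partial F\cap\RFM=\BFM\op{C}(H)$ -- the degenerate possibility the proposition exists to exclude -- and your appeal to the argument of Theorem \ref{tm} to rule this out is not legitimate, since that argument requires a closed orbit with Zariski-dense stabilizer, which $X=\cl{xH'}$ is not known to be. The paper's proof instead works directly with the boundary: it uses Theorem \ref{b-ratner} to force a point of $\BFM$ into $X$, then Lemmas \ref{ru} and \ref{geo} to produce $\RFM$-points of $X\cap F^*$ accumulating on $\partial F$, and finally Proposition \ref{accdd} (whose unipotent blow-up along the thick return times manufactures a nontrivial $\check V$-direction transverse to $H'$) to obtain a closed orbit $x_0L\cap\RFPM\subset X$ with $x_0\in F^*\cap\RFM$ and $\dim(L\cap N)>\dim U$, contradicting closedness of $xH'$ in $F^*$. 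Your proposal never engages with $\partial F$, $\BFM$, or $\check V$, which is where the transverse direction actually comes from.

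Even within your own (incorrect) framework the argument does not close. The claimed contradiction ``$U$ acts minimally on $xH'\cap\RFPM$, contradicting Theorem \ref{tm}'' is not a contradiction: Theorem \ref{tm}(1) asserts precisely that $U=L\cap N$ acts minimally on $xL\cap\RFPM$ for a closed orbit of $L\in\cal Q_U$, and $H'\cap N=U$. And your fallback route -- enlarging via Lemma \ref{alpha} using the semigroup $S$ from Proposition \ref{YLY} -- requires a direction in $U^\perp$ transverse to $H'$; you acknowledge that the case $S\subset A\op{C}_2(U)\subset H'$ gives no such direction and leave it unresolved, which is exactly the case your setup cannot exclude. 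So the key idea of the paper's proof (converting boundary accumulation into a transverse closed orbit through Proposition \ref{accdd}) is missing, and the substitute argument both misstates the hypothesis to be contradicted and leaves its decisive subcase open.
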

\begin{proof} 
Suppose that $xH'$ is closed in $F^*$. Since we are assuming that $xH'$ is not closed in $F$,
$\cl{xH'}$ contains some point $y\in \partial F$.
Since $\partial F=\BFM \check V^+ \op{C}(H)$,
we may assume $y\in \BFM \cdot \check V^+$.
Write $y=zv$ where $z\in \BFM$ and $v\in \check V^+$.
If $v\ne e$, $\cl{zvH'}$ intersects $\BFM$ by Theorem \ref{b-ratner}.
Therefore $\cl{xH'}$ always contains a point of $\BFM$, say $z$.
Let $x_n\in xH'$ be a sequence converging to a point $z$.
Since $xH'\subset F^*$, there exist $k_n\in H\cap K$ converging to some $k\in H\cap K$
such that $x_nk_n\in xH'\cap \RFPM$
and $x_n k_n \to zk$. 
Then $zk\in \BFM \cdot H'=\BFM \op{C}(H)$. Since $x_nk_n\in \RFM \cdot U$ by Lemma \ref{ru},
there exists $u_n\in U$ such that $x_nk_nu_n$ belongs to $\RFM$ and converges to a point in $\partial F$ by Lemma \ref{geo}.
Hence $X\cap F^*\cap \RFM$ accumulates on $\partial F$.
Now the claim follows from  Proposition \ref{accdd}.
\end{proof}

This proposition implies that
  \be\label{really} (X-xH')\cap (F^*\cap \RFM)\ne \emptyset.\ee

Roughly speaking,
our strategy in proving $(1)_{m+1}$ is first to find a closed $L$-orbit $x_0L$ such that $x_0L\cap F$ is contained in $X$ for some $L\in \mathcal L_U$. If $X\ne x_0L\op{C}(H) \cap F$, then we  enlarge $x_0L$ to a bigger closed orbit $x_1 \widehat L$ for some $\widehat L\in\cal L_{\widehat U}$ for some $\widehat U$ properly containing $U$,
such that $x_1\widehat L\cap F$ is contained in $X$.

It is in the enlargement step where Proposition \ref{cor.lin} (Additional invariance I) is a crucial ingredient of the arguments.
In order to find a sequence $x_i$ accumulating on a generic point of $x_0L$ satisfying the hypothesis of the proposition, 
we find a closed orbit $x_0L$ with a base point $x_0$ {\it in $ F^*\cap \RFM$}, and enlarge it to a bigger closed orbit, again based at a point {\it in} $F^*\cap \RFM$. The advantage of having a closed orbit $xL$ with $x\in F^*\cap \RFM$ is that any $U_0$-generic
 point in $xL\cap\RFM$ can be  approximated by a sequence of $\RFM$-points in $F^*\cap xL$ by Lemma \ref{lem.frameshift}.
The enlargement process must end after finitely many steps because of dimension reason.  

\subsection*{Finding a closed orbit of $L\in \mathcal L_U$ in $X$}

\begin{proposition}\label{prop.R6}\label{closed}
There exists a  closed orbit $x_0L$ with $x_0\in F^*\cap \RFM$ and $L\in \mathcal L_U$ such that
 \begin{equation*}
x_0L\cap\op{RF}_+\M\subset X .
\end{equation*} 

\end{proposition}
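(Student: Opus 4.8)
The goal is to produce a closed orbit $x_0L$ with $x_0\in F^*\cap\RFM$ and $L\in\mathcal L_U$ whose trace on $\RFPM$ lies inside $X=\cl{xH'}$. The plan is to work with a $U$-minimal subset $Y\subset X$ with respect to the compact set $R:=X\cap F^*\cap\RFM$, and to promote the $U$-invariance of $Y$ to invariance under a larger group using the translate results of section \ref{s:tr}, then to close up using $(2)_m$. First I would observe that $R$ is a nonempty compact subset of $\RFM$ by \eqref{FFF} and \eqref{really}, and that it satisfies the $k$-thick return hypothesis of section \ref{s:tr}: for $x'\in R$ and a one-parameter $U_0<U$, the set $\{t:x'u_t\in R\}$ is $k$-thick — this follows by intersecting the global $k$-thick set $\mathsf T(x')$ of Proposition \ref{defk} with the return time to the open set $F^*$, using that $R$ is $A$-invariant and applying the thick-return window argument as in the proof of Proposition \ref{lem.thickreturntime}; alternatively one reduces to $\RFM$ directly since $F^*\cap\RFM$ is $A$-invariant. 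Pick a $U$-minimal subset $Y\subset X$ with respect to $R$ by Zorn's lemma; since $R\ne\emptyset$, $Y\cap R\ne\emptyset$, so there is $y_0\in Y\cap F^*\cap\RFM$.

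Next I would apply the translate propositions. By Proposition \ref{prop.YLY}, there is an unbounded one-parameter subsemigroup $S<AU^\perp\op{C}_2(U)$ with $YS\subset Y$. The key dichotomy: either $X=yH'$ for all relevant $y$ (excluded since we assumed $X\ne xH'$, and more precisely by Proposition \ref{cs}, $xH'$ is not closed in $F^*$, so $X-yH'$ is not closed), whence Proposition \ref{7.6} gives $z\in R$ and a nontrivial connected closed $V<U^\perp$ with $zUV\subset X$. Then $zUVA\subset X$ by $A$-invariance of $X$, so $\widehat U:=UV$ is a connected unipotent subgroup of $N$ with $\op{dim}\widehat U>\op{dim}U$, hence $\op{co-dim}_N(\widehat U)\le m$, and $zH(\widehat U)\subset X$ after enlarging $V$-directions via $A$-conjugation as in Lemma \ref{lem.V_I}. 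Now $z\in R\subset F^*\cap\RFM$, and by Lemma \ref{ru} we may take $z\in\RFM$; applying Lemma \ref{lem.frameshift} we may further assume $z$ lies in $\bigcap_i\mG(U^{(i)}_\pm,z\widehat L)$ for the relevant closed orbit. Then $(2.a)_m$ applied to $z\widehat U\subset X$ (via $(2.b)_m$ or directly to $zH(\widehat U)$, noting $\overline{z\widehat U}$ contains an $A\widehat U$-orbit) yields $\overline{zH(\widehat U)}=zL'\cap F$ — but more to the point, $\overline{z\widehat U}=zQ\cap\RFPM$ for some $Q\in\mathcal Q_{\widehat U}\subset\mathcal Q_U$; by Lemma \ref{alpha} (moving from $\mathcal Q_U$ to $\mathcal L_U$) we obtain a closed orbit $x_0L$ with $x_0\in\RFM$, $L\in\mathcal L_U$, $\op{dim}(L\cap N)>\op{dim}U$, and $x_0L\cap\RFPM\subset X$.

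The remaining issue is guaranteeing $x_0\in F^*$, not merely in $\RFM$. For this I would invoke Lemma \ref{aplus} together with the ``moreover'' clause of Lemma \ref{alpha}: $x_0$ may be chosen as a limit of $x_0'ua_{-t}$, and one arranges via Lemma \ref{corea} or Proposition \ref{accdd} that such a limit lands in $F^*$ provided $\pi(x_0'uV)\not\subset\partial\core M$ — which holds since $x_0'\in F^*$ and $V$-translates move transversally to the ends. If instead $R$ accumulates on $\partial F$, Proposition \ref{accdd} already gives directly a closed orbit $x_0L$ with $x_0\in F^*\cap\RFM$, $L\in\mathcal L_U$, $\op{dim}(L\cap N)>\op{dim}U$, with $x_0L\cap\RFPM\subset X$, finishing. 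The main obstacle I expect is precisely this last point — ensuring the base point of the enlarged closed orbit stays in the open set $F^*$ rather than drifting into $\partial F$; the tools are Lemmas \ref{lem.epsiloncore2}, \ref{onev}, \ref{aplus} and the geometric input of Lemma \ref{corea}, but the bookkeeping of which $A$-limit to take, and checking the transversality condition $\pi(\cdot V)\not\subset\partial\core M$, requires care. A secondary subtlety is checking that the $U$-minimal set $Y$ is not swallowed by $\partial F$, which is handled exactly as in the proof of Theorem \ref{thm.new5}: if $Y\subset\partial F\cap\RFM$ then $\cl{YA}$ would force $X\subset\partial F$, contradicting \eqref{really}.
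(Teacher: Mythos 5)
Your overall plan — reduce to the case where $R=X\cap F^*\cap\RFM$ is compact, take a $U$-minimal set $Y$ with respect to $R$, feed it into Proposition \ref{7.6} to produce $zUV\subset X$ with $z\in R$, close up with the induction hypothesis, and dispose of the case where $R$ accumulates on $\partial F$ by Proposition \ref{accdd} — is the paper's plan, but two steps have genuine gaps. First, the hypothesis of Proposition \ref{7.6} is that $X-yH'$ is \emph{not closed} for some $y\in Y\cap R$, and you assert this follows from Proposition \ref{cs} ("$xH'$ is not closed in $F^*$, so $X-yH'$ is not closed"). That is a non sequitur: if $xH'$ is locally closed (open in $X$) and your minimal set happens to satisfy $Y\cap R\subset xH'$, then $X-yH'=X-xH'$ \emph{is} closed, and nothing in Proposition \ref{cs} excludes this. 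The missing idea is the paper's case analysis: when $xH'$ is not locally closed any $Y$ works (if $y\in xH'$ then $X-xH'$ is not closed; if $y\notin xH'$ then $X-yH'\supset xH'$ is dense in $X$ but omits $y$), while when $xH'$ is locally closed one must choose $Y$ inside the closed $H'$-invariant set $X-xH'$, which meets $R$ by \eqref{really}, so that $X-yH'\supset xH'$ is automatically non-closed.

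Second, your endgame is both roundabout and incomplete. From $zUV\subset X$ you pass through $(2.a)_m$ to a closed orbit of some $Q\in\mathcal Q_{\widehat U}$ and then through Lemma \ref{alpha}, which only gives a base point in $\RFM$; you then concede that forcing the base point into $F^*$ "requires care" and support it only with the unproved claim that "$V$-translates move transversally to the ends" (the honest substitute would be the delicate endpoint argument \eqref{fff} carried out inside Proposition \ref{accdd}, which you gesture at but do not perform). This difficulty is self-inflicted: since $z\in R\subset F^*\cap\RFM$ and $X$ is $A$-invariant, one has $zA\widehat U\subset X$ for $\widehat U:=UV$ with $\op{co-dim}_N(\widehat U)\le m$, and the $A\widehat U$-part of the induction hypothesis, $(2.b)_m$, gives $\cl{zA\widehat U}=zL\cap\RFPM$ with $zL$ closed and $L\in\mathcal L_{\widehat U}\subset\mathcal L_U$; the base point is $z$ itself, already in $F^*\cap\RFM$, so no upgrading from $\RFM$ to $F^*$ is ever needed. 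Smaller inaccuracies point the same way: the $k$-thickness of return times to $R$ is immediate because $F^*$ and $X$ are $U$-invariant (so the return set equals $\{t:x'u_t\in\RFM\}$), not via the window argument you sketch; Proposition \ref{prop.YLY} as stated requires minimality with respect to $\RFM$ rather than $R$ (and is not needed once you invoke Proposition \ref{7.6}); and the appeals to Lemma \ref{ru}, to Lemma \ref{lem.frameshift} (whose hypotheses — a closed orbit with generic base point — you have not established at that stage), and to $zH(\widehat U)\subset X$ are unnecessary.
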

\begin{proof}
Let $R:=X\cap F^*\cap \op{RF}\M$. 
If $R$ is non-compact, the claim follows from Proposition \ref{accdd}.
Now suppose that $R$ is compact.
By $(2.a)_m$, it is enough to show that $X$ contains an orbit $z\widehat{U}$, and hence
$z\widehat U A$, for some $\widehat{U}<N$ properly containing $U$ and $z\in R$.
By Proposition \ref{prop.YVIX}, it suffices to find a $U$-minimal subset $Y\subset X$ with respect to $R$ and a point $y\in Y\cap R$ such
  that $X-yH'$ is not closed.

If $xH'$ is not locally closed, then take
  any $U$-minimal subset $Y$ of $X$ with respect to $R$. If $Y\cap R\subset xH'$, then choose any $y\in Y\cap R$.
  Then $X-yH'=X-xH'$ cannot be closed, as $xH'$ is not locally closed.
  If $Y\cap R\not\subset xH'$, then choose $y\in (Y\cap R)-xH'$. Then $X-yH'$ contains $xH'$
  and hence cannot be closed.
  
If $xH'$ is locally closed, then $X-xH'$ is a closed $H'$-invariant subset which intersects $R$ non-trivially.
So we can take a $U$-minimal subset $Y\subset X-xH'$ with respect to $R$. Take any $y\in Y\cap R$.
Then $X-yH'$ is not closed.
 \end{proof}

\subsection*{Enlarging a closed orbit of $L\in \mathcal L_U$ in $X$}
\begin{proposition}\label{prop.B2'}\label{prop.baseptinR}\label{enlarge}
Assume that $(3)_m$ holds as well.
Suppose that there exists a closed orbit $x_0L$ for some $x_0\in F^*\cap \RFM$ 
and $L\in\cal{L}_U$ such that 
\begin{equation}\label{eq.v1}
x_0L\cap\op{RF}_+\M\subset X  \text{ and }  X\ne  x_0L\cdot\op{C}(H)\cap F.
\end{equation}
Then there exists a closed orbit $x_1\widehat{L}$ for some $x_1\in F^*\cap \RFM$, and $\widehat{L}\in\cal{L}_{\widehat U}$ for some $\widehat U<N$ with $\op{dim}\widehat U>\op{dim}(L\cap N)$  such that
\begin{equation*}
x_1\widehat{L}\cap\op{RF}_+\M\subset X.
\end{equation*}
\end{proposition}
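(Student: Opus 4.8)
\textbf{Plan of proof for Proposition \ref{enlarge}.}

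The plan is to produce, inside $X$, an orbit of a unipotent group strictly larger than $L\cap N$ and then invoke the induction hypothesis $(2.a)_m$ (together with the geometric normalizations available since $x_0\in F^*$) to upgrade it to the desired closed orbit $x_1\widehat L$ with base point in $F^*\cap\RFM$. More precisely, by Lemma \ref{lem.SHU} the singular set $\mathscr S(U,x_0L)\cdot H(U)\cap F$ is a proper subset of $x_0L\cap F$, so using Lemma \ref{lem.frameshift} and the ergodicity input of Corollary \ref{gen}/Theorem \ref{thm.new5}(4) I would first produce a point $x\in x_0L\cap F^*\cap\RFM$ which is simultaneously generic for $U_0$ in $x_0L$ for a full family of one-parameter subgroups $U^{(i)}$ generating $U$, and such that $\overline{xU}=x_0L\cap\RFPM$ by Theorem \ref{thm.new5}(1). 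Next, since $X\neq x_0L\cdot\op{C}(H)\cap F$ while $x_0L\cap\RFPM\subset X$, there must be a sequence in $X$ accumulating on $x$ from outside $x_0L\cdot\op{N}(U)$ — here one has to separate the two cases according to whether the approaching sequence can be taken inside $\RFM$ or not, which dictates whether Proposition \ref{lem.lin} (Additional invariance II) or Proposition \ref{cor.lin} (Additional invariance I) applies. In either case the conclusion is a sequence $v_n\to\infty$ in $(L\cap N)^\perp$ with $x_0Lv_n\cap\RFPM\subset X$ and, in the $\RFM$-case, $x_0Lv_n\cap\RFM\neq\emptyset$.

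The second half of the argument is the enlargement of $\{x_0Lv_n\}$ to a genuine closed orbit. Having $x_0Lv_n\cap\RFPM\subset X$ for a divergent sequence $v_n$ in $(L\cap N)^\perp$, and knowing that either $x_0\in F^*$ or $x_0Lv_n\cap\RFM\cap F^*\neq\emptyset$, I would apply Lemma \ref{onev} (or its $A$-invariant strengthening Lemma \ref{onev2}, since $X$ is $A$-invariant) to get $y(L\cap N)VA\subset X$ for some $y\in F^*\cap\RFM$ and a one-parameter subgroup $V<(L\cap N)^\perp$. Setting $\widehat U:=(L\cap N)V$, which is a connected unipotent subgroup of $N$ with $\dim\widehat U=\dim(L\cap N)+1>\dim(L\cap N)$, we obtain $y\widehat U\subset X$ and hence $y\widehat U A\subset X$. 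Since $\op{co-dim}_N(\widehat U)\le m$, by $(2.a)_m$ applied to $y$ (after replacing $y$ by a point of $\mathscr G(\widehat U_0,\cdot)$ if necessary, again via Lemma \ref{lem.frameshift}), the closure $\overline{y\widehat U A}$ equals $x_1\widehat L\cap\RFPM$ for a closed orbit $x_1\widehat L$ with $\widehat L\in\mathcal L_{\widehat U}$. The base point $x_1$ can be arranged to lie in $F^*\cap\RFM$ because $y$ does, using that $F^*\cap\RFM$ is $A$-invariant and open and appealing once more to Lemma \ref{xfss} and Lemma \ref{lem.R1}.

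\textbf{Main obstacle.} The delicate point is the dichotomy on how the approximating sequence in $X$ sits relative to $\RFM$, and verifying the precise hypotheses of Propositions \ref{cor.lin} and \ref{lem.lin}: one needs the approaching sequence to avoid $x_0L\cdot\op{N}(U)$ (not merely $x_0L$), one needs the chosen base point $x$ to be generic for all the one-parameter subgroups $U^{(i)}$ at once, and — crucially — in the $\RFM$-case one needs the sequence $x_i\in X\cap\RFM$ converging to $x$ which lies outside $x_0L\cdot\op{N}(U)$; establishing that such a sequence exists (rather than one merely outside $x_0L$) is where the geometry of $F^*$ and Lemma \ref{lem.frameshift} do the real work, since a sequence that stays inside $x_0L\cdot\op{N}(U)$ but outside $x_0L$ would only yield a conjugate $v^{-1}Lv$ and no dimension gain. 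I also expect the bookkeeping needed to guarantee $x_1\in F^*\cap\RFM$ (as opposed to merely $x_1\in\RFM$, or $x_1$ in $\partial F$) to require a separate short argument combining Lemma \ref{onev2}, Lemma \ref{aplus} and the openness of $F^*$.
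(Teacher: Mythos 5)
Your overall skeleton (arrange a base point of $x_0L$ generic for the generating one-parameter subgroups, feed an approximating sequence into an additional-invariance proposition to get $x_0Lv\cap\RFPM\subset X$ with $v\in(L\cap N)^\perp$, then conclude via Lemma \ref{onev2} and $(2.a)_m$) is the same as the paper's, and your second half is essentially correct: Lemma \ref{onev2} already supplies the point of $F^*\cap\RFM$, so no extra bookkeeping argument is needed there. The genuine gap is at the step you yourself flag. You assert that $X\ne x_0L\op{C}(H)\cap F$ forces a sequence in $X$ accumulating on the generic point from \emph{outside} $x_0L\cdot\op{N}(U)$, and you hope that Lemma \ref{lem.frameshift} and the geometry of $F^*$ will produce it. Neither does. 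The hypothesis only gives $x\notin x_0L\op{C}(H)$, hence approximations $xw_i=x_0\ell_ir_i$ with $w_i\in H'$, $\ell_i\in L$, $r_i\in\exp\frak l^\perp$ and $r_i\notin\op{C}(H)$; nothing prevents every such transverse component from lying in $\op{N}(U)$, for instance along $(L\cap N)^\perp$-directions, which normalize $U$ but do not centralize $H$. Lemma \ref{lem.frameshift} only moves points into $\RFM$ while staying close; it does not move the transverse direction out of $\op{N}(U)$. Since Proposition \ref{cor.lin} requires $r_i\notin\op{N}(U)$ and Proposition \ref{lem.lin} requires $x_i\notin x_0L\op{N}(U)$, both branches of your dichotomy stall in exactly the case you identify, and (as you note) such a sequence would only produce a conjugate of $L$ with no dimension gain.

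The paper's resolution is a different idea absent from your proposal: Lemma \ref{hen} together with a switch between $U$ and $U^+$. Since $r_i\to e$ in $\exp\frak h^\perp$ with $r_i\notin\op{C}(H)$, after passing to a subsequence either $r_i\notin\op{N}(U)$ for all $i$, or $r_i\notin\op{N}(U^+)$ for all $i$; in the second case one runs the same argument with $U^+$ in place of $U$. This is legitimate because $X=\cl{xH'}$ is invariant under all of $H'$ (hence under $U^+$), because $H(U^+)=H(U)$ so $F$, $F^*$ and the target collections are unchanged, and because the base point is arranged from the start, via Theorem \ref{thm.new5}(4), to lie in $\bigcap_i\mG(U^{(i)}_\pm,x_0L)$, i.e.\ generic for one-parameter subgroups generating \emph{both} $U$ and $U^+$ — a preparation your plan omits since you only require genericity for the $U^{(i)}$. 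With that switch, only Proposition \ref{cor.lin} is needed (Lemma \ref{lem.frameshift} puts the $L$-components $x_0\ell_iw_i'$ into $\RFM$, which is precisely the form of input \ref{cor.lin} is built for; your $\RFM$/non-$\RFM$ dichotomy and the use of \ref{lem.lin} are not needed here — the latter enters only in the proofs of $(2)_{m+1}$ and $(3)_{m+1}$), and the rest of your argument goes through as in the paper.
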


\begin{proof} 
Note that if $ X\subset x_0L\cdot\op{C}(H)$, then $X= x_0L\cdot\op{C}(H) \cap F$.
Indeed, this can be seen from the identity $x_0L\cdot \op{C}(H)\cap F=(x_0L\cap \RFPM)\op{C}(H)$.
Therefore we assume that $X\not\subset x_0L\cdot\op{C}(H)$.
First note that the hypothesis implies that $L\ne G$, and hence $\op{co-dim}_{L\cap N}(U)\le m$.
Let $U_-^{(1)},\cdots,U_-^{(\ell)}$ be one-parameter subgroups generating $U$.
Similarly, let $U^{(1)}_+,\cdots,U^{(\ell)}_+$ be one-parameter subgroups generating $U^+$.
By Theorem \ref{thm.new5}, 
$\bigcap_{i=1}^{\ell}\mathscr{G}(U^{(i)}_\pm,x_0L)\neq\emptyset.$
Therefore without loss of generality, we can assume 
\be\label{assume}x_0\in \bigcap_{i=1}^\ell\mathscr{G}(U^{(i)}_\pm,x_0L).\ee
Let us write $L=H(\til{U}) C$ for some $\til U<N$ and a closed subgroup $C$ of $\op{C}(H(\til U))$.
Note from the hypothesis that we have
\begin{equation*}
(x_0L\cap\op{RF}_+\M)\cdot H'\subset X.
\end{equation*}
Observe that \eqref{eq.v1} implies that $x\not\in x_0L\cdot H'=x_0L\cdot\op{C}(H)$. Since $C<\op{C}(H)$, we have $x\not\in x_0H(\til{U})$.
Now choose a sequence $w_i\in H'$ such that $xw_i\to x_0$, as $i\to\infty$.
Write $xw_i=x_0g_i$ where $g_i\to e$ in $G-LH'$. Let us write $g_i=\ell_ir_i$ where $\ell_i\in L$, and $r_i\in \exp \mathfrak l^\perp$.
In particular, $r_i\not\in\op{C}(H)$. Let $x_i=x_0\ell_i$, so that $x_ir_i\in X$.

 We  claim that we can assume that 
$x_i\in \RFM \cap x_0L$, $r_i\not\in\op{C}(H)$, and $x_ir_i\in X$.
Since $x_0\in F^*$, by Lemma \ref{lem.frameshift}, we can find $w_i'\to w'\in H$ such that $x_0\ell_iw_i'\in\op{RF}\M$, 
 and $x_0w' \in \bigcap_{i=1}^\ell \mathscr{G}(U^{(i)}_\pm,x_0L)$;
 hence
\begin{equation*}
\cl{x_0w'U}=x_0L\cap\op{RF}_+\M.
\end{equation*}
Writing $x_i'=x_0\ell_iw_i'$ and $r_i'=w_i'^{-1}r_iw_i'$, we have
\begin{equation*}
x_i'r_i'=xw_iw_i'\in X,
\end{equation*}
where $x_i'\to x_0w'$ in $x_0L\cap\op{RF}\M$, and $r_i'\to e$ in $ \exp \mathfrak l^\perp$.
Since $F^*$ is $H'$-invariant, we have $x_0w'\in F^*$.
Since $F^*$ is open and $x_0w'\in F^*$, it follows that $x_i'\in X\cap\op{RF}\M\cap F^*$ for sufficiently large $i$.
Note that $r_i'\not\in\op{C}(H)$, as $r_i\not\in\op{C}(H)$.
This proves the claim.
 
We may assume $r_i\notin \op{N}(U)$ for all $i$, up to switching the roles of $U$ and $U^+$, by Lemma \ref{hen}.
Note that  $x_i\to x_0$ in $\RFM\cap x_0L$ and $x_0$ satisfies \eqref{assume}. As we are assuming $(2)_m$, and $(3)_m$, we may now apply 
Proposition \ref{cor.lin} to the sequence $x_0\ell_ir_i\to x_0$ to obtain
 a non-trivial element  $v\in \til{U}^\perp$ such that
$$x_0L v \cap\op{RF}_+\M \subset X.$$ 
Since $x_0\in F^*\cap \RFM$, it follows from Lemma \ref{onev2} that
 there exist $x_2\in F^*\cap \RFM$ and a connected closed subgroup
$\widehat U<N$ properly containing $L\cap N$ such that
$$x_2\widehat UA\subset X .$$ 
Since $\op{co-dim}_N(\widehat U)\leq m$, it remains to apply $(2.a)_m$ to finish the proof of the proposition.
\end{proof}

\subsection*{Proof of $(1)_{m+1}$}
Combining Propositions \ref{closed} and \ref{enlarge}, we now prove:
\begin{theorem}\label{lem.indRA1}
If $(2)_m$ and $(3)_m$ are true,
then $(1)_{m+1}$ is true.
\end{theorem}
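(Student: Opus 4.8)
\textbf{Proof proposal for Theorem \ref{lem.indRA1}.}
The plan is to assemble the two propositions of this section into an induction-on-dimension argument for a single $H(U)$-orbit closure. Fix a non-trivial connected proper subgroup $U<N$ with $\op{co-dim}_N(U)=m+1$, conjugated so that $U<N\cap\check H$. Given $x\in\RFM$, if $x\in\partial F$ then the conclusion is already supplied by Theorem \ref{thm.ratner}, so I may assume $x\in F^*\cap\RFM$. By Proposition \ref{hpu} it suffices to produce a closed orbit $xL$ with $L\in\mathcal L_U$ and $\cl{xH'}=xL\op{C}(H)\cap F$; and if $xH'$ is already closed, there is nothing to do, so assume $X:=\cl{xH'}\ne xH'$.

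First I would apply Proposition \ref{closed} (which uses $(2)_m$, itself available by hypothesis): this yields a closed orbit $x_0L$ with $x_0\in F^*\cap\RFM$, $L\in\mathcal L_U$, and $x_0L\cap\RFPM\subset X$, with $\op{dim}(L\cap N)\ge\op{dim}U$. Now run the enlargement loop. At a generic stage I have a closed orbit $x_jL_j$ with $x_j\in F^*\cap\RFM$, $L_j\in\mathcal L_{U_j}$ for some $U<U_j<N$, and $x_jL_j\cap\RFPM\subset X$. If $X=x_jL_j\op{C}(H)\cap F$, I am done: take $L:=L_j$. Otherwise the hypotheses of Proposition \ref{enlarge} are met (it needs $(2)_m$ and $(3)_m$, both available), so I obtain a new closed orbit $x_{j+1}\widehat L_{j+1}$ with $x_{j+1}\in F^*\cap\RFM$, $\widehat L_{j+1}\in\mathcal L_{U_{j+1}}$ for some $U_{j+1}<N$ with $\op{dim}U_{j+1}>\op{dim}(L_j\cap N)$, and $x_{j+1}\widehat L_{j+1}\cap\RFPM\subset X$. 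Since $\op{dim}(L_j\cap N)$ strictly increases at each step and is bounded by $\op{dim}N=d-1$, the loop terminates after finitely many steps at some closed orbit $x_*L_*$ with $L_*\in\mathcal L_U$ (note $L_*\in\mathcal L_U$ because each enlargement group contains $U$), satisfying $x_*L_*\cap\RFPM\subset X$ and $X=x_*L_*\op{C}(H)\cap F$.

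It remains to convert this into the statement $\cl{xH'}=xL\cap F$ for a single $L\in\mathcal L_U$ with $xL$ closed, rather than $x_*L_*\op{C}(H)\cap F$ based at a possibly different point $x_*$. Since $x\in X=x_*L_*\op{C}(H)\cap F$ and $F^*\subset\RFM\cdot H$, one has $x\in x_*L_*\op{C}(H)$, so after absorbing an element of $\op{C}(H)\subset H'$ I may take $x\in x_*L_*$, whence $x_*L_*=xL$ for $L:=L_*$; the orbit $xL$ is closed and $\cl{xH'}=xL\op{C}(H)\cap F$. Finally Proposition \ref{hpu} upgrades this to $\cl{xH}=xL'\cap F$ for $L'=H(\widetilde U)C\in\mathcal L_U$, which is exactly $(1)_{m+1}$ — here I should check that $L'$ still lies in $\mathcal L_U$, i.e. that $U<\widetilde U$, which holds since $U<L_*\cap N\subset\widetilde U$.

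The main obstacle I anticipate is not any single step but the bookkeeping that keeps every intermediate orbit based at a point of $F^*\cap\RFM$: this is what makes the enlargement proposition applicable and what is genuinely new beyond the finite-volume case, and it relies on Lemma \ref{lem.frameshift} and Lemma \ref{onev2} being invoked with the correct genericity hypotheses at each stage. A secondary point requiring care is verifying, when $X$ first fails to equal $x_jL_j\op{C}(H)\cap F$, that $X\not\subset x_jL_j\op{C}(H)$ so that Proposition \ref{enlarge} genuinely produces a strictly larger group; this is handled inside \ref{enlarge} itself, but one must make sure the dichotomy ``$X=x_jL_j\op{C}(H)\cap F$ or the hypotheses of \ref{enlarge} hold'' is exhaustive, which it is since $x_jL_j\op{C}(H)\cap F$ is closed and contains $x_jL_j\cap\RFPM$.
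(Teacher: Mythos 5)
Your argument is correct and is essentially the paper's own proof: reduce to $x\in F^*\cap\RFM$ with $xH'$ non-closed, produce a closed orbit via Proposition \ref{closed}, iterate Proposition \ref{enlarge} (using $\mathcal L_{\widehat U}\subset\mathcal L_U$) until the dimension must stabilize, and finish with Proposition \ref{hpu}. The only addition is your explicit rebasing from $x_*$ to $x$ by absorbing an element of $\op{C}(H)$, which the paper leaves implicit and which is harmless since such an element centralizes $H$ and its conjugation preserves $\mathcal L_U$.
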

\begin{proof} Recall that we only need to consider the case
$X=\overline{xH'}$ where $x\in F^*$ and $xH'$ is not closed in $F^*$.
By Proposition \ref{prop.R6},
 there exists $x_0\in F^*\cap \RFM$ and $L\in\cal{L}_U$ such that $x_0L$ is closed and
\begin{equation*}
x_0L\cap\op{RF}_+\M\subset X.
\end{equation*}
Since $X$ is $H'$-invariant, it follows
\begin{equation}\label{eq.v2}
(x_0L\cap\op{RF}_+\M)\cdot H'\subset X.
\end{equation}
Note that $(x_0L\cap\op{RF}_+\M)\cdot H'=x_0L\cdot\op{C}(H)\cap F$ is a closed set.
We may assume the inclusion in \eqref{eq.v2} is proper, otherwise we have nothing further to prove.
Then by Proposition \ref{prop.B2'}, there exists $\widehat L\in\cal{L}_{\widehat U}$ for some $\widehat U<N$ properly containing $L\cap N$, and a closed orbit $x_1\widehat L$ with $x_1\in F^*\cap \RFM$ such that
$
x_1\widehat L\cap\op{RF}_+\M\subset X.
$ If
\begin{equation*}
(x_1\widehat L\cap\op{RF}_+\M)\cdot\op{C}(H)\neq X,
\end{equation*}
then we can apply Proposition \ref{prop.B2'} on
\begin{equation*}
x_1\widehat L\cap\op{RF}_+\M\subset X,
\end{equation*}
as $\cal{L}_{\widehat U}\subset\cal{L}_U$.
Continuing in this fashion, the process terminates in a finite step for a dimension reason, and hence
\begin{equation*}
X=(x_1\til L\cap\op{RF}_+\M)\cdot H'=x_1\til L\cdot\op{C}(H)\cap F
\end{equation*}
for some $\til L\in\cal L_U$, completing the proof.
\end{proof}

\section{$U$ and $AU$-orbit closures: proof of $(2)_{m+1}$}\label{s:2}
In this section, we fix
  a closed orbit $x_0\widehat L$  for $x_0\in F^*$ and $\widehat L\in\cal L_U$.
   Let $U< \widehat L\cap N$ be  a connected closed subgroup with $ \op{co-dim}_{\widehat L\cap N}U\le m+1$.
By replacing $U$ and $\widehat L$ by their conjugates using an element $m\in M$,
we may assume that $$U\subset \widehat L\cap  \check H \cap N.$$

We keep the same notation $H, F, \partial F, F^*$ etc from section \ref{s:1}.
If $x\in \RFPM\cap \partial F$ (resp. if $x\in \RFM\cap \partial F$), then $(2.a)$ (resp. $(2.b)$) follows from Theorem \ref{thm.ratner}.

We fix $x\in \RFM \cap x_0\widehat L\cap F^*$, and
set \be\label{as3} X:=\overline{xU}\text{ and assume that } X \ne x_0\widehat L\cap \RFPM.\ee
This assumption implies that $U$ is a proper connected closed subgroup of $\widehat L\cap N$ and hence
 $\op{dim}(\widehat L\cap N)>\op{dim} U \ge 1$.

By Proposition \ref{SAVE},
either $x_0\widehat L$ is compact or $ \mathscr{S}(U,x_0\widehat L)$ contains
a compact orbit $zL_0$ with $L_0\in \mathcal L_U$.
If $x_0\widehat L$ is compact, then $(2)_{m+1}$ follows from Theorem \ref{thm.ratner}. Therefore
we assume  in the rest of the section that
\be\label{compact} \text{ $\mathscr{S}(U,x_0\widehat L)$ contains
a compact orbit $zL_0$ with $L_0\in \mathcal L_U$. }\ee

\begin{lemma}\label{lem.AUbarmeetsSU}
Assume that $(1)_{m+1}$ and $(2)_m$ hold.
Then $$\cl{xAU}\cap \mathscr{S}(U,x_0\widehat L)\ne \emptyset .$$
\end{lemma}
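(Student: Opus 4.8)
The plan is to reduce the statement to the classification of $H(U)$-orbit closures together with the compact orbit furnished by \eqref{compact}. Recall that $x\in\RFM\cap x_0\widehat L\cap F^*$, that $\cl{xAU}$ is $AU$-invariant (indeed $AU=UA$, so $w(au)=(wa)u\in\cl{xAU}$ for $w\in\cl{xAU}$), and that the closed orbit $x_0\widehat L$ is, via the canonical projection (Proposition \ref{count}, cf.\ \S\ref{s:ri}), the frame bundle of a lower-dimensional convex cocompact hyperbolic manifold with Fuchsian ends, inside which $U$ sits with co-dimension $\op{co-dim}_{\widehat L\cap N}(U)\le m+1$. First I would invoke $(1)_{m+1}$, applied in this localized form, to write $\cl{xH(U)}=xL\cap F_{H(U)}$ for a closed orbit $xL$ with $L\in\mathcal{L}_U$. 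Since $x\in xL\cap F^*$ and $F^*$ is open while $\cl{xH(U)}\subseteq x_0\widehat L$, an application of Lemma \ref{LLL} forces $L\subseteq\widehat L$.

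Next I would split into two cases. If $L\subsetneq\widehat L$, then $L_{nc}\subsetneq\widehat L_{nc}$ by Lemma \ref{sin3}, and as $xL$ is a closed orbit with $\op{Stab}_L(x)$ Zariski dense in $L$ (Proposition \ref{count}) and $U\subseteq L$, the witness $W=L$ shows directly that $x\in\mathscr{S}(U,x_0\widehat L)$. Since $x\in\cl{xAU}$, this already proves the lemma.

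The remaining, substantive case is $L=\widehat L$, i.e.\ $\cl{xH(U)}=x_0\widehat L\cap F_{H(U)}$. Here I would use the identity $\cl{xH(U)}=\cl{xAU}\cdot(K\cap H(U))$, which follows from the Iwasawa decomposition $H(U)=UA(K\cap H(U))$, the $AU$-invariance of $\cl{xAU}$, and compactness of $K\cap H(U)$. By \eqref{compact} there is a compact orbit $zL_0\subseteq\mathscr{S}(U,x_0\widehat L)$ with $L_0\in\mathcal{L}_U$; since $L_0\in\mathcal{L}_U$ we have $H(U)\subseteq L_0$, so $zL_0$ is $H(U)$-invariant, and being compact it lies in $\RFM\subseteq F_{H(U)}$, while it lies in $x_0\widehat L$ by definition of $\mathscr{S}(U,x_0\widehat L)$. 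Hence $zL_0\subseteq x_0\widehat L\cap F_{H(U)}=\cl{xAU}\cdot(K\cap H(U))$. Writing any $p\in zL_0$ as $p=qk$ with $q\in\cl{xAU}$ and $k\in K\cap H(U)\subseteq L_0$ gives $q=pk^{-1}\in zL_0$, so $q\in zL_0\cap\cl{xAU}\subseteq\mathscr{S}(U,x_0\widehat L)\cap\cl{xAU}$, completing the proof. (When $\op{co-dim}_{\widehat L\cap N}(U)\le m$ one can alternatively argue at once: either $x\in\mathscr{S}(U,x_0\widehat L)$, or $x$ is generic, in which case $\cl{xU}=x_0\widehat L\cap\RFPM$ by Lemma \ref{lem.gp}(2), contradicting \eqref{as3}.)

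I expect the only delicate point — hence the main obstacle — to be the bookkeeping behind the localization: verifying that $(1)_{m+1}$ may legitimately be applied inside $x_0\widehat L$ with the stated co-dimension count, and that $F^*_{H(U)}$, $\RFM$, and $\mathscr{S}(U,x_0\widehat L)$ restrict compatibly to the sub-frame-bundle $x_0\widehat L$. Once that is in place, everything else is soft.
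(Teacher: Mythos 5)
Your proposal is correct and follows essentially the same route as the paper's proof: invoke $(1)_{m+1}$ to get $\cl{xH(U)}=xL\cap F_{H(U)}$, use Lemma \ref{LLL} to force $L\subset\widehat L$, split via Lemma \ref{sin3} into $L\subsetneq\widehat L$ (where the orbit closure itself lies in $\mathscr{S}(U,x_0\widehat L)$) and $L=\widehat L$ (where the compact orbit from \eqref{compact} together with $\cl{xH(U)}=\cl{xAU}(K\cap H(U))$ produces the intersection point). The only cosmetic difference is that in the proper case you place $x$ itself in the singular set with witness $W=L$, while the paper notes $\cl{xAU}\subset xL\subset\mathscr{S}(U,x_0\widehat L)$; both rest on the same Zariski-density observation.
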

\begin{proof} 
Since $(1)_{m+1}$ is true, we have
$$\cl{xH}= x Q\cap F$$
for some $Q\in \mathcal L_U$ such that $xQ$ is closed.
By Lemma \ref{LLL}, $Q<\widehat L$. It follows from Lemma \ref{sin3} that
either $Q=\widehat L$ or $\op{dim} (Q\cap N )<\op{dim}(\widehat L\cap N)$.
Suppose that $Q=\widehat L$. By \eqref{compact}, there exists a compact orbit $zL_0\subset \mS(U, x_0\widehat L)$ 
for some $L_0\in \mathcal L_U$. 
On the other hand, $x_0\widehat L\cap F=\cl{xH}=\overline{xAU} (K\cap H)$. 
Hence for some $k\in K\cap H$, $zk\in \overline{xAU}$.
Since $H\subset L_0$, $zk\in zL_0$.  So $\overline{xAU}$ intersects $zL_0$, proving the claim.
If $\op{dim} (Q\cap N) <\op{dim}(\widehat L\cap N)$.
 then $\overline{xAU}\subset xQ\subset \mS(U, x_0\widehat L)$.
\end{proof}

\begin{lemma}\label{xUone}
Assume that $(1)_{m+1}$ and $(2)_m$ hold.
Then $$\cl{xU}\cap \mathscr{S}(U,x_0\widehat L) \ne \emptyset .$$ \end{lemma}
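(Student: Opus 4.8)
We already know from Lemma \ref{lem.AUbarmeetsSU} that $\overline{xAU}$ meets $\mathscr S(U,x_0\widehat L)$; the goal now is to upgrade this to a statement about $\overline{xU}$ itself. The natural route is to relate $\overline{xAU}$ and $\overline{xU}$ via the $A$-action and the structure of $\overline{xU}$ as a $U$-invariant set, and to exploit the compact orbit $zL_0\subset\mathscr S(U,x_0\widehat L)$ guaranteed by \eqref{compact}. I would first record that $X=\overline{xU}$ is $A$-invariant on a large part of itself: by Theorem \ref{thm.new5}(1) applied inside $x_0\widehat L$, any $U$-minimal subset $Y\subset X$ with respect to $\RFM$ meeting $F^*$ is $A$-invariant (this is Proposition \ref{prop.new6} in the ambient closed orbit $x_0\widehat L$, which is legitimate since $\widehat L\in\mathcal L_U$). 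Since $x\in\RFM\cap F^*$ and $X=\overline{xU}$, one can choose such a $U$-minimal $Y\subset X$ with $Y\cap F^*\cap\RFM\neq\emptyset$ exactly as in the proof of Theorem \ref{thm.new5}: if $Y$ were contained in $\partial F$ then $X$ would be too, contradicting $x\in F^*$ together with minimality of $x_0\widehat L$ under $\widehat L_{nc}$.

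Once $Y\subset X$ is $A$-invariant, we have $YA=Y\subset X$, so $\overline{xAU}\supset\overline{yAU}=\overline{yU}A$ — but more usefully, since $Y$ is $A$-invariant, $\overline{xAU}$ and $X=\overline{xU}$ have the same $U$-minimal subsets with respect to $\RFM$, and in particular $Y\subset\overline{xU}$ is already $AU$-invariant. The point is then: Lemma \ref{lem.AUbarmeetsSU} gives a point $w\in\overline{xAU}\cap\mathscr S(U,x_0\widehat L)$; I would argue that one can in fact take $w$ inside a $U$-minimal subset $Y'$ of $\overline{xAU}$ with respect to $\RFM$. Indeed, $\mathscr S(U,x_0\widehat L)$ is a closed $U$-invariant (even $\op{N}(U)$-invariant, hence $AU$-invariant) subset, so $\overline{xAU}\cap\mathscr S(U,x_0\widehat L)$ is a nonempty closed $AU$-invariant set; pick a $U$-minimal subset $Y'$ of it with respect to $\RFM$. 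Since every $U$-minimal subset of $\overline{xAU}$ with respect to $\RFM$ meeting $F^*$ is $A$-invariant, and since $Y'$ is contained in the $A$-invariant set $\mathscr S(U,x_0\widehat L)\cap\overline{xAU}$, the minimal set $Y'$ is contained in $\overline{xU}$: this is because $\overline{xAU}=\overline{\overline{xU}\,A}$ and $\overline{xU}$ meets $Y'$ (as $\overline{xU}$ is itself $A$-invariant on a neighborhood of $Y'$, or because $Y'A=Y'$ forces $Y'$ to be reached already under $U$ from any of its points in $\RFM$, which lie in $\overline{xU}$). Thus $Y'\subset\overline{xU}\cap\mathscr S(U,x_0\widehat L)$, giving the claim.

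I should be careful about the one place where the above sketch is not automatic: the step ``$Y'\subset\overline{xU}$''. The honest justification runs through the compact orbit $zL_0$ of \eqref{compact}. If $w\in\overline{xAU}\cap zL_0$, then $zL_0$ is a closed (indeed compact) $L_0$-invariant subset of $x_0\widehat L$ with $L_0\in\mathcal L_U$, and $\overline{xU}w^{-1}\cdot(\text{something})$ — more precisely, $\overline{xAU}\cap zL_0$ is a nonempty closed $AU$-invariant subset of the compact homogeneous space $zL_0$; by Theorem \ref{thm.ratner}(1)--(3) (Ratner's theorem in the compact orbit $zL_0$, applicable since $L_0$ is generated by unipotents modulo its compact factor and $zL_0$ is compact) the closure $\overline{wU}$ equals $wL_1$ for some $L_1\in\mathcal Q_U$ with $L_1\subset L_0$, and $\overline{wAU}=\overline{wH(U)}$; running $a_{-t}$ to recover $\overline{xU}$ from $\overline{xAU}$ along the lines of Lemma \ref{lem.maximal}, one obtains a point of $\overline{xU}$ lying in $zL_0\subset\mathscr S(U,x_0\widehat L)$. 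The main obstacle I anticipate is precisely this interchange between $\overline{xAU}$ and $\overline{xU}$: recovering an $A$-free orbit closure statement from an $A$-augmented one requires the compact-orbit input \eqref{compact} and a $a_{-t}$-limiting argument to stay inside $\RFM$, and one must verify that the limit point still lies in $\mathscr S(U,x_0\widehat L)$ rather than escaping to a larger orbit — which it does because $\mathscr S(U,x_0\widehat L)$ is closed and $A$-invariant. Everything else (choosing $U$-minimal sets with respect to $\RFM$, the $k$-thick return property, $A$-invariance of minimal sets via Proposition \ref{prop.new6}) is by now standard in this paper.
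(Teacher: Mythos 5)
There is a genuine gap, and it sits exactly where the real difficulty of this lemma lies. Your argument rests on the claim that any $U$-minimal subset $Y\subset X=\cl{xU}$ (with respect to $\RFM$) meeting $F^*$ is $A$-invariant, citing Proposition \ref{prop.new6} / Theorem \ref{thm.new5}(1) ``applied inside $x_0\widehat L$''. But those results are proved only when $U$ is the \emph{full} horospherical subgroup $L\cap N$ of the closed orbit in question: their proof goes through Lemma \ref{lem.fullhoro}, which requires the unipotent blow-up to land in $A$, and this is available precisely because $U$ is horospherical in $H(U)$. In the present lemma $\op{co-dim}_{\widehat L\cap N}(U)=m+1\ge 1$, so $U$ is a proper subgroup of $\widehat L\cap N$ and no such $A$-invariance of $Y$ is available. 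What the general machinery (Proposition \ref{prop.YLY} together with Lemma \ref{rmk.1psg}) actually gives is only invariance of $Y$ under an unbounded one-parameter subsemigroup $S<AU^\perp\op{C}_2(U)\cap\widehat L$, and $S$ may well be a \emph{unipotent} semigroup $V^+<U^\perp$. In that case there is no route back to Lemma \ref{lem.AUbarmeetsSU} at all: the paper handles it by producing a point $y_0$ with $y_0UV\subset X(\op{C}_2(U)\cap\widehat L)$ and invoking the induction hypothesis $(2)_m$ for the enlarged unipotent group $UV$. Only in the other case, $S=v^{-1}A^+v$, does one recover an $A$-orbit inside (a bounded translate of) $X$ and then quote Lemma \ref{lem.AUbarmeetsSU}. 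Your proposal collapses both cases into the first one by an unjustified $A$-invariance claim, and in doing so never uses $(2)_m$ on a larger unipotent group, which is indispensable here.

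The fallback you offer for the weak step is also not sound as stated. Passing from a point $w\in\cl{xAU}\cap\mS(U,x_0\widehat L)$ to a point of $\cl{xU}\cap\mS(U,x_0\widehat L)$ by ``running $a_{-t}$ along the lines of Lemma \ref{lem.maximal}'' does not work: Lemma \ref{lem.maximal} is applied to sets that are already known to be $A$-invariant (orbit closures of $H(U)$ or $AU$), whereas $\cl{xU}$ is exactly the set whose $A$-invariance you do not have; a point $w=\lim x a_{t_i}u_i$ cannot in general be pushed back into $\cl{xU}$ because the times $t_i$ vary with the limit. Likewise the parenthetical justifications for ``$Y'\subset\cl{xU}$'' (that $\cl{xU}$ is ``$A$-invariant on a neighborhood of $Y'$'', or that $Y'A=Y'$ forces $Y'$ to be reached under $U$ alone) are assertions, not arguments, and they beg the question. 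If you want to salvage the proof, follow the paper's structure: first reduce to $X\subset F^*$ using $(x_0\widehat L\cap\RFPM)-F^*\subset\mS(U,x_0\widehat L)$, then take a $U$-minimal $Y\subset X$, apply Proposition \ref{prop.YLY} and Lemma \ref{rmk.1psg}, and treat the two shapes of the semigroup $S$ separately — the diagonalizable case via Lemma \ref{lem.AUbarmeetsSU}, the unipotent case via $(2)_m$ applied to $UV$.
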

\begin{proof}
Since  \be\label{nothing}\left(x_0\widehat L\cap\op{RF}_+\M\right) -F^*\subset\mathscr{S}(U,x_0\widehat L),\ee it suffices to consider
the case when $X:=\cl{xU}\subset F^*$.
Let $Y\subset X$ be a $U$-minimal set with respect to $\RFM$.
Since $Y\subset F^*$, by Proposition \ref{prop.YLY}, there exists an unbounded one-parameter subsemigroup $S$ inside $AU^\perp \op{C}_2(U)\cap\widehat L$ such that $YS\subset Y$.
In view of Lemma \ref{rmk.1psg},
we could remove $\op{C_2}(U)$-component of $S$
so that $S$ is either of the following  \begin{itemize}
  \item $v^{-1}A^+v$ for a one-parameter semigroup $A^+\subset A$ and $v\in U^\perp\cap\widehat L$; 
  \item
   $V^+$ for a one-parameter semigroup $V^+\subset U^\perp\cap\widehat L$,
  \end{itemize}
  and
  $$YS\subset X(\op{C}_2(U)\cap \widehat L).$$
   Since $\mathscr{S}(U,x_0\widehat L)$ is invariant by $N\op{C}_2(U)\cap\widehat L$,
  it suffices to show that $$X(N \op{C}_2(U)\cap \widehat L)\cap \mathscr{S}(U,x_0\widehat L)\ne \emptyset .$$
If $S=v^{-1}A^+v$,  then $Yv^{-1}A^+\subset Xv^{-1}(\op{C}_2(U)\cap\widehat L)$.
Choose $y\in Y$.
We may assume that $yv^{-1}\in F^*$ by \eqref{nothing}.
Then, replacing $y$ with an element in $yU$ if necessary, we may assume $yv^{-1}\in\op{RF}\M\cap F^*$.
Choose a sequence $a_n\to\infty$ in $A^+$. Then $yv^{-1}a_n $ converges to some $y_0\in \RFM$
by passing to a subsequence. Since $
\liminf a_{n}^{-1}A^+=A$, and
\begin{equation*}
(yv^{-1} a_n)(a_n^{-1}A^+)\subset Xv^{-1}(\op{C}_2(U)\cap\widehat L),
\end{equation*}
we obtain that
 $$y_0A\subset Xv^{-1}(\op{C}_2(U)\cap \widehat L).$$
Since  $\cl{y_0AU}\subset Xv^{-1} (\op{C}_2(U)\cap\widehat  L)$ and $\cl{y_0AU}$ meets $\mathscr{S}(U,x_0\widehat L)$ by  Lemma \ref{lem.AUbarmeetsSU}, the claim follows.

Next, assume that  $S=V^+$, so that $YV^+\subset\ X\op{C}_2(U)\cap\widehat L$.
Let $v_n\to\infty$ be a sequence in $V^+$.
We have  $Yv_n\subset X\subset F^*$.
Together with the fact $Yv_n$ is $U$-invariant, this implies $Yv_n$ meets $\RFM$.
Note that $$Yv_n(v_n^{-1}V^+)\subset X (\op{C}_2(U)\cap\widehat L).$$
Choose $y_n\in Yv_n\cap\RFM$.
As $\RFM$ is compact, $y_n$ converges to some $y_0\in\RFM$, by passing to a subsequence, and hence 
$$y_0UV\subset X (\op{C}_2(U)\cap\widehat L) .$$
Since $\op{co-dim}_N(UV)\leq m$, the conclusion follows from $(2)_m$.
\end{proof}

\begin{lemma}\label{lem.UbarmeetsSU}\label{onetwo}
Assume that $(1)_{m+1}$ and $(2)_m$ hold.
Then  $$\cl{xU}\cap \mathscr{S}(U,x_0\widehat L)\cap F^*\ne \emptyset .$$
  \end{lemma}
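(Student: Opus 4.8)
The plan is to combine the previous two lemmas: by Lemma \ref{xUone} we know $\cl{xU}$ meets $\mathscr{S}(U,x_0\widehat L)$, and we need to promote this to an intersection point lying inside the open set $F^*$. The issue is that $\mathscr{S}(U,x_0\widehat L)$ contains the ``boundary part'' $(x_0\widehat L\cap\RFPM)-F^*$ as well as various intermediate closed orbits, so a priori the point produced by Lemma \ref{xUone} could sit on $\partial F$. So the real content is to show that whenever $\cl{xU}$ touches the singular set, it touches it at a point of $F^*$ as well.

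First I would take a point $y\in\cl{xU}\cap\mathscr{S}(U,x_0\widehat L)$ given by Lemma \ref{xUone}. If already $y\in F^*$ we are done, so assume $y\in\partial F$. Since $U\subset\check H\cap N$ we have $\partial F\cap\RFPM=\BFM\cdot\check V^+\cdot\op{C}(H(U))$, so $y$ lies over a Fuchsian end, and by Theorem \ref{b-ratner} (applied to the $H(U)$-orbit, or rather directly to $\cl{yU}$ via Theorem \ref{thm.ratner}) $\cl{yU}=yL'$ is a closed orbit of some $L'\in\cal Q_U$ contained in $\BFM\cdot\op{C}(H(U))$, in particular a \emph{compact} orbit after adjusting $y$ by an element of $\op{C}(H(U))$. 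This compact orbit $yL'$ is contained in $\cl{xU}=X$. The plan is then to use the additional-invariance machinery: since $X$ is a closed $U$-invariant set strictly containing the compact orbit $yL'$ but (by our standing assumption \eqref{as3}) not all of $x_0\widehat L\cap\RFPM$, and since $yL'$ sits inside $\BFM\cdot\check V^+$, I would exhibit a sequence $x_n\to y'$ in $X$ with $y'\in yL'$ generic for the relevant one-parameter subgroups, approaching $y'$ \emph{transversally} to $L'$ — concretely, by the same device as in Proposition \ref{accdd}, producing points $z_1v$ with $z_1\in\BFM$ and $v\in\check V-\{e\}$, which automatically lie in $\RFM\cap F^*$ by Lemma \ref{lem.R0}. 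That already gives a point of $X\cap\mathscr{S}(U,x_0\widehat L)\cap F^*$, because $z_1v\in F^*$ and $\cl{z_1vU}=(z_1v)(v^{-1}L'v)$ with $v^{-1}L'v\in\cal Q_U-\cal L_U$, hence $z_1v$ lies in a proper closed orbit and thus in the singular set relative to $x_0\widehat L$.

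To run that argument I need to know that $X$ actually accumulates on $\partial F$ from inside $F^*$, i.e. that $R:=X\cap F^*\cap\RFM$ is nonempty and its closure meets $\partial F$; the first is immediate since $x\in F^*\cap\RFM\cap X$, and for the second I would argue as in the proof of Proposition \ref{prop.CFtoC2}: if $X\cap F^*$ were closed in $F^*$ then, since $X$ itself meets $\partial F$ (at $y$), one extracts a sequence in $X\cap F^*$ converging to a $\BFM$-point, applies Lemma \ref{lem.geometric} to push it back into $\RFM$ while staying in $X$, and lands in $\partial F\cap\RFM$ as a limit of $F^*\cap\RFM$ points — exactly the hypothesis of Proposition \ref{accdd}. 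So the two cases ``$X\cap F^*$ closed in $F^*$'' and ``not closed'' both feed into the mechanism above. I expect the main obstacle to be the bookkeeping in the transversal-approach step: one must verify that the sequence $x_n\to y$ realizing $y\in\cl{xU}$ has a genuinely non-$\op{N}(U)$ transverse component (so that the unipotent blow-up Lemma \ref{lem.QR2} applies and yields a nontrivial $v\in\check V$), and that the resulting limit point still lies in $\cl{xU}$ rather than merely in some larger $\cl{xAU}$ — here one uses that $\cl{xU}$ is $U$-invariant and closed together with Theorem \ref{thm.new5}(1) to control minimal subsets, and the compactness of $z\check H$ to guarantee convergence of the $\BFM$-components, just as in \eqref{zone}. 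Once a point of $X\cap\mathscr{S}(U,x_0\widehat L)\cap F^*$ is produced, the lemma is proved.
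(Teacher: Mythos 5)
Your overall strategy is the paper's: apply Lemma \ref{xUone}, reduce to the case $y\in\partial F$, and then use the geometry near the Fuchsian end together with the unipotent blow-up Lemma \ref{lem.QR2} along thick return times to manufacture a point of the form $z_1v$ ($z_1\in\BFM$, $v\in\check V-\{e\}$) lying in $X\cap\RFM$, hence in $F^*$ by Lemma \ref{lem.R0}, and on a proper closed orbit. But there is a genuine gap in how you reach the configuration where that device applies. The point $y$ produced by Lemma \ref{xUone} lies only in $\RFPM\cap\partial F=\BFM\cdot\check V^+\cdot\op{C}(H)$, so $y=zv_0c_0$ with possibly $v_0\neq e$; in that case $y\notin\RFM$ (indeed $\cl{yU}=zL'v_0c_0$ is a \emph{translate} of a compact orbit by $v_0$, not a subset of $\BFM\op{C}(H)$ as you first assert). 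Consequently (i) Lemma \ref{lem.geometric} cannot be applied to the sequence $xu_i\to y$, since its hypothesis requires the limit to lie in $\RFM$; and (ii) your fallback in the last paragraph --- producing a sequence in $X\cap F^*$ converging to a point of $\BFM\op{C}(H)$ ``as in Proposition \ref{prop.CFtoC2}'' --- does not work, because that argument pulls a boundary point back into $\BFM$ using Theorem \ref{b-ratner} (i.e.\ the $A$-direction/$H'$-invariance of the closure), and $X=\cl{xU}$ is only $U$-invariant: there is no reason $X$ contains any point of $\partial F\cap\RFM=\BFM\op{C}(H)$ at all. So the hypothesis you need to run the Proposition \ref{accdd}-type blow-up (a sequence of $\RFM$-points of $X\cap F^*$ accumulating on a $\BFM\op{C}(H)$-point) is exactly what is missing when $v_0\neq e$.

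The paper resolves this by a translation-and-size-comparison trick that your proposal does not contain: one translates the approximating sequence out of $X$, setting $z_i:=xu_ic_0^{-1}v_0^{-1}\in\cl{xU}c_0^{-1}v_0^{-1}$, which converges to $z\in\BFM$ and lies in $F^*\cap\RFPM$; after correcting by Lemmas \ref{ru} and \ref{geo} to land in $\RFM$, one applies Lemma \ref{lem.QR2} \emph{with its quantitative ``moreover'' clause} to produce $v\in\check V^-$ with $\norm{v}$ strictly larger than $\norm{v_0}$, and a point $z'v\in\RFM\cap\cl{xU}c_0^{-1}v_0^{-1}$. Translating back into $X$ gives the closed orbit $z'Q_1(vv_0)c_0\subset\cl{xU}$, and the size comparison is what guarantees $vv_0\neq e$ in $\check V^-$, so this orbit meets $F^*$ and consists of singular points. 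Without this mechanism (or a substitute) your argument only covers the case $v_0=e$. Two smaller points: the case where the transverse components $r_i$ lie in $\op{N}(U)$, which you flag as ``bookkeeping,'' is not covered by the blow-up and needs its own short argument (the paper's Case 1, which concludes directly that the relevant point already lies in $\mS(U,x_0\widehat L)\cap F^*$); and membership of $z_1v$ in $\mS(U,x_0\widehat L)$ should be justified by the properness of the closed orbit $\cl{z_1vU}$ inside $x_0\widehat L$ (using $X\neq x_0\widehat L\cap\RFPM$), not by the assertion $v^{-1}L'v\in\cal Q_U-\cal L_U$.
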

\begin{proof}
By Lemma \ref{xUone}, there exists $y\in\cl{xU}\cap\mathscr{S}(U,x_0\widehat L)$.
Hence by $(2)_m$, 
$$\overline{yU}=yL\cap \RFPM\subset \overline{xU}$$
for some  $L\in\cal{Q}_U$ properly contained in $\widehat L$.
Consider the collection of all subgroups $L\in \cal Q_U$ such that
$yL\subset \overline{xU}$ for some $y\in \RFPM$. Choose $L$ from this collection so that
$L\cap N$ has  maximal dimension.
If $yL\cap F^*\neq\emptyset$, then the claim follows.

Now suppose that $yL\subset \partial F$.
As $y\in \RFPM\cap \partial F$, we have
$$y=zv_0 c_0$$ for some $z\in \BFM$, $v_0\in \check V^+$ and $c_0\in \op{C}(H)$.
Since $y\in\cl{xU}$, there exists $u_i\in U$ such that $xu_i$ converges to $ y$ as $n\to\infty$.
Set $$z_i:=xu_ic_0^{-1}v_0^{-1}\in \cl{xU} c_0^{-1}v_0^{-1}$$
so, $z_i\to z.$
 As $v_0\in \check V^+$ and hence $v_0^{-1}\in \check V^-$ and $xu_i\in F^*$, we have $z_i\in F^*\cap \RFPM \subset \RFM \cdot U$.
By Lemma \ref{geo}, we may modify $z_i$ by elements of $U$ so that $z_i \in \RFM$ and $z_i$ converges to some $z_0\in z\check H$.
Write $z_i=z_0\ell_ir_i$ for some $\ell_i\in \check H$ and $r_i\in \exp \check{\mathfrak h}^\perp $ converging to $e$.
Since $z_i\in F^*$ and $z_0\ell_i \in \partial F$, we have $r_i \ne e$.
By Theorem \ref{thm.ratner},  we have $\cl{z_0\ell_i U} =z_0\ell_i L_i$ for some $L_i \in \mathcal Q_U $ contained in $\check H$.

\noindent{\bf Case 1: $r_i\in\op{N}(U)$ for some $i$}.
Then 
$$\cl{xU}=\cl{z_0 \ell_i r_i v_0c_0 U}=\cl{z_0\ell_i U} (r_i v_0 c_0)={z_0\ell_i L_i} (r_i v_0 c_0) .$$
As $\cl{xU}\ne x_0\widehat L$ by the hypothesis,
it follows that $x\in \mS(U, x_0\widehat L)\cap F^*$, proving the claim.

\noindent{\bf Case 2: $r_i\not\in\op{N}(U)$ for all $i$}.
Then
 there exists a one-parameter subgroup $U_0<U$  such that $r_i\not\in\op{N}(U_0)$ for all $i$, by passing to a subsequence.

By Lemma \ref{lem.QR2}, we can find $u_{t_i}\to \infty$ in $U_0$ so that
$z_i u_{t_i}\in \RFM$ and $u_{t_i}^{-1} r_i u_{t_i}$ converges to a non-trivial element $v\in \check V$, whose size is strictly bigger than
$\|v_0\|$.
As $z_0\ell_i u_{t_i}$ is contained in the compact subset $z_0\check H$, we may assume that $z_0\ell_i u_{t_i}$ converges to
some $z'\in z_0\check H$.
Hence $$z_i u_{t_i}= z_0\ell_i u_{t_i} (u_{t_i}^{-1} r_i u_{t_i}) \to z' v\in \RFM \cap\cl{xU} c_0^{-1}v_0^{-1}.$$
Since $z'\in \BFM$ and $z'v\in \RFM$, we have $v\in \check V^-$.

By Theorem \ref{thm.ratner}, $\overline{z' U}=z'Q_1$ for some $Q_1\in \cal Q_U$.
Since $z' vv_0c_0\in \cl{xU}$, we get $$\cl{xU}\supset z' Q_1 (v v_0) c_0. $$
Since the size of $v$  is larger than the size of $v_0$,
then $v v_0$ is a non-trivial element of $ \check V^-$. Since $z'Q_1\subset \BFM$,
 the closed orbit $z' Q_1 (v v_0) c_0$ meets $F^*$.
 Hence the claim follows.
\end{proof}

\begin{thm}\label{lem.indRA3}\label{twom}
Assume that $(1)_{m+1}$, $(2)_m$, and $(3)_m$ are true.
Then $(2)_{m+1}$ is true.
\end{thm}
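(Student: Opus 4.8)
\textbf{Proof plan for $(2)_{m+1}$.} The plan is to carry out the two-step ``Find'' and ``Enlarge'' strategy for $X:=\overline{xU}$, exactly parallel to the proof of $(1)_{m+1}$ but now driven by Proposition \ref{lem.linearization2} (Additional invariance II) instead of Proposition \ref{cor.lin}, and using the key geometric input \eqref{compact} that $\mathscr{S}(U,x_0\widehat L)$ contains a compact orbit $zL_0$ with $L_0\in\mathcal L_U$ (Proposition \ref{SAVE}). As reduced above, we may assume $x\in\RFM\cap x_0\widehat L\cap F^*$, that $X\neq x_0\widehat L\cap\RFPM$, that $U\subsetneq\widehat L\cap N\cap\check H$, and that \eqref{compact} holds. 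The base steps $(1)_{m+1},(2)_m,(3)_m$ are assumed.

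\textbf{Step 1 (Find a closed orbit of $L\in\mathcal Q_U$ in $X$ based at a point of $F^*$).} First I would invoke Lemma \ref{onetwo}: under $(1)_{m+1}$ and $(2)_m$ we have $\overline{xU}\cap\mathscr{S}(U,x_0\widehat L)\cap F^*\neq\emptyset$. Pick $y$ in this intersection. Since $\mathrm{co\text-dim}_{\widehat L\cap N}(U)\le m$ at $y$ (as $y$ lies in a proper intermediate closed orbit, using Lemma \ref{sin3} and the fact that $L\subsetneq\widehat L$ for the relevant $L$), apply $(2)_m$ to get a closed orbit $yL\subset\overline{xU}$ with $L\in\mathcal Q_U$, $L$ a proper subgroup of $\widehat L$, and $yL\cap\RFPM\subset X$. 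Among all such $(y,L)$ with $y\in F^*$, choose one with $\dim(L\cap N)$ maximal; by Lemma \ref{onev}/Lemma \ref{onev2} applied after conjugating $L$ into $\mathcal L_U$ via $\mathcal Q_U=\{vLv^{-1}:L\in\mathcal L_U,v\in U^\perp\}$, and using that $y\in F^*\cap\RFM\cdot U$, we may take the base point in $F^*\cap\RFM$ and $L\in\mathcal L_U$.

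\textbf{Step 2 (Enlarge).} If $xL\cap\RFPM=x_0\widehat L\cap\RFPM$ we are done (then $L\cap N=\widehat L\cap N$, $L=\widehat L$ by Lemma \ref{sin3} and Theorem \ref{tm}(1), contradicting our standing assumption unless equality holds). Otherwise suppose $x_0L\cap\RFPM\subsetneq X$ with $x_0\in F^*\cap\RFM$, $L\in\mathcal L_U$. Since $x\in\overline{xU}$ and $x\notin x_0L\cdot\op{N}(U)$ (this uses that $x$ is $U$-generic relative to the smaller orbit; if it did lie there Step 1 would give a larger orbit, contradicting maximality), there is a sequence $x_i\to x$ with $x_i\in X\cap\RFM-x_0L\cdot\op{N}(U)$ — here I use Lemma \ref{lem.frameshift} to push the approximating sequence into $\RFM$ while staying in $F^*$ and keeping genericity of the limit, exactly as in the proof of Proposition \ref{enlarge}. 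Now apply Proposition \ref{lem.lin} (Additional invariance II) to this sequence: since $(2)_m$ and $(3)_m$ hold, we obtain $v_j\to\infty$ in $(N\cap L)^\perp$ with $x_0Lv_j\cap\RFPM\subset X$ and $x_0Lv_j\cap\RFM\neq\emptyset$. By Lemma \ref{onev} we get $y_1\in\RFM$ and a one-parameter subgroup $V\subset(L\cap N)^\perp$ with $y_1(L\cap N)V\subset X$, hence $y_1(L\cap N)VA\subset X$ since $X=\overline{xU}$ need not be $A$-invariant — so here I would instead first pass to $\overline{xAU}$ if necessary, or use that $x\in F^*\cap\RFM$ and Lemma \ref{onev2} to upgrade $V^+$ to $V$ and land a genuinely larger unipotent orbit $y_1\widehat U$ with $\widehat U:=(L\cap N)V$ properly containing $L\cap N$. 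Since $\mathrm{co\text-dim}_{\widehat L\cap N}(\widehat U)\le m$, apply $(2.a)_m$ to conclude $y_1\widehat{\widehat L}\cap\RFPM\subset X$ for a closed orbit with $\widehat{\widehat L}\in\mathcal Q_{\widehat U}\subset\mathcal Q_U$ properly larger than $L$. This contradicts maximality in Step 1 unless $\widehat{\widehat L}=\widehat L$, in which case $X\supset x_0\widehat L\cap\RFPM$, forcing equality. Together with the case $x\in\partial F$ handled by Theorem \ref{thm.ratner} and the compact case handled by \eqref{compact}, this proves $(2.a)_{m+1}$; then $(2.b)_{m+1}$ follows because $\overline{xAU}\cdot(K\cap H)=\overline{xH}$ is classified by $(1)_{m+1}$, so $\overline{xAU}$ meets $\mathscr{G}(U,xL)$ for the appropriate $L$, reducing to $(2.a)_{m+1}$.

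\textbf{Main obstacle.} The delicate point is the passage, in Step 2, from the translated orbit $x_0Lv_j$ back to an honest \emph{larger} closed orbit based at a point of $F^*\cap\RFM$: I must guarantee simultaneously that (i) the limit point $y_1$ of $x_0Lv_j\cap\RFM$ lies in $F^*$ (via Lemma \ref{onev}, \ref{onev2}, \ref{aplus}, \ref{lem.epsiloncore3}), (ii) the semigroup $V^+\subset(L\cap N)^\perp$ promotes to a full one-parameter subgroup $V$ (needs $x_0\in F^*$, via Lemma \ref{vAv} and Lemma \ref{onev2}), and (iii) the new orbit is genuinely larger so the induction on $\dim(\widehat L\cap N)-\dim U$ strictly decreases, letting the enlargement terminate. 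Interlocking these three while keeping the base point generic for all the one-parameter subgroups $U^{(i)}$ — so that Proposition \ref{lem.lin} can be re-applied at the next stage — is exactly where the argument is most technical, and is the part I expect to require the most care.
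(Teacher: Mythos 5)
Your overall architecture matches the paper's: use Lemma \ref{onetwo} to produce an intermediate closed orbit $yL\cap\RFPM\subset\cl{xU}$ meeting $F^*$, maximize $\dim(L\cap N)$, enlarge via Additional invariance II plus Lemma \ref{onev} to contradict maximality, and deduce $(2.b)_{m+1}$ from $(1)_{m+1}$, Lemma \ref{lem.SHU} and Lemma \ref{lem.gp}. However, the execution of the enlargement step has a genuine gap. Proposition \ref{lem.lin} requires a sequence in $X\cap\RFM- yL\cdot\op{N}(U)$ converging to the point $y$ that is the base point of the intermediate closed orbit and lies in $\bigcap_i\mG(U^{(i)},yL)\cap F^*\cap\RFM$ (arranged via Theorem \ref{thm.new5}). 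You instead apply it to a sequence converging to the original point $x$, which in general does not lie on the intermediate orbit at all and is not covered by the hypotheses of Proposition \ref{lem.lin}. The correct sequence is $xu_i\to y$ with $u_i\in U$ (available since $y\in\cl{xU}$), pushed into $\RFM$ using $y\in F^*$; and here your appeal to Lemma \ref{lem.frameshift} is also not legitimate, because that lemma modifies by elements of $H(U)$, which is allowed only when the ambient set is $H'$-invariant (as in the proof of $(1)_{m+1}$); for $X=\cl{xU}$ one must modify by elements of $U$, i.e. use Lemma \ref{lem.geometric}/\ref{xfss}.

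Moreover, the dichotomy you replace by the claim ``$x\notin x_0L\cdot\op{N}(U)$, else maximality is contradicted'' is both misstated and insufficient. If $x$ (or some $xu_i$) does lie in $yL\cdot\op{N}(U)$, writing $xu_i=y\ell_ir_i$ with $r_i\in\op{N}(U)$, one does not get a larger orbit contradicting maximality; rather one concludes directly that $X=\cl{y\ell_iU}\,r_i=y\ell_iL'r_i\cap\RFPM$ by $(2)_m$, which is how $(2.a)_{m+1}$ is proved in that case. The genuine issue is about the approximating points $xu_i$, not about $x$ itself, and it must be handled as a separate (and concluding) case before Proposition \ref{lem.lin} can be invoked with $r_i\notin\op{N}(U)$. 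Finally, your hesitation about $A$-invariance (``pass to $\cl{xAU}$ if necessary'') is a wrong turn: passing to $\cl{xAU}$ would only place the enlarged orbit inside $\cl{xAU}$, not inside $\cl{xU}$, so it cannot contradict the maximality of $L\cap N$ among orbits contained in $\cl{xU}$. No $A$-invariance is needed: Proposition \ref{lem.lin} supplies a whole sequence $v_j\to\infty$ in $(L\cap N)^\perp$ with $yLv_j\cap\RFPM\subset X$, and Lemma \ref{onev} (which only needs $X$ closed and $U$-invariant, using Lemma \ref{vAv}) then yields $y_1(L\cap N)V\subset X$ with $y_1\in F^*\cap\RFM$, after which $(2)_m$ gives the contradiction.
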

\begin{proof} We first show $(2.a)_{m+1}$ holds for $X=\cl{xU}$. 
By Lemma \ref{lem.UbarmeetsSU} and  $(2)_m$, there exists a closed orbit
$yL$ with $y\in F^*$ and $L\in\cal Q_U$ 
such that
$$\cl{xU}\supset yL\cap\op{RF}_+\M$$ and
 $L\cap N\neq \widehat L\cap N$. We choose $L\in \cal Q_U$ so that
 $\op{dim}(L\cap N)$ is maximal.
 Note that $\op{co-dim}_{L\cap N}U\le m$. By Theorem \ref{thm.new5}, we can assume that
\be\label{ygeneric} y\in\bigcap_{i=1}^{\ell}\mathscr{G}(U^{(i)},yL)\cap F^*\cap \RFM\ee where $U^{(1)},\cdots, U^{(\ell)}$ are one-parameter subgroups generating $U$.
As $y\in\cl{xU}$, there exists $u_i\in U$ such that $xu_i\to y$ as $i\to\infty$.
Since $y\in F^*$, we can assume $xu_i\in\RFM$ after possibly modifying $u_i$ by Lemma \ref{lem.geometric}.
We will write $xu_i=y\ell_ir_i$ where $\ell_i\in L$ and $ r_i\in \exp \mathfrak l^\perp \cap\widehat L$.

\noindent{\bf Case 1: $r_i\in\op{N}(U)$ for some $i$}.
Then $y\ell_i\in \RFPM$ and $X=\overline{xu_iU}=\overline{y\ell_i U}r_i$.
Since $y\ell_i U\subset yL$, and $\op{co-dim}_{L\cap N} (U)\le m$, we have
$$X= \overline{y\ell_i  Ur_i}=y\ell_i L' r_i\cap \RFPM$$ for some $L'\in \mathcal Q_U$, proving the claim.

\noindent{\bf Case 2: $r_i\not\in\op{N}(U)$ for all $i$}.
By \eqref{ygeneric}, we can apply Proposition \ref{lem.lin} to the sequence $xu_i\to y$ and obtain 
a sequence $v_j\to\infty$ in $(L\cap N)^\perp$ such that
$$yL v_j \cap\op{RF}_+\M\subset X.$$
Since $y\in F^*$,  by Lemma \ref{lem.epsiloncore2}, there exists a one-parameter subgroup
$V\subset (L\cap N)^\perp$ such that $y_1(L\cap N) V\subset X$ for some $y_1\in F^*\cap \RFM$.
Hence, by $(2)_m$, we get a contradiction to the maximality of $L\cap N$; this proves $(2.a)_{m+1}$.

Now we show $(2.b)_{m+1}$ for  the closure $\cl{xAU}$. By $(1)_{m+1}$, we have $\cl{xH}=xL\cap F$ for some $L\in\cal L_U$ contained in $\widehat L$.
Hence $\cl{xAU}\subset xL\cap \RFPM$.
It suffices to show that
\be\label{final} \cl{xAU}=xL\cap\RFPM .\ee
 If $U=L\cap N$,
then $\cl{xU}=xL\cap \RFPM$ by Theorem \ref{tm}, which implies \eqref{final}.
So, suppose that $U$ is a proper closed subgroup of $L\cap N$.
Since $\cl{xAU} (K\cap H)= \cl{xH}=xL\cap F$, it follows from Lemma \ref{lem.SHU}  that we can choose  $y\in\cl{xAU}\cap\mathscr{G}(U,xL)$.
By $(2.a)_{m+1}$ and
Lemma \ref{lem.gp},
we have $\cl{yU}=xL\cap\RFPM$, finishing the proof. \end{proof}

\section{Topological equidistribution: proof of $(3)_{m+1}$}\label{s:3}
In this section, we prove $(3)_{m+1}$.
  Let $U< N$ be  a non-trivial connected closed subgroup.
Let $x_0\widehat L$ be a closed orbit for $x_0\in F^*\cap \RFM$ and $\widehat L\in\cal L_U$ such that $\op{co-dim}_{\widehat L \cap N}(U)=m+1$.
As before we may assume that $U\subset \widehat L\cap  \check H \cap N.$

Let $x_i L_i\subset x_0\widehat L$ be a sequence of closed orbits intersecting $\RFM$ where
$x_i\in\RFPM$, $L_i\in\cal{Q}_U$. We write $x_iL_i$ as $y_iL_iv_i$ where
$y_i\in\RFPM$, $L_i\in\cal{L}_U$, and $v_i\in 
(L_i\cap N)^\perp \cap \widehat L$.  
Assume that no infinite subsequence of $y_iL_iv_i$ is contained in a subset
of the form $y_0L_0D\subset \mathscr S(U, x_0\widehat L) $ where $y_0L_0$ is a closed orbit for some $L_0\in \cal L_U$ and $D\subset \op{N}(U)$
is a compact subset.
Let
\begin{equation*}
E=\limsup\limits_{i\to\infty}\text{ }(y_i L_iv_i\cap\op{RF}_+\M).
\end{equation*}
Note that $\liminf_{i\to \infty} (y_i L_iv_i\cap\op{RF}_+\M)$ coincides with the intersection 
of the subsets  $\limsup (y_{i_k} L_{i_k}v_{i_k}\cap\op{RF}_+\M)$ for all infinite subsequences $\{i_k:k\in\bb N\}$ of $\bb N$. 
If the hypothesis of $(3)_{m+1}$ holds for a given sequence $y_i L_iv_i$, then it also holds for all subsequences.
Hence to prove $(3)_{m+1}$, it suffices to show that $$E=\RFPM\cap x_0\widehat L.$$
 We note that by $(3)_m$, we may assume that 
$$L_i\cap N= U\quad\text{for all $i$}.$$ This in particular implies
that  each $y_i L_iv_i\cap \RFPM$ is $U$-minimal by Theorem \ref{tm}.

\begin{lemma}\label{lem.limsup}
Assume that  $(1)_{m+1}$, $(2)_{m+1}$
and $(3)_m$ are true.
Then there exist $y\in  F^*\cap \RFM$ and
 $L\in \mathcal Q_U$ with $\op{dim}(L\cap N)>\op{dim} U$ such that $yL$ is closed and
 $$E\supset yL\cap \RFPM.$$ 
 \end{lemma}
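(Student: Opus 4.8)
The goal is to produce a single closed orbit $yL$ strictly larger than the $x_iL_i$'s (in the $N$-direction) inside $E$, based at a point of $F^*\cap\RFM$. The strategy is to take $U$-minimal subsets $Y_i\subset y_iL_iv_i\cap\RFPM$ with respect to $\RFM$, pass to a limit, and extract an additional invariance. First I would note that, since $L_i\cap N=U$, each $y_iL_iv_i\cap\RFPM$ is itself $U$-minimal by Theorem \ref{tm}, so I may simply take a point $z_i\in y_iL_iv_i\cap\RFM$; by compactness of $\RFM$ a subsequence converges to some $z\in\RFM$, and $E\supset\overline{zU}$. If $z\in F^*$, then by $(1)_{m+1}$ and $(2)_{m+1}$ the closure $\overline{zU}$ is $z L\cap\RFPM$ for some $L\in\mathcal Q_U$; I then must rule out $\dim(L\cap N)=\dim U$, for if equality held then $zL\cap N=U$ and, tracking the defining groups, the whole sequence $y_iL_iv_i$ would accumulate on a set of the form $y_0L_0D$ with $D$ compact in $\op{N}(U)$ — precisely the excluded situation. (Here I would use Lemma \ref{lem.inc} and Lemma \ref{sin3} to control how the $L_i$ and $v_i$ converge, exactly as in the proof of Proposition \ref{cor.lin} where $(3)_m$ is invoked.) That gives $\dim(L\cap N)>\dim U$, and moving the base point into $F^*\cap\RFM$ via Lemma \ref{xfss} (or keeping it there if $z$ already lies in $F^*$) completes this case.

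The substantive case is when $z\in\partial F$. Then as in the proofs of Proposition \ref{accdd} and Lemma \ref{onetwo} I would write $z\in\BFM\cdot\check V^+\cdot\op{C}(H)$, and more precisely reduce (modifying $z_i$ by $\op{C}(H)$ and a small $\check V$-translate, using that $z_i\in F^*$) to finding a sequence $w_i\to w_0$ in $\RFM$ with $w_i\in E\cdot(\text{bounded})$, $w_0\in z\check H$, and $w_i=w_0\ell_i r_i$ with $\ell_i\in\check H$, $r_i\in\exp\check{\mathfrak h}^\perp$, $r_i\ne e$. Since $r_i\notin\op{C}(H)$, Lemma \ref{lem.QR2} (applied to the $k$-thick return-time sets of $w_0\ell_i\in\partial F\cap\RFM$, which are $k$-thick by Proposition \ref{defk}) produces $u_{t_i}\to\infty$ in some one-parameter $U_0<U$ with $u_{t_i}^{-1}r_iu_{t_i}\to v\in\check V-\{e\}$, of size strictly larger than the original $\check V^+$-coordinate. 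Passing to a subsequence $w_0\ell_i u_{t_i}\to w'\in z\check H$, so $w'v\in E$; and since $w'\in\BFM$ with $w'v\in\RFM$ we get $w'v\in F^*$. Then $\overline{w'vU}$ is, by Theorem \ref{thm.ratner}, a closed orbit $w'Q\cdot(\text{translate})$ with $Q\in\mathcal Q_U$ meeting $F^*$; applying Lemma \ref{lem.maximal} (enlargement from $\mathcal Q_U$ to $\mathcal L_U$ using $A$-invariance of $E$, available because $E$ is $A$-invariant) one lands on a closed orbit $yL$ with $y\in F^*\cap\RFM$, $L\in\mathcal L_U$, $\dim(L\cap N)>\dim U$, and $yL\cap\RFPM\subset E$.

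The last point to settle is that the closed orbit thus obtained genuinely has $\dim(L\cap N)>\dim U$, i.e.\ that the enlargement is not vacuous; this again follows from the hypothesis excluding accumulation on $y_0L_0D$ with $D\subset\op{N}(U)$ compact, since a purely $U$-dimensional limit orbit, together with the boundedness of the $v_i$-type data forced by convergence in the compact piece $z\check H$, would fall into that excluded family — here I would invoke Lemma \ref{lem.inc} once more to pin down the relevant $D$. \textbf{The main obstacle} I anticipate is the bookkeeping in the $z\in\partial F$ case: one must simultaneously (i) keep the base points in $\RFM$ while translating by $\check V$ and $\op{C}(H)$, (ii) ensure the new $\check V$-element $v$ dominates the old $\check V^+$-coordinate so that the resulting orbit escapes $\partial F$ into $F^*$, and (iii) verify that none of these modifications secretly places the limiting orbit back inside an excluded set $y_0L_0D$. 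Each of these is handled by an argument that already appears in Lemmas \ref{onetwo} and \ref{accdd}, so the proof is essentially an assembly of those techniques applied to the sequence $y_iL_iv_i$ rather than to a single orbit closure, with $(3)_m$ supplying the control on the $L_i$'s that lets one reduce to $L_i\cap N=U$ at the outset.
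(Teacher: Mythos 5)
Your reduction of the main case contains a genuine gap. After extracting a limit point $z\in F^*\cap\RFM$ of points $z_i\in y_iL_iv_i\cap\RFM$ and writing $\cl{zU}=zL\cap\RFPM$ with $L\in\mathcal Q_U$, you claim that $\op{dim}(L\cap N)=\op{dim}U$ would force the sequence $y_iL_iv_i$ into an excluded set $y_0L_0D$ with $D\subset\op{N}(U)$ compact. But the hypothesis of $(3)$ forbids \emph{containment} of the orbits in such a set, not accumulation of points near $zL$, and containment does not follow merely from $z_i\to z$. Writing the approximants as $y_iv_i=z\ell_ir_i$ with $\ell_i\in L$, $r_i\in\exp\frak l^\perp$, the contradiction with the hypothesis is available only in the sub-case $r_i\in\op{N}(U)$ (there $y_iL_iv_i\cap\RFPM=zLr_i\cap\RFPM$ and Lemma \ref{LLL} upgrades this to $y_iL_iv_i\subset zLr_i$ with $r_i$ bounded). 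In the complementary sub-case $r_i\notin\op{N}(U)$ there is no contradiction at all: the orbits genuinely escape transversally, and this is exactly where the paper must invoke the additional-invariance machinery, namely Proposition \ref{lem.lin} together with Lemma \ref{onev}, which in turn requires first replacing the limit point by a point of $\bigcap_i\mG(U^{(i)},y_0L)\cap F^*\cap\RFM$ (Theorem \ref{tm}(4)), re-choosing the approximants inside $y_iL_iv_i$ so that they converge to this generic point, and pushing them into $\RFM$ via Lemma \ref{xfss}. Your proposal never establishes genericity of $z$, so even the appeal to the proof of Proposition \ref{cor.lin} is not available as stated; this missing dichotomy is the heart of the lemma and cannot be replaced by the hypothesis alone.

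Two secondary points. First, by discarding the conjugates $v_i^{-1}AUv_i$ and keeping only $U$-invariance at the limit, you lose the mechanism the paper actually uses: when $v_i\to\infty$, Lemma \ref{vAv} applied to $\limsup v_i^{-1}AUv_i$ already yields an orbit of a strictly larger $\widehat U$ (after locating a limit point in $F^*$ via Lemma \ref{lem.epsiloncore3}), and when $v_i$ is bounded one gets $v^{-1}AUv$-invariance and hence, by $(2)_{m+1}$, a closed orbit to feed into the dichotomy above. Relatedly, $E$ is not known to be $A$-invariant (each $y_iL_iv_i\cap\RFPM$ is invariant under $v_i^{-1}Av_i$, not $A$), so your use of Lemma \ref{lem.maximal} in the boundary case, justified by ``$E$ is $A$-invariant,'' is not licensed. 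Second, the boundary case is much simpler than your quasi-regular-map argument suggests: if $y_iL_iv_i\subset\partial F$ for infinitely many $i$, then since $L_i\cap N=U$, Theorem \ref{thm.ratner} gives $y_iL_iv_i=\cl{y_iv_iU}\subset z_i\check H\op{C}(H)$ with $z_i\in\BFM$, and as $\BFM$ is a finite union of compact $\check H$-orbits this places an infinite subsequence inside a single set of the excluded form, contradicting the hypothesis directly.
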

\begin{proof}
By $(2)_m$, it suffices to show that
there exist $y_0\in F^*\cap \RFM$ and
$\widehat U<N$ properly containing $U$ such that $$E\supset y_0 \widehat U.$$
Suppose that $y_iL_iv_i\subset \partial F$ for  infinitely many $i$.
Since $y_iL_iv_i\cap \RFM\ne \emptyset$, we may assume $y_iv_i\in z_i\check H\op{C}(H)$ for some $z_i\in \BFM$ by \eqref{bfmz}.
Since $L_i\cap N=U$, we get $y_iL_iv_i=\cl{y_iU}\subset z_i\check H\op{C}(H)$ by Theorem \ref{thm.ratner}.
This contradicts the hypothesis on $y_iL_iv_i$'s.

Therefore by passing to a subsequence, for all $i$, 
 $$ y_iL_i v_i\cap \RFPM\cap F^*\ne \emptyset .$$
Since $AU<L_i$ for all $i$, it follows that
\begin{equation*}
E=\limsup\limits_{i\to\infty}(y_iL_i v_i\cap\op{RF}_+\M) (v_i^{-1}AUv_i)
\end{equation*}
By Lemma \ref{lem.epsiloncore3}, there exists $y_0\in\limsup_i (y_iL_iv_i \cap\op{RF}_+\M)\cap F^*$.
Hence
\begin{equation}\label{yxxx}
y_0\limsup_{i\to\infty} (v_i^{-1}AUv_i)\subset E,
\end{equation}
after passing to a subsequence.

If $v_i\to \infty$, then 
  $\limsup_i(v_i^{-1}AUv_i)$ contains $A\widehat U$ for some $\widehat U$ properly containing $U$ by Lemma \ref{vAv}.
Therefore, we get the conclusion $y_0\widehat U\subset E$ from \eqref{yxxx}.
Now suppose that, by passing to a subsequence, $v_i$ converges to some $v\in N\cap \widehat L$.
Then \eqref{yxxx} gives \begin{equation*}
y_0 v^{-1} AUv \subset E.
\end{equation*}
Then by $(2)_{m+1}$, $\cl{y_0v^{-1} AU}$ is of the form $y_0v^{-1} L_0\cap\RFPM$ for some $L_0\in\cal L_U$.
Hence \be\label{Indd} E\supset y_0L\cap \RFPM\ee
where  $L:=v^{-1}L_0v$.
If $L\cap N$ contains $U$ properly, this proves the claim. So we suppose that $L\cap N=U$.
By Theorem \ref{tm}, we can assume that
$y_0\in\bigcap_{i=1}^{\ell}\mathscr{G}(U^{(i)},y_0L)\cap F^*\cap \RFM$, where $U^{(1)},\cdots, U^{(\ell)}$ are one-parameter subgroups generating $U$.
By replacing $y_i$ by an element of $y_iL\cap \RFPM$,
we may assume that $y_iv_i \to y_0$. Furthermore, as $y_0\in F^*\cap \RFM$,
for all $i$ sufficiently large,
$y_iv_i\in F^*\cap \RFPM\subset \RFM \cdot U$ (as $F^*$ is open). Hence
we can also assume $y_iv_i\in \RFM$ by Lemma \ref{xfss}.
Therefore we may write
$$y_iv_i =y_0\ell_ir_i$$ for some $\ell_i\to e$ in $L$ and non-trivial $r_i\to e$ in $\exp \mathfrak l^\perp$.

Suppose that $r_i$ belongs to $\op{N}(U)$ for infinitely many $i$.
Then 
\begin{align*} y_iL_iv_i\cap \RFPM &=\overline{y_iv_iU} =\overline{y_0\ell_i U}r_i =
 y_0L r_i\cap \RFPM .\end{align*}
 Hence $y_iL_iv_i r_i^{-1}\cap \RFPM = y_0L\cap \RFPM$.
 In particular, $y_iL_iv_i r_i^{-1}\cap \RFM$ is non-empty (as it contains $y_0$) and is contained in $y_0L$.
 By  Lemma \ref{LLL}, this implies that
 $y_iL_iv_i \subset y_0Lr_i$.
 As $r_i\to e$, this contradicts the hypothesis on $y_iL_iv_i$'s.

Therefore $r_i\not\in \op{N}(U)$ for all $i$ but finitely many.
We may now apply Proposition \ref{lem.lin} and Lemma \ref{onev} to deduce that $E$ contains an orbit
 $z_0\widehat U $ for some $\widehat U<\widehat L\cap N$ containing $U$ properly and for some
 $z_0\in \RFPM\cap F^*$. This proves the claim. \end{proof}

\begin{thm}\label{prop.indRA2}
If $(1)_{m+1}$, $(2)_{m+1}$, and $(3)_m$ are true, then $(3)_{m+1}$ is true.
\end{thm}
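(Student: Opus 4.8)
The plan is to prove $(3)_{m+1}$ by the same enlargement strategy used for $(1)_{m+1}$ and $(2)_{m+1}$, with Lemma~\ref{lem.limsup} as the starting point. We keep the reductions already in place: $E=\limsup_i(y_iL_iv_i\cap\RFPM)$ with $L_i\cap N=U$ for all $i$ (by $(3)_m$), each $y_iL_iv_i\cap\RFPM$ is $U$-minimal by Theorem~\ref{tm}, and $E$ is a closed $U$-invariant subset of $x_0\widehat L$; the goal is $E=x_0\widehat L\cap\RFPM$. By Lemma~\ref{lem.limsup} (this is exactly where the hypotheses $(1)_{m+1}$, $(2)_{m+1}$, $(3)_m$ are consumed) $E$ contains a closed orbit $yL$ with $y\in F^*\cap\RFM$, $L\in\mathcal Q_U$ and $\op{dim}(L\cap N)>\op{dim}U$. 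Among all closed orbits $zQ\subset x_0\widehat L$ with $z\in\RFPM$, $Q\in\mathcal Q_U$ and $zQ\cap\RFPM\subset E$, we choose one, still denoted $yL$, with $\op{dim}(L\cap N)$ maximal; using the argument from the proof of Lemma~\ref{lem.limsup} that no infinite subsequence of the $y_iL_iv_i$ lies in $\partial F$, together with Lemma~\ref{xfss}, Lemma~\ref{lem.frameshift} and Theorem~\ref{tm}, we may arrange $y\in F^*\cap\RFM$ and $y\in\bigcap_i\mathscr{G}(U^{(i)},yL)$, where $U^{(1)},\dots,U^{(\ell)}$ generate $U$.

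The claim to establish is $L\cap N=\widehat L\cap N$. Granting it, $L_{nc}=\widehat L_{nc}$ and $L\cap N$ is the contracting horospherical subgroup of $\widehat L$, so Theorem~\ref{tm}(1) gives $\overline{y(\widehat L\cap N)}=x_0\widehat L\cap\RFPM$, whence $E\supset yL\cap\RFPM\supset\overline{y(L\cap N)}=x_0\widehat L\cap\RFPM$ and we are done. So suppose $\op{dim}(L\cap N)<\op{dim}(\widehat L\cap N)$; then $\op{co-dim}_{L\cap N}(U)\le m$, so $(2)_m$, $(3)_m$ and Lemma~\ref{lem.gp} are available inside $L$. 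Since $y\in yL\cap\RFPM\subset E$ and $y\in F^*$, after passing to a subsequence there are points of $y_iL_iv_i\cap\RFPM$ converging to $y$; these lie in $F^*\subset\RFM\cdot U$, so Lemma~\ref{xfss} yields $x_i\in y_iL_iv_i\cap\RFM$ with $x_i\to y$ and $\limsup_i x_iU\subset E$. Write $x_i=yg_i$ with $g_i\to e$ in $\widehat L$, and $g_i=\ell_ir_i$ with $\ell_i\to e$ in $L$, $r_i\to e$ in $\exp\mathfrak l^\perp$.

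If $r_i\notin\op{N}(U)$ for all $i$ (after passing to a subsequence), then the argument of Proposition~\ref{lem.lin} (in its form for a sequence $x_i\in\RFM$ with $\limsup x_iU\subset E$) produces $v_j\to\infty$ in $(L\cap N)^\perp$ with $yLv_j\cap\RFPM\subset E$; since $y\in F^*$, Lemma~\ref{lem.epsiloncore2} gives $y_1(L\cap N)V\subset E$ for some $y_1\in F^*\cap\RFM$ and a one-parameter subgroup $V<(L\cap N)^\perp$, and then $(2)_m$ turns $\overline{y_1(L\cap N)V}$ into a closed orbit of a subgroup of $\mathcal Q_U$ whose horospherical part strictly contains $L\cap N$, contradicting maximality. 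If instead $r_i\in\op{N}(U)$ for infinitely many $i$, then by Lemma~\ref{hen} (after possibly interchanging $U$ and $U^+$, and passing to a subsequence) we may take $r_i\in U^\perp\cap\exp\mathfrak l^\perp$, which commutes with $U$, so $y_iL_iv_i\cap\RFPM=\overline{x_iU}=\overline{y\ell_iU}\,r_i$ with $y\ell_i\in yL$. By Lemma~\ref{lem.gp} (using $(2)_m$), $\overline{y\ell_iU}$ is either $yL\cap\RFPM$ (if $y\ell_i$ is generic in $yL$) or a closed orbit $y\ell_iL_i'\cap\RFPM$ with $L_i'\in\mathcal Q_U$, $L_i'\cap N=U$ and $\op{dim}(L_i')_{nc}<\op{dim}L_{nc}$ (if $y\ell_i$ is singular). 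In the generic case $y_iL_iv_i\subset (yL)r_i$ with $r_i\to e$, and writing $L=v^{-1}L_0v$ with $L_0\in\mathcal L_U$, $v\in U^\perp$, this contradicts the non-containment hypothesis of $(3)_{m+1}$. In the singular case $y_iL_iv_i\cap\RFPM=(y\ell_iL_i'\cap\RFPM)r_i$: if the sequence $y\ell_iL_i'$ violates the hypothesis of $(3)_m$ relative to $yL$ then, composing with the bounded $r_i$, so do the $y_iL_iv_i$ relative to $x_0\widehat L$; otherwise $(3)_m$ applies and forces $E=yL\cap\RFPM$, in which case a rigidity argument in the spirit of Lemma~\ref{lem.inc} — a convergent sequence of closed $U$-minimal orbits whose limit is all of $yL\cap\RFPM$ must eventually lie in a bounded $\op{N}(U)$-translate of $yL$ — again contradicts the hypothesis.

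I expect the main obstacle to be precisely this last sub-case ($r_i\in\op{N}(U)$ infinitely often with the approximants $y\ell_i$ singular): one must feed the intermediate closed orbits $y\ell_iL_i'$ back into $(3)_m$ while carefully bookkeeping the directions in which things escape, decomposing $\op{N}(U)=AN\op{C}_1(U)\op{C}_2(U)$ and absorbing the $A$-, $U$- and $\op{C}_2(U)$-parts together with the bounded compact factors so that Lemma~\ref{vAv} and Lemma~\ref{lem.epsiloncore2} become applicable, and one must make the rigidity statement ruling out "$E$ a proper closed orbit" precise. Once that is in hand, the remainder is a routine transcription of the enlargement arguments from the proofs of Theorems~\ref{lem.indRA1} and \ref{lem.indRA3}, together with the boundary cases already covered by Theorems~\ref{thm.ratner} and \ref{b-ratner}.
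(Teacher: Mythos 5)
Your overall architecture is the paper's: Lemma \ref{lem.limsup} to plant a closed orbit $yL$ with $\op{dim}(L\cap N)>\op{dim}U$ inside $E$, a maximality choice (you maximize $\op{dim}(L\cap N)$ over closed orbits in $E$, the paper maximizes the unipotent $\widehat U$ with $y\widehat U\subset E$; these are interchangeable via $(2)_m$), a generic base point $y\in F^*\cap\RFM\cap\bigcap_i\mG(U^{(i)},yL)$ approximated by $\RFM$-points $x_i\in y_iL_iv_i$, and, when the transverse parts $r_i$ avoid $\op{N}(U)$, Proposition \ref{lem.lin} plus Lemma \ref{onev} plus $(2)_m$ to contradict maximality. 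That branch, and the reduction $L\cap N=\widehat L\cap N\Rightarrow E=x_0\widehat L\cap\RFPM$ via minimality (Theorem \ref{tm}), are correct and essentially verbatim the paper's.

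The gap is exactly where you flag it: the case $r_i\in\op{N}(U)$ for infinitely many $i$. Two concrete problems. First, the reduction ``by Lemma \ref{hen}, after interchanging $U$ and $U^+$, we may take $r_i\in U^\perp$'' is not legitimate: Lemma \ref{hen} concerns $\exp\frak h^\perp$ for $\frak h=\op{Lie}H'(U)$ and gives the opposite implication (it produces $r_i\notin\op{N}(U)$ or $\op{N}(U^+)$), and switching $U$ with $U^+$ is only available when the object under study is $H(U)$-invariant, as in the proof of $(1)_{m+1}$; here $E$ and the $U$-minimal sets $y_iL_iv_i\cap\RFPM$ are tied to $U$ itself. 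Fortunately that reduction is also unnecessary, since $r_i\in\op{N}(U)$ alone gives $\cl{x_iU}=\cl{y\ell_iU}r_i$. Second, and more seriously, the generic/singular split is both superfluous and, in the singular ``otherwise'' branch, wrong: applying $(3)_m$ to the subsequence $y\ell_iL_i'$ does not ``force $E=yL\cap\RFPM$'' ($E$ is the limsup of the whole sequence and already contains $yL\cap\RFPM$), and the ``rigidity statement'' you leave unproved is never needed. The resolution is a one-liner, and it is what the paper does: regardless of whether $y\ell_i$ is generic or singular in $yL$, trivially $\cl{y\ell_iU}\subset yL$, so $y_iL_iv_i\cap\RFPM=\cl{x_iU}\subset yLr_i$ with $r_i\to e$; writing $L=w^{-1}L_0w$ with $L_0\in\cal L_U$, $w\in U^\perp$, and noting $L\subsetneq\widehat L$ (so $\op{dim}L_0<\op{dim}\widehat L$), this puts infinitely many of the orbits $y_iL_iv_i$ inside $(yw^{-1})L_0D$ for a compact $D\subset\op{N}(U)$ (pass from the $\RFPM$-parts to the orbits as in Lemma \ref{lem.inc}/Lemma \ref{LLL}), contradicting the non-containment hypothesis of $(3)_{m+1}$ directly. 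With that replacement your proof closes; as written, the sub-case you yourself identify as the main obstacle is not handled.
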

\begin{proof}
 We claim that \be\label{yw} x_0\widehat L\cap\op{RF}_+\M= E .\ee
 By Lemmas \ref{lem.limsup},
 we can take a maximal $\widehat U$ such that
 $E\supset y\widehat U$ for some $y\in F^*\cap \RFM$. 
  By $(2)_{m}$, we get a closed orbit $y L$ for some  $L\in\cal{Q}_{\widehat U}$ such that
\begin{equation}\label{eq.k2}
y  L\cap\op{RF}_+\M\subset E.
\end{equation}

If $L=\widehat L$, then the claim  \eqref{yw} is clear. Now suppose that $L$ is a proper subgroup
of $\widehat L$. This implies that $L\cap N$ is a proper subgroup of $\widehat L\cap N$, since $\widehat L\cap N$ acts minimally on $x_0\widehat L\cap \RFPM$ as $\widehat L\in \mathcal L_U$.
By Theorem \ref{thm.new5}, we can assume that
$y\in\bigcap_{i=1}^{\ell}\mathscr{G}(U^{(i)},yL)\cap F^*\cap \RFM$, where $U^{(1)},\cdots, U^{(\ell)}$ are one-parameter subgroups generating $U$. As $y\in E$, there exists a
sequence $x_i\in y_iL_iv_i \cap \RFPM$
converging to $y$, by passing to a subsequence.
Since $U=v_i^{-1}L_iv_i\cap N$, we have $x_i\in \RFM \cdot U$.
By Lemma \ref{xfss}, 
by replacing $x_i$ with $x_iu_i$ for some $u_i\to e$ in $U$, we may assume $x_i\in \RFM$.

We claim that
  $$ x_i\notin yL \op{N}(U).$$
Suppose not, i.e., $x_i=y\ell_i r_i$ for some $\ell_i\in L$ and $r_i\in\op{N}(U)$.
Then $$y_iL_iv_i\cap\RFPM=\cl{x_iU}=\cl{y\ell_i U}r_i\subset yLr_i.$$
 By the assumption on $y_iL_iv_i$'s, this cannot happen as $r_i$'s are bounded.

 On the other hand,
 $\op{dim}(L_i\cap N)$ is strictly smaller
than $\op{dim} (L\cap N)$, since $L_i\cap N=U$ and $\widehat U<L\cap N$, yielding a contradiction.
Hence $x_i\notin yL \op{N}(U)$. 

We can now apply Proposition \ref{lem.lin} and Lemma  \ref{onev} and deduce that $E$ contains $y_1\widehat UV$ for some $y_1\in F^*\cap \RFM$. This is a contradiction to the maximality assumption on 
$\op{dim} \widehat U$.
\end{proof}

\subsection*{Proof of Theorem \ref{geo-intro2}}
We explain how to deduce this theorem from Theorem \ref{mainth}(3).
For (1),  we may first assume that $P_i$ have all same dimension so that for some fixed connected closed
subgroup $U<N$, $P_i=\pi(x_i H'(U))$
where $x_iH'(U)$ is a closed orbit of some $x_i\in \RFM$.
Then there exists $L_i\in \cal L_U$ such that $x_iL_i$ is closed  and $P_i=\pi(x_iL_i)$ by Proposition \ref{count}.
We claim that  the sequence $x_iL_i$ satisfies the hypothesis of Theorem \ref{mainth}(3).
Suppose not. Then there exists a closed orbit $y_0L_0$
with $L_0\in \cal L_U$, $L_0\neq G$ and a compact subset $D\subset \op{N}(U)$ such that $x_iL_i\subset y_0L_0D$ for infinitely many $i$.
By Lemma \ref{lem.inc}, this can happen only when $L_i\subset d_i^{-1}L_0d_i$ and $x_iL_i\subset y_0L_0d_i$ for some $d_i\in D$.
Since $D\subset\op{N}(U)\subset L_0(L_0\cap N)^\perp M$, we may assume that  $d_i\in (L_0\cap N)^\perp M$.
Since $A\subset L_i\subset d_i^{-1}L_0d_i$, we have $d_i\in M$.
This implies that $P_i=\pi(x_iL_i)\subset\pi(y_0L_0d_i)=\pi(y_0L_0)$. By the maximality assumption on $P_i$'s,
it follows that $P_i$ is a constant sequence, yielding a contradiction.
Hence by Theorem \ref{mainth}(3), $\lim (x_iL_i\cap \RFPM)=\RFPM$.
Since $\pi(\RFPM)=\Gamma\ba \bH^d$, the claim follows.
(2) follows from Corollary \ref{rcount}.
For (3), if there are infinitely many bounded properly immersed $P_i$'s, then $\lim P_i=M$ by
(1). On the other hand, $P_i\subset \core \M $; because any bounded $H'(U)$ orbit should be inside $\RFM$.
Since $\core\M$ is a proper closed subset of $M$, as $\op{Vol} (\M)=\infty$, this gives a contradiction.

\begin{Rmk}\label{nochange}
In fact, when $\M$ is any convex cocompact hyperbolic manifold of infinite volume, there are only finitely many bounded maximal
closed $H'(U)$-orbits, and hence only finitely many maximal properly immersed bounded geodesic planes.
  The reason is that if not, we will be having infinitely many maximal closed orbits $x_iL_i$ {\it contained in $\RFM$}
for some $L_i\in \mathcal L_U$, and for any $U$-invariant subset $E$ contained in $\RFM$, 
the $1$-thickness for points in $E$ holds for any one-parameter subgroup of $U$ for the trivial reason, which makes
our proof of Theorem \ref{mainth} work with little modification (in fact, much simpler)
for a general $\M$.
\end{Rmk}

\newpage  
\section{Appendix: Orbit closures for $\Gamma\ba G$ compact case}
In this section we give an outline of the proof of  the orbit closure theorem for the actions of $H(U)$ and $U$,
 assuming that $\Gamma\ba G$ is compact and there exists at least one closed orbit of $\SO^\circ (d-1,1)$.
 We hope that giving an outline of the proof of Theorem \ref{mainth} in this special case will help readers understand 
the whole scheme of the proof better and see the differences with the infinite volume case more clearly.

Note that in the case at hand, $$\RFM=F_{H(U)}^*=\RFPM=\Gamma\ba G.$$
Without loss of generality, we assume that $U\subset \SO^\circ(d-1,1)\cap N$.
\begin{theorem} \label{mainthlattice}
Let $x\in\Gamma\ba G$.

\begin{enumerate}
\item
There exists  $L\in \mathcal L_U$ such that
\begin{equation*}
\cl{xH(U)}=xL .
\end{equation*}

\item
There exists  $L\in \mathcal Q_U$ such that
\begin{equation*}
\cl{xU}=x  L.
\end{equation*}
\end{enumerate}
\end{theorem}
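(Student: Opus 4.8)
The plan is to deduce Theorem~\ref{mainthlattice} as a special case of the inductive scheme culminating in Theorem~\ref{thm.H'UMIN} (equivalently Theorem~\ref{mainth}), specialized to the compact setting, where the key simplification is that $\RFM=\RFPM=F_{H(U)}^*=\Gamma\ba G$ and every return-time set along a one-parameter unipotent subgroup is all of $\br$, hence trivially $k$-thick for every $k>1$. First I would record this: since $\Gamma\ba G$ is compact, $\mathsf T(y)=\br$ for all $y$ and all $U_0=\{u_t\}<N$, so Proposition~\ref{lem.thickreturntime} is automatic with any $k$, and the constant $k$ in the avoidance theorems can be taken to be (say) any fixed $k>1$. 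Also $\core M=M$, so $\partial F_{H(U)}=\emptyset$ and the boundary frame analysis of Section~\ref{s:cl} is vacuous; there is no $\BFM$ to worry about, and the hypothesis that there is at least one closed $\SO^\circ(d-1,1)$-orbit is what we use in place of Proposition~\ref{SAVE} to guarantee the existence of a proper intermediate closed orbit inside the singular set (this is the analogue of \eqref{compact}).

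The main substantive input is the avoidance of the singular set. Here I would invoke the finite-volume version of the linearization method: since $\ell\{t\in[0,T]:xu_t\in\RFM\}=T$, one has the classical Dani--Margulis estimate \eqref{dmg}, namely for $x_i\to x\in\mG(U_0)$ one can choose $E_j\nearrow\mS(U_0)$ in $\cal E_{U_0}$ and neighborhoods $\cal O_j$ of $E_j$ with $x_i\notin\bigcup_{j\le i+1}\cal O_j$ and $\ell\{t\in[0,T]:x_iu_t\in\bigcup_{j\le i}\cal O_j\}\le\e T$. This directly yields the compact-case analogue of Corollary~\ref{cor.unifreclin} and Theorem~\ref{lin2}: $\{t:x_iu_t\in\RFM-\cal O_j\}$ is $2k$-thick (indeed has full density), \emph{without using any induction hypothesis} --- this is precisely the point noted in the ``Comparison with the finite volume case'' subsection. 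Consequently the accumulation-on-a-generic-point statement (the analogue of Theorem~\ref{mtl} / Propositions~\ref{cor.lin} and~\ref{lem.lin}) needs only $(2)_m$ and not $(3)_m$ to feed into $(1)_{m+1}$ and $(2)_{m+1}$. The inductive search lemma of Section~\ref{s:ig}, the reductive-subgroup structure results of Section~\ref{s:st} (Propositions~\ref{str}, \ref{sin1}, \ref{sinc}, Corollary~\ref{counth}, Proposition~\ref{sudef}), the translates-of-minimal-sets machinery of Section~\ref{s:tr} (Propositions~\ref{YLY}, \ref{YvX}, \ref{7.6}), and the minimality/ergodicity results of Sections~\ref{s:err}--\ref{s:um} all carry over verbatim or become trivial (e.g. $\m^{\BR}$ is just the Haar measure, $U$-ergodic since $\delta=d-1>\mathrm{co\text{-}dim}_N(U)$).

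With these ingredients the induction runs exactly as in Sections~\ref{s:or}--\ref{s:3}: the base case $m=0$ is Theorem~\ref{tm} (for $(2)_0$) and is trivial for $(1)_0$, $(3)_0$; then $(2)_m\Rightarrow(1)_{m+1}$ via the ``find a closed orbit'' step (Proposition~\ref{closed}, using $U$-minimal sets with respect to the compact $\Gamma\ba G$ and Proposition~\ref{7.6}) followed by the ``enlarge'' step (Proposition~\ref{enlarge} / Proposition~\ref{cor.lin}, where $x_0\in F^*=\Gamma\ba G$ is automatic so the passage from a semigroup $V^+$ to a group $V$ via Lemma~\ref{onev2} is immediate); then $(1)_{m+1}\,\&\,(2)_m\Rightarrow(2)_{m+1}$ using the finite-volume avoidance and Lemmas~\ref{lem.AUbarmeetsSU}--\ref{onetwo} together with the assumed existence of a closed $\SO^\circ(d-1,1)$-orbit to supply the compact intermediate orbit in $\mS(U,\Gamma\ba G)$; and finally $(1)_{m+1}\,\&\,(2)_{m+1}\Rightarrow(3)_{m+1}$ as in Section~\ref{s:3} --- though for the statement of Theorem~\ref{mainthlattice} itself only parts (1) and (2) are claimed, so one may even stop earlier or simply cite Mozes--Shah \cite{MSh} for equidistribution as in Remark~\ref{EEnd}(2). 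The hard part will be nothing genuinely new: it is bookkeeping to check that every lemma invoked in Sections~\ref{s:or}--\ref{s:3} either (a) holds trivially because $\RFM=\RFPM=\Gamma\ba G$ and all thickness is automatic, or (b) is replaced by the stronger finite-volume Dani--Margulis estimate \eqref{dmg}, or (c) is the hypothesis on the existence of one $\SO^\circ(d-1,1)$-closed orbit standing in for Proposition~\ref{SAVE}; once this verification is done, Theorem~\ref{mainthlattice} is literally Theorem~\ref{thm.H'UMIN}(1),(2) specialized, and Theorem~\ref{mtp}(1),(2) for this $M$ follow since $F_{H(U)}=\Gamma\ba G$. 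I expect the only place requiring real care is confirming that the compact intermediate closed orbit needed in the enlargement step of $(2)_{m+1}$ is genuinely produced --- i.e. that starting from one $\SO^\circ(d-1,1)$-closed orbit and conjugating by $M$ one lands inside $\mS(U,x_0\widehat L)$ for the relevant $\widehat L$ --- which is the analogue of the ``existence of a compact orbit in any noncompact closed orbit'' discussion and must be argued by hand rather than quoted.
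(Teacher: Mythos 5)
Your proposal follows essentially the same route as the paper's own proof in the appendix: specialize the inductive scheme of Theorem \ref{mainth} to the compact case, where thickness of return times is automatic, $\partial F_{H(U)}=\emptyset$, the Dani--Margulis estimate \eqref{dmg} replaces the avoidance theorems so that $(3)_m$ is never needed, and the hypothesis of a closed $\SO^\circ(d-1,1)$-orbit guarantees $\mS(U)\neq\emptyset$ in place of Proposition \ref{SAVE}. The only slight divergence is your closing worry: the paper invokes that orbit in the ``find a closed orbit'' step of $(2)_{m+1}$ rather than in the enlargement step, and after conjugating $U$ into $\SO^\circ(d-1,1)\cap N$ every point of the compact $\SO^\circ(d-1,1)$-orbit lies in $\mS(U)$ because its stabilizer is a cocompact, hence Zariski dense, lattice, so the verification you flag as delicate is immediate.
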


 The base case $(2)_0$ follows from a special case of Theorem \ref{tm}. 
  For $m\ge 0$, we will show that $(2)_m$ implies $(1)_{m+1}$, and that  $(1)_{m+1}$ and $(2)_m$ together imply $(2)_{m+1}$.

We note that when $\Gamma\ba G$ is compact, we don't need the topological equidistribution statement, which is Theorem \ref{mainth}(3)
to run the induction argument, thanks to \eqref{dmg}.  In order to prove $(1)_{m+1}$,
it  suffices to use $(2)_m$ only
when the ambient space is $\Gamma\ba G$; in
the proof of Theorem \ref{mainth}, we needed to use $(2)_m$
whenever $\op{co-dim}_{N\cap \widehat L} U\le m$ for  any closed orbit $x_0\widehat L$ containing $xU$ (this was needed in order to use results in section \ref{s:ge2}).

\begin{Rmk} Theorem \ref{mainthlattice}
is  proved by Shah \cite{Sh0} by topological arguments. Our proof presented in this appendix
is somewhat different from Shah's in that we prove that $(1)_m$ implies $(2)_m$  using the existence of a closed $\SO^\circ (d-1,1)$-orbit, while
 he shows that $(2)_m$ implies $(1)_m$.
  \end{Rmk}

\subsection*{Proof of $(1)_{m+1}$}
We assume that $1\le \op{co-dim}_N U=m+1$.
By Proposition \ref{hpu},
it suffices to show that $X:=\overline{xH'(U)}=xL \op{C}(H(U))$ for some $L\in \mathcal L_U$.
Assume that $xH'(U)$ is not closed in the following.

\subsection*{Step 1: Find a closed orbit inside $X$}
We claim that $X$ contains a $U$-minimal subset $Y$ such that
$X-yH'$ is not closed for some $y\in Y$ (cf. the case when $R$ is compact in the proof of Proposition \ref{closed}).
If $xH'(U)$ is not locally closed, then any $U$-minimal subset $Y\subset X$  does the job.
If $xH'(U)$ is locally closed, then 
any $U$-minimal subset $Y$ of $X-xH'(U)$ does the job; note that the set $X-xH'(U)$ is a compact $H'(U)$-invariant subset
and hence contains a $U$-minimal subset.

Hence, by Lemma \ref{7.6}, $X$ contains an orbit $x_0\widehat U$
with $\op{dim}\widehat U >\op{dim} U$.
By $(2)_m$ and Lemma \ref{alpha}, $X$ contains a closed orbit $zL$ for some $L\in \mathcal L_U$.
 We may assume that $X\ne zL \op{C}(H(U))$; otherwise, we are done.
\subsection*{Step 2: Enlarge a closed orbit inside $X$}

Since $zL$ is compact, by Theorem \ref{tm}, we can assume that $zU^{(i)}_{\pm}$ is dense in $zL$
where $U^{(1)}_\pm,\cdots  ,U^{(k)}_\pm$ are one-parameter subgroups of $U^\pm$ generating $U^\pm$.
Note that there exists $g_i\to e$ in $G- L \op{C}(H(U))$ such that $zg_i\in X$.
We can write $g_i=\ell_i r_i$ where $r_i\in \exp \mathfrak l^\perp $ and $\ell_i\in L$. Then $r_i\notin \op{C}(H(U))$.
Since $\bigcap_{i=1}^k (\op{N}(U^{(i)}_+)\cap \op{N}(U^{(i)}_-))\cap \exp \frak l^\perp$ is locally contained in $ \op{C}(H(U))$, we have $r_i\notin \op{N}(U_0)$
where $U_0$ is one of the subgroups $U^{(i)}_{\pm}$. If $U_0\in \{U^{(i)}_+\}$, then replace $U$ by $U^+$.

Fix any $k>1$. Applying \eqref{dmg} to the sequence $z_i:=z\ell_i \to z$,
the set 
\begin{equation}\label{tzi} \mathsf T(z_i):=\{t\in \br: z_i u_t\in \Gamma\ba G -\bigcup_{j=1}^{i}\cal O_j\}\end{equation}
is a $k$-thick subset (take $0<\epsilon<1-1/k$).
By Lemma \ref{lem.QR2}, there exists  ${t_i}\in \mathsf T(z_i)$ such that
$u_{t_i}^{-1} r_iu_{t_i}$ converges to a non-trivial element $v\in (L\cap N)^\perp$.
Now the sequence $z_iu_{t_i}$ converges to $z_0\in \mG(U_0, zL)$.
Since $zg_i u_{t_i}$ converges to $z_0v$,
we deduce
$$zL v=\overline{z_0 v U_0 }\subset X\text{ and hence } zLV^+\subset zL(AvA)\subset X$$
where $V^+$ is the one-parameter unipotent subsemigroup contained in $AvA$. Take any sequence $v_i\to \infty$ in $V^+$ such that $zv_i$ converges to some $x_0$.
Then $x_0V\subset \limsup (zv_i)(v_i^{-1}V^+)\subset X$ and hence $X$ contains $x_0(L\cap N)V$.
By the induction hypothesis $(2)_m$ and Lemma \ref{alpha}, $X$ contains a closed orbit of $\widehat L$ for some $\widehat L\in \mathcal L_{\widehat U}$. This process of enlargement must end after finitely many steps.

\subsection*{Proof of $(2)_{m+1}$} Set $X:=\overline{xU}$.
We assume that $X\ne \G\ba G$. Since the co-dimension of $U$ in $ N$ is at least $1$,
we may assume without loss of generality that $U< N\cap \SO^\circ (d-1,1)$ using conjugation by an element of $M$.

\subsection*{Step 1: Find a closed orbit inside $X$}
 By the hypothesis on the existence of a closed $L_0:=\SO^\circ (d-1,1)$-orbit, $\mS( U)\ne \emptyset$.
It  follows from $(1)_{m+1}$, $(2)_m$,  and
  the cocompactness of $AU$ in $H'(U)$ 
 that any ${AU}$-orbit closure intersects $\mS( U)$ (cf. proof of Lemma \ref{lem.AUbarmeetsSU}).

We claim that $X$ intersects $\mS( U)$. Since $\mS(U)$ is $N\op{C}_2(U)$-invariant,
it suffices to show that $X N\op{C}_2(U)$ intersects $\mS(U)$.  Let $Y\subset X$ be a $U$-minimal subset.
Then  there exists a one-parameter subgroup $S<AU^\perp \op{C}_2(U)$ such that
$Yg=Y$ for all $g\in S$ by Lemma \ref{YLY}.
Strictly speaking, the cited lemma gives $Yg\subset Y$ for $g$ in a semigroup $S$, but in the case at hand,
$Yg\subset Y$ implies $Yg=Y$, since $Yg$ is $U$-minimal again, and hence $Yg^{-1}=Y$ as well.
In view of Lemma \ref{rmk.1psg}, we get $YA\subset XN \op{C}_2(U)$ or $YvA\subset XN \op{C}_2(U)$ for some $v\in N$.
In either case, $XN \op{C}_2(U)$ contains an $AU$-orbit and hence intersects $\mS(U)$. So
the claim follows. Since $X$ intersects $\mS( U)$, by applying $(2)_m$,
$X$ contains a closed orbit $zL$ for some $L\in \mathcal Q_U$. 

\subsection*{Step 2: Enlarge a closed orbit inside $X$}
Suppose $L\ne G$ and $X\ne zL$.
It suffices to show that $X$ contains a closed orbit of $\widehat L$ for some $\widehat L\in \mathcal L_{\widehat U}$
for some $\widehat U$ properly containing $L\cap N$.
We may assume $X\not\subset zL \op{C}(H(U))$; otherwise, the claim follows from $(2)_m$.
We may assume $z\in \bigcap_{i=1}^{\ell} \mG(U^{(i)}, yL)$ where $U^{(i)}$'s are one-parameter generating subgroups of $U$.
Take a sequence $xu_i\to z$ where $u_i\in U$, and write $xu_i=z\ell_i r_i$
where $\ell_i\in L$ and $r_i\in \exp ( \mathfrak l^\perp)$.
The case of $r_i\in \op{N}(U)$ for some $i$ follows from $(2)_m$ (cf. proof of Lemma \ref{lem.indRA3}). Hence 
we may assume $r_i\notin \op{N}(U)$, and by passing to a subsequence, $r_i\notin \op{N}(U_0)$
for some $U_0\in \{U^{(i)}\}$.

Fix any $k>1$. Then $\mathsf T(z_i)$ as in \eqref{tzi}
is a $k$-thick subset.  We now repeat the same argument of a step in the proof of $(1)_{m+1}$.
By Lemma \ref{lem.QR2}, there exists  ${t_i}\in \mathsf T(z_i)$ such that
$u_{t_i}^{-1} r_iu_{t_i}$ converges to a non-trivial element $v\in U^\perp$.
Now the sequence $z_iu_{t_i}$ converges to $z_0\in \mG(U_0, zL)$.
Hence $X\supset \overline{z_0 (L\cap N)} v=zL v$. Moreover, by  Lemma \ref{lem.QR2}, such $v$ can be made of arbitrarily large size,
so we get $X\supset zLv_j$ for a sequence $v_j\in (L\cap N)^\perp$ tending to $\infty$.
The set 
$\limsup_{j\to \infty} v_j^{-1}A v_j$
contains a one-parameter subgroup $V\subset (L\cap N)^\perp$ by Lemma \ref{vAv}. 
Passing to a subsequence, there exists $y\in\liminf zLv_j$ and hence
 $$X\supset \limsup_{j\to \infty} (zLv_j) \supset
 y(L\cap N)\limsup_{j\to \infty}  (v_j^{-1} A v_j )\supset y(L\cap N) V.$$
Hence $X$ contains $y(L\cap N)V$, and hence the claim follows from $(2)_m$.

\end{document}